\setlist[enumerate,1]{label = \roman*.,
ref = \roman*}
\theoremstyle{plain}
\newtheorem{theorem}{Theorem}[section]
\newtheorem*{theorem*}{Theorem}
\newtheorem{thmx}{Theorem}
\newtheorem{cor}[theorem]{Corollary}
\newtheorem{lem}[theorem]{Lemma}
\newtheorem{prop}[theorem]{Proposition}
\theoremstyle{definition}
\newtheorem{ex}[theorem]{Example}
\newtheorem{exs}[theorem]{Examples}
\newtheorem{dfn}[theorem]{Definition}
\newtheorem{rem}[theorem]{Remark}
\newtheorem{rems}[theorem]{Remarks}
\theoremstyle{remark}
\newtheoremstyle{linked}
  {}
  {}
  {\itshape}
  {}
  {\bfseries}
  {.}
  { }
{\hyperlink{InternalLink:\thislink}{\thmname{#1} \thmnumber{#2}}\thmnote{ (#3)}}
\theoremstyle{linked}
\newtheorem{innlinkthm}[theorem]{Theorem} 
 \NewDocumentEnvironment{linkthm}{m o}
   { 
   \IfNoValueTF{#2}{\def\thislink{#1}\begin{innlinkthm}\label{thm:#1}}{\def\thislink{#1}\begin{innlinkthm}[#2]\label{thm:#1}}
   }
   {\end{innlinkthm}}
 \newenvironment{linklem}[1]
  {\def\thislink{#1}\innlinklem\label{thm:#1}}
  {\endinnlinkthm}
  \newtheorem{innlinkprop}[theorem]{Proposition} 
  \NewDocumentEnvironment{linkprop}{m o}
   { 
   \IfNoValueTF{#2}{\def\thislink{#1}\begin{innlinkprop}\label{thm:#1}}{\def\thislink{#1}\begin{innlinkprop}[#2]\label{thm:#1}}
   }
   {\end{innlinkprop}}
\NewDocumentEnvironment{linkproof}{m o}
  {
    \IfNoValueTF{#2}
      {
        \hypertarget{InternalLink:#1}{
          \paragraph{Proof of \Cref{thm:#1}.}
        }
      }
      {
        \hypertarget{InternalLink:#1}{}
        \hypertarget{InternalLink:#2}{
          \paragraph{Proof of \Cref{thm:#1} and \Cref{thm:#2}.}
        }
      }
  }
  {
    \hfill$\qed${\parfillskip=0pt\par}
  }
 \newcommand\linksec[3][\DefaultOpt]{\def\DefaultOpt{#2}\section[#1]{\texorpdfstring{\hyperref[sec:#3_proof]{#2}}{#2}}\label{sec:#3}}
 \newcommand\linkchap[3][\DefaultOpt]{\def\DefaultOpt{#2}\chapter[#1]{\texorpdfstring{\hyperref[chap:#3_proof]{#2}}{#2}}\label{chap:#3}}
\newcommand{\RNum}[1]{\uppercase\expandafter{\romannumeral #1\relax}}
\providecommand*{\twoheadrightarrowfill@}{%
  \arrowfill@\relbar\relbar\twoheadrightarrow}
\providecommand*{\twoheadleftarrowfill@}{%
  \arrowfill@\twoheadleftarrow\relbar\relbar}
\providecommand*{\xtwoheadrightarrow}[2][]{%
  \ext@arrow 0579\twoheadrightarrowfill@{#1}{#2}}
\providecommand*{\xtwoheadleftarrow}[2][]{%
  \ext@arrow 5097\twoheadleftarrowfill@{#1}{#2}}
\newcommand\setItemnumber[1]{\setcounter{enum\romannumeral\@enumdepth}{\numexpr#1-1\relax}}
\newcommand\norm[1]{\left\lVert#1\right\rVert}
\newcommand\Crefitem[2]{\nameCref{#1} \hyperref[#1:#2]{\ref*{#1}.\ref*{#1:#2}}}
\newcommand\refitem[2]{\hyperref[#1:#2]{\ref*{#1}.\ref*{#1:#2}}}
\NewDocumentCommand{\MyCref}{m o}{%
  \IfNoValueTF{#2}
    {\Cref{#1}} 
    {\nameCref{#1} \hyperref[#1:#2]{\ref*{#1}.\ref*{#1:#2}}}
}
\DeclareMathOperator\supp{supp}
\DeclareMathOperator\Ad{Ad}
\DeclareMathOperator\sgn{sgn}
\DeclareMathOperator\Hom{Hom}
\DeclareMathOperator\im{Im}
\DeclareMathOperator\dom{Dom}
\DeclareMathOperator\codim{Codim}
\newcommand{\R}{\mathbb{R}}
\newcommand{\N}{\mathbb{N}}
\newcommand{\C}{\mathbb{C}}
\newcommand{\Z}{\mathbb{Z}}
\newcommand{\cA}{\mathcal{A}}
\newcommand{\cF}{\mathcal{F}}
\newcommand{\cG}{\mathcal{G}}
\newcommand{\cK}{\mathcal{K}}
\newcommand{\cL}{\mathcal{L}}
\newcommand{\cM}{\mathcal{M}}
\newcommand{\cQ}{\mathcal{Q}}
\newcommand{\cX}{\mathcal{X}}
\newcommand{\cZ}{\mathcal{Z}}
\newcommand{\Rpt}{\mathbb{R}_+^\times}
\newcommand\bb[1]{[\![ #1]\!]}
\newcommand{\dbtilde}[1]{\accentset{\approx}{#1}}
\newtcbox{\redbox}[1]{colback=red!5!white,
colframe=red!75!black,fonttitle=\bfseries,
title={#1}}
\DeclareMathOperator{\BCH}{BCH}
\newcommand{\Id}{{\hbox{Id}}}
\newcommand{\weak}[1]{\hat{#1}}
\newcommand{\normtriple}[1]{{\left\vert\kern-0.25ex\left\vert\kern-0.25ex\left\vert #1 
    \right\vert\kern-0.25ex\right\vert\kern-0.25ex\right\vert}}
\newcommand{\Grass}{\mathrm{Grass}}
\newcommand{\WF}{\mathrm{WF}}
\newcommand{\vol}{\mathrm{vol}}
\newcommand{\omegahalf}{\Omega^\frac{1}{2}}
\newcommand{\HL}[1]{\mathrm{H\!N}_{#1}}
\newcommand{\HN}{\mathrm{H\!N}}
\newcommand{\grF}[1]{\mathfrak{g}_{#1}}
\newcommand{\GrF}[1]{\mathfrak{G}_{#1}}
\newcommand{\cGF}{\mathcal{G}}
\newcommand{\HatHLnon}{\widehat{\mathrm{H\!N}}^{\mathrm{Nonsing}}}
\newcommand{\HLnon}{\mathrm{H\!N}^{\mathrm{Nonsing}}}
\newcommand\cinv{C^\infty_{c,\mathrm{inv}}(\mathbb{G},\omegahalf)}
\newcommand\cbbG{C^\infty_{c}(\mathbb{G},\omegahalf)}
\newcommand\cinvf[1]{C^\infty_{c,\mathrm{inv},#1}(\mathbb{G},\omegahalf)}
\newcommand{\cinvdis}{C^{-\infty}_{r,s,\mathrm{inv}}(\mathbb{G},\omegahalf)}
\newcommand{\tdom}{\widetilde{\dom}}
\DeclareMathOperator{\singsupp}{singsupp}
\newcommand{\DO}{\mathrm{DO}}
\ams@newcommand{\iiiiint}{\DOTSI\protect\MultiIntegral{5}}
\renewcommand{\MultiIntegral}[1]{%
\edef\ints@c{\noexpand\intop
\ifnum#1=\z@\noexpand\intdots@\else\noexpand\intkern@\fi
\ifnum#1>\tw@\noexpand\intop\noexpand\intkern@\fi
\ifnum#1>\thr@@\noexpand\intop\noexpand\intkern@\fi
\ifnum#1>4 \noexpand\intop\noexpand\intkern@\fi 
\noexpand\intop
\noexpand\ilimits@
}%
\futurelet\@let@token\ints@a
}
\begin{document}
\title{Microlocal maximal hypoellipticity from the geometric viewpoint: I}
\author{Omar Mohsen\thanks{{
Université Paris Cité, Sorbonne Université, CNRS, IMJ-PRG, F-75013 Paris, France, \href{mailto:omar.mohsen@imj-prg.fr}{\texttt{omar.mohsen@imj-prg.fr}}}}}
\date{}
\maketitle
\begin{abstract}
	Given some vector fields on a smooth manifold satisfying Hörmander's condition, 
	we define a bi-graded pseudo-differential calculus which contains the classical pseudo-differential calculus and a pseudo-differential calculus adapted to the sub-Riemannian structure induced by the vector fields.

	Our approach is based on geometric constructions (resolution of singularities) together with methods from operators algebras. 
	We develop this calculus in full generality, including Sobolev spaces, the wavefront set, and the principal symbol, etc.

	In particular, using this calculus, we prove that invertibility of the principal symbol implies microlocal maximal hypoellipticity.
	This allows us to resolve affirmatively the microlocal version of a conjecture of Helffer and Nourrigat.

\end{abstract}

\setcounter{tocdepth}{2} 
\tableofcontents
\addcontentsline{toc}{chapter}{Introduction}
\chapter*{Introduction}

	Let $M$ be a smooth manifold, $\Omega^\frac{1}{2}$ the bundle of half-densities on $TM$. A pseudo-differential operator $P:C^\infty(M,\omegahalf)\to C^\infty(M,\omegahalf)$ is called hypoelliptic if for any distribution $w\in C^{-\infty}(M,\omegahalf)$ on $M$, 
		\begin{equation*}\begin{aligned}
			\mathrm{singsupp}(P(w))=\mathrm{\singsupp}(w).
		\end{aligned}\end{equation*}
	Equivalently, if $P(w)$ is smooth on an open subset $U\subseteq M$, then $w$ is also smooth on $U$.
	Similarly, $P$ is called microlocally hypoelliptic on a cone (a subset that is closed under the $\Rpt$-dilation) $\Gamma\subseteq T^*M\backslash 0$ if
		\begin{equation*}\begin{aligned}
			\WF(P(w))\cap \Gamma=\WF(w)\cap \Gamma,\quad \forall w\in C^{-\infty}(M,\omegahalf).
		\end{aligned}\end{equation*}
	If $P$ is microlocally hypoelliptic on $T^*M\backslash 0$, then it is hypoelliptic.
	A well-known sufficient criterion for microlocal hypoellipticity is that the classical principal symbol of $P$ vanishes nowhere on $\Gamma$.

	In our previous work  \cite{MohsenMaxHypo}, we generalised the main regularity theorem for elliptic operators to a sub-Riemannian setting. Thus giving a general sufficient criterion for hypoellipticity of differential operators, and answering the global and local versions of the Helffer-Nourrigat conjecture.
	In this article, we generalise our theorem in two simultaneous directions: First, we microlocalize the theorem and thus solving the microlocal version of Helffer-Nourrigat conjecture.
	Second, we extend our theorem to what we call multi-parameter sub-Riemannian geometry.

	Let us start with the context for our theorem. For simplicity, we will describe the context of our theorem in the $2$-parameter setting.
	Our article treats the more general setting of $\nu$-parameters where $\nu\in \N$ is arbitrary.

	Let	$X_1,\cdots,X_n$ vector fields satisfying Hörmander's condition, that is for any $x\in M$, the tangent space $T_xM$ is linearly spanned
	by $X_1(x),\cdots,X_n(x)$ and $[X_i,X_j](x)$ and higher iterated Lie brackets evaluated at $x$.
	Let $a_1,\cdots,a_n\in \N$ be any natural numbers thought of as weights associated to each vector field $X_i$.

	Let $s\in \R$, $t\in \N$ such that $a_i|t$ for all $i$.
	We define Sobolev spaces $H^{(s,t)}(M)$ adapted to the vector fields $X_1,\cdots,X_n$ and their weights $a_1,\cdots,a_n$ as follows: For any $w\in C^{-\infty}(M,\omegahalf)$,
		\begin{equation*}\begin{aligned}
			w\in H^{(s,t)}(M)\iff Q(X_1,\cdots,X_n)u\in L^2_{\mathrm{loc}}(M) \forall Q,	
		\end{aligned}\end{equation*}
	where $Q(x_1,\cdots,x_n)$ is any noncommutative formal polynomial of weighted degree $\leq t$ where each variable $x_i$ is given weight $a_i$, and whose coefficients are classical pseudo-differential operators of order $\leq s$.
	
	In this article we extend these Sobolev spaces to $s,t\in \R$ by an interpolation type procedure. We prove that these Sobolev spaces satisfy the following Sobolev lemma:
		\begin{equation}\label{intro:Sobolev_lemma}\begin{aligned}
			C^\infty(M,\omegahalf)=\bigcap_{s\in \R}H^{(s,t)}_{\mathrm{loc}}(M)=\bigcap_{t\in \R}H^{(s,t)}_{\mathrm{loc}}(M),\quad \forall s,t\in \R.
		\end{aligned}\end{equation}
		These Sobolev spaces generalise both the classical local Sobolev spaces $H^s_{\mathrm{classical}}(M)$ and the Sobolev spaces we defined in \cite{MohsenMaxHypo}, $H^t_{\mathrm{sub-Riemannian}}(M)$, by the relations 
			\begin{equation*}\begin{aligned}
				H^s_{\mathrm{classical}}(M)=H^{(s,0)}_{\mathrm{loc}}(M),\quad H^t_{\mathrm{sub-Riemannian}}(M)=H^{(0,t)}_{\mathrm{loc}}(M),\quad \forall s,t\in \R.
			\end{aligned}\end{equation*}
		
		 We also define a bi-graded pseudo-differential calculus $\Psi^{(k,l)}(M)$, where $k,l\in \C$.
	This calculus contains both the classical pseudo-differential calculus and a pseudo-differential calculus adapted to the sub-Riemannian structure defined by the vector fields $X_1,\cdots,X_n$ and their weights $a_1,\cdots,a_n$.
	More precisely, we have the following properties: 
		\begin{enumerate}
		\item  If $k\in \C,l\in \N$, then for any $Q(x_1,\cdots,x_n)$ noncommutative formal polynomial of weighted degree $\leq l$ whose coefficients are classical pseudo-differential operators of order $\leq k$, one has $Q(X_1,\cdots,X_n)\in \Psi^{(k,l)}(M)$. 
		In particular, if $P$ is a classical pseudo-differential operator of order $k\in \C$, then $P\in \Psi^{(k,0)}(M)$, and for any $i$, $X_i\in \Psi^{(0,a_i)}(M)$. 
		\item If $P$ is an operator in the calculus we defined in \cite{MohsenMaxHypo} of order $k\in \C$, then $P\in \Psi^{(0,k)}(M)$. 
				For example, for any $s\in \N$ such that $2a_i|s$ for all $i$, and the generalised Hörmander sum of squares 
			\begin{equation*}\begin{aligned}
				\Delta_{\mathrm{SubRiem}}=\sum_{i=1}^n  (X_i^{*}X_i)^\frac{s}{2a_i}\in \Psi^{(0,s)}(M)		
			\end{aligned}\end{equation*}
		admits a parametrix in $\Psi^{(0,-s)}(M)$.
	\end{enumerate}
	So, this calculus allows one to mix operators from Riemannian geometry like elliptic operators and operators in sub-Riemannian geometry like the generalised Hörmander sum of squares and its parametrix.
	This mixture of calculi together with the fact that our definition is quite geometric (see \Cref{chap:bi-graded_tangent_groupoid}) 
	and that our calculus has a principal symbol (which we will define shortly) makes this calculus useful in transporting questions from sub-Riemannian geometry to 
	classical pseudo-differential operators and vice versa, as well as apply homotopy type arguments between the two.
	For example, homotopies of the form $\Delta^t\Delta_{\mathrm{SubRiem}}^{1-t}\in \Psi^{(2t,s(1-t))}(M)$.
	The fact that we define appropriate Sobolev spaces on which such operators are Fredholm and that Fredholmness in general can be detected using our principal symbol is among the principal strengths of our calculus.
	This will be exploited in forthcoming articles.
	
	Operators in our calculus act continuously between the Sobolev spaces defined above:
		\begin{equation}\label{eqn:qksdfjmkqsdjfmqsd}\begin{aligned}
			P\in \Psi^{(k,l)}(M)\implies P\left(H^{(s+\Re(k),t+\Re(l))}_{\mathrm{loc}}(M)\right) \subseteq H^{(s,t)}_{\mathrm{loc}}(M),\quad \forall k,l\in \C,s,t\in \R.
		\end{aligned}\end{equation}
	
	Motivated by microlocal hypoellipticity, we define a refinement of the wave-front set. Let $s,t\in \R$, $w\in C^{-\infty}(M,\omegahalf)$.
	The set $\WF_{(s,t)}(w)$ is the cone in $T^*M\backslash 0$ defined as follows:
		\begin{equation*}\begin{aligned}
			(\xi,x)\notin \WF_{(s,t)}(w)\iff \exists w'\in C^{-\infty}(M,\omegahalf), w'\in H^{(s,t)}_{\mathrm{loc}}(M)\text{ and }(\xi,x)\notin \WF(w-w').		
		\end{aligned}\end{equation*}
	In other words, $(\xi,x)\notin \WF_{(s,t)}(M)$ if $w$ belongs to $H^{(s,t)}_{\mathrm{loc}}(M)$ microlocally around $(\xi,x)$.
	The set $\WF_{s,t}(w)$ is a closed cone in $T^*M\backslash 0$.
	These wave-front sets recover the classical wave-front set by the relation
		\begin{equation}\label{eqn:lqskdfkqsdlmfkqsdmlf}\begin{aligned}
			\WF(w)=\overline{\bigcup_{s,t\in \R}\WF_{(s,t)}(w)}.
		\end{aligned}\end{equation}
		Furthermore, as expected, $\WF_{(s,t)}(w)=\emptyset$ if and only if $w\in H^{(s,t)}_{\mathrm{loc}}(M)$.
		We can microlocalize \eqref{eqn:qksdfjmkqsdjfmqsd} as follows:
		\begin{equation*}\begin{aligned}
			\WF_{(s,t)}(P(w))\subseteq \WF_{(s+\Re(k),t+\Re(l))}(w),\quad \forall P\in \Psi^{(k,l)}(M), k,l\in \C,s,t\in \R.		
		\end{aligned}\end{equation*}
		We now come to the main definition of interest to us
		\begin{dfn}\label{intro:dfn:micro-Local_maxilam_hypo}
			One says that $P\in \Psi^{(k,l)}_{\mathrm{loc}}(M)$ is microlocally maximally hypoelliptic of order $(\Re(k),\Re(l))$ on a cone $\Gamma\subseteq T^*M\backslash 0$ if
				\begin{equation}\label{intro:eqn:dfn:micro-Local_maxilam_hypo}\begin{aligned}
								\WF_{(s,t)}(P(w))\cap \Gamma=\WF_{(s+\Re(k),t+\Re(l))}(w)\cap \Gamma,\quad \forall w\in  C^{-\infty}(M,\omegahalf), s,t\in \R.		
				\end{aligned}\end{equation}
			One says that $P$ is maximally hypoelliptic of order $(\Re(k),\Re(l))$ if it is microlocally maximally hypoelliptic on $T^*M\backslash 0$.
			By \eqref{eqn:lqskdfkqsdlmfkqsdmlf}, microlocal maximal hypoellipticity on $\Gamma$ implies microlocal hypoellipticity on $\Gamma$.
		\end{dfn}
	
		The main strength in our calculus lies in the fact that we define a principal symbol which detects microlocal maximal hypoellipticity.
	So, let us now define our principal symbol. 
	Since the principal symbol is a local object, we can suppose for simplicity that in Hörmander's condition, we only need iterated commutators of depth $\leq N\in \N$.
	This is always satisfied locally or if $M$ is compact.	
	To simplify the exposition, we will also restrict our attention in the introduction to operators $P=Q(X_1,\cdots,X_n)\in \Psi^{(k,l)}(M)$ where $Q$ is noncommutative formal polynomial of weighted degree $\leq l\in \N$ 
	whose coefficients are classical pseudo-differential operators of order $\leq k\in \C$ (when $k=0$, then these are exactly the operators which appear in the microlocal Helffer-Nourrigat conjecture).

	Let $\mathfrak{g}$ be the free nilpotent Lie algebra of depth $N$ with $n$ generators $\tilde{X}_1,\cdots,\tilde{X}_{n}\in \mathfrak{g}$, and $G$ be the simply connected nilpotent Lie group integrating $\mathfrak{g}$.
	Let $\pi$ be an irreducible unitary representation of $G$ on a Hilbert space $L^2(\pi)$, $C^\infty(\pi)\subseteq L^2(\pi)$ the subspace of smooth vectors, $\odif{\pi}:\mathfrak{g}\to \mathrm{End}(C^\infty(\pi))$ the differential of $\pi$.
	We define the principal symbol by 
		\begin{equation}\label{into:eqn_symbol}\begin{aligned}
			\sigma^{(k,l)}(P,\pi,\xi,x)=Q_{\xi,x,\mathrm{max}}(\odif{\pi}(\tilde{X}_{1})\cdots\odif{\pi}(\tilde{X}_{n})):C^\infty(\pi)\to C^\infty(\pi),
		\end{aligned}\end{equation}
	 where $Q_{\xi,x,\mathrm{max}}$ is the homogeneous polynomial constructed from $Q$ where we removed all lower order terms and replaced all coefficients $L$ with $\sigma^k_{\mathrm{classical}}(L,\xi,x)\in \C$, the classical principal symbol of $L$ at $(\xi,x)$.
	 There is the obvious issue: Is this well-defined? 
	A priori, \eqref{into:eqn_symbol} depends on the presentation of $P$ as a polynomial in $X_1,\cdots,X_n$. Different presentations of $P$ might give different values for $\sigma^{(k,l)}(P,\pi,\xi,x)$.
 	In \Cref{sec:Tangent_groupoid_representation}, we define for each $(\xi,x)\in T^*M\backslash 0$, a set  $\HatHLnon_{(\xi,x)}$ of irreducible unitary representations of $G$ which we call the Helffer-Nourrigat cone at $(\xi,x)$.
	 We can now state the main theorem of this article.
	 \begin{thmx}\label{intro:main_thm}
		Let \begin{itemize}
			\item     $M$ be a smooth manifold.
			\item $X_1,\cdots,X_n$ vector fields satisfying Hörmander's condition.
			\item  $k,l\in \C$, and $P\in \Psi^{(k,l)}(M)$.
		\end{itemize}
		Then, for any $(\xi,x)\in T^*M\backslash 0$ and $\pi\in \HatHLnon_{(\xi,x)}$, $\sigma^{(k,l)}(P,\pi,\xi,x):C^\infty(\pi)\to C^\infty(\pi)$ is well-defined.
		Let 
			\begin{equation*}\begin{aligned}
					\Gamma=\left\{(\xi,x)\in T^*M\backslash 0:\forall \pi\in \HatHLnon_{(\xi,x)},\ \sigma^{(k,l)}(P,\pi,\xi,x):C^\infty(\pi)\to C^\infty(\pi) \text{ is injective}\right\} 
			\end{aligned}\end{equation*}
		The following hold:
				\begin{enumerate}
				\item\label{intro:main_thm:inj} The cone $\Gamma$ is open, and $\Gamma\subseteq \{(\xi,x)\in T^*M\backslash 0:(-\xi,\xi;x,x)\in \WF(P)\}$, where $\WF(P)$ is the wave-front set of the Schwartz kernel of $P$.
				\item\label{intro:main_thm:micro_local_max_hypo}
				The operator $P$ is microlocally maximally hypoelliptic on $\Gamma$ of order $(\Re(k),\Re(l))$, i.e., 
					 \begin{equation}\label{eqn:intro_sdqjofjlkqsdjf}\begin{aligned}
						 				\WF_{(s,t)}(P(w))\cap \Gamma=	 \WF_{(s+\Re(k),t+\Re(l))}(w)\cap \Gamma.
					 \end{aligned}\end{equation}
				 Furthermore, if $\Gamma'$ is any cone such that \eqref{eqn:intro_sdqjofjlkqsdjf} holds (with $\Gamma$ replaced by $\Gamma'$) for some $s,t\in \R$, then $\Gamma'\subseteq \Gamma$.
				 So, $\Gamma$ is the largest cone on which $P$ is microlocally maximally hypoelliptic.
				\item\label{intro:main_thm:parametrix}
				For any closed cone $\Gamma'\subseteq \Gamma$. 
				There exists a continuous $P':C^{-\infty}(M,\omegahalf)\to C^{-\infty}(M,\omegahalf)$ such that for any $t,s\in \R$ and $w\in C^{-\infty}(M,\omegahalf)$,
					\begin{equation}\label{eqn:intro_parametrix_WF}\begin{aligned}
							\WF_{(s,t)}(P'(w))\subseteq \WF_{(s-\Re(k),t-\Re(l))}(w),\quad \WF(P'(P(w))-w)\cap \Gamma'=\emptyset.
					\end{aligned}\end{equation}
					In particular, by \eqref{eqn:lqskdfkqsdlmfkqsdmlf}, $\WF(P'(w))\subseteq \WF(w)$. So, $P'(C^\infty(M,\omegahalf))\subseteq C^\infty(M,\omegahalf)$.
				\item\label{intro:main_thm:inequa}For any $P''\in \Psi^{(k,l)}(M)$ which is compactly supported, 
				there exists $\chi\in \Psi^0_{\mathrm{classical}}(M)$ a classical pseudo-differential operator of order $0$, $C>0$ and $R\in C^{\infty}_c(M\times M,\omegahalf)$ a smoothing operator
				such that $\chi$ is elliptic on $\Gamma$ and
						 \begin{equation}\label{eqn:intro_inequ_microlocal_main_thm}\begin{aligned}
								 \norm{\chi(P''( w))}_{L^2(M)}\leq C\left(\norm{P(w)}_{L^2(M)}+\norm{R(w)}_{L^2(M)}\right),\quad \forall w\in C^\infty_c(M,\omegahalf).
						 \end{aligned}\end{equation}
					 If $\Gamma=T^*M\backslash 0$, then one can suppose that $\chi=\mathrm{Id}$.

					\item\label{intro:main_thm:Fredholm} If $M$ is compact and $\Gamma=T^*M\backslash 0$, then for any $s,t\in \R$, 
						$P:H^{(s+\Re(k),t+\Re(l))}(M)\to H^{(s,t)}(M)$ is left-invertible modulo compact operators.
				\item 
				If $(\xi,x)\in \Gamma$, $\pi\in \HatHLnon_{(\xi,x)}$, then the operators 
					 \begin{equation*}\begin{aligned}
						& \sigma^{(k,l)}(P,\pi,x):C^{-\infty}(\pi)\to C^{-\infty}(\pi)&&
						\\ &\sigma^{(k,l)}(P,\pi,x):C^\infty(\pi)\to C^\infty(\pi)&&
					 \end{aligned}\end{equation*}
				admit a continuous left-inverse. Here, $C^{-\infty}(\pi)$ is the topological dual of $C^\infty(\pi)$, which contains $L^2(\pi)$, and is called the space of distribution vectors of $\pi$. The map $ \sigma^{(k,l)}(P,\pi,x):C^{-\infty}(\pi)\to C^{-\infty}(\pi)$ is defined using duality.
			\end{enumerate}
	\end{thmx}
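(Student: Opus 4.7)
The plan is to derive all enumerated statements (as well as the well-definedness claim) from a single structural result: for every closed cone $\Gamma' \subseteq \Gamma$, there exists a microlocal parametrix $P' \in \Psi^{(-k,-l)}(M)$ with $\WF(P' P - \mathrm{Id}) \cap \Gamma' = \emptyset$. First I would identify $\sigma^{(k,l)}(P,\pi,\xi,x)$ with the image of $P$ under evaluation at the zero fibre of the bi-graded tangent groupoid of~\Cref{chap:bi-graded_tangent_groupoid}, composed with $\pi$. Well-definedness is then automatic: two polynomial presentations of the same $P \in \Psi^{(k,l)}(M)$ differ by lower-order remainders which vanish in the $(k,l)$-principal symbol, hence act as zero under $\odif{\pi}$ for $\pi \in \HatHLnon_{(\xi,x)}$. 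Statement~\ref{intro:main_thm:inj} splits into openness and a wavefront inclusion: openness is a semicontinuity statement for the family $(\xi,x,\pi) \mapsto \sigma^{(k,l)}(P,\pi,\xi,x)$ in the Fell topology of $\HatHLnon$, using the $C^*$-continuity of the zero-fibre bundle; the inclusion $\Gamma \subseteq \{(\xi,x) : (-\xi,\xi;x,x) \in \WF(P)\}$ follows because absence of $(-\xi,\xi;x,x)$ from $\WF(P)$ would make all principal symbols at $(\xi,x)$ vanish, violating injectivity.

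The heart of the proof is~\ref{intro:main_thm:parametrix}, and it splits into two stages. First, one left-inverts the symbol family $\pi \mapsto \sigma^{(k,l)}(P,\pi,\xi,x)$ locally uniformly in $\pi \in \HatHLnon_{(\xi,x)}$ and in $(\xi,x)$ near $\Gamma'$, with continuous inverse on smooth vectors: this uses the representation theory of $G$ together with the $C^*$-algebra description of $\HatHLnon_{(\xi,x)}$ as a spectrum, so that fibrewise injectivity is upgraded to invertibility of a single section modulo a two-sided ideal in the associated continuous field. This stage simultaneously produces the continuous left-inverses on $C^\infty(\pi)$ required by the sixth (unlabelled) part of the theorem; extension to $C^{-\infty}(\pi)$ is by duality. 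Second, one lifts the left-inverse symbol to an operator $P' \in \Psi^{(-k,-l)}(M)$ by the standard pseudo-differential recipe, namely an asymptotic sum plus Neumann iteration against lower-order remainders, exploiting closure of the calculus under such sums.

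Once the parametrix is available, the remaining claims are formal. Microlocal maximal hypoellipticity~\ref{intro:main_thm:micro_local_max_hypo} is immediate from applying the mapping property~\eqref{eqn:qksdfjmkqsdjfmqsd} to $P'$. The maximality of $\Gamma$ is shown contrapositively: for each $(\xi,x) \notin \Gamma$ one uses a nonzero kernel vector of $\sigma^{(k,l)}(P,\pi,\xi,x)$ to build a distribution concentrated microlocally near $(\xi,x)$ that violates~\eqref{eqn:intro_sdqjofjlkqsdjf}. The estimate~\ref{intro:main_thm:inequa} is obtained by composing $\chi P''$ with $P'$: the product $\chi P'' P'$ has bi-order $(0,0)$ hence is $L^2$-bounded by the calculus, while $\chi P'' - \chi P'' P' P$ is smoothing microlocally on $\Gamma$ and can therefore be absorbed into the smoothing remainder $R$. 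The Fredholm property~\ref{intro:main_thm:Fredholm} follows because when $\Gamma = T^*M \setminus 0$ and $M$ is compact, $P' P - \mathrm{Id}$ is globally smoothing, and smoothing operators are compact between the Sobolev spaces by~\eqref{intro:Sobolev_lemma} combined with the compactness of Sobolev inclusions.

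The main obstacle is the first stage of the parametrix construction: upgrading pointwise injectivity of the symbols to a locally uniform continuous left-inverse family and realising it inside $\Psi^{(-k,-l)}(M)$. Two delicate points arise. First, $\HatHLnon_{(\xi,x)}$ generically contains infinite-dimensional representations, so the inversion is genuinely functional-analytic rather than a matrix computation, and one must control operator norms uniformly across the whole family. Second, the regularity of $\pi \mapsto \sigma^{(k,l)}(P,\pi,\xi,x)^{-1}$ in $(\pi,\xi,x)$ has to match the regularity demanded by the tangent-groupoid construction of $\Psi^{(\bullet,\bullet)}(M)$ in order for the lift back to an operator in the calculus to be possible. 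Overcoming both will require a thorough exploitation of the $C^*$-algebraic picture of the zero fibre of the bi-graded tangent groupoid.
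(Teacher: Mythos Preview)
Your plan has a genuine gap at its core: you claim to construct $P' \in \Psi^{(-k,-l)}(M)$ by lifting the inverse symbol via ``an asymptotic sum plus Neumann iteration against lower-order remainders.'' The paper explicitly flags this as an open question (see the remarks after the theorem: ``I don't know if the parametrix $P'$ belongs to $\Psi^{(-k,-l)}(M)$''). The obstruction is that the symbol map in this calculus is not known to be exact in the sense you need: vanishing of $\sigma^{(k,l)}$ on all $(\pi,x)\in\HatHLnon$ only gives compactness of the operator between Sobolev spaces (\Cref{thm:vanishing_of_principal_symbol_Total}), not membership in $\Psi^{\prec(k,l)}(M)$. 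Without that, each Neumann step produces a remainder you cannot place back into the calculus at strictly lower order, and the asymptotic sum does not converge inside $\Psi^\bullet(M)$. Your first-stage inversion is also too vague: ``fibrewise injectivity is upgraded to invertibility of a single section modulo a two-sided ideal'' is the correct intuition, but making it work requires knowing that the relevant $C^*$-algebra is of Type~I, which is itself nontrivial here.

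The paper's route is quite different and avoids both issues. It first extends the structure by one classical parameter so that $\Psi^0_{\mathrm{classical}}(M)$ sits inside the calculus; this makes microlocalisation available. It then proves that $\overline{\Psi^0_c(M)}$ is Type~I (via Morita equivalence with a crossed product, \Cref{thm:main_representation_theorem_morita_1}) and applies an abstract $C^*$-calculus theorem (\Cref{thm:simple_arg_sobolev}) to deduce that injectivity of the symbol on smooth vectors forces actual invertibility on all Sobolev spaces. The parametrix $P'$ is then simply the inverse of a perturbed operator $v^*v+\chi^*\chi+g$ (with $\chi$ elliptic off $\Gamma'$ and $g$ an injective smoothing operator), which lives only in $C^{-\infty}_{r,s}(M\times M)$, not in the calculus. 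The delicate point --- that this inverse nonetheless preserves the refined wavefront sets $\WF_l$ --- is handled by a commutator bootstrap (\Cref{lem:chi_n}, \Cref{lem:chi_n_2}) using that $[\chi,v]\in\Psi^{\prec 0}$ for classical $\chi$. Maximality of $\Gamma$ is then obtained not by building a counterexample distribution from a kernel vector, but by an open-mapping-theorem argument showing that the hypoellipticity inequality forces invertibility of $v^*v+\chi^*\chi$ modulo compacts, hence injectivity of the symbol.
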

	\begin{rems}Here are some remarks on \Cref{intro:main_thm}.
	\begin{enumerate}
		\item If $P$ is a classical pseudo-differential operator of order $k\in \C$, seen as element of $\Psi^{(k,0)}(M)$, then $\sigma^{(k,0)}(P,\pi,\xi,x)=\sigma^{k}_{\mathrm{classical}}(P,\xi,x)$ is the classical principal symbol. 
	In this case, Theorem \ref{intro:main_thm} is the microlocalized main regularity theorem for elliptic operators. 
	If $k=0$ and $\Gamma=T^*M\backslash 0$, then \Cref{intro:main_thm} is the main theorem of \cite{MohsenMaxHypo}. 
	It is actually quite stronger than the main theorem of \cite{MohsenMaxHypo} because we couldn't obtain \eqref{eqn:intro_parametrix_WF} in \cite{MohsenMaxHypo}.

	\item I don't know if the parametrix $P'$ belongs to $\Psi^{(-k,-l)}(M)$. This question will be investigated in a forthcoming article.
	\item All the results of this article work equally well if one adds vector bundle coefficients everywhere, i.e., one considers two complex vector bundles $E,F$, and operators $P:C^\infty(M,E\otimes \omegahalf)\to C^\infty(M,F\otimes \omegahalf)$.
		In this case, the principal symbol $\sigma^{(k,l)}(P,\pi,\xi,x)$ is an operator from $C^\infty(\pi)\otimes E_x$ to $C^\infty(\pi)\otimes F_x$. The theory extends in a rather straightforward way. 
		To simplify the notation above all, we decided to omit vector bundles.
	\item \MyCref{intro:main_thm}[parametrix] is optimal in the sense that we can't take $\Gamma'=\Gamma$ unless $\Gamma=T^*M\backslash 0$ in which case $\Gamma$ is closed. 
	\end{enumerate}
	\end{rems}
	\begin{cor}
		The microlocal version of the Helffer-Nourrigat conjecture \cite[Conjcture 2.3]{HelfferNourrigatBook} is true.
	\end{cor}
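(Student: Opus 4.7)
The plan is to derive the corollary as a direct specialization of \Cref{intro:main_thm}. A differential operator $P = Q(X_1, \ldots, X_n)$ of the form treated in \cite[Conjecture 2.3]{HelfferNourrigatBook}—with $Q$ a noncommutative polynomial of weighted degree $l$ whose coefficients are smooth functions—belongs to $\Psi^{(0, l)}(M)$, as recorded among the properties of our calculus listed just before \Cref{intro:main_thm}. Since the coefficients of $Q$ are multiplication operators, their classical principal symbols of order $0$ are simply the values of the coefficient functions at $x$, so the symbol $\sigma^{(0, l)}(P, \pi, \xi, x)$ defined in \eqref{into:eqn_symbol} specializes to the nilpotent-group symbol $Q_{\xi, x, \mathrm{max}}(\odif{\pi}(\tilde X_1), \ldots, \odif{\pi}(\tilde X_n))$ appearing in the Helffer-Nourrigat formulation.

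I would then invoke the identification of $\HatHLnon_{(\xi, x)}$ with the characteristic cone of representations used by Helffer and Nourrigat at the point $(\xi, x)$; this identification is provided in \Cref{sec:Tangent_groupoid_representation}, where the Helffer-Nourrigat cone is constructed geometrically via the tangent groupoid. With this matching in place, \MyCref{intro:main_thm}[micro_local_max_hypo] asserts that the open cone where all relevant symbols are injective is the largest cone of microlocal maximal hypoellipticity at bi-order $(0, l)$, which delivers both implications of the microlocal Helffer-Nourrigat conjecture simultaneously.

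The main obstacle is therefore not analytic but representation-theoretic: one must verify that the set $\HatHLnon_{(\xi, x)}$ produced by the tangent-groupoid construction coincides with the cone of representations singled out in \cite[Conjecture 2.3]{HelfferNourrigatBook}. Once this matching is carried out in \Cref{sec:Tangent_groupoid_representation}, the corollary is a formal specialization of \Cref{intro:main_thm} to differential operators of bi-order $(0, l)$, requiring no further work.
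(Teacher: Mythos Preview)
Your approach is essentially the same as the paper's: both derive the corollary by specializing \Cref{intro:main_thm} to operators of bi-order $(0,l)$. The paper's proof is a single sentence identifying the Helffer--Nourrigat conjecture with the inequality \eqref{eqn:intro_inequ_microlocal_main_thm} in the case $k=0$, i.e.\ it points to part~\ref{intro:main_thm:inequa} of \Cref{intro:main_thm} rather than part~\ref{intro:main_thm:micro_local_max_hypo} as you do; the conjecture as stated in \cite{HelfferNourrigatBook} is formulated as an a~priori estimate, so part~\ref{intro:main_thm:inequa} is the more direct match. The paper also allows the coefficients of $Q$ to be classical pseudo-differential operators of order $0$, not merely smooth functions. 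Finally, the representation-theoretic matching you flag as ``the main obstacle'' is not treated as an obstacle in the paper's proof: the set $\HatHLnon_{(\xi,x)}$ is \emph{defined} in \Cref{sec:Tangent_groupoid_representation} so as to coincide with Helffer and Nourrigat's cone, and this is taken for granted at the level of the corollary.
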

	\begin{proof}
		Their conjecture is that \eqref{eqn:intro_inequ_microlocal_main_thm} holds when $k=0$ and $P$ and $P''$ are noncommutative formal polynomial of weighted degree $\leq l\in \N$ 
	whose coefficients are classical pseudo-differential operators of order $0$.
	\end{proof}

	We prove many properties of the Sobolev spaces, principal symbol, wave-front set in this calculus
	see \MyCref{chap:chapter_bi-garded_pseudo_diff} for the statements in full generality.
	Many of these properties generalise well-known properties of the classical pseudo-differential calculus.
	But some aren't as expected. In fact, they have no counterpart in classical calculus, for example \MyCref{thm:principal_symbol_characterizations}, see the preceding paragraph.
	We also refer the reader to \MyCref{sec:example_pseudo-diff} where we compute the principal symbol in an explicit example, and where \MyCref{thm:principal_symbol_characterizations} has subtle and interesting consequences of diagonalizability of certain Schrodinger-type operators.

	There has been some work in recent years by different authors on multi-parameter pseudo-differential calculus, see for instance Street's books \cite{StreetBook2,StreetMulti} and the references there.
	Street's work considers \eqref{eqn:intro_inequ_microlocal_main_thm} on different Sobolev-type spaces.	
	In particular, the following is proved:
		\begin{theorem}[{\cite[Proposition 8.2.12  and Corollary 6.9.2]{StreetBook2}}]
			Inequality \eqref{eqn:intro_inequ_microlocal_main_thm} holds with the $L^2$-norm if and only if it holds with the $L^2$-norm replaced by the $L^p$-norm for all $p\in ]1,+\infty[$.
		\end{theorem}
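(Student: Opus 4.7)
The non-trivial direction is the implication $L^2 \Rightarrow L^p$, since specialising the $L^p$-version to $p=2$ immediately yields the converse. The plan is to reduce this implication to two ingredients: first, the existence of a parametrix for $P$ (on a slightly shrunk subcone of $\Gamma$); and second, the $L^p$-boundedness, for every $p \in \,]1,+\infty[\,$, of operators of order $(0,0)$ in the bi-graded calculus $\Psi^{(0,0)}(M)$.

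Assuming the $L^2$-version of \eqref{eqn:intro_inequ_microlocal_main_thm} with data $\chi, C, R$, a routine functional-analytic argument combined with \MyCref{intro:main_thm}[parametrix] furnishes an operator $P'$ on a neighbourhood of the essential support of $\chi$ satisfying $P' P = \id - S_1$, with $S_1$ smoothing there. Decomposing
\begin{equation*}
\chi P''(w) \;=\; (\chi P'' P')(P w) \;+\; (\chi P'')(S_1 w),
\end{equation*}
the first factor $\chi P'' P'$ lies in $\Psi^{(0,0)}(M)$ modulo smoothing terms, while the second has smoothing pre-factor. Applying ingredient two to $\chi P'' P'$ and the elementary $L^p$-boundedness of smoothing operators with compactly supported kernels to the remainder, one obtains the $L^p$-version of \eqref{eqn:intro_inequ_microlocal_main_thm}, possibly with a new cut-off (still elliptic on $\Gamma$), a new constant, and a new smoothing remainder $R$ absorbing all the lower-order contributions.

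The main obstacle is the second ingredient. The classical analogue is the standard Calderón-Zygmund theorem for pseudo-differential operators of order $0$; the purely sub-Riemannian analogue is due to Christ-Geller-Głowacki-Polin and Nagel-Ricci-Stein-Wainger. For the bi-graded calculus, one would show that the Schwartz kernel of an operator in $\Psi^{(0,0)}(M)$ is a multi-parameter singular integral simultaneously adapted to the Euclidean and the sub-Riemannian dilation structures, in the sense of Fefferman-Stein and Journé, and then invoke the corresponding multi-parameter $L^p$ theorem. The geometric construction via the bi-graded tangent groupoid presents these kernels as pushforwards from a bi-filtered deformation space, so the required size and cancellation conditions can be checked face-by-face on the deformation. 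An alternative route is a vector-valued interpolation argument exploiting the homotopy $\Delta^t \Delta_{\mathrm{SubRiem}}^{1-t} \in \Psi^{(2t,\,s(1-t))}(M)$ highlighted in the introduction, which interpolates between the classical elliptic and sub-Riemannian endpoints where $L^p$-boundedness is already known.
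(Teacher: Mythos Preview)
The paper does not prove this theorem; it is quoted from Street's book \cite[Proposition 8.2.12 and Corollary 6.9.2]{StreetBook2} as background on related work, with no proof given here. So there is no ``paper's own proof'' to compare against, and the result lives entirely in Street's multi-parameter singular integral framework, not in the groupoid framework of this article.

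Your reduction has a real gap at the parametrix step. You write that $\chi P'' P'$ lies in $\Psi^{(0,0)}(M)$ modulo smoothing terms, but the paper explicitly states (see the remarks following Theorem~\ref{intro:main_thm}) that it is \emph{not known} whether the parametrix $P'$ produced by \MyCref{intro:main_thm}[parametrix] belongs to $\Psi^{(-k,-l)}(M)$; this is deferred to a future article. The parametrix is only asserted to satisfy the wavefront estimates \eqref{eqn:intro_parametrix_WF}, which do not by themselves place $\chi P'' P'$ inside any calculus for which $L^p$-boundedness is available. So the decomposition $\chi P'' = (\chi P'' P')(P) + (\chi P'')(S_1)$ does not, with the tools of this paper, yield an operator to which your ``second ingredient'' applies.

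More fundamentally, your second ingredient --- $L^p$-boundedness of operators in $\Psi^{(0,0)}(M)$, or more generally of multi-parameter singular integrals adapted simultaneously to Euclidean and sub-Riemannian scalings --- is precisely the substance of Street's theorem, and the sketch you give (product-type Calder\'on--Zygmund theory, face-by-face verification on the deformation space, or interpolation along $\Delta^t\Delta_{\mathrm{SubRiem}}^{1-t}$) is at the level of a research programme rather than a proof. Street's book develops exactly this $L^p$ theory from scratch; the equivalence statement is then a consequence. Your proposal correctly identifies where the difficulty lies but does not resolve it.
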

	Street also defines a multi-parameter pseudo-differential calculus. 
	The calculus is defined in terms of estimates on the kernels which is a more analytic approach.
	We expect that our calculus is contained in Street's calculus and that this is not too hard to prove.
	We are quite certain that the inclusion is strict.

	The main distinction between our approach and previous ones in the literature lies in the systematic use of groupoids, together with substantial input from techniques from $C^*$-algebras.
	We now outline the main steps of the proof of \Cref{intro:main_thm}, and at least explain why our proof is long.
	The argument is divided into three main parts.

\begin{enumerate}
	\item \textbf{Construction of the groupoid.}
	The first step consists in constructing the space $\mathbb{G}$ in \Cref{chap:bi-graded_tangent_groupoid} and \Cref{chap:bi-graded_tangent_groupoid_proof}.
	This space is a generalization of the groupoid introduced in \cite{MohsenTangentCone} (which is itself a generalization of Connes' tangent groupoid \cite{ConnesBook}, and the groupoid of van Erp and Yuncken \cite{ErikBobTangentGrp} and Choi and Ponge \cite{ChoiPonge}).
	The groupoid $\mathbb{G}$ provides a resolution of the singularities arising from variations in the rank of the distribution generated by the vector fields $X_1,\ldots,X_n$ and their iterated brackets.
	The novelty in this construction compared to that of \cite{MohsenTangentCone} is that this groupoid also takes into account the variation of the rank at the microlocal level.
	This part of the proof is purely geometric and does not involve pseudo-differential operators.

	This step presents several technical difficulties.
	One major problem is that the space $\mathbb{G}$ is not a smooth manifold but rather a differential space in the sense of Sikorski \cite{SikorskiDifferentialSpacesBook} and Aronszajn \cite{AronszajnAMS}.
	So, it makes sense to talk about smooth functions on $\mathbb{G}$, but smooth functions generally don't have a Taylor series expansion (this is the main reason why I don't know if $P'$ belongs to $\Psi^{(-k,-l)}(M)$ or not).
	For this reason, we introduce a special class of smooth functions, which we call \emph{invariant smooth functions}.
	These functions behave like smooth functions on a smooth manifold.

	All of these difficulties together with the intricate definition of $\mathbb{G}$ lead to the length of \Cref{chap:bi-graded_tangent_groupoid} and \Cref{chap:bi-graded_tangent_groupoid_proof}. 
	We refer the reader for example to \MyCref{sec:tangent_groupoid_exs} where the groupoid is constructed in the quite simple setting where $M=\R^2$, and $X_1=\frac{\partial }{\partial x}$ and $X_2=x^n\frac{\partial}{\partial y}$ for some $n\in \N$.
	In this setting one clearly sees how subtle the definition of the groupoid is.

	The idea behind the definition of the groupoid $\mathbb{G}$ is quite simple. One embeds $M\times \Rpt$ inside a large enough Grassmannian manifold which takes into account all the different 
	singularities which one needs to resolve, and then one takes the closure of the image of this embedding. This idea is classical in algebraic geometry, and is usually called Nash's resolution.

	\item \textbf{Construction of the pseudo-differential calculus.}
	The second step consists in constructing the calculus $\Psi^{(k,l)}(M)$.
	This construction is inspired by the work of Debord and Skandalis \cite{DebordSkandalis1,DebordSkandalis2} and van Erp and Yuncken \cite{ErikBobCalculus}, where an analogous approach was developed for the classical pseudo-differential calculus using Connes' tangent groupoid \cite{ConnesBook}.
	This part of the argument is rather straightforward generalization of arguments from Debord, Skandalis, van Erp, and Yuncken.
	The proofs get slightly longer due to the multi-parameter structure of the calculus but in my view, there are no new conceptual difficulties.

	This part is mostly in \Cref{sec:bi_graded_pseudo_diff_proof}, and \Cref{sec:principal_symbol_proof}.
	\item \textbf{Use of $C^*$-algebras of Type 1 techniques.}
	The third and final step is to prove \Cref{intro:main_thm}.
	This part relies heavily on arguments from $C^*$-algebra theory.
	The main difficulty arises from fact that $\Gamma$ in \Cref{intro:main_thm} is defined using injectivity on $C^\infty(\pi)$.
	If one instead assumes injectivity on the space $C^{-\infty}(\pi)$, this step becomes simpler.
	Passing from injectivity on $C^{\infty}(\pi)$ to injectivity on $C^{-\infty}(\pi)$ is in some sense a hypoellipticity statement.
	One needs to show that if a vector in $C^{-\infty}(\pi)$ is in the kernel of the symbol map, then the vector is actually smooth.
	In fact, if one uses Kirillov's orbit method to construct the representations of $G$, then the symbol maps are pseudo-differential operators on some Euclidean space $\R^k$, and the above statement on the kernel consisting of smooth vectors is a hypoellipticity statement, see \MyCref{sec:example_pseudo-diff} where this is explicitly carried out in an example.

	This part of the proof relies on the structure theory of $C^*$-algebras Type~\RNum{1}, and more precisely on an abstract maximal hypoellipticity theorem that we proved in \cite{MohsenAbstractMaxHypo}.
	This abstract result shows that in $C^*$-algebras of Type I, one can bootstrap injectivity from smooth vectors to distributional vectors.
\end{enumerate}
	Our previous work \cite{MohsenMaxHypo} follows a similar approach. The main differences are that the groupoid here is more complicated which leads to some technical difficulties in the first two Parts of the proof. 
	The third part of the proof is new and differs from the arguments we used in \cite{MohsenMaxHypo}.

	For the reader looking to see only the general structure of the argument, in \cite[Section 2]{MohsenAbstractMaxHypo}, we added a proof of the theorem of Helffer and Nourrigat \cite{HelfferRockland} (their resolution of the Rockland conjecture) which is a special case of 
	\Cref{intro:main_thm}.
	\cite[Section 2]{MohsenAbstractMaxHypo} is considerably shorter and simpler.
	The main simplification is that Step 1 becomes trivial, as the space $\mathbb{G}$ is replaced by a simply connected nilpotent Lie group.
	Consequently, the second and third steps are substantially easier to follow, even though the general structure of the argument remains essentially the same.

	In forthcoming articles, we will present further applications of our calculus. 

		\paragraph{Organization of the article:}

	We tried to make this article as self-contained as we possibly could.
			We don't assume any familiarity with our previous work \cite{MohsenMaxHypo,MohsenTangentCone}. 
		In fact, in contrast to our previous work \cite{MohsenMaxHypo}, in this article, we don't rely on or assume any familiarity with the results in \cite{RotschildStein,HelfferRockland,ChrGelGloPol,HebischRockland}.
		Technically, we don't even assume Hörmander's sum of squares theorem \cite{Hormander:SoS} (we could but that wouldn't simplify our proof).
		We can circumvent all the results in \cite{RotschildStein,HelfferRockland,ChrGelGloPol,HebischRockland} thanks to the abstract maximal hypoellipticity theorem mentioned above.

		We use techniques from groupoids and $C^*$-algebras. These techniques were developed by the non-commutative geometry community for the last 30 years.
		We have included a chapter in the beginning of our article which introduces the necessary tools from groupoids and $C^*$-algebras.
		This chapter is mainly based on Renault's book \cite{RenaultBook} and Connes's book \cite{ConnesBook}.

		Finally, to make the article easier to read, we divided the proofs in \MyCref{chap:bi-graded_tangent_groupoid} and \MyCref{chap:chapter_bi-garded_pseudo_diff} into separate \MyCref{chap:bi-graded_tangent_groupoid_proof} and \MyCref{chap:chapter_bi-garded_pseudo_diff_proof}.
		This way, the reader can more easily find all the results in this article.
		Also, we tried to make \MyCref{chap:chapter_bi-garded_pseudo_diff_proof} as independent as possible from \MyCref{chap:bi-graded_tangent_groupoid_proof}.
		This way, the reader interested only in the pseudo-differential calculus can read \MyCref{chap:chapter_bi-garded_pseudo_diff} and \MyCref{chap:chapter_bi-garded_pseudo_diff_proof} without going through \MyCref{chap:bi-graded_tangent_groupoid_proof}.

		We also added an appendix on Kirillov's orbit method for nilpotent Lie groups, see \MyCref{chap:Orbit_method}, and on the abstract maximal hypoellipticity result from \cite{MohsenAbstractMaxHypo}, see \MyCref{thm:asbtract}.
	\paragraph*{Acknowledgement:}
		This research was supported in part by the ANR project OpART (ANR-23-CE40-0016).

		I would like to thank the following people for their help and support during this work: 
		Iakovos Androulidakis, Claire Debord, Nigel Higson, Bernard Helffer, Francis Nier, Pierre Pansu, Georges Skandalis, Stéphane Vassout, Robert Yuncken.
			\chapter{Quasi-Lie groupoids}\label{chap:quasi_Lie_groupoid}
\paragraph{Conventions and notations:}
	\begin{enumerate}
		\item   We use $\N=\{1,2,\cdots\}$, $\Z_+=\N\cup \{0\}$, $\R_+=\{x\in \R:x\geq 0\}$, $\Rpt=\{x\in \R:x>0\}$.
		\item   If $n,m\in \Z$ with $n\leq m$, then $\bb{n,m}:=[n,m]\cap \Z$. 
			\item   Functions, vector spaces and vector bundles  are complex unless otherwise stated.
			\item   Hilbert spaces are always separable. Inner products $\langle \xi,\eta\rangle$ are linear in $\eta$ and antilinear in $\xi$.
			
	\end{enumerate}

\section{Differential structures}\label{sec:sub-Cartesian}
		In this chapter, we recall the notion of a differential structure space due to Sikorski \cite{SikorskiDifferentialSpacesBook} and Aronszajn \cite{AronszajnAMS}, see \cite{SniatyckiDifferentialGeometryBook,AronszajnSikorskiSpaces} for a modern treatment of differential spaces.
		\begin{dfn}\label{dfn:sub-Cartesian}
			A differential structure on a topological space $M$ is a non-empty subset $C^\infty(M)$ of the space of continuous complex-valued functions $C(M)$ such that:
			\begin{enumerate}
				\item\label{dfn:sub-Cartesian:1}   For every $x,y\in M$ with $x\neq y$, there exists $f\in C^\infty(M)$ such that $f(x)\neq f(y)$.
				\item\label{dfn:sub-Cartesian:2}   If $f_1,\cdots,f_n\in C^\infty(M)$ and $F\in C^\infty(\C^n)$, then $F(f_1,\dots,f_n)\in C^\infty(M)$.
				\item\label{dfn:sub-Cartesian:3}   If $f:M\to \C$ is a function such that for every $x\in M$, there exists a neighborhood $U$ of $x$ and $g\in C^\infty(M)$ such that $f_{|U}=g_{|U}$, then $f\in C^\infty(M)$.
			\end{enumerate}
		\end{dfn}
		A differential space is a Hausdorff locally compact second countable\footnote{Some authors only include a subset of these conditions in the definition of a differential space.} equipped with a differential structure.
		A differential space is metrizable by Urysohn's metrization theorem.
		We denote by $C^\infty_c(M)$ the subspace of $C^\infty(M)$ of functions with compact support.
		By Stone–Weierstrass theorem, $C^\infty_c(M)$ is a dense $*$-subalgebra of $C_0(M)$.
		\begin{prop}[{\cite[Theorem 2.2.4]{SniatyckiDifferentialGeometryBook}}]\label{prop:parition_of_unity}
			Let $(U_i)_{i\in I}$ be an open cover of a differential space $M$. There exists a countable family $f_n:M\to [0,1]$ of smooth functions such that:
			\begin{itemize}
				\item   For all $n\in \N$, there exists an $i\in I$, such that $\supp(f_n)\subseteq U_i$.
				\item   The sum $\sum_{n\in \N}f_n$ is locally finite and equal to $1$.
			\end{itemize}
		\end{prop}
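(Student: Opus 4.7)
The plan is to adapt the classical argument for partitions of unity to the coordinate-free setting of differential spaces. The argument has three steps: first construct smooth bump functions using the three axioms of \Cref{dfn:sub-Cartesian}, then use local compactness and second countability to produce a countable locally finite subordinate family, and finally normalize by dividing by the sum.

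The crucial step is the following bump function lemma: for any $x \in M$ and any open neighborhood $U$ of $x$, there exists $\phi \in C^\infty(M)$ with $0 \leq \phi \leq 1$, $\phi \equiv 1$ on a neighborhood of $x$, and $\supp \phi \subseteq U$. To prove it, I would use local compactness to choose nested open sets $x \in V' \subseteq \overline{V'} \subseteq V \subseteq \overline{V} \subseteq U$ with $\overline{V}$ compact. For each $y \in \overline{V}\setminus V'$, axiom i gives $g_y \in C^\infty(M)$ with $g_y(x) \neq g_y(y)$; composing with a suitable smooth function on $\C$ via axiom ii yields $h_y \in C^\infty(M)$ with values in $[0,1]$, vanishing near $x$ and equal to $1$ near $y$. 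Since $\overline{V}\setminus V'$ is compact, finitely many $h_{y_1},\dots,h_{y_k}$ suffice, and $F := \sum_j h_{y_j}$ satisfies $F \equiv 0$ near $x$ and $F \geq 1$ on $\overline{V}\setminus V'$. Post-composing with a smooth cutoff $\psi:\C\to[0,1]$ with $\psi(0)=1$ and $\psi \equiv 0$ on $\{|z|\geq 1/2\}$ produces $G := \psi\circ F \in C^\infty(M)$ with $G \equiv 1$ near $x$ and $G \equiv 0$ on an open neighborhood of $\overline{V}\setminus V'$. Finally, the function defined by $G$ on $V$ and by $0$ on $M\setminus \overline{V'}$ is consistent on the overlap $V\setminus \overline{V'} \subseteq \overline{V}\setminus V'$, where $G$ vanishes, and is smooth by axiom iii; its support is contained in $\overline{V'} \subseteq U$.

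For the refinement step, since $M$ is locally compact Hausdorff and second countable, it admits a compact exhaustion $M=\bigcup_n K_n$ with $K_n \subseteq \mathrm{int}(K_{n+1})$. For each $n$, I would cover the compact shell $K_n \setminus \mathrm{int}(K_{n-1})$ by finitely many bump functions $\phi_{n,1},\dots,\phi_{n,j_n}$, each obtained from the lemma with $U$ replaced by $U_{i(z)} \cap (\mathrm{int}(K_{n+1})\setminus K_{n-2})$ for an appropriate $z$ and $i(z) \in I$, arranged so that the open sets where $\phi_{n,j}\equiv 1$ cover the shell. The resulting countable family is locally finite, its sum $F := \sum_{n,j} \phi_{n,j}$ is everywhere $\geq 1$ and smooth by axiom iii applied to the locally finite sum, and $1/F$ is smooth because locally it is $F$ post-composed with a smooth function on $\C$ defined on a neighborhood of the values of $F$ (which stay bounded away from $0$). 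Setting $f_{n,j} := \phi_{n,j}/F$ yields the desired partition.

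The hard part is the bump function lemma, since there are no local coordinates from which to import a bump function. The key trick is to use axiom iii to paste the cutoff $G$, which already vanishes on an open neighborhood of $\overline{V}\setminus V'$, with the zero function outside $\overline{V'}$; this pasting is what actually forces the support to be contained in $U$, rather than merely producing rapid decay at the boundary.
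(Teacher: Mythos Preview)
The paper does not give its own proof of this proposition; it simply cites \cite[Theorem 2.2.4]{SniatyckiDifferentialGeometryBook}. Your argument is correct and is the standard partition-of-unity construction, with the bump function lemma carried out directly from the three axioms of \Cref{dfn:sub-Cartesian}; the pasting via axiom~\ref{dfn:sub-Cartesian:3} is indeed the step that replaces the usual coordinate-chart argument, and your normalization step is fine since $F\geq 1$ allows you to postcompose with a smooth extension of $z\mapsto 1/z$ globally via axiom~\ref{dfn:sub-Cartesian:2}.
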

	\paragraph{Generating set:}
		Let $M$ be a topological space, $\cF\subseteq C(M)$ a subset which satisfies \MyCref{dfn:sub-Cartesian}[1].
		We define the differential structure generated by $\cF$ to be the set of functions $f:M\to \C$ such that for every $x\in M$, there exists a neighborhood $U$ of $x$, $f_1,\cdots,f_n\in \cF$, $F\in C^\infty(\C^n)$ such that $f_{|U}=F(f_1,\cdots,f_n)_{|U}$.
	\paragraph{Subspaces:}
		If $A\subseteq M$  is a locally closed subset of a differential space $M$, then $C^\infty(A)$ denotes the space of functions $f:A\to \C$ such that every $x\in A$,
		there exists an open neighborhood $U$ of $x$ and $g\in C^\infty(M)$ such that $f_{|U\cap A}=g_{|U\cap A}$.
		By \Cref{prop:parition_of_unity}, if $A$ is closed, then $f\in C^\infty(A)$ if and only if there exists $g\in C^\infty(M)$ such that $f=g_{|A}$.
		The condition $A$ is locally closed is needed to ensure that $A$ is locally compact.
		The reader can check that $A$ is a differential space.
		From now on, any locally closed subset of a differential space is equipped with this differential structure.

		\begin{dfn}\label{dfn:smooth_map_sub_Cart}
			\begin{enumerate}\label{enum:smooth_map_sub_Cart}
				\item 	A map $f:M\to M'$ between differential spaces is smooth if for any $g\in C^\infty(M')$, $g\circ f\in C^\infty(M)$.
				\item 	A diffeomorphism is a smooth homeomorphism whose inverse is smooth.
				\item\label{dfn:smooth_map_sub_Cart:smooth} 	A (closed, open) smooth embedding is a (closed, open) topological embedding which is a diffeomorphism onto its image.
				\item	A smooth map $f:M\to M'$ is called a local diffeomorphism at $x\in M$, if there exists an open neighborhood $U\subseteq M$ of $x$ such that $f_{|U}:U\to M'$ is an open smooth embedding.
			\end{enumerate}
		\end{dfn}
		By \MyCref{dfn:sub-Cartesian}[1], a smooth map is automatically continuous.
		\begin{rem}
			Since a subspace $A$ of a locally compact space is locally compact if and only if $A$ is locally closed, it follows that the image of a topological embedding between differential spaces is locally closed.
			Hence, the image inherits a differential structure. This justifies \Crefitem{dfn:smooth_map_sub_Cart}{smooth}.
		\end{rem}
		A differential space is a smooth manifold if for some $n\in \N$, $M$ is locally diffeomorphic to $\R^n$.
		\begin{theorem}\label{prop:Whitney_full}
			Let $M$ be a differential space. The following conditions are equivalent: 
			\begin{enumerate}
				\item\label{qsljdqdslkqjskldj1} There exists a closed smooth embedding $M\hookrightarrow \R^{n}$ for some $n\in \N$.
				\item\label{qsljdqdslkqjskldj2} There exists a finite number $f_1,\cdots,f_n\in C^\infty(M)$ such that for any $f\in C^\infty(M)$, there exists $F\in C^\infty(\C^n)$ such that $f=F(f_1,\cdots,f_n)$. 
				\item\label{prop:Whitney_full:3} There exists $n\in \N$ such that for any $x\in M$, one can find an open neighborhood $U\subseteq M$ of $x$, $f_1,\cdots,f_n\in C^\infty(U)$ such that for all $f\in C^\infty(M)$, there exists $F\in C^\infty(\C^n)$ such that $f_{|U}=F(f_1,\cdots,f_n)$.
			\end{enumerate}
			We say that $M$ is a finite-dimensional differential space if it satisfies the above conditions.
			We say that $M$ is locally finite-dimensional\footnote{Sometimes called sub-Cartesian space in the literature.} if every $x\in M$ has an open neighborhood which is finite dimensional.
		\end{theorem}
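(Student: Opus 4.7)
The plan is to establish the cyclic chain $(1)\Rightarrow(2)\Rightarrow(3)\Rightarrow(1)$, with the last being the only substantial direction. For $(1)\Rightarrow(2)$, given a closed smooth embedding $\iota\colon M\hookrightarrow\R^n$, set $f_i:=x_i\circ\iota$; every $f\in C^\infty(M)$ extends to some $g\in C^\infty(\R^n)$ by \Cref{prop:parition_of_unity} applied to the closed subset $\iota(M)\subseteq\R^n$ (combined with the definition of $C^\infty$ on locally closed subsets), and precomposing $g$ with the projection $\C^n\cong\R^{2n}\to\R^n$ gives $F\in C^\infty(\C^n)$ with $f=F(f_1,\ldots,f_n)$. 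The implication $(2)\Rightarrow(3)$ is tautological: take $U=M$ with the same generators globally.

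For $(3)\Rightarrow(1)$ I would split the argument in two. The first half promotes (3) to (2). Using that $M$ is locally compact Hausdorff and second countable, pick a locally finite countable open refinement $\{V_j\}_{j\in\N}$ of the cover from (3) with $\overline{V_j}$ compact. On each $V_j$ the local tuple $(g_{j,1},\ldots,g_{j,n})$ defines a continuous injection $\Psi_j\colon\overline{V_j}\hookrightarrow\C^n$ (any two distinct points are separated by some $f\in C^\infty(M)$, which by (3) factors through $\Psi_j$), and compactness of $\overline{V_j}$ upgrades this to a topological embedding. Consequently the covering dimension of $M$ is at most $2n$, so Ostrand's coloring theorem yields a further refinement $\{W_j\}$ whose index set partitions into $2n+1$ classes with same-class members pairwise disjoint. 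Summing the local data colorwise against a subordinate partition of unity $\{\chi_j\}$ provided by \Cref{prop:parition_of_unity} produces $2n+1$ families of $n+1$ global smooth functions $\psi_c, h_{c,i}$, which together form a candidate set of generators in the sense of (2).

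It remains to verify that the resulting map $\Phi\colon M\to\C^{(n+1)(2n+1)}$ is a closed smooth embedding. Injectivity uses that the color-summed cutoffs satisfy $\sum_c\psi_c=1$, and that within each color class the disjointness reduces $(\psi_c(x),h_{c,i}(x))$ to a rescaling of $(g_{j,i}(x))$ for the unique index $j$ in that class containing $x$. Smoothness of $\Phi^{-1}$ on $\Phi(M)$ is immediate from the factorization property in (3). Properness of $\Phi$, and hence closedness of the image, follows from local finiteness of $\{W_j\}$ combined with $\sum_c\psi_c=1$, which forbids sequences escaping to infinity in $M$ from remaining bounded in the target. The main technical obstacle, in my view, is verifying that (3) indeed forces the topological covering dimension of $M$ to be at most $2n$ through the local embeddings into $\C^n$, and that the Ostrand-style folding of infinitely many local data into finitely many global generators preserves injectivity, smoothness of the inverse, and closedness simultaneously; every other step is careful but standard assembly using \Cref{prop:parition_of_unity}.
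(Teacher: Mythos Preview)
Your overall architecture for $(3)\Rightarrow(1)$ matches the paper's: local embeddings into $\C^n$, a covering-dimension bound, and then a finite packaging of infinitely many local charts via a partition of unity. But the specific packaging you propose via Ostrand coloring has two concrete gaps.

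\textbf{Injectivity.} Within a color class $c$, disjointness tells you that at $x$ only one index $j$ contributes, so $(\psi_c(x),h_{c,i}(x))=(\chi_j(x),\chi_j(x)g_{j,i}(x))$. But at $y$ the contributing index $j'$ in the same class may be different, and then $\chi_j(x)=\chi_{j'}(y)$ together with $g_{j,i}(x)=g_{j',i}(y)$ does \emph{not} force $x=y$, because $g_{j,\bullet}$ and $g_{j',\bullet}$ are unrelated local embeddings. A concrete failure: on $M=\R$ with two colors and intervals $W_1,W_3$ in the same class, pick $x\in W_1$, $y\in W_3$ both outside every set of the other color, with $\chi_1(x)=\chi_3(y)=1$ and $g_{1,1}(x)=g_{3,1}(y)$; then $\Phi(x)=\Phi(y)$. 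The paper avoids this by a Vandermonde trick: it sets $h_l:=\sum_n n^l g_n$ for $l=0,\ldots,2N-1$, so that knowing $h_0,\ldots,h_{2N-1}$ at a point (where at most $N$ of the $g_n$ are nonzero, by the intersection-multiplicity bound rather than a coloring) recovers each individual $g_n$ via inversion of a Vandermonde matrix. This is exactly the missing mechanism that encodes \emph{which} chart is active.

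\textbf{Properness.} Local finiteness and $\sum_c\psi_c=1$ do not prevent $\Phi$ from being bounded: all $\psi_c$ take values in $[0,1]$, and the local data $g_{j,i}$, being defined on relatively compact $V_j$, can be arranged (or simply happen) to be uniformly bounded, so a sequence escaping $M$ can have bounded image. The paper handles this by adjoining an explicit proper smooth function $\rho$ (built from the partition of unity as $\rho=\sum_n n f_n$) as one coordinate of the embedding. You need something analogous.
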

		\begin{proof}
			The implications $\ref{qsljdqdslkqjskldj1}\Rightarrow \ref{qsljdqdslkqjskldj2}\Rightarrow \ref{prop:Whitney_full:3}$ are trivial.
			Our proof of $\ref{prop:Whitney_full:3}\Rightarrow \ref{qsljdqdslkqjskldj1}$ differs from Whitney's proof.
			Classical proof of Whitney's theorem is based on Sard's theorem.
			We base ours on Lebesgue's covering dimension.
			In fact, our proof is a variant of the proof of the topological Whitney embedding theorem, see \cite[Theorem 50.5]{MunkresBook}.
			Fix $N\in \N$ for which the third condition is satisfied.
			By replacing the $f_1,\cdots,f_N$ with their real and imaginary parts, we can suppose (after doubling $N$) that $f_1,\cdots,f_N$ are real-valued.
			We fix a proper smooth map $\rho:M\to \R$.
			For example, if $(f_n)_{n\in \N}$ is as in \Cref{prop:parition_of_unity} associated to the trivial open cover, then $\rho=\sum_{n}n f_n$ is proper smooth.
			\begin{lem}\label{prop:Whitney}
				For any $x\in M$, there exists an open neighborhood $U$ of $x$, and a smooth embedding $U\to \R^{N}$.
			\end{lem}
			\begin{proof}
				Let $U$ and $f_1,\cdots,f_N\in C^\infty(U)$ as in Condition $3$.
				We define
				\begin{equation*}\begin{aligned}
						\phi:U\to \R^N,\quad \phi(y)=(f_1(y),\cdots,f_N(y)).
					\end{aligned}\end{equation*}
				The map $\phi$ is a smooth embedding when restricted to any compact neighborhood $V\subseteq U$ of $x$.
			\end{proof}
			\Cref{prop:Whitney} together with \cite[Theorem 3.1.14 and Corollary 3.1.20]{EngelkingDimension} imply that $M$ has Lebesgue covering dimension $\leq N$.
			By \Cref{prop:Whitney} and  \cite[Proposition 3.2.2.b]{EngelkingDimension}, we can find a locally finite countable open cover $(U_n)_{n\in \N}$ of $M$ and smooth embeddings $f_n:U_n\to \R^{N}$ such that any $N+1$ intersection
			$U_{n_1}\cap \cdots \cap U_{n_{N+1}}$ is empty.
			To be more precise, one starts with any open cover such that its elements admit smooth embeddings into $\R^{N}$, then by paracompactness one replaces the cover by a locally finite refinement,
			then by Lindelöf one replaces the cover by a countable subcover, then one applies \cite[Proposition 3.2.2.b]{EngelkingDimension}.
			By modifying $f_n$, we can suppose that $\mathrm{Im}(f_n)\subseteq \R^{N}\backslash 0$.
			Let
			\begin{equation*}\begin{aligned}
					\tilde{f}_n:U_n\to \R^{N+1}\backslash 0,\quad \tilde{f}_n(x)=\left(f_n(x),\norm{f_n(x)}^2\right).
				\end{aligned}\end{equation*}
			Notice that $\tilde{f}_n$ has the property that
			\begin{equation}\label{eqn:injective_proof_whitney}\begin{aligned}
					\forall x,y\in U_n,\lambda\in \R_+, \quad   \tilde{f}_n(x)=\lambda \tilde{f}_n(y)\implies x=y
				\end{aligned}\end{equation}

			Since $M$ is normal, we can find an open cover $(V_n)_{n\in \N}$ such that $\overline{V_n}\subseteq U_n$.
			For each $n\in \N$, by \Cref{prop:parition_of_unity}, there exists $\xi_n:M\to [0,1]$ a smooth function such that $\xi_n=1$ on $V_n$ and $\supp(\xi_n)\subseteq U_n$.
			Let
			$g_n=\xi_n\tilde{f}_n:M\to \R^{N+1}$
			extended outside $U_n$ by $0$ which is smooth by \MyCref{dfn:sub-Cartesian}[3].
			By \eqref{eqn:injective_proof_whitney}, 
			\begin{equation}\label{eqn:injective_proof_whitney_2}\begin{aligned}
				\forall x,y\in M,n\in \N, \quad   g_n(x)=g_n(y) \text{ and }  g_n(x)\neq 0\implies x=y
			\end{aligned}\end{equation}
			Since $(U_n)_{n\in \N}$ is a locally finite cover, it follows that the sums
			\begin{equation*}\begin{aligned}
					h_l:=\sum_{n=1}^{+\infty} n^l g_n:M\to \R^{N+1},\quad l\in \Z_+
				\end{aligned}\end{equation*}
			are locally finite, and thus smooth.
			We can now define
			\begin{equation*}\begin{aligned}
					\phi:M\to \R^{2N^2+2N+1},\quad \phi(x)=(h_0(x),\cdots,h_{2N-1}(x),\rho(x)).
				\end{aligned}\end{equation*}
			We will show that $\phi$ is a closed smooth embedding.
			For every $x\in M$, let $S_x=\{n\in \N:x\in U_n\}$.
			By our assumptions on $(U_n)_{n\in \N}$, $|S_x|\leq N$.
			Let
				\begin{equation*}\begin{aligned}
						W_x=\bigcap_{n\in S_x}U_n \cap \bigcap_{n\notin S_x}\supp(g_n)^c.
					\end{aligned}\end{equation*}
			The set $W_x$ is open because $(U_n)_{n\in \N}$ is a locally finite cover.
			To see this, let $U\subseteq M$ be an open set such that $ |\{n\in \N:U\cap U_n\neq \emptyset\}|<\infty$.
			Then, for any $n\in \N$ such that $U\cap U_n=\emptyset$, we have $U\cap \supp(g_n)^c=U$.
			Hence,
			\begin{equation*}\begin{aligned}
					U\cap W_x=      \bigcap_{n\in S_x}(U\cap U_n)  \cap\bigcap_{\{n\in S_x^c:U\cap U_n\neq \emptyset\}}  (U\cap \supp(g_n)^c)
				\end{aligned}\end{equation*}
			This is a finite intersection.
			Hence, we have proved that $W_x$ is locally open.
			So $W_x$ is open.
			By the construction of $W_x$, $x\in W_x$ and $g_{n|W_x}=0$ for all $n\notin S_x$.
			
			We can now show that $\phi$ is injective. Let $x,y\in M$ with $\phi(x)=\phi(y)$.
			We have
			\begin{equation}\label{eqn:WS}\begin{aligned}
					n\notin (S_x\cup S_y)\implies g_{n|W_x\cup W_y}=0.
				\end{aligned}\end{equation}
			If $s_1,\cdots,s_{k}$ are the elements of $S_x\cup S_y$, then
			using the identity
			\begin{equation*}\begin{aligned}
					\begin{pmatrix}
						h_0(z) \\h_1(z)\\\vdots \\ h_{k-1}(z)
					\end{pmatrix}=\begin{pmatrix}
						              1             & \cdots & 1               \\
						              s_1           & \cdots & s_{k}         \\
						              \vdots        & \ddots & \vdots          \\
						              s_{1}^{k-1} & \cdots & s_{k}^{k-1}
					              \end{pmatrix}\begin{pmatrix}
						                           g_{s_1}(z) \\g_{s_2}(z)\\
						                           \vdots
						                           \\
						                           g_{s_{k}}(z)
					                           \end{pmatrix}\quad \forall z\in W_x\cup W_y,
				\end{aligned}\end{equation*}
			we can find an invertible linear map $L$ such that
			\begin{equation}\label{eqn:WS2}\begin{aligned}
					L\circ \phi=(g_{s_1},\cdots,g_{s_{k}},h_{k},\cdots,h_{2N-1},\rho) \quad \text{on } W_x\cup W_y.
				\end{aligned}\end{equation}
			Since $x,y\in W_x\cup W_y$, by \eqref{eqn:WS2}, we deduce that
			$g_n(x)=g_n(y)$ for all $n\in S_x\cup S_y$.
			Since $x$ belongs to $V_n\subseteq U_n$ for some $n\in \N$, it follows that $g_n(x)=g_n(y)$ and $g_n(x)\neq 0$.
			By \eqref{eqn:injective_proof_whitney_2}, $x=y$.

			Since $\rho$ is proper, it follows that $\phi$ is a closed topological embedding.
			It remains to show that $\phi^{-1}$ is smooth.
			On $V_n\cap W_x$, $g_n=\tilde{f_n}$, so by \eqref{eqn:WS2}, the inverse of $\phi$ on  $V_n\cap W_x$ is given by $f_n^{-1}\circ \pi\circ L$ where $\pi$ is the projection onto the coordinate with $f_n$.
			This is smooth because $f_n$ is a smooth embedding.
		\end{proof}
	\textbf{From now on, all differential spaces are supposed locally finite dimensional.}
	\paragraph{Product:}
		Let $M_1$ and $M_2$ be differential spaces. 
		We equip $M_1\times M_2$ with the differential structure generated by $\{(x,y)\mapsto f(x):f\in C^\infty(M_1)\}\cup \{(x,y)\mapsto g(y):g\in C^\infty(M_2)\}$.
		If $M_1$ and $M_2$ are smooth manifolds, then $C^\infty(M_1\times M_2)$ agrees with the usual $C^\infty(M_1\times M_2)$.
	\paragraph{Disjoint union:}
		If $M_1$ and $M_2$ are differential spaces, then $M_1\sqcup M_2$ is a differential space with $C^\infty(M_1\sqcup M_2)=\{f\in C(M_1\sqcup M_2):f_{|M_1}\in C^\infty(M_1),f_{|M_2}\in C^\infty(M_2)\}$.				
	\paragraph{Tangent vectors, tangent bundle, and vector fields:}Let $M$ be a differential space.
		A tangent vector of $M$ at $x\in M$ is a $\C$-linear map $X:C^\infty(M)\to \C$ such that
		\begin{equation*}\begin{aligned}
				X(fg)=X(f)g(x)+f(x)X(g),\quad X(\overline{f})=\overline{X(f)},\quad \forall f,g\in C^\infty(M).
			\end{aligned}\end{equation*}
		The space of all tangent vectors is denoted by $T_xM$.
		By a standard trick using Taylor's formula, see \cite[Proof of Lemma 7.1.4]{HormanderBook1}, if $f_1,\cdots,f_n\in C^\infty(M)$ and $F\in C^\infty(\C^n)$, then
		\begin{equation}\label{eqn:LeibnizGeneralised}\begin{aligned}
				X(F(f_1,\cdots,f_n))=\sum_{i=1}^n\frac{\partial F}{\partial x_i}(f_1(x),\cdots,f_n(x))X(f_i).
			\end{aligned}\end{equation}
		By the same trick,
		\begin{equation}\label{eqn:TangentVectorIsLocal}\begin{aligned}
			\forall f\in C^\infty(M),\quad  x\notin \supp(f) \implies  X(f)(x)=0.
			\end{aligned}\end{equation}
		By \eqref{eqn:LeibnizGeneralised}, \eqref{eqn:TangentVectorIsLocal} and since $M$ is locally finite dimensional, $T_xM$ is  a finite-dimensional real vector space.
		We define the tangent bundle by
		\begin{equation*}\begin{aligned}
				TM:=\{(X,x):x\in M,X\in T_xM\}.
			\end{aligned}\end{equation*}
		Let $\pi:TM\to M$ be the natural projection.
		If $f\in C^\infty(M)$, then we define
		\begin{equation*}\begin{aligned}
				\odif{f}:TM\to \C,\quad \odif{f}(X,x)=X(f).
			\end{aligned}\end{equation*}
		We equip $TM$ with the weakest topology such that $\pi:TM\to M$ and $\odif{f}$ are continuous for any $f\in C^\infty(M)$.
		Let $C^\infty(TM)$ be the differential structure generated by $\{f\circ \pi,\odif{f}:f\in C^\infty(M)\}$.
		One can show that $TM$ is a locally finitely generated differential space, see \cite[Section 3.3]{SniatyckiDifferentialGeometryBook}.
		A smooth map $f:M_1\to M_2$ induces for every $x\in M_1$, a linear map
		\begin{equation*}\begin{aligned}
				\odif{f}_x:T_xM_1\to T_{f(x)}M_2,\quad \odif{f}_x(X)(g)=X(g\circ f),\quad \forall g\in C^\infty(M_2),
			\end{aligned}\end{equation*}
		and a smooth map $\odif{f}:TM_1\to TM_2$.	
		
		If $M_1,M_2$ are differential spaces, then $T(M_1\times M_2)$ is naturally diffeomorphic to $TM_1\times TM_2$.

		A vector field on $M$ is a linear map   $X:C^\infty(M)\to C^\infty(M)$ such that
		\begin{equation*}\begin{aligned}
				X(fg)=X(f)g+fX(g),\quad X(\overline{f})=\overline{X(f)},\quad \forall f,g\in C^\infty(M).
			\end{aligned}\end{equation*}
		We denote the space of vector fields by $\cX(M)$.
		Since the commutator of derivations is a derivation, $\cX(M)$ is a Lie algebra.
		By composing $X$ by the evaluation map at every point $x\in M$, one obtains a section $M\to TM$.
		This defines a bijection between vector fields and smooth sections $M\to TM$, see \cite[Proposition 3.3.5]{SniatyckiDifferentialGeometryBook}.
		\begin{ex}\label{ex:sub_cart}
			Let $M=\{(x,y)\in \R^2:xy=0\}$.
			One can check that $\dim(T_{(x,y)}M)=1$ if $(x,y)\neq 0$ and $\dim(T_{0}M)=2$.
			A vector field on $M$ has the form $X=f(x,y)x\frac{\partial}{\partial x}+g(x,y)y\frac{\partial}{\partial y}$ where $f,g\in C^\infty(M)$.
			It follows that if $X$ is a vector field, then $X$ at $(0,0)$ vanishes $0$.
			In particular, $\{X(x):X\in \cX(M)\}$ is usually a strict subspace of $T_xM$.
		\end{ex}
		\begin{dfn}[Vector bundles]\label{dfn:vector_bundles_sub-Cartesian}
			A real vector bundle on a differential space $M$ is a differential space $E$, a smooth map $\pi:E\to M$, a real finite dimensional vector space structure on $E_x:=\pi^{-1}(x)$ for each $x\in M$ such that for every $x\in M$, there exists an open neighborhood $U$ of $x$, $k\in \Z_+$, a diffeomorphism $\phi: \R^k\times U\to \pi^{-1}(U)$ such that:
			\begin{itemize}
				\item   For all $(v,x)\in \R^k\times U$, $\pi(\phi(v,x))=x$.
				\item   For all $x\in U$, $\phi(\cdot,x):\R^k\to E_x$ is a linear isomorphism.
			\end{itemize}
			Complex vector bundles are defined analogously. 
			We denote by $C^\infty(M,E)$ the space of smooth sections. We denote by $C^\infty_c(M,E)$ the subspace of compactly supported smooth section.
		\end{dfn}
		Classical constructions like direct sum, tensor product, pullback extend immediately to vector bundles over differential spaces.
		The space $TM$ is generally not a real vector bundle because $\dim(T_xM)$ generally depends on $x$.
		One can show that if $\dim(T_xM)$ is locally constant, then $TM$ is a real vector bundle, see \cite[Proposition 3.3.15]{SniatyckiDifferentialGeometryBook}.
		\begin{prop}\label{rem:finitely_generated_section}
			If $E\to M$ is a real vector bundle over a finite dimensional differential space $M$ such that $x\in M\mapsto \dim(E_x)\in \Z_+$ is bounded, then $C^\infty(M,E)$ is a finitely generated $C^\infty(M,\R)$-module.
		\end{prop}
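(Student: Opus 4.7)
Plan: I would imitate the finitization used in the proof of \Cref{prop:Whitney_full} to construct finitely many global sections whose fiber values span $E_x$ everywhere, and then upgrade pointwise spanning to module generation by splitting a fiberwise surjective bundle map. By \Cref{prop:Whitney_full}, $M$ has Lebesgue covering dimension $\le N$ for some $N\in\N$; let $K$ be the given bound on $\dim(E_x)$. Starting from the trivializing cover of \Cref{dfn:vector_bundles_sub-Cartesian} and refining exactly as in the proof of \Cref{prop:Whitney_full} (paracompactness plus \cite[Proposition 3.2.2.b]{EngelkingDimension}), I obtain a locally finite countable cover $(U_m)_{m\in\N}$ by trivializing opens in which any $N+1$ sets have empty intersection. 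On each $U_m$ I pick $K$ smooth local sections $s^m_1,\ldots,s^m_K$ of $E|_{U_m}$: the first $\dim(E|_{U_m})$ forming a frame and the remaining ones set to zero. With $(\chi_m)$ a partition of unity subordinate to $(U_m)$ from \Cref{prop:parition_of_unity}, set $g^m_j := \chi_m s^m_j \in C^\infty(M,E)$.

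The key step is the Vandermonde trick already used in the proof of \Cref{prop:Whitney_full}: define
\begin{equation*}
h^l_j := \sum_{m\in\N} m^l\, g^m_j, \qquad l=0,\ldots,N,\ j=1,\ldots,K,
\end{equation*}
which is smooth by local finiteness. For $x\in M$, set $S_x := \{m:\chi_m(x)\neq 0\}$; the cover assumption gives $|S_x|\le N$, so the injectivity of the Vandermonde matrix with distinct integer nodes yields
\begin{equation*}
\mathrm{span}\{h^l_j(x):l,j\} = \mathrm{span}\{g^m_j(x):m\in S_x,\,j\},
\end{equation*}
which equals $E_x$ because $\sum_m\chi_m(x)=1$ forces some $\chi_{m_0}(x)>0$, and then $\{g^{m_0}_j(x)\}_j$ already spans $E_x$. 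Thus the $K(N+1)$ sections $h^l_j$ span every fiber.

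To pass from pointwise spanning to module generation, equip $E$ with a Riemannian metric built by patching standard metrics on local trivializations via \Cref{prop:parition_of_unity}, and consider the smooth bundle map $\Phi\colon M\times \R^{K(N+1)} \to E$ sending $e^l_j\mapsto h^l_j$. Fiberwise surjectivity of $\Phi$ makes $\Phi\Phi^*\colon E\to E$ a fiberwise invertible, positive self-adjoint, smooth bundle endomorphism, whose fiberwise inverse is smooth by Cramer's rule inside any local trivialization of $E$. For $s\in C^\infty(M,E)$, the coefficients $(f^l_j)(x) := \Phi_x^*(\Phi_x\Phi_x^*)^{-1} s(x)$ are then smooth and satisfy $s = \sum_{l,j} f^l_j h^l_j$, showing that the $h^l_j$ generate $C^\infty(M,E)$ as a $C^\infty(M,\R)$-module.

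The main obstacle is precisely this last step: while conceptually routine, it requires constructing a smooth Riemannian metric on $E$ within the differential space framework and verifying that fiberwise inversion of smoothly varying invertible matrices preserves smoothness. Both reduce via \Cref{prop:parition_of_unity} to standard facts inside single trivializations, but the gluing must be done with care because the fiber dimension of $E$ (and hence the size of the matrix $\Phi_x\Phi_x^*$) is only locally constant on $M$.
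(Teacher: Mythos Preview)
Your argument is correct. It takes a genuinely different route from the paper, which is much shorter: the paper observes that the total space $E^*$ satisfies \Crefitem{prop:Whitney_full}{3} (since $M$ does and the fiber dimension is bounded), hence admits a closed smooth embedding $\phi\colon E^*\hookrightarrow\R^n$; the fiberwise differential of $\phi$ along the zero section then gives an injective bundle map $E^*\hookrightarrow M\times\R^n$, and the transpose $M\times\R^n\to E$ provides the generators. The step you spell out carefully --- passing from fiberwise spanning to module generation via a metric and the Moore--Penrose splitting $\Phi^*(\Phi\Phi^*)^{-1}$ --- is left implicit in the paper; there it follows more directly from closedness of the embedding, since any $s\in C^\infty(M,E)$ is a smooth fiberwise-linear function on $E^*$ which extends to $M\times\R^n$ and can be linearized in the fiber variables. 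Your approach has the merit of making the number of generators explicit as $K(N+1)$ and of being self-contained, while the paper's avoids re-running the Vandermonde construction by invoking Whitney wholesale on $E^*$. Your closing worry is harmless: by \Cref{dfn:vector_bundles_sub-Cartesian} the rank of $E$ is locally constant, so $\Phi_x\Phi_x^*$ has fixed size on each trivializing chart and Cramer's rule applies without issue.
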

		\begin{proof}
			The total space of $E^*$ satisfies \MyCref{prop:Whitney_full}[3].
			So, by \Cref{prop:Whitney_full}, there exists a closed smooth embedding $\phi:E^*\to \R^n$ for $n$ big enough.
			The differential of $\phi$ restricted to $M$ gives a smooth bundle embedding of $E^*$ into the trivial vector bundle $M\times \R^n$.
			By taking the adjoint, one obtains generators of $C^\infty(M,E)$.
		\end{proof}
			\paragraph{Submersions:}
			We say that a smooth map $f:M_1\to M_2$ between differential spaces is a submersion at $x\in M_1$ if there exists a smooth map $g:M_1\to \R^n$ for some $n\in \Z_+$
			such that 
				\begin{equation*}\begin{aligned}
					M_1\to M_2\times \R^n,\quad y\mapsto (f(y),g(y))		
				\end{aligned}\end{equation*}
			is a local diffeomorphism at $x\in M_1$, i.e., $f$ is locally a projection whose fiber is an open subset of a Euclidean space.
			The map $f$ is called a submersion if it is a submersion at every $x\in M_1$.
		
			If $f$ is a submersion, $x\in M_1$, then $\odif{f}_{x}:T_xM_1\to T_{f(x)}M_2$ is surjective, and for every $y\in M_2$, $f^{-1}(y)$ is a smooth manifold whose tangent space at $x\in f^{-1}(y)$ is equal to $\ker(\odif{f}_x)$.
		Moreover, $\ker(\odif{f})$, equipped with the subspace topology and differential structure from $TM_1$, is a real vector bundle over $M_1$.
		The composition of two submersions is again a submersion.
		\begin{rem}\label{rem:flow_vector_field}
			If $X\in \cX(M)$ is a vector field, then its flow at time $t$ starting from $x\in M$ is denoted by $e^{tX}\cdot x$. 
			In general the flow might not be well-defined for any $t\neq 0$.
			But if $f:M\to M'$ is a submersion, $X\in C^\infty(M,\ker(\odif{f}))$, then the flow exists in a neighborhood of $(x,0)$ in $M\times \R$.
		\end{rem}

	\begin{prop}\label{prop:composition_submersion}
		Let $f:M_1\to M_2$ and $g:M_2\to M_3$ be smooth maps between differential spaces.
		If $f$ and $g\circ f$ are submersions, and $f$ is surjective, then $g$ is a submersion.
	\end{prop}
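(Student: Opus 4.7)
The plan is to fix any $y\in M_2$ and, using surjectivity of $f$, choose $x\in M_1$ with $f(x)=y$; it will then suffice to show that $g$ is a submersion at $y$. I would apply the submersion hypothesis twice to extract smooth maps $h_1:M_1\to \R^n$ and $h_2:M_1\to \R^m$ such that $\psi_1:=(f,h_1)$ and $\psi_2:=(g\circ f,h_2)$ are local diffeomorphisms at $x$; after shrinking they restrict to diffeomorphisms $\psi_1:U\xrightarrow{\sim}U_2\times V$ and $\psi_2:U\xrightarrow{\sim}U_3\times V'$ on a common open neighbourhood $U\subseteq M_1$ of $x$, with $U_2\subseteq M_2$, $U_3\subseteq M_3$, $V\subseteq \R^n$, $V'\subseteq \R^m$ open. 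Writing $(y,t_0)=\psi_1(x)$ and $s_0=h_2(x)$, the composite $\Phi:=\psi_2\circ \psi_1^{-1}:U_2\times V\to U_3\times V'$ is a diffeomorphism of the distinctive form $\Phi(y',t)=(g(y'),K(y',t))$ with $K:=h_2\circ \psi_1^{-1}$, since $f\circ \psi_1^{-1}$ is the first projection.

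The differential $d\Phi_{(y,t_0)}$ is block lower-triangular with diagonal blocks $dg_y$ and $C:=\partial_t K|_{(y,t_0)}:\R^n\to \R^m$, so its invertibility forces $C$ to be injective. I would then fix a splitting $\R^m=\im(C)\oplus W$ with $W\cong \R^{m-n}$, and let $\pi:\R^m\to W$ be the projection with kernel $\im(C)$. Define $h:M_2\to \R^{m-n}$ to coincide with $y'\mapsto \pi(K(y',t_0))$ on a neighbourhood of $y$ and extend it globally by a bump function (using \Cref{prop:parition_of_unity}). The key claim is then that $(g,h):M_2\to M_3\times \R^{m-n}$ is a local diffeomorphism at $y$; as $y$ was arbitrary, this will prove that $g$ is a submersion.

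To establish the claim, I would construct the local inverse of $(g,h)$ explicitly. Writing $\Phi^{-1}(z,s)=(H(z,s),L(z,s))$, differentiating $\Phi^{-1}\circ \Phi=\id$ at $(y,t_0)$ gives the useful identity $\partial_s L|_{(g(y),s_0)}\circ C=\id_{\R^n}$. There is a unique $u_0\in \R^n$ with $s_0=\pi(s_0)+C(u_0)$, and for $(z,w)$ near $(g(y),\pi(s_0))$ in $U_3\times W$ the equation $L(z,w+C(u))=t_0$ has invertible $u$-derivative $\id_{\R^n}$ at the base point $(g(y),\pi(s_0),u_0)$. The parametrized implicit function theorem then supplies a unique smooth solution $u=u(z,w)$, and setting $\varphi(z,w):=H(z,w+C(u(z,w)))$ yields $(g,h)\circ \varphi=\id$ by direct computation using $\Phi(\varphi(z,w),t_0)=(z,w+C(u(z,w)))$; uniqueness in the IFT gives $\varphi\circ (g,h)=\id$ near $y$. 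The main obstacle will be justifying the implicit function theorem in the differential space framework, but since the variable $u$ being solved for is Euclidean and the parameter $(z,w)$ lives in the finite-dimensional differential space $U_3\times \R^{m-n}$, we may embed $U_3$ locally in some $\R^N$ via \Cref{prop:Whitney_full}, extend $L$ ambiently to a smooth function, and invoke the classical parametrized IFT.
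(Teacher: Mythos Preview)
Your proof is correct and follows essentially the same strategy as the paper: both first produce a local diffeomorphism $\Phi:(y',t)\mapsto(g(y'),K(y',t))$ from $M_2\times\R^n$ to $M_3\times\R^m$, and then apply a parametrized inverse/implicit function theorem (with $M_3$ as the parameter space) to extract the required map $M_2\to\R^{m-n}$. The only cosmetic difference is that the paper parametrizes the extra $m-n$ directions by $\ker(dg_{x_0})$ and invokes the parametrized inverse mapping theorem on a single auxiliary map $\psi$, whereas you choose a complement $W\subseteq\R^m$ to $\im(C)$ and solve for an auxiliary Euclidean variable $u$ via the implicit function theorem; both routes hinge on the same block-triangular structure of $d\Phi$.
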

	\begin{proof}
		Let $x_0\in M_2$. By hypothesis, there exists $n,m\in \Z_+$, $h:M_2\times \R^{n}\to \R^{m}$ a smooth map such that $h(x_0,0)=0$ and the map
			\begin{equation*}\begin{aligned}
					\phi:M_2\times \R^{n}\to M_3\times \R^{m},\quad \phi(x,y)=(g(x),h(x,y))
			\end{aligned}\end{equation*}
		is a local diffeomorphism at $(x_0,0)$. So, its differential at $(x_0,0)$ is a linear isomorphism which restricts to a linear isomorphism $L:\ker(\odif{g}_{x_0})\times \R^n\to \R^m$.
		Let $\pi:M_2\times \R^n\to M_2$ be the natural projection, $\psi$ the locally defined map
			\begin{equation*}\begin{aligned}
				\psi:M_3\times \ker(\odif{g}_{x_0})\times \R^n\to M_3\times \R^m,\quad \psi(x,a,b)=\phi\Big( \pi\left(\phi^{-1}(x,L(a,0))\right),b\Big)		
			\end{aligned}\end{equation*}
		The map $\psi$ is of the form $\psi(x,a,b)=(x,\kappa(x,a,b))$ for some smooth map $\kappa$. Furthermore, $\odif{\kappa}_{(g(x_0),0,0)}(0,a,b)=L(a,b)$.
		The parameterized inverse mapping theorem (where $M_3$ is seen as a parameter space) implies that $\psi$ is a local diffeomorphism at $(g(x_0),0,0)$.
		So, the map 
			\begin{equation*}\begin{aligned}
				M_3\times \ker(\odif{g}_{x_0})\to M_2,\quad (x,a)\mapsto\pi(\phi^{-1}(x,L(a,0)))
			\end{aligned}\end{equation*}
		is a local diffeomorphism at $(g(x_0),0)$ whose inverse is a map of the form $x\mapsto (g(x),\varphi(x))$ for some smooth map $\varphi:M_2\to \ker(\odif{g}_{x_0})$. The result follows.
	\end{proof}
	By taking $M_3=\{\mathrm{pt}\}$, \Cref{prop:composition_submersion} implies that, if $M$ is a differential space such that for some $n\in \Z_+$, $M\times \R^n$ is a smooth manifold, then $M$ is a smooth manifold.
	The following theorem is a variant of Bourbaki's quotient theorem \cite[Section 5.9.5]{BourbakiDiffVariety}.
	\paragraph{Quotient space:}
		\begin{theorem}\label{thm:quotient_differential_space}
			Let $M$ be a differential space, $R\subseteq M\times M$ an equivalence relation, $\pi:M\to M/R$ the quotient map, $C^\infty(M/R)=\{f:M/R\to \C:f\circ \pi\in C^\infty(M/R)\}$. 
			Suppose the following holds:
			\begin{enumerate}
				\item The set $R$ is a closed subset of $M\times M$.
				\item The natural projection $(x,y)\in R\mapsto x\in M$ is a submersion.
			\end{enumerate}
			Then, $M/R$ equipped with the quotient topology is Hausdorff locally compact and second countable, and $C^\infty(M/R)$ is a locally finite dimensional differential structure on $M/R$, and $\pi$ is a submersion.
		\end{theorem}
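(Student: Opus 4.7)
The plan is to split the proof into three parts: topology of $M/R$, the differential-structure axioms, and the local finite-dimensionality together with the submersion claim. The third part is the main obstacle.

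First, I would show that $\pi:M\to M/R$ is open. Since $R$ is symmetric, the swap $(x,y)\mapsto(y,x)$ is a diffeomorphism of $R$ intertwining $p_1$ with $p_2$, so $p_2$ is also a submersion, hence open. For open $U\subseteq M$, the saturation $\pi^{-1}(\pi(U))=p_2(p_1^{-1}(U))$ is therefore open, so $\pi$ is open. Combined with closedness of $R$, this yields Hausdorff: given $\pi(x)\neq\pi(y)$, closedness produces open $U\ni x$, $V\ni y$ with $(U\times V)\cap R=\emptyset$, and then $\pi(U)$ and $\pi(V)$ are disjoint open neighborhoods of $[x]$ and $[y]$ (any common class would produce some $(u,v)\in R\cap(U\times V)$). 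Local compactness and second countability transfer from $M$ along the continuous open surjection $\pi$.

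Next, with $C^\infty(M/R):=\{\bar f:M/R\to\C:\bar f\circ\pi\in C^\infty(M)\}$, axioms (ii) and (iii) of \Cref{dfn:sub-Cartesian} are immediate, since both commute with pullback by $\pi$. Axiom (i) (point separation), local finite-dimensionality, and the submersion claim for $\pi$ all reduce to constructing, at each $x_0\in M$, a \emph{local quotient chart}: a neighborhood $U\ni x_0$ and a submersion $q:U\to\R^k$ whose fibers are exactly the intersections of the $R$-classes with $U$. Granting this, separation within $\pi(U)$ is achieved by components of $q$; separation across distinct charts is achieved by compositions of $q$ with compactly supported bumps on $\R^k$ extended by zero to $M$; the submersion property of $\pi$ at $x_0$ is witnessed by combining the descended $\bar q:\pi(U)\to\R^k$ with orbit coordinates on $U$; and local finite-dimensionality of $\pi(U)$ is witnessed by the components of $\bar q$, which realize $\pi(U)$ as an open subset of $\R^k$.

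To build $q$, I would apply the submersion hypothesis on $p_1$ at $(x_0,x_0)$ to produce a smooth $g:R\to\R^n$ with $g(x,x)=0$ near $x_0$ and $(p_1,g)$ a local diffeomorphism onto $U\times W$; writing its inverse as $(u,w)\mapsto(u,\psi(u,w))$ gives a smooth $\psi:U\times W\to M$ with $\psi(u,0)=u$, $u\sim\psi(u,w)$ for all $(u,w)$, and $w\mapsto\psi(x_0,w)$ an immersed parametrization of a neighborhood of $x_0$ in $[x_0]$. Embedding $U$ into some $\R^N$ via \Cref{prop:Whitney_full} and applying the Euclidean implicit function theorem to $(u,w)\mapsto\psi(u,w)$, I would extract, after shrinking $U$, a complementary set of coordinates defining the required submersion $q:U\to\R^k$. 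The principal difficulty --- and the main obstacle --- is verifying that the fibers of $q$ are genuinely the $R$-classes in $U$, not merely the slice-local equivalences $\{u'=\psi(u,w):(u,w)\in U\times W\}$. Closedness of $R$ is used here to exclude sporadic equivalences between distant points of $U$ after shrinking, and openness of the orbits in their relative topology (coming from the submersion property of $p_2$) is used to match the slice-local relation with the actual relation $R$ on a small enough neighborhood.
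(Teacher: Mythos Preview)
Your topology argument and the handling of axioms (ii) and (iii) are correct and essentially match the paper. The gap is in your construction of the local quotient chart $q:U\to\R^k$, and it occurs one step before where you locate the difficulty.

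The step ``embed $U$ in $\R^N$ and apply the Euclidean implicit function theorem to $(u,w)\mapsto\psi(u,w)$ to extract complementary coordinates'' does not go through as stated. After embedding, $U$ sits in $\R^N$ as a possibly singular closed subset, and the Euclidean implicit function theorem yields functions on open subsets of $\R^N$, not intrinsically on $U$. More to the point, a ``complementary set of coordinates'' to the orbit directions is exactly a family of smooth $R$-invariant functions on $U$, i.e.\ smooth functions on $\pi(U)$---which is precisely what you are trying to produce. A linear projection killing $\partial_w\psi(x_0,0)(\R^n)$ works only at $x_0$; there is no mechanism forcing it to be constant along nearby orbits. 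As written the construction is circular.

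The paper avoids this by reversing the roles: rather than seeking quotient coordinates $q$, it produces \emph{orbit} coordinates directly. Extend $g$ from $R$ to all of $M\times M$ (using that $R$ is closed) and set $g'(y):=g(x_0,y)$, a smooth map $M\to\R^n$ depending on a single variable. The key step is a parametrized inverse-function argument---with $M$ as parameter space and $\R^n$ as the genuine variable---showing that $(x,y)\mapsto(x,g'(y))$ is still a local diffeomorphism on $R$ at $(x_0,x_0)$; this works because the fibre direction is Euclidean even though $M$ is only a differential space. The level set $V\cap g'^{-1}(0)$ is then a local slice on which $\pi$ is injective, its inverse $h:\pi(U)\to M$ supplies the separating functions for axiom~(i), and $x\mapsto(\pi(x),g'(x))$ is the local diffeomorphism witnessing that $\pi$ is a submersion. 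So $g'$, not $q$, is the object to construct---and it comes essentially for free from the submersion hypothesis on $p_1$.
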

		\begin{proof}
			Let $U\subseteq M$ be an open subset, $x_0\in \pi^{-1}(\pi(U))$. 
			So, $(x_0,y_0)\in R$ for some $y_0\in U$.
			Since $(x,y)\in R\mapsto x\in M$ is a submersion and a submersion admits local sections, there exists a locally defined smooth map $s:M\to M$ such that $s(x_0)=y_0$ and $(x,s(x))\in R$ for all $x\in \dom(s)$.
			Hence, $s^{-1}(U)$ is a neighborhood of $x_0$ which is contained in $\pi^{-1}(\pi(U))$.
			So, $\pi$ is open.
			Hence, $M/R$ is Hausdorff because $R$ is closed. It also follows that $M/R$ is second countable and locally compact. 

			We now show that $C^\infty(M/R)$ is a differential structure on $M/R$.
			If $U\subseteq M$ is an open set, $f:\pi(U)\to \C$ is a function, then since $f\circ \pi\circ s=f\circ \pi$ for any section $s$ as above, we deduce that 
				\begin{equation}\label{eqn:qsjdofjmoqksd}\begin{aligned}
					f\circ\pi \in C^\infty(U) \iff f\circ\pi \in C^\infty(\pi^{-1}(\pi(U))).	
				\end{aligned}\end{equation}
			
			It is easy to see that \MyCref{dfn:sub-Cartesian}[2] and \MyCref{dfn:sub-Cartesian}[3] hold.
			So we need to prove \MyCref{dfn:sub-Cartesian}[1]. 
			Let $x_0\in M$ be fixed.
			Since $(x,y)\in R\mapsto x\in M$ is a submersion at $(x_0,x_0)$, there exists $g:R\to \R^n$ a smooth map such that $g(x_0,x_0)=0$ and the map
				\begin{equation*}\begin{aligned}
					\psi:R\to M\times \R^n,\quad (x,y)\mapsto (x,g(x,y))
				\end{aligned}\end{equation*}
				is a local diffeomorphism at $(x_0,x_0)$.
				Since $R$ is closed, we can extend $g$ to a smooth map on $M\times M$.
				Let $g'(y)=g(x_0,y)$.
				We claim that the map 
				\begin{equation*}\begin{aligned}
					\phi:R\to M\times \R^n,\quad (x,y)\mapsto (x,g'(y))
				\end{aligned}\end{equation*}
				is a local diffeomorphism at $(x_0,x_0)$. To see this, notice that the map $\phi\circ \psi^{-1}:M\times \R^n\to M\times \R^n$ is of the form $(x,t)\mapsto (x,k(x,t))$ for some smooth map $k$ which satisfies $k(x_0,t)=t$.
				The parameterized inverse mapping theorem (with $M$ as a parameter space) implies that $\phi\circ \psi^{-1}$ is a local diffeomorphism at $(x_0,0)$. 
				Hence, $\phi$ is a local diffeomorphism at $(x_0,x_0)$. 
			
			Let $V$ be an open neighborhood of $x_0$ such that $\phi$ is an open embedding on $(V\times V)\cap R$, $U$ the open neighborhood of $x_0$ defined by
				\begin{equation*}\begin{aligned}
					U=\{y\in V:(y,0)\in \phi((V\times V)\cap R)\}.
				\end{aligned}\end{equation*}
			The set $\pi(V\cap g^{\prime -1}(0))$ is an open neighborhood of $\pi(x_0)$ because 
				$\pi(V\cap g^{\prime -1}(0))=\pi(U)$.
			The map $\pi$ is injective on $V\cap g^{\prime -1}(0)$ because $\phi$ is injective on $(V\times V)\cap R$.
			Let $h:\pi(V\cap g^{\prime -1}(0))\to M$ be the inverse of $\pi$ on $V\cap g^{\prime -1}(0)$.
			On $U$, $h\circ \pi$ is simply defined by $\phi(y,h\circ \pi(y))=(y,0)$.
			So, the map $h\circ \pi$ is smooth on $U$.
			By \eqref{eqn:qsjdofjmoqksd}, $h\circ \pi\in C^{\infty}(\pi^{-1}(\pi(U)))$.
			Now, suppose that $x_1\in M$ such that $(x_0,x_1)\notin R$.
			We replace $U$ by a neighborhood of $x_0$ small enough so that $x_1\notin \pi^{-1}(\pi(U))$ which is possible because $\pi$ is open and $R$ is closed.
			Since $h$ is injective by replacing $U$ with a smaller relatively compact neighborhood, we can suppose that $h$ is a homeomorphism onto its image.
			So, $h(U)$ is locally closed, i.e., $h(U)=A\cap B$, where $A\subseteq M$ is open and $B\subseteq M$ is closed. 
			Let $g\in C^\infty_c(M)$ such that $g(x_0)=1$ and $\supp(g)\subseteq A$.
			So, $\supp(g)\cap h(U)=\supp(g)\cap B$ is compact.
			Since $h$ is a homeomorphism onto its image, $g\circ h$ is compactly supported.
			We extend $g\circ h$ to $M/R$ by $0$ outside $\pi(U)$.
			By \MyCref{dfn:sub-Cartesian}[3], $g\circ h\circ \pi \in C^\infty(M)$.
			So, $g\circ h\in C^\infty(M/R)$.
			Notice that $g\circ h(\pi(x_0))=1$ and $g\circ h(\pi(x_1))=0$.
			Thus, $C^\infty(M/R)$ satisfies \MyCref{dfn:sub-Cartesian}[1].

			We now show that $\pi$ is a submersion at $x_0$. 
			We claim that the map 
				\begin{equation*}\begin{aligned}
					M\to M/R\times \R^n,\quad x\mapsto(\pi(x),g'(x))
				\end{aligned}\end{equation*}
			 is a local diffeomorphism at $x_0$. In fact a local inverse is the map 
				 \begin{equation*}\begin{aligned}
					 M/R\times \R^n\to M,\quad (z,t)\mapsto y,\quad \text{where }(h(z),y)=\phi^{-1}(h(z),t)
				 \end{aligned}\end{equation*}	 
			 Finally, since $M$ is locally finite dimensional, $M/R$ is locally finite dimensional.
		\end{proof}
	\paragraph{Fréchet topology:}
		Let $M$ be a differential space, $K\subseteq M$ a compact subset. 
		Since $K$ is compact and $M$ is locally finite dimensional, $K$ is finite dimensional.
		Let $f_1,\cdots,f_n\in C^\infty(K)$ such that for every $f\in C^\infty(K)$, $f=F(f_1,\cdots,f_n)$ for some $F\in C^\infty(\C^n)$.
		We can define the semi-norm
		\begin{equation}\label{eqn:norm_phi_K_sub_cart}\begin{aligned}
				\norm{f}_{C^m,K}=\inf\{\norm{F}_{C^m(\C^n)}:f=F(f_1,\cdots,f_n)\},\quad m\in \Z_.
			\end{aligned}\end{equation}
		It is clear that $\norm{\cdot}_{C^m,K}$ up to equivalence doesn't depend on the choice of $f_1,\cdots,f_n$.	
		We equip $C^\infty(M)$ with the topology generated by $\norm{\cdot}_{C^m,K}$ as $m\in \Z_+$, $K\subseteq M$ vary.
		By a standard diagonalization argument, $C^\infty(M)$ is complete, hence a Fréchet space.
		We equip $C^\infty_c(M)$ with the strongest topology that makes the maps $C^\infty(K)\to C^\infty_c(M)$ continuous for any compact subset $K$ of $M$.
		This makes $C^\infty_c(M)$ an LF space (limit of Fréchet spaces).
		If $E\to M$ is a vector bundle, then we define similarly topologies on $C^\infty(M,E)$ and $C^\infty_c(M,E)$.

	\paragraph{Densities:}
		If $E\to M$ is a real vector bundle on a differential space $M$ and $k\in \C$, then $\Omega^k(E)$ denotes the vector bundle of $k$-densities of $E$.
		Its fiber over $x\in M$ is the $1$-dimensional complex vector space of all functions $\phi:\Lambda^{\mathrm{rank}(E_x)}E_x\backslash 0\to \C$ such that $\phi(\lambda v)=|\lambda|^{k}\phi(v)$ for all $v \in \Lambda^{\mathrm{rank}(E_x)}E\backslash 0$ and $\lambda\in \R^\times$.
		The following holds: 
		\begin{itemize}
			\item    If 		 $\phi\in C^\infty(M,\Omega^k(E))$, then $|\phi|\in C^\infty(M,\Omega^{\Re(k)}(E)) $.
			\item  	If $\phi\in C^\infty(M,\Omega^k(E)),\psi\in C^\infty(M,\Omega^l(E))$, then $\phi\psi\in C^\infty(M,\Omega^{k+l}(E))$.
			\item If $k\in \R$, then we say that $\phi\in C^\infty(M,\Omega^k(E))$ is positive if for all $x\in M$, $\im(\phi_x)\subseteq \R_+$.
		\end{itemize}
		The proof of the following classical proposition is left to the reader.
		\begin{prop}\label{prop:density_natural_isomorphisms}
			\begin{enumerate}
				\item   If $E\to M$ is a real vector bundle, $k,l\in \C$, then there are natural isomorphisms
					\begin{equation}\label{eqn:densities_natural_iso}\begin{aligned}
							\Omega^{k+l}(E)\simeq \Omega^k(E)\otimes \Omega^{l}(E),\quad \Omega^k(E^*)\simeq\Omega^k(E)^*\simeq \Omega^{-k}(E),\quad \overline{\Omega^k(E)}\simeq \Omega^{\overline{k}}(E)
						\end{aligned}\end{equation}
				\item\label{prop:density_natural_isomorphisms:2}   If $0\to E\to F\to G\to 0$ is a short exact sequence of real vector bundles, $k\in \C$, then there is a natural isomorphism
					\begin{equation}\label{eqn:densities_short_exact_iso}\begin{aligned}
							\Omega^{k}(F)\simeq \Omega^{k}(E)\otimes \Omega^{k}(G).
						\end{aligned}\end{equation}
			\end{enumerate}
		\end{prop}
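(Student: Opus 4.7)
The plan is to construct explicit natural bundle maps fibrewise and observe that smoothness comes for free from the algebraic/polynomial nature of the construction. Both statements reduce, at each $x \in M$, to canonical isomorphisms of one-dimensional complex vector spaces, so it suffices in each case to exhibit a nonzero map.

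For part (i), I would write down the four maps directly on fibres. The first isomorphism $\Omega^{k+l}(E) \simeq \Omega^k(E) \otimes \Omega^l(E)$ sends $\phi \otimes \psi$ to the pointwise product $v \mapsto \phi(v)\psi(v)$, which has the required homogeneity $|\lambda|^{k+l}$. For $\Omega^k(E^*) \simeq \Omega^{-k}(E)$ I would use the canonical perfect pairing $\Lambda^{\rank(E)} E \otimes \Lambda^{\rank(E)} E^* \to \C$: given $\phi \in \Omega^k(E^*)$ and a nonzero $v \in \Lambda^{\rank(E)} E$, let $w^*$ be the unique element with $\langle w^*,v \rangle = 1$ and set $\tilde\phi(v) := \phi(w^*)$; replacing $v$ by $\lambda v$ forces $w^* \mapsto \lambda^{-1} w^*$, so $\tilde\phi(\lambda v) = |\lambda|^{-k}\tilde\phi(v)$. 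The isomorphism $\Omega^k(E)^* \simeq \Omega^{-k}(E)$ then comes from the pairing $\phi \otimes \psi \mapsto \phi\psi \in \Omega^0(E) = \C$ provided by the first isomorphism (noting that a function of homogeneity $|\lambda|^0$ on a one-dimensional space is constant). Finally, for $\overline{\Omega^k(E)} \simeq \Omega^{\bar k}(E)$, the map $\phi \mapsto \bar\phi$ with $\bar\phi(v) := \overline{\phi(v)}$ works because $|\lambda| \in \R_+$ gives $\overline{|\lambda|^k} = |\lambda|^{\bar k}$.

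For part (ii), the key algebraic input is the canonical isomorphism $\Lambda^{\rank(F)} F \simeq \Lambda^{\rank(E)} E \otimes \Lambda^{\rank(G)} G$ associated to the short exact sequence. Concretely, for decomposable nonzero $v_E \in \Lambda^{\rank(E)} E$ and $v_G \in \Lambda^{\rank(G)} G$, pick representative bases $e_1,\ldots,e_e$ of $E$ and lifts $\tilde g_1,\ldots,\tilde g_g \in F$ of representatives of $v_G$, and set $v_F := e_1 \wedge \cdots \wedge e_e \wedge \tilde g_1 \wedge \cdots \wedge \tilde g_g$. A standard check shows $v_F$ is independent of the choice of lifts because any two lifts differ by elements of $E$, which are annihilated after wedging with $e_1 \wedge \cdots \wedge e_e$. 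Using this, I would define the map $\Omega^k(E) \otimes \Omega^k(G) \to \Omega^k(F)$ by $(\phi \otimes \psi)(v_F) := \phi(v_E)\psi(v_G)$. Well-definedness is the only nontrivial point: the factorization $v_F \leftrightarrow v_E \otimes v_G$ is unique only up to the rescaling $(v_E, v_G) \mapsto (\lambda v_E, \lambda^{-1} v_G)$, and the two homogeneity factors $|\lambda|^k$ and $|\lambda|^{-k}$ cancel exactly.

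The only mildly subtle point is part (ii), namely packaging the exterior-algebra isomorphism naturally enough that it defines a smooth bundle map over a differential space. Since the construction is polynomial in a choice of local trivializations of $E$ and $G$, with independence from the trivialization built in, this reduces to a routine verification on overlaps; no analytic difficulty arises from the fact that $M$ is merely a differential space rather than a smooth manifold, because we are working fibrewise in finite-dimensional bundles in the sense of \Cref{dfn:vector_bundles_sub-Cartesian}.
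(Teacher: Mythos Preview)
Your proposal is correct; the paper explicitly leaves this classical proposition to the reader, and what you have written is precisely the standard fibrewise construction one is expected to supply. The only minor remark is that in part (ii) you might also note that the inverse map is obtained by fixing any nonzero $v_E$ and sending $\Phi \in \Omega^k(F)$ to $\phi \otimes \psi$ with $\phi(v_E)\psi(v_G) := \Phi(v_F)$, but since all spaces are one-dimensional this is automatic once you have a nonzero map in one direction.
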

		\begin{cor}\label{prop:cor_triangle_isom}
			If $X,Y,Z$ are differential spaces, $f:X\to Y$, $g:Y\to Z$ are submersions
		then for any $k\in \C$, 
			\begin{equation*}\begin{aligned}
				\Omega^k(\ker(\odif{(g\circ f)}))\simeq \Omega^k(\ker(\odif{f}))\otimes f^*(\Omega^k(\ker(\odif{g}))).
			\end{aligned}\end{equation*}
		\end{cor}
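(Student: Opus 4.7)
The plan is to produce a short exact sequence of honest vector bundles over $X$ and then feed it into \MyCref{prop:density_natural_isomorphisms}[2]. Because $f$ and $g\circ f$ are both submersions (the composition of submersions being a submersion, as observed after \Cref{rem:flow_vector_field}), the spaces $\ker(\odif{f})$ and $\ker(\odif{(g\circ f)})$ are real vector bundles over $X$, and $\ker(\odif{g})$ is a real vector bundle over $Y$, so $f^*\ker(\odif{g})$ makes sense as a vector bundle over $X$.

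The chain rule $\odif{(g\circ f)}_x=\odif{g}_{f(x)}\circ \odif{f}_x$ shows that the restriction of $\odif{f}:TX\to TY$ to $\ker(\odif{(g\circ f)})$ factors through $f^*\ker(\odif{g})\to X$. First I would verify that for each $x\in X$ the resulting map
\begin{equation*}
0\longrightarrow \ker(\odif{f}_x)\longrightarrow \ker(\odif{(g\circ f)}_x)\xrightarrow{\ \odif{f}_x\ }\ker(\odif{g}_{f(x)})\longrightarrow 0
\end{equation*}
is exact. Exactness in the middle is immediate from the chain rule, and surjectivity on the right uses that $f$ is a submersion: any $w\in \ker(\odif{g}_{f(x)})\subseteq T_{f(x)}Y$ lifts to some $v\in T_xX$ with $\odif{f}_x(v)=w$, and then $\odif{(g\circ f)}_x(v)=\odif{g}_{f(x)}(w)=0$, so $v\in \ker(\odif{(g\circ f)}_x)$. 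Smoothness of the bundle maps follows from smoothness of $\odif{f}:TX\to TY$ together with the fact (recalled in the subsection on submersions) that $\ker(\odif{f})$ and $\ker(\odif{(g\circ f)})$ inherit their bundle structures as subbundles of $TX$. Combined with the fiberwise dimension count this upgrades the pointwise exact sequence to an exact sequence of vector bundles over $X$.

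Applying \MyCref{prop:density_natural_isomorphisms}[2] to this sequence yields the natural isomorphism
\begin{equation*}
\Omega^k\bigl(\ker(\odif{(g\circ f)})\bigr)\simeq \Omega^k\bigl(\ker(\odif{f})\bigr)\otimes \Omega^k\bigl(f^*\ker(\odif{g})\bigr).
\end{equation*}
The proof finishes by noting that the $\Omega^k$ construction commutes with pullback along smooth maps, so $\Omega^k(f^*\ker(\odif{g}))\simeq f^*\Omega^k(\ker(\odif{g}))$; this is a fiberwise identity that extends trivially to a bundle isomorphism since on each fiber both sides are canonically identified with the same space of $k$-densities on $\ker(\odif{g}_{f(x)})$.

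The only genuinely non-routine part is the first step, i.e.\ checking that the chain-rule sequence is a short exact sequence of vector bundles in the differential-space setting; but since $f$, $g$, and $g\circ f$ are all submersions, the relevant kernels are honest vector bundles by the discussion preceding \Cref{rem:flow_vector_field}, so this reduces to the standard linear-algebra verification above. Everything else is an application of the already established \Cref{prop:density_natural_isomorphisms}.
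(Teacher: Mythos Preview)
Your proposal is correct and follows essentially the same approach as the paper: both construct the short exact sequence $0\to\ker(\odif{f})\to\ker(\odif{(g\circ f)})\to f^*\ker(\odif{g})\to 0$ and apply \Crefitem{prop:density_natural_isomorphisms}{2}. You simply supply more detail on exactness and the compatibility of $\Omega^k$ with pullback, which the paper leaves implicit.
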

		\begin{proof}
			The map $g\circ f$ is a submersion because $f,g$ are submersions.
			Since $f,g,g\circ f$ are submersions, $\ker(\odif{f}),\ker(\odif{g}),\ker(\odif{(g\circ f)})$ are real vector bundles. 
			The corollary follows \Crefitem{prop:density_natural_isomorphisms}{2} applied to the short exact sequence 
				\begin{equation*}
					0\to \ker(\odif{f})\to \ker(\odif{(g\circ f)})\to f^*(\ker(\odif{g}))\to 0\qedhere
				\end{equation*}
		\end{proof}

	\paragraph{Integration along fibers:}
		If $M$ is a differential space, $f\in C^\infty(M\times \R^n)$ such that the natural projection $\supp(f)\to M$ is proper, then $x\mapsto \int f(x,y)dy\in C^\infty(M)$.
		This follows easily from \Cref{prop:Whitney_full} (or directly from \Cref{dfn:sub-Cartesian}).

		Let $f:M_1\to M_2$ be a submersion between differential spaces, $E$ a vector bundle on $M_2$.
		Thanks to \Cref{prop:parition_of_unity}, we can define an integration along the fibers map
		\begin{equation}\label{eqn:integration_along_fibers}\begin{aligned}
				f_*:C^\infty_c(M_1,\Omega^1(\ker(\odif{f}))\otimes f^*(E) )\to C^\infty_c(M_2,E)
			\end{aligned}\end{equation}
		The map $f_*$ is continuous with respect to the $LF$-topologies defined above.
	
		Let $X\in C^\infty(M_1,\ker(\odif{f}))$. For any $k\in \C$, we define the Lie derivative 
				\begin{equation}\label{eqn:Lie_derivative_dfn}\begin{aligned}
					\cL_X:	C^\infty(M_1,\Omega^k(\ker(\odif{f}))\otimes f^*(E))\to C^\infty(M_1,\Omega^k(\ker(\odif{f}))\otimes f^*(E))
				\end{aligned}\end{equation}
		as follows: Locally, $M_1=M_2\times \R^n$ and $f$ is the projection map.
		So, if $\vol^k:\Lambda^n\R^n\backslash 0\to \C$ denotes the canonical element which maps $e_1\wedge\cdots \wedge e_n$ to $1$, then
				\begin{equation*}\begin{aligned}
				\cL_X(g \vol^k\otimes s)=X(g)\vol^k\otimes  s+ k g \mathrm{div}(X)\vol^k\otimes  s ,\quad g\in C^\infty(M_2\times \R^n),s\in C^\infty(M_2,E),		
			\end{aligned}\end{equation*}
			where $\mathrm{div}(X)\in C^\infty(M_2\times \R^n)$ is the divergence of $X$, i.e., if $X=\sum X_i\frac{\partial}{\partial t_i}$ with $X_i\in C^\infty(M_2\times \R^n)$, then $\mathrm{div}(X)=\sum_{i=1}^n\frac{\partial X_i}{\partial t_i}$.

	\paragraph{Pullback of smooth maps:}
		We say that a diagram of smooth maps between differential spaces
		\begin{equation*}\begin{tikzcd}
			M_1\arrow[d,"\pi_1"]\arrow[r, "f"]    &   M_2 \arrow[d,"\pi_2"]\\
			\tilde{M_1}\arrow[r, "\tilde{f}"]& \tilde{M}_2
		\end{tikzcd}\end{equation*} 
		is a pullback diagram if it commutes and the map $(\pi_1,f):M_1\to \tilde{M}_1\times_{\tilde{f},\pi_2} M_2$ is a diffeomorphism.

		\begin{prop}\label{lem:pullback_local_diff}
			In a pullback diagram, if $\tilde{f}$ is a submersion, then $f$ is also a submersion and the map $\odif{\pi_{1}}:\ker(\odif{f})\to \pi_{1}^*(\ker(\odif{\tilde{f}}))$ is an isomorphism.
		   Furthermore, if $E\to \tilde{M}_2$ is a vector bundle,  then
				\begin{equation}\label{eqn:integration_pullback}\begin{aligned}
					f_{*}(h\circ \pi_{1})=\tilde{f}_*(h)\circ \pi_2,\quad \forall h\in C^\infty_c(\tilde{M_1},\tilde{f}^*(E)\otimes \Omega^1(\ker(\odif{\tilde{f}})),
				\end{aligned}\end{equation}
			where $f_*(h\circ \pi_1)$ is well-defined because $f_{|\supp(h\circ \pi_1)}:\supp(h\circ \pi_1)\to M_2$ is proper.
		\end{prop}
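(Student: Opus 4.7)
The plan is to reduce the proposition to a local model provided by the submersion structure of $\tilde f$. Fix $p\in M_1$ and set $y_0=\pi_1(p)$. By the definition of submersion there exist an open neighbourhood $U\subseteq \tilde M_1$ of $y_0$, a smooth map $g:\tilde M_1\to \R^n$, and an open set $V\subseteq \tilde M_2\times \R^n$ such that $(\tilde f,g):U\to V$ is a diffeomorphism under which $\tilde f$ corresponds to the first projection. The universal property of the pullback, applied to the base change of $V\to \tilde M_2$ along $\pi_2$, identifies $\pi_1^{-1}(U)$ with the open subset $W=\{(x_2,t)\in M_2\times \R^n:(\pi_2(x_2),t)\in V\}$ of $M_2\times \R^n$, in such a way that $f$ corresponds to the first projection $W\to M_2$ and $\pi_1$ corresponds to the composition of $(x_2,t)\mapsto (\pi_2(x_2),t)$ with $(\tilde f,g)^{-1}$.

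From this local model $f$ is a submersion at $p$, and since $p\in M_1$ was arbitrary, $f$ is a submersion everywhere. In these coordinates both $\ker(\odif f)$ and $\ker(\odif{\tilde f})$ are identified with $\{0\}\oplus \R^n$, while $\odif{\pi_1}$ acts as the identity on the $\R^n$-factor. Hence $\odif{\pi_1}:\ker(\odif f)\to \pi_1^{*}(\ker(\odif{\tilde f}))$ is a fiberwise linear isomorphism, and therefore an isomorphism of vector bundles. In particular, the induced map on $\Omega^1$-densities gives a canonical identification $\Omega^1(\ker(\odif f))\cong \pi_1^{*}(\Omega^1(\ker(\odif{\tilde f})))$.

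For the integration identity, use a partition of unity on $\tilde M_1$ to reduce to the case $\supp(h)\subseteq U$. In the local coordinates on $V$, write $h=g_0(y,t)\,dt\otimes s(y)$ with $g_0$ smooth and compactly supported and $s$ a local section of $E$; transferring through $(\tilde f,g)^{-1}$ and $\pi_1$, the section $h\circ \pi_1$ takes the form $g_0(\pi_2(x_2),t)\,dt\otimes s(\pi_2(x_2))$ on $W$. Since $\supp(h)$ projects to a compact set in the $\R^n$-factor, the restriction of $f$ to $\supp(h\circ \pi_1)$ is proper, so $f_{*}(h\circ \pi_1)$ is well defined. Fubini then yields
\begin{equation*}
    f_{*}(h\circ \pi_1)(x_2)=\left(\int_{\R^n}g_0(\pi_2(x_2),t)\,dt\right) s(\pi_2(x_2))=\tilde f_{*}(h)(\pi_2(x_2)),
\end{equation*}
which is the claimed identity.

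The only step with genuine content is the construction of the local diffeomorphism $\pi_1^{-1}(U)\cong W$ from the pullback hypothesis; this amounts to verifying that the fiber product of differential spaces is compatible with the local trivialisation of the submersion $\tilde f$, which reduces to the universal property of the pullback applied to the open subset $V\subseteq \tilde M_2\times \R^n$ and the map $\pi_2$. The remainder is bookkeeping together with the density identifications from \Cref{prop:density_natural_isomorphisms} and a standard integration in $\R^n$.
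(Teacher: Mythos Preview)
Your proof is correct and follows essentially the same approach as the paper: the paper's proof simply says ``Since $\tilde{f}$ is a submersion, we can suppose that $\tilde{M}_1=\tilde{M}_2\times \R^n$ and $\tilde{f}$ is the natural projection. The proposition is now straightforward to prove.'' You have carried out this reduction explicitly and filled in the straightforward details.
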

		\begin{proof}
			Since $\tilde{f}$ is a submersion, we can suppose that $\tilde{M}_1=\tilde{M}_2\times \R^n$ and $\tilde{f}$ is the natural projection.
			The proposition is now straightforward to prove.
		\end{proof}
\section{Quasi-Lie groupoid}\label{sec:quasi_Lie_groupoid}
	A groupoid is a small category where all morphisms are invertible. We denote by \begin{itemize}
		\item     $G$ the set of morphisms.
		\item $G^{(0)}$ the set of objects.
		\item $s,r:G\to G^{(0)}$ the maps sending a morphism to its domain and range respectively.
		\item $\iota:G\to G$ the inverse map. Generally, the inverse of $\gamma\in G$ is denoted  by $\gamma^{-1}$.
		\item $u:G^{(0)}\to G$ the identity map. Generally, we will identify $G^{(0)}$ with $u(G^{(0)})$.
		\item $G^{(2)}:=\{(\gamma',\gamma)\in G\times G:s(\gamma')=r(\gamma)\}$ the set of composable pairs.
		\item $m:G^{(2)}\to G$ the composition map. Generally, we use $\gamma'\gamma$ instead of $m(\gamma',\gamma)$.
	\end{itemize}
	We use $G\rightrightarrows G^{(0)}$ to denote a groupoid. If $A,B\subseteq G^{(0)}$ are subsets, then 
		\begin{equation*}\begin{aligned}
			G_A:=s^{-1}(A),\quad G^{B}:=r^{-1}(B),\quad G_A^B:=G_A\cap G^B.
		\end{aligned}\end{equation*}
	If $A$ or $B$ are singletons $\{x\}$ and $\{y\}$, then we use $G_x$, $G^x$ and $G_x^y$ for simplicity.
	We also define 
			\begin{equation*}\begin{aligned}
				AB:=\{\gamma\gamma':(\gamma,\gamma')\in G^{(2)}\cap (A\times B)\},\quad A^{-1}:=\{\gamma^{-1}:\gamma\in A\},\quad \forall A,B\subseteq G.
			\end{aligned}\end{equation*}

	\begin{exs}\label{ex:exs_groupoids}
		\begin{enumerate}
			\item  The trivial groupoid over a set $X$ is the unique groupoid where $G=G^{(0)}=G^{(2)}=X$, and all the structure maps are the identity.
			\item    The pair groupoid over a set $X$ is the unique category where $G^{(0)}=X$ and for every $x,y$, there exists a unique morphism from $x$ to $y$.
				So, $G=X\times X$ and the structure maps are
				\begin{equation*}\begin{aligned}
						s(y,x)=x, \quad r(y,x)=y, \quad \iota(y,x)=(x,y), \quad u(x)=(x,x), \quad m((z,y),(y,x))=(z,x).
					\end{aligned}\end{equation*}
				More generally, if 
				$R\subseteq X\times X$ is an equivalence relation, then $R\rightrightarrows X$ is a groupoid where the structure maps are the restriction of the structure maps from $X\times X\rightrightarrows X$.
			\item   If $G^{(0)}=\{\bullet\}$, then a groupoid is just a group. More generally, if $s=r$, then a groupoid is a bundle of groups over $G^{(0)}$.
			\item\label{ex:exs_groupoids:quotient}   Let $\Gamma$ be a group, $S$ a set of subgroups of $\Gamma$ which is closed under conjugation.
				We define the groupoid
				\begin{equation*}\begin{aligned}
						G=\{\gamma H\eta:\gamma,\eta\in \Gamma,H\in S\}        ,\quad G^{(0)}=S
					\end{aligned}\end{equation*}
				Notice that since $S$ is closed under conjugation,
				\begin{equation*}\begin{aligned}
						\gamma H\eta=\gamma\eta (\eta^{-1 }H\eta)=(\gamma H\gamma^{-1})\gamma\eta
					\end{aligned}\end{equation*}
				is a left coset of $\eta^{-1}H\eta\in S$ and a right coset of $\gamma H\gamma^{-1}\in S$.
				The structure maps are 
				\begin{equation*}\begin{aligned}
						s(A)=A^{-1}A,\quad r(A)=AA^{-1},\quad u(H)=H,\quad \iota(A)=A^{-1},\quad m(B,A)=BA
					\end{aligned}\end{equation*}
				Notice that if $A=\gamma H\eta$, then $s(A)=\eta^{ -1}H \eta$ is an element of $S$.
				Similar computation with $r$ and $m$.
				If $S=\{H\}$ where $H$ is a normal subgroup, then $G=\Gamma/H$ is the quotient of $\Gamma$ by $H$.
				In general, we call the groupoid $G$, \textit{the quotient of $\Gamma$ by $S$.}
				This groupoid plays a fundamental role in this article. 
				It has also been used in a different context by Bradd, Higson, Yuncken, see \cite{bradd2025liegroupoidssatakecompactification,bradd2025liegroupoidssatakecompactification2}.
		\end{enumerate}
	\end{exs}

	\begin{dfn}\label{dfn:quasi-lie}
		A \textit{quasi-Lie groupoid} is a groupoid $G\rightrightarrows G^{(0)}$ together with a Hausdorff locally compact second countable topology and locally finite dimensional differential structure on $G$ and on $G^{(0)}$ such that
		\begin{enumerate}
			\item   The maps $u$, $r$, $s$, $\iota$ and $m$ are smooth where $G^{(2)}$ is equipped with the differential structure inherited from $G\times G$.
			\item   The source map $s:G\to G^{(0)}$ is a submersion.
		\end{enumerate}
	\end{dfn}
	\begin{rem}\label{rem:quasi-lie_is_Lie}
		If $G^{(0)}$ is a smooth manifold, then $G$ is also a smooth manifold because $s$ is a submersion.
		In this case, $G\rightrightarrows G^{(0)}$ is called a Lie groupoid.
	\end{rem}
	\begin{prop}\label{lem:multi_is_sub}
		\begin{enumerate}
			\item The identity map $u:G^{(0)}\to G$ is a closed smooth embedding.
			\item The map $\iota:G\to G$ is a diffeomorphism.
			\item The maps $m:G^{(2)}\to G$ and $r:G\to G^{(0)}$ are submersions.
		\end{enumerate}
	\end{prop}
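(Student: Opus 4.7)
The plan is to dispatch the three parts in the order in which they become available, since each later part will use the earlier ones.

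For part 2, I would observe that $\iota\circ\iota=\mathrm{id}_{G}$ because in any groupoid $(\gamma^{-1})^{-1}=\gamma$. Since $\iota$ is smooth by \Cref{dfn:quasi-lie}, it is its own smooth inverse, hence a diffeomorphism.

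For part 1, I would first note that $s\circ u=\mathrm{id}_{G^{(0)}}$, so $u$ is injective and $s$ restricts to a smooth left inverse of $u$ on its image. The image $u(G^{(0)})$ can be identified as the equaliser
\begin{equation*}
u(G^{(0)})=\{\gamma\in G:\gamma=u(s(\gamma))\},
\end{equation*}
which is closed in $G$ because $G$ is Hausdorff and both $\mathrm{id}_{G}$ and $u\circ s$ are continuous. So $u$ is an injective smooth map whose image is closed and on which the restriction of $s$ is a smooth two-sided inverse; this shows $u$ is a closed smooth embedding in the sense of \MyCref{dfn:smooth_map_sub_Cart}[smooth].

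For part 3, the range map is the easy one: $r=s\circ\iota$ is the composition of a submersion and a diffeomorphism (using part 2), hence a submersion. The substantive step is showing $m$ is a submersion. I would introduce the map
\begin{equation*}
\alpha:G^{(2)}\to G\times_{s,s}G,\qquad \alpha(\gamma',\gamma)=(\gamma'\gamma,\gamma),
\end{equation*}
which lands in $G\times_{s,s}G=\{(\eta,\gamma)\in G\times G:s(\eta)=s(\gamma)\}$ because $s(\gamma'\gamma)=s(\gamma)$. Its set-theoretic inverse is $(\eta,\gamma)\mapsto(\eta\gamma^{-1},\gamma)$, which is smooth by parts 1-2 together with smoothness of $m$ and $\iota$. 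Thus $\alpha$ is a diffeomorphism, and under $\alpha$ the multiplication $m$ becomes the projection $\pi_{1}:G\times_{s,s}G\to G$ onto the first factor. This projection fits into the pullback square
\begin{equation*}
\begin{tikzcd}
G\times_{s,s}G\arrow[d,"\pi_{2}"]\arrow[r,"\pi_{1}"] & G\arrow[d,"s"]\\
G\arrow[r,"s"] & G^{(0)}
\end{tikzcd}
\end{equation*}
and since $s$ is a submersion by assumption, \Cref{lem:pullback_local_diff} tells us $\pi_{1}$ is a submersion. Hence $m=\pi_{1}\circ\alpha$ is a submersion.

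The only subtle point is confirming that $G\times_{s,s}G$ is a genuine differential space and that the displayed square really is a pullback diagram in the sense required by \Cref{lem:pullback_local_diff}; both reduce to the observation that $s$ is a submersion (which makes the fibre product locally look like $G\times\mathbb{R}^{n}$) together with the tautological identification $(\eta,\gamma)\mapsto(\gamma,\eta)$ between the two presentations of the fibre product. The rest is bookkeeping, so the core of the proof is really just the construction of the diffeomorphism $\alpha$ and the pullback argument for $\pi_{1}$.
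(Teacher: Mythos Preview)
Your proof is correct and essentially parallel to the paper's, with two pleasant variations worth noting. For part~1, the paper argues closedness of $u(G^{(0)})$ via sequences and the identity $u(x_n)u(x_n)^{-1}=u(x_n)$, whereas your equaliser description $u(G^{(0)})=\{\gamma:\gamma=u(s(\gamma))\}$ is more direct. For the multiplication in part~3, the paper works locally: it fixes $(\gamma_0,\gamma_1)\in G^{(2)}$, uses that $s$ is a submersion at $\gamma_1$ to find $g:G\to\R^n$ with $\gamma\mapsto(s(\gamma),g(\gamma))$ a local diffeomorphism, and then exhibits $(\gamma,\gamma')\mapsto(\gamma\gamma',g(\gamma'))$ as a local diffeomorphism by writing down an explicit local inverse. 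Your route---the global diffeomorphism $\alpha:G^{(2)}\to G\times_{s,s}G$ followed by \Cref{lem:pullback_local_diff}---packages the same idea more conceptually: unwinding the proof of \Cref{lem:pullback_local_diff} in your situation recovers exactly the paper's local chart. The paper's version is slightly more self-contained (it does not need the pullback lemma), while yours makes the structural reason clearer.
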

	\begin{proof}
		\begin{enumerate}
			\item The map $u$ is a smooth embedding because $s\circ u=\mathrm{Id}$. 
				It is closed because if $u(x_n)\to \gamma\in G$, then $u(x_n)u(x_n)^{-1}\to \gamma\gamma^{-1}$. 
				But since $G$ is Hausdorff, and $u(x_n)u(x_n)^{-1}=u(x_n)$, it follows that $\gamma=\gamma\gamma^{-1}\in u(G^{(0)})$.
			\item The map $\iota $ is its own inverse.
			\item     
			Since $r=s\circ \iota$, it follows that $r$ is a submersion.
			Let $(\gamma_0,\gamma_1)\in G^{(2)}$. Since $s$ is a submersion at $\gamma_1$, 
			we can find $g:G\to \R^n$ such that $\phi: x\in G\mapsto  (s(x),g(x))\in G^{(0)}\times \R^n$ is a local diffeomorphism at $\gamma_1$.
			The map 
			\begin{equation*}\begin{aligned}
					G^{(2)}\to  G\times \R^n,\quad (\gamma,\gamma')\mapsto(\gamma\gamma',g(\gamma'))
			\end{aligned}\end{equation*}
			is a local diffeomorphism at $(\gamma_0,\gamma_1)$.
			A local inverse is the map
			\begin{equation*}\begin{aligned}
					G\times \R^n\to G^{(2)},\quad (\gamma,l)\mapsto (\gamma \phi^{-1}(s(\gamma),l)^{-1},\phi^{-1}(s(\gamma),l)).
				\end{aligned}\end{equation*}
			Hence, $m$ is a submersion.\qedhere
		\end{enumerate}
	\end{proof}

	\paragraph{Morphisms of quasi-Lie groupoids:}
	Let $G\rightrightarrows G^{(0)}$ and $H\rightrightarrows H^{(0)}$ be quasi-Lie groupoids.
	A morphism of quasi-Lie groupoids is a smooth map $f:H\to G$ which is also a functor of categories, i.e., if $(\gamma,\gamma')\in H^{(2)}$, then $(f(\gamma),f(\gamma'))\in G^{(2)}$ and $f(\gamma \gamma')=f(\gamma)f(\gamma')$.

	\paragraph{Quasi-Lie subgroupoid}	If $H\subseteq G$, and the inclusion is a morphism of quasi-Lie groupoids, then we say that $H\rightrightarrows H^{(0)}$ is a quasi-Lie subgroupoid of $G\rightrightarrows G^{(0)}$. 
\paragraph{Saturated subset:}
		A locally closed subset $A$ of $G^{(0)}$ is called saturated if $G_A=G^A$.		
		In this case, $G_A\rightrightarrows A$ is a quasi-Lie subgroupoid of $G$.
\paragraph{Pullback:}If $G\rightrightarrows G^{(0)}$ is a quasi-Lie groupoid, $f:X\to G^{(0)}$ is a smooth map, then we can define the pullback groupoid $f^*(G)\rightrightarrows X$ by 
	\begin{equation*}\begin{aligned}
		f^*(G)=\left\{(y,\gamma,x)\in X\times G\times X:\gamma\in G_{f(x)}^{f(y)}\right\},	\quad	 r(y,\gamma,x)=y,\quad s(y,\gamma,x)=x\\
		u(x)=(x,f(x),x),\quad (y,\gamma,x)^{-1}=(x,\gamma^{-1},y),\quad (z,\gamma,y)\cdot (y,\eta,x)=(z,\gamma\eta,x).
	\end{aligned}\end{equation*}
\paragraph{Action of a groupoid on a space:}
	Let $X$ be a differential space.
	A smooth action of $G\rightrightarrows G^{(0)}$ on $X$ consists of smooth maps $\sigma:X\to G^{(0)}$ and $(\gamma,x)\in G\times_{s,\sigma}X\mapsto \gamma\cdot x\in X$ which satisfy 
		\begin{equation*}\begin{aligned}
			\sigma(\gamma\cdot x)=r(\gamma),\quad \eta\cdot (\gamma\cdot x)=(\eta\gamma)\cdot x,\quad \sigma(x)\cdot x=x,\quad \forall x\in X,\gamma\in G_{\sigma(x)},\eta\in G_{r(\gamma)}.		
		\end{aligned}\end{equation*}
	Given such a smooth action, one can define the quasi-Lie groupoid $G\ltimes X\rightrightarrows X$ as
		\begin{equation}\label{eqn:crossed_product_groupoid}\begin{aligned}
			G\ltimes X=\{(y,\gamma,x)\in X\times G\times X:\sigma(x)=s(\gamma),y=\gamma \cdot x \}\subseteq \sigma^*(G)
		\end{aligned}\end{equation}
	equipped with the subspace topology, sub-differential structure, and subgroupoid structure.
\paragraph{Proper subset:} 
	A subset $A\subseteq G$ is called proper if the maps $s_{|A},r_{|A}:A\to G^{(0)}$ are proper maps.
	A proper subset is automatically closed.
	If $E\to G$ is a vector bundle, then $C^\infty_{\mathrm{proper}}(G,E)$ denotes the space of properly supported sections.
	
\section{Lie algebroid}\label{sec:algebraic_constructions}

\begin{dfn}\label{dfn:Lie_algebroid}
	Let $M$ be a differential space. A Lie algebroid over $M$ is a real vector bundle $A\to M$ together with a family of $\R$-linear maps $\rho_x:A_x\to T_xM$ for $x\in M$ 
	and a Lie algebra structure on $C^\infty(M,A)$ such that 
		\begin{equation}\label{eqn:Lie_algebra}\begin{aligned}
			[X,fY]=f[X,Y]+\rho(X)(f)Y,		
		\end{aligned}\end{equation}
		where $\rho:A\to TM$ is the map induced from the family $(\rho_x)_{x\in M}$ which is called the anchor map.
\end{dfn}
By \eqref{eqn:Lie_algebra}, $\rho$ is a smooth map.
By looking at $[[X,Y],fZ]$ and using \eqref{eqn:Lie_algebra}, one deduces that 
	\begin{equation}\begin{aligned}
		\rho([X,Y])=[\rho(X),\rho(Y)],\quad \forall X,Y\in C^\infty(M,A)
	\end{aligned}\end{equation}

\begin{ex}\label{ex:Lie_algebroids}
	If $f:M\to M'$ is a submersion, then $\ker(\odif{f})$ is a Lie algebroid, where $\rho:\ker(\odif{f})\to TM$ is the inclusion.
\end{ex}

\paragraph{Lie algebroid of a quasi-Lie groupoid:} Let $G\rightrightarrows G^{(0)}$ be a quasi-Lie groupoid.
The restriction of the vector bundle $\ker(\odif{s})$ to $G^{(0)}$ is called the \textit{Lie algebroid} of $G$ and is denoted by $A(G)$.
\footnote{One can define $A(G)$ to be $\ker(\odif{r})_{|G^{(0)}}$. 
		The choice is a matter of convention. 
		It should be consistent with the choice of the Lie algebra of a Lie group as that of right or left invariant vector fields.
		Our convention identifies the Lie algebra of a Lie group with right invariant vector fields
		which implies a sign in the BCH formula, see \eqref{eqn:BCH}.}
The anchor map $\rho:A(G)\to TG^{(0)}$ is the restriction of $\odif{r}$ to $A(G^0)$. We will now define a Lie algebra structure on $C^\infty(G^{(0)},A(G))$.
For any $\gamma\in G$, let
\begin{equation*}\begin{aligned}
		R_\gamma:G_{r(\gamma)}\to G_{s(\gamma)},\ R_\gamma(\gamma')=\gamma'\gamma
		,\quad 
		L_\gamma:G^{s(\gamma)}\to G^{r(\gamma)},\ L_\gamma(\gamma')=\gamma\gamma'
	\end{aligned}\end{equation*}
The maps $R_\gamma$ and $L_\gamma$ diffeomorphisms because their inverses are $R_{\gamma^{-1}}$ and $L_{\gamma^{-1}}$.
The differential of $R_\gamma$ and $L_\gamma$ induces isomorphisms
\begin{equation}\label{eqn:left_right_iso_groupoid}\begin{aligned}
		(\odif{R}_\gamma)_{\gamma'}  :\ker(\odif{s})_{\gamma'}\to \ker(\odif{s})_{\gamma'\gamma} ,\quad   (\odif{L}_{\gamma'})_{\gamma}  :\ker(\odif{r})_\gamma\to \ker(\odif{r})_{\gamma'\gamma},\quad \forall(\gamma',\gamma)\in G^{(2)} .
	\end{aligned}\end{equation}
The map $\iota$ induces a diffeomorphism from $G^x$ to $G_x$. Hence, its differential gives an isomorphism 
	\begin{equation}\label{eqn:iota_iso_grpd}\begin{aligned}
		\ker(\odif{r})\simeq \iota^*(\ker(\odif{s})).		
	\end{aligned}\end{equation}
	The isomorphisms \eqref{eqn:left_right_iso_groupoid}  and \eqref{eqn:iota_iso_grpd} induce isomorphisms
	\begin{equation}\label{eqn:iso_kerds_kerdr}\begin{aligned}
			&\ker(\odif{s})\simeq r^*(A(G)), &&\ker(\odif{s})_\gamma\xrightarrow{(\odif{R}_{\gamma^{-1}})_{\gamma}}r^*(A(G))_{\gamma} \\
			&\ker(\odif{r})\simeq s^*(A(G)),  &&\ker(\odif{r})_\gamma\xrightarrow{\odif{\iota}_{s(\gamma)}\circ  (\odif{L}_{\gamma^{-1}})_{\gamma}}s^*(A(G))_{\gamma}.
		\end{aligned}\end{equation}

	\begin{prop}\label{prop:left_right_invariant_sections}
		Let $X\in C^\infty(G^{(0)},A(G))$. There exists a unique $X_R\in C^\infty(G,\ker(\odif{s}))$ called the right invariant vector field associated to $X$ such that:
		\begin{enumerate}
			\item\label{item:restriction:ksdjmfqkjsdmjfmjsdqmqf} The restriction of $X_R$ to $G^{(0)}$ is equal to $X$.
			\item\label{item:invariance:ksdjmfqkjsdmjfmjsdqmqf} For any $(\gamma',\gamma)\in G^{(2)}$, $\odif{R}_\gamma (X_R(\gamma'))=X_R(\gamma'\gamma)$, i.e., $\odif{R}_\gamma\circ X_R=X_R\circ R_\gamma$.
		\end{enumerate}
		Furthermore,
		\begin{equation}\label{eqn:drXR}\begin{aligned}
				\odif{r}\circ X_R= \rho(X)\circ r       .
			\end{aligned}\end{equation}
	\end{prop}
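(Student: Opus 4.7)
The plan is to exploit the vector-bundle isomorphism $\ker(\odif{s})\simeq r^*(A(G))$ established in \eqref{eqn:iso_kerds_kerdr}, and simply transport $X$ across this isomorphism.

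Concretely, I define
\[
X_R(\gamma) := (\odif{R}_\gamma)_{r(\gamma)}\bigl(X(r(\gamma))\bigr)\in \ker(\odif{s})_\gamma,\qquad \gamma\in G.
\]
Since the map $w\mapsto (\odif{R}_\gamma)_{r(\gamma)}(w)$ from $r^*(A(G))_\gamma$ to $\ker(\odif{s})_\gamma$ is the inverse of the bundle isomorphism in \eqref{eqn:iso_kerds_kerdr}, $X_R$ is precisely the section of $\ker(\odif{s})$ corresponding to the pulled-back section $X\circ r$ of $r^*(A(G))$. Smoothness of $X_R$ is then automatic: $r$ is smooth (in fact a submersion by \Cref{lem:multi_is_sub}) and $X$ is smooth, so $X\circ r$ is a smooth section of $r^*(A(G))$, and the bundle isomorphism is smooth.

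Next I verify the two listed properties. For \Crefitem{item:restriction:ksdjmfqkjsdmjfmjsdqmqf}, if $\gamma=x\in G^{(0)}$, then $R_x$ is the identity on $G_x$ (since $\gamma'x=\gamma'$ whenever $s(\gamma')=x$), so $(\odif{R}_x)_x$ is the identity and $X_R(x)=X(x)$. For \Crefitem{item:invariance:ksdjmfqkjsdmjfmjsdqmqf}, the groupoid associativity gives $R_{\gamma'\gamma}=R_\gamma\circ R_{\gamma'}$ on $G_{r(\gamma')}$; differentiating at $r(\gamma')$ and using $r(\gamma'\gamma)=r(\gamma')$ yields
\[
X_R(\gamma'\gamma)=(\odif{R}_{\gamma'\gamma})_{r(\gamma')}(X(r(\gamma')))=(\odif{R}_\gamma)_{\gamma'}\bigl((\odif{R}_{\gamma'})_{r(\gamma')}X(r(\gamma'))\bigr)=(\odif{R}_\gamma)_{\gamma'}\bigl(X_R(\gamma')\bigr).
\]
For uniqueness, note $\gamma=u(r(\gamma))\cdot \gamma=R_\gamma(r(\gamma))$, so any $Y$ satisfying both properties must satisfy $Y(\gamma)=(\odif{R}_\gamma)_{r(\gamma)}(Y(r(\gamma)))=(\odif{R}_\gamma)_{r(\gamma)}(X(r(\gamma)))=X_R(\gamma)$.

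Finally, \eqref{eqn:drXR} follows from the observation $r\circ R_\gamma=r$ on $G_{r(\gamma)}$, which yields $\odif{r}\circ(\odif{R}_\gamma)_{r(\gamma)}=\odif{r}|_{T_{r(\gamma)}G_{r(\gamma)}}$; evaluating on $X(r(\gamma))\in A(G)_{r(\gamma)}$ and recalling that $\rho$ is the restriction of $\odif{r}$ to $A(G)$ gives $\odif{r}(X_R(\gamma))=\rho(X)(r(\gamma))$. I do not anticipate any real obstacle: the content of the proposition is genuinely contained in the isomorphism \eqref{eqn:iso_kerds_kerdr}, with the only mild subtlety being that $G$ is not a smooth manifold in general, which is handled by quoting smoothness of that isomorphism in the differential-space sense.
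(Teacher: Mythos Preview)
Your proof is correct and follows essentially the same approach as the paper: define $X_R(\gamma)=\odif{R}_\gamma(X(r(\gamma)))$, verify the two properties and uniqueness from this formula, and deduce \eqref{eqn:drXR} from $r\circ R_\gamma=r$. Your version is simply more detailed, spelling out the smoothness argument via the isomorphism \eqref{eqn:iso_kerds_kerdr} and the verification of invariance via $R_{\gamma'\gamma}=R_\gamma\circ R_{\gamma'}$, which the paper leaves as ``clear''.
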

	\begin{proof}
		The vector field $X_R$ has to be defined by the formula
		$X_R(\gamma)=\odif{R}_\gamma( X(r(\gamma)))$.
		This proves the uniqueness. It is clear that $X_R$ satisfies $\ref{item:restriction:ksdjmfqkjsdmjfmjsdqmqf}$ and $\ref{item:invariance:ksdjmfqkjsdmjfmjsdqmqf}$.
		Since $r\circ R_\gamma=r$, \eqref{eqn:drXR} follows.
	\end{proof}

	\begin{prop}
		There exists a unique Lie bracket on $C^\infty(G^{(0)},A(G))$ which satisfies the following identities:
		\begin{equation*}\begin{aligned}
				[X,Y]_R      =[X_R,Y_R],\quad
				\rho([X,Y])  =[\rho(X),\rho(Y)],\quad
				[X,fY]       =f[X,Y]+\rho(X)(f)Y
			\end{aligned}\end{equation*}
		where $X,Y\in C^\infty(G^{(0)},A(G))$ and $f\in C^\infty(G^{(0)})$.
	\end{prop}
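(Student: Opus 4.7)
The natural approach is to transfer the bracket from right-invariant vector fields on $G$. Since $s$ is a submersion, $\ker(\odif{s})\subseteq TG$, so any section $Z\in C^\infty(G,\ker(\odif{s}))$ is a vector field on $G$ tangent to the $s$-fibers. Given $X,Y\in C^\infty(G^{(0)},A(G))$, I would form the ordinary Lie bracket $[X_R,Y_R]\in\cX(G)$. Since vector fields tangent to $s$-fibers form a Lie subalgebra of $\cX(G)$, this bracket lies in $C^\infty(G,\ker(\odif{s}))$. I then \emph{define}
\[
[X,Y]:=[X_R,Y_R]_{|G^{(0)}}\in C^\infty(G^{(0)},A(G)).
\]
Uniqueness is then automatic: any bracket on $C^\infty(G^{(0)},A(G))$ for which $[X,Y]_R=[X_R,Y_R]$ is forced to equal $[X_R,Y_R]_{|G^{(0)}}$ by \refitem{prop:left_right_invariant_sections}{item:restriction:ksdjmfqkjsdmjfmjsdqmqf}.

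\textbf{The right-invariance of $[X_R,Y_R]$.} This is the step that needs the most care. For each $\gamma\in G$, the map $R_\gamma\colon G_{r(\gamma)}\to G_{s(\gamma)}$ is a diffeomorphism between smooth manifolds (the $s$-fibers are smooth since $s$ is a submersion). The right-invariance of $X_R$, namely $\odif{R}_\gamma\circ X_R=X_R\circ R_\gamma$ from \refitem{prop:left_right_invariant_sections}{item:invariance:ksdjmfqkjsdmjfmjsdqmqf}, states exactly that $X_R{}_{|G_{r(\gamma)}}$ and $X_R{}_{|G_{s(\gamma)}}$ are $R_\gamma$-related; the same holds for $Y_R$. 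By the standard fact that Lie brackets of related vector fields are related, $[X_R,Y_R]_{|G_{r(\gamma)}}$ and $[X_R,Y_R]_{|G_{s(\gamma)}}$ are $R_\gamma$-related (here I use that the ambient bracket $[X_R,Y_R]$ restricts fiberwise to the bracket of restrictions, because $X_R,Y_R$ are fiberwise-tangent). This yields $\odif{R}_\gamma\circ [X_R,Y_R]=[X_R,Y_R]\circ R_\gamma$, which combined with the definition of $[X,Y]$ gives the first identity $[X,Y]_R=[X_R,Y_R]$.

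\textbf{Anchor compatibility and Leibniz rule.} For the anchor identity, formula \eqref{eqn:drXR} says $X_R$ is $r$-related to $\rho(X)$ on $G^{(0)}$; applying again the standard lemma on related vector fields, $[X_R,Y_R]$ is $r$-related to $[\rho(X),\rho(Y)]$, and restricting to $G^{(0)}$ gives $\rho([X,Y])=[\rho(X),\rho(Y)]$. For the Leibniz rule, a direct computation using right-invariance shows $(fY)_R=(f\circ r)Y_R$, so
\[
[X,fY]_R=[X_R,(f\circ r)Y_R]=(f\circ r)[X_R,Y_R]+X_R(f\circ r)\,Y_R.
\]
Since $X_R(f\circ r)=\odif{f}(\odif{r}\circ X_R)=(\rho(X)(f))\circ r$ by \eqref{eqn:drXR}, the right-hand side equals $(f[X,Y]+\rho(X)(f)Y)_R$, and the Leibniz rule follows by restriction to $G^{(0)}$. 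The Lie algebra axioms (bilinearity, antisymmetry, Jacobi) transfer immediately from $\cX(G)$ via the injection $X\mapsto X_R$.

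\textbf{Main obstacle.} The only nontrivial input is the right-invariance of $[X_R,Y_R]$, which relies on being able to apply the classical related-vector-fields lemma in the differential space setting — but this is legitimate because $R_\gamma$ is a diffeomorphism of the smooth fibers $G_{r(\gamma)}$ and $G_{s(\gamma)}$, and the bracket of vector fields tangent to these fibers, computed on $G$, agrees with the fiberwise bracket. Everything else is a short formal manipulation.
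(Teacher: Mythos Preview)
Your proposal is correct and follows essentially the same approach as the paper: define $[X,Y]$ as the restriction of $[X_R,Y_R]$ to $G^{(0)}$, verify right-invariance via $R_\gamma$-relatedness, the anchor identity via $r$-relatedness, and the Leibniz rule via $(fY)_R=(f\circ r)Y_R$. The only difference is that the paper isolates the related-vector-fields lemma as an explicit statement for differential spaces (needed in particular for the anchor identity, where $r:G\to G^{(0)}$ is a map between differential spaces, not just smooth $s$-fibers), whereas you invoke it as standard.
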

	\begin{proof}
		Uniqueness is clear from the first identity. For existence, we need the following lemma.
		\begin{lem}\label{prop:Lie_bracket}
			If $f:M\to M'$ is a smooth map, $X,Y\in \cX(M)$ and $X',Y'\in \cX(M')$ such that $\odif{f}\circ X=X'\circ f$ and $\odif{f}\circ Y=Y'\circ f$, then
			$\odif{f}\circ [X,Y]=[X',Y']\circ f$.
		\end{lem}
		\begin{proof}
			Both sides are functions $M\to TM'$. Both map an element $(Z,x)$ to an element of the form $(Z',f(x))$ for some $Z'\in T_{f(x)}M'$.
			Therefore, to prove that both sides are equal, it is enough to check that for any $g\in C^\infty(M')$, both sides are equal after composing with $\odif{g}$.
			We have
			\begin{equation*}\begin{aligned}
					\odif{g}\circ \odif{f} \circ [X,Y]=\odif{(g\circ f)}\circ [X,Y]=[X,Y](g\circ f) & =X(Y(g\circ f))-Y(X(g\circ f))               \\
					                                                              & = X(Y'(g)\circ f)-Y(X'(g)\circ f)            \\
					                                                              & =X'(Y'(g))\circ f-Y'(X'(g))\circ f           \\
					                                                              & =[X',Y'](g)\circ f   =\odif{g}\circ [X',Y']\circ f.
				\end{aligned}\end{equation*}
				The result follows.
		\end{proof}
		Let $X,Y\in C^\infty(G^{(0)},A(G))$.
		By \Cref{prop:Lie_bracket}, $[X_R,Y_R]\in C^\infty(G,\ker(\odif{s}))$.
		Let $[X,Y]\in C^\infty(G^{(0)},A(G))$ be the restriction of $[X_R,Y_R]$ on $G^{(0)}$.
		By \Cref{prop:left_right_invariant_sections} and \Cref{prop:Lie_bracket},
		$[X,Y]_R=[X_R,Y_R]$.
		By taking $\odif{r}$ on both sides and using \eqref{eqn:drXR} and \Cref{prop:Lie_bracket}, we deduce that
		$\rho([X,Y])=[\rho(X),\rho(Y)]$.
		Finally,
		it is clear that $(fY)_R=f\circ r Y_R$.
		Hence,
		\begin{equation*}\begin{aligned}
				[X,fY]_R=[X_R,f\circ r Y_R]=f\circ r [X_R,Y_R]+ X_R(f\circ r) Y_R & =f\circ r[X,Y]_R+ \rho(X)(f)\circ r Y_R  \\&=    (f[X,Y])_R + (\rho(X)(f) Y)_R .
			\end{aligned}\end{equation*}
		Therefore, $[X,fY]=f[X,Y] + \rho(X)(f) Y$.
	\end{proof}
	Let $X\in C^\infty(G^{(0)},A(G))$. The section
	\begin{equation}\label{eqn:left_invariant_dfn}\begin{aligned}
			X_L:=-\odif{\iota} \circ X_R\circ \iota\in C^\infty(G,\ker(\odif{r}))
		\end{aligned}\end{equation}
	is called \textit{the left-invariant vector field associated to $X$}.
	The choice of the sign in \eqref{eqn:left_invariant_dfn} is to ensure that \eqref{eqn:X_distribut} holds.
	We have
	\begin{equation*}\begin{aligned}
		\odif{L}_\gamma\circ X_L=X_L\circ L_\gamma,\quad [X,Y]_L=-[X_L,Y_L],\quad [X_R,Y_L]=0,\quad \forall \gamma\in G,X,Y\in C^\infty(G^{(0)},A(G)).
		\end{aligned}\end{equation*}

	The identities $\odif{R}_\gamma\circ X_R=X_R\circ R_\gamma$ and  $\odif{L}_\gamma\circ X_L=X_L\circ L_\gamma$ imply that whenever the flows are well-defined, we have 
		\begin{equation}\label{eqn:flow_Left_and_right_invariant_vector_fields}\begin{aligned}
			&m(e^{tX_R}\cdot r( \gamma), \gamma)	=e^{tX_R}\cdot \gamma,\quad 
			&m(\gamma,e^{tX_L}\cdot s(\gamma))	=e^{tX_L}\cdot \gamma.
		\end{aligned}\end{equation}
	It follows that if $X$ is compactly supported, then $X_R$ and $X_L$ are complete.
	\begin{ex}\label{ex:Right_Left_invariant_vector_field_Pair}
		Let $M$ be a smooth manifold, $G=M\times M\rightrightarrows G^{(0)}$ the pair groupoid.
		In this case, $A(G)=TM$, and our sign convention implies that the Lie bracket agrees with the Lie bracket of vector fields.
		If $X\in \cX(M)$, then $X_R,X_L\in \cX(M\times M)$ are given by 
			\begin{equation}\label{eqn:left_right_invariant_vector_fields_example}\begin{aligned}
				X_R(x,y)=(X(x),0),\quad X_L(x,y)=(0,-X(y)).		
			\end{aligned}\end{equation}
	\end{ex}
\section{Convolution algebra}\label{sec:conv_algebra}
	
		 Let $G\rightrightarrows G^{(0)}$ be a quasi-Lie groupoid. For any $k,l\in \C$, we define
			\begin{equation*}\begin{aligned}
					\Omega^{k,l}(G):=\Omega^k(\ker(\odif{s}))\otimes         \Omega^{l}(\ker(\odif{r})),\quad \Omega^k(G):=\Omega^{k,k}(G).
				\end{aligned}\end{equation*}
		By \eqref{eqn:iso_kerds_kerdr},
		\begin{equation}\label{eqn:sqdiojfioqjsdiomfjqsidmojfimojsdoqimf}\begin{aligned}
				\Omega^{k,l}(G)\simeq r^*(\Omega^{k}(A(G)))\otimes s^*(\Omega^{l}(A(G))).
			\end{aligned}\end{equation}
		To simplify the notation, we will generally use $C^\infty(G,\Omega^{k,l})$ instead of  $C^\infty(G,\Omega^{k,l}(G))$.
		
		\paragraph{Adjoint:}
	      We define the adjoint of $f\in C^\infty(G,\Omega^{k,l})$ by
		\begin{equation*}\begin{aligned}
			 f^*(\gamma):=f(\gamma^{-1})^*\in C^\infty(G,\Omega^{\overline{l},\overline{k}}).
		\end{aligned}\end{equation*}
		\paragraph{Convolution:}We now define the convolution of two functions on $G$.
		\begin{lem}\label{lem:natural_isom_multiplication}
			 There is a natural isomorphism between the real vector bundles
			\begin{equation*}\begin{aligned}
					m^*\left(\Omega^{k_1,k_4}(G)\right)\otimes \Omega^{k_2+k_3}(\ker(\odif{m})) \simeq \pi_1^*\left(\Omega^{k_1,k_2}(G)\right)\otimes \pi_2^*\left(\Omega^{k_3,k_4}(G)\right),\quad \forall  k_1,k_2,k_3,k_4\in \C,
				\end{aligned}\end{equation*}
			where $\pi_i:G^{(2)}\to G$ are the natural projections, and $m:G^{(2)}\to G$ is the multiplication map.
		\end{lem}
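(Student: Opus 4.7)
The plan is to build the isomorphism pointwise from two kinds of ingredients: (a) the left/right translation isomorphisms of \eqref{eqn:left_right_iso_groupoid}, which will take care of the ``$k_1$'' and ``$k_4$'' factors by identifying $m^*$ with $\pi_1^*$ on $\ker(\odif s)$ and with $\pi_2^*$ on $\ker(\odif r)$; and (b) two applications of \Cref{prop:cor_triangle_isom} exploiting the identities $s\circ m=s\circ \pi_2$ and $r\circ m=r\circ \pi_1$, which will account for the density factor along $\ker(\odif m)$. Throughout, $\pi_1,\pi_2:G^{(2)}\to G$ are submersions by \Cref{lem:pullback_local_diff} applied to the pullback square defining $G^{(2)}$, and $m$ is a submersion by \Cref{lem:multi_is_sub}.

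First I would treat the $(k_2,k_3)$-part separately, showing
\begin{equation*}
\Omega^{k_2+k_3}(\ker(\odif m))\simeq \pi_1^*(\Omega^{k_2}(\ker(\odif r)))\otimes \pi_2^*(\Omega^{k_3}(\ker(\odif s))).
\end{equation*}
Using $\Omega^{k_2+k_3}(V)\simeq\Omega^{k_2}(V)\otimes\Omega^{k_3}(V)$ from \eqref{eqn:densities_natural_iso}, it suffices to produce the two isomorphisms $\Omega^{k_3}(\ker(\odif m))\simeq \pi_2^*(\Omega^{k_3}(\ker(\odif s)))$ and $\Omega^{k_2}(\ker(\odif m))\simeq \pi_1^*(\Omega^{k_2}(\ker(\odif r)))$. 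For the first, apply \Cref{prop:cor_triangle_isom} to the pair $(m,s)$ and to the pair $(\pi_2,s)$ and compare, using $s\circ m=s\circ\pi_2$: the two resulting expressions for $\Omega^{k_3}(\ker(\odif(s\circ m)))$ give
\begin{equation*}
\Omega^{k_3}(\ker(\odif m))\otimes m^*(\Omega^{k_3}(\ker(\odif s)))\simeq \Omega^{k_3}(\ker(\odif \pi_2))\otimes \pi_2^*(\Omega^{k_3}(\ker(\odif s))).
\end{equation*}
Now \Cref{lem:pullback_local_diff} identifies $\ker(\odif \pi_2)_{(\gamma',\gamma)}\simeq \ker(\odif s)_{\gamma'}$, and right translation $R_\gamma$ from \eqref{eqn:left_right_iso_groupoid} identifies $\ker(\odif s)_{\gamma'}\simeq \ker(\odif s)_{\gamma'\gamma}$, so the $m^*$ and $\Omega^{k_3}(\ker(\odif \pi_2))$ factors cancel and leave the desired isomorphism. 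The second identity is obtained symmetrically using $r\circ m=r\circ\pi_1$ and left translation.

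To finish, I combine this with the observation that the translation maps \eqref{eqn:left_right_iso_groupoid} give canonical isomorphisms
\begin{equation*}
m^*(\Omega^{k_1}(\ker(\odif s)))\simeq \pi_1^*(\Omega^{k_1}(\ker(\odif s))),\qquad m^*(\Omega^{k_4}(\ker(\odif r)))\simeq \pi_2^*(\Omega^{k_4}(\ker(\odif r))),
\end{equation*}
coming respectively from $(\odif R_\gamma)_{\gamma'}$ and $(\odif L_{\gamma'})_\gamma$. Tensoring these two isomorphisms with the one from the previous paragraph and rearranging factors yields exactly the claim.

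The only subtle point I anticipate is naturality/coherence: all the identifications above use the canonical translation isomorphisms of \eqref{eqn:left_right_iso_groupoid} and the canonical pullback identification of \Cref{lem:pullback_local_diff}, so the resulting isomorphism of real vector bundles is intrinsic and does not depend on local choices. This is what makes the convolution product (to be defined in the next lemma using integration along the fibers of $m$ via \eqref{eqn:integration_along_fibers}) well-defined.
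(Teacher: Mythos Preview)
Your argument is correct. The paper takes a shorter path: it first passes everything through the Lie algebroid via \eqref{eqn:sqdiojfioqjsdiomfjqsidmojfimojsdoqimf}, introducing the three maps $l_1,l_2,l_3:G^{(2)}\to G^{(0)}$ with $l_i$ sending $(\gamma',\gamma)$ to $r(\gamma')$, $s(\gamma')$, $s(\gamma)$ respectively, and rewrites both sides as $l_1^*(\Omega^{k_1}A(G))\otimes l_2^*(\Omega^{k_2+k_3}A(G))\otimes l_3^*(\Omega^{k_4}A(G))$. The only nontrivial step is then the single identification $\ker(\odif m)\simeq l_2^*(A(G))$, which the paper obtains directly from the diffeomorphism $G_{l_2(\gamma',\gamma)}\to m^{-1}(\gamma'\gamma)$, $\gamma''\mapsto(\gamma'\gamma''^{-1},\gamma''\gamma)$. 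By contrast, you work upstairs with $\ker(\odif s),\ker(\odif r),\ker(\odif m)$ and extract the $\ker(\odif m)$-density factor by applying \Cref{prop:cor_triangle_isom} twice (once through $s$, once through $r$) and cancelling with the pullback identification of \Cref{lem:pullback_local_diff} plus the translation isomorphisms \eqref{eqn:left_right_iso_groupoid}. Your route uses only general submersion machinery and never writes down an explicit fiber diffeomorphism, at the cost of assembling more pieces; the paper's route is more economical but requires spotting the map $\gamma''\mapsto(\gamma'\gamma''^{-1},\gamma''\gamma)$. The resulting isomorphisms coincide, since your cancellation step is precisely what the differential of that map encodes.
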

		\begin{proof}
			Let $l_1,l_2,l_3:G^{(2)}\to G^{(0)}$ be the maps $l_1(\gamma,\gamma')=r(\gamma)$, $l_2(\gamma,\gamma')=s(\gamma)$, $l_3(\gamma,\gamma')=s(\gamma')$.
			Clearly, we have
			\begin{equation*}\begin{aligned}
					\pi_1^*\left(\Omega^{k_1,k_2}(G)\right)\otimes \pi_2^*\left(\Omega^{k_3,k_4}(G)\right) & \simeq l_1^*\left(\Omega^{k_1} A(G)\right)\otimes l_2^*\left(\Omega^{k_2+k_3}A(G)\right)\otimes l_3^*\left(\Omega^{k_4} A(G)\right) 
					\\ &\simeq   m^*\left(\Omega^{k_1,k_4}(G)\right)\otimes \Omega^{k_2+k_3}(l_2^*\left(A(G)\right))
				\end{aligned}\end{equation*}
			It remains to show that $l_2^*(A(G))\simeq \ker(\odif{m})$. For any $(\gamma,\gamma')\in G^{(2)}$, the map 
				\begin{equation*}\begin{aligned}
					G_{l_2(\gamma,\gamma')}\to m^{-1}(\gamma\gamma'),\quad \gamma''\mapsto (\gamma \gamma^{\prime\prime -1},\gamma''\gamma')
				\end{aligned}\end{equation*}
			is a diffeomorphism whose differential at $l_2(\gamma,\gamma')$ induces the required isomorphism.
		\end{proof}
		Let $f\in C^\infty(G,\Omega^{k_1,k_2})$, $g\in C^\infty(G,\Omega^{1-k_2,k_3})$.
		By \Cref{lem:natural_isom_multiplication}, we define
			\begin{equation*}\begin{aligned}
				 f\boxtimes g(\gamma,\gamma')=f(\gamma)g(\gamma')\in C^\infty\left(G^{(2)}, m^*\left(\Omega^{k_1,k_3}(G)\right)\otimes \Omega^{1}(\ker(\odif{m})) \right).
			\end{aligned}\end{equation*}
		If $f$ or $g$ is properly supported, then $m:\supp(f\boxtimes g)\to G$ is proper. 
			Hence, we can define
		\begin{equation*}\begin{aligned}
				f\ast g:= m_*(f\boxtimes g)\in C^\infty(G,\Omega^{k_1,k_3}).
			\end{aligned}\end{equation*}
		Furthermore, if $f$ and $g$ are properly supported, then $f\boxtimes g$ is also properly supported.
		The main case of interest to us is when $k_1=k_2=k_3=\frac{1}{2}$.
		\begin{prop}\label{prop:convolution_algebra}
			The space $C^\infty_{\mathrm{proper}}(G,\omegahalf)$ is a $*$-algebra, and  
			\begin{equation}\label{eqn:supp_product_functions}\begin{aligned}
				\supp(f\ast g)\subseteq \supp(f)\supp(g),\quad \supp(f^*)=\supp(f)^{-1}		,\quad \forall f,g\in C^\infty_{\mathrm{proper}}(G,\omegahalf).
			\end{aligned}\end{equation}
		\end{prop}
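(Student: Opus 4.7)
My plan is to verify each of the four things hiding in this statement: (i) $f \ast g$ and $f^*$ belong to $C^\infty_{\mathrm{proper}}(G, \omegahalf)$, (ii) the support inclusions hold, (iii) convolution is associative, and (iv) $(f\ast g)^* = g^*\ast f^*$. The half-density case is the one where all the degrees match: writing $k_1=k_2=k_3=\tfrac12$ in the preceding setup, the convolution $f\ast g = m_*(f\boxtimes g)$ again lands in $\Omega^{1/2,1/2}$, and $f^*(\gamma) = f(\gamma^{-1})^*$ lies in $\Omega^{\overline{1/2},\overline{1/2}} = \Omega^{1/2,1/2}$, so the operations preserve the ambient bundle.

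I would first dispose of the adjoint. Since $\iota$ is a diffeomorphism exchanging $s$ and $r$, the map $f \mapsto f^*$ is continuous on smooth sections, and clearly $\supp(f^*) = \iota(\supp(f)) = \supp(f)^{-1}$. Properness of $f^*$ follows because $s\circ\iota = r$ and $r\circ\iota = s$. Next, for convolution, the formula $f\ast g(\gamma) = \int_{m^{-1}(\gamma)} (f\boxtimes g)$ shows that if $\gamma \notin \supp(f)\supp(g)$, then $f\boxtimes g$ vanishes on $m^{-1}(\gamma)$ (since any decomposition $\gamma = \gamma_1\gamma_2$ would have either $\gamma_1 \notin \supp(f)$ or $\gamma_2 \notin \supp(g)$), hence $f\ast g(\gamma) = 0$. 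To conclude $\supp(f\ast g) \subseteq \supp(f)\supp(g)$ rather than only its closure, I need to verify that $\supp(f)\supp(g)$ is closed; this reduces to showing that, for $A = \supp(f)$ and $B = \supp(g)$, the map $m: (A\times B) \cap G^{(2)} \to G$ is proper, which holds because if $K\subseteq G$ is compact then $s(K)$ and $r(K)$ are compact, and the fiber $m^{-1}(K) \cap (A\times B)$ is contained in $(r^{-1}(r(K))\cap A) \times (s^{-1}(s(K))\cap B)$, a product of two compacts by the properness of $A$ and $B$.

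For the properness of $f\ast g$, I apply the same idea to $s$ and $r$: if $K\subseteq G^{(0)}$ is compact, then $s^{-1}(K) \cap \supp(f)\supp(g)$ consists of products $\gamma_1\gamma_2$ with $\gamma_2 \in s^{-1}(K)\cap\supp(g)$ (compact by properness of $g$), and then $\gamma_1 \in s^{-1}(r(s^{-1}(K)\cap\supp(g)))\cap\supp(f)$ (compact by properness of $f$). The same argument with $r$ in place of $s$ finishes, so $f\ast g$ is properly supported, and smoothness of $f\ast g$ is precisely the continuity of integration along the fibers of the submersion $m$ from \Cref{lem:multi_is_sub} combined with the identification in \Cref{lem:natural_isom_multiplication}.

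It remains to check the algebraic identities. For $(f\ast g)^* = g^*\ast f^*$, I would apply the definition together with the change of variable $\gamma_1\gamma_2 = \gamma^{-1} \leftrightarrow \gamma_2^{-1}\gamma_1^{-1} = \gamma$ coming from the diffeomorphism $\iota$ of the fiber $m^{-1}(\gamma^{-1}) \to m^{-1}(\gamma)$, $(\gamma_1,\gamma_2)\mapsto (\gamma_2^{-1},\gamma_1^{-1})$. For associativity of $\ast$, I would unwind $(f\ast g)\ast h$ as an iterated integration along fibers of $m$ and use the associativity of the groupoid composition, together with a Fubini-type identity for integration along fibers (\Cref{lem:pullback_local_diff}), to identify it with integration over $G^{(3)}:=\{(\gamma_1,\gamma_2,\gamma_3):s(\gamma_1)=r(\gamma_2), s(\gamma_2)=r(\gamma_3)\}$ of $f(\gamma_1)g(\gamma_2)h(\gamma_3)$ against the natural half-density. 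The main bookkeeping obstacle is showing that the density-bundle isomorphism of \Cref{lem:natural_isom_multiplication}, when applied twice to the two different bracketings of the threefold product, yields the same map from $G^{(3)}$ to $\Omega^{1/2,1/2}(G)\otimes \Omega^1(\text{fibers})$; this is natural but must be checked using \MyCref{prop:density_natural_isomorphisms}[2] applied to the two short exact sequences associated with the two possible factorizations.
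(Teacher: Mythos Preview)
The paper states this proposition without proof, treating it as a routine consequence of the preceding construction (it appears immediately after the definition of $f\ast g = m_*(f\boxtimes g)$ with no argument given). Your outline is correct and covers all the points one would actually have to check: the support identities, properness of $\supp(f)\supp(g)$ (hence closedness) via the properness argument on $m|_{(A\times B)\cap G^{(2)}}$, smoothness via integration along the fibers of the submersion $m$, and the algebraic identities via the obvious fiberwise diffeomorphisms. Your identification of the only nontrivial bookkeeping—compatibility of the two applications of \Cref{lem:natural_isom_multiplication} over $G^{(3)}$—is exactly right, and reduces to the naturality in \Cref{prop:density_natural_isomorphisms}. One minor correction: the Fubini identity $(g\circ f)_* = g_* \circ f_*$ you need for associativity is not quite \Cref{lem:pullback_local_diff} (which is about pullback squares) but rather the transitivity of integration along fibers for a composite of submersions, which follows from \Cref{prop:cor_triangle_isom} and the local triviality of submersions.
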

	\paragraph{Action of the Lie algebroid:}
		If $X\in C^\infty(G,\ker(\odif{s}))$ or $X\in C^\infty(G,\ker(\odif{r}))$, then by \eqref{eqn:sqdiojfioqjsdiomfjqsidmojfimojsdoqimf} and \eqref{eqn:Lie_derivative_dfn}, we have a map
		\begin{equation*}\begin{aligned}
				\cL_X:C^\infty(G,\Omega^{k,l})\to C^\infty(G,\Omega^{k,l}),\quad \forall k,l\in \C.
			\end{aligned}\end{equation*}
		Hence, if $X\in C^\infty(G^{(0)},A(G))$, then we have maps
		\begin{equation*}\begin{aligned}
				\cL_{X_R},\cL_{X_L}:        C^\infty(G,\Omega^{k,l})\to C^\infty(G,\Omega^{k,l}),\quad \forall k,l\in \C.
			\end{aligned}\end{equation*}
		It is very convenient to use the notation
		\begin{equation*}\begin{aligned}
				X\ast f:=\cL_{X_R}(f),\quad f\ast X:=\cL_{X_L}(f),\quad \forall f\in C^\infty(G,\Omega^{k,l}).
			\end{aligned}\end{equation*}
		If $f,g\in C^\infty_{\mathrm{proper}}(G,\omegahalf)$, then 
		\begin{equation}\label{eqn:X_distribut}\begin{aligned}
				  X\ast (f\ast g)= (X\ast f)\ast g,\quad
				  (f\ast X)\ast g=f\ast (X\ast g),\quad (X\ast f)^*=-f^*\ast X.
			\end{aligned}\end{equation}

	\section{\texorpdfstring{$L^1$}{L1}-Banach algebra}\label{sec:compl}
	
			We fix for the rest of this section, a positive section
			$\omega\in C^\infty(G,\Omega^{\frac{1}{2},-\frac{1}{2}})$
			 which satisfies 
				\begin{equation}\label{eqn:omega_symmetry_condition}\begin{aligned}
					\omega(\gamma_1\gamma_2)=\omega(\gamma_1)\omega(\gamma_2),\ \forall(\gamma_1,\gamma_2)\in G^{(2)},\quad \omega(\gamma)=1,\ \forall\gamma\in G^{(0)}.	
				\end{aligned}\end{equation}
	
				\begin{ex}
					 If $\eta\in C^\infty(G^{(0)},\Omega^{\frac{1}{2}}(A(G)))$ is a positive section which vanishes nowhere, then $\omega(\gamma)=\eta(r(\gamma))\eta(s(\gamma))^{-1}$ satisfies \eqref{eqn:omega_symmetry_condition}.
					 Not all $\omega\in C^\infty(G,\Omega^{\frac{1}{2},-\frac{1}{2}})$ which satisfy \eqref{eqn:omega_symmetry_condition} come from a section $\eta\in C^\infty(G^{(0)},\Omega^{\frac{1}{2}}(A(G)))$.
					We will consider such sections, see \Cref{thm:L1normRiemMetric}.
				\end{ex}
	
			If $f\in C^\infty_c(G,\omegahalf)$, then $|f|\omega\in C_c(G,\Omega^{1,0})$.
			By \eqref{eqn:iso_kerds_kerdr}, we can integrate $|f|\omega$ along the fibers $s$.
			Similarly, we can integrate $|f|\omega^{-1}$ along the fibers of $r$.
			We define
			\begin{equation}\label{eqn:L1norms}\begin{aligned}
					\norm{f}_{L^1(G,\omega)}:&=\max\left(\norm{s_*(|f|\omega)}_{C^0(G^{(0)})},\norm{r_*(|f|\omega^{-1})}_{C^0(G^{(0)})}\right)
			\end{aligned}\end{equation}
			The completion of $C^\infty_c(G,\Omega^\frac{1}{2})$ by the norm $\norm{\cdot}_{L^1(G,\omega)}$ is denoted by $L^1(G,\omega)$.
			We have,
				\begin{equation*}\begin{aligned}
						\norm{f\ast g}_{L^1(G,\omega)}\leq \norm{f}_{L^1(G,\omega)}\norm{g}_{L^1(G,\omega)},\quad \norm{f^*}_{L^1(G,\omega)}=\norm{f}_{L^1(G,\omega)},\quad \forall f,g\in C^\infty_c(G,\omegahalf) .		
					\end{aligned}\end{equation*}
			In particular, $L^1(G,\omega)$ is a Banach $*$-algebra.
	
		\begin{prop}[Approximate identity]\label{prop:Approximate_identity}
			There exists $(g_n)_{n\in \N}\subseteq C^\infty_c(G,\omegahalf)$ a sequence of self-adjoint elements such that 
			\begin{enumerate}
				\item\label{prop:Approximate_identity:G0}\label{prop:Approximate_identity:1}  
				If $K\subseteq G^{(0)}$ is compact, then the sequence $\supp(g_n)\cap s^{-1}(K)$ is eventually decreasing, and its set theoretic limit is equal to $K$, i.e., $\bigcap_{n\in \N}\bigcup_{m\geq n}\supp(g_m)\cap s^{-1}(K)=K$.
				\item\label{prop:Approximate_identity:2}   For any $f\in C^\infty_c(G,\omegahalf)$, $g_n\ast f$ and $f\ast g_n$ converge to $f$ in the $C^\infty_c(G,\omegahalf)$-topology.
				\item\label{prop:Approximate_identity:proper} 	For any $f\in C^\infty_{\mathrm{proper}}(G,\omegahalf)$, $g_n\ast f$ and $f\ast g_n$ converge to $f$ in the $C^\infty(G,\omegahalf)$-topology.
				\item\label{prop:Approximate_identity:L1} 	For any $f\in L^1(G,\omega)$, $g_n\ast f$ and $f\ast g_n$ converge to $f$ in the $L^1(G,\omega)$-topology.
			\end{enumerate}
		\end{prop}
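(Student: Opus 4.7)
The plan is to construct $g_n$ as a symmetrized sequence of bump half-densities concentrated near the unit space $G^{(0)} \subseteq G$, using the fact that the unit space acts as a ``distributional identity'' for convolution. The construction is local on $G^{(0)}$ and glued via a smooth partition of unity.

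First, since $s:G\to G^{(0)}$ is a submersion with closed smooth section $u$, for every $x\in G^{(0)}$ I can choose a relatively compact open $V\subseteq G^{(0)}$ around $x$ together with a diffeomorphism $s^{-1}(V)\supseteq W\to V\times \R^k$ under which $s$ is the projection and $u(V)$ corresponds to $V\times\{0\}$ (obtained by rectifying the submersion and then straightening the section $u$ by a fiberwise translation). By \Cref{prop:parition_of_unity} I pick a countable locally finite cover $(V_i)_{i\in \N}$ of $G^{(0)}$ of this form with subordinate partition of unity $\chi_i\in C^\infty_c(V_i,[0,1])$. In each chart, \Cref{eqn:sqdiojfioqjsdiomfjqsidmojfimojsdoqimf} trivializes $\omegahalf$ near $G^{(0)}$, so I can write down $h_i^n = \chi_i(s(\cdot))\,\psi_n(\cdot)$ where $\psi_n\in C^\infty_c(\R^k,\R_+)$ has support shrinking to $0$ and is normalized so that, viewed as a half-density on $\ker(\odif{s})\otimes\ker(\odif{r})$, it integrates to $1$ against the fiber density appearing in the convolution formula (see \Cref{lem:natural_isom_multiplication}). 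Setting $h_n=\sum_i h_i^n\in C^\infty_c(G,\omegahalf)$ (the sum is locally finite) and $g_n:=\tfrac12(h_n+h_n^*)$ produces self-adjoint elements, and since $\iota$ fixes $G^{(0)}$ pointwise the support of $h_n^*$ also shrinks to $G^{(0)}$, proving \ref{prop:Approximate_identity:1}.

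For \ref{prop:Approximate_identity:2} and \ref{prop:Approximate_identity:proper}, the support bound $\supp(g_n\ast f)\subseteq \supp(g_n)\supp(f)$ from \eqref{eqn:supp_product_functions} shows that $g_n\ast f$ is eventually supported in any prescribed neighborhood of $\supp(f)$. In the local coordinates above, the convolution $g_n\ast f$ evaluated at $\gamma\in s^{-1}(V_i)$ reduces to a standard mollification of $f$ by $\psi_n$ along the $s$-fiber passing through $r(\gamma)$, so all derivatives converge uniformly on compact sets by classical mollifier estimates; the right convolution $f\ast g_n$ is handled symmetrically via the $\ker(\odif{r})$ fiber structure and the identity $(f\ast X)^*=-X^*\ast f^*$ in \eqref{eqn:X_distribut}. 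For \ref{prop:Approximate_identity:L1}, I first observe that the normalization in the construction together with $\omega|_{G^{(0)}}=1$ and the continuity of $\omega$ gives a uniform bound $\sup_n\norm{g_n}_{L^1(G,\omega)}<\infty$. Then density of $C^\infty_c(G,\omegahalf)$ in $L^1(G,\omega)$ combined with the Banach algebra inequality $\norm{g_n\ast (f-f')}_{L^1}\leq \norm{g_n}_{L^1}\norm{f-f'}_{L^1}$ and \ref{prop:Approximate_identity:2} yields the $L^1$ convergence by a standard $3\varepsilon$-argument.

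The main obstacle is identifying the correct normalization of $\psi_n$ in the half-density picture: since $\omegahalf=\Omega^{1/2}(\ker\odif{s})\otimes \Omega^{1/2}(\ker\odif{r})$, the convolution of $g_n$ against $f$ only integrates out the $\ker(\odif{m})$-fiber density from \Cref{lem:natural_isom_multiplication}, and one has to check that this fiber density agrees, at units, with the density induced by the trivialization of $s$. The isomorphism $\ker(\odif{m})\simeq l_2^*(A(G))$ from the proof of \Cref{lem:natural_isom_multiplication}, together with the fact that $u$ is the identity and $R_{u(x)}=\id$ on $G_x$, makes this compatibility canonical once the trivializations are chosen as above; from that point the argument is routine.
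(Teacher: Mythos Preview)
Your approach is sound and leads to a correct proof, but the paper uses a different and somewhat cleaner construction. Instead of local charts glued by a partition of unity, the paper (assuming $G^{(0)}$ compact; the general case is then routine) takes finitely many generators $X_1,\dots,X_k$ of $C^\infty(G^{(0)},A(G))$ and uses the global map
\[
\phi:\R^k\times G^{(0)}\to G,\qquad \phi(t,x)=e^{t_1X_{1R}+\cdots+t_kX_{kR}}\cdot x,
\]
which is a submersion near $\{0\}\times G^{(0)}$. The approximate identity is then $g_n=\phi_*(h_n\kappa^{1/2})\omega^{-1}$ for even bump functions $h_n\in C^\infty_c(\R^k)$ with $\int h_n=1$ and shrinking support, where $\kappa$ is an explicit density-correction factor making $g_n$ exactly self-adjoint via the identity $\phi(t,x)^{-1}=\phi(-t,r(\phi(t,x)))$. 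The convolution $f\ast g_n$ then unwinds, using $R_\gamma$-equivariance of the flow from \eqref{eqn:flow_Left_and_right_invariant_vector_fields}, to
\[
f(\gamma)-f\ast g_n(\gamma)=\int_{\R^k}\psi(t,\gamma)\,h_n(t)
\]
for a smooth $\psi$ vanishing at $t=0$, so convergence in $C^\infty_c$ is immediate.

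What each approach buys: the flow construction avoids both the partition of unity and the half-density bookkeeping you flag as ``the main obstacle,'' since the integration-along-fibers map $\phi_*$ packages the density identifications automatically and right-invariance makes the mollifier computation uniform in $\gamma$ rather than only near units. Your chart-based construction is more elementary and closer to the usual $\R^n$ story, but the symmetrization $g_n=\tfrac12(h_n+h_n^*)$ slightly perturbs the fibrewise normalization (since $\int_{G_x}h_n^*$ need not equal $\int_{G_x}h_n$), which is harmless in the limit but adds a small wrinkle to the $L^1$ bound.
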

		In the continuous setting, this result is due to Renault \cite[Lemma 3.2]{RenaultRepProdCroises}.
		\begin{proof}
			To simplify the exposition, we will only deal with the case where $G^{(0)}$ is compact. 
			The general case is dealt with similarly using a partition of unity on $G^{(0)}$.
			By \Cref{rem:finitely_generated_section}, there exists a finite family $X_{1},\cdots,X_{k}\in C^\infty(G^{(0)},A({G}))$ which generates $C^\infty(G^{(0)},A({G}))$.
			The vector fields $X_{1R},\cdots,X_{kR}$ are complete, see \eqref{eqn:flow_Left_and_right_invariant_vector_fields}.
			The map
				\begin{equation*}\begin{aligned}
					\phi:	\R^{k}\times G^{(0)}\to G,\quad \phi(t_1,\cdots,t_{k},x)=e^{t_1 X_{1R}(x)+\cdots+t_{k}X_{kR}}\cdot x	
				\end{aligned}\end{equation*}
			is a submersion in a neighborhood of $\{0\}\times G^{(0)}$ because $s$ is a submersion.
			To see this, let $\psi:G\to \R^n\times G^{(0)}$ be a local diffeomorphism of the form $\gamma\mapsto (g(\gamma),s(\gamma))$.
			The map $\psi\circ \phi:\R^k\times G^{(0)}\to \R^n\times G^{(0)}$ is of the form $(t,x)\mapsto (h(t,x),x)$ for some smooth map $h$.
			The differential of $h$ in the direction of $t$ at $(0,x)$ is surjective. So, by the parameterized inverse mapping theorem, we deduce that $\psi\circ \phi$ is a submersion which implies that $\phi$ is a submersion.

			By \eqref{eqn:flow_Left_and_right_invariant_vector_fields}, $\phi(t,x)^{-1}=\phi(-t,r(\phi(t,x)))$.
			We denote by $U$ the set of $(t,x)$ such that $\phi$ is a well-defined submersion at $(t,x)$ and at $(-t,\phi(t,x))$.
			Since $s\circ \phi(t,x)=x$, by \Cref{prop:cor_triangle_isom}, we get a canonical isomorphism $\Omega^1(\ker(\odif{\phi}))\otimes \phi^*(\Omega^{1,0}(G))\simeq \C$.
			So, we have an integration along the fibers map 
				\begin{equation*}\begin{aligned}
					\phi_*:C^\infty_c(U)\to C^\infty_c(G,\Omega^{1,0}).		
				\end{aligned}\end{equation*}
			Let $(t,x)\in U$. 
			The exponential map $\gamma\in G_x\mapsto e^{t_1X_{1R}+\cdots+t_{k}X_{kR}}\cdot \gamma\in G_x$ defines a local diffeomorphism from $x$ to $\phi(t,x)$.
			Hence, its differential at $x$ is a linear isomorphism $\ker(\odif{s})_x\to \ker(\odif{s})_{\phi(t,x)}$. 
			So, it defines an element of $\Omega^{1,-1}(G)_{\phi(t,x)}$.
			The element $\omega^2(\phi(t,x))$ also defines an element of $\Omega^{1,-1}(G)_{\phi(t,x)}$.
			By taking the quotient, we obtain a smooth function $\kappa:U\to \Rpt$.
			One has 
				\begin{equation}\label{eqn:sqdjkofjmsdkqjkmfjsdf}\begin{aligned}
					\phi_*(h)^*=\phi_*(\tilde{h}\kappa)\omega^{-2},\quad \forall h\in C^\infty_c(U),		
				\end{aligned}\end{equation}
			where $\tilde{h}(t,x)=\overline{h(-t,r(\phi(t,x)))}$. Furthermore, $\tilde{\kappa}=\frac{1}{\kappa}$ and $\kappa(0,x)=1$.
			Now, let $(h_{n})_{n\in \N}\subseteq C^\infty_c(\R^{k},\R_+)$ be a sequence of functions such that $\int_{\R^{k}} h_{n}(t)dt=1$, $h_{n}(t)=h_{n}(-t)$, and the sequence $(\supp(h_{n}))_{n\in \N}$ is decreasing with intersection equal to $\{0\}$, and $\supp(h_{n})\times G^{(0)}\subseteq U$.
			We see $h_{n}$ as functions on $\R^{k}\times G^{(0)}$ which don't depend on $x$. 
			Let 
				\begin{equation*}\begin{aligned}
					g_n=\phi_*(h_n\kappa^{\frac{1}{2}})\omega^{-1}	
				\end{aligned}\end{equation*}
			Clearly, $g_n$ is real-valued. By \eqref{eqn:sqdjkofjmsdkqjkmfjsdf}, $g^*=g$. 
			\Crefitem{prop:Approximate_identity}{G0} follows from the inclusion $G^{(0)}\subseteq \supp(g_n)\subseteq \phi(\supp(h_n))$.
			Let $f\in C^\infty_c(G,\omegahalf)$. We have
				\begin{equation*}\begin{aligned}
					f(\gamma)- f\ast g_n(\gamma) &=
					f(\gamma)-\int_{G_{s(\gamma)}}f(\gamma \eta^{-1})\omega(\eta^{-1})g_n(\eta)\\
					&=f(\gamma)-\int_{\R^{k}}f(\gamma \phi(t,s(\gamma))^{-1})\omega(\phi(t,s(\gamma))^{-1})h_n(t)\kappa^{\frac{1}{2}}(t,s(\gamma))\\
					&=\int_{\R^{k}}\Big(f(\gamma)-f(\gamma \phi(t,s(\gamma))^{-1})\omega(\phi(t,s(\gamma))^{-1})\kappa^{\frac{1}{2}}(t,s(\gamma))\Big)h_n(t) 
				\end{aligned}\end{equation*}
			The map $\psi(t,\gamma)=f(\gamma)-f(\gamma \phi(t,s(\gamma))^{-1})\omega(\phi(t,s(\gamma))^{-1})\kappa^{\frac{1}{2}}(t,s(\gamma))\in C^\infty(\R^k\times G,\omegahalf)$ satisfies 
			$\lim_{t\to 0}\psi(t,\gamma)=\psi(0,\gamma)=0$, where the limit is in the topology of $C^\infty_c(G,\omegahalf)$.
			This proves \Crefitem{prop:Approximate_identity}{2}.
			\Crefitem{prop:Approximate_identity}{proper} follows by a similar argument.
			Finally, \Crefitem{prop:Approximate_identity}{L1} follows from \Crefitem{prop:Approximate_identity}{2} and the fact that $\norm{g_n}_{L^1(G,\omega)}\leq \int_{\R^k} h_n\kappa^{1/2}\leq \sup{\kappa^{1/2}_{|\supp(h_1)}}$ is uniformly bounded.
		\end{proof}

\section{Distributions on quasi-Lie groupoids}\label{sec:distributions_quasi_Lie}
	In this section, we define an algebra $C^{-\infty}_{r,s}(G,\omegahalf)$ of distributions on a quasi-Lie groupoid $G\rightrightarrows G^{(0)}$. 
	If $G\rightrightarrows G^{(0)}$ is a Lie groupoid, then $C^{-\infty}_{r,s}(G,\omegahalf)$ was introduced by Lescure, Manchon and Vassout \cite{LescureManchonVassout}.
	
		\begin{dfn}\label{dfn:distributions_convolution}
			An element $u\in C^{-\infty}_{r,s}(G,\omegahalf)$ consists of two functions $u\ast \cdot,\cdot \ast u:C^\infty_{c}(G,\omegahalf)\to C^\infty_{c}(G,\omegahalf)$ such that 
				\begin{equation*}\begin{aligned}
					(f\ast u) \ast g=f\ast (u \ast g)	,\quad \forall f,g\in 	C^\infty_{c}(G,\omegahalf).
				\end{aligned}\end{equation*}
		\end{dfn}
		\begin{prop}\label{prop:automatic_continuity_distributions_quasi_Lie_groupoid}
			Let $u\in C^{-\infty}_{r,s}(G,\omegahalf)$. The maps $u\ast \cdot$ and $\cdot \ast u$ are continuous $\C$-linear maps.
			Furthermore, the natural map $C^\infty_{\mathrm{proper}}(G,\omegahalf)\to C^{-\infty}_{r,s}(G,\omegahalf)$ is injective.
		\end{prop}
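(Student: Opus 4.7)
The plan is to exploit the approximate identity from \Cref{prop:Approximate_identity} as the universal tool, since the only relation defining $u$ is the weak associativity statement $(f*u)*g=f*(u*g)$, which by itself is too rigid to yield continuity or injectivity directly. All three conclusions follow once we can convolve with $g_n$ and pass to the limit.

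First I would bootstrap more structure out of the defining relation in \Cref{dfn:distributions_convolution}. Testing the expression $(u*f)*g - u*(f*g)$ against an arbitrary $h\in C^\infty_c(G,\omegahalf)$ via convolution and applying the defining relation twice (once to $h$ and $f$, once to $h$ and $f*g$) shows $h*\bigl[(u*f)*g - u*(f*g)\bigr]=0$ for every $h$; setting $h=g_n$ and invoking \MyCref{prop:Approximate_identity}[2] forces the full associativity $(u*f)*g = u*(f*g)$. The symmetric computation gives $(f*g)*u = f*(g*u)$, and the same trick establishes $\mathbb{C}$-linearity of both $u*\cdot$ and $\cdot*u$ in their argument by testing $u*(f_1+\lambda f_2) - u*f_1 - \lambda\,u*f_2$ against $g_n$.

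Next, for continuity, fix a compact $K\subseteq G$ and restrict $f\mapsto f*u$ to $C^\infty_K(G,\omegahalf)$. For each $g\in C^\infty_c(G,\omegahalf)$ the map
\begin{equation*}
T_g : C^\infty_K(G,\omegahalf)\to C^\infty_c(G,\omegahalf),\qquad T_g(f)=(f*u)*g,
\end{equation*}
equals $f\mapsto f*(u*g)$ by the defining relation, and since $u*g\in C^\infty_c(G,\omegahalf)$ this is a continuous linear map into $C^\infty_{K\cdot\supp(u*g)}\subseteq C^\infty_c(G,\omegahalf)$. Specialising $g=g_n$, \MyCref{prop:Approximate_identity}[2] gives $T_{g_n}(f)\to f*u$ in $C^\infty_c(G,\omegahalf)$ for every $f\in C^\infty_K(G,\omegahalf)$. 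The source being Fréchet (hence barreled), Banach--Steinhaus ensures that $\{T_{g_n}\}_{n\in\mathbb{N}}$ is equicontinuous into $C^\infty_c(G,\omegahalf)$; since the pointwise limit of an equicontinuous family of linear maps into a locally convex space is itself continuous, $\cdot*u$ is continuous on $C^\infty_K(G,\omegahalf)$. As $K$ was arbitrary and $C^\infty_c(G,\omegahalf)$ carries the LF topology, this yields continuity on the whole space. The argument for $u*\cdot$ is completely symmetric.

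Finally, injectivity is almost immediate: if $u\in C^\infty_{\mathrm{proper}}(G,\omegahalf)$ maps to zero, then in particular $u*g_n=0$ for all $n$, while \MyCref{prop:Approximate_identity}[proper] gives $u*g_n\to u$ in the Fréchet topology of $C^\infty(G,\omegahalf)$, forcing $u=0$. The only mildly delicate point in the whole proof is the equicontinuity-plus-pointwise-limit step, because the target is an LF space rather than a Fréchet space; but the relevant fact (equicontinuous subsets of $\mathrm{Hom}(E,F)$ are closed under pointwise convergence) is standard locally convex theory, so I do not anticipate a genuine obstacle.
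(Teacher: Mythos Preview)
Your proof is correct and follows essentially the same approach as the paper: approximate identity plus Banach--Steinhaus (the paper cites it as the uniform boundedness principle, \cite[Theorem~2.8]{RudinFA}), and injectivity via \MyCref{prop:Approximate_identity}[proper]. Your preliminary bootstrapping of full associativity and $\C$-linearity is more explicit than the paper, which leaves these implicit in the pointwise-limit identity $u\ast f = \lim_n (g_n\ast u)\ast f$; but neither the extra associativity nor the separate linearity check is actually needed for the continuity argument, since $T_g(f)=f\ast(u\ast g)$ is already manifestly linear in $f$ and the defining relation alone suffices.
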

		\begin{proof}
			Let $(g_n)_{n\in \N}$ be an approximate identity as in \Cref{prop:Approximate_identity}.
			We have 
				\begin{equation*}\begin{aligned}
						u\ast f=\lim_{n\to +\infty}g_n\ast (u\ast f)=\lim_{n\to +\infty}(g_n\ast u)\ast f		
				\end{aligned}\end{equation*}
			Since $g_n\ast u\in C^{\infty}_{c}(G,\omegahalf)$, the map $f\in C^\infty_{c}(G,\omegahalf)\mapsto (g_n\ast u)\ast f\in C^\infty_{c}(G,\omegahalf)$ is continuous.
			By the definition of the LF topology on $C^\infty_c(G,\omegahalf)$, a linear map $C^\infty_c(G,\omegahalf)\to C^\infty_c(G,\omegahalf)$ is continuous if and only if its restriction to $C^\infty(K,\omegahalf)$ is continuous for every compact $K\subseteq G$.
			Now, continuity of $u\ast \cdot:C^\infty(K,\omegahalf)\to C^\infty_c(G,\omegahalf)$ follows from the uniform boundedness principle, see \cite[Theorem 2.8]{RudinFA}.
			Injectivity of the inclusion $C^\infty_{\mathrm{proper}}(G,\omegahalf)\hookrightarrow C^{-\infty}_{r,s}(G,\omegahalf)$ follows from \Crefitem{prop:Approximate_identity}{proper}.
		\end{proof}
		The space $C^{-\infty}_{r,s}(G,\omegahalf)$ is naturally a $*$-algebra with the product and adjoint defined by 
			\begin{equation*}\begin{aligned}
				(u\ast v) \ast f:=u\ast (v\ast f),\quad f\ast (u\ast v):=(f\ast u)\ast v,\quad u^*\ast f:=(f^*\ast u)^*,\quad f\ast u^*:=(u\ast f^*)^*,	
			\end{aligned}\end{equation*}
		where $f\in C^\infty_c(G,\omegahalf),u,v\in C^{-\infty}_{r,s}(G,\omegahalf)$.
		Furthermore, $C^\infty_{\mathrm{proper}}(G,\omegahalf)$ is a $*$-subalgebra of $C^{-\infty}_{r,s}(G,\omegahalf)$. 
		In fact, in \Cref{prop:ideal_proper_smooth_distributions}, we will prove that it is a two-sided ideal.
		We equip the space $C^{-\infty}_{r,s}(G,\omegahalf)$ with the weakest topology such that for any $f\in C^\infty_c(G,\omegahalf)$, the maps 
			\begin{equation*}\begin{aligned}
				C^{-\infty}_{r,s}(G,\omegahalf)\to C^\infty_c(G,\omegahalf),\quad	u\mapsto f\ast u ,\quad u\mapsto u\ast f 
			\end{aligned}\end{equation*}
		are continuous. 
		The proof of \Cref{prop:automatic_continuity_distributions_quasi_Lie_groupoid} shows that $ C^\infty_c(G,\omegahalf)$ is dense in $C^{-\infty}_{r,s}(G,\omegahalf)$.
		\begin{exs}\label{ex:distributions_quasi_Lie_groupods}
			\begin{enumerate}
				\item\label{ex:distributions_quasi_Lie_groupods:inclusion_Lie_algebroid}   By \eqref{eqn:X_distribut}, we have a linear map $C^\infty(G^{(0)},A(G))\to C^{-\infty}_{r,s}(G,\omegahalf)$.
				\item\label{ex:distributions_quasi_Lie_groupods:dirac} 	Let $f\in C^\infty(G^{(0)})$. 
					We define $\delta_{f}\in  C^{-\infty}_{r,s}(G,\omegahalf)$ by
					\begin{equation}\label{eqn:convolution_multiplication_function_base}\begin{aligned}
						\delta_{f}\ast g:=(f\circ r)\cdot g,\quad g\ast \delta_{f}:=g \cdot(f\circ s),\quad g\in C^\infty_c(G,\omegahalf),
					\end{aligned}\end{equation}
				where $\cdot$ is pointwise multiplication.
				This defines a $*$-algebra inclusion 
					\begin{equation}\label{eqn:functions_are_dist_quasi_Lie}\begin{aligned}
						C^\infty(G^{(0)})\hookrightarrow C^{-\infty}_{r,s}(G,\omegahalf).
					\end{aligned}\end{equation}
				Clearly $\delta_1$ is the unit of $C^{-\infty}_{r,s}(G,\omegahalf)$.
				If $f\circ r=f\circ s$, then $\delta_{f}$ is in the center of $C^{-\infty}_{r,s}(G,\omegahalf)$.
				If $G\rightrightarrows G^{(0)}$ is a Lie groupoid, then $\delta_f$ is the distribution equal to $f$ supported on $G^{(0)}$.
				\item\label{ex:distributions_quasi_Lie_groupods:3} If $G=M\times M\rightrightarrows M$  is the pair groupoid over a smooth manifold $M$, then an element of $C^{-\infty}_{r,s}(M\times M,\omegahalf)$ is a distribution $u$ on $M\times M$ with values in $\omegahalf(T(M\times M))$ such that
					\begin{equation*}\begin{aligned}
						x\mapsto \int_M u(x,y)f(y) \in C^\infty_c(M,\omegahalf(TM)),\quad y\mapsto \int_M u(x,y)f(x)\in C^\infty_c(M,\omegahalf(TM)),		
					\end{aligned}\end{equation*}
					for all $f\in C^\infty_c(M,\omegahalf(TM))$. Such distributions are also called semiregular \cite[p.532]{TrevesTopolgoicalVectorSPacesBoook}.
			\end{enumerate}
		\end{exs}

		\begin{prop}\label{prop:proper_support_distribution}
		There is a unique way to associate to each $u\in C^{-\infty}_{r,s}(G,\omegahalf)$ a proper subset $\supp(u)\subseteq G$ such that the following holds:
			\begin{enumerate}
				\item\label{prop:proper_support_distribution:compat} If $u\in C^\infty_{\mathrm{proper}}(G,\omegahalf)$, then $\supp(u)$ is equal to the support of $u$ as a smooth section.
				\item\label{prop:proper_support_distribution:Limit} If $(u_n)_{n\in \N}\subseteq  C^{-\infty}_{r,s}(G,\omegahalf)$ converges to $u$, then $\supp(u)\subseteq \bigcap_{n\in \N}\overline{\bigcup_{m\geq n}\supp(u_m)}$.
				\item\label{prop:proper_support_distribution:2} If $u,v\in  C^{-\infty}_{r,s}(G,\omegahalf)$, then $\supp(u\ast v)\subseteq \supp(u)\supp(v)$ and $\supp(u^*)=\supp(u)^{-1}$.
			\end{enumerate}
		\end{prop}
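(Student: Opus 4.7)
The plan is to reduce the definition of support for arbitrary $u \in C^{-\infty}_{r,s}(G,\omegahalf)$ to the already-available notion of support for smooth sections, using the approximate identity $(g_n)_{n\in\N}$ of \Cref{prop:Approximate_identity}. By \Cref{dfn:distributions_convolution}, the sequence $u_n := g_n \ast u \ast g_n$ lies in $C^\infty_c(G,\omegahalf)$, and \MyCref{prop:Approximate_identity}[2] gives $u_n \to u$ in the topology of $C^{-\infty}_{r,s}(G,\omegahalf)$.

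For uniqueness, suppose a rule $\supp$ satisfies (i)--(iii). Property (ii) applied to $u_n \to u$ immediately forces
\[
    \supp(u) \ \subseteq\ \bigcap_{n \in \N} \overline{\bigcup_{m \geq n} \supp(u_m)}.
\]
Applying (iii) twice to $u_m = g_m \ast u \ast g_m$ yields $\supp(u_m) \subseteq \supp(g_m)\,\supp(u)\,\supp(g_m)$. Combined with \MyCref{prop:Approximate_identity}[G0], which says $\supp(g_m)$ collapses to $G^{(0)}$ uniformly over compacts of $G^{(0)}$, and using the assumed properness of $\supp(u)$, this yields the opposite inclusion. Hence $\supp(u) = \bigcap_n \overline{\bigcup_{m \geq n} \supp(u_m)}$ is uniquely forced.

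For existence, I would take this same formula as the definition, with one choice of $(g_n)$ fixed, and then verify (i)--(iii). Independence of the choice of approximate identity follows from a squeeze argument: for two systems $(g_n),(g_n')$, iterating the smooth-case inclusion $\supp(f\ast g)\subseteq \supp(f)\,\supp(g)$ of \eqref{eqn:supp_product_functions} inside $g_m \ast g_n' \ast u \ast g_n' \ast g_m$ and exchanging the roles of the two identities shows the two $\limsup$-intersections coincide. Property (i) then follows from \MyCref{prop:Approximate_identity}[proper], which upgrades convergence to the $C^\infty$-topology when $u$ is properly supported, so the supports stabilize to the usual smooth support. Property (ii) follows by a diagonal extraction from the joint continuity of convolution. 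Property (iii) follows by writing $(u\ast v)_n = u_n \ast v_n$ up to factors of $g_m$ that converge to identities, then applying the smooth inclusion \eqref{eqn:supp_product_functions} and passing to the $\limsup$.

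The main obstacle is verifying that the set defined by this formula is actually proper, i.e., that $s$ and $r$ restrict to proper maps on it. For each compact $K \subseteq G^{(0)}$ I need a compact $K' \subseteq G$ containing $s^{-1}(K) \cap \supp(u_m)$ uniformly in large $m$. My intended argument is to fix a cutoff $f \in C^\infty_c(G,\omegahalf)$ equal to $1$ in a neighborhood of $K$ inside $G^{(0)}$; then $u \ast f \in C^\infty_c(G,\omegahalf)$ is genuinely compactly supported, and using the factorization $u_m = g_m \ast (u \ast g_m)$ together with \MyCref{prop:Approximate_identity}[G0] to confine $\supp(g_m)$ near $G^{(0)}$, one can trap $s^{-1}(K) \cap \supp(u_m)$ in a fixed neighborhood of $\supp(u \ast f) \cap s^{-1}(K)$ for all $m$ large enough. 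Turning this heuristic into a uniform bound, and doing so symmetrically for $r$, is the technical core of the proof.
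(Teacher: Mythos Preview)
Your overall strategy---define $\supp(u)$ as a $\limsup$ of the smooth supports $\supp(g_m\ast u\ast g_m)$ and verify (i)--(iii)---is the same as the paper's, though the paper works one-sidedly with $u\ast g_m$ rather than $g_m\ast u\ast g_m$, which avoids the minor joint-continuity issue lurking in your claim that $g_n\ast u\ast g_n\to u$.

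The genuine gap is in your properness argument. You fix a single cutoff $f$ and observe that $u\ast f\in C^\infty_c$; but what you actually need is that $\supp(u\ast(g_m\ast\delta_h))$ is \emph{uniformly} compact as $m$ varies, since on $s^{-1}(K)$ one has $u\ast g_m=u\ast(g_m\ast\delta_h)$ for $h\equiv 1$ near $K$. Knowing compactness for each individual $m$ (or for the single $f$) does not give a uniform bound. The paper handles this with an abstract lemma: if $T:C^\infty_c(G,\omegahalf)\to C^\infty_c(G,\omegahalf)$ is continuous (which $u\ast\cdot$ is, by the closed-graph argument of Proposition~\ref{prop:automatic_continuity_distributions_quasi_Lie_groupoid}), then for any compact $K\subseteq G$ there is a compact $K'$ with $\supp(Tf)\subseteq K'$ whenever $\supp(f)\subseteq K$. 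This is a standard LF-space fact proved by a diagonal/uniform-boundedness trick. Your heuristic of ``trapping near $\supp(u\ast f)$'' is pointing at this, but does not supply the uniformity.

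One further simplification the paper makes that you might find useful: after defining $\supp(u)$ by the $\limsup$ formula, it shows that $\supp(u)$ is the \emph{smallest} closed set $A$ with $\supp(u\ast f)\subseteq A\,\supp(f)$ for all $f\in C^\infty_c$. This intrinsic characterization makes both (ii) and the product inclusion in (iii) almost immediate, and also shows independence of the choice of approximate identity without the squeeze argument you sketch.
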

		\begin{proof}
			Let $(g_n)_{n\in \N}\subseteq C^\infty_c(G,\omegahalf)$ as in \Cref{prop:Approximate_identity}.
			If it is possible to define $\supp(u)$ which satisfies the above properties, then by \Crefitem{prop:Approximate_identity}{G0}, one has 
				\begin{equation}\label{eqn:support_dfn}\begin{aligned}
					\supp(u)=\bigcap_{n\in \N}\overline{\bigcup_{m\geq n}\supp(u\ast g_m)}.
				\end{aligned}\end{equation}
			This proves uniqueness. To prove existence, we define $\supp(u)$ using \eqref{eqn:support_dfn}.
			\Crefitem{prop:proper_support_distribution}{compat} is straightforward to prove.
			We now show that $s_{|\supp(u)}:\supp(u)\to G^{(0)}$ is proper.
			Let $K\subseteq G^{(0)}$ be compact and $h\in C^\infty_c(G^{(0)})$ which is equal to $1$ on $K$. 
			It suffices to show that $\supp(u\ast g_n\ast \delta_{h})$ is uniformly compactly supported.
			By \Crefitem{prop:Approximate_identity}{1}, $g_n\ast \delta_{h}$ is uniformly compactly supported. So, the result follows from the following lemma.
			\begin{lem}\label{lem:automatic_uniform_compact}
				If $T:C^\infty_c(G,\omegahalf)\to C^\infty_c(G,\omegahalf)$ is a continuous linear map, then for any $K\subseteq G$ compact, there exists $K'\subseteq G$ compact such that $\supp(Tf)\subseteq K'$ for any $f\in C^\infty_c(G,\omegahalf)$ with $\supp(f)\subseteq K$.
			\end{lem}
			\begin{proof}
				If not, then we can find a sequence $(f_n)_{n\in \N}\subseteq C^\infty_c(G,\omegahalf)$ such that $(f_n)_{n\in \N}$ are uniformly compactly supported but $(T(f_n))_{n\in \N}$ aren't uniformly compactly supported.
				By a standard diagonal argument, we can find a sequence $c_n\in \Rpt$ such that $\sum_{n}c_nf_n$ converges in $C^\infty_c(G,\omegahalf)$ but $\sum_{n\in \N}c_nT(f_n)$ isn't compactly supported.
				This contradicts the continuity of $T$.
			\end{proof}
			We now show that $\supp(u)$ is the smallest closed subset $A$ of $G$ such that $\supp(u\ast f)\subseteq A\supp(f)$ for all $f\in C^\infty_c(G,\omegahalf)$.
			For one inclusion, let $f\in  C^\infty_c(G,\omegahalf)$. Then, $u\ast g_n\ast f\to u\ast f$.
			So, 
				\begin{equation*}\begin{aligned}
					\supp(u\ast f)\subseteq \bigcap_{n\in \N}\overline{\bigcup_{m\geq n}\supp(u\ast g_n\ast f)}\subseteq \left(\bigcap_{n\in \N}\overline{\bigcup_{m\geq n}\supp(u\ast g_n)}\right)\supp(f)	,	
				\end{aligned}\end{equation*}
			where we used the fact that $\supp(f)$ is compact in the second inclusion.
			For the other inclusion, let $A$ be a closed subset of $G$ such that $\supp(u\ast f)\subseteq A\supp(f)$ for all $f\in C^\infty_c(G,\omegahalf)$.
			Then, for any $n\in \N$, $\supp(u\ast g_n)\subseteq A\supp(g_n)$.
			Hence, 
				\begin{equation*}\begin{aligned}
					\bigcap_{n\in \N}\overline{\bigcup_{m\geq n}\supp(u\ast g_n)}\subseteq \bigcap_{n\in \N}\overline{\bigcup_{m\geq n}A\supp(g_n)} \subseteq A \left(\bigcap_{n\in \N}\overline{\bigcup_{m\geq n}\supp(g_n)} \right)=A
				\end{aligned}\end{equation*}
			where we used \Crefitem{prop:Approximate_identity}{1}.
			From this new description of the support, \Crefitem{prop:proper_support_distribution}{Limit} follows easily, as well as that $\supp(u\ast v)\subseteq \supp(u)\supp(v)$.
			Here, we use the fact that $\supp(u) \supp(v)$ is closed which follows from properness of $s:\supp(v)\to G^{(0)}$.
			One can prove by similar arguments that $\supp(u)=\bigcap_{n\in \N}\overline{\bigcup_{m\geq n}\supp(g_m\ast u)}$.
			Hence, $\supp(u^*)=\supp(u)$ and $\supp(u)$ is proper.
		\end{proof}
		\begin{prop}\label{prop:ideal_proper_smooth_distributions}
			The space $C^\infty_{\mathrm{proper}}(G,\omegahalf)$ is a two-sided ideal of $C^{-\infty}_{r,s}(G,\omegahalf)$.
		\end{prop}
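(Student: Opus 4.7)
I plan to show that for $u \in C^{-\infty}_{r,s}(G,\omegahalf)$ and $f \in C^\infty_{\mathrm{proper}}(G,\omegahalf)$, both $u \ast f$ and $f \ast u$ lie in $C^\infty_{\mathrm{proper}}(G,\omegahalf)$. The two cases are symmetric (swap the r\^{o}les of $u \ast \cdot$ and $\cdot \ast u$), so I concentrate on $u \ast f$.

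For the support claim, \Cref{prop:proper_support_distribution}.\ref{prop:proper_support_distribution:2} gives $\supp(u \ast f) \subseteq \supp(u)\supp(f)$; it remains to verify that the product of two proper subsets of $G$ is proper. This is a direct check: for compact $K \subseteq G^{(0)}$, one has
$$
\supp(u)\supp(f) \cap s^{-1}(K) = m\Big((\supp(u) \cap s^{-1}(r(\supp(f)\cap s^{-1}(K)))) \times_{G^{(0)}} (\supp(f)\cap s^{-1}(K))\Big),
$$
which is a continuous image of a compact set (using properness of $s$ on both factors together with properness of $r$ on $\supp(f)$), hence compact; a symmetric computation handles $r$.

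The substantive task is smoothness. My plan is to exhibit an explicit smooth section $h \in C^\infty(G,\omegahalf)$ representing $u \ast f$, meaning $h \ast g = (u \ast f) \ast g$ for every $g \in C^\infty_c(G,\omegahalf)$; the injectivity of $C^\infty_{\mathrm{proper}}(G,\omegahalf) \hookrightarrow C^{-\infty}_{r,s}(G,\omegahalf)$ from \Cref{prop:automatic_continuity_distributions_quasi_Lie_groupoid} then identifies $h$ with $u \ast f$. Working locally, I fix $\gamma_0 \in G$, use that $s$ is a submersion to pick a smooth local section of $r$ through $\gamma_0$, and build from right-translation of $f$ a smooth family $\Phi \colon U \to C^\infty_c(G,\omegahalf)$, $\gamma \mapsto \Phi_\gamma$, defined on a neighbourhood $U$ of $\gamma_0$, whose supports all sit inside a fixed compact subset of $G$. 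The smooth dependence of groupoid translation on its parameters together with the smoothness of $f$ makes $\Phi$ smooth into the LF topology on $C^\infty_c(G,\omegahalf)$. Continuity of $v \mapsto u \ast v$ on $C^\infty_c$ (again \Cref{prop:automatic_continuity_distributions_quasi_Lie_groupoid}) then yields that $\gamma \mapsto u \ast \Phi_\gamma$ is smooth into $C^\infty_c$; evaluating it at the canonical point attached to the local section of $r$ produces the local smooth representative $h$. The local pieces patch automatically because they all represent the same element of $C^{-\infty}_{r,s}$ on overlaps, and the final identity $h \ast g = (u \ast f) \ast g$ reduces to a Fubini-type swap, which is legitimate since the $\Phi_\gamma$ are uniformly compactly supported.

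The main obstacle I anticipate is the half-density bookkeeping in the construction of $\Phi_\gamma$: one must arrange matters so that $u \ast \Phi_\gamma$, evaluated at the distinguished point, lands in the correct fibre of $\omegahalf$ over $\gamma$ rather than in some twisted density line. Closely related is the fact that $G$ is only a differential space, not a smooth manifold, so the smooth family $\gamma \mapsto \Phi_\gamma$ must be constructed using the parameterised submersion technology from \Cref{lem:pullback_local_diff} rather than naive local coordinates. Once that is set up, the remainder is formal: patching and the Fubini check.
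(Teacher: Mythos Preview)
Your route would probably go through, but the paper's argument is dramatically shorter and bypasses every obstacle you flag. The key observation is that for $h \in C^\infty_c(G^{(0)})$ one has $f \ast \delta_h = f \cdot (h \circ s) \in C^\infty_c(G,\omegahalf)$, since $f$ is properly supported and $h$ compactly supported. Then, by associativity in $C^{-\infty}_{r,s}(G,\omegahalf)$ and the very definition of $u \ast \cdot$ on $C^\infty_c$,
\[
(u \ast f) \ast \delta_h \;=\; u \ast (f \ast \delta_h) \;\in\; C^\infty_c(G,\omegahalf).
\]
Varying $h$ (and using $\phi_{h_1} \ast \delta_{h_2} = \phi_{h_1}\cdot(h_2\circ s)$ to see consistency) patches these into a global smooth section representing $u \ast f$; properness of the support is already supplied by \Cref{prop:proper_support_distribution}. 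That is the whole proof.

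Your construction---the parameterised family $\Phi_\gamma$ obtained by right-translating $f$, the extension from $G_{r(\gamma)}$ to all of $G$ via a local section of $r$, the half-density twist, and the Fubini identification at the end---amounts to redoing by hand what the multiplier $\delta_h$ accomplishes structurally. Each step you anticipate as an obstacle is real and would take work in the quasi-Lie setting (your ``local section of $r$'' already needs care when $G^{(0)}$ is not a manifold), whereas the paper's reduction replaces all of it with a single cutoff on the base. The lesson is that the elements $\delta_h$ already live in $C^{-\infty}_{r,s}$ and let you commute the problem into the compactly supported regime where $u$ is \emph{defined} to act.
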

		\begin{proof}
			Let $u\in C^{-\infty}_{r,s}(G,\omegahalf) $ and $f\in C^\infty_{\mathrm{proper}}(G,\omegahalf)$, $h\in C^\infty_c(G^{(0)})$. Since $f$ is properly supported, $f\ast h\in C^\infty_c(G,\omegahalf)$. 
			So, $(u\ast f)\ast \delta_h\in C^\infty_c(G,\omegahalf)$.
			By taking different functions $h$, one deduces that $u\ast f\in C^\infty(G,\omegahalf)$. It is properly supported by \Cref{prop:proper_support_distribution}.
		\end{proof}

		We denote by $C^{-\infty}_{c,r,s}(G,\omegahalf)$ the subspace of $C^{-\infty}_{r,s}(G,\omegahalf)$ of compactly supported distributions.

	\paragraph{Restriction to saturated subsets:}
		Let $A\subseteq G$ be a saturated subset.	
		Since for any $x\in A$, $(G_A)_x=G_x$ and $(G_A)^x=G^x$, it follows that $\omegahalf(G_A)$ is equal to the restriction of $\omegahalf(G)$ to $G_A$.
		By taking the restriction to $G_A$, we get a $*$-algebra homomorphism 
			\begin{equation*}\begin{aligned}
				C^\infty_{\mathrm{proper}}(G,\omegahalf)\to C^\infty_{\mathrm{proper}}(G_A,\omegahalf),\quad f\mapsto f_{|G_A}.
			\end{aligned}\end{equation*}
		This map extends to a $*$-algebra homomorphism 
			\begin{equation}\label{eqn:restriction_map_saturated_distribution_closed}\begin{aligned}
				C^{-\infty}_{r,s}(G,\omegahalf)\to C^{-\infty}_{r,s}(G_A,\omegahalf),\quad u\mapsto u_{|G_A},
			\end{aligned}\end{equation}
		where $u_{|G_A}\ast f=(u\ast g)_{|G_A}$ and $f\ast u_{|G_A}=(g\ast u)_{|G_A}$, where $f\in C^\infty_c(G_A,\omegahalf)$ and $g\in C^\infty_{c}(G,\omegahalf)$ is any extension of $f$.
		The fact that $u_{|G_A}\ast f$ and $f\ast u_{|G_A}$ don't depend on the choice of $g$ follows easily using the distributions \Crefitem{ex:distributions_quasi_Lie_groupods}{dirac}, see proof of \Cref{prop:ideal_proper_smooth_distributions}.
		Notice that
			\begin{equation}\label{eqn:support_restriction_dist}\begin{aligned}
				\supp(u_{|G_A})=\supp(u)\cap G_{|A},\quad \forall u\in C^{-\infty}_{r,s}(G,\omegahalf).
			\end{aligned}\end{equation}
			
	\section{Representation theory of quasi-Lie groupoids}\label{sec:represen_quasi_lie}
		\paragraph{Conventions and notations:}
			The following will be used throughout this article.
			\begin{itemize}
				\item 	If $V,W$ are topological vector spaces, then $\mathcal{L}(V,W)$ denotes the space of continuous linear maps $V\to W$.
				We denote by $\mathcal{L}(V)$ the space of continuous endomorphisms of $V$.
				\item If $M$ is a smooth manifold, then $L^2(M)$ denotes the Hilbert space completion of the space $C^\infty_c(M,\omegahalf(TM))$ by the inner product $\langle f,g\rangle=\int_M \bar{f}g$.
			\end{itemize}
		\begin{dfn}
			Let $H$ a Hilbert space.
			A unitary representation of a quasi-Lie groupoid $G\rightrightarrows G^{(0)}$ over $H$ is a $*$-algebra homomorphism 
				$\pi:C^\infty_c(G,\omegahalf)\to \mathcal{L}(H)$ which is continuous when $C^\infty_c(G,\omegahalf)$ is equipped with the LF-topology, and $\mathcal{L}(H)$ with the strong operator topology.
			We say that $\pi$ is non-degenerate if 
				\begin{equation*}\begin{aligned}
					\pi(C^\infty_c(G,\omegahalf))H:=\left\{\sum_{i\in I}\pi(f_i)\xi_i:I \text{ a finite index set }, (f_i)_{i\in I}\subseteq C^\infty_c(G,\omegahalf),(\xi_i)_{i\in I}\subseteq  H\right\}
				\end{aligned}\end{equation*}
			 is dense in $H$.
			We will restrict our attention to non-degenerate representations.
			It is convenient to denote the space $H$ by $L^2(\pi)$.
		\end{dfn}

		\begin{ex}\label{ex:Regular_representation}
			Let $x\in G^{(0)}$. 
			We define the regular representation $\Xi_x$ of $G$ acting on $L^2(G_x)$ by 
					$\Xi_x(f)g=f\ast g$.
		\end{ex}
					
		\begin{rem}\label{rem:equivalence}
			Let $\gamma\in G$. The smooth diffeomorphism $R_\gamma:G_{r(\gamma)}\to G_{s(\gamma)}$ induces a unitary equivalence between $\Xi_{s(\gamma)}$ and $\Xi_{r(\gamma)}$.
		\end{rem}
		\paragraph{Smooth vectors:} If $u\in C^{-\infty}_{r,s}(G,\omegahalf)$ and $\xi \in L^2(\pi)$, then $\pi(u)\xi$ denotes the unique vector in $L^2(\pi)$, if it exists, such that 
			\begin{equation}\label{eqn:smooth_vector_equation}\begin{aligned}
				\langle \pi(u)\xi,\pi(f)\eta\rangle=\langle \xi,\pi(u^*\ast f)\eta\rangle,\quad \forall \eta\in L^2(\pi),f\in C^\infty_c(G,\omegahalf).
			\end{aligned}\end{equation}
		Uniqueness follows from the non-degeneracy of $\pi$. 
		Let 
			\begin{equation*}\begin{aligned}
				C^\infty_c(\pi):=\left\{\xi\in L^2(\pi):\forall u\in C^{-\infty}_{r,s}(G,\omegahalf), \ \pi(u)\xi \text{ exists}\right\}.\\
			\end{aligned}\end{equation*}
		The space $C^\infty_c(\pi)$ is equipped with the complete locally convex topology given by the family of semi-norms $\xi\mapsto \norm{\pi(u)\xi}_{L^2(\pi)}$ for $u\in C^{-\infty}_{r,s}(G,\omegahalf)$.
		The space $C^\infty_c(\pi)$ is dense in $L^2(\pi)$ because $\pi(C^\infty_c(G,\omegahalf))L^2(\pi)\subseteq  C^\infty_c(\pi)$.

		\begin{dfn}
			We denote by $C^{-\infty}(\pi)$ the space of continuous anti-linear maps $C^\infty_c(\pi)\to \C$.
			The action of $\xi \in C^{-\infty}(\pi)$ on $\eta \in C^\infty(\pi)$ is denoted by $\langle \eta,\xi\rangle$.
		\end{dfn}
		\begin{rem}
			One can also define $C^\infty(\pi):=\left\{\xi\in L^2(\pi):\forall u\in C^{-\infty}_{c,r,s}(G,\omegahalf), \ \pi(u)\xi \text{ exists}\right\}$, and $C^{-\infty}_c(\pi)$ as its continuous dual.
		\end{rem}
		We have obvious inclusions 
			\begin{equation*}\begin{aligned}
				C^\infty_c(\pi)\subseteq L^2(\pi)\subseteq C^{-\infty}(\pi).
			\end{aligned}\end{equation*}
		If $u\in C^{-\infty}_{r,s}(G,\omegahalf)$, then $\pi(u)C^\infty_c(\pi)\subseteq C^\infty_c(\pi)$.
		By duality, we define
		\begin{equation}\label{eqn:qimosdjifojqsodmjfmqsdjfmq22}\begin{aligned}
			\pi(u):C^{-\infty}(\pi)\to C^{-\infty}(\pi),	\quad \langle \eta,\pi(u)\xi\rangle:=\langle \pi(u^*)\eta,\xi\rangle,\quad \forall \eta\in C^\infty_c(\pi),\xi\in C^{-\infty}_c(\pi).
		\end{aligned}\end{equation}
		By a Riesz representation theorem argument, if $f\in C^\infty_{c}(G,\omegahalf)$, then $\pi(f)C^{-\infty}(\pi)\subseteq C^{\infty}_c(\pi)$.
		We now state the Dixmier-Malliavin theorem \cite{DixmierMalliavin}.
		The original version of Dixmier-Malliavin theorem is stated for Lie groups. It was generalised to Lie groupoids by Francis \cite{DixmierMalliavinLieGroupoids}. 
		The proof also works for quasi-Lie groupoids.
		\begin{theorem}[Dixmier-Malliavin theorem]\label{thm:Dixmier-Malliavin_convolution}
			If $f\in C^\infty_c(G,\omegahalf)$ and $U$ is an open neighborhood of $G^{(0)}$, then $f$ can be written as sum of the form $g_1\ast h_1+\cdots+g_n\ast f_n$, where
			$g_i,h_i\in C^\infty_c(G,\omegahalf)$, $\supp(g_i)\subseteq \supp(f)$ and $\supp(h_i)\subseteq U$ for all $i\in \bb{1,n}$.
		\end{theorem}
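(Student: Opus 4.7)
The strategy is to adapt the original Lie group argument of \cite{DixmierMalliavin}, following the Lie groupoid extension of Francis \cite{DixmierMalliavinLieGroupoids}. The key is to transfer the convolution factorization problem on $G$ to a parameterized factorization problem on a Euclidean space via the exponential-type map of the Lie algebroid constructed in the proof of \Cref{prop:Approximate_identity}. Using a partition of unity on $\supp(f)$ (via \Cref{prop:parition_of_unity}), I first reduce to the case where $f$ is supported in a single ``chart'' over which one has a finite family $X_1,\dots,X_k \in C^\infty(G^{(0)},A(G))$ generating the algebroid (possible by \Cref{rem:finitely_generated_section}), so that the submersion $\phi(t,x) = e^{t_1 X_{1R} + \cdots + t_k X_{kR}}\cdot x$ of \Cref{prop:Approximate_identity} realizes a neighborhood of $G^{(0)}$ in $G$.

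Thanks to the right-invariance formula \eqref{eqn:flow_Left_and_right_invariant_vector_fields}, for any $h \in C^\infty_c(G,\omegahalf)$ supported in this chart and parameterized as $h = \phi_*(H\kappa^{1/2})\omega^{-1}$ with $H \in C^\infty_c(\R^k \times G^{(0)})$, the convolution $f\ast h$ unfolds into an integral of $f$ against the time-$t$ flows of the right-invariant vector fields $X_{iR}$. This identifies $f\ast h$, modulo smooth Jacobian-type factors, with a parameterized convolution in the $\R^k$-variable with parameter $x \in G^{(0)}$. The problem therefore reduces to factoring a smooth compactly supported function $F$ on $\R^k \times G^{(0)}$ as a finite sum $F = \sum_i G_i \ast_{\R^k} H_i$ in which each $H_i$ is supported in a prescribed neighborhood of $\{0\}\times G^{(0)}$ and $\supp(G_i) \subseteq \supp(F)$; this is the classical parameterized Dixmier-Malliavin theorem, proved by the same argument as in \cite{DixmierMalliavin} applied uniformly in the parameter, and carried out in detail in \cite{DixmierMalliavinLieGroupoids}.

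Pushing the factorization back through $\phi$ and enforcing the exact condition $\supp(g_i)\subseteq \supp(f)$ by multiplying each $g_i$ by a suitable cutoff, once $U$ has been taken small enough, completes the proof. The main technical point, and the place where I expect the most care to be required, is that $\phi$ is not a groupoid homomorphism: the groupoid convolution $\phi_*(G_i)\ast \phi_*(H_i)$ differs from $\phi_*(G_i \ast_{\R^k} H_i)$ by a Jacobian/BCH-type correction, which must be identified explicitly and absorbed into the $G_i$'s before the factorization can be pulled back. The fact that $G^{(0)}$ is only a differential space rather than a smooth manifold causes no additional difficulty, since the construction of \Cref{prop:Approximate_identity} already handles this: all Euclidean analysis is carried out fiberwise in $\R^k$ with $G^{(0)}$ playing the role of a smooth parameter space.
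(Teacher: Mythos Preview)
The paper does not actually give its own proof of this theorem: immediately after stating it, the paper remarks that the original Dixmier--Malliavin theorem is for Lie groups, that it was generalized to Lie groupoids by Francis \cite{DixmierMalliavinLieGroupoids}, and that ``the proof also works for quasi-Lie groupoids.'' Your proposal is precisely a sketch of how to carry out that adaptation---using the submersion $\phi$ from the proof of \Cref{prop:Approximate_identity} to transfer the problem to a parameterized Euclidean convolution and then invoking the classical argument fiberwise---so it is entirely consistent with what the paper asserts (but does not write out).
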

		\begin{theorem}[Dixmier-Malliavin theorem]\label{thm:Dixmier-Malliavin_representation}
					Let $\xi\in C^\infty_c(\pi)$. There exists $\xi_1,\cdots,\xi_n\in C^\infty_c(\pi)$ and  $f_1,\cdots,f_n\in C^\infty_c(G,\omegahalf)$ such that $\xi=\pi(f_1)\xi_1+\cdots+\pi(f_n)\xi_n$.
		\end{theorem}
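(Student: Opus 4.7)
The plan is to adapt the classical Dixmier--Malliavin argument, building on the convolution factorization \Cref{thm:Dixmier-Malliavin_convolution} and the approximate identity \Cref{prop:Approximate_identity} already established. The overall strategy has three parts: a soft approximation step, a factorization step providing finite Gårding sums that approximate $\xi$, and a Borel-type rigidification that upgrades the approximation to an exact finite-sum equality.

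First, given $\xi\in C^\infty_c(\pi)$, I would fix an approximate identity $(g_n)_{n\in\N}$ as in \Cref{prop:Approximate_identity} and show that $\pi(g_n)\xi\to\xi$ in the Fréchet topology of $C^\infty_c(\pi)$. By definition of the topology on $C^\infty_c(\pi)$ this reduces to checking $\pi(u)\pi(g_n)\xi\to\pi(u)\xi$ in $L^2(\pi)$ for every $u\in C^{-\infty}_{r,s}(G,\omegahalf)$. Writing $\pi(u)\pi(g_n)\xi=\pi(u\ast g_n)\xi$ and using \Crefitem{prop:Approximate_identity}{2} together with \Crefitem{prop:Approximate_identity}{proper} applied to test convolutions against elements of $C^\infty_c(G,\omegahalf)$, one obtains $u\ast g_n\to u$ in $C^{-\infty}_{r,s}(G,\omegahalf)$, whence the desired convergence on $L^2(\pi)$.

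Next, for each $n$ I apply \Cref{thm:Dixmier-Malliavin_convolution} with a neighborhood $U_n$ of $G^{(0)}$ whose diameter shrinks to zero, producing a finite decomposition
\[
g_n \;=\; \sum_{i=1}^{k_n} g_n^{(i)}\ast h_n^{(i)},\qquad \supp(h_n^{(i)})\subseteq U_n,\quad \supp(g_n^{(i)})\subseteq \supp(g_n).
\]
Since $\xi\in C^\infty_c(\pi)$, each $\eta_n^{(i)}:=\pi(h_n^{(i)})\xi$ again lies in $C^\infty_c(\pi)$, and hence $\pi(g_n)\xi=\sum_i \pi(g_n^{(i)})\eta_n^{(i)}$ is a finite Gårding sum that approximates $\xi$.

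The main obstacle, and the heart of the Dixmier--Malliavin argument, is passing from the approximation $\pi(g_n)\xi\to\xi$ to an exact finite decomposition of $\xi$ itself. Here my plan is to run the Borel-type induction of Dixmier--Malliavin: construct the $g_n$ and the factorizations of $g_{n+1}-g_n$ furnished by \Cref{thm:Dixmier-Malliavin_convolution} so that the left factors are absolutely summable in $C^\infty_c(G,\omegahalf)$ while the right factors remain concentrated near $G^{(0)}$ with $\pi(h_n^{(i)})\xi$ staying in a bounded set of $C^\infty_c(\pi)$. Telescoping $\xi=\pi(g_1)\xi+\sum_{n\ge 1}\pi(g_{n+1}-g_n)\xi$ and regrouping the doubly-indexed convolution sums by their $h$-components collapses $\xi$ into a finite sum $\sum_i \pi(F_i)\xi_i$ with $F_i\in C^\infty_c(G,\omegahalf)$ and $\xi_i\in C^\infty_c(\pi)$. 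The Borel construction is carried out in the parametrization of a neighborhood of $G^{(0)}$ by the right-invariant flow $\phi(t,x)=e^{t_1 X_{1R}+\cdots+t_k X_{kR}}\cdot x$ already used in the proof of \Cref{prop:Approximate_identity}, exactly as in the Lie groupoid treatment of Francis \cite{DixmierMalliavinLieGroupoids}; the only new point is that the submersion property of $s$ (rather than smoothness of $G$) is what justifies this parametrization in the quasi-Lie setting, which is precisely the content of the proof of \Cref{prop:Approximate_identity}.
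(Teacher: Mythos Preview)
The paper does not actually supply its own proof of this theorem: immediately after stating \Cref{thm:Dixmier-Malliavin_convolution} and \Cref{thm:Dixmier-Malliavin_representation} it simply refers to the original Dixmier--Malliavin paper and to Francis's groupoid version \cite{DixmierMalliavinLieGroupoids}, remarking that ``the proof also works for quasi-Lie groupoids.'' So there is no in-paper argument to compare against; your task is really to check that the classical proof transports, which is exactly what you attempt.

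Your first two steps are fine and match the standard setup. The third step, however, is where the real content of Dixmier--Malliavin lives, and your description of it does not clearly produce a \emph{finite} sum. You write that one telescopes $\xi=\pi(g_1)\xi+\sum_{n\ge1}\pi(g_{n+1}-g_n)\xi$, factors each increment via \Cref{thm:Dixmier-Malliavin_convolution}, and then ``regroups the doubly-indexed convolution sums by their $h$-components.'' But the right-hand factors $h_n^{(i)}$ produced by \Cref{thm:Dixmier-Malliavin_convolution} depend on $n$ (their supports shrink with $U_n$), so regrouping by $h$ still leaves infinitely many distinct terms. The actual Dixmier--Malliavin mechanism is different: working in the flow coordinates $\phi(t,x)$ (as you correctly identify from the proof of \Cref{prop:Approximate_identity}), one constructs, for a suitably chosen rapidly increasing sequence, a \emph{single} function $\psi\in C^\infty_c(\R^k)$ with a factorization $\psi=\psi_1\ast\psi_2+\psi_3$ in $C^\infty_c(\R^k)$ such that the formal series $\sum c_m D^m\psi$ (with $D$ a Laplacian-type operator) converges to a delta at $0$; pushing this through $\phi$ yields an identity of the form $\xi=\pi(f_1)\pi(f_2)\xi+\pi(f_3)\xi$ with each $f_j\in C^\infty_c(G,\omegahalf)$. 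The finiteness is built into the factorization of $\psi$, not obtained by regrouping an infinite telescoping sum. Your reference to Francis is the right pointer, but you should follow that argument more closely rather than the telescoping/regrouping sketch, which as written has a gap.
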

		The following theorem in the continuous setting is due to Renault \cite{RenaultBook}. 
		We refer the reader to \cite{AS1} for a modern proof using $C^*$-modules.
		\begin{theorem}\label{prop:automatic_continuity}
			Let $\omega\in C^\infty(G,\Omega^{\frac{1}{2},-\frac{1}{2}})$ be a positive section which satisfies \eqref{eqn:omega_symmetry_condition}.
			For any representation $\pi$ of $G$, and $f\in C^\infty_c(G,\omegahalf)$, $\norm{\pi(f)}_{\cL(L^2(\pi))}\leq \norm{f}_{L^1(G,\omega)}$.
		\end{theorem}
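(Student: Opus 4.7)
The plan is to follow Renault's classical strategy \cite{RenaultBook} (see \cite{AS1} for a modern $C^*$-module treatment), adapted to the quasi-Lie groupoid half-density formalism. The argument has two ingredients: a disintegration of $\pi$ into regular representations, and a weighted Schur estimate on each regular representation.

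For the disintegration, I would invoke the Hilbert-module disintegration theorem for non-degenerate $*$-representations of the convolution $*$-algebra $C^\infty_c(G,\omegahalf)$: any such $\pi$ is unitarily equivalent to a direct integral $\int^\oplus \pi_x\,d\mu(x)$ over $G^{(0)}$, where each fiber $\pi_x$ is a subrepresentation of an amplification of the regular representation $\Xi_x$ of \Cref{ex:Regular_representation}. The proof in \cite{AS1} uses only the $*$-algebra structure and the existence of an approximate identity, both of which are established here (the latter in \Cref{prop:Approximate_identity}). Since the desired inequality $\|\pi(f)\|\leq \|f\|_{L^1(G,\omega)}$ is stable under direct integrals, amplifications, unitary equivalences, and restriction to subrepresentations, it suffices to prove $\|\Xi_x(f)\|_{\mathcal{L}(L^2(G_x))}\leq \|f\|_{L^1(G,\omega)}$ for every $x\in G^{(0)}$.

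For the Schur estimate, fix $x\in G^{(0)}$ and $g\in C^\infty_c(G_x,\omegahalf)$. Under the right-invariance identifications \eqref{eqn:iso_kerds_kerdr}, $\Xi_x(f)g(\gamma)=\int_{\eta\in G_x}f(\gamma\eta^{-1})g(\eta)$ at each $\gamma\in G_x$. Applying the Cauchy--Schwarz inequality with the weight split $|f(\gamma\eta^{-1})g(\eta)|=\bigl[|f(\gamma\eta^{-1})|^{1/2}\omega(\gamma\eta^{-1})^{-1/2}\bigr]\cdot\bigl[|f(\gamma\eta^{-1})|^{1/2}\omega(\gamma\eta^{-1})^{1/2}|g(\eta)|\bigr]$ yields
\begin{equation*}
|\Xi_x(f)g(\gamma)|^2\leq \Bigl(\int_{\eta\in G_x}|f(\gamma\eta^{-1})|\,\omega(\gamma\eta^{-1})^{-1}\Bigr)\cdot \Bigl(\int_{\eta\in G_x}|f(\gamma\eta^{-1})|\,\omega(\gamma\eta^{-1})\,|g(\eta)|^2\Bigr).
\end{equation*}
The choice of the exponents $\pm 1/2$ for $\omega$ is forced by the requirement that both factors be honest densities on $G_x$: using the cocycle property \eqref{eqn:omega_symmetry_condition} together with the natural isomorphisms \eqref{eqn:iso_kerds_kerdr}, one verifies that $|f(\gamma\eta^{-1})|\omega(\gamma\eta^{-1})^{\mp 1}$ lies in the correct line bundle to produce a density in $\eta$.

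Finally, the substitution $\tau=\gamma\eta^{-1}$, which for fixed $\gamma$ is a diffeomorphism $G_x\xrightarrow{\sim}G^{r(\gamma)}$ and for fixed $\eta$ is a diffeomorphism $G_x\xrightarrow{\sim}G_{r(\eta)}$, combined with the cocycle identity $\omega(\gamma\eta^{-1})=\omega(\gamma)\omega(\eta)^{-1}$, identifies the first factor with $r_*(|f|\omega^{-1})(r(\gamma))\leq \|f\|_{L^1(G,\omega)}$ by \eqref{eqn:L1norms}. Integrating the pointwise inequality over $\gamma\in G_x$, exchanging order by Fubini, and performing the analogous substitution with $\eta$ fixed shows that the remaining $\gamma$-integral equals $s_*(|f|\omega)(r(\eta))\leq \|f\|_{L^1(G,\omega)}$. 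Combining these bounds gives $\|\Xi_x(f)g\|_{L^2(G_x)}^2\leq \|f\|_{L^1(G,\omega)}^2\|g\|_{L^2(G_x)}^2$, as desired. The main obstacle is not the Schur computation but the disintegration step, which is purely $C^*$-algebraic; since the $*$-algebra structure and approximate identity we have set up suffice to run the standard arguments of \cite{AS1}, no new difficulty arises in our setting.
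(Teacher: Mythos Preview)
Your proposal is correct and follows exactly the approach the paper points to: the paper does not give its own proof of this theorem but refers the reader to Renault \cite{RenaultBook} and \cite{AS1}, whose argument is precisely disintegration into regular representations followed by the weighted Schur estimate you carry out. Your density bookkeeping and the use of the cocycle identity for $\omega$ are accurate, and your observation that the $C^*$-module disintegration from \cite{AS1} only requires the $*$-algebra structure and an approximate identity (both available here) correctly addresses the passage from the continuous setting to quasi-Lie groupoids.
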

	\section{Full and reduced \texorpdfstring{$C^*$}{Cstar}-algebras}\label{sec:amenability}
For any $f\in C^\infty_c(G,\omegahalf)$, we define
\begin{equation*}\begin{aligned}
	\norm{f}_{C^*G}&:=\sup\left\{\norm{\pi(f)}:\pi \text{ non-degenerate representation of } G\right\}\\
	\norm{f}_{C^*_rG}&:=\sup\{\norm{\Xi_x(f)}:x\in G^{(0)}\}		.
\end{aligned}\end{equation*}
\Cref{prop:automatic_continuity} implies that $\norm{f}_{C^*G}<+\infty$ for all $f\in C^\infty_c(G,\omegahalf)$.
The completion of $C^\infty_c(G,\omegahalf)$ by the norms $\norm{\cdot}_{C^*G}$ and $\norm{\cdot}_{C^*_rG}$ are denoted by $C^*G$ and $C^*_rG$ respectively.
By \Cref{prop:automatic_continuity}, for any $\omega$ satisfying \eqref{eqn:omega_symmetry_condition}, 
\begin{equation}\label{eqn:ksqjfmksqd}\begin{aligned}
	\norm{f}_{C^*_rG}\leq \norm{f}_{C^*G}\leq \norm{f}_{L^1(G,\omega)},\quad \forall f\in C^\infty_c(G,\omegahalf).		
\end{aligned}\end{equation}
So, we have natural $*$-homomorphisms 
			\begin{equation*}\begin{aligned}
				L^1(G,\omega)\xrightarrow{i} C^*G \xrightarrow{j} C^*_rG.		
			\end{aligned}\end{equation*}
The map $j$ is surjective because its image is dense, and it is a $*$-homomorphism between $C^*$-algebras. 
The map $j\circ i$ injective by \Cref{prop:Approximate_identity}. 
\begin{prop}\label{prop:fiber_dense_reduced}
	Let $A\subseteq G^{(0)}$ be a dense subset. For any $f\in C^*_rG$, 
		\begin{equation*}\begin{aligned}
			\norm{f}_{C^*_rG}=\sup\left\{\norm{\Xi_x(f)}:x\in A\right\}		
		\end{aligned}\end{equation*}
\end{prop}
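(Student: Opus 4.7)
The plan is to show that the function $y \mapsto \|\Xi_y(f)\|_{\cL(L^2(G_y))}$ is lower semi-continuous on $G^{(0)}$; the conclusion then follows, since the supremum of a lower semi-continuous function over a dense subset coincides with its supremum over the whole space, which by definition is $\|f\|_{C^*_rG}$. By density of $C^\infty_c(G,\omegahalf)$ in $C^*_rG$ and the uniform bound $\|\Xi_y(\cdot)\|_{\mathrm{op}} \leq \|\cdot\|_{C^*_rG}$, it suffices to treat $f \in C^\infty_c(G,\omegahalf)$.

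For each $\tilde\xi \in C^\infty_c(G,\omegahalf)$, I consider the two scalar-valued functions
\begin{equation*}
D_{\tilde\xi}(y) := \|\tilde\xi|_{G_y}\|_{L^2(G_y)}^2, \qquad N_{\tilde\xi}(y) := \langle \tilde\xi|_{G_y},\Xi_y(f^*\ast f)\tilde\xi|_{G_y}\rangle = \|\Xi_y(f)\tilde\xi|_{G_y}\|_{L^2(G_y)}^2.
\end{equation*}
After a local (in $y$) choice of nowhere-vanishing positive section of $\Omega^{1/2}(A(G))$, the isomorphism $\ker(\odif{r})|_{G_y} \simeq A(G)_y$ trivializes the $\omegahalf(\ker(\odif{r}))$ factor along $G_y$ and identifies $\omegahalf(G)|_{G_y}$ with $\omegahalf(TG_y)$, so that $\tilde\xi|_{G_y}$ becomes a bona fide element of $L^2(G_y)$. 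Under this identification, both $D_{\tilde\xi}$ and $N_{\tilde\xi}$ are pushforwards by $s$ of smooth compactly supported densities on $G$; here I use that $f^*\ast f \ast \tilde\xi \in C^\infty_c(G,\omegahalf)$ by \Cref{prop:convolution_algebra}. Smoothness of $D_{\tilde\xi}$ and $N_{\tilde\xi}$ in $y$ then follows from the integration-along-fibers map \eqref{eqn:integration_along_fibers} being continuous on smooth compactly supported sections.

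The remaining geometric input is that any $\xi_0 \in C^\infty_c(G_{y_0},\omegahalf)$ extends to some $\tilde\xi \in C^\infty_c(G,\omegahalf)$ with $\tilde\xi|_{G_{y_0}} = \xi_0$: because $s$ is a submersion, each $\gamma \in \supp(\xi_0)$ admits a product chart $W_\gamma \times U_\gamma \subseteq G$ with $W_\gamma \subseteq G_{y_0}$ and $y_0 \in U_\gamma \subseteq G^{(0)}$, on which one extends $\xi_0|_{W_\gamma}$ by constancy in $U_\gamma$; a partition of unity on the compact set $\supp(\xi_0)$ glues these into a global extension. Now, given $y_0 \in G^{(0)}$ and $\epsilon > 0$, pick $\xi_0 \in C^\infty_c(G_{y_0},\omegahalf)$ with $\|\xi_0\|_{L^2(G_{y_0})} = 1$ and $\|\Xi_{y_0}(f)\xi_0\|^2 > \|\Xi_{y_0}(f)\|_{\mathrm{op}}^2 - \epsilon$; extend to $\tilde\xi$; and use continuity of $D_{\tilde\xi}$ and $N_{\tilde\xi}$ to find an open neighborhood $U$ of $y_0$ on which $D_{\tilde\xi}(y) > 0$ and $\|\Xi_y(f)\|_{\mathrm{op}}^2 \geq N_{\tilde\xi}(y)/D_{\tilde\xi}(y) > \|\Xi_{y_0}(f)\|_{\mathrm{op}}^2 - 2\epsilon$. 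Choosing $y \in U\cap A$ by density and letting $\epsilon \to 0$ yields $\sup_{y\in A}\|\Xi_y(f)\|_{\mathrm{op}} \geq \|\Xi_{y_0}(f)\|_{\mathrm{op}}$ for every $y_0$, which is the required lower semi-continuity.

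The main technical subtlety is simply the bookkeeping of $\Omega^{1/2}$-factors when interpreting fiberwise $L^2(G_y)$-inner products of restrictions of global sections of $\omegahalf(G)$ as scalar-valued smooth functions of $y$; once a local trivialization of $\Omega^{1/2}(A(G))$ is fixed, smoothness of $D_{\tilde\xi}$ and $N_{\tilde\xi}$ is a routine application of fiber integration for submersions, and the lower semi-continuity argument is then formal.
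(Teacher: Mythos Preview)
Your proof is correct and takes essentially the same approach as the paper: both establish continuity of $y \mapsto \|\Xi_y(f)\,\tilde\xi|_{G_y}\|_{L^2(G_y)}$ for test vectors $\tilde\xi$ coming from global sections, and use density of such restrictions in each $L^2(G_y)$. The paper packages this as injectivity of the quotient map $C^*_rG \to B$ (where $B$ is the completion for the sup-over-$A$ norm) and uses test vectors in $C^\infty_c(G,\Omega^{1/2,0})$ so that $\tilde\xi|_{G_y}$ lands directly in $L^2(G_y)$ without any trivialization of $\Omega^{1/2}(A(G))$; your direct lower-semicontinuity argument is equivalent and perhaps slightly more transparent.
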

\begin{proof}
	Let $B$ be the $C^*$-algebra completion of $C^\infty_c(G,\omegahalf)$ with respect to the norm given by $\sup\left\{\norm{\Xi_x(f)}:x\in A\right\}$.
	We have a surjective $*$-homomorphism $C^*_rG\to B$.
	We need to show that this map is injective. 
	\begin{lem}\label{lem:sqdklfjklsqdjlfjkmqsdmjfqjsdmlfjqsmldfqsdjmf}
		If $f\in C^*_rG$ and $g\in C^\infty_c(G,\Omega^{\frac{1}{2},0})$, then 
			\begin{equation}\label{eqn:kjqsmkodjfmlqsjdlfjkmqsdfjqsmdojf}\begin{aligned}
				G^{(0)}\to \R_+,\quad x\mapsto \norm{\Xi_x(f)g_{|G_x}}_{L^2(G_x)}
			\end{aligned}\end{equation}
		 is a continuous function.
	\end{lem}
	\begin{proof}
		If $f\in C^\infty_c(G,\omegahalf)$, then continuity of $f\ast g\in C^\infty_c(G,\Omega^{\frac{1}{2},0})$ and the dominated convergence theorem imply continuity of \eqref{eqn:kjqsmkodjfmlqsjdlfjkmqsdfjqsmdojf}.
		The result follows from density of $C^\infty_c(G,\omegahalf)$ in $C^*_r(G)$.	
	\end{proof}
	Let $f\in C^*_rG$ such that $\Xi_x(f)=0$ for all $x\in A$.
	If $g\in C^\infty_c(G,\Omega^{\frac{1}{2},0})$, then by \Cref{lem:sqdklfjklsqdjlfjkmqsdmjfqjsdmlfjqsmldfqsdjmf} and density of $A$, $\Xi_x(f) g_{|G_x}$ vanishes for all $x\in G^{(0)}$.
	So, $\Xi_x(f)$ vanishes for all $x\in G^{(0)}$ by density of elements of the form $g_{|G_x}$ in $L^2(G_x)$. 
	So, $f=0$.
\end{proof}
Let $A\subseteq G^{(0)}$ be a closed saturated subset. 
The restriction map $C^\infty_c(G,\omegahalf)\to C^\infty_c(G_A,\omegahalf)$ and the inclusion map $C^\infty_c(G_{A^c},\omegahalf)\to C^\infty_c(G,\omegahalf)$ fit into a short exact sequence
		\begin{equation}\label{eq:exact_saturated_closed}\begin{aligned}
			0\to C^*G_{A^c}\to C^*G\to C^*G_{A}\to 0.		
		\end{aligned}\end{equation}
They also extend to an inclusion and a surjection 
		\begin{equation}\label{eq:exact_saturated_closed_2}\begin{aligned}
			C^*_rG_{A^c}		\hookrightarrow C^*_rG \twoheadrightarrow C^*_rG_{A}.
		\end{aligned}\end{equation}
	Generally \eqref{eq:exact_saturated_closed_2} is \textit{not} exact in the middle.
\begin{dfn}\label{dfn:WCP}
	We say that a quasi-Lie groupoid $G\rightrightarrows G^{(0)}$ has the weak containment property (WCP) if the natural surjective map $C^*G\to C^*_rG$ is an isomorphism.
\end{dfn}

\begin{prop}\label{prop:weak_amenable_saturated}
	Let $G\rightrightarrows G^{(0)}$ be a quasi-Lie groupoid, $A\subseteq G^{(0)}$ a closed saturated subset. If $G_{A}$ and $G_{A^c}$ have the WCP, then $G$ has the WCP.
\end{prop}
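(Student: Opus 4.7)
The plan is to run a short diagram chase between the known exact sequence for full $C^*$-algebras and the partial sequence for reduced $C^*$-algebras. Specifically, I would consider the commutative diagram
\begin{equation*}
\begin{tikzcd}
0 \arrow[r] & C^*G_{A^c} \arrow[r] \arrow[d, "\alpha"] & C^*G \arrow[r] \arrow[d, "\beta"] & C^*G_{A} \arrow[r] \arrow[d, "\gamma"] & 0 \\
 & C^*_rG_{A^c} \arrow[r, hook] & C^*_rG \arrow[r, two heads] & C^*_rG_{A} \arrow[r] & 0
\end{tikzcd}
\end{equation*}
where the vertical arrows $\alpha,\beta,\gamma$ are the canonical surjections from full to reduced $C^*$-algebras of the respective groupoids, and the horizontal arrows come from \eqref{eq:exact_saturated_closed} and \eqref{eq:exact_saturated_closed_2}. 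By hypothesis, $\alpha$ and $\gamma$ are isomorphisms, and I know that the top row is exact and that $\beta$ is surjective. It suffices to show that $\beta$ is injective.

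First I would verify commutativity of both squares, which is immediate by checking on the dense subalgebra $C^\infty_c(G,\omegahalf)$: the restriction map $f\mapsto f_{|G_A}$ and the extension-by-zero map $C^\infty_c(G_{A^c},\omegahalf)\hookrightarrow C^\infty_c(G,\omegahalf)$ intertwine the reduced and full norms tautologically. Next, take $f\in C^*G$ with $\beta(f)=0$. Pushing down to $C^*_r G_A$ gives $0$, and since $\gamma$ is an isomorphism the image of $f$ in $C^*G_A$ vanishes. Exactness of the top row produces $g\in C^*G_{A^c}$ mapping to $f$. Under the inclusion $C^*_rG_{A^c}\hookrightarrow C^*_rG$, the element $\alpha(g)$ maps to $\beta(f)=0$, hence $\alpha(g)=0$ by injectivity of the bottom-left arrow. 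Since $\alpha$ is an isomorphism, $g=0$, and therefore $f=0$.

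This shows $\beta$ is injective, and combined with its already-known surjectivity, we conclude that $\beta:C^*G\to C^*_rG$ is an isomorphism, i.e., $G$ has the WCP. The only technically delicate point is the injectivity of the bottom-left horizontal arrow $C^*_rG_{A^c}\hookrightarrow C^*_rG$, which is asserted in \eqref{eq:exact_saturated_closed_2}; since $A^c$ is open and saturated, this injection follows from the fact that the regular representations $\Xi_x$ for $x\in A^c$ on the restricted groupoid extend to representations of $G$, so the seminorm computed on $G_{A^c}$ is dominated by the one on $G$, and via \Cref{prop:fiber_dense_reduced} equality of the reduced norms on the dense subalgebra follows. No further obstacle arises; the argument is a direct five-lemma style chase using only the three facts (top row exact, bottom-left arrow injective, outer verticals are isomorphisms).
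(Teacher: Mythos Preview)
Your argument is correct and is exactly the diagram chase that the paper's one-line proof (``This follows immediately from \eqref{eq:exact_saturated_closed} and \eqref{eq:exact_saturated_closed_2}'') leaves to the reader. You have simply made explicit the five-lemma style reasoning that is implicit in the paper's citation of those two displayed sequences.
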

\begin{proof}
	This follows immediately from \eqref{eq:exact_saturated_closed} and \eqref{eq:exact_saturated_closed_2}.
\end{proof}
The converse of \Cref{prop:weak_amenable_saturated} is false, see \cite{WillettNonAmenable}.
For this reason, we say that a quasi-Lie groupoid has the stable weak containment property (SWCP) if $G_{A}$ has the WCP for any locally compact saturated subset $A\subseteq G^{(0)}$.
It is conjectured by C. Anantharaman-Delaroche that the SWCP is equivalent to topological amenability, see \cite{anantharamandelaroche2021remarksweakcontainmentproperty}.
The following proposition is an obvious corollary of \Cref{prop:weak_amenable_saturated}.
\begin{prop}\label{prop:stable_weak_amenable_saturated}
	Let $G\rightrightarrows G^{(0)}$ be a quasi-Lie groupoid, $A\subseteq G^{(0)}$ a closed saturated subset. The groupoids $G_{A}$ and $G_{A^c}$ have the SWCP if and only if $G$ has the SWCP.
\end{prop}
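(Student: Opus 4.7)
The plan is to reduce the proposition to \Cref{prop:weak_amenable_saturated} by choosing, for any locally closed saturated subset that is handed to us, a compatible decomposition into an $A$-part and an $A^c$-part, and then applying the basic WCP gluing result on each piece.

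For the implication where $G$ has the SWCP, I would argue as follows. Let $B\subseteq A$ be a locally closed saturated subset of $A$ (viewed as the base of $G_A$). Since $A$ is closed in $G^{(0)}$, any locally closed subset of $A$ is again locally closed in $G^{(0)}$; and since $A$ itself is saturated, saturation of $B$ inside $A$ immediately upgrades to saturation in $G^{(0)}$. Hence $(G_A)_B = G_B$ has the WCP by the hypothesis that $G$ has the SWCP. The same argument applies with $A^c$ in place of $A$, using that $A^c$ is open (so locally closed subsets of $A^c$ are locally closed in $G^{(0)}$) and that $A^c$ is saturated as the complement of a saturated set.

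For the converse implication, assume both $G_A$ and $G_{A^c}$ have the SWCP, and let $B\subseteq G^{(0)}$ be an arbitrary locally closed saturated subset. Set $B_1 := B\cap A$ and $B_2 := B\cap A^c$. Because $A$ is closed and saturated, $B_1$ is closed in $B$ and saturated inside $B$; furthermore $B_1$ is locally closed and saturated as a subset of $A$, so the SWCP of $G_A$ gives that $G_{B_1} = (G_A)_{B_1}$ has the WCP. The same reasoning applied to $A^c$ shows that $G_{B_2} = (G_{A^c})_{B_2}$ has the WCP. Viewing $G_B \rightrightarrows B$ as a quasi-Lie groupoid with the closed saturated subset $B_1\subseteq B$ and complement $B_2$, one has $(G_B)_{B_1} = G_{B_1}$ and $(G_B)_{B_2} = G_{B_2}$, so \Cref{prop:weak_amenable_saturated} yields that $G_B$ has the WCP. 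Since $B$ was arbitrary, $G$ has the SWCP.

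The only subtleties, and they are minor, are the bookkeeping checks that intersections of saturated subsets remain saturated and that locally closed subsets behave well under intersection with closed or open saturated pieces of $G^{(0)}$; these are elementary point-set facts. There is no serious obstacle here, since the heavy lifting has already been carried out in \Cref{prop:weak_amenable_saturated} via the exact sequence \eqref{eq:exact_saturated_closed} and the inclusion-surjection pair \eqref{eq:exact_saturated_closed_2}, and the SWCP is defined precisely so that it is stable under passing to saturated locally closed subsets.
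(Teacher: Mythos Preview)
Your argument is correct and is precisely the reduction the paper has in mind: it states the proposition as ``an obvious corollary of \Cref{prop:weak_amenable_saturated}'' without further detail, and your splitting $B = (B\cap A)\sqcup(B\cap A^c)$ together with the bookkeeping on saturation and local closedness is exactly how one unpacks that claim.
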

The following proposition will be used in \Cref{sec:Tangent_groupoid_representation_proof}.
\begin{prop}\label{prop:Crossed_product_amenable}
	Let $G$ be an amenable Lie group which acts on a differential space $X$, $H\rightrightarrows X$ a quasi-Lie groupoid. 
	If there exists a surjective submersion  $f:G\rtimes X\to H$ which is a morphism of quasi-Lie groupoid that is equal to the identity on $X$, then
	$H\rightrightarrows X$ has the SWCP property.
\end{prop}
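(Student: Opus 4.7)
The plan is to establish the SWCP of $H$ by showing that $H_A$ has the WCP for every locally compact saturated $A\subseteq X$. Because $f:G\rtimes X\to H$ is the identity on $X$, any saturated $A\subseteq X$ is automatically $G$-invariant: for $x\in A$ and $g\in G$, the element $f(g,x)\in H$ has source $x\in A$, so by saturation its range $g\cdot x$ lies in $A$. Hence $(G\rtimes X)_A=G\rtimes A$, and the restriction $f_A:G\rtimes A\to H_A$ is still a surjective submersion and a groupoid morphism that is the identity on $A$. It thus suffices to establish two claims: (i) $G\rtimes A$ has the WCP, and (ii) the WCP is transported along any surjective submersion groupoid morphism that is the identity on the base.

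For (i), under the canonical identifications $C^*(G\rtimes A)\cong C_0(A)\rtimes G$ and $C^*_r(G\rtimes A)\cong C_0(A)\rtimes_r G$, the surjection from the full to the reduced $C^*$-algebra of $G\rtimes A$ becomes the comparison map between the full and reduced crossed products; amenability of the Lie group $G$ makes this an isomorphism, so $G\rtimes A$ has the WCP.

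For (ii), surjectivity of the Lie algebroid map $A(f_A):A(G\rtimes A)\to A(H_A)$ supplies a short exact sequence of densities on the fibres of $f_A$, and integration along those fibres yields a surjective $*$-algebra homomorphism $f_{A!}:C^\infty_c(G\rtimes A,\omegahalf)\twoheadrightarrow C^\infty_c(H_A,\omegahalf)$. This extends to surjective $*$-homomorphisms $\beta$ and $\beta_r$ on the full and reduced $C^*$-algebras which, together with the canonical maps $\alpha_1:C^*(G\rtimes A)\to C^*_r(G\rtimes A)$ and $\alpha_2:C^*(H_A)\to C^*_r(H_A)$, form a commutative square. A diagram chase reduces injectivity of $\alpha_2$ to the inclusion $\alpha_1^{-1}(\ker\beta_r)\subseteq\ker\beta$. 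This inclusion is the main obstacle and the heart of the proof; equivalently, it states that every regular representation $\Xi_x^{H_A}$ of $H_A$ is weakly contained in its pullback $\Xi_x^{G\rtimes A}\circ f_{A!}$ to a regular representation of $G\rtimes A$. I would verify it by averaging with an invariant mean over the fibres of $f_A$, which form a bundle of closed subgroups of $G$ and are therefore amenable; the averaging yields the intertwiners realising the weak containment, at which point (i) combined with the diagram chase gives $\alpha_2$ an isomorphism and finishes the proof.
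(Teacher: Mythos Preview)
Your reduction step is correct and coincides with the paper's: a subset $A\subseteq X$ saturated for $H$ is $G$-invariant, so it is enough to show that each $H_A$ has the WCP. Your part (i) is also fine.

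The problem is part (ii). Once $\alpha_1$ is an isomorphism, your diagram chase is vacuous: identifying $C^*(G\rtimes A)=C^*_r(G\rtimes A)$ via $\alpha_1$, the two surjections $\beta$ and $\beta_r$ have the same source, $\alpha_2$ is the induced map between the quotients, and $\alpha_2$ is injective if and only if $\ker\beta=\ker\beta_r$. Your ``reduced'' inclusion $\alpha_1^{-1}(\ker\beta_r)\subseteq\ker\beta$ is therefore \emph{literally} the WCP for $H_A$; nothing has been gained. Your subsequent reformulation in terms of weak containment is also garbled: the composition $\Xi_x^{G\rtimes A}\circ f_{A!}$ does not typecheck (the pushforward $f_{A!}$ sends functions on $G\rtimes A$ to functions on $H_A$, while $\Xi_x^{G\rtimes A}$ eats functions on $G\rtimes A$), and in any event the correct unwinding of $\ker\beta_r\subseteq\ker\beta$ is simply that the regular representations of $H_A$ separate $C^*(H_A)$, i.e.\ the WCP for $H_A$ again.

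What is missing is a genuine mechanism for passing from $G\rtimes A$ to $H_A$. The averaging idea over the (amenable) fibres is in the right spirit, but the natural object it produces is an approximate invariant mean on $H_A$, i.e.\ a proof of \emph{topological amenability} of $H_A$, not a direct weak-containment statement. This is exactly the route the paper takes: $G\rtimes X$ is topologically amenable because $G$ is amenable (\cite[Corollary~2.2.10]{AnaRenaultGrps}); topological amenability is preserved under quotient maps of groupoids (\cite[Proposition~5.1.2]{AnaRenaultGrps}), so $H$ is topologically amenable; and topological amenability implies the WCP (\cite[Propositions~6.1.8 and~3.3.5]{AnaRenaultGrps}). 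Working with topological amenability rather than the WCP is essential here, since WCP is not known to pass to quotients or restrictions in general (cf.\ \cite{WillettNonAmenable}), whereas topological amenability does.
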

\begin{proof}
	A saturated subset of $X$ for the groupoid $H$ is also a saturated subset for the groupoid $G\rtimes X$ because $f$ is surjective.
	A saturated subset is just a $G$-invariant subset of $X$. So, without loss of generality we only need to show that $H$ has the WCP.
	Topological amenability implies the WCP property, see \cite[Proposition 6.1.8 and Proposition 3.3.5]{AnaRenaultGrps}. The groupoid $G\rtimes X$ is topologically amenable by \cite[Corollary 2.2.10]{AnaRenaultGrps}.
	It follows from \cite[Proposition 5.1.2]{AnaRenaultGrps} that $H\rightrightarrows X$ is also topologically amenable.
\end{proof}

\paragraph{Fiberation of $C^*$-algebras and multipliers.}
	We will use of the theory of multipliers of $C^*$-algebras. We refer the reader to \cite[Section 2.3]{WilliamsCrossedBook} or \cite{LanceBook} for more details.
If $A$ is a $C^*$-algebra, then we denote by $\cM(A)$ its multiplier $C^*$-algebra.
Recall that a multiplier of $A$ is a linear map $T:A\to A$ such that there exists a map $T^*:A\to A$ (called the adjoint) which satisfy 
	\begin{equation*}\begin{aligned}
		T(ab)=T(a)b,\quad T(a)^*b=a^*T^*(b),\quad T^*(ab)=T^*(a)b,\quad \forall a,b\in A.		
	\end{aligned}\end{equation*}
The map $T^*$ is unique. Its existence implies by the closed graph theorem that $T$ is bounded. The algebra $\cM(A)$ with the operator norm of $T$ is a $C^*$-algebra.

We will also make use of the theory of fibered $C^*$-algebras in the sense of Kasparov \cite{KasparovInvent}.
	We refer the reader to \cite{BlanchardHopf} and \cite[Appendix]{WilliamsCrossedBook} for an introduction.
	A fibration of a $C^*$-algebra $A$ over a locally compact space $X$ is a unital $*$-homomorphism $C_b(X)\to \cZ\cM(A)$, where $C_b(X)$ is the $C^*$-algebra of bounded continuous functions on $X$, $\cZ\cM(A)$ is the center of the multiplier algebra.
	The fiber of $A$ at $x\in X$ is the $C^*$-algebra $A_x:=A/I_xA$, where $I_x=\{f\in C_0(X):f(x)=0\}$, $I_xA$ is the closure of the linear span of elements of the form $fa$ where $f\in I_x$, $a\in A$.
\paragraph{Fibration of groupoids:}
	Let $G\rightrightarrows G^{(0)}$ be a quasi-Lie groupoid, $X$ a differential space, $\rho:G^{(0)}\to X$ a smooth map which satisfies $\rho\circ r=\rho\circ s$.
	In this case for each locally closed subset $A\subseteq X$, $\rho^{-1}(A)$ is a saturated subset. 
	So, we can form the quasi-Lie groupoid $G_{\rho^{-1}(A)}\rightrightarrows \rho^{-1}(A)$.
	
	The map
		\begin{equation*}\begin{aligned}
				C^\infty_c(X)\to \cL(C^\infty_c(G)),\quad f\mapsto (g\mapsto \delta_{f\circ \rho}\ast g)
		\end{aligned}\end{equation*}
	extends to a unital $*$-homomorphism 
		$C_b(X)\to \cM(C^*G)$
	So, $C^*G$ is a $C_0(X)$-$C^*$-algebra.

	\begin{prop}\label{prop:fiber_C*_algebra}
		If $G\rightrightarrows G^0$ has the SWCP, then for any $x\in X$, the fiber (as a $C_0(X)$-$C^*$-algebra) of $C^*G$ at $x$ is equal to $C^*G_{\rho^{-1}(x)}$. 
	\end{prop}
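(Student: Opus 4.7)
The plan is to identify the closed ideal $I_x \cdot C^*G \subseteq C^*G$ with $C^*G_{\rho^{-1}(x)^c}$, and then invoke the short exact sequence \eqref{eq:exact_saturated_closed} to conclude that the quotient $C^*G / (I_x \cdot C^*G)$ is $C^*G_{\rho^{-1}(x)}$. Here I use that the point $\{x\}$ is closed in $X$, so $\rho^{-1}(x)$ is a closed saturated subset of $G^{(0)}$ and $\rho^{-1}(x)^c$ is an open saturated subset, making the sequence
\begin{equation*}
0\to C^*G_{\rho^{-1}(x)^c}\to C^*G\to C^*G_{\rho^{-1}(x)}\to 0
\end{equation*}
available.

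For the inclusion $I_x \cdot C^*G \subseteq C^*G_{\rho^{-1}(x)^c}$, I would take $f\in I_x$ and $a\in C^\infty_c(G,\omegahalf)$. Since $f(x)=0$ and $f\circ\rho\circ s$ vanishes precisely on $s^{-1}(\rho^{-1}(x))=G_{\rho^{-1}(x)}$, the product $(f\circ\rho\circ s)\cdot a$ is a smooth compactly supported section whose support lies in $G_{\rho^{-1}(x)^c}$, hence represents an element of $C^*G_{\rho^{-1}(x)^c}$. Density of $C^\infty_c(G,\omegahalf)$ in $C^*G$ together with continuity of the multiplier action of $f$ extends this to all of $C^*G$, and then closedness of $C^*G_{\rho^{-1}(x)^c}$ finishes the inclusion.

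For the reverse inclusion $C^*G_{\rho^{-1}(x)^c} \subseteq I_x \cdot C^*G$, I would start with $a\in C^\infty_c(G_{\rho^{-1}(x)^c},\omegahalf)$, extended by zero to $G$. The set $\rho(s(\supp(a)))$ is a compact subset of the Hausdorff locally compact space $X\setminus\{x\}$, so by Urysohn's lemma there exists $f\in C_0(X)$ with $f(x)=0$ and $f\equiv 1$ on $\rho(s(\supp(a)))$. Then $f\in I_x$ and $(f\circ\rho\circ s)\cdot a=a$, so $a\in I_x\cdot C^*G$. Density of such sections in $C^*G_{\rho^{-1}(x)^c}$ (together with the fact that $I_x\cdot C^*G$ is closed by definition) gives the inclusion.

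The main conceptual point is that the SWCP hypothesis ensures the short exact sequence \eqref{eq:exact_saturated_closed} is available and well-behaved at $\rho^{-1}(x)$: without SWCP for the subgroupoid $G_{\rho^{-1}(x)^c}$, one cannot guarantee that the natural map $C^*G_{\rho^{-1}(x)^c}\to C^*G$ is injective and has the correct image. Once exactness is in hand, the computation of $I_x\cdot C^*G$ via cutoff/Urysohn arguments is routine; the heart of the matter lies in using the SWCP hypothesis to pass from the universal norm on the subgroupoid to the norm inherited as an ideal of $C^*G$. I expect the only genuine step where care is required is verifying that the Urysohn cutoff $(f\circ\rho\circ s)\cdot a$ really lies in $I_x\cdot C^*G$ as a multiplier action, not just as a pointwise product, but this follows from the definition of the $C_0(X)$-structure on $C^*G$ given just above the proposition.
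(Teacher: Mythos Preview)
Your approach is essentially the same as the paper's: reduce via \eqref{eq:exact_saturated_closed} to showing $I_xC^*G=C^*G_{\rho^{-1}(x)^c}$, then verify this equality by Urysohn/cutoff arguments. Two corrections are worth making.

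First, the claim that $\supp((f\circ\rho\circ s)\cdot a)\subseteq G_{\rho^{-1}(x)^c}$ is not quite right for a general $f\in I_x$: such an $f$ only vanishes at $x$, so $(f\circ\rho\circ s)\cdot a$ merely vanishes on $G_{\rho^{-1}(x)}$ rather than being supported in the open complement. The fix is to first approximate $f$ in $C_0(X)$ by functions in $C_c(X\setminus\{x\})$; then the support claim holds on the nose.

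Second, your placement of the SWCP hypothesis does not match the paper. The sequence \eqref{eq:exact_saturated_closed} is asserted in the paper for full groupoid $C^*$-algebras without any amenability hypothesis, so SWCP is not what makes it exact. In the paper's proof, SWCP for $G_{\rho^{-1}(x)^c}$ enters at the injectivity step of the map $C^*G_{\rho^{-1}(x)^c}\to I_xC^*G$: the inclusion $C^*_rG_{\rho^{-1}(x)^c}\hookrightarrow C^*_rG$ is always injective by \eqref{eq:exact_saturated_closed_2}, and WCP for $G_{\rho^{-1}(x)^c}$ identifies $C^*G_{\rho^{-1}(x)^c}=C^*_rG_{\rho^{-1}(x)^c}$, so the composite into $C^*_rG$ is injective. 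Your final remark about ``passing from the universal norm on the subgroupoid to the norm inherited as an ideal'' is exactly the right intuition for this step; the earlier attribution of SWCP to the exact sequence itself should be dropped.
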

	\begin{proof}
		By \eqref{eq:exact_saturated_closed}, it suffices to show that $I_xC^*G=C^*G_{\rho^{-1}(\{x\}^c)}$. 
		The groupoid $G_{\rho^{-1}(F^c)}$ has the WCP.
		The obvious inclusion 
				$C^\infty_c(G_{\rho^{-1}(\{x\}^c)}) \hookrightarrow	C^\infty_c(G,\omegahalf)$ maps into $I_xC^*G$. By the definition, of the maximal $C^*$-algebra, this inclusion extends to a $*$-homomorphism $C^*G_{\rho^{-1}(\{x\}^c)}\to I_xC^*G$.
		It is surjective because it has dense image.
		The inclusion $C^\infty_c(G_{\rho^{-1}(\{x\}^c)}) \hookrightarrow	C^\infty_c(G,\omegahalf)$ also extends to an injective $*$-homomorphism $C^*_rG_{\rho^{-1}(\{x\}^c)}\to C^*_rG$.
		Since $G_{\rho^{-1}(\{x\}^c)}$ has the WCP, $C^*G_{\rho^{-1}(\{x\}^c)}=C^*_rG_{\rho^{-1}(\{x\}^c)}$. So, the map $C^*G_{\rho^{-1}(\{x\}^c)}\to I_xC^*G$ is injective
	\end{proof}
	\Cref{prop:fiber_C*_algebra} is false in general without the SWCP.

\section{Automorphisms}\label{sec:automorphisms}
		An automorphism of a quasi-Lie groupoid $G$ is a morphism of quasi-Lie groupoids $\alpha:G\to G$ which is a diffeomorphism. 
	The differential of $\alpha$ induces an action on $A(G)$
	\begin{equation}\label{eqn:lqjsopdokfqskdfpokqsdof}\begin{aligned}
			\alpha_*:A(G)_x\to A(G)_{\alpha(x)},\quad     \alpha_*(X)=\odif{\alpha}_x(X)
		\end{aligned}\end{equation}
	and an action on $C^\infty(G^{(0)},A(G))$ by
	\begin{equation*}\begin{aligned}
			\alpha_*:C^\infty(G^{(0)},A(G))\to C^\infty(G^{(0)},A(G)),\quad \alpha_*(X)(x)=\alpha_*(X(\alpha^{-1}(x))).
		\end{aligned}\end{equation*}
	By \eqref{eqn:lqjsopdokfqskdfpokqsdof}, for any $k,l\in \C$, we get a map 
		\begin{equation*}\begin{aligned}
			\alpha_*:\Omega^{k,l}(G)_{\gamma}\to \Omega^{k,l}(G)_{\alpha(\gamma)},\quad \gamma\in G		
		\end{aligned}\end{equation*}
	which induces an action on $C^\infty_c(G,\omegahalf)$ by 
		\begin{equation*}\begin{aligned}
			\alpha_*(f)(\gamma)=\alpha_{*}(f(\alpha^{-1}(\gamma))).
		\end{aligned}\end{equation*}
	This action extends to $C^{-\infty}_{r,s}(G,\omegahalf)$ by the formula 
		\begin{equation*}\begin{aligned}
			\alpha_*(u)\ast f=\alpha_*(u\ast (\alpha^{-1})_*(f)),\quad f\ast \alpha_*(u)=\alpha_*((\alpha^{-1})_*(f)\ast u).		
		\end{aligned}\end{equation*}
	The map $\alpha_*:C^{-\infty}_{r,s}(G,\omegahalf)\to C^{-\infty}_{r,s}(G,\omegahalf)$ is a $*$-algebra automorphism which satisfies
		\begin{equation}\label{eqn:support_automorphism}\begin{aligned}
			\supp(\alpha_*(u))=\alpha(\supp(u)),\quad \forall u\in C^{-\infty}_{r,s}(G,\omegahalf).
		\end{aligned}\end{equation}
	For any $f\in C^\infty_c(G,\omegahalf)$, 
			\begin{equation}\label{eqn:automorphis_preserves_Cnorm}\begin{aligned}
				\norm{\alpha_*(f)}_{C^*G}=\norm{f}_{C^*G},\quad \norm{\alpha_*(f)}_{C^*_rG}=\norm{f}_{C^*_rG}.
			\end{aligned}\end{equation}
	Hence, $\alpha_*$ extends to a $*$-algebra automorphisms $\alpha_*:C^*_rG\to C^*_rG$ and $\alpha_*:C^*G\to C^*G$.

\linkchap{Multi-graded sub-Riemannian geometry}{bi-graded_tangent_groupoid}
	\paragraph{Notations and conventions:}
	We fix for the rest of this article a smooth manifold $M$ without boundary and a natural number $\nu\in \N$ which denotes the number of multi-parameters.
	\begin{enumerate}
					\item 
					  If $t\in \R_+^\nu$, $k\in \C^\nu$, then 
					\begin{equation}\label{eqn:tk}\begin{aligned}
						t^k:=t_{1}^{k_1}\cdots t_{\nu}^{k_\nu}\in \C		
					\end{aligned}\end{equation}
				with the convention $0^0=1$ and $0^z=0$ for all $z\neq 0$.
				The notation \eqref{eqn:tk} is only used either in the case where $t\in \R_+^\nu$ and $k\in \Z_+^\nu$, or in the case where $t\in (\Rpt)^\nu$ and $k\in \C^\nu$.
				The key fact which relies on this convention is that if $k\in \Z_+^\nu$, then the map $t\in \R_+^\nu\mapsto t^k\in \C$ is smooth.
				\item Let $k,l\in \C^\nu$.
				We use the partial order $k\preceq  l$ if $l-k\in \Z_+^\nu$.
				This induces the relation $k\prec l$ if $k\preceq  l$ and $k\neq l$.
				The symbols $\preceq $ and $\prec $ are reserved for this partial order.
	\item   If $\mathfrak{g}$ is a nilpotent Lie algebra and $a,b\in \mathfrak{g}$, then we denote by
			\begin{equation}\label{eqn:BCH}\begin{aligned}
					\BCH(a,b):=a+b-\frac{[a,b]}{2}+\frac{1}{12}[a,[a,b]]-\frac{1}{12}[b,[a,b]]+\cdots
				\end{aligned}\end{equation}
			the Baker-Campbell-Hausdorff product of $a$ and $b$ which is a finite sum because $\mathfrak{g}$ is nilpotent.
			Often, we use the notation $a\cdot b$ instead of $\BCH(a,b)$.
			Our BCH formula differs by a sign from the classical formula, i.e., it is the BCH formula for the opposite Lie algebra.
			See \eqref{eqn:BCH_group} for the motivation behind our choice.
			The inverse of $a$ which is $-a$ is often denoted by $a^{-1}$.
		
	\item	 We recall that if $X\in \cX(M)$ and $x\in M$, then $e^X\cdot x$ denotes the flow, if it exists, of $X$ at time $1$ starting from $x$.
			So, the flow of $X$ at time $t$ is equal to $e^{tX}\cdot x$.			
			\textbf{We stress that we never use time dependent flows.}
			Throughout the article, we will have real vector spaces $V$ equipped with a linear map $\natural:V\to \cX(M)$.
			To ease the notation, we will use $e^v\cdot x$ instead of $e^{\natural(v)}\cdot x$.
			This shouldn't cause any confusion because the map $\natural$ will be clear from context. 
			If $v,w\in V$, then we write $e^ve^w\cdot x$ instead of $e^v\cdot (e^w\cdot x)$.
			We remark that in some situations $V$ will be a nilpotent Lie algebra. 
			Our sign convention in the BCH formula implies that if $\natural:V\to \cX(M)$ is a Lie algebra homomorphism, then
			\begin{equation}\label{eqn:BCH_group}\begin{aligned}
				e^{\BCH(v,w)}\cdot x=e^ve^w\cdot x.
				\end{aligned}\end{equation}
			Generally,  $\natural:V\to \cX(M)$ won't be a Lie algebra homomorphism.
			It will only preserve some Lie brackets up to some error term, see \eqref{eqn:bi_graded_lie_basis_commutator}.
			This adds an error term to \eqref{eqn:BCH_group}, which is ultimately resolved by \Cref{thm:composition_bisub}, see \eqref{eqn:product_phi_compsotion_bisub}.
		\end{enumerate}
\section{Multi-graded sub-Riemannian structure}\label{sec:weighted_sub_riem_structure}	
		\begin{dfn}\label{dfn:weighted_sub_riemannian_structure}
			A $\nu$-graded sub-Riemannian structure on $M$ of depth $N\in \N^\nu$ is a family  $(\cF^{k})_{k\in \Z_+^\nu}$  of finitely generated $C^\infty(M,\R)$-submodules of $\cX(M)$ such that:
			\begin{enumerate}
				\item For any $k,l\in \Z_+^\nu$, if $k\preceq  l$, then $\cF^{k}\subseteq \cF^{l}$.
				\item\label{eqn:kjqsdfjmljqsdmjfmqsdljfkmqlsdf} One has $\cF^{0}=0$ and 
					\begin{equation}\label{eqn:finit_N_condition}\begin{aligned}
						\cF^{(N_1,0,\cdots,0)}=\cF^{(0,N_2,0,\cdots,0)}=\cdots=\cF^{(0,\cdots,0,N_\nu)}=\cX(M).
					\end{aligned}\end{equation}
				\item For any $k,l\in \Z_+^\nu$,
					\begin{equation}\label{eqn:Liebracket_cFi}
						[\cF^{k},\cF^{l}]\subseteq \cF^{k+l}.
					\end{equation}
			\end{enumerate}
		\end{dfn}
		Notice that $\cF^{k}=\cX(M)$ if $N_i\leq k_i$ for some $i\in \bb{1,\nu}$.
		So, to define a $\nu$-graded sub-Riemannian structure of depth $N$, it is enough to define $\cF^{k}$ for $k\in \Z_+^\nu$ with $k_i<N_i$ for all $i\in \bb{1,\nu}$ and $k\neq 0$.

		\begin{rem}
			For any examples of $\nu$-graded sub-Riemannian structures to exist, $\cX(M)$ needs to be finitely generated as a $C^\infty(M,\R)$-module.
			This is always true, see \Cref{rem:finitely_generated_section}.
		\end{rem}
		\begin{exs}\label{exs:exs_weighted}
			\begin{enumerate}
				\item The theory of elliptic operators corresponds to the trivial filtration where $\nu=1$ and $N=1$.
				\item Let $X_1,\cdots,X_d$ be a family of vector fields satisfying Hörmander's condition of depth $N\in \N$, i.e., for any $x\in M$, $T_xM$ is linearly spanned by iterated commutators of $X_i$ of length $\leq N$. 
					We define a $1$-graded sub-Riemannian structure of depth $N$ by declaring $\cF^{i}$ to be the module generated by all iterated Lie brackets of length $\leq i$ using the vector fields $X_\bullet$.
				\item\label{exs:exs_weighted:Weighted_bigraded}
					Let $X_1,\cdots,X_{d}$ and $Y_1,\cdots,Y_{d'}$ be two families of vector fields such that each family satisfies Hörmander's condition of depth $N\in \N$ and $N'\in \N$ respectively.
					We define a $2$-graded sub-Riemannian structure of depth $(N,N')$ by declaring $\cF^{(i,j)}$ to be the module generated by the iterated Lie brackets of length $\leq i+j$ using the vector fields $X_\bullet$ and $Y_\bullet$ and such that the vector fields $X_\bullet$ appear $\leq i$ times and the vector fields $Y_\bullet$ appear $\leq j$ times.
			\end{enumerate}
		\end{exs}

	\paragraph{Pullback:}
		One can pull back $\nu$-graded sub-Riemannian structures along a submersion.
		Let $\phi:M'\to M$ be a smooth submersion, $\cF\subseteq \cX(M)$ a $C^\infty(M,\R)$-submodule.
		We define $\phi^*(\cF)$ to be the $C^\infty(M',\R)$-submodule of $\cX(M')$ consisting of all $X\in \cX(M')$ such that there exists $X_1,\cdots,X_n\in \cF$ and $g_1,\cdots,g_n\in C^\infty(M')$ such that
		\begin{equation*}\begin{aligned}
				\odif{\phi}(X)=\sum_{i=1}^n g_iX_i\circ \phi\in C^\infty(M',\phi^*(TM)).
			\end{aligned}\end{equation*}
			
		\begin{prop}\label{prop:pullback_weighted_subRiem}
			If $\cF^{\bullet}$ is a $\nu$-graded sub-Riemannian structure on $M$ of depth $N$, then $\phi^*(\cF^{\bullet})$ is a $\nu$-graded sub-Riemannian structure on $M'$ of depth $N$.
		\end{prop}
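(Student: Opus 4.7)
The plan is to verify the three axioms of \Cref{dfn:weighted_sub_riemannian_structure} for $\phi^*(\cF^\bullet)$. The monotonicity $\phi^*(\cF^k) \subseteq \phi^*(\cF^l)$ for $k \preceq l$ is immediate from $\cF^k \subseteq \cF^l$ and the definition of the pullback. The identity $\phi^*(\cF^{(0,\ldots,N_i,\ldots,0)}) = \cX(M')$ follows because every $X \in \cX(M')$ has $\odif \phi(X) \in C^\infty(M',\phi^*(TM))$, and the latter can be written, via a partition of unity, as $\sum g_i\, X_i \circ \phi$ with $X_i$ running over any generating family of $\cX(M) = \cF^{(0,\ldots,N_i,\ldots,0)}$. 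Finite generation of $\phi^*(\cF^k)$ as a $C^\infty(M',\R)$-module is obtained by taking finite generators $X_1,\ldots,X_m$ of $\cF^k$, choosing local lifts $\tilde X_i \in \cX(M')$ with $\odif \phi(\tilde X_i) = X_i \circ \phi$ in submersion charts covering $M'$, patching via a partition of unity, and adjoining the finite generators of the vertical module $C^\infty(M', \ker(\odif \phi))$ furnished by \Cref{rem:finitely_generated_section}.

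The heart of the proof is the bracket inclusion $[\phi^*(\cF^k), \phi^*(\cF^l)] \subseteq \phi^*(\cF^{k+l})$, which I would establish in a local submersion chart $M' \cong U \times F$ with $\phi$ the projection onto $U \subseteq M$; the inclusion is $C^\infty(M',\R)$-local so globality is recovered by gluing. In such a chart, any $X \in \phi^*(\cF^k)$ decomposes as $X = \sum_i g_i \tilde X_i + V$ with $g_i \in C^\infty(U \times F)$, $X_i \in \cF^k$, $\tilde X_i$ the trivial extension of $X_i$ to $U \times F$ (coefficients depending only on the $U$-coordinate, no $F$-component), and $V$ vertical; similarly $Y = \sum_j h_j \tilde Y_j + W$ with $Y_j \in \cF^l$ and $W$ vertical. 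Expanding $[X,Y]$ by bilinearity and Leibniz produces three kinds of terms: (a) $g_i h_j [\tilde X_i,\tilde Y_j]$, which by \Cref{prop:Lie_bracket} is $\phi$-related to $[X_i,Y_j] \in \cF^{k+l}$ and therefore belongs to $\phi^*(\cF^{k+l})$; (b) coefficient-derivative terms such as $X(h_j)\tilde Y_j$, $Y(g_i)\tilde X_i$, $V(h_j)\tilde Y_j$, $W(g_i)\tilde X_i$, all lying in $\phi^*(\cF^k) \cup \phi^*(\cF^l) \subseteq \phi^*(\cF^{k+l})$; and (c) brackets $[\tilde X_i, W]$, $[V,\tilde Y_j]$, $[V,W]$ between trivial extensions and vertical fields, which in the product chart are vertical (because $\tilde X_i, \tilde Y_j$ have coefficients independent of the fiber coordinate, and the vertical distribution is involutive), hence lie in $\ker(\odif \phi) \subseteq \phi^*(\cF^{k+l})$.

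The main obstacle is the verticality of the type-(c) brackets: away from a product chart a general lift of $X_i$ to $M'$ would fail to commute modulo verticals with arbitrary vertical fields, and one would have either to cover $M'$ by submersion charts from the outset (as above) or to set up a chart-free calculus of vector fields along $\phi$. A secondary interpretive point: the axiom $\cF^0 = 0$ pulls back to $\phi^*(\cF^0) = \ker(\odif \phi)$, which is not zero in general; this is read as the natural ``zeroth'' layer of the pullback structure, and its compatibility with the bracket axiom (in particular $[\ker(\odif \phi),\phi^*(\cF^k)] \subseteq \phi^*(\cF^k)$) is a special case of the chart computation above.
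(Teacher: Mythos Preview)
Your approach is correct and covers the same ground as the paper, but the paper is considerably more economical: it picks \emph{global} lifts $X'_i \in \cX(M')$ with $\odif\phi(X'_i) = X_i \circ \phi$ (these exist because the surjective bundle map $\odif\phi: TM' \to \phi^*TM$ admits a smooth right inverse) and then invokes \Cref{prop:Lie_bracket} directly. Since $X'_i$ is $\phi$-related to $X_i$ and each vertical generator $Y_j$ is $\phi$-related to $0$, every pairwise bracket of generators is $\phi$-related to an element of $\cF^{k+l}$ (or to $0$), which is precisely the membership condition for $\phi^*(\cF^{k+l})$; the Leibniz rule then extends this to arbitrary $C^\infty(M',\R)$-combinations. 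No product charts, no three-way case split, no separate check that a specific lift commutes with verticals in a particular trivialisation --- that is exactly what \Cref{prop:Lie_bracket} is there to do chart-free. Your computation in a local $U \times F$ chart is fine, but it rederives the content of \Cref{prop:Lie_bracket} by hand.

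Your observation that $\phi^*(\cF^0) = C^\infty(M',\ker\odif\phi)$ is generally nonzero is correct, and the paper does not address it. In the paper's only application of this proposition (restriction to an open subset $U \hookrightarrow M$) the map is an open embedding, so $\ker\odif\phi = 0$ and the issue disappears; for a submersion with positive-dimensional fibres one must either set $(\phi^*\cF)^0 := 0$ by convention or relax the axiom. Your reading of $\ker\odif\phi$ as a natural zeroth layer is sensible but is not what \Cref{dfn:weighted_sub_riemannian_structure} literally demands.
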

		\begin{proof}
			Let $k\in \Z_+^\nu$, $X_1,\cdots,X_n\in \cF^{k}$ be generators.
			We will show that $\phi^*(\cF^{k})$ is finitely generated.
			Since $\phi$ is a submersion, for each $i\in \bb{1,n}$, there exists $X'_i\in \cX(M')$ such that $\odif{\phi}(X'_i)=X_i\circ \phi$.
			By \Cref{rem:finitely_generated_section}, $C^\infty(M',\ker(\odif{\phi}))$ is a finitely generated $C^\infty(M',\R)$-module.
			Let $Y_1,\cdots,Y_m\in C^\infty(M',\ker(\odif{\phi}))$ be some generators.
			Clearly, $X'_1,\cdots,X'_n,Y_1,\cdots,Y_m$ generate $\phi^*(\cF^{k})$.
			Finally, it is enough to check \eqref{eqn:Liebracket_cFi} on generators which follows from \Cref{prop:Lie_bracket}.
		\end{proof}
		We fix for the rest of this chapter a $\nu$-graded sub-Riemannian structure $\cF^{\bullet}$ on $M$.
	\paragraph{Multi-graded basis:}
		The modules $\cF^{\bullet}$ are finitely generated.
		It is well known that linear algebra is simpler when one avoids using linear bases.
		Nevertheless, we still need to make use of generators of $\cF^{\bullet}$.
		So, to avoid complicated notation involving indices, we introduce the following notion.
		\begin{dfn}\label{dfn:graded_lie_basis}
			A $\nu$-graded basis is a pair $(V,\natural)$, where
					$V=\bigoplus_{k\in \Z_+^\nu}V^{k}$
			is a $\nu$-graded finite dimensional  real vector space, $\natural:V\to \cX(M)$ an $\R$-linear map such that:
			\begin{enumerate}
				\item\label{dfn:graded_lie_basis:1} One has $V^{0}=0$. 
				\item\label{dfn:graded_lie_basis:2} For each $k\in \Z_+^\nu$, $\natural(V^{k})\subseteq \cF^{k}$.
				\item\label{dfn:graded_lie_basis:3} For each $k\in \Z_+^\nu$, $\cF^{k}$ is equal to the $C^\infty(M,\R)$-module generated by
							$\natural\left(\bigoplus_{l\preceq  k}V^{l}\right)$.
			\end{enumerate}
		\end{dfn}
	\paragraph{Dilations on $\nu$-graded basis:}
		Let $(V,\natural)$ be a $\nu$-graded basis, $\lambda\in \R_+^\nu$. We define the dilation
		\begin{equation}\label{eqn:dilations_basic_V}\begin{aligned}
				 & \alpha_{\lambda}:V\to V, & \alpha_\lambda\left(\sum_{k\in \Z_+^\nu}v_{k}\right)=\sum_{k\in \Z_+^\nu} \lambda^{k}v_{k},\quad\text{where } v_{k}\in V^{k} \text{ and }\lambda^k \text{ is defined in }\eqref{eqn:tk}. \\
			\end{aligned}\end{equation}
			
		\begin{rem}
			Various different dilations will be defined throughout this article. All of them will be denoted by $\alpha_\lambda$ as there is no risk of confusion.
			It is crucial to notice that some are well-defined for $\lambda\in \R_+^\nu$ while others are only well-defined for $\lambda \in (\Rpt)^\nu$, see for example \Cref{sec:smooth_functions_vanishing_Fourier_Transform} and more precisely \Cref{thm:equivariance_Cn_norm} where this distinction plays an important role.
		\end{rem}
	\paragraph{Evaluation map:}
		For any $x\in M$ and $t\in \R_+^\nu\backslash 0$, we define
		\begin{equation*}\begin{aligned}
				\natural_{x,t}:V\to T_xM,\quad \natural_{x,t}(v):=\natural(\alpha_{t}(v))(x).
			\end{aligned}\end{equation*}
		The map $\natural_{x,0}$ which is more subtle to define is given in the next section.
		By \eqref{eqn:finit_N_condition}, the map $\natural_{x,t}$ is surjective for any $x\in M$ and $t\in \R_+^\nu\backslash 0$.
		Hence, $\ker(\natural_{x,t})$ defines an element of $\Grass(V)$, the Grassmannian manifold of linear subspaces of $V$ of codimension equal to $\dim(M)$.
		Since the map $\natural_{x,t}$ depends smoothly on $x$ and $t$, the following inclusion is smooth:
		\begin{equation}\label{eqn:incl_no_zeros}\begin{aligned}
				M\times \R_+^\nu\setminus \{0\}\to \Grass(V)\times M\times \R_+^\nu,\quad (x,t)\mapsto (\ker(\natural_{x,t}),x,t).
			\end{aligned}\end{equation}
		We can't extend this inclusion to $t=0$ because
			$\lim_{t\to 0}\ker(\natural_{x,t})$ exists for each fixed $x$ but in general it is not uniform in $x$.
		We will resolve this issue by a blowup procedure in the next section. 
		Essentially, we will take the closure of the image of \eqref{eqn:incl_no_zeros} and call it $\mathbb{G}^{(0)}$, our blow-up space.
\linksec{Tangent bundle in multi-graded sub-Riemannian geometry}{cotangent_cone}
	\paragraph{Summary:} In this section, we define the tangent bundle  $A(\cGF)$ in $\nu$-graded sub-Riemannian geometry which plays a fundamental role in the study of multi-parameter maximally hypoelliptic differential operators. For example:
		\begin{enumerate}
			\item   The dual of $A(\cGF)$ is naturally connected to the domain of the principal symbol of multi-parameter maximally hypoelliptic differential operators, see \eqref{eqn:Hellfer_Nourrigat_set_condition_union_cones}.
			\item   The Pontryagin characteristic classes of $A(\cGF)$ enter into our index formula in a way similar to that of Atiyah-Singer index formula but more subtle due to the noncommutativity of the principal symbol. This is the content of Part II in this series of articles.
			\item   Trivializing the density bundle of $A(\cGF)$ leads to the leading term in the asymptotics of the heat kernel and Weyl Law of multi-parameter maximally hypoelliptic differential operators. This is the content of Part III in this series of articles.
		\end{enumerate}
		The peculiar thing about $A(\cGF)$ is that its base space is not $M$ but a topological blowup of $M$ which we denote by $\cGF^{(0)}$.
		We also construct a real vector bundle $A(\mathbb{G})\to \mathbb{G}^{(0)}$ which creates a deformation between the bundles $TM\to M$ and $A(\cGF)\to \cGF^{(0)}$.
		As the notation suggests, the bundles $A(\cGF)$ and $A(\mathbb{G})$ are the Lie algebroids of quasi-Lie groupoids $\cGF\rightrightarrows \cGF^{(0)}$ and $\mathbb{G}\rightrightarrows \mathbb{G}^{(0)}$ which we construct in \Cref{sec:bi-graded_tangent_groupoid}.
		This will endow the space of smooth sections of $A(\cG)$ and $A(\mathbb{G})$ with a Lie bracket as expected of a tangent bundle.
	\paragraph{Multi-graded osculating group:}
		It is useful to introduce the notation 
			\begin{equation*}\begin{aligned}
				\cF^{\prec k}:=\sum_{l\in \Z_+^\nu,l\prec k}\cF^{l},\quad \forall k\in \Z_+^\nu	.	
			\end{aligned}\end{equation*}
		Clearly, $\cF^{\prec k}\subseteq \cF^{k}$, and $\cF^{\prec k}$ is a finitely generated $C^\infty(M,\R)$-module.
		By convention, $\cF^{\prec 0}=0$.
		
		\begin{dfn}
		The \textit{$\nu$-graded osculating Lie algebra} of $\cF^{\bullet}$ at $x$ is the vector space
		\begin{equation}\label{eqn:Lie_algebra_osculating}\begin{aligned}
				\grF{x}:=\bigoplus_{k\in \Z_+^\nu}\frac{\cF^{k}}{\cF^{\prec k}+C^\infty_x(M,\R)\cF^{k}},\text{ where }C^\infty_x(M,\R)=\{f\in C^\infty(M,\R):f(x)=0\}.
			\end{aligned}\end{equation}
		\end{dfn}
		If $X\in \cF^{k}$, then $[X]_{k,x}$ denotes its class in $\cF^{k}/(\cF^{\prec k}+C^\infty_x(M,\R)\cF^{k})$.
		If $f\in C^\infty(M,\R)$, then $[fX]_{k,x}=f(x)[X]_{k,x}$.
		This identity together with the fact that the module $\cF^{k}$ is finitely generated implies that $\cF^{k}/(\cF^{\prec k}+C^\infty_x(M,\R)\cF^{k})$ is a finite dimensional real vector space.
		Since $\cF^{k}=\cX(M)$ if $k_i\geq N_i$  for some $i\in \bb{1,\nu}$, we deduce that $\grF{x}$ is a finite dimensional vector space.
		We define a Lie bracket on $\grF{x}$ by the formula
		\begin{equation}\label{eqn:Lie_bracket_gx}\begin{aligned}
				[[X]_{k,x},[Y]_{l,x}]:=
				[[X,Y]]_{k+l,x},\quad \forall X\in \cF^{k}, Y\in \cF^{l}.
			\end{aligned}\end{equation}
		The space $\grF{x}$ is a nilpotent Lie algebra, because the quotient $\cF^{k}/(\cF^{\prec k}+C^\infty_x(M,\R)\cF^{k})$ vanishes if $k_i> N_i$  for some $i\in \bb{1,\nu}$.
		Here, we make use of our assumption $\cF^{0}=0$.
		The space $\grF{x}$ becomes a Lie group with the product of $a,b\in \grF{x}$ equal to $\BCH(a,b)$.
		Since $\grF{x}$ is a $\nu$-graded vector space, we can define $\alpha_\lambda:\grF{x}\to \grF{x}$ 
		like in \eqref{eqn:dilations_basic_V}. By \eqref{eqn:Lie_bracket_gx} and \eqref{eqn:BCH}, $\alpha_\lambda$ is a Lie algebra and a Lie group homomorphism.
	\paragraph{Tangent cones:}
		Let $x\in M$, $(V,\natural)$ a $\nu$-graded basis.
		We define the linear map
		\begin{equation}\label{eqn:natural00}\begin{aligned}
				\natural_{x,0}:V\to \grF{x},\quad \natural_{x,0}\left(\sum_{k\in \Z_+^\nu}v_{k}\right)=\sum_{k\in \Z_+^\nu}[\natural(v_{k})]_{k,x},\quad v_{k}\in V^{k}.
			\end{aligned}\end{equation}
		By \MyCref{dfn:graded_lie_basis}[3], $\natural_{x,0}$ is surjective.
		It also commutes with the dilation action.
		\begin{linklem}{conv_kernel_mathbbG0}
			Let $L\in \Grass(V)$,    $(x_n,t_n)_{n\in\N}\subseteq M\times \R_+^\nu\setminus \{0\}$ be a sequence such that 
				\begin{equation*}\begin{aligned}
					(\ker(\natural_{x_n,t_n}),x_n,t_n)\to (L,x,0) \in \Grass(V)\times M\times \R_+^\nu.
				\end{aligned}\end{equation*}
			Then, $\ker(\natural_{x,0})\subseteq L$.
		\end{linklem}
		For a linear subspace $L$ of $V$,  $ \ker(\natural_{x,0})\subseteq L$ if and only if  for some subspace $L'\subseteq \grF{x}$, $L=\natural_{x,0}^{-1}(L')$. 
		In this case, $L'=\natural_{x,0}(L)$.
		We can now define tangent cones
		\begin{dfn}\label{dfn:tanget_cone}
			A linear subspace $L\subseteq \grF{x}$ of codimension $\dim(M)$ is a \textit{tangent cone} at $x$, if there exists a sequence $(x_n,t_n)_{n\in\N}\subseteq M\times \R_+^\nu\setminus \{0\}$ such that 
			$(\ker(\natural_{x_n,t_n}),x_n,t_n)\to (\natural_{x,0}^{-1}(L),x,0)$ in $\Grass(V)\times M\times \R_+^\nu$.
			We denote the set of tangent cones at $x$ by $ \cGF^{(0)}_x$.
		\end{dfn}
		\begin{rem}\label{rem:Positivity}
			In \Cref{dfn:tanget_cone}, it is rather important to impose that $t_n$ are non-negative. If we instead consider $t_n\in \R^\nu\backslash 0$, we usually get more tangent cones, see for example the condition on $\zeta$ after \eqref{eqn:qjksdflksqdkfksdqkfkqsdhfjkhsdqkfhqskdjhfjkqshdlkf}.
		\end{rem}

		Since $\Grass(V)$ is compact, for any sequence  $(x_n,t_n)_{n\in\N}\subseteq M\times \R_+^\nu\setminus \{0\}$ such that $x_n\to x$, $t_n\to 0$, there exists a subsequence such that
		$\ker(\natural_{x_n,t_n})$ converges to some subspace of $V$. \Cref{thm:conv_kernel_mathbbG0} implies that the limit of $\ker(\natural_{x_n,t_n})$ has to be of the form $\natural_{x,0}^{-1}(L)$ for some $L\subseteq \grF{x}$ linear subspace of codimension $\dim(M)$.
		So, for all $x\in M$, $\cGF^{(0)}_x$ is nonempty.
		The set of all tangent cones is denoted by
			\begin{equation*}\begin{aligned}
					\cGF^{(0)}:=\{(L,x):x\in M,L\in \cGF^{(0)}_x\}
				\end{aligned}\end{equation*}
			We also define the set
			\begin{equation*}\begin{aligned}
					\mathbb{G}^{(0)}:=M\times \R_+^\nu\setminus \{0\}\sqcup \cGF^{(0)}\times \{0\}.
				\end{aligned}\end{equation*}
		For any $\nu$-graded basis $(V,\natural)$, we have the following injective map
		\begin{equation}\label{eqn:inclusion_mathbbG0}\begin{aligned}
				\Grass(\natural):\mathbb{G}^{(0)} & \hookrightarrow \Grass(V)\times M\times \R_+^\nu                                        \\
				(x,t)                       & \mapsto (\ker(\natural_{x,t}),x,t), &&(x,t)\in M\times \R_+^\nu\setminus \{0\} \\
				(L,x,0)                     & \mapsto (\natural_{x,0}^{-1}(L),x,0), &&(L,x,0)\in \cGF^{(0)}\times \{0\}
			\end{aligned}\end{equation}
		whose image is closed by the definition of tangent cones.
		We equip $\mathbb{G}^{(0)}$ and $\cG^{(0)}$ with the subspace topology and sub-differential structure from $\Grass(V)\times M\times \R_+^\nu$.
		A priori, $\mathbb{G}^{(0)}$ depends on the choice of the $\nu$-graded basis $(V,\natural)$. This isn't the case as the next theorem shows.
		\begin{linkthm}{G0_locally_compact}
				\begin{enumerate}
					\item\label{thm:G0_locally_compact:indep}  For any $x\in M$, the set $\cG^0_x$ doesn't depend on the choice of the $\nu$-graded basis $(V,\natural)$.
					Furthermore, the topology and the differential structure on $\mathbb{G}^{(0)}$ and $\cG^{(0)}$ don't depend on the choice of the $\nu$-graded basis $(V,\natural)$.
					\item\label{thm:G0_locally_compact:cont} The natural projection
					\begin{equation}\label{eqn:proj_mathbbG0}\begin{aligned}
							\pi_{\mathbb{G}^{(0)}} :\mathbb{G}^{(0)}\to M\times \R_+^\nu,\quad \pi_{\mathbb{G}^{(0)}}(x,t)=(x,t),\quad \pi_{\mathbb{G}^{(0)}}(L,x,0)=(x,0)
						\end{aligned}\end{equation}
					is a proper smooth surjective map.
										\item 	The inclusion $M\times \R_+^\nu\setminus \{0\}\hookrightarrow \mathbb{G}^{(0)}$ is an open smooth embedding with dense image.
				\end{enumerate}
			\end{linkthm}
			Throughout the article, the notation $\pi_{\mathbb{G}^{(0)}}$  is reserved for the projection map \eqref{eqn:proj_mathbbG0}.
		We now describe the topology of $\mathbb{G}$ in terms of sequences which follows immediately from \Crefitem{thm:G0_locally_compact}{indep}.
		\begin{prop}\label{thm:convergence_mathbbG0}
			\begin{enumerate}
				\item\label{thm:convergence_mathbbG0:part1} A sequence $(x_n,t_n)_{n\in \N}\subseteq M\times \R_+^\nu\setminus \{0\}$ converges to $(L,x,0)$ in $\mathbb{G}^{(0)}$ if and only if
					for any (or for some) $\nu$-graded basis $(V,\natural)$, $x_n\to x$, $t_n\to 0$, $\ker(\natural_{x_n,t_n})\to \natural_{x,0}^{-1}(L)$.
				\item\label{thm:convergence_mathbbG0:part2} A sequence $(L_n,x_n)\subseteq \cGF^{(0)}$ converges to $(L,x)$ in $\cGF^{(0)}$ if and only if
					for any (or for some) $\nu$-graded basis $(V,\natural)$, $x_n\to x$, $\natural_{x_n,0}^{-1}(L_n)\to \natural_{x,0}^{-1}(L)$.
			\end{enumerate}
		\end{prop}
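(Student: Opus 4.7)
The proposition is essentially a direct restatement of the definition of the topology on $\mathbb{G}^{(0)}$, so my plan is simply to unwind that definition.

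By construction (see \eqref{eqn:inclusion_mathbbG0}), the topology and differential structure on $\mathbb{G}^{(0)}$ are those induced by the injective map $\Grass(\natural):\mathbb{G}^{(0)}\hookrightarrow\Grass(V)\times M\times\R_+^\nu$ associated to a choice of $\nu$-graded basis $(V,\natural)$, and by \Crefitem{thm:G0_locally_compact}{indep} this topology does not depend on that choice. Consequently, the equivalence of the ``for any'' and ``for some'' versions of both items is automatic, and I only need to verify the ``for some'' statement with a single fixed basis.

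For \ref{thm:convergence_mathbbG0:part1}, I would fix a $\nu$-graded basis $(V,\natural)$ and write down the images of the relevant points under $\Grass(\natural)$, namely $\Grass(\natural)(x_n,t_n)=(\ker(\natural_{x_n,t_n}),x_n,t_n)$ and $\Grass(\natural)(L,x,0)=(\natural_{x,0}^{-1}(L),x,0)$. Since the topology on $\mathbb{G}^{(0)}$ is the subspace topology from $\Grass(V)\times M\times\R_+^\nu$, convergence of $(x_n,t_n)$ to $(L,x,0)$ in $\mathbb{G}^{(0)}$ is equivalent to componentwise convergence of these images in the ambient product, which is precisely the three conditions $x_n\to x$, $t_n\to 0$, and $\ker(\natural_{x_n,t_n})\to\natural_{x,0}^{-1}(L)$. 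Part \ref{thm:convergence_mathbbG0:part2} is the same argument applied to the closed subset $\cGF^{(0)}\times\{0\}\subseteq\mathbb{G}^{(0)}$, using that $\Grass(\natural)(L_n,x_n,0)=(\natural_{x_n,0}^{-1}(L_n),x_n,0)$ and that the $\R_+^\nu$-coordinate is constantly $0$ on this locus.

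There is no genuine obstacle here; the only point worth emphasising is that the image of a tangent cone $(L,x,0)$ under $\Grass(\natural)$ is the subspace $\natural_{x,0}^{-1}(L)\subseteq V$ rather than $L$ itself, which is why $\natural_{x,0}^{-1}$ appears on the right-hand side of the stated limits. The substantive work has already been carried out in establishing basis-independence in \Crefitem{thm:G0_locally_compact}{indep}; this proposition is merely the translation of that statement into the language of sequential convergence.
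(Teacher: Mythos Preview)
Your proposal is correct and matches the paper's approach exactly: the paper does not even write out a separate proof, stating only that the proposition ``follows immediately from \Crefitem{thm:G0_locally_compact}{indep},'' which is precisely the unwinding of the definition via the embedding $\Grass(\natural)$ that you describe.
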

			\begin{rem}\label{rem:connected_not_path_connected}
				In general $\mathbb{G}^{(0)}$ is not a smooth manifold.
				In fact $\mathbb{G}^{(0)}$ can exhibit pathological properties.
				If $M$ is connected, then $\mathbb{G}^{(0)}$ is connected because it is the closure of $M\times \R_+^\nu\setminus \{0\}$.
				But even though $M$ is path-connected, $\mathbb{G}^{(0)}$ generally isn't path-connected. 
			\end{rem}

	\paragraph{Tangent bundle:}
		Let $A(\mathbb{G})$ be the real vector bundle over $\mathbb{G}^{(0)}$ whose fiber over $(x,t)\in M\times \R_+^\nu\setminus \{0\}$ is equal to $T_xM$, and whose fiber over $(L,x,0)$ is equal to $\frac{\grF{x}}{L}$.
		Let $(V,\natural)$ be a $\nu$-graded basis. 
		There is a natural morphism of vector bundles $A(\natural):V\times \mathbb{G}^{(0)}\to A(\mathbb{G})$ defined as follows:
			\begin{equation}\label{eqn:Anatural}\begin{aligned}
				A(\natural)(v,x,t)=(\natural_{x,t}(v),x,t),\quad A(\natural)(v,L,x,0)=(\natural_{x,0}(v)\bmod L,x,0).
			\end{aligned}\end{equation}
		The map $A(\natural)$ is clearly surjective. 
		We equip $A(\mathbb{G})$ with the quotient topology.
		We also define $C^\infty(A(\mathbb{G}))$ to be the space of $f:A(\mathbb{G})\to \C$ such that $f\circ A(\natural)$ is smooth.
			\begin{linkthm}{ind_top_algberoid}
				\begin{enumerate}
					\item\label{thm:ind_top_algberoid:1}   The topological space $A(\mathbb{G})$ is metrizable locally compact second countable, and $C^\infty(A(\mathbb{G}))$ is a finite dimensional differential structure, 
					and $A(\natural)$ is a submersion.
					\item\label{thm:ind_top_algberoid:2} 	The differential space $A(\mathbb{G})$ is a real vector bundle over $\mathbb{G}^{(0)}$, see \Cref{dfn:vector_bundles_sub-Cartesian}.
					\item\label{thm:ind_top_algberoid:3} 	The topology and the differential structure on $A(\mathbb{G})$ don't depend on the choice of $(V,\natural)$.
				\end{enumerate}
			\end{linkthm}
				We now give some natural examples of smooth sections of the bundle $A(\mathbb{G})$.
		\begin{linkprop}{smooth_sections_AmathbbG}
			For every $k\in \Z_+^\nu$, $X\in \cF^{k}$, the section $\theta_{k}(X)$ of $A(\mathbb{G})$ defined by
			\begin{equation*}\begin{aligned}
					\theta_{k}(X)(x,t)=t^kX(x)\in T_xM,\quad   \theta_{k}(X)(L,x,0)=[X]_{k,x}\bmod L\in \frac{\grF{x}}{L} 
				\end{aligned}\end{equation*}
			is smooth.
			Also, the family $(\theta_{k}(X))_{k\in \Z_+^\nu,X\in \cF^{k}}$ generates $C^\infty(\mathbb{G}^{(0)},A(\mathbb{G}))$ as a $C^\infty(\mathbb{G}^{(0)},\R)$-module.
		\end{linkprop}
		We denote by  $A(\cGF)$ the restriction of $A(\mathbb{G})$ to $\cGF^{(0)}$.
		
		\paragraph{Tangent cones are Lie subalgebras:}
		Recall that $\grF{x}$ is a Lie algebra by \eqref{eqn:Lie_bracket_gx} and a Lie group by \eqref{eqn:BCH}.
		\begin{linkprop}{Lie_algebra}
			Let $x\in M$, $L\subseteq \grF{x}$ be a tangent cone.
			\begin{enumerate}
				\item\label{thm:Lie_algebra:Lie_subalgebra}    The linear subspace $L$ is a Lie subalgebra of $\grF{x}$ (and hence also a closed Lie subgroup of $\grF{x}$).
				\item\label{thm:Lie_algebra:adjoint_action}      If $g\in \grF{x}$, then
					$gLg^{-1}$
					is a tangent cone at $x$.
			\end{enumerate}
		\end{linkprop}
\linksec{Tangent groupoid}{bi-graded_tangent_groupoid}
	By \Cref{thm:Lie_algebra}, for each $x\in M$, following \Crefitem{ex:exs_groupoids}{quotient}, we can define the quotient groupoid of $\grF{x}$ by $\cGF^{(0)}_x$.
	We take the disjoint union of all such groupoids as $x$ varies, i.e.,
	let
	\begin{equation*}\begin{aligned}
			\cGF=\{(A,x):x\in M,A=gLh \text{ for some } g,h\in \grF{x},L\in \cGF^{(0)}_x\}
		\end{aligned}\end{equation*}
	We also define the space
	\begin{equation*}\begin{aligned}
			\mathbb{G}:=M\times M\times \R_+^\nu\setminus \{0\}\sqcup \cGF\times \{0\}.
		\end{aligned}\end{equation*}
	In this section, we equip $\mathbb{G}$ with a groupoid structure, a topology, and a differential structure making it a quasi-Lie groupoid.
	\paragraph{Groupoid structure:}
		The space of objects of $\mathbb{G}$ is $\mathbb{G}^{(0)}$.
		The structural morphisms are
		\begin{equation*}\begin{aligned}
				 & s:\mathbb{G}\to \mathbb{G}^{(0)}, &&s(y,x,t)=(x,t),&&&& s(A,x,0)=(A^{-1}A,x,0) \\
				 & r:\mathbb{G}\to \mathbb{G}^{(0)}, &&r(y,x,t)=(y,t),&&&& r(A,x,0)=(AA^{-1},x,0)\\
				&u:\mathbb{G}^{(0)}\to \mathbb{G}, &&u(x,t)=(x,x,t),&&&& u(L,x,0)=(L,x,0)\\
				&\iota:\mathbb{G}\to \mathbb{G}, &&\iota(y,x,t)=(x,y,t),&&&& \iota(A,x,0)=(A^{-1},x,0)\\
				&m:\mathbb{G}^{(2)}\to \mathbb{G}, &&m((z,y,t),(y,x,t))=(z,x,t),&&&& m((B,x,0),(A,x,0))=(BA,x,0)
			\end{aligned}\end{equation*}
		In other words, the groupoid $\mathbb{G}$ is the disjoint union of a copy of the pair groupoid $M\times M$ for each $t\in  \R_+^\nu\setminus \{0\}$, and
		a copy of the quotient groupoid of  $\grF{x}$ by $\cGF^{(0)}_x$
		for each $x\in M$.
	\paragraph{Topology and differential structure:}
		Let $(V,\natural)$ be a $\nu$-graded basis.
		The following map plays a central role in this article
		\begin{equation*}\begin{aligned}
				\cQ_V:\dom(\cQ_V)\subseteq V\times \mathbb{G}^{(0)}\to \mathbb{G}\\ (v,x,t)\mapsto(e^{\alpha_{t}(v)}\cdot x,x,t) \\ 				(v,L,x,0)\mapsto(\natural_{x,0}(v)L,x,0).
			\end{aligned}\end{equation*}
		Its domain is defined as follows:	
			Consider the map \begin{equation}\label{eqn:qsojod}\begin{aligned}
			&V\times M\times \R_+^\nu\backslash 0\to M\times M\times \R_+^\nu\backslash 0,\quad  (v,x,t)\mapsto(e^{\alpha_t(v)}\cdot x,x,t)\\
		\end{aligned}\end{equation}
		Let $U$ be the open subset of $V\times M\times \R_+^\nu\backslash 0$ such that \eqref{eqn:qsojod} is a well-defined submersion,	
						\begin{equation}\label{eqn:domprime_cQV}\begin{aligned}
						\tdom(\cQ_V)&= U\sqcup V\times M\times \{0\} &&\subseteq V\times M\times \R_+^\nu\\
						\dom(\cQ_V)&=U\sqcup V\times \cGF^{(0)}\times \{0\}&&\subseteq V\times \mathbb{G}^{(0)}.
					\end{aligned}\end{equation}
		The set $\tdom(\cQ_V)$ is open because for any $v\in V,x\in M$, when $t$ is close enough to $0$ but non-zero, $(v,x,t)\in U$ by \eqref{eqn:finit_N_condition}.
		The set $\dom(\cQ_V)$ is open because $\dom(\cQ_V)=\pi_{V\times \mathbb{G}^{(0)}}^{-1}(\tdom(\cQ_V))$,
		where \begin{equation}\label{eqn:proj_with_V_mathbbG}\begin{aligned}
			\pi_{V\times \mathbb{G}^{(0)}}:V\times \mathbb{G}^{(0)}\to V\times M\times \R_+^\nu,\quad \pi_{V\times \mathbb{G}^{(0)}}(v,\gamma)=(v,\pi_{\mathbb{G}^{(0)}}(\gamma))	
		\end{aligned}\end{equation}
		is the natural projection which is continuous by \Crefitem{thm:G0_locally_compact}{cont}.
		\begin{dfn}\label{dfn:topology_mathbbG}
			\begin{enumerate}
				\item   A subset $U\subseteq \mathbb{G}^{(0)}$ is open if and only if the following holds:
				\begin{itemize}
					\item   The subset $U\cap (M\times M\times \R_+^\nu\setminus \{0\})$ is open.
					\item   For any $\nu$-graded basis $(V,\natural)$, $\cQ_V^{-1}(U) $ is an open subset of $V\times \mathbb{G}^{(0)}$.
				\end{itemize}
				In other words, if $T$ is a topological space, then a map $f:\mathbb{G}\to T$ is continuous if and only if 
				$f_{|M\times M\times \R_+^\nu\setminus \{0\}}$ and $f\circ \cQ_V$ are continuous for every $\nu$-graded basis $(V,\natural)$.
				\item A function $f:\mathbb{G}\to \C$ is smooth if and only if the following conditions are satisfied: \begin{itemize}
					\item   The map $f_{|M\times M\times \R_+^\nu\setminus \{0\}}$ is smooth.
					\item   For any $\nu$-graded basis $(V,\natural)$, the map $f\circ \cQ_V:\dom(\cQ_V)\to \C$ is smooth, where $\dom(\cQ_V)\subseteq V\times \mathbb{G}^{(0)}$ inherits a differential structure from $V$ and $\mathbb{G}^{(0)}$.
				\end{itemize}		
			\end{enumerate}
		\end{dfn}
		\begin{linkthm}{topology_mathbbG}
			\begin{enumerate}
				\item\label{thm:topology_mathbbG:invariance}    In \Cref{dfn:topology_mathbbG}, one can replace "For any $\nu$-graded basis" by "For some $\nu$-graded basis".
				Neither the topology nor the space of smooth functions change.
				\item\label{thm:topology_mathbbG:topology} 		The topological space $\mathbb{G}$ is metrizable locally compact second countable.
				\item\label{thm:topology_mathbbG:subCartesian} 	The smooth structure $C^\infty(\mathbb{G})$ defines a finite dimensional differential structure on $\mathbb{G}$.
				\item\label{thm:topology_mathbbG:open_subset}	The inclusion $M\times M\times \R_+^\nu\setminus \{0\}\hookrightarrow \mathbb{G}$ is an open smooth embedding with dense image.
				\item\label{thm:topology_mathbbG:submersion}	For any $\nu$-graded basis $(V,\natural)$, the map $\cQ_{V}$ is a submersion.
			\end{enumerate}
		\end{linkthm}
		Let us describe the topology in terms of sequences.
		\begin{linkthm}{convergence_mathbbG}
			\begin{enumerate}
				\item\label{thm:convergence_mathbbG:sequence} A sequence $(y_n,x_n,t_n)_{n\in \N}\subseteq  M\times  M\times \R_+^\nu\setminus \{0\}$ converges to a point $(A,x,0)$ in $\mathbb{G}$ if and only if the following hold:
					\begin{enumerate}[label=\Roman*.]
						\item   The sequence $(x_n,t_n)_{n\in \N}\subseteq  M\times \R_+^\nu\setminus \{0\}$ converges to $(A^{-1}A,x,0)$ in $\mathbb{G}^{(0)}$.
						\item   For any (or for some) $\nu$-graded basis $(V,\natural)$, there exists\footnote{To be very precise in the forward implication $\implies$, the elements of the sequence $v_n$ only exist for $n$ big enough. To simplify the exposition, we will ignore this.} $(v_n)_{n\in \N}\subseteq  V$ and $v\in \natural_{x,0}^{-1}(A)$ such that $v_n\to v$ and $e^{\alpha_{t_n}(v_n)}\cdot x_n=y_n$.
					\end{enumerate}
					Furthermore, if $(y_n,x_n,t_n)$ converges to $(A,x,0)$, then for any $v\in  \natural_{x,0}^{-1}(A)$,  there exists $(v_n)_{n\in \N}\subseteq V$ such that $v_n\to v$ and $e^{\alpha_{t_n}(v_n)}\cdot x_n=y_n$.
				\item\label{thm:convergence_mathbbG:2} A sequence $(A_n,x_n)_{n\in \N}\subseteq \cGF$ converges to $(A,x)$ in $\cGF$ if and only if the following hold:
					\begin{enumerate}[label=\Roman*.]
						\item    The sequence $(A_n^{-1}A_n,x_n)_{n\in \N}\subseteq \cGF^{(0)}$ converges to $(A^{-1}A,x)$ in the topology of $\cGF^{(0)}$.
						\item    For any (or for some) $\nu$-graded basis $(V,\natural)$, there exists $(v_n)_{n\in \N}\subseteq  V$ and $v\in \natural_{x,0}^{-1}(A)$ such that $v_n\to v$ and $v_n\in \natural_{x_n,0}^{-1}(A_n)$.
					\end{enumerate}
					Furthermore, if $(A_n,x_n)$ converges to $(A,x)$, then for any $v\in  \natural_{x,0}^{-1}(A)$,  there exists $(v_n)_{n\in \N}\subseteq V$ such that $v_n\to v$ and  $v_n\in \natural_{x_n,0}^{-1}(A_n)$.
			\end{enumerate}
		\end{linkthm}
		It is rather interesting to compare \Cref{thm:convergence_mathbbG} to \Cref{prop:grass_prop}.
		In practice, it is often more useful to use the following theorem to verify the convergence in $\mathbb{G}$.
		\begin{linkthm}{convergence_mathbbG_practical}
			Let $x\in M$, $L\in \cG^0_x$ a tangent cone, $(V,\natural)$ be a $\nu$-graded basis, and $S\subseteq V$ a linear subspace such that the map 
				$(v,l)\in S\times L\mapsto \natural_{x,0}(v) l\in \mathfrak{g}_x$
			is a diffeomorphism. A sequence $(y_n,x_n,t_n)_{n\in \N}\subseteq  M\times  M\times \R_+^\nu\setminus \{0\}$ converges to a point $(\natural_{x,0}(v)L,x,0)$ in $\mathbb{G}$ where $v\in S$ if and only if the following hold:
					\begin{enumerate}[label=\Roman*.]
						\item   The sequence $(x_n,t_n)_{n\in \N}\subseteq  M\times \R_+^\nu\setminus \{0\}$ converges to $(L,x,0)$ in $\mathbb{G}^{(0)}$.
						\item   For $n$ big enough, 
						there exists $(v_n)_{n\in \N}\subseteq  S$ such that $v_n\to v$ and $e^{\alpha_{t_n}(v_n)}\cdot x_n=y_n$. 
					\end{enumerate}
		\end{linkthm}
		\begin{linkthm}{tangent_groupoid_is_smooth}
			The groupoid $\mathbb{G}\rightrightarrows \mathbb{G}^{(0)}$ is a quasi-Lie groupoid whose Lie algebroid is $A(\mathbb{G})$.
		\end{linkthm}
		By \Cref{thm:tangent_groupoid_is_smooth}, we mean that the differential structure and topology induced on $A(\mathbb{G})$ from the differential structure and topology on $\mathbb{G}$ coincide with the differential structure and topology described in \Cref{sec:cotangent_cone}.
	
		The following proposition gives a natural isomorphism of vector bundles which will be used repeatedly throughout this article.
		\begin{linkprop}{submersion_cQ}
			For any $k,l\in \C$, we have a natural smooth isomorphism 
				\begin{equation}\label{eqn:integration_denstiy_iso_exponential}\begin{aligned}
					\Omega^{k+l}(V)\simeq \cQ^*_V(\Omega^{k,l}(\mathbb{G}))\otimes \Omega^{k+l}(\ker(\odif{\cQ_V}))
				\end{aligned}\end{equation}
		\end{linkprop}
	\paragraph{Lie bracket:}
		Since $A(\mathbb{G})$ is the Lie algebroid of a quasi-Lie groupoid, it has a Lie bracket
		\begin{equation*}\begin{aligned}
				[\cdot,\cdot]:C^\infty(\mathbb{G}^{(0)},A(\mathbb{G}))\times  C^\infty(\mathbb{G}^{(0)},A(\mathbb{G}))\to C^\infty(\mathbb{G}^{(0)},A(\mathbb{G}))
			\end{aligned}\end{equation*}
		On $M\times M\times \R_+^\nu\setminus \{0\}$, the groupoid $\mathbb{G}$ is the pair groupoid. So, the Lie bracket when restricted to $M\times \R_+^\nu\setminus \{0\}$ is the usual Lie bracket on $TM$.
		By the density of $M\times \R_+^\nu\setminus \{0\}$ in $\mathbb{G}^{(0)}$, we obtain 
				\begin{equation}\label{eqn:Lie_Bracket_AG_theta}\begin{aligned}
					[\theta_{k}(X),\theta_{l}(Y)]=\theta_{k+l}([X,Y]),\quad \forall k,l\in \Z_+^\nu,X\in \cF^k,Y\in \cF^l.
				\end{aligned}\end{equation}
\section{Example of a tangent groupoid}\label{sec:tangent_groupoid_exs}
In this section, we consider an example of a $2$-graded sub-Riemannian structure. We will present the tangent cones and the tangent groupoid.
We leave the verification to the reader.
Let $N\in \N$ greater than or equal to $2$ be fixed.
Consider the $2$-graded structure of depth $(1,N)$ on $\R^2$
	\begin{equation*}\begin{aligned}
	 	\cF^{0,k}=\{f \partial_x+g x^{N-k}\partial_y:f,g\in C^\infty(\R^2,\R)\},\quad k\in \bb{1,N}.
	\end{aligned}\end{equation*}
This is the structure defined in \Crefitem{exs:exs_weighted}{Weighted_bigraded} associated to the families $\{\partial_x,\partial_y\}$ and $\{\partial_x,x^{N-1}\partial_y\}$.

\paragraph{Osculating Lie algebras:} We will describe the osculating Lie algebra by giving  a linear basis. 
In the following equations, span means linearly spanned, and the generators are linearly independent.
To ease the notation, we will use 
	\begin{equation}\label{eqn:kqsdkfjqposjfkpoqsfdf}\begin{aligned}
		&X_1=[\partial_x]_{(1,0),(x,y)},&&X_2=[\partial_y]_{(1,0),(x,y)},&&&Y=[\partial_x]_{(0,1),(x,y)},\\
		&Z_1=[x^{N-1}\partial_y]_{(0,1),(x,y)},&&Z_2=[x^{N-2}\partial_y]_{(0,2),(x,y)},\cdots,&&&Z_N=[\partial_y]_{(0,N),(x,y)}
	\end{aligned}\end{equation}
 which are elements of $\mathfrak{g}_{(x,y)}$. We have
	\begin{equation}\label{eqn:qksodfjopkqsdjkofjqskopdfjqspd}\begin{aligned}
		\mathfrak{g}_{(x,y)}=\begin{cases}
			\mathrm{span}\left(X_1,X_2, Y,Z_1\right),&\text{if } x\neq 0,\\
			\mathrm{span}\left(X_1,X_2,Y,Z_1,Z_2,\cdots,Z_N\right),&\text{if } x=0.
		\end{cases}		
	\end{aligned}\end{equation}
	The Lie algebra $\mathfrak{g}_{(x,y)}$ is commutative if $x\neq 0$.
	If $x=0$, then the only non-trivial Lie bracket are
		\begin{equation*}\begin{aligned}
			[Y,Z_k]=(N-k)Z_{k+1},\quad k\in \bb{1,N-1}.		
		\end{aligned}\end{equation*}
	\paragraph{Tangent cones at $x\neq 0$:} The tangent cones at $(x,y)$ with $x\neq 0$ are the linear subspaces:
		\begin{equation*}\begin{aligned}
				L_{\lambda}:=\mathrm{span}\{\lambda X_1-Y, \lambda x^{N-1}X_2-Z_1\},\quad 			L_{\infty}:=	 \mathrm{span}\{ X_1, X_2\} 
						\end{aligned}\end{equation*}
	where $\lambda\in \R_+$.
	A sequence $((x_n,y_n),(t_n,s_n))\in \R^2\times (\R^2\backslash 0)$ converges $(L_\lambda,(x,y),(0,0))$ in $\mathbb{G}^{(0)}$ if $x_n\to x$, $y_n\to y$, $t_n\to 0$, $s_n\to 0$ and $\frac{s_n}{t_n}\to \lambda\in [0,+\infty]$.

\paragraph{Tangent cones at $x=0$:}
	The tangent cones at $(0,y)$ are the linear subspaces:
		\begin{equation}\label{eqn:qjksdflksqdkfksdqkfkqsdhfjkhsdqkfhqskdjhfjkqshdlkf}\begin{aligned}
			&L_{\infty,\mu,\eta}=\mathrm{span}(X_1,X_2-\eta Z_N,Z_1-\mu Z_{2},\cdots,Z_{N-1}-\mu Z_{N}),&&L_{\infty}=\mathrm{span}(X_1,Z_1,\cdots,Z_N),\\
			&L_{\infty,\zeta}=\mathrm{span}(X_1,X_2-\zeta Z_1,Z_{2},\cdots,Z_{N}), 			&&L_{\lambda}:=\mathrm{span}(\lambda X_1-Y,Z_1,\cdots,Z_N),
		\end{aligned}\end{equation}
	where $\eta,\lambda\in \R_+$ and $\mu\in \R$, and $\zeta\in \R$ or $\zeta \in \R_+$ depending on the parity of $N$. A sequence $((x_n,y_n),(t_n,s_n))\in \R^2\times (\R^2\backslash 0)$ converges $(L,(0,y),(0,0))$ in $\mathbb{G}^{(0)}$ if $x_n\to 0$, $y_n\to y$, $t_n\to 0$, $s_n\to 0$, and 
	\begin{itemize}
		\item    	 $\frac{s_n}{t_n}\to \lambda$, if $L=L_\lambda$. 
		\item $\frac{t_n}{s_n^N}\to \eta$ and $\frac{x_n}{s_n}\to \mu$, if $L=L_{\infty,\mu,\eta}$. 
		\item $\frac{t_n}{s_nx_n^{N-1}}\to \zeta$ and $\frac{x_n}{s_n}\to \infty$, if $L=L_{\infty,\zeta}$. 
		\item $\frac{s_n}{t_n}\to \infty$ and $\frac{s_n^{N-k}x_n^k}{t_n}\to 0$ for all $k\in \bb{0,N-1}$, if $L=L_{\infty}$.
	\end{itemize}
	In the above conditions, we use the convention that $\frac{1}{0}=\infty$ and $\frac{0}{0}=0$.
\paragraph{Topology on $\mathbb{G}$:}
The following computation is a direct corollary of \Cref{thm:convergence_mathbbG_practical}.
A sequence $((x_n',y_n'),(x_n,y_n),(t_n,s_n))\in \R^2\times\R^2 \times(\R^2\backslash 0)$ converges to $(A,(x,y),(0,0))$ in $\mathbb{G}$ with $x\neq 0$ if and only if $(x_n,y_n)\to (x,y),(x_n',y_n')\to (x,y),(t_n,s_n)\to 0$, and the following conditions are satisfied:
\begin{itemize}
	\item    $\frac{x_n'-x_n}{t_n}\to a$ and $\frac{y_n'-y_n}{t_n}\to b$ and $\frac{s_n}{t_n}\to \lambda\in [0,+\infty[$, if $A=(aX_1+bX_2)L_{\lambda}$.
	\item $\frac{x_n'-x_n}{s_n}\to a$ and $\frac{y_n'-y_n}{s_n}\to bx^{N-1}$ and $\frac{s_n}{t_n}\to \infty$, if $A=(aY+bZ_1)L_{\infty}$.
\end{itemize}
A sequence $((x_n',y_n'),(x_n,y_n),(t_n,s_n))\in \R^2\times\R^2 \times(\R^2\backslash 0)$ converges to $(A,(0,y),(0,0))$ in $\mathbb{G}$ if and only if $(x_n,y_n)\to (0,y),(x_n',y_n')\to (0,y),(t_n,s_n)\to 0$, and the following conditions are satisfied:	
\begin{itemize}
	\item    $\frac{x_n'-x_n}{t_n}\to a$, $\frac{y_n'-y_n}{t_n}\to b$ and $\frac{s_n}{t_n}\to \lambda\in [0,+\infty[$, if $A=(aX_1+bX_2)L_{\lambda}$.
	\item  $\frac{x_n'-x_n}{s_n}\to a$, $\frac{y_n'-y_n}{s_n^N}\to b$, $\frac{t_n}{s_n^N}\to \eta$ and $\frac{x_n}{s_n}\to \mu$, if $A=(aY+bZ_N)L_{\infty,\mu,\eta}$.
	\item   $\frac{x_n'-x_n}{s_n}\to a$, $\frac{y_n'-y_n}{s_nx_n^{N-1}}\to b$, $\frac{t_n}{s_nx_n^{N-1}}\to \zeta$ and $\frac{x_n}{s_n}\to \infty$, if $A=(aY+bZ_1)L_{\infty,\zeta}$.
	\item 	  $\frac{x_n'-x_n}{s_n}\to a$, $\frac{y_n'-y_n}{t_n}\to b$, $\frac{s_n}{t_n}\to \infty$, $\frac{s_n^{N-k}x_n^k}{t_n}\to 0$ for all $k\in \bb{0,N-1}$, if $A=(aY+bX_2)L_{\infty}$.
\end{itemize}
The above computation illustrates how complicated the groupoid $\mathbb{G}\rightrightarrows \mathbb{G}^{(0)}$ can be even in very simple examples.

\linksec{Debord-Skandalis actions}{equivariance_of_norms}
	In \eqref{eqn:dilations_basic_V}, we defined $\R_+^\nu$-dilations on $V$ where $(V,\natural)$ is a $\nu$-graded basis.
	In this section, we define some dilations that will be used throughout this article.
	Let
			\begin{equation}\label{eqn:lambda_mu_product}\begin{aligned}
				&\lambda\mu:=(\lambda_1\mu_1,\cdots,\lambda_\nu\mu_\nu)\in \R_+^\nu, &&\forall \lambda,\mu\in \R_+^\nu\\
					&\lambda^{-1}:=(\lambda_1^{-1},\cdots,\lambda_{\nu}^{-1})\in (\Rpt)^\nu,&& \forall \lambda\in (\Rpt)^\nu.		
				\end{aligned}\end{equation}
			This notation will be used throughout the article. 
		\begin{linkprop}{quasi-Lie-Debord Skandalis}
			Let $\lambda\in (\Rpt)^\nu$. 
		The map 
			\begin{equation*}\begin{aligned}
					\alpha_\lambda:\mathbb{G}\to \mathbb{G},	\quad\alpha_\lambda(y,x,t)=(y,x,\lambda^{-1}t),  \quad \alpha_\lambda(A,x,0)=(\alpha_\lambda(A),x,0)	
			\end{aligned}\end{equation*}
			is a quasi-Lie groupoid automorphism of $\mathbb{G}$.
		\end{linkprop}
	  By \Cref{sec:automorphisms}, $(\Rpt)^\nu$ acts on
					$\cbbG $, $C^{-\infty}_{r,s}(\mathbb{G},\omegahalf)$, $C^*\mathbb{G}$			by $*$-algebra automorphisms.
	\paragraph{Equivariance of the $C^*$-norm:}
		Since $\alpha_\lambda:\mathbb{G}\to \mathbb{G}$ is an automorphism of quasi-Lie groupoids, by \eqref{eqn:automorphis_preserves_Cnorm}, 
			\begin{equation}\label{eqn:equiv_norm_Cstar_Debord_Skand}\begin{aligned}
				\norm{\alpha_\lambda(f)}_{C^*\mathbb{G}}=\norm{f}_{C^*\mathbb{G}}	,\quad \forall f\in C^*\mathbb{G},\ \lambda\in (\Rpt)^\nu.	
			\end{aligned}\end{equation}
	\paragraph{Equivariance of the $L^1$-norm:}
		Fix a Riemannian metric on $M$. For each $x\in M$, let $\vol_{x}\in \Omega^1(T_xM)$ be the associated $1$-density.
		Let $\omega$ be the section on $\mathbb{G}$ of $\Omega^{\frac{1}{2},-\frac{1}{2}}(\mathbb{G})$
		defined by
			\begin{equation*}\begin{aligned}
				\omega(y,x,t)=\vol_{y}^{\frac{1}{2}}\otimes \vol_{x}^{-\frac{1}{2}},\quad \forall (y,x,t)\in M\times M\times \R_+^\nu\setminus \{0\},
			\end{aligned}\end{equation*}
		and	whose value at $(A,x)\in\cGF\times \{0\}$ is defined as follows:
		For some $g\in \grF{x}$ and $L\in \cGF^{(0)}_x$, $A=gL$.
		Recall that $A(\mathbb{G})_{s(A,x,0)}=\frac{\grF{x}}{L}$ and $A(\mathbb{G})_{r(A,x,0)}=\frac{\grF{x}}{gLg^{-1}}$.
		The linear map 
			\begin{equation*}\begin{aligned}
				\frac{\grF{x}}{L}\to \frac{\grF{x}}{gLg^{-1}},\quad v\mapsto gvg^{-1}		
			\end{aligned}\end{equation*}
		is an isomorphism. Thus, it induces a section of $\omegahalf\left(\frac{\grF{x}}{gLg^{-1}}\right)\otimes\Omega^{-\frac{1}{2}}\left(\frac{\grF{x}}{L}\right)=\Omega^{\frac{1}{2},-\frac{1}{2}}(\mathbb{G})_{(A,x,0)} $
		which is $\omega(A,x,0)$.
		Notice that $\omega(A,x,0)$ doesn't depend on the choice of $g$ because for any $l\in L$, the map 
		$\grF{x}/L\to \grF{x}/L$ given by $v\mapsto lvl^{-1}$
		has determinant $1$ because $\grF{x}$ is a nilpotent Lie algebra.
		\begin{linkprop}{L1normRiemMetric}
			The section $\omega$ is smooth, i.e., $\omega\in C^\infty(\mathbb{G},\Omega^{\frac{1}{2},-\frac{1}{2}})$ and satisfies \eqref{eqn:omega_symmetry_condition}.
		\end{linkprop}

		Since $ M\times \R_+^\nu\setminus \{0\}$ is dense in $\mathbb{G}^{(0)}$, it follows that $\norm{f}_{L^1(\mathbb{G},\omega)}$ is given by 
			\begin{equation}\label{eqn:L1normMathbbG}\begin{aligned}
					\norm{f}_{L^1(\mathbb{G},\omega)}= \sup\left\{\max\left(\int_M |f|(y,x,t)\vol_{y}^{\frac{1}{2}}\vol_{x}^{-\frac{1}{2}},\int_M |f|(x,y,t)\vol_{y}^{\frac{1}{2}}\vol_{x}^{-\frac{1}{2}}\right)\right\}	,
			\end{aligned}\end{equation}
		where the supremum is over $(x,t)\in  M\times \R_+^\nu\setminus \{0\}$. 
		By \eqref{eqn:L1normMathbbG}, we have 
		\begin{equation}\label{eqn:L1norm_invariance}\begin{aligned}
			\norm{\alpha_\lambda(f)}_{L^1(\mathbb{G},\omega)}=\norm{f}_{L^1(\mathbb{G},\omega)},\quad \forall \lambda\in (\Rpt)^\nu, \ f\in \cbbG.		
		\end{aligned}\end{equation}
		
		\begin{rem}
			An amusing consequence of \eqref{eqn:L1normMathbbG} is that if $f\in \cbbG$, then the RHS of \eqref{eqn:L1normMathbbG} is finite.
			This is not obvious at all at first sight. We warn the reader though that if $f\in C^\infty_c(\mathbb{G})$ is not density-valued, then 
				$\sup\left\{\int_M |f|(y,x,t)\vol_{y}:(x,t)\in  M\times \R_+^\nu\setminus \{0\}\right\}$
				is usually infinite. Finiteness of the RHS of \eqref{eqn:L1normMathbbG} is an interplay between the topologies on $\mathbb{G}$ and on $A(\mathbb{G})$.
		\end{rem}

		\linksec{Invariant smooth functions}{smooth_functions_vanishing_Fourier_Transform}
		\paragraph{Summary:}
			In \Cref{chap:chapter_bi-garded_pseudo_diff}, we will define $\nu$-graded pseudo-differential operators as integrals. 
			To study such integrals, it is necessary that if $g\in \cbbG$, then the maps
				\begin{equation}\label{eqn:qsdkfjmsdjfkqmsdjlfjqskmdljf}\begin{aligned}
					(\Rpt)^\nu\to \cbbG,\quad \lambda\mapsto g\ast\alpha_{\lambda}(f)	\quad\text{and }\quad 	\lambda\mapsto \alpha_{\lambda}(f)\ast g
				\end{aligned}\end{equation}
			extend to $\R_+^\nu$. The extension in general doesn't exist.
			In this section, we define a subalgebra of the convolution algebra $\cbbG$ which we call the space of invariant functions.
			The main property of interest is that for such functions \eqref{eqn:qsdkfjmsdjfkqmsdjlfjqskmdljf} extend smoothly to $\R_+^\nu$.
			
		\paragraph{Invariant functions:}
		Let $(V,\natural)$ be a $\nu$-graded basis. 
			By \eqref{eqn:integration_denstiy_iso_exponential} with $k=l=\frac{1}{2}$ and \eqref{eqn:integration_along_fibers}, we have an integration along the fibers map
			\begin{equation*}\begin{aligned}
					\cQ_{V*}:C^\infty_c(\dom(\cQ_V),\Omega^1(V))\to \cbbG.
				\end{aligned}\end{equation*}
			Since a submersion is open, by \Crefitem{thm:topology_mathbbG}{submersion}, $\cQ_V$ is open.
			So, $\im(\cQ_V)$ is an open neighborhood of $\cGF\times \{0\}$. Since $\mathbb{G}=\im(\cQ_V)\cup (M\times M\times \R_+^\nu\setminus \{0\})$, by \Cref{prop:parition_of_unity}, for any $f\in \cbbG$,  there exists $g\in C^\infty_c(\dom(\cQ_V),\Omega^1(V))$
				such that 
				$f-\cQ_{V*}(g)\in C^\infty_c(M\times M\times \R_+^\nu\setminus \{0\},\omegahalf)$.
				
			\begin{dfn}\label{dfn:space_of_invariant_functions}
				The space of invariant\footnote{Invariant functions where essentially defined by Androulidakis and Skandalis in \cite{AS1}. We remark that the map $\tilde{f}\mapsto \cQ_{V*}(\tilde{f}\circ \pi_{V\times \mathbb{G}^{(0)}})$ is essentially an $X$-ray transform.
				Such transforms in the case of Lie groups have been extensively studied, see for example \cite{HelgasonBookGeometricAnalysis}} functions $\cinv$ is the space of $f\in \cbbG$ 
				 such that 
					\begin{equation}\label{eqn:sum_smooth_decompo_invariatn_function}\begin{aligned}
						f-\cQ_{V*}(\tilde{f}\circ \pi_{V\times\mathbb{G}^{(0)}})\in C^\infty_c(M\times M\times \R_+^\nu\setminus \{0\},\omegahalf), \text{ for some }\tilde{f}\in C^\infty_c(\tdom(\cQ_V),\Omega^1(V)).
					\end{aligned}\end{equation}
				We equip $\cinv$ with the quotient topology from the surjective map 
				\begin{equation}\label{eqn:quotient_map_topology_LF_Space}\begin{aligned}
					q_V:C^\infty_c(\tdom(\cQ_V),\Omega^1(V))\times C^\infty_c(M\times M\times \R_+^\nu\setminus \{0\},\Omega^\frac{1}{2})\to \cinv,\\ 
					 (\tilde{f},\dbtilde{f})\mapsto \cQ_{V*}(\tilde{f}\circ \pi_{V\times \mathbb{G}^{(0)}})+\dbtilde{f}.
				\end{aligned}\end{equation}
			\end{dfn}
			\begin{linkthm}{algebra_structure_invariant_functions}
				\begin{enumerate}
					\item\label{thm:algebra_structure_invariant_functions:invariance}    The space $\cinv$ and its topology don't depend on the choice of a $\nu$-graded basis $(V,\natural)$.
					\item\label{thm:algebra_structure_invariant_functions:space} 		The space $\cinv$ is an LF-space, and the inclusion $\cinv\hookrightarrow \cbbG$ is continuous.
					\item\label{thm:algebra_structure_invariant_functions:algebra}   The space $\cinv$ is a $*$-subalgebra of $\cbbG$. 
					\item\label{thm:algebra_structure_invariant_functions:dilation} If $\lambda\in (\Rpt)^\nu$ and $f\in \cinv$, then $\alpha_{\lambda}(f)\in  \cinv$.
				\end{enumerate}
			\end{linkthm}
			\begin{dfn}
				We denote by $C^{-\infty}_{r,s,\mathrm{inv}}(\mathbb{G},\omegahalf)$ the space of all $u\in C^{-\infty}_{r,s}(\mathbb{G},\omegahalf)$ such that 
			\begin{equation}\label{eqn:cinvdis_dfn}\begin{aligned}
				u\ast \cinv\subseteq \cinv,\quad \cinv\ast u\subseteq \cinv.
			\end{aligned}\end{equation}
			\end{dfn}
			The space $\cinvdis$ is a $*$-subalgebra of $C^{-\infty}_{r,s,\mathrm{inv}}(\mathbb{G},\omegahalf)$. Furthermore, if $\lambda\in (\Rpt)^\nu$ and $u\in \cinvdis$, then $\alpha_\lambda(u)\in \cinvdis$.
			\begin{rem}\label{rem:continuity_dis}
				If $u\in\cinvdis$, then $u\ast \cdot:\cinv\to\cinv$ is continuous by the closed graph theorem and \Cref{prop:automatic_continuity_distributions_quasi_Lie_groupoid}, and \Crefitem{thm:algebra_structure_invariant_functions}{space}.
			\end{rem}

			\begin{linkthm}{compatability_cinvdiv_cinv}
				\begin{enumerate}
					\item\label{thm:compatability_cinvdiv_cinv:inclusion_smooth_cinvdiv} If $u\in C^{\infty}_{}(M\times \R_+^\nu)$, then $\delta_{u\circ \pi_{\mathbb{G}^{(0)}}}\in \cinvdis$.
					\item\label{thm:compatability_cinvdiv_cinv:inclusion_vectorfield_cinvdiv} If $k\in \Z_+^\nu$, $X\in \cF^{k}$, then $\theta_{k}(X)\in \cinvdis$.
				\end{enumerate}
			\end{linkthm}
			In \Cref{thm:compatability_cinvdiv_cinv}, we are using the notation of \Cref{ex:distributions_quasi_Lie_groupods}.
			\begin{linkthm}{equivariance_Cn_norm}
				For any $f\in \cinv$, $g\in \cbbG$, the functions 
					\begin{equation*}\begin{aligned}
			(\Rpt)^\nu\to \cbbG,\quad \lambda\mapsto \alpha_{\lambda}(f)\ast g\quad\text{and}\quad \lambda\mapsto g\ast \alpha_{\lambda}(f)
					\end{aligned}\end{equation*}
			extend to smooth functions $\R_+^\nu\to \cbbG$ whose values at $\lambda\in \R_+^\nu$ are denoted by 
				 $\alpha_{\lambda}(f)\ast g$ and $g\ast \alpha_{\lambda}(f)$.
			Furthermore, if $g\in \cinv$, then the extensions are smooth functions $\R_+^\nu\to \cinv$.
			So, $\alpha_{\lambda}(f)$ defines an element of $\cinvdis$ for any $\lambda\in \R_+^\nu$.
			Finally,
			\begin{equation}\label{eqn:value_of_alpha_00}\begin{aligned}
				\alpha_{0}(f)= 		\delta_{u\circ\pi_{\mathbb{G}^{(0)}}},\quad u(x,t)=\int_V \tilde{f}(v,x,0)\in C^\infty(M\times \R_+^\nu),
			\end{aligned}\end{equation}
			where $\tilde{f}$ is any function as in \eqref{eqn:sum_smooth_decompo_invariatn_function}.
			\end{linkthm}
\linksec{Multi-grading on the algebra of differential operators}{bi-grading_algebra_diff}
		\paragraph{Notation and convention:}
			The space $C^\infty(M,\omegahalf)$ denotes $C^\infty(M,\Omega^\frac{1}{2}(TM))$, and $C^{-\infty}(M,\omegahalf)$ denotes the topological dual of $C^\infty_c(M,\omegahalf)$.
			This convention will be used throughout the article.
		\paragraph{Grading on differential operators:}
		Let $\DO(M)$ be the algebra of finite order differential operators $D:C^\infty(M,\omegahalf)\to C^\infty(M,\omegahalf)$.
		By finite order, we mean that there exists $n\in \N$ such that for any $f\in C^\infty(M,\Omega^{\frac{1}{2}})$ and $x\in M$, $D(f)(x)$
		only depends on the partial derivatives of $f$ at $x$ of order $\leq n$.
		If $f\in C^\infty(M)$, then the differential operator $g\in C^\infty(M,\omegahalf)\mapsto fg\in C^\infty(M,\omegahalf)$ will be denoted by $\delta_{f}$ to be consistent with the notation introduced in \Crefitem{ex:distributions_quasi_Lie_groupods}{dirac}.
		For any $k\in \Z_+^\nu$, we denote by $\DO^{k}(M)$ the $\C$-linear span of monomials $X_1\cdots X_n$ such that $X_i\in \cF^{a(i)}$ with $a(1)+\cdots+a(n)\preceq k$.
		A monomial of length $0$ is by convention $\delta_f$ where $f\in C^\infty(M)$, i.e., $ \DO^{0}(M)=C^\infty(M)$.
		By our assumption of finite order and \eqref{eqn:finit_N_condition},
			\begin{equation*}\begin{aligned}
				\DO(M)=\bigcup_{k\in \Z_+^\nu}\DO^{k}(M).
			\end{aligned}\end{equation*}
		Let
			\begin{equation*}\begin{aligned}
				 &\DO_c(M)=C_c^\infty(M)\DO(M),\quad  &&\DO^{\prec k}(M):=\sum_{l\prec k}\DO^{l}(M)\\
				&\DO^{k}_c(M)=C^\infty_c(M)\DO^{k}(M),\quad &&\DO^{\prec k}_c(M)=C^\infty_c(M)\DO^{\prec k}(M).
			\end{aligned}\end{equation*}
		The proof of the following proposition is straightforward and left to the reader.
		\begin{prop}\label{thm:properties_bi_grading_diff_op}
			\begin{enumerate}
				\item\label{thm:properties_bi_grading_diff_op:1} If $k\preceq l$, then $\DO^{k}(M)\subseteq \DO^{l}(M)$.
				\item\label{thm:properties_bi_grading_diff_op:2} For any $k,l\in \Z_+^\nu$, $\DO^{k}(M)\DO^{l}(M)\subseteq \DO^{k+l}(M)$.
				\item\label{thm:properties_bi_grading_diff_op:3} If $D\in \DO^{k}(M)$ and $f\in C^\infty(M)$, then $[D,\delta_f]\in \DO^{\prec k}(M)$. 
				\item\label{thm:properties_bi_grading_diff_op:4} If $D\in \DO^{k}(M)$, then its formal adjoint $D^*$ and formal transpose $D^t$ belong to $\DO^{k}(M)$.
				\item\label{thm:properties_bi_grading_diff_op:5} For any $k\in \Z_+^\nu$, $\DO^{k}(M)$ is a finitely generated $C^\infty(M)$-module.
			\end{enumerate}
		\end{prop}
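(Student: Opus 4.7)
The plan is to address the five claims in order, with Parts (1)--(4) being essentially formal and Part (5) containing the only substantive argument. For Part (1), if $k\preceq l$ then every weight sum $a(1)+\cdots+a(n)\preceq k$ also satisfies $\preceq l$ by transitivity of $\preceq$, so the spanning monomials of $\DO^{k}(M)$ sit inside those of $\DO^{l}(M)$. Part (2) is immediate by concatenation: $\sum a(i)\preceq k$ and $\sum b(j)\preceq l$ give $\sum a(i)+\sum b(j)\preceq k+l$, so the product of two spanning monomials is again a spanning monomial.

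For Part (3), since $D\mapsto[D,\delta_{f}]$ is a derivation on $\DO(M)$ and $[\delta_{g},\delta_{f}]=0$, it suffices to compute on generators $X\in\cF^{j}$ with $j\neq 0$ (here I use $\cF^{0}=0$), where $[X,\delta_{f}]=\delta_{X(f)}\in \DO^{0}(M)\subseteq \DO^{\prec j}(M)$ because $0\prec j$. A standard induction on length then expands $[X_{1}\cdots X_{n},\delta_{f}]$ into a sum of terms each of which replaces exactly one $X_{i}$ by a function factor, strictly lowering the total weight in $\preceq$. For Part (4), the formal adjoint and formal transpose are antihomomorphisms of $\DO(M)$, with $\delta_{f}^{*}=\delta_{\overline{f}}$ and $X^{*}=-X$ for $X\in \cF^{j}$ acting by Lie derivative on half-densities (the usual divergence term disappears because $\Omega^{\frac{1}{2}}$-valued integration by parts is symmetric). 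Reversing a monomial preserves the multiset of weights, so the total weight is unchanged; the transpose is handled identically.

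The real work is Part (5). Since $\cF^{0}=0$, every admissible weight $a(i)$ lies in $\Z_{+}^{\nu}\setminus\{0\}$ and hence has some strictly positive coordinate, which forces $n\leq k_{1}+\cdots+k_{\nu}$; only finitely many weight sequences contribute. I would fix, for each $l\preceq k$ with $l\neq 0$, a finite family generating $\cF^{l}$ as a $C^{\infty}(M,\R)$-module, and let $\mathcal{M}$ be the (finite) set of all admissible monomials in these generators. Given a spanning monomial $X_{1}\cdots X_{n}\in\DO^{k}(M)$, expanding $X_{i}=\sum_{j}f_{ij}Y_{ij}$ produces a sum of products of the form $f_{1,j_{1}}Y_{1,j_{1}}\cdots f_{n,j_{n}}Y_{n,j_{n}}$ with coefficients interspersed between the $Y$'s. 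Invoking Part (3) repeatedly I would push each such coefficient to the left one step at a time, with each commutation introducing an error in $\DO^{\prec k}(M)$; a downward induction on $k$ with respect to $\preceq$, whose base case is $\DO^{0}(M)=C^{\infty}(M)$, then writes every element of $\DO^{k}(M)$ as a $C^{\infty}(M)$-linear combination of elements of $\mathcal{M}$.

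The main obstacle I anticipate is the bookkeeping in Part (5): one must verify that this "move-coefficients-left" induction terminates in finitely many steps and really yields a $C^{\infty}(M)$-combination of elements of $\mathcal{M}$. This is guaranteed precisely because Part (3) provides a strict decrease in the partial order $\preceq$ at each commutation and the set $\{l\in\Z_{+}^{\nu}:l\prec k\}$ is finite, so the downward induction has only finitely many stages.
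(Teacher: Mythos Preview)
Your proposal is correct, and since the paper explicitly says the proof is ``straightforward and left to the reader,'' there is no paper proof to compare against; your argument is precisely the kind of routine verification the author has in mind, with Part~(5) handled by the natural finite-generation-plus-commutator induction you describe.
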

		In \Crefitem{thm:properties_bi_grading_diff_op}{4}, the formal adjoint and formal transpose are canonically defined because we are working with half-densities.
		So, $X^t=X^*=-X$ for any vector field $X\in \cX(M)$.
	
	\paragraph{Differential operators as distributions on $\mathbb{G}$:}
		Let $k\in \Z_+^\nu$. 
		By \Crefitem{thm:compatability_cinvdiv_cinv}{inclusion_vectorfield_cinvdiv}, we have a linear map 
				$\theta_{k}:\cF^{k}\to \cinvdis$.
		We extend $\theta_k$ to a linear map 
			\begin{equation*}\begin{aligned}
				\theta_{k}:\DO^{k}(M)\to 	\cinvdis		
			\end{aligned}\end{equation*}
		For monomials of length $0$, i.e., $\delta_f$ where $f\in C^\infty(M)$, we set
		\begin{equation*}\begin{aligned}
			\theta_{k}(\delta_f):=\delta_{g\circ \pi_{\mathbb{G}^{(0)}}}\in \cinvdis, \quad g(x,t):=t^kf(x)\in C^\infty(M\times \R_+^\nu).
		\end{aligned}\end{equation*}
		Here, we use \Crefitem{thm:compatability_cinvdiv_cinv}{inclusion_smooth_cinvdiv}.
		For monomials of non-zero length, we use the formula
			\begin{equation*}\begin{aligned}
				\theta_{k}(X_1\cdots X_n)=\theta_{k-\sum_i a(i)}(\delta_1)\ast \theta_{a(1)}(X_1)\ast \cdots \ast \theta_{a(n)}(X_n),
			\end{aligned}\end{equation*}
		We define $\theta_k$ for general differential operators using linearity by writing $D$ as sum of monomials.
		\begin{linkthm}{inclusion_diff_op}
				For any $k\in \Z_+^\nu$, the map $\theta_{k}$ is well-defined, i.e., $\theta_{k}(D)$ doesn't depend on the presentation of $D $ as a sum of monomials. 
				Furthermore,
					\begin{equation}\label{eqn:theta_product}\begin{aligned}
						\theta_{k_1+k_2}(D_1D_2)=\theta_{k_1}(D_1)\ast \theta_{k_2}(D_2).
					\end{aligned}\end{equation}
		\end{linkthm}
		It is useful to remark that 
			\begin{equation}\label{eqn:homogenity_theta}\begin{aligned}
					\alpha_\lambda(\theta_{k}(D))=\lambda^{k}\theta_{k}(D),\quad \theta_k(D)^*=\theta_k(D^*),\quad \forall D\in \DO^{k}(M).		
			\end{aligned}\end{equation}
		\MyCref{thm:inclusion_diff_op} is far from trivial. When restricted to $\cG$, $\theta_k(D)$ is in some sense the principal symbol of $D$.
		One can define this restriction without talking at all about the groupoid $\mathbb{G}$. 
		Independence of this symbol on the presentation of $D$ as a sum of monomial is on the other hand not at all clear unless one has access to the groupoid $\mathbb{G}$ in which case the independence is trivial to prove!
\paragraph{Invariant functions whose Fourier transform vanish at the origin:}
		In the study of the principal symbol of pseudo-differential operators, one often needs to consider the space of compactly supported smooth functions whose Fourier transform vanishes at the origin to a certain order.
		We now define an analogue of such spaces for our differential space $\mathbb{G}$.
		\begin{dfn}\label{dfn:Schwartz_functions_vanish}
			For any $k\in \Z_+^\nu$, let 
			\begin{equation}\label{eqn:dfn_algebra_vanishing_Fourier_invariant}\begin{aligned}
				\cinvf{k}=\left\{\sum_{i=1}^n\theta_{k_i}(D_i)\ast f_i:k_i\in \Z_+^\nu,k\preceq k_i, D_i\in \DO^{k_i}(M),f_i\in \cinv\right\}
			\end{aligned}\end{equation}
		\end{dfn}
		Since $\theta_0(\delta_1)=\delta_{1}$, $\cinvf{0}=\cinv$.
		Notice that by \eqref{eqn:theta_product}, if $k,l\in \Z_+^\nu$, then 
			\begin{equation}\label{eqn:theta_product_cinfk}\begin{aligned}
				&\theta_k(\DO^{k}(M))\ast \cinvf{l}\subseteq \cinvf{k+l}, &&\\
				&\cinvf{l}\subseteq \cinvf{k}, &&\text{if } k\preceq l.
			\end{aligned}\end{equation}
		
	\begin{linkthm}{invariant_vanish_algebra}
		\begin{enumerate}
			\item\label{thm:invariant_vanish_algebra:inclusion} For any $k\in \Z_+^\nu$, $C^\infty_c(M\times M\times (\Rpt)^\nu,\omegahalf)\subseteq \cinvf{k}$.
			\item\label{thm:invariant_vanish_algebra:algebra}     For any $k,l\in \Z_+^\nu$, we have 
			\begin{equation*}\begin{aligned}
				\cinvf{k}\ast \cinvf{l}\subseteq \cinvf{k+l},\quad \cinvf{k}^*=\cinvf{k}.
			\end{aligned}\end{equation*}
			\item\label{thm:invariant_vanish_algebra:alternate} If $k\in \Z_+^\nu$, then 
			\begin{equation}\label{eqn:invariant_vanish_algebra:alternate}\begin{aligned}
				\cinvf{k}=\left\{\sum_{i=1}^n f_i\ast \theta_{k_i}(D_i):k_i\in \Z_+^\nu,k\preceq k_i, D_i\in \DO^{k_i}(M),f_i\in \cinv\right\}
			\end{aligned}\end{equation}	
		\end{enumerate}
		\end{linkthm}
		\begin{rem}\label{rem:singular_support}
			If $\nu\geq 2$, then functions in $\cinvf{k}$ satisfy some vanishing conditions away from the fiber at $t=0$.
			For example, if $k_1\neq 0$ and $f\in \cinvf{k}$, then $f(y,x,(0,t))=0$ for all  $t\in \R^{\nu-1}_{+}\backslash 0$ and $y,x\in M$.
			In particular, $C^\infty_c(M\times M\times \R^\nu\backslash 0,\omegahalf)\nsubseteq \cinvf{k}$.
		\end{rem}
		
			The following proposition will be used in \Cref{chap:chapter_bi-garded_pseudo_diff}.
	
	\begin{linkprop}{derivative_vanish_at_0}
			For any $k\in \Z_+^\nu$, $f\in \cinvf{k}$, $i\in \bb{1,\nu}$,
			\begin{equation*}\begin{aligned}
				\odv*{\alpha_{(1,\cdots,1,\lambda_i,1,\cdots,1)}(f)}{\lambda_{i}}_{\lambda_i=1}\in \cinvf{(k_1,\cdots,k_{i-1},\max(1,k_{i}),k_{i+1},\cdots,k_\nu)}.
			\end{aligned}\end{equation*}
	\end{linkprop}
\linksec{Weakly commutative sub-Riemannian structure}{weakly_comm_struct}
		
		The following condition was introduced by Street \cite{StreetBook2}.
		\begin{dfn}\label{dfn:weakly_commuting}
			A $\nu$-graded sub-Riemannian structure $\cF^{\bullet}$ is called weakly commutative if
			\begin{equation}\label{eqn:weakly_commuting}\begin{aligned}
					\cF^{k}=\cF^{(k_1,0,\cdots,0)}+\cdots+\cF^{(0,\cdots,0,k_{\nu})},\quad \forall k\in \Z_+^\nu
				\end{aligned}\end{equation}
		\end{dfn}
		We suppose for the rest of this section that $\cF$ is weakly commutative.
		If $V=\oplus_{k\in \Z_+^\nu}V^k$ is a $\nu$-graded vector space, then we denote by 
			\begin{equation*}\begin{aligned}
				 V^{\weak{i}}:=\bigoplus_{j\in \Z_+} V^{(\overbrace{0,\cdots,0}^{i-1},j,\overbrace{0\cdots,0}^{\nu-i})},\quad i\in \bb{1,\nu}.		
			\end{aligned}\end{equation*}
		By \eqref{eqn:weakly_commuting},
		\begin{equation}\label{eqn:weakly_commuting_lie_algebra}\begin{aligned}
				\grF{x}=\bigoplus_{i=1}^{\nu}\grF{x}^{\weak{i}},\quad \forall x\in M.
			\end{aligned}\end{equation}
		Furthermore, $[\grF{x}^{\weak{i}},\grF{x}^{\weak{j}}]=0$ for all $i,j\in \bb{1,\nu}$ with $i\neq j$.
		\begin{exs}\label{ex:commuting}
			\begin{enumerate}
				\item Any $1$-graded sub-Riemannian structure is weakly commutative. Similarly, any $2$-graded sub-Riemannian structure of depth $(1,N)$ is weakly commutative. 
				\item Let $X_1,\cdots,X_{d}$ and $Y_1,\cdots,Y_{d'}$ be two families of vector fields such that each family satisfies Hörmander's condition of depth $N\in \N$ and $N'\in \N$ respectively, $\cF^{\bullet}$
						the $2$-graded sub-Riemannian structure of depth $(N,N')$ generated by the two families as in \Crefitem{exs:exs_weighted}{Weighted_bigraded}. 
						Then, $\cF^{\bullet}$ is weakly commutative if and only if 
							\begin{equation}\label{eqn:weak_commut}\begin{aligned}
								[X_k,Y_l]=\sum_{i=1}^d f_i X_i+\sum_{j=1}^{d'} g_j Y_j,\quad \forall k\in \bb{1,d},l\in\bb{1,d'},		
							\end{aligned}\end{equation}
						for some $f_i,g_j\in C^\infty(M,\R)$.
						This is automatic for example if one of the two families spans the tangent space without need for commutators.
						Another example is $M=\R^2$ and the families are $\{\partial_x,x^k\partial_y\}$ and $\{\partial_y,y^{l}\partial_x\}$.
			\end{enumerate}
		\end{exs}
		The following theorem is false without the weak-commutativity condition.
		It is needed to ensure that our pseudo-differential calculus defined in \Cref{chap:chapter_bi-garded_pseudo_diff} is closed under composition.
		\begin{linkthm}{estimate_weakly_commut_convolution}
			Let $\cF^{\bullet}$ be a weakly commutative $\nu$-graded sub-Riemannian structure, $f,g\in \cinv$.
			For any $\lambda,\mu\in \R_+^\nu$, if $\lambda+\mu\in (\Rpt)^\nu$, then
					$\alpha_{\lambda}(f)\ast \alpha_{\mu}(g)\in \cinv$.
			 Furthermore, the following function is smooth:
				\begin{equation}\label{eqn:lambda_mu_map_azs}\begin{aligned}
					\{(\lambda,\mu)\in \R_+^{2\nu}:\lambda+\nu\in (\Rpt)^\nu\} \to \cinv,\quad (\lambda,\mu)\mapsto \alpha_{\lambda}(f)\ast \alpha_{\mu}(g).	
				\end{aligned}\end{equation}
		\end{linkthm}
		Weak-commutativity also allows one to build smooth functions on $\mathbb{G}$ from smaller groupoids as follows:
		Let $\tilde{\nu},\dbtilde{\nu}\in \N$ such that $\tilde{\nu}+\dbtilde{\nu}=\nu$.
		We define a $\tilde{\nu}$-graded $\tilde{\cF}^\bullet$ and a $\dbtilde{\nu}$-graded $\dbtilde{\cF}^\bullet$ sub-Riemannian structures on $M$ by the formulae
			\begin{equation}\label{eqn:extension_bigradedStructure}\begin{aligned}
				\tilde{\cF}^{k}:=\cF^{(k,0,\cdots,0)},\quad \dbtilde{\cF}^{l}:=\cF^{(0,\cdots,0,l)},\quad \forall k\in \Z_+^{\tilde{\nu}},l\in \Z_+^{\dbtilde{\nu}}.		
			\end{aligned}\end{equation}
		So, by \eqref{eqn:weakly_commuting}, $\cF$ and $\tilde{\cF}$ and $\dbtilde{\cF}$ are connected by the equation 
			\begin{equation}\label{eqn:cF_tilde_dbtilde}\begin{aligned}
				\cF^{(k,l)}=\tilde{\cF}^k+\dbtilde{\cF}^l,\quad \forall k\in \Z_+^{\tilde{\nu}},l\in \Z_+^{\dbtilde{\nu}}.
			\end{aligned}\end{equation}
		Let $\tilde{\mathbb{G}}\rightrightarrows \tilde{\mathbb{G}}^{(0)}$ and $\dbtilde{\mathbb{G}}\rightrightarrows \dbtilde{\mathbb{G}}^{(0)}$ be the associated quasi-Lie groupoids.
		\begin{linkthm}{extension_parameters}
			There exists a natural $\C$-bilinear map (thought of as a convolution product)
			\begin{equation*}\begin{aligned}
				\Phi:C^{\infty}_{c,\mathrm{inv}}(\tilde{\mathbb{G}},\omegahalf)\times C^{\infty}_{c,\mathrm{inv}}(\dbtilde{\mathbb{G}},\omegahalf) \to \cinv
			\end{aligned}\end{equation*}
			which is uniquely determined by the identity  
			\begin{equation}\label{eqn:Convolution_extension_parameters}\begin{aligned}
				\Phi(f,g)(z,x,t,s):=
					\int_{M}f(z,y,t)g(y,x,s),\quad  \forall(z,x,t,s)\in M\times M\times (\R_+^{\tilde{\nu}}\backslash 0 )\times (\R_+^{\dbtilde{\nu}}\backslash 0),
			\end{aligned}\end{equation}
			where $f\in C^{\infty}_{c,\mathrm{inv}}(\tilde{\mathbb{G}},\omegahalf)$ and $g\in C^\infty_{c,\mathrm{inv}}(\dbtilde{\mathbb{G}},\omegahalf)$.
			Furthermore, 
				\begin{equation}\label{eqn:homogenity_extension}\begin{aligned}
					\alpha_{(\lambda,\mu)}(\Phi(f,g))=\Phi(\alpha_{\lambda}(f),\alpha_{\mu}(g)),\quad \Phi(C^{\infty}_{c,\mathrm{inv},k}(\tilde{\mathbb{G}},\omegahalf),C^{\infty}_{c,\mathrm{inv},l}(\dbtilde{\mathbb{G}},\omegahalf))\subseteq \cinvf{(k,l)},	
				\end{aligned}\end{equation}
				where  $\lambda\in (\Rpt)^{\tilde{\nu}},\mu\in (\Rpt)^{\dbtilde{\nu}}$, $f\in C^{\infty}_{c,\mathrm{inv}}(\tilde{\mathbb{G}},\omegahalf),g\in C^\infty_{c,\mathrm{inv}}(\dbtilde{\mathbb{G}},\omegahalf)$, $k\in \Z_+^{\tilde{\nu}},l\in \Z_+^{\dbtilde{\nu}}$.
		\end{linkthm}
			By Symmetry, we can also define a map (still denoted by $\Phi$) 
				\begin{equation*}\begin{aligned}
				\Phi:C^{\infty}_{c,\mathrm{inv}}(\dbtilde{\mathbb{G}},\omegahalf)\times C^{\infty}_{c,\mathrm{inv}}(\tilde{\mathbb{G}},\omegahalf) \to \cinv
			\end{aligned}\end{equation*}
			which is uniquely determined by the identity  
			\begin{equation}\label{eqn:Convolution_extension_parameters_2}\begin{aligned}
				\Phi(g,f)(z,x,t,s):=
					\int_{M}g(z,y,s)f(y,x,t),\quad  \forall(z,x,t,s)\in M\times M\times (\R_+^{\tilde{\nu}}\backslash 0 )\times (\R_+^{\dbtilde{\nu}}\backslash 0),
			\end{aligned}\end{equation}
			where $f\in C^{\infty}_{c,\mathrm{inv}}(\tilde{\mathbb{G}},\omegahalf)$ and $g\in C^\infty_{c,\mathrm{inv}}(\dbtilde{\mathbb{G}},\omegahalf)$, and which satisfies
			similar identities to those in \eqref{eqn:homogenity_extension}.
			The next result says that $\Phi(f,g)$ and $\Phi(g,f)$ are equal up to lower order terms
		
		\begin{linkthm}{commutator_phi}
				If $f\in C^{\infty}_{c,\mathrm{inv}}(\tilde{\mathbb{G}},\omegahalf)$ and $g\in C^\infty_{c,\mathrm{inv}}(\dbtilde{\mathbb{G}},\omegahalf)$, then
				there exists invariant functions $h_1,\cdots,h_{\tilde{\nu}},h'_1,\cdots,h'_{\dbtilde{\nu}}\in \cinv$ such that 
					\begin{equation}\label{eqn:commutator_phi}\begin{aligned}
						\Phi(f,g)-\Phi(g,f)=\sum_{i=1}^{\tilde{\nu}} t_i h_i+\sum_{i=1}^{\dbtilde{\nu} }s_i h_i'
					\end{aligned}\end{equation}
				 
		\end{linkthm}

\chapter{Proofs of the results in Chapter \ref{chap:bi-graded_tangent_groupoid}}\label{chap:bi-graded_tangent_groupoid_proof}
		The following proposition will be used throughout this chapter. 
		Its proof is left to the reader.
		\begin{prop}\label{prop:grass_prop}
			Let $V$ be a vector space, $d\in \bb{0,\dim(V)}$, $\Grass_d(V)$ the Grassmannian manifold of linear subspaces of $V$ of dimension $d$, $(L_n)_{n\in \N}\subseteq \Grass_d(V)$ a sequence, $L\in \Grass_d(V)$.
			The following are equivalent:
			\begin{enumerate}
				\item     The sequence $(L_n)_{n\in \N}$ converges to $L$ in the Grassmannian topology.
				\item     For every sequence $(l_n)_{n\in \N}\subseteq V$ such that $l_n\in L_n$ for all $n\in \N$, if $l_n$ converges to a vector $v\in V$, then $v\in L$.
			\end{enumerate}
			Furthermore, if $L_n\to L$ in the Grassmannian topology, then for any $l\in L$, there exists a sequence $l_n\in L_n$ such that $l_n\to l$.
		\end{prop}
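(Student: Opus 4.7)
}
The plan is to reduce the statement to a fact about orthogonal projections. First I would fix any inner product on the finite dimensional vector space $V$, and for each $W\in \Grass_d(V)$ let $P_W\in \End(V)$ denote the orthogonal projection onto $W$. A standard fact (and in fact one way to define the Grassmannian topology) is that $W\mapsto P_W$ is a homeomorphism onto its image in $\End(V)$, so $L_n\to L$ in $\Grass_d(V)$ if and only if $P_{L_n}\to P_L$ in operator norm.

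For the implication $(1)\Rightarrow(2)$ together with the last claim, I would assume $P_{L_n}\to P_L$. Given $l_n\in L_n$ with $l_n\to v$, the identities $P_{L_n}(l_n)=l_n$ and $\norm{P_{L_n}(l_n)-P_L(l_n)}\le \norm{P_{L_n}-P_L}\cdot \norm{l_n}$ imply $l_n-P_L(l_n)\to 0$, so $v=P_L(v)\in L$. Conversely, given $l\in L$, the sequence $l_n:=P_{L_n}(l)\in L_n$ satisfies $l_n\to P_L(l)=l$, which gives the "furthermore" statement.

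For $(2)\Rightarrow(1)$, I would argue by contradiction: if $L_n\not\to L$, then since $\Grass_d(V)$ is compact, some subsequence $(L_{n_k})$ converges to some $L'\neq L$ in $\Grass_d(V)$. By the already established "furthermore" statement applied to this subsequence, every $l'\in L'$ is the limit of some $l_{n_k}\in L_{n_k}$; extending arbitrarily to a full sequence $l_n\in L_n$ (with $l_n=0$ for $n\notin\{n_k\}$) need not help, so instead I would apply condition $(2)$ directly to the subsequential statement: for each $l'\in L'$ pick $l_{n_k}\in L_{n_k}$ with $l_{n_k}\to l'$, and observe that condition $(2)$ passes to subsequences (since any convergent sequence in $L_{n_k}\subseteq L_n$ can be completed to a convergent sequence in $L_n$ by interpolation using the fact that projections are continuous). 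This forces $l'\in L$, hence $L'\subseteq L$, and since $\dim L'=\dim L=d$ we obtain $L'=L$, a contradiction. The only mildly delicate point is this subsequence argument; the cleanest way to handle it is to note that $(2)$ is equivalent to the closed graph condition $\{(v,W)\in V\times \Grass_d(V):v\in W\}\ni (l_n,L_n)\to (v,L)$, which is automatically preserved under passing to subsequences, so no interpolation is actually needed.
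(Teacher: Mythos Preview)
Your argument for $(1)\Rightarrow(2)$ and for the ``furthermore'' statement via orthogonal projections is correct and standard; the paper leaves this proof to the reader, so there is no approach to compare against.

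For $(2)\Rightarrow(1)$, the step you flag as ``mildly delicate'' is in fact a genuine gap: condition $(2)$ does \emph{not} pass to subsequences, and neither the interpolation nor the closed-graph reformulation you sketch repairs this. A counterexample: take $V=\R^2$, $d=1$, $L=\mathrm{span}(e_1)$, and set $L_{2n}=\mathrm{span}(e_1)$, $L_{2n+1}=\mathrm{span}(e_2)$. Any full sequence $l_n\in L_n$ that converges must have limit $0$ (the even terms force the second coordinate of the limit to vanish, the odd terms force the first), so $(2)$ holds; yet $L_n$ does not converge. Along the odd subsequence one has $l_{2n+1}=e_2\to e_2\notin L$, and there is no way to extend this to a full convergent sequence in the $L_n$ with the same limit. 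Your closed-graph reformulation also fails: it presupposes $(l_n,L_n)\to(v,L)$ in $V\times\Grass_d(V)$, which already assumes $L_n\to L$. Thus $(2)\Rightarrow(1)$ is actually false as literally written; the correct version either replaces $(2)$ by the subsequence condition (every accumulation point of any selection $l_n\in L_n$ lies in $L$) or takes $(2)$ together with the ``furthermore'' as a joint hypothesis. The paper only ever invokes the proposition in the directions you have correctly proved, namely $(1)\Rightarrow(2)$ and the ``furthermore''.
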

		If $(V,\natural)$ and $(V',\natural')$ are $\nu$-graded basis, then we write $(V',\natural')\subseteq (V,\natural)$ if $V'\subseteq V$ and $\natural_{|V'}=\natural'$.
		We call an element $v\in V$ pure if $v\in V^k$ for some $k\in \Z_+^\nu$.  
		
\section{Proofs of the results in Section \ref{sec:cotangent_cone}}\label{sec:cotangent_cone_proof}	
		\begin{linkproof}{conv_kernel_mathbbG0}
			The map $\natural_{x,0}$ preserves the $\nu$-grading on $V$ and $\grF{x}$. Hence, $\ker(\natural_{x,0})$ is a $\nu$-graded subspace of $V$.
			So, it is enough to show that if $v\in V^{k}\cap \ker(\natural_{x,0})$ for some $k\in \Z_+^\nu$, then $v\in L$.
			Since $\natural_{x,0}(v)=0$, $\natural(v)\in \cF^{\prec k}+C^\infty_x(M,\R)\cF^{k}$.
			By the definition of a $\nu$-graded basis, we can write $\natural(v)$ as a sum over some finite index sets $I$ and $J$
			\begin{equation*}\begin{aligned}
					\natural(v)=\sum_{i\in I} f_i\natural(w_i)+\sum_{j\in J}f_j'\natural(w_j')
				\end{aligned}\end{equation*}
			where $w_i\in V^{a(i)}$ for some $a(i)\in \Z_+^\nu$ with $a(i)\prec k$, and $w'_j\in V^{k}$, $f_i,f'_j\in C^\infty(M,\R)$ such that $f'_j(x)=0$.
			It follows that for any $n\in \N$,
			\begin{equation*}\begin{aligned}
					v_n:=v-\sum_{i\in I} f_i(x_n)t_n^{k-a(i)}w_i-\sum_{j\in J}f'_j(x_n)w'_j\in \ker(\natural_{x_n,t_n}).
				\end{aligned}\end{equation*}
			Since $v_n\to v$, by \Cref{prop:grass_prop}, we deduce that $v\in L$.
		\end{linkproof}
		\begin{linkproof}{G0_locally_compact}
			The map $\pi_{\mathbb{G}^{(0)}}$ is obviously continuous.
			It is surjective because $\cGF^{(0)}_x \neq \emptyset$ for every $x\in M$.
			It is proper because $\Grass(V)$ is compact.
			The inclusion $M\times \R_+^\nu\setminus \{0\}\hookrightarrow \mathbb{G}^{(0)}$ is easily seen to be an open smooth embedding.
			Its image is dense by the definition of tangent cones.
			It remains to show \Crefitem{thm:G0_locally_compact}{indep}.
			It is clear that $(V\oplus V',\natural\oplus \natural')$ is a $\nu$-graded basis.
			We will construct closed smooth embeddings
			\begin{equation*}\begin{aligned}
					\Phi:\Grass(V)\times M\times \R_+^\nu\to \Grass(V\oplus V')\times M\times \R_+^\nu, \\
					\Phi':\Grass(V')\times M\times \R_+^\nu\to \Grass(V\oplus V')\times M\times \R_+^\nu
				\end{aligned}\end{equation*}
			such that the diagram
			\begin{equation} \label{diag:smooth_structure_mathbbG0}\begin{tikzcd}
					& \Grass(V\oplus V')\times M\times \R_+^\nu  &  \\
					\Grass(V)\times M\times \R_+^\nu \arrow[ur,"\Phi"]& &\Grass(V')\times M\times \R_+^\nu \arrow[ul,"\Phi'"']   \\
					& \mathbb{G}^{(0)}\arrow[ur,"\Grass(\natural')"']\arrow[ul,"\Grass(\natural)"]\arrow[uu,"\Grass(\natural\oplus \natural')"]&
				\end{tikzcd}\end{equation}
			commutes. The existence of $\Phi$ and $\Phi'$ finishes the proof of \Crefitem{thm:G0_locally_compact}{indep}.

			By symmetry, it is enough to construct $\Phi'$.
			We claim that we can find a smooth map $T:V\times M\times \R_+^\nu\to V'$ which is linear in $V$ such that
			\begin{equation}\label{eqn:qskjfpiojqspdjfpqjsdpf}\begin{aligned}
					\natural'_{x,t}(T(v,x,t))=\natural_{x,t}(v),\quad \forall (v,x,t)\in V\times M\times \R_+^\nu.
				\end{aligned}\end{equation}
			Notice that in \eqref{eqn:qskjfpiojqspdjfpqjsdpf}, $t\in \R_+^\nu$. 
			So, \eqref{eqn:qskjfpiojqspdjfpqjsdpf} is an equality between elements of $T_xM$ if $t\neq 0$ and between elements of $\grF{x}$ if $t=0$.
			It is enough to define $T$ on a linear basis of $V$ and then extend $T$ by linearity.
			We fix a basis of $V$ of pure elements.
			If $v\in V^{k}$ is a basis element, then by the definition of a $\nu$-graded basis, there exists a finite family $(f_i)_{i\in I}\subseteq C^\infty(M,\R)$, $(v_i)_{i\in I}\subseteq V'$ such that $v_i\in V^{\prime a(i)}$ with $a(i)\preceq k$ and
					$\natural(v)=\sum_{i\in I} f_i \natural'(v_i)$.
			We define $T(v,x,t)$ by the formula
			\begin{equation}\label{eqn:map_T_change_of_basis}\begin{aligned}
					T(v,x,t)=        \sum_{i\in I} t^{k-a(i)}f_i(x) v_i,
				\end{aligned}\end{equation}
			We leave it to the reader to check that \eqref{eqn:qskjfpiojqspdjfpqjsdpf} holds.
			We extend $T$ to a map $T:(V\oplus V')\times M\times \R_+^\nu\to V'$ which is linear in $V\oplus V'$ by the formula
					$T(v+v',x,t)=T(v,x,t)+v' $.
			The map $T(\cdot,x,t)$ is surjective for any $x,t$.
			Hence, we can define the map
			\begin{equation*}\begin{aligned}
					\Phi'(L,x,t)=(T(\cdot,x,t)^{-1}(L),x,t)
				\end{aligned}\end{equation*}
			where $T(\cdot,x,t)^{-1}(L):=\{v+v'\in V\oplus V':T(v+v',x,t)\in L\}$.
			Surjectivity of $T(\cdot,x,t)$ ensures that $\Phi'$ is a well-defined closed embedding.
			Commutativity of \eqref{diag:smooth_structure_mathbbG0} follows from the identity
				$\natural_{x,t}'(T(v+v',x,t))=\natural_{x,t}(v)+\natural'_{x,t}(v')$.
		\end{linkproof}
		\begin{linkproof}{ind_top_algberoid}
			We embed the space $\mathbb{G}^{(0)}$ inside $\Grass(V)\times M\times \R_+^\nu$ using $\Grass(\natural)$.
			There is a natural real vector bundle $E\to \Grass(V)\times M\times \R_+^\nu$ whose fiber over $(L,x,t)$ is $V/L$.
			Furthermore, there is an obvious smooth submersion $V\times \Grass(V)\times M\times \R_+^\nu\to E$.
			The restriction of $E$ to $\mathbb{G}^{(0)}$ is $A(\mathbb{G})$ and the obvious projection map from $V\times \mathbb{G}^{(0)}\to A(\mathbb{G}^{(0)})$ gets identified with $A(\natural)$.
			From this we deduce \Crefitem{thm:ind_top_algberoid}{1} and \Crefitem{thm:ind_top_algberoid}{2}.
			\Crefitem{thm:ind_top_algberoid}{3} is proved by an obvious modification of the proof of \Crefitem{thm:G0_locally_compact}{indep}. 
			The details are left to the reader.
					\end{linkproof}
		\begin{linkproof}{smooth_sections_AmathbbG}
			Let $(V,\natural)$ be a $\nu$-graded basis. By the definition of a $\nu$-graded basis, $X=\sum_{i\in I}f_i\natural(v_i)$ where $f_i\in C^\infty(M,\R)$, $v_i\in V^{a(i)}$ with $a(i)\preceq k$.
			So, for any $\gamma\in \mathbb{G}^{(0)}$
				\begin{equation*}\begin{aligned}
					\theta_k(X)(\gamma)=\sum_{i\in I}\big(g_i\circ \pi_{\mathbb{G}^{(0)}}(\gamma)\big)\theta_{a(i)}(v_i)(\gamma)=\sum_{i\in I}\big(g_i\circ \pi_{\mathbb{G}^{(0)}}(\gamma)\big)A(\natural)(v_i,\gamma),
				\end{aligned}\end{equation*}
				where $g_i\in C^\infty(M\times \R_+^\nu)$ are defined by $g_i(x,t)=t^{k-a(i)}f_i(x)$.
			Hence, $\theta_k(X)$ is a smooth section of $A(\mathbb{G}^{(0)})$.
			It is clear from the description of $A(\mathbb{G})$ in the proof of \Cref{thm:ind_top_algberoid} that the family $(\theta_{k}(v))_{k\in \Z_+^\nu,v\in V^k}$ generates the $C^\infty(\mathbb{G}^{(0)},\R)$-module $C^\infty(\mathbb{G}^{(0)},A(\mathbb{G}))$.
		\end{linkproof}

		\paragraph{Multi-graded Lie basis:}
		Throughout this article, we sometimes need to lift the Lie bracket from the osculating Lie algebras $\grF{x}$ to $\nu$-graded basis. For this reason, we introduce the following notion:
		A $\nu$-graded Lie basis is a $\nu$-graded basis $(V,\natural)$ such that $V$ is equipped with a Lie bracket $[\cdot,\cdot]:V\times V\to V$ such that
			$[V^{k},V^{l}]\subseteq 
								V^{k+l}$,
		and 
		\begin{equation}\label{eqn:bi_graded_lie_basis_commutator}\begin{aligned}
			[\natural(v),\natural(w)]-\natural([v,w])\in \cF^{\prec k+l},\quad \forall v\in V^{k},w\in V^{l}.
		\end{aligned}\end{equation}
		Notice that \eqref{eqn:bi_graded_lie_basis_commutator} is trivially true if $k+l\npreceq N$ because by \eqref{eqn:finit_N_condition}, $\cF^{\prec k+l}=\cX(M)$.
		Since  $V$ is finite dimensional, $V^k\neq 0$ for only a finite number of $k\in \Z_+^\nu$.
		Together with $V^0=0$, we deduce that the Lie algebra $V$ is nilpotent
		\begin{lem}\label{lem:bigger_bi_graded_Lie}
			If $(V',\natural')$ is a $\nu$-graded basis, then there exists a $\nu$-graded Lie basis $(V,\natural)$ such that $(V',\natural')\subseteq (V,\natural)$.
		\end{lem}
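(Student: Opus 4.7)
The idea is to enlarge $V'$ by freely adjoining iterated brackets up to total weight $\preceq N$, and to simply truncate all bracket data above that weight because \eqref{eqn:bi_graded_lie_basis_commutator} is vacuous there.

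First, I would let $\mathfrak{g}_0$ denote the free $\nu$-graded Lie algebra generated by $V'$, where the generators inherit the grading of $V'$. By the universal property, the linear map $\natural'$ extends uniquely to a Lie algebra homomorphism $\phi: \mathfrak{g}_0 \to \cX(M)$; an induction on bracket depth using \eqref{eqn:Liebracket_cFi} gives $\phi(\mathfrak{g}_0^k) \subseteq \cF^k$ for all $k$. Each piece $\mathfrak{g}_0^k$ is finite-dimensional, since $V^{\prime 0}=0$ forces any iterated bracket of multi-degree $k$ to have length at most $k_1 + \cdots + k_\nu$, and $V'$ itself is finite-dimensional.

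Next, I would define the graded vector space $V$ by
\begin{equation*}
V^k := \begin{cases} \mathfrak{g}_0^k & \text{if } k \preceq N, \\ V^{\prime k} & \text{if } k \npreceq N, \end{cases}
\end{equation*}
and set $\natural:V\to \cX(M)$ equal to $\phi$ on the first piece and $\natural'$ on the second. The bracket is defined by truncation: for pure elements $v\in V^k$, $w\in V^l$, set $[v,w]_V := [v,w]_{\mathfrak{g}_0}$ when $k, l, k+l$ all satisfy $\preceq N$, and $[v,w]_V := 0$ otherwise. Because the condition $\npreceq N$ is absorbing under addition in $\Z_+^\nu$, this truncation is consistent with the grading. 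A short case analysis on a triple $(v,w,u)$ of pure elements in degrees $(k,l,m)$ verifies Jacobi: if $k+l+m\preceq N$ every bracket involved coincides with the $\mathfrak{g}_0$-bracket and Jacobi is inherited, while if $k+l+m\npreceq N$ every term of the Jacobi identity vanishes individually.

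Finally, I would check the axioms of Definition \ref{dfn:graded_lie_basis} together with \eqref{eqn:bi_graded_lie_basis_commutator}. The inclusion $V'\subseteq V$ and $\natural|_{V'} = \natural'$ are built into the construction. One has $V^0 = \mathfrak{g}_0^0 = 0$ since $V^{\prime 0} = 0$, and $\natural(V^k)\subseteq \cF^k$ by the first paragraph. The generation condition for $\cF^k$ transfers from $(V',\natural')$ to $(V,\natural)$ because $V^{\prime l}\subseteq V^l$ and $\natural$ extends $\natural'$. For \eqref{eqn:bi_graded_lie_basis_commutator}, if $k+l\preceq N$ then by construction $\natural([v,w]_V) = \phi([v,w]_{\mathfrak{g}_0}) = [\phi(v),\phi(w)] = [\natural(v),\natural(w)]$, so the difference is $0\in \cF^{\prec k+l}$; if $k+l\npreceq N$ then $\cF^{\prec k+l}=\cX(M)$ by \eqref{eqn:finit_N_condition} and the condition is automatic. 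The only mild bookkeeping issue, which I see as the main (small) obstacle, is accommodating the possibility that $V^{\prime k}$ is nonzero for some $k\npreceq N$, which is why $V$ is defined piecewise above; once this is handled, everything follows from finite-dimensionality of each $\mathfrak{g}_0^k$ and the asymmetry of the threshold $\preceq N$.
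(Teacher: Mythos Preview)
Your proof is correct and follows essentially the same approach as the paper: take the free Lie algebra on $V'$ (the paper calls it $W$, you call it $\mathfrak{g}_0$), extend $\natural'$ to a Lie algebra map via the universal property, and then truncate. The only difference is where you truncate. You cut at $N$ and then patch in $V'^k$ by hand for $k\npreceq N$, which forces the piecewise definition and the ``mild bookkeeping issue'' you flag. The paper instead picks $L\in\Z_+^\nu$ with $N\preceq L$ and $V'^k=0$ for all $k\npreceq L$ (possible since $V'$ is finite-dimensional), and truncates at $L$; then $V=\bigoplus_{k\preceq L}W^k$ automatically contains $V'$, the bracket is just the quotient bracket by the ideal $\bigoplus_{k\npreceq L}W^k$, and no case split is needed. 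Both routes yield a valid $\nu$-graded Lie basis, but the paper's choice of truncation level sidesteps exactly the bookkeeping you had to do.
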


		\begin{proof}
			Let $S$ be a linear basis of $V'$ consisting of pure elements.
			Let $W$ be the free Lie algebra on $S$ and $\natural_{W}:W\to \cX(M)$ the Lie algebra extension of the function $\natural_{|S}:S\to \cX(M)$.
			We also have a natural inclusion $V'\hookrightarrow  W$.
			The space $W$ inherits a $\nu$-grading from $S$ making it an infinite dimensional $\nu$-graded Lie algebra $W=\oplus_{k\in \Z_+^\nu}W^{k}$.
			By \eqref{eqn:Liebracket_cFi}, $\natural(W^k)\subseteq \cF^k$.
			Since $V'$ is finite dimensional, there exists $L\in \Z_+^\nu$ such that $V'\subseteq \oplus_{k\preceq L}W^{k}$.
			By replacing $L$ with $L+N$, we can assume that $N\preceq L$.
			Let $V=\oplus_{k\in \Z_+^\nu,k\preceq L}W^{k}$.
			We equip $V$ with a Lie bracket by identifying it with $W/\oplus_{k\npreceq L}W^{k}$, and using the fact that $\oplus_{k\npreceq L}W^{k}$ is an ideal of $W$.
			Equivalently, the lie bracket in $V$ of $v\in W^{k}$ and $w\in W^{l}$ is defined by the formula 
				\begin{equation*}\begin{aligned}
					[v,w]_V=\begin{cases}
						[v,w]_W & \text{ if }k+l\preceq L\\
						0 & \text{ otherwise.}
					\end{cases}		
				\end{aligned}\end{equation*}
			The pair $(V,\natural)$ is a $\nu$-graded Lie basis. 
			It is a $\nu$-graded basis because $(V',\natural')\subseteq (V,\natural)$.
			It satisfies a stronger form of \eqref{eqn:bi_graded_lie_basis_commutator}, namely that
				\begin{equation*}
					[\natural(v),\natural(w)]_V=\natural([v,w]_V),\quad \forall v\in V^{k},w\in V^{l}\text{ such that }k+l\preceq L.\qedhere
				\end{equation*}
		\end{proof}

		\begin{rem}\label{rem:natural_x_0_is_a_lie_algebra_homo}
			If $(V,\natural)$ is a $\nu$-graded Lie basis, then for any $x\in M$, by \eqref{eqn:bi_graded_lie_basis_commutator}, the linear map $\natural_{x,0}:V\to \grF{x}$ a Lie algebra and a Lie group homomorphism.
		\end{rem}

		\begin{linkproof}{Lie_algebra}
			Let $(V,\natural)$ be a $\nu$-graded Lie basis.
			We claim that there exists a smooth map $T:V\times V\times M\times \R_+^\nu\to V$ which is bilinear in $V\times V$ such that
			\begin{equation}\label{eqn:T_Lie_Bracket}\begin{aligned}
					&\natural_{x,t}(T(v_1,v_2,x,t))=[\natural(\alpha_{t}(v_1)),\natural(\alpha_{t}(v_2))](x),\quad &&\forall  (v_1,v_2,x,t)\in V\times V\times M\times    \R_+^\nu\setminus \{0\}\\
					&T(v_1,v_2,x,0)=[v_1,v_2],\quad &&\forall (v_1,v_2,x)\in V\times V\times M.
				\end{aligned}\end{equation}
			By bilinearity, it is enough to define $T$ on a basis of $V$. We fix a basis of pure elements.
			Let $v_1\in V^{k_1}$ and $v_2\in V^{k_2}$ be basis elements. By \eqref{eqn:bi_graded_lie_basis_commutator}, there exists $f_i\in C^\infty(M,\R)$ and $w_i\in V^{a(i)}$ such that $a(i)\prec k_1+k_2$ and
			\begin{equation*}\begin{aligned}
					[\natural(v_1),\natural(v_2)]=\natural([v_1,v_2])+\sum_i f_i \natural (w_i)
				\end{aligned}\end{equation*}
			We define
			\begin{equation}\label{eqn:qsdjhlqusdhiqshldloqsdoiuoqsjdojoqsjd}\begin{aligned}
					T(v_1,v_2,x,t)= [v_1,v_2]+     \sum_i f_i(x)t^{k_1+k_2-a(i)} w_i
				\end{aligned}\end{equation}
			It is straightforward to check that $T$ satisfies \eqref{eqn:T_Lie_Bracket}.

			Let $L\subseteq \grF{x}$ be a tangent cone at $x$. 
			So, there exists $(x_n,t_n)_{n\in \N}\subseteq M\times \R_+^\nu\setminus \{0\}$ such that $x_n\to x$, $t_n\to 0$ and
			$\ker(\natural_{x_n,t_n})\to \natural_{x,0}^{-1}(L)$.
			Since $\natural_{x,0}:V\to \mathfrak{g}_x$ is a Lie algebra homomorphism, it suffices to prove that $\natural_{x,0}^{-1}(L)$ is a Lie subalgebra of $V$.
			Let $v,w\in \natural_{x,0}^{-1}(L)$.
			By \Cref{prop:grass_prop}, there exists sequences $v_n, w_n\in  \ker(\natural_{x_n,t_n})$ such that $v_n\to v$ and $w_n\to w$.
			The vector fields $\natural(\alpha_{t_n}(v_n))$ and $\natural(\alpha_{t_n}(w_n))$ vanish at $x_n$.
			So, their Lie bracket vanishes at $x_n$.
			Hence, $T(v_n,w_n,x_n,t_n)\in \ker(\natural_{x_n,t_n})$.
			By continuity of $T$ and \Cref{prop:grass_prop}, $[v,w]=T(v,w,x,0)\in \natural_{x,0}^{-1}(L)$.

			We now  prove \Crefitem{thm:Lie_algebra}{adjoint_action}. 
			Recall that the tangent space $T_L\Grass(V)$ of the Grassmannian manifold $\Grass(V)$ at a subspace $L\subseteq V$ naturally identifies with $\Hom(L,V/L)$ as follows: Let $(L_t)_{t\in ]-\epsilon,\epsilon[}\subseteq \Grass(V)$ be a smooth path of subspaces for some $\epsilon>0$ with $L_0=L$.
			The derivative $\odv{L_t}{t}_{t=0}$ is the linear map $L\to V/L$ defined by 
				\begin{equation}\label{eqn:tangent_space_Grassmannian}\begin{aligned}
					\odv{L_t}{t}_{t=0}(l)=c'(0) \bmod L,
				\end{aligned}\end{equation}
			 where $c:]-\epsilon,\epsilon[\to V$ is any smooth path such that $c(t)\in L_t$ for all $t\in ]-\epsilon,\epsilon[$ and $c(0)=l$.
			Notice that $\odv{L_t}{t}_{t=0}(l)$ doesn't depend on  the choice of $c$ because if $c$ is a smooth path such that $c(t)\in L_t$ for all $t$ and $c(0)=0$, then $c'(0)=\lim_{t\to 0}\frac{c(t)}{t}\in L$ by \Cref{prop:grass_prop}.
			
			Now, let $v\in V$ be fixed. On the manifold $\Grass(V)\times M\times \R_+^\nu$, we define the vector field
				\begin{equation*}\begin{aligned}
					Z(L,x,t)=(w\mapsto T(w,v,x,t)\bmod L,\natural(\alpha_{t}(v))(x),0) \in T_{(L,x,t)}\Grass(V)\times M\times \R_+^\nu.
				\end{aligned}\end{equation*}
			On $\Grass(V)\times M\times \{0\}$, $Z(L,x,0)=(w\mapsto [w,v]\bmod L,0,0)$.
			So,\footnote{Here, the sign convention from \eqref{eqn:BCH} is used.} 
				\begin{equation*}\begin{aligned}
					e^Z\cdot (L,x,0)=(vLv^{-1},x,0).
				\end{aligned}\end{equation*}
			We now see the space $\mathbb{G}^{(0)}$ as a subspace of $\Grass(V)\times M\times \R_+^\nu$ using the map $\Grass(\natural)$ from \eqref{eqn:inclusion_mathbbG0}.
			To prove \Crefitem{thm:Lie_algebra}{adjoint_action}, it suffices to show that the flow of $Z$ leaves invariant the subset $\cGF^{(0)}\times \{0\}$.
			The set $\cGF^{(0)}\times \{0\}$ is the limit set of $M\times \R_+^\nu\setminus \{0\}$.
			Hence, it suffices to show that the flow of $Z$ preserves $M\times \R_+^\nu\setminus \{0\}$.
			In fact $Z$ is tangent to $M\times \R_+^\nu\setminus \{0\}$.
			To see this, we compute the differential of $\Grass(\natural)$ at a point $(x,t)\in M\times \R_+^\nu\setminus \{0\}$.
			We claim that for any $\xi \in T_xM$, $\odif{\Grass(\natural)}_{(x,t)}(\xi,0)=(\phi,\xi,0)$ where $\phi:\ker(\natural_{x,t})\to V/\ker(\natural_{x,t})$ is the unique linear map which satisfies 
			\begin{equation*}\begin{aligned}
				\natural_{x,t}(\phi(w))=[\natural(\alpha_{t}(w)),X](x)	,\quad \forall w\in 	\ker(\natural_{x,t})
			\end{aligned}\end{equation*}
			where $X\in \cX(M)$ is any vector field which satisfies $X(x)=\xi$.
			The claim finishes the proof of \Crefitem{thm:Lie_algebra}{adjoint_action} because by \eqref{eqn:T_Lie_Bracket}, we have
				\begin{equation*}\begin{aligned}
					\odif{\Grass(\natural)}_{(x,t)}(\natural(\alpha_{t}(v))(x),0)= Z(\Grass(\natural)(x,t)).
				\end{aligned}\end{equation*}
			Let us prove our claim.
			We fix $x_0\in M$, $t_0\in \R^\nu_+\backslash 0$, $X\in \cX(M)$. 
			Let $v_1,\cdots,v_{\dim(M)}\in V$ such that $\natural_{x_0,t_0}(v_1),\cdots,\natural_{x_0,t_0}(v_{\dim(M)})$ form a linear basis of $T_{x_0}M$.
			So, the vector fields $\natural(\alpha_{t_0}(v_1)),\cdots,\natural(\alpha_{t_0}(v_{\dim(M)}))$ linearly span the tangent bundle in a neighborhood $U$ of $x_0$.
			For any $w\in V$, $\natural(\alpha_{t_0}(w))=\sum_{i}f_i \natural(\alpha_{t_0}(v_i))$ for some unique smooth functions $f_i\in C^\infty(U)$.
			For any $x\in U$, a generic element of $\ker(\natural_{x,t_0})$ is then given by $w-\sum_{i}f_i(x)v_i$.
			By the identification of the tangent space of Grassmannian manifolds in \eqref{eqn:tangent_space_Grassmannian}, $\odif{\Grass(\natural)}_{(x_0,t_0)}(X(x_0),0)=(\phi,X(x_0),0)$ where $\phi$ is the linear map 
				\begin{equation*}\begin{aligned}
						w-\sum_{i}f_i(x_0)v_i&\mapsto \odv*{\left(w-\sum_{i}f_i(e^{tX}\cdot x_0)v_i\right)}{t}_{t=0}=-\sum_{i}X(f_i)(x_0)v_i \mod \ker(\natural_{x_0,t_0}).
				\end{aligned}\end{equation*}
			We have
				\begin{equation*}\begin{aligned}
					\left[\natural\left(\alpha_{t_0}\left(w-\sum_{i}f_i(x_0)v_i\right)\right),X\right](x_0)&=[\natural(\alpha_{t_0}(w)),X](x_0)-\sum_{i}f_i(x_0)[\natural(\alpha_{t_0}(v_i)),X](x_0)\\
					&=-\sum_{i}X(f_i)(x_0)\natural_{x_0,t_0}(v_i).
				\end{aligned}\end{equation*}
			The claim follows.
		\end{linkproof}

		While proving \Crefitem{thm:Lie_algebra}{adjoint_action}, we also proved the following: 
		\begin{lem}\label{lem:r_smooth}
			Let $(V,\natural)$ be a $\nu$-graded Lie basis. The following function is smooth 
				\begin{equation*}\begin{aligned}
					r\circ \cQ_V:\dom(\cQ_V)&\to \mathbb{G}^{(0)}&\\
					(v,x,t)&\mapsto (e^{\alpha_{t}(v)}\cdot x,t)\\
					(v,L,x,0)&\mapsto (\natural_{x,0}(v)L\natural_{x,0}(v)^{-1},x,0).
				\end{aligned}\end{equation*}
		\end{lem}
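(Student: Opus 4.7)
The plan is to realize $\Grass(\natural)\circ r\circ\cQ_V$ as the time-$1$ flow of a $v$-dependent family of smooth vector fields on the ambient manifold $\Grass(V)\times M\times\R_+^\nu$. Specifically, for each $v\in V$ take the smooth vector field
\[
Z_v(L,x,t)=\bigl(w\mapsto T(w,v,x,t)\bmod L,\ \natural(\alpha_t(v))(x),\ 0\bigr)
\]
from the proof of \Crefitem{thm:Lie_algebra}{adjoint_action}, with $T$ the smooth map of \eqref{eqn:T_Lie_Bracket}. Linearity of $v\mapsto Z_v$ makes $\tilde Z(v,L,x,t)=(0,Z_v(L,x,t))$ into a smooth vector field on $V\times\Grass(V)\times M\times\R_+^\nu$. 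Its time-$1$ flow $\Phi_1$ exists on an open set and is smooth there by standard ODE theory.

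Two facts, already contained in the proof of \Cref{thm:Lie_algebra}, do the work. First, $Z_v$ is tangent to $\Grass(\natural)(\mathbb{G}^{(0)})$: on the open stratum $M\times\R_+^\nu\setminus\{0\}$ the identity
\[
Z_v\bigl(\Grass(\natural)(x,t)\bigr)=\odif{\Grass(\natural)}_{(x,t)}\bigl(\natural(\alpha_t(v))(x),0\bigr)
\]
was verified, and on $\cGF^{(0)}\times\{0\}$ tangency follows by passing to the limit (or, equivalently, from \Crefitem{thm:Lie_algebra}{adjoint_action}). Hence $\Phi_1$ preserves $V\times\mathbb{G}^{(0)}$ wherever it is defined. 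Second, the time-$1$ flow is computed explicitly on each stratum. On the open stratum, $\Phi_1$ restricts via $\Grass(\natural)$ to the time-$1$ flow of the smooth vector field $X_v(x,t)=(\natural(\alpha_t(v))(x),0)$ on $M\times\R_+^\nu\setminus\{0\}$, which is $(x,t)\mapsto(e^{\alpha_t(v)}\cdot x,t)$, defined precisely on the set $U$ of \eqref{eqn:domprime_cQV}; on the stratum $t=0$ the $M\times\R_+^\nu$-component of $Z_v$ vanishes (because $V^0=0$ forces $\alpha_0(v)=0$) and the linear $\Grass(V)$-flow $w\mapsto[w,v]\bmod L$ integrates, under the BCH convention of \eqref{eqn:BCH}, to the map $L'\mapsto vL'v^{-1}$ in $V$. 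Applying $\natural_{x,0}$, and using \Cref{rem:natural_x_0_is_a_lie_algebra_homo} which gives $\natural_{x,0}(vL'v^{-1})=\natural_{x,0}(v)\natural_{x,0}(L')\natural_{x,0}(v)^{-1}$, the induced map on $\mathbb{G}^{(0)}$ is $(L,x,0)\mapsto(\natural_{x,0}(v)L\natural_{x,0}(v)^{-1},x,0)$.

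Combining the two facts, $\Phi_1$ restricted to $V\times\mathbb{G}^{(0)}$ agrees with $(v,\gamma)\mapsto(v,\Grass(\natural)(r\circ\cQ_V(v,\gamma)))$ on $\dom(\cQ_V)$. Since $V\times\mathbb{G}^{(0)}$ carries the sub-differential structure from the ambient manifold and $\Grass(\natural)$ is a closed smooth embedding, smoothness of $\Phi_1$ on an open subset of $V\times\Grass(V)\times M\times\R_+^\nu$ transfers to smoothness of $r\circ\cQ_V$. The main obstacle is purely bookkeeping, namely verifying that the open set on which $\Phi_1$ is defined contains $\dom(\cQ_V)$ under the embedding; this is immediate from \eqref{eqn:domprime_cQV} since on the $t=0$ fiber the $\R_+^\nu$-direction of $\tilde Z$ vanishes (so the flow exists for all time), and on $t\neq 0$ the relevant condition is exactly that the classical flow $e^{\alpha_t(v)}\cdot x$ be defined.
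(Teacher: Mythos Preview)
Your proposal is correct and follows exactly the same approach as the paper's proof: realize $r\circ\cQ_V(v,\cdot)$ as the restriction to $\mathbb{G}^{(0)}$ (via the closed embedding $\Grass(\natural)$) of the time-$1$ flow of the vector field $Z=Z_v$ introduced in the proof of \Crefitem{thm:Lie_algebra}{adjoint_action}, and then use that $Z_v$ depends smoothly (linearly) on $v$. The paper's proof is two sentences to this effect; you have simply unpacked the details (tangency of $Z_v$ to the image of $\Grass(\natural)$, the explicit flow on each stratum via the BCH convention and \Cref{rem:natural_x_0_is_a_lie_algebra_homo}, and the domain bookkeeping).
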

		\begin{proof}
			The map $r\circ \cQ_V$ for a fixed $v$ is the restriction to $\mathbb{G}^{(0)}$ of the flow at time $1$ of $Z$.
			Since $Z$ depends smoothly on $v$, it follows that $r\circ \cQ_V$ is smooth.
		\end{proof}
\section{Key lemmas}\label{sec:prelim_results}
		\paragraph{Summary:}
		We will prove two Lemmas. The first allows us to pass from one $\nu$-graded basis to another. The second lifts composition on $\mathbb{G}$ to $\nu$-graded Lie basis.
		Both lemmas can be seen as refinements of the maps $T$ constructed in the proofs of \Cref{thm:G0_locally_compact} and \Cref{thm:Lie_algebra}.
		The Lemmas are proved in the reverse order.
		\paragraph{Notation and convention:}
		Throughout this Chapter and \Cref{chap:chapter_bi-garded_pseudo_diff_proof}, we will make use of the following auxiliary $(\Rpt)^\nu$-actions.
		Let $\lambda\in (\Rpt)^\nu$. 
		Recall the notation from \eqref{eqn:lambda_mu_product}.
		We define
		\begin{equation}\label{eqn:dilations_VMR2}\begin{aligned}
				&\alpha_\lambda:M\times \R_+^{\nu}\to M\times \R_+^\nu, &&\alpha_\lambda(x,t)=(x,\lambda^{-1}t)\\
				&\alpha_\lambda:V\times M\times \R_+^{\nu}\to V\times M\times \R_+^\nu, &&\alpha_\lambda(v,x,t)=(\alpha_\lambda(v),\alpha_\lambda(x,t))\\
				&\alpha_\lambda:V\times V\times M\times \R_+^{\nu}\to V\times V\times M\times \R_+^\nu, &&\alpha_\lambda(v,w,x,t)=(\alpha_\lambda(v),\alpha_{\lambda}(w),\alpha_\lambda(x,t))\\
							&\alpha_\lambda:\mathbb{G}^{(0)}\to \mathbb{G}^{(0)},	&&\alpha_\lambda(x,t)=(x,\lambda^{-1}t),  \quad \alpha_\lambda(L,x,0)=(\alpha_\lambda(L),x,0)\\
				&\alpha_\lambda:V\times \mathbb{G}^{(0)}\to V\times \mathbb{G}^{(0)}, &&\alpha_\lambda(v,\gamma)=(\alpha_\lambda(v),\alpha_\lambda(\gamma))\\
					&\alpha_\lambda:V\times V\times \mathbb{G}^{(0)}\to V\times V\times \mathbb{G}^{(0)}, &&\alpha_\lambda(v,w,\gamma)=(\alpha_\lambda(v),\alpha_{\lambda}(w),\alpha_\lambda(\gamma))
		\end{aligned}\end{equation}
		To summarize, $\alpha_\lambda$ acts diagonally on all variables. It acts trivially on the variable $M$, on $V$ by the dilation from \eqref{eqn:dilations_basic_V}, on $\Grass(\mathfrak{g}_x)$ using the dilation on $\mathfrak{g}_x$, on $\R_+^\nu$ by $t\mapsto \lambda^{-1}t$.
		\begin{rem}\label{rem:map_alpha_lambda_G0_well_defined}
			The map $\alpha_\lambda:\mathbb{G}^{(0)}\to \mathbb{G}^{(0)}$ is well-defined diffeomorphism. To check that it is well-defined, we need to show that if $L\subseteq \mathfrak{g}_x$ is a tangent cone at $x$, then $\alpha_\lambda(L)$ is also a tangent cone at $x$.
			This is because if $(x_n,t_n)_{n\in \N}\subseteq M\times \R_+^\nu\backslash 0$ is a sequence such that $x_n\to x$, $t_n\to 0$, $\ker(\natural_{x_n,t_n})\to \natural_{x,0}^{-1}(L)$, then
					$\ker(\natural_{x_n,\lambda^{-1}t_n})=\alpha_{\lambda}(\ker(\natural_{x_n,t_n}))\to \alpha_\lambda(L)$.
				The map $\alpha_\lambda:\mathbb{G}^{(0)}\to \mathbb{G}^{(0)}$ is a diffeomorphism because it is the restriction of the diffeomorphism $\Grass(V)\times M\times \R_+^\nu\to \Grass(V)\times M\times \R_+^\nu$ given by $(L,x,t)\mapsto (\alpha_\lambda(L),x,\lambda^{-1}t)$.
		\end{rem}

		We will usually have partially defined maps $f:\dom(f)\subseteq X\to Y$ between spaces $X$ and $Y$ which are equipped with $(\Rpt)^\nu$-actions.
		We will say that $f$ is equivariant if $\dom(f)$ is equivariant and $f(\alpha_\lambda(x))=\alpha_\lambda(f(x))$ for all $x\in \dom(f)$ and $\lambda\in (\Rpt)^\nu$.
		\begin{linklem}{change_of_basis_map}
		Let $(V,\natural)$ and $(V',\natural')$ be $\nu$-graded basis such that $(V',\natural')\subseteq (V,\natural)$. There exists smooth equivariant submersions 
			\begin{equation*}\begin{aligned}
				\tilde{\phi}:\dom(\tilde{\phi})\subseteq V\times M\times \R_+^\nu\to V'\times M\times \R_+^\nu, \quad \phi:\dom(\phi)\subseteq V\times \mathbb{G}^{(0)}\to V'\times \mathbb{G}^{(0)},
			\end{aligned}\end{equation*}
		defined on open neighborhoods of $V\times M\times \{0\}$ and $V\times \cG^{(0)}\times \{0\}$ respectively such that:
		\begin{enumerate}
			\item If $(v,x)\in V'\times M$, then $\tilde{\phi}(v,x,0)=(v,x,0)$.\footnote{In fact, the map we construct satisfies the stronger condition that $\tilde{\phi}(v,x,t)=(v,x,t)$ for all $(v,x,t)\in (V'\times M\times \R_+^\nu)\cap \dom(\tilde{\phi})$. We won't need this.}
				\item One has
					\begin{equation}\label{eqn:ksqldjflqdsjfmqsdjf}\begin{aligned}
						\dom(\tilde{\phi})\subseteq \tdom(\cQ_V),\quad  \im(\tilde{\phi})\subseteq \tdom(\cQ_{V'}), \quad
						\dom(\phi)\subseteq \dom(\cQ_V),\quad  \im(\phi)\subseteq \dom(\cQ_{V'})
					\end{aligned}\end{equation}
				 and the following diagram commutes 
				\begin{equation}\label{eqn:comm_diag_phi_hat}\begin{aligned}
					\begin{tikzcd}
					\dom(\phi)\arrow[dr,"\cQ_{V}"']\arrow[r,"\phi"]& V'\times \mathbb{G}^{(0)}\arrow[d,"\cQ_{V'}"]\\&\mathbb{G}
					\end{tikzcd}.
				\end{aligned}\end{equation}
				\item One has $\dom(\phi)=\pi_{V\times \mathbb{G}^{(0)}}^{-1}(\dom(\tilde{\phi}))$, and the following diagram
				
				\begin{equation}\label{eqn:ksqldjflqdsjfmqsdjf_2}\begin{tikzcd}
					\dom(\phi)\arrow[d,"\pi_{V\times \mathbb{G}^{(0)}}"']\arrow[r, "\phi"]    &   V'\times \mathbb{G}^{(0)} \arrow[d,"\pi_{V'\times \mathbb{G}^{(0)}}"]\\
												\dom(\tilde{\phi})\arrow[r, "\tilde{\phi}"]& V'\times M\times \R_+^\nu
				\end{tikzcd}\end{equation}
				is a pullback diagram.
			\end{enumerate}
		\end{linklem}
		To summarize, the map $\phi$ allows one to pass from $\cQ_{V'}$ to $\cQ_V$. Furthermore, it is the pullback of some submersion $\tilde{\phi}$ which doesn't live on the blow up (and thus we can use standard differential geometry tools like the inverse mapping theorem on $\tilde{\phi}$).

		To state the following lemma, we need a few preliminaries: Let $(V,\natural)$ be a $\nu$-graded Lie basis.
		We wish to lift the product map $\mathbb{G}^{(2)}\to \mathbb{G}$ through $\cQ_V$.
		Notice that 
			\begin{equation*}\begin{aligned}
				&\{(v,\gamma),(w,\eta)\in \dom(\cQ_V)\times \dom(\cQ_V):(\cQ_V(v,\gamma),\cQ_V(w,\eta))\in \mathbb{G}^{(2)}\}\\
				=&\{(v,\gamma),(w,\eta)\in \dom(\cQ_V)\times \dom(\cQ_V):\gamma=r\circ \cQ_V(w,\eta)\}.
			\end{aligned}\end{equation*}
		So, this set can be identified with the following subset of $V\times V\times \mathbb{G}^{(0)}$
			\begin{equation*}\begin{aligned}
		\dom(\cQ_V^2)=\{(v,w,\eta)\in V\times V\times \mathbb{G}^{(0)}:(v,r\circ \cQ_V(w,\eta)),(w,\eta)\in \dom(\cQ_V)\}.	
			\end{aligned}\end{equation*}
		Recall that $\dom(\cQ_V)=\pi_{V\times \mathbb{G}^{(0)}}^{-1}(\tdom(\cQ_V))$. Similarly, if we define
			\begin{equation*}\begin{aligned}
				\tdom(\cQ_V^2)&=\{(v,w,x,t)\in V\times V\times M\times \R_+^\nu:(v,e^{\alpha_{t}(w)}\cdot x,t),(w,x,t)\in \tdom(\cQ_V)\},
			\end{aligned}\end{equation*}
			then $\dom(\cQ_V^2)=\pi_{V\times V\times \mathbb{G}^{(0)}}^{-1}(\tdom(\cQ_V^2))$ where $\pi_{V\times V\times \mathbb{G}^{(0)}}:V\times V\times \mathbb{G}^{(0)}\to V\times V\times M\times \R_+^\nu$ is the natural projection, see \eqref{eqn:proj_with_V_mathbbG}.
		The set $\tdom(\cQ_V^2)$ is open because $\tdom(\cQ_V)$ is open.
		Since  $\pi_{V\times V\times \mathbb{G}^{(0)}}$ is continuous, $\dom(\cQ_V^2)$ is open.
		Let \begin{equation}\label{eqn:cQv2}\begin{aligned}
					\cQ_V^2:\dom(\cQ_V^2)\subseteq 		V\times V\times \mathbb{G}^{(0)}\to \mathbb{G}\\
					(v,w,x,t)\mapsto (e^{\alpha_{t}(v)}e^{\alpha_{t}(w)}\cdot x,x,t)           \\
					(v,w,L,x,0)\mapsto(\natural_{x,0}(v\cdot w)L,x,0),
					\end{aligned}\end{equation}
		Equivalently, 
			\begin{equation}\label{eqn:QV2_is_QV_times_QV}\begin{aligned}
				\cQ_V^2(v,w,\gamma)=\cQ_V(v,r\circ\cQ_V(w,\gamma) )\cQ_V(w,\gamma)		
			\end{aligned}\end{equation}
		The sets $\tdom(\cQ_V^2)$, $\dom(\cQ_V^2)$ and the maps $\pi_{V\times V\times \mathbb{G}^{(0)}}$, $\cQ_V^2$ are $(\Rpt)^\nu$-equivariant. 
		\begin{linklem}{composition_bisub}
		Let $(V,\natural)$ be a $\nu$-graded Lie basis. 
		There exists equivariant smooth submersions
		\begin{equation*}\begin{aligned}
			\tilde{\psi}:\dom(\tilde{\psi})\subseteq V\times V\times M\times \R_+^\nu\to V	\times M\times \R_+^\nu,\quad
			\psi:\dom(\psi)\subseteq V\times V\times \mathbb{G}^{(0)}\to V	\times \mathbb{G}^{(0)}
				\end{aligned}\end{equation*}
		defined on some open neighborhood of $V\times V\times M\times \{0\}$ and $V\times V\times \cG^{(0)}\times \{0\}$ such that: 
		\begin{enumerate}
		\item\label{thm:composition_bisub:3} For any  $(v,w,x)\in V\times V\times M $, $\tilde{\psi}(v,w,x,0)=(v\cdot w,x,0)$.
		\item\label{thm:composition_bisub:4} For any $(v,x,t)\in V\times M\times \R_+^\nu$, we have $\tilde{\psi}(v,0,x,t)=\tilde{\psi}(0,v,x,t)=(v,x,t)$ whenever $(v,0,x,t)$ or $(0,v,x,t)$ is in the domain of $\tilde{\psi}$.
		\item\label{thm:composition_bisub:5}
		One has, 
			\begin{equation}\label{eqn:domains_psi}\begin{aligned}
				\dom(\tilde{\psi})\subseteq \tdom(\cQ_V^2),\ \im(\tilde{\psi})\subseteq \tdom(\cQ_V),\ \dom(\psi)\subseteq \dom(\cQ_V^2),\ \im(\psi)\subseteq \dom(\cQ_V),
			\end{aligned}\end{equation}
		and the following diagram commutes  
		\begin{equation}\label{eqn:diag_composition_bisub}\begin{aligned}
			\begin{tikzcd}\dom(\psi)\arrow[r,"\psi"]\arrow[dr,"\cQ_{V}^2"']&V\times \mathbb{G}^{(0)}\arrow[d,"\cQ_{V}"]\\& \mathbb{G}
			   \end{tikzcd}.
		\end{aligned}\end{equation}	
		\item One has $\dom(\psi)=\pi_{V\times V\times \mathbb{G}^{(0)}}^{-1}(\dom(\tilde{\psi}))$ and the following diagram		
			\begin{equation}\label{eqn:diag_comb_sub_psi}
				\begin{tikzcd}
			\dom(\psi)\arrow[d,"\pi_{V\times V\times \mathbb{G}^{(0)}}"']\arrow[r, "\psi"]    &   V\times \mathbb{G}^{(0)} \arrow[d,"\pi_{V\times V\times \mathbb{G}^{(0)}}"]\\
										\dom(\tilde{\psi})\arrow[r, "\tilde{\psi}"]& V\times M\times \R_+^\nu
		\end{tikzcd}\end{equation}
		is a pullback diagram.
		\end{enumerate}
		\end{linklem}
		\begin{rem}\label{rem:psi_hat}
			Commutativity of \eqref{eqn:diag_composition_bisub} and that Diagram \eqref{eqn:diag_comb_sub_psi} imply that $\psi$ and $\tilde{\psi}$ are of the form 
			\begin{equation}\label{eqn:composition_bisub_tilde_psi}\begin{aligned}
						\tilde{\psi}(v,w,x,t)=(\hat{\psi}(v,w,x,t),x,t),\quad \psi(v,w,\gamma)=(\hat{\psi}(v,w,\pi_{\mathbb{G}^{(0)}}(\gamma)),\gamma)
					\end{aligned}\end{equation}
			for some map $\hat{\psi}:\dom(\tilde{\psi})\to V$ which satisfies
			\begin{equation}\label{eqn:product_phi_compsotion_bisub}\begin{aligned}
					e^{\alpha_{t}(\hat{\psi}(v,w,x,t))}\cdot x=e^{\alpha_{t}(v)} e^{\alpha_{t}(w)}\cdot x,\text{ and } \hat{\psi}(v,w,x,0)=v\cdot w.
			\end{aligned}\end{equation}
			In some proofs later on in this section, we will make use of the map $\hat{\psi}$.
		\end{rem}
		Throughout this chapter, we will be consistent in reserving $\tilde{\phi},\phi$ for the maps obtained by applying \Cref{thm:change_of_basis_map} and $\tilde{\psi},\psi$ for the maps obtained by applying \Cref{thm:composition_bisub}.
	\begin{linkproof}{composition_bisub}
		\begin{lem}\label{lem:partial_proof_thm_compo_bisub}
				To prove \Cref{thm:composition_bisub}, it is enough to find an $(\Rpt)^\nu$-equivariant open neighborhood $U\subseteq V\times M\times \R_+^\nu$ of $V\times M\times  \{0\}$ and a linear map
				$w\in V\mapsto A_w\in \cX(U)$ such that: 
		\begin{enumerate}
			\item For any $\lambda\in (\Rpt)^\nu$, $w\in V$, $\alpha_{\lambda*}(A_w)=A_{\alpha_\lambda(w)}$.
			\item If $(v,x)\in V\times M$, then $A_w(v,x,0)=\left(\odv*{\BCH(\tau w,v)}{\tau}_{\tau=0},0,0\right)$.
			\item\label{lem:partial_proof_thm_compo_bisub:3} If $(v,x,t)\in U$, $w\in V$, then $e^{\tau A_w}\cdot (v,x,t)=(v_\tau,x,t)$ and $e^{\alpha_t(v_\tau)}\cdot x=e^{\tau \alpha_{t}(w)}e^{\alpha_{t}(v)}\cdot x$ whenever the flow of $A_w$ is well-defined at $\tau \in \R$.
			\item\label{lem:partial_proof_thm_compo_bisub:ksqdjfmkjsdqklùjfkljqsdljfmdsqmjfqdsfjqml} Whenever the flow is well-defined, $e^{\tau A_w}\cdot (0,x,t)=(\tau w,x,t)$.
		\end{enumerate}
		\end{lem}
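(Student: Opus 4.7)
The plan is to define $\tilde\psi$ by flowing the vector field $A_v$ from $(w,x,t)$ to time $\tau=1$. By hypothesis~(iii), this flow preserves the $M \times \R_+^\nu$ factor, so the result is of the form $\tilde\psi(v,w,x,t) = (\hat\psi(v,w,x,t),x,t)$ for some smooth map $\hat\psi$. I would take $\dom(\tilde\psi)$ to be the open neighborhood of $V \times V \times M \times \{0\}$ on which this time-$1$ flow is defined and stays inside $U$, shrunk further so that $\im(\tilde\psi) \subseteq \tdom(\cQ_V)$ (automatic near $t=0$ since $V \times M \times \{0\} \subseteq \tdom(\cQ_V)$). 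Equivariance of the family $(A_w)_{w \in V}$ under the dilations, which is hypothesis~(i), transfers immediately to $\tilde\psi$.

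To verify property~\ref{thm:composition_bisub:3}, hypothesis~(ii) identifies the restriction of $A_v$ to the slice $\{t=0\}$ with the right-invariant vector field for the BCH product on $V$. The resulting ODE $\odv{w_\tau}{\tau} = \odv*{\BCH(\sigma v, w_\tau)}{\sigma}_{\sigma = 0}$ admits the explicit solution $w_\tau = \BCH(\tau v, w)$ in the nilpotent Lie algebra $V$, so $\hat\psi(v,w,x,0) = v \cdot w$. For property~\ref{thm:composition_bisub:4}: $\hat\psi(v,0,x,t) = v$ is hypothesis~(iv) evaluated at $\tau=1$, while $\hat\psi(0,v,x,t) = v$ holds because $A_0 = 0$ by linearity, so its flow is the identity. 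For the commutativity part of property~\ref{thm:composition_bisub:5}, applying hypothesis~(iii) after interchanging the roles of $v$ and $w$ gives
\[
e^{\alpha_t(\hat\psi(v,w,x,t))} \cdot x \;=\; e^{\alpha_t(v)}\, e^{\alpha_t(w)} \cdot x,
\]
which, combined with the computation $\hat\psi(v,w,x,0) = v \cdot w$, will establish $\cQ_V \circ \psi = \cQ_V^2$ once $\psi$ is defined.

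The map $\psi$ is then obtained as the canonical lift $\psi(v,w,\gamma) := (\hat\psi(v,w,\pi_{\mathbb{G}^{(0)}}(\gamma)),\gamma)$ on $\dom(\psi) := \pi_{V \times V \times \mathbb{G}^{(0)}}^{-1}(\dom(\tilde\psi))$. By construction diagram~\eqref{eqn:diag_comb_sub_psi} is a pullback square; commutativity of~\eqref{eqn:diag_composition_bisub} follows from the identity above on $M \times \R_+^\nu \setminus 0$ and from $\cQ_V(v \cdot w, L, x, 0) = (\natural_{x,0}(v \cdot w)L,x,0) = \cQ_V^2(v,w,L,x,0)$ on the fiber $t=0$. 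For the submersion claim, since $\hat\psi(v,w,x,0) = v \cdot w$ is a diffeomorphism in $w$ for every fixed $(v,x)$, the parameterized inverse mapping theorem gives that $w \mapsto \hat\psi(v,w,x,t)$ is a local diffeomorphism on a neighborhood of $\{t=0\}$, so $\tilde\psi$ is a submersion; then $\psi$ is a submersion by \Cref{lem:pullback_local_diff}.

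This step contains no deep conceptual difficulty: the genuine content of \Cref{thm:composition_bisub} is concentrated in the construction of the family $(A_w)_{w \in V}$, which will have to combine the BCH formula with the $\nu$-graded Lie basis estimate~\eqref{eqn:bi_graded_lie_basis_commutator}. The only subtleties in the present reduction are to arrange the successive shrinkings of $U$ so as to preserve equivariance and to make the four inclusions in~\eqref{eqn:domains_psi} hold simultaneously; both are routine.
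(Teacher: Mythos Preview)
Your proposal is correct and follows essentially the same approach as the paper: define $\tilde\psi$ as the time-$1$ flow of $A_v$ starting from $(w,x,t)$, extract $\hat\psi$, lift to $\psi$ via $\pi_{V\times V\times\mathbb{G}^{(0)}}$, and verify each property of \Cref{thm:composition_bisub} from the corresponding hypothesis on $A_w$. The only cosmetic difference is in the submersion argument: you invoke the parameterized inverse mapping theorem using that $w\mapsto v\cdot w$ is a global diffeomorphism of $V$, whereas the paper checks submersion at points $(0,0,x,0)$ and then propagates to all of $V\times V\times M\times\{0\}$ via equivariance (for any $(v,w,x,0)$, taking $\lambda$ small sends $(\alpha_\lambda(v),\alpha_\lambda(w),x,0)$ into the open set of submersion points near the origin).
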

		\begin{proof}
			Let 
				$\tilde{\psi}(w,v,x,t)=e^{A_w}\cdot (v,x,t)$ whose domain is the set of points at which the flow exists. 
				The map $\tilde{\psi}$ is of the form $\tilde{\psi}(w,v,x,t)=(\hat{\psi}(v,w,x,t),x,t)$ for some map $\hat{\psi}$ which satisfies \eqref{eqn:product_phi_compsotion_bisub}.
			We define $\psi$ by  \eqref{eqn:composition_bisub_tilde_psi}.
			By \eqref{eqn:product_phi_compsotion_bisub}, the map $\tilde{\psi}$ is a submersion at any point in $\{0\}\times \{0\}\times M\times \{0\}$.
			Since $\tilde{\psi}$ is equivariant, the set of points at which $\tilde{\psi}$ is a submersion is an $(\Rpt)^\nu$-equivariant open neighborhood of $V\times V\times M\times \{0\}$. 
			By reducing $\dom(\tilde{\psi})$ to the set of the points at which $\tilde{\psi}$ is a submersion, 
			we can suppose that $\tilde{\psi}$
			is a submersion.
			By replacing $\dom(\tilde{\psi})$ with $\dom(\tilde{\psi})\cap \tdom(\cQ_V^2) \cap \tilde{\psi}^{-1}(\tdom(\cQ_V))$,
			we can further suppose that $\dom(\tilde{\psi})\subseteq \tdom(\cQ_V^2)$ and $\im(\tilde{\psi})\subseteq \tdom(\cQ_V)$.
			We now take the domain of $\psi$ to be $\pi_{V\times V\times \mathbb{G}^{(0)}}^{-1}(\dom(\tilde{\psi}))$. 
			Diagram \eqref{eqn:diag_comb_sub_psi} is easily seen to be a pullback diagram.
			So, by \Cref{lem:pullback_local_diff}, $\psi$ is a submersion. 
			Since $\dom(\cQ_V^2)=\pi_{V\times V\times \mathbb{G}^{(0)}}^{-1}(\tdom(\cQ_V^2))$ and $\dom(\cQ_V)=\pi_{V\times \mathbb{G}^{(0)}}^{-1}(\tdom(\cQ_V))$, \eqref{eqn:domains_psi} follows.				
			Commutativity of \eqref{eqn:diag_composition_bisub} follows from \eqref{eqn:product_phi_compsotion_bisub} and \Cref{rem:natural_x_0_is_a_lie_algebra_homo}.
			Finally, \Crefitem{thm:composition_bisub}{4} follows from Condition \ref{lem:partial_proof_thm_compo_bisub:ksqdjfmkjsdqklùjfkljqsdljfmdsqmjfqdsfjqml} and that $A_0=0$.
		\end{proof}
			
		Let $T:V\times V\times M\times \R_+^\nu\to V$ be a bilinear map as in the proof of \Cref{thm:Lie_algebra} which satisfies \eqref{eqn:T_Lie_Bracket}.
		One can check that by the definition of $T$ in \eqref{eqn:qsdjhlqusdhiqshldloqsdoiuoqsjdojoqsjd} that $T$ is $(\Rpt)^\nu$-equivariant.
		By replacing $T$, by $(w,v,x,t)\mapsto \frac{T(w,v,x,t)-T(v,w,x,t)}{2}$, we can further suppose that 
			\begin{equation}\label{eqn:T_anti_symmetric_map}\begin{aligned}
				T(w,v,x,t)=-T(v,w,x,t),\quad \forall (v,w,x,t)\in V\times V\times M\times \R_+^\nu.		
			\end{aligned}\end{equation}
		Let $f_\tau,h_\tau:V\times M\times \R_+^\nu\to \cL(V)$ for $\tau\in\R$ be locally-defined smooth solutions of the first order differential equation (matrix differential equation)
				\begin{equation}\label{eqn:f_tau_h_tau}\begin{aligned}
				f_0(v,x,t)w=w,\quad	\pdv*{f_{\tau}(v,x,t)}{\tau}w=f_\tau(v,x,t)T(w,v,e^{-\tau \alpha_t(v)}\cdot x,t),\quad h_\tau=\int_{0}^\tau f_{s}ds.	
				\end{aligned}\end{equation}
				Because the differential equation defining $f$ is a matrix differential equation, the maps $f_{\tau}(v,x,t)$ and $h_\tau(v,x,t)$ are defined whenever $e^{-\tau\alpha_t(v)}\cdot x$ is well-defined.
				Let $h_{1}^{-1}(v,x,t)$ be the linear inverse of $h_{1}(v,x,t)$ if it exists.
				By \eqref{eqn:T_Lie_Bracket}, $f_\tau(v,x,0)=e^{\tau\mathrm{ad}_v}$, where $\mathrm{ad}_v:V\to V$ is the linear map $\mathrm{ad}_v(w)=[w,v]$.
				Hence, $h_1(v,x,0)=\frac{e^{\mathrm{ad}_v}-1}{\mathrm{ad}_v}$.
				Since $\mathrm{ad}_v$ is a nilpotent linear map, $h_{1}(v,x,0)$ is invertible.
				The formula for the derivative of the exponential of matrices \cite[Theorem 5 Section 1.2]{RossmannBook} implies that
					\begin{equation}\label{eqn:qksodforijhaozeihrqskodfqsidfôiqsdiof3}\begin{aligned}
						h_{1}^{-1}(v,x,0)w=\odv*{\BCH(\tau w,v)}{\tau}_{\tau=0}.	
					\end{aligned}\end{equation}
				Let
				\begin{equation}\label{eqn:Aw}\begin{aligned}
						&U=\left\{(v,x,t)\in V\times  M\times \R_+^\nu:e^{\alpha_t(v)}\cdot x\text{ and }\ h_{1}^{-1}\left( v,e^{\alpha_t(v)}\cdot x,t\right)\text{ exist}\right\}\\
						&A_w(v,x,t)=(h_{1}^{-1}(v,e^{\alpha_t(v)}\cdot x,t)w,0,0)\in \cX(U).		
					\end{aligned}\end{equation}
				The set $U$ is an open neighborhood of $V\times M\times \{0\}$. 
				Let us check that $A_w$ satisfies the required conditions.
				Equivariance of $T$ implies that
				\begin{equation*}\begin{aligned}
					f_{\tau}(\alpha_\lambda(v,x,t))\alpha_\lambda(w)=\alpha_\lambda(f_\tau(v,x,t)w),	\quad 	h_{\tau}(\alpha_\lambda(v,x,t))\alpha_\lambda(w)=\alpha_\lambda(h_\tau(v,x,t)w)			
				\end{aligned}\end{equation*} 
				which implies that $U$ is $(\Rpt)^\nu$-equivariant and $\alpha_{\lambda*}(A_w)=A_{\alpha_\lambda(w)}$.	
				By \eqref{eqn:qksodforijhaozeihrqskodfqsidfôiqsdiof3}, we see that 
			$A_w(v,x,0)=\left(\odv*{\BCH(\tau w,v)}{\tau}_{\tau=0},0,0\right)$.
			By \eqref{eqn:T_anti_symmetric_map}, $f_\tau(sv,x,t)v=v$ for all $s\in \R$ whenever $f_\tau$ is well-defined. 
			It follows that $A_w(s w,x,t)=(w,0,0)$ for all $s\in \R$ from which \Crefitem{lem:partial_proof_thm_compo_bisub}{ksqdjfmkjsdqklùjfkljqsdljfmdsqmjfqdsfjqml} follows.

			It remains to check \Crefitem{lem:partial_proof_thm_compo_bisub}{3}.
			On the manifold $V\times M\times \R_+^\nu$, we define the vector fields: 
			\begin{equation*}\begin{aligned}
				Y_w(v,x,t)=(w,0,0),\quad 
					X_w(v,x,t)=(0,\natural(\alpha_{t}(w))(x),0),\quad 
					Z(v,x,t)=(0,\natural(\alpha_{t}(v))(x),0),
			\end{aligned}\end{equation*}
			where $w\in V$ is a parameter.
			We have 
				\begin{align}
					[Y_w,Z]=X_w,\quad	e^{\tau X_w}\cdot (v,x,t)=(v,e^{\tau \alpha_{t}(w)}\cdot x,t),\quad e^{\tau Z}\cdot (v,x,t)=(v,e^{\tau\alpha_{t}(v)}\cdot x,t)\label{eqn:flow_of_X_Z}
				\end{align}	
			By \eqref{eqn:T_Lie_Bracket} and \eqref{eqn:flow_of_X_Z}, we have 
					\begin{equation}\label{eqn:Bracket_X_Z}\begin{aligned}
						[X_{w},Z](v,x,t)=X_{T(w,v,x,t)}(v,x,t).
					\end{aligned}\end{equation}		
			The flow of $A_w$ is given by $e^{\tau A_w}\cdot (v,x,t)=(v_\tau,x,t)$ for some $v_\tau \in V$ because $A_{w}$ is tangent to $V$. 
					By the definition of pushforward of vector fields, 
			\begin{equation}\label{eqn:qskdjfkqdsfkmlqdsjf}\begin{aligned}
				(v_\tau,e^{\alpha_t(v_\tau)}\cdot x,t)=e^Z\cdot (v_\tau,x,t)=e^Ze^{\tau A_w}\cdot (v,x,t)&=e^Ze^{\tau A_w}e^{-Z}e^{Z}\cdot (v,x,t)\\&=e^{\tau e_*^{Z}(A_w)}\cdot (v,e^{\alpha_{t}(v)}\cdot x,t).	
			\end{aligned}\end{equation}
			We now compute the vector field $e_*^Z(A_w)$.
			We claim that
			\begin{equation}\label{eqn:X_Y_identity}\begin{aligned}
				e^{\tau Z}_*(X_w)(v,x,t)=X_{f_\tau(v,x,t)w}(v,x,t),\ 				e^{\tau Z}_*(Y_w)(v,x,t)=Y_w(v,x,t)+X_{h_\tau(v,x,t)w}(v,x,t)								.
			\end{aligned}\end{equation}
		To see this, let $K_{w,\tau}(v,x,t)=X_{f_\tau(v,x,t)w}(v,x,t)$.
		We have 
			\begin{equation*}\begin{aligned}
				\odv*{e^{-\tau Z}_*(K_{w,\tau})}{\tau}=e^{-\tau Z}_*\left(\odv*{K_{w,\tau}}{\tau}-[K_{w,\tau},Z]\right).
			\end{aligned}\end{equation*}
		By \eqref{eqn:Bracket_X_Z}, we have
			\begin{equation*}\begin{aligned}
				[K_{w,\tau},Z](v,x,t)=X_{T(f_\tau(v,x,t)w,v,x,t)}(v,x,t)-X_{Z(f_{\tau})(v,x,t)w}(v,x,t).	
			\end{aligned}\end{equation*}
		To compute $Z(f_\tau)$, notice that 
			\begin{equation*}\begin{aligned}
				f_{\tau'+\tau}(v,e^{\tau'\alpha_t(v)}\cdot x,t)=f_{\tau'}(v,e^{\tau'\alpha_t(v)}\cdot x,t)f_{\tau}(v,x,t).		
			\end{aligned}\end{equation*}
		By taking derivative at $\tau'=0$, we deduce that
				\begin{equation*}\begin{aligned}
				Z(f_\tau)(v,x,t)w+\pdv*{f_\tau(v,x,t)w}{\tau}=T(f_\tau(v,x,t)w,v,x,t).	
			\end{aligned}\end{equation*}
		Hence, $\odv*{e^{-\tau Z}_*(K_{w,\tau})}{\tau}=0$. So, $e^{-\tau Z}_*(K_{w,\tau})=K_{w,0}=X_w$. 
		This proves the first identity of \eqref{eqn:X_Y_identity}.
		The second identity of \eqref{eqn:X_Y_identity} follows from the following:
			\begin{equation*}\begin{aligned}
				\odv*{X_{h_\tau(v,x,t)w}}{\tau}(v,x,t)=X_{f_\tau(v,x,t)w}(v,x,t)=e^{\tau Z}_{*}\left(X_w\right)=e^{\tau Z}_{*}\left([Y_w,Z]\right)=\odv*{e^{\tau Z}_*(Y_w)}{\tau}.
			\end{aligned}\end{equation*}
		Since $A_w(v,x,t)=Y_{h_{1}^{-1}(v,e^{\alpha_t(v)}\cdot x,t)w}(v,x,t)$, the second identity of \eqref{eqn:X_Y_identity} implies that
			\begin{equation*}\begin{aligned}
				e_*^{Z}(A_w)(v,x,t)=Y_{h_1^{-1}(v,x,t)w}(v,x,t)+X_w(v,x,t)=(h_1^{-1}(v,x,t)w,\natural(\alpha_t(w))(x),0).
			\end{aligned}\end{equation*}
		So, $e^{\tau e_*^{Z}(A_w)}\cdot (v,e^{\alpha_{t}(v)}\cdot x,t)=(\xi_\tau,e^{\tau \alpha_{t}(w)}e^{\alpha_{t}(v)}\cdot x,t)$ for some $\xi_\tau\in V$.
		Hence, by \eqref{eqn:qskdjfkqdsfkmlqdsjf}, 
			\begin{equation*}\begin{aligned}
				(v_\tau,e^{\alpha_{t}(v_\tau)}\cdot x,t)=(\xi_\tau,e^{\tau \alpha_{t}(w)}e^{\alpha_{t}(v)}\cdot x,t).	
			\end{aligned}\end{equation*}
		In particular, $e^{\alpha_{t}(v_\tau)}\cdot x=e^{\tau \alpha_{t}(w)}e^{\alpha_{t}(v)}\cdot x$.
	\end{linkproof}
	\begin{linkproof}{change_of_basis_map}
		By replacing $(V,\natural)$ with a bigger $\nu$-graded basis using \Cref{lem:bigger_bi_graded_Lie}, we can suppose that $(V,\natural)$ is a $\nu$-graded Lie basis.
		Let $\tilde{\psi}$ be as in \Cref{thm:composition_bisub}, and $\hat{\psi}$ as in \eqref{eqn:composition_bisub_tilde_psi}, $T:V\times M\times \R_+^\nu\to V'$ as in the proof of \Cref{thm:G0_locally_compact} which satisfies \eqref{eqn:qskjfpiojqspdjfpqjsdpf}.
		By looking at the way $T$ is constructed, we see that we can suppose that $T(v,x,0)=v$ for all $v\in V'$.
		Note that the map $T$ defined using \eqref{eqn:map_T_change_of_basis} is $(\Rpt)^\nu$-equivariant.
		\begin{lem}\label{lem:sklqdjfkjqsdfkjqsdmkjfkmsqdljf}
			To prove \Cref{thm:change_of_basis_map}, it is enough to find an equivariant smooth map $\hat{\phi}:\dom(\hat{\phi})\subseteq V\times M\times \R_+^\nu\to V'$ defined on an equivariant open neighborhood of $V\times M\times \{0\}$
			such that: 
			\begin{enumerate}
				\item\label{lem:sklqdjfkjqsdfkjqsdmkjfkmsqdljf:1} If $(v,x,t)\in \dom(\hat{\phi})$, then $(-v,\hat{\phi}(v,x,t),x,t)\in \dom(\tilde{\psi})$ and $T(\tilde{\psi}(-v,\hat{\phi}(v,x,t),x,t))=0$.	
				\item\label{lem:sklqdjfkjqsdfkjqsdmkjfkmsqdljf:2} If $v\in V'$ and $x\in M$, then $\hat{\phi}(v,x,0)=v$.	
			\end{enumerate}
		\end{lem}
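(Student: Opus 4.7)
The plan is to define $\tilde{\phi}$ and $\phi$ directly from $\hat{\phi}$ by
\begin{equation*}
\tilde{\phi}(v,x,t):=(\hat{\phi}(v,x,t),x,t),\qquad \phi(v,\gamma):=(\hat{\phi}(v,\pi_{\mathbb{G}^{(0)}}(\gamma)),\gamma),
\end{equation*}
with $\dom(\tilde{\phi}):=\dom(\hat{\phi})$ and $\dom(\phi):=\pi_{V\times\mathbb{G}^{(0)}}^{-1}(\dom(\hat{\phi}))$, and to verify the conclusions of \Cref{thm:change_of_basis_map}. After shrinking $\dom(\hat{\phi})$ in an $(\Rpt)^\nu$-equivariant way we may arrange $\dom(\tilde{\phi})\subseteq\tdom(\cQ_V)$ and $\im(\tilde{\phi})\subseteq\tdom(\cQ_{V'})$, so that \eqref{eqn:ksqldjflqdsjfmqsdjf} holds. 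Diagram \eqref{eqn:ksqldjflqdsjfmqsdjf_2} is then a pullback because the map $((v,x,t),(v',\gamma'))\mapsto(v,\gamma')$ from the fibre product back to $\dom(\phi)$ is a two-sided smooth inverse of the tautological map into it.

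The key step is the commutativity of \eqref{eqn:comm_diag_phi_hat}, namely $\cQ_V=\cQ_{V'}\circ\phi$. At points with $t\in(\Rpt)^\nu$, this amounts to the identity $e^{\alpha_t(\hat{\phi}(v,x,t))}\cdot x=e^{\alpha_t(v)}\cdot x$. By \Cref{thm:composition_bisub} and \Cref{rem:psi_hat},
\begin{equation*}
\tilde{\psi}(-v,\hat{\phi}(v,x,t),x,t)=(\hat{\psi}(-v,\hat{\phi},x,t),x,t),\qquad e^{\alpha_t(\hat{\psi})}\cdot x=e^{-\alpha_t(v)}e^{\alpha_t(\hat{\phi})}\cdot x.
\end{equation*}
Condition \ref{lem:sklqdjfkjqsdfkjqsdmkjfkmsqdljf:1} gives $T(\hat{\psi},x,t)=0$ in $V'$, and by \eqref{eqn:qskjfpiojqspdjfpqjsdpf} this forces $\natural_{x,t}(\hat{\psi})=0$, i.e.\ the vector field $\natural(\alpha_t(\hat{\psi}))$ on $M$ vanishes at $x$; consequently its time-one flow fixes $x$, so $e^{\alpha_t(\hat{\psi})}\cdot x=x$ and the desired identity follows. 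At $t=0$, after enlarging $(V,\natural)$ to a $\nu$-graded Lie basis via \Cref{lem:bigger_bi_graded_Lie} so that $\natural_{x,0}$ is a Lie algebra homomorphism (see \Cref{rem:natural_x_0_is_a_lie_algebra_homo}), \Crefitem{thm:composition_bisub}{3} together with condition \ref{lem:sklqdjfkjqsdfkjqsdmkjfkmsqdljf:1} yields $\natural_{x,0}(-v\cdot\hat{\phi})=0$, hence $\natural_{x,0}(\hat{\phi})=\natural_{x,0}(v)$ in $\mathfrak{g}_x$. Since $\natural_{|V'}=\natural'$, this equals $\natural'_{x,0}(\hat{\phi})$, and so $\natural'_{x,0}(\hat{\phi})L=\natural_{x,0}(v)L$ for every tangent cone $L$, giving commutativity at $t=0$.

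It remains to verify that $\tilde{\phi}$ is a submersion. Condition \ref{lem:sklqdjfkjqsdfkjqsdmkjfkmsqdljf:2} shows that $\tilde{\phi}(v,x,0)=(v,x,0)$ for $v\in V'$, so $\odif{\tilde{\phi}}$ is the identity at such points; by equivariance the submersion locus is $(\Rpt)^\nu$-invariant. At a general point $(v,x,0)\in V\times M\times\{0\}$, implicit differentiation of $T(\tilde{\psi}(-v,\hat{\phi},x,0))=0$ in the $v$-variable expresses $\partial_v\hat{\phi}(\cdot,x,0)$ through the surjective linear map $T(\cdot,x,0):V\to V'$, giving surjectivity of $\odif{\tilde{\phi}}$ there as well. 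After a final equivariant shrinking we may assume $\tilde{\phi}$ is a submersion on its whole domain, and then the submersion property of $\phi$ follows from the pullback structure together with \Cref{lem:pullback_local_diff}. Equivariance of $\tilde{\phi}$ and $\phi$ is inherited from that of $\hat{\phi}$.

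The main difficulty in \Cref{thm:change_of_basis_map} as a whole is not this reduction but the subsequent construction of $\hat{\phi}$, which will come from an implicit function theorem applied to $F(v',v,x,t):=T(\tilde{\psi}(-v,v',x,t))$ at base points $(v_0,v_0,x,0)$ with $v_0\in V'$, the invertibility of $\partial_{v'}F$ at that base point coming from $T(\cdot,x,0)|_{V'}=\mathrm{id}_{V'}$ and $\partial_{v'}\BCH(-v_0,v')|_{v'=v_0}=\mathrm{id}$. Within the reduction itself the subtle point is that $T\circ\tilde{\psi}=0$ is a priori only an infinitesimal condition at $x$, but it nonetheless suffices to force the full flow equality $e^{\alpha_t(v)}\cdot x=e^{\alpha_t(\hat{\phi})}\cdot x$, thanks to the elementary fact that a vector field vanishing at a point has a flow that fixes that point.
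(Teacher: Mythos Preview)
Your proof is correct and follows essentially the same approach as the paper's: define $\tilde{\phi},\phi$ from $\hat{\phi}$ componentwise, derive $e^{\alpha_t(\hat{\phi})}\cdot x=e^{\alpha_t(v)}\cdot x$ from condition~\ref{lem:sklqdjfkjqsdfkjqsdmkjfkmsqdljf:1} via \eqref{eqn:qskjfpiojqspdjfpqjsdpf} and the elementary fact that a vector field vanishing at a point has that point as a fixed point of its flow, and then shrink domains equivariantly to arrange the submersion and pullback properties. The paper does precisely this, citing \Cref{lem:partial_proof_thm_compo_bisub} for the domain-shrinking step.

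One minor remark: your implicit-differentiation step at a general $(v,x,0)$ is unnecessary and, as written, not quite complete (the BCH product introduces nonlinear terms in $v$ that make the relation between $\partial_v\hat{\phi}$ and $T(\cdot,x,0)$ less transparent than you suggest). It is also not needed: once you know the submersion locus is open, $(\Rpt)^\nu$-invariant, and contains $\{0\}\times M\times\{0\}$ (which already follows from condition~\ref{lem:sklqdjfkjqsdfkjqsdmkjfkmsqdljf:2} at $v=0$), it automatically contains all of $V\times M\times\{0\}$, since $\alpha_\lambda(v,x,0)=(\alpha_\lambda(v),x,0)\to(0,x,0)$ as $\lambda\to 0$. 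This is exactly the mechanism in the proof of \Cref{lem:partial_proof_thm_compo_bisub} that the paper invokes.
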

		\begin{proof}
			If $(v,x,t)\in \dom(\hat{\phi})$, then by \eqref{eqn:qskjfpiojqspdjfpqjsdpf}, $\hat{\psi}(-v,\hat{\phi}(v,x,t),x,t)\in \ker(\natural_{x,t})$.
			By \Cref{rem:natural_x_0_is_a_lie_algebra_homo} and \eqref{eqn:product_phi_compsotion_bisub}, we deduce 
				\begin{equation}\label{eqn:sqdklfkjklsqdjf}\begin{aligned}
					e^{\alpha_t(\hat{\phi}(v,x,t))}\cdot x=e^{\alpha_t(v)}\cdot x,\text{ and } \natural_{x,0}(\hat{\phi}(v,x,0))=\natural_{x,0}(v).
				\end{aligned}\end{equation}
			We now define $\tilde{\phi}(v,x,t)=(\hat{\phi}(v,x,t),x,t)$ and $\phi(v,\gamma)=(\hat{\phi}(v,\pi_{\mathbb{G}^{(0)}}(\gamma)),\gamma)$.
			We proceed as in the proof of \Cref{lem:partial_proof_thm_compo_bisub} to reduce the domain of $\tilde{\phi}$ so that $\tilde{\phi}$ and $\phi$ are submersions,
			and \eqref{eqn:ksqldjflqdsjfmqsdjf} holds, and Diagram \eqref{eqn:ksqldjflqdsjfmqsdjf_2} is a pullback diagram.
			Finally, Diagram \eqref{eqn:comm_diag_phi_hat} commutes by \eqref{eqn:sqdklfkjklsqdjf}.
		\end{proof}

		We will define $\hat{\phi}(v,x,t)$ using the implicit function theorem to be the unique $v'\in V'$ which solves the equation 
				$T(\tilde{\psi}(-v,v',x,t))=0$.
		More precisely, consider the map 
			\begin{equation*}\begin{aligned}
				\Psi:\dom(\tilde{\psi})\cap (V\times V'\times M\times \R_+^\nu)\to V\times V'\times M\times \R_+^\nu,\ \Psi(v,v',x,t)=(v,T(\tilde{\psi}(v,v',x,t)),x,t) 		
			\end{aligned}\end{equation*}
		Since $\tilde{\psi}$ and $T$ are equivariant, $\Psi$ is equivariant.
		Furthermore, for any $x\in M$, $\odif{\Psi}_{(0,0,x,0)}$ is bijective and $\Psi(0,0,x,0)=(0,0,x,0)$.
		\begin{theorem}[Global inverse mapping theorem]\label{thm:inverse_mapping_thm}
			Let $f:M_1\to M_2$ be a smooth map between smooth manifolds, $A\subseteq M_1$ a closed subset. 
			If for every $x\in A$, $\odif{f}_x$ is bijective, and $f_{|A}:A\to M_2$ is injective and proper, then 
			there exists an open neighborhood $U$ of $A$ such that $f_{|U}$ is an open smooth embedding. 
		\end{theorem}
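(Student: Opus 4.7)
My plan is to produce, on an open neighborhood of $f(A)\subseteq M_2$, a smooth local section $\sigma$ of $f$ that extends the continuous inverse $(f|_A)^{-1}:f(A)\to A$. Once such a $\sigma$ is constructed on an open $\Omega\supseteq f(A)$, the set $U:=\sigma(\Omega)$ will be open in $M_1$ (as $\sigma$ is itself a section of a local diffeomorphism, hence a local diffeomorphism onto its image), will contain $A=\sigma(f(A))$, and $f|_U:U\to \Omega$ will be a diffeomorphism with inverse $\sigma$, hence an open smooth embedding.

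First I would reduce to the case that $f$ is a local diffeomorphism on all of $M_1$: bijectivity of $\odif{f}_x$ is an open condition in $x$, so the set where it holds is an open neighborhood of $A$, to which $M_1$ may be restricted without disturbing the hypotheses. Next, continuity, injectivity, and properness of $f|_A$ together imply that $f|_A$ is a closed topological embedding, so $f(A)$ is closed in $M_2$ and $(f|_A)^{-1}:f(A)\to A$ is continuous. For each $p\in f(A)$, set $x_p:=(f|_A)^{-1}(p)$ and choose an open $W_p\ni x_p$ in $M_1$ on which $f|_{W_p}$ is a diffeomorphism onto $f(W_p)$; continuity of $(f|_A)^{-1}$ at $p$, together with $(f|_A)^{-1}(\{p\})=\{x_p\}$, allows one to shrink to an open $\Omega_p\subseteq f(W_p)$ with $p\in \Omega_p$ and $(f|_A)^{-1}(\Omega_p\cap f(A))\subseteq W_p$. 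This produces a local smooth section $\sigma_p:=(f|_{W_p})^{-1}|_{\Omega_p}:\Omega_p\to M_1$ which agrees with $(f|_A)^{-1}$ on $\Omega_p\cap f(A)$.

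The core of the argument is then gluing the $\sigma_p$'s into a single section $\sigma$ defined on an open neighborhood $\Omega\supseteq f(A)$. The key observation is that two smooth sections of a local diffeomorphism that coincide at a single point coincide on an entire open neighborhood of that point, since near any such point both sections take values in a single sheet of $f$ and must be the unique inverse branch there. Thus for any $p_1,p_2$ the sections $\sigma_{p_1}$ and $\sigma_{p_2}$ already agree on $f(A)\cap \Omega_{p_1}\cap \Omega_{p_2}$, hence agree on an open subset of $\Omega_{p_1}\cap \Omega_{p_2}$ containing that intersection. Using paracompactness of $M_2$ to pass to a locally finite refinement of $\{\Omega_p\}$ and intersecting each $\Omega_p$ with the pairwise agreement sets for the finitely many $p'$ meeting it, one obtains an open $\Omega\supseteq f(A)$ on which all relevant $\sigma_p$'s agree, yielding a well-defined smooth section $\sigma:\Omega\to M_1$ extending $(f|_A)^{-1}$.

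The main obstacle is this gluing step in the non-compact setting: infinitely many local sections must be made simultaneously compatible. Paracompactness reduces this to a finite compatibility check at each point, after which the pointwise-open agreement sets are enough to carve out $\Omega$. With $\sigma$ in hand, $U:=\sigma(\Omega)$ is the sought-for open neighborhood of $A$ on which $f$ is an open smooth embedding, completing the proof.
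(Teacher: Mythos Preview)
Your strategy is sound and genuinely different from the paper's. The paper never constructs a section on the target side; instead it argues directly for injectivity of $f$ on a neighborhood of $A$. Concretely, it first shows by a sequential compactness argument that for every compact $K\subseteq A$ one can find a relatively compact open $U\supseteq K$ with $f|_{\overline{U}\cup A}$ injective, and then uses a compact exhaustion of $M_2$ to build an increasing sequence $V_n$ with $f|_{\overline{V_n}\cup A}$ injective and $\bigcup_n V_n\supseteq A$. Your approach instead extends the continuous inverse $(f|_A)^{-1}$ to a smooth section $\sigma$ of the local diffeomorphism $f$ over an open neighborhood of $f(A)$, and sets $U=\sigma(\Omega)$. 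This is essentially the sheaf-theoretic statement that for a paracompact base, a continuous section of an \'etale map over a closed set extends to an open neighborhood; it has the pleasant side effect of producing the global inverse $\sigma$ explicitly.

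One step deserves more care than you give it. The phrase ``intersecting each $\Omega_p$ with the pairwise agreement sets'' does not literally work: the agreement set $V_{p_1p_2}=\{\sigma_{p_1}=\sigma_{p_2}\}$ is a subset of $\Omega_{p_1}\cap\Omega_{p_2}$, so intersecting $\Omega_{p_1}$ with it would destroy most of $\Omega_{p_1}$ (and $V_{p_1p_2}$ may be empty when $f(A)\cap\Omega_{p_1}\cap\Omega_{p_2}=\emptyset$). The standard fix is to pass to a locally finite refinement $\{V_\alpha\}$, take a shrinking $\{V'_\alpha\}$ with $\overline{V'_\alpha}\subseteq V_\alpha$ still covering $f(A)$, and for each $q\in f(A)$ choose a neighborhood $U_q\subseteq V'_{\alpha_0}\cap\bigcap_{q\in\overline{V'_\alpha}}V_\alpha$ disjoint from all other $\overline{V'_\beta}$, on which the finitely many relevant $\sigma_\alpha$ agree. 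On $\Omega=\bigcup_q U_q$ the sections then glue: if $r\in U_q\cap U_{q'}$ and $r\in V'_{\alpha_0}$, the disjointness condition forces $\alpha_0$ to be among the indices both at $q$ and at $q'$, so the two candidate values match. With this correction your argument is complete.
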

		\begin{proof}	
			By the inverse mapping theorem, and by replacing $M_1$ with an open neighborhood of $A$, we can assume that $f$ is a local diffeomorphism.
			We only need to find an open neighborhood $U$ of $A$ such that $f_{|U}$ is injective.
			Let $K\subseteq A$ be compact.
			We can find $U_n\subseteq M_1$ a sequence of relatively compact open neighborhoods of $K$ such that $K=\bigcap_n U_n$ and $\overline{U_{n+1}}\subseteq U_n$.
			Since $\overline{U_n}$ is compact, $f_{|\overline{U_n}\cup A}$ is proper.
			We claim that there exists $n$ such that $f_{|\overline{U_n}\cup A}$ is injective.
			If not, then there exists sequences $(x_n)_{n\in \N}$, $(y_n)_{n\in \N}$ such that $x_n\neq y_n$, $x_n\in \overline{U_n}$, $y_n\in \overline{U_n}\cup A$, $f(x_n)=f(y_n)$.
			Since $U_n$ are decreasing and relatively compact, by passing to a subsequence, we can suppose that $x_n\to x\in K$.
			Since $f_{|\overline{U_n}\cup A}$ is proper and $f(y_n)$ converges to $f(x)$, we can find a subsequence of $y_n$ which converges.
			So without loss of generality, we can suppose that $y_n\to y\in A$.
			So, $f(x)=f(y)$. By injectivity of $f_{|A}$, $x=y$.
			Since $f$ is a local diffeomorphism at $x$, $x_n=y_n$ for $n$ big enough which is a contraction.

			Let $(K_n)_{n\in\N}$ be an increasing sequence of compact subsets of $M_2$ such that $M_2=\bigcup_{n\in \N}K_n$.
			We will define by recurrence an increasing sequence $(V_n)_{n\in \N}$ of relatively compact open subsets of $M_1$ such that $f_{|\overline{V_n}\cup A}$ is injective and proper, and $f^{-1}(K_n)\cap A\subseteq V_n$.
			The initial step follows from what we just proved applied to $f^{-1}(K_1)\cap A$ which is compact by properness of $f_{|A}$.
			The induction step also follows from what we just proved by replacing $A$ with $\overline{V_n}\cup A$ and using the compact set $\overline{V_n} \cup(f^{-1}(K_{n+1})\cap A)$.
			The set $\bigcup_{n}V_n$ is an open neighborhood of $A$ on which $f$ is injective.
		\end{proof}
		By \Cref{thm:inverse_mapping_thm}, $\Psi$ is an open embedding on an open neighborhood $W\subseteq V\times V'\times M\times \R_+^\nu$ of $\{(0,0,x,0):x\in M\}$.
		Let $\norm{\cdot}$ be a norm on $V$. 
		For any $t\in \R_+^\nu$, let $\norm{t}_\infty:=\max\{t_i:i\in\bb{1,\nu}\}$. 
			There exists $g:M\to \Rpt $ a continuous function such that 
				\begin{equation*}\begin{aligned}
					\left\{(v,v',x,t)\in V\times V'\times M\times \R_+^\nu:\max(\norm{v},\norm{v'},\norm{t}_\infty)<2g(x)\right\}\subseteq W
				\end{aligned}\end{equation*}	
			Let
				\begin{equation*}\begin{aligned}
					U=\{(v,v',x,t)\in V\times V'\times M\times \R_+^\nu:\max\left(\norm{\alpha_{t/g(x)}(v)},\norm{\alpha_{t/g(x)}(v')}\right)<2g(x) \}
				\end{aligned}\end{equation*}
			The set $U$ is $(\Rpt)^\nu$-equivariant open neighborhood of $V\times V'\times M\times \{0\}$.
		We claim that $\Psi$ is an open smooth embedding on $U$.
		We will show that if $(v,v',x,t),(w,w',x,t)\in U$, then there exists $\lambda\in (\Rpt)^\nu$ such that $\alpha_\lambda(v,v',x,t)$, $\alpha_\lambda(w,w',x,t)\in W$.
		Equivariance of $\Psi$ would imply that $\Psi$ is injective on $U$ and by taking $(v,v',x,t)=(w,w',x,t)$, we deduce that $U\subseteq \bigcup_{\lambda\in(\Rpt)^\nu}\alpha_\lambda(W)$
		which by equivariance of $\Psi$ implies that $\Psi$ is an open embedding on $U$.

		We now prove our claim. Let $(v,v',x,t),(w,w',x,t)\in U$. Let $\epsilon>0$ to be chosen later. 
			We take $\lambda\in (\Rpt)^\nu$ to be $\lambda_i=t_i/g(x)$ if $t_i> 0$ and $\epsilon$ if $t_i=0$.
			We have 
				\begin{equation*}\begin{aligned}
					\alpha_{\lambda}(v,v',x,t)=(\alpha_{\lambda}(v),\alpha_{\lambda}(v'),x,s),\quad \alpha_{\lambda}(w,w',x,t)=(\alpha_{\lambda}(w),\alpha_{\lambda}(w'),x,s),
				\end{aligned}\end{equation*}
			where $s_i=g(x)$ if $t_i>0$ and $s_i=0$ if $t_i=0$.
			Hence, $\norm{s}_{\infty}<2g(x)$.
			As $\epsilon\to 0^+$, $\norm{\alpha_{\lambda}(v)}\to \norm{\alpha_{t/g(x)}(v)}$. Same with $v'$, $w$, $w'$. 
			So, for $\epsilon$ small enough, $\alpha_{\lambda}(v,v',x,t),\alpha_{\lambda}(w,w',x,t)\in W$.

		We can now define $\hat{\phi}$ by the formula 
			$\Psi(-v,\hat{\phi}(v,x,t),x,t)=(-v,0,x,t)$
		whose domain is 
			\begin{equation*}\begin{aligned}
				\dom(\hat{\phi}):=\{(v,x,t)\in V\times M\times \R_+^\nu:(-v,0,x,t)\in \Psi(U)\}.
			\end{aligned}\end{equation*}
		By equivariance of $U,\Psi,T$, we deduce that $\hat{\phi}$ is equivariant.
		For any $x\in M$, since $(0,0,x,0)\in U$, we deduce that $(0,x,0)\in \dom(\hat{\phi})$ which
		by equivariance of $\dom(\hat{\phi})$ implies that $V\times M\times \{0\}\subseteq  \dom(\hat{\phi})$.
		By construction, $\hat{\phi}$ satisfies \Crefitem{lem:sklqdjfkjqsdfkjqsdmkjfkmsqdljf}{1}.
		By \eqref{eqn:product_phi_compsotion_bisub}, if $v\in V'$, then $\Psi(-v,v,x,0)=(-v,0,x,0)$. \Crefitem{lem:sklqdjfkjqsdfkjqsdmkjfkmsqdljf}{2} follows.
	\end{linkproof}

\section{Proofs of the results in Section \ref{sec:bi-graded_tangent_groupoid}}\label{sec:bi-graded_tangent_groupoid_proof}
	\begin{linkproof}{topology_mathbbG}
		Let $(V,\natural)$ and $(V',\natural')$ be $\nu$-graded basis, $X$ a topological space, $f:\mathbb{G}\to X$ a function such that $f_{|M\times M\times \R_+^\nu\setminus \{0\}}$ is continuous.
		We will show that $f\circ \cQ_V$ is continuous if and only if $f\circ \cQ_{V'}$ is continuous.
		By passing through the intermediary $\nu$-graded basis $(V\oplus V',\natural\oplus \natural')$, we can suppose without loss of generality that $(V',\natural')\subseteq (V,\natural)$.
		\begin{itemize}
			\item    Suppose $f\circ \cQ_V$ is continuous. Since $\cQ_V$ restricted to $V'\times M\times \R_+^\nu$ coincides with $\cQ_{V'}$ on their common domain,
					it follows that $f\circ \cQ_{V'}$ is continuous on a neighborhood of $V'\times \cGF^{(0)}\times \{0\}$.
					The map $f\circ \cQ_{V'}$ is continuous on $\dom(\cQ_{V'})\cap V'\times M\times \R_+^\nu\setminus \{0\}$ because $f_{|M\times M\times \R_+^\nu\setminus \{0\}}$ is continuous.
					So, $f\circ \cQ_{V'}$ is continuous on $\dom(\cQ_{V'})$.
			\item 	Suppose $f\circ \cQ_{V'}$ is continuous.
			Let $\phi$ be as in \Cref{thm:change_of_basis_map}. 
			By \eqref{eqn:comm_diag_phi_hat}, $f\circ \cQ_V$ is continuous on $\dom(\phi)$. 
			The map $f\circ \cQ_V$ is continuous on $\dom(\cQ_V)\cap V\times M\times \R_+^\nu\setminus \{0\}$ because $f_{|M\times M\times \R_+^\nu\setminus \{0\}}$ is continuous.
			Since $V\times \cGF^{(0)}\times \{0\}\subseteq \dom(\phi)$, we deduce that $f\circ \cQ_V$ is continuous on $\dom(\cQ_V)$.
		\end{itemize}
		By an identical argument, one shows that if $f:\mathbb{G}\to \C$ is a map such that $f_{|M\times M\times \R_+^\nu\setminus \{0\}}$ is smooth, then 
		$f\circ \cQ_V$ is smooth if and only if $f\circ \cQ_{V'}$ is smooth.
		This finishes the proof of \Crefitem{thm:topology_mathbbG}{invariance}.

		By the Cauchy-Lipschitz theorem, $M\times M\times \R_+^\nu\setminus \{0\}$ is an open subset of $\mathbb{G}$.
		Since $V\times M\times \R_+^\nu\setminus \{0\}$ is an open dense subset of $V\times \mathbb{G}^{(0)}$, 
		one deduces that $M\times M\times \R_+^\nu\setminus \{0\}$ is a dense subset of $\mathbb{G}$. 
		This finishes the proof of \Crefitem{thm:topology_mathbbG}{open_subset}.
		Fix $(V,\natural)$ any $\nu$-graded \textit{Lie} basis.
		Let 
			\begin{equation*}\begin{aligned}
				\cQ_V\sqcup \mathrm{Id}:\dom(\cQ_V)\sqcup (M\times M\times \R_+^\nu\backslash 0)\to \mathbb{G}
			\end{aligned}\end{equation*}
		 be the obvious surjective map.
		By definition, $\mathbb{G}$ is equipped with the quotient topology and smooth structure.
		Therefore, to prove \Crefitem{thm:topology_mathbbG}{topology} and \Crefitem{thm:topology_mathbbG}{subCartesian}, it is enough to show that the equivalence relation on $\dom(\cQ_V)\sqcup (M\times M\times \R_+^\nu\backslash 0)$ of having the same image under $\cQ_V\sqcup \mathrm{Id}$ satisfies the hypothesis of \Cref{thm:quotient_differential_space}.
		This would also imply \Crefitem{thm:topology_mathbbG}{submersion} in the case where the $\nu$-graded basis is Lie. The general case follows from the case of $\nu$-graded Lie basis by \Cref{lem:bigger_bi_graded_Lie} together with 
		\eqref{eqn:comm_diag_phi_hat} and \Cref{prop:composition_submersion}.
	
		Let
		\begin{equation*}\begin{aligned}
				R&=\{(v,\gamma,w,\eta)\in \dom(\cQ_V)\times \dom(\cQ_V):\cQ_V(v,\gamma)=\cQ_V(w,\eta)\}\\
				&=\{(v,x,t,w,x,t):e^{\alpha_t(-w)}e^{\alpha_t(v)}\cdot x= x\}\cup \{(v,L,x,0,w,L,x,0):w^{-1}\cdot v\in \natural_{x,0}^{-1}(L) \}	.	
			\end{aligned}\end{equation*}
		Since the map $\cQ_V$ is a submersion at any point in $\dom(\cQ_V)\cap (M\times M\times \R_+^\nu\backslash 0)$, see \eqref{eqn:domprime_cQV}, it suffices to prove: 
		\begin{enumerate}
			\item     The set $R$ is closed in $\dom(\cQ_V)\times \dom(\cQ_V)$.
		\item 	The projection $(v,\gamma,w,\eta)\in R\mapsto (v,\gamma)\in V\times \mathbb{G}^{(0)}$ is a submersion at every point in the set $R\cap (V\times \cG^{(0)}\times \{0\}\times V\times \cG^{(0)}\times \{0\})$.
		\end{enumerate}
					\begin{lem}[Period bounding lemma \cite{period2}]\label{lem:periodbounding}
				If $X\in \cX(M)$, $K\subseteq M$ a compact subset, then there exists $\epsilon>0$ such that for any $x\in K$, $\tau\in ]0,\epsilon[$, if $e^{\tau X}\cdot x=x$, then $X(x)=0$.
			\end{lem}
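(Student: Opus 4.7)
My plan is to reduce the lemma to a single Gronwall-type estimate together with the Lipschitz character of $X$, sidestepping any case analysis on limit points of $K$ or any linearization near fixed points of $X$. After fixing a Riemannian metric on $M$ (so that the distance $d$ and the norm $|X|$ are defined globally) and a compact neighborhood $K'\supseteq K$, let $L<+\infty$ be a Lipschitz constant for $X$ on $K'$, and pick $\epsilon_1>0$ so that $e^{sX}\cdot x\in K'$ for every $x\in K$ and $|s|\le\epsilon_1$; this is possible by continuity of the flow and the compactness of $K$. I claim that $\epsilon:=\min(\epsilon_1,1/(2L))$ works.

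The key quantitative input is a bound on $M(s):=d(e^{sX}\cdot x,x)$ for $x\in K$ and $s\in[0,\epsilon_1]$. The identity $e^{sX}\cdot x-x=\int_0^s X(e^{uX}\cdot x)\,du$ combined with the Lipschitz bound $|X(e^{uX}\cdot x)|\le|X(x)|+L\,M(u)$ yields the integral inequality
\[
M(s)\;\le\;s\,|X(x)|+L\int_0^s M(u)\,du,
\]
from which the (linear) Gronwall lemma produces the explicit estimate $M(s)\le |X(x)|\,(e^{Ls}-1)/L$.

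To conclude, suppose $e^{\tau X}\cdot x=x$ for some $x\in K$ and $\tau\in(0,\epsilon)$. Writing $0=\tau X(x)+\int_0^\tau\bigl[X(e^{sX}\cdot x)-X(x)\bigr]\,ds$ and bounding the error term via the Lipschitz estimate together with the explicit control on $M(s)$,
\[
\Bigl|\int_0^\tau\bigl[X(e^{sX}\cdot x)-X(x)\bigr]\,ds\Bigr|\;\le\;\int_0^\tau L\,M(s)\,ds\;\le\;|X(x)|\,\frac{e^{L\tau}-1-L\tau}{L}\;\le\;L\tau^2\,|X(x)|,
\]
where the last inequality uses $e^y-1-y\le y^2$ for $y\in[0,1]$. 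This gives $\tau|X(x)|\le L\tau^2|X(x)|$, i.e.\ $(1-L\tau)|X(x)|\le 0$. Since $L\tau<1/2$, the factor $1-L\tau$ is positive, forcing $X(x)=0$. The only real care is in making the Lipschitz constant meaningful globally, which is handled by the Riemannian metric and the compactness of $K$; no flow-box theorem or case split on whether $X$ vanishes at limits is required.
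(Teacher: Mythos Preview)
Your approach is correct in substance and genuinely different from the paper's. The paper argues by contradiction: assuming no uniform $\epsilon$ exists, it extracts sequences $x_n\to x\in K$ and $\epsilon_n\to 0$ with $e^{\epsilon_nX}\cdot x_n=x_n$ and $X(x_n)\neq 0$, passes to a fixed chart around $x$, and applies Rolle's theorem to the scalar function $\tau\mapsto\langle e^{\tau X}\cdot x_n,X(x_n)\rangle$; normalizing $X(x_n)/\lVert X(x_n)\rVert\to v\neq 0$ then yields $\langle v,v\rangle=0$, a contradiction. Your direct Gronwall estimate is more elementary and, unlike the paper's argument, produces an explicit $\epsilon$ in terms of a Lipschitz constant for $X$.

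One point does need attention. The identities $e^{sX}\cdot x-x=\int_0^s X(e^{uX}\cdot x)\,du$ and $0=\tau X(x)+\int_0^\tau\bigl[X(e^{sX}\cdot x)-X(x)\bigr]\,ds$ are Euclidean: on a general manifold one cannot subtract points, nor tangent vectors based at different points, so as written these expressions have no meaning. The first is harmless --- the inequality $M(s)\le s|X(x)|+L\int_0^s M(u)\,du$ already follows from the arc-length bound $M(s)\le\int_0^s|X(e^{uX}\cdot x)|\,du$ together with Lipschitz continuity of the scalar function $p\mapsto|X(p)|$, and this is intrinsic. The second, however, is the heart of the argument and genuinely requires a linear structure. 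The fix is routine: cover $K$ by finitely many coordinate charts, choose $\epsilon_1$ so small that for every $x\in K$ the flow segment $\{e^{sX}\cdot x:0\le s\le\epsilon_1\}$ lies in a single chart (possible by compactness of $K$), let $L$ be a common Lipschitz constant for the coordinate expressions of $X$ over these finitely many charts, and run your estimate verbatim in that chart. This does not change the idea, but it should be said --- especially since you advertise the argument as avoiding any localization.
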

			\begin{proof}
				If $\epsilon$ doesn't exist, then there exists $(x_n,\epsilon_n)_{n\in \N}\subseteq K\times \Rpt$ such that $\epsilon_n\to 0$, $X(x_n)\neq 0$ and $e^{\epsilon_n X}\cdot x_n=x_n$. 
				By passing to a subsequence, we can suppose that $x_n\to x\in K$.
				We fix some chart near $x$. 
				Since the map $\chi:\R\times M\to M$ which maps $(\tau,x)$ to $e^{\tau X}\cdot x$ is a smooth map on its domain of definition, we can suppose that the orbit $e^{\tau X}\cdot x_n$ for $0\leq \tau\leq \epsilon_n$ lies in the fixed chart.
				The function 
					\begin{equation*}\begin{aligned}
					\R\to\R,\quad    \tau\mapsto \langle e^{\tau X}\cdot x_n,X(x_n)\rangle
					\end{aligned}\end{equation*}
				is a smooth function whose values at $0$ and at $\epsilon_n$ agree. 
				Here, $\langle \cdot,\cdot\rangle$ is the inner product inside the coordinate chart we fixed.
				By Rolle's theorem, there exists $0<\tau_n<\epsilon_n$ such that 
					\begin{equation*}\begin{aligned}
						\langle X(e^{\tau_nX}\cdot x_n),X(x_n)\rangle=0.
					\end{aligned}\end{equation*}
				Since $X(x_n)\neq 0$, we can by taking a subsequence suppose that 
					$\frac{X(x_n)}{\norm{X(x_n)}}\to v\neq 0$.
					Since 
					$X(e^{\tau_nX}\cdot x_n)=\odif{\chi}_{(\tau_n,x_n)}(0,X(x_n))$
				and since $\odif{\chi}_{(\tau_n,x_n)}$ converges to the identity map as $n\to +\infty$, it follows that 
	\begin{equation*}\begin{aligned}
							\frac{X(e^{\tau_nX}\cdot x_n)}{\norm{X(x_n)}}\to v.		
	\end{aligned}\end{equation*}
				Hence, $\langle v,v\rangle=0$ which is a contraction.			
			\end{proof}
		\begin{lem}[{\cite{Debord2013}}]\label{thm:periodicbounding}
			Let $\norm{\cdot}$ a Euclidean norm on $V$, $K\subseteq M$ compact. 
			There exists $\epsilon>0$ such that for any $x\in K$, $v\in V$, if 
			$e^v\cdot x=x$, then either $\natural(v)(x)=0$ or $\norm{v}\geq \epsilon$.
		\end{lem}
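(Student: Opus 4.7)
}{Lemma 2.8.2}.}
The plan is to run the argument of \Cref{lem:periodbounding} uniformly over the compact parameter space given by the unit sphere $S \subseteq V$ (which is compact because $V$ is finite dimensional), combined with the compact set $K \subseteq M$. I argue by contradiction. Suppose no such $\epsilon$ exists; then there exist sequences $x_n \in K$ and $v_n \in V$ such that $e^{v_n}\cdot x_n = x_n$, $\natural(v_n)(x_n)\neq 0$, and $\norm{v_n}\to 0$. Write $t_n := \norm{v_n}$ and $w_n := v_n/t_n \in S$. Then, by linearity of $\natural$, the identity $e^{v_n}\cdot x_n = x_n$ becomes $e^{t_n \natural(w_n)}\cdot x_n = x_n$, i.e., the flow of $\natural(w_n)$ closes up at time $t_n$.

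By compactness of $K$ and $S$, after passing to a subsequence, assume $x_n\to x\in K$ and $w_n\to w\in S$. Since the evaluation map $(\tau,u,y)\mapsto e^{\tau\natural(u)}\cdot y$ is smooth in a neighborhood of $(0,w,x)$, for $n$ large the entire orbit $\{e^{\tau\natural(w_n)}\cdot x_n : 0\leq \tau\leq t_n\}$ lies in a fixed coordinate chart around $x$ in which we may use the ambient Euclidean inner product $\langle\cdot,\cdot\rangle$. The function
\begin{equation*}
\tau \longmapsto \bigl\langle e^{\tau\natural(w_n)}\cdot x_n,\, \natural(w_n)(x_n)\bigr\rangle
\end{equation*}
is smooth on $[0,t_n]$ and takes the same value at $0$ and at $t_n$, so Rolle's theorem yields $\tau_n\in(0,t_n)$ with
\begin{equation*}
\bigl\langle \natural(w_n)(e^{\tau_n\natural(w_n)}\cdot x_n),\, \natural(w_n)(x_n)\bigr\rangle = 0.
\end{equation*}

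Now divide by $\norm{\natural(w_n)(x_n)}^2$, which is nonzero by assumption, and set $\eta_n := \natural(w_n)(x_n)/\norm{\natural(w_n)(x_n)}$. Passing to a further subsequence, $\eta_n \to \eta$ with $\norm{\eta}=1$. Using the identity $\natural(w_n)(e^{\tau_n\natural(w_n)}\cdot x_n) = \odif{(e^{\tau_n\natural(w_n)})}_{x_n}\bigl(\natural(w_n)(x_n)\bigr)$, the relation above rewrites as
\begin{equation*}
\bigl\langle \odif{(e^{\tau_n\natural(w_n)})}_{x_n}(\eta_n),\, \eta_n\bigr\rangle = 0.
\end{equation*}
Since $\tau_n \to 0$, $w_n\to w$, and $x_n\to x$, the differential $\odif{(e^{\tau_n\natural(w_n)})}_{x_n}$ converges to the identity uniformly in the chart. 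Passing to the limit gives $\langle \eta,\eta\rangle = 0$, contradicting $\norm{\eta}=1$.

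The main technical point, which is not really an obstacle but deserves care, is the double normalization: first rescaling $v_n = t_n w_n$ to land on the compact parameter space $S$, and then rescaling $\natural(w_n)(x_n)$ by its norm (which a priori could tend to $0$) to extract a nonzero limiting direction $\eta$. After these two normalizations, the classical Rolle/period-bounding trick applies with uniform estimates supplied by compactness of $K\times S$, so the argument of \Cref{lem:periodbounding} goes through essentially verbatim.
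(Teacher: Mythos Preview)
Your proof is correct: the contradiction argument, the double normalization, and the Rolle step all go through as you describe. The identity $\natural(w_n)(e^{\tau_n\natural(w_n)}\cdot x_n) = \odif{(e^{\tau_n\natural(w_n)})}_{x_n}(\natural(w_n)(x_n))$ is just the invariance of a vector field under its own flow, and the uniform convergence of the flow differential to the identity is guaranteed by smooth dependence on parameters over the compact set $S\times K$.

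The paper takes a shorter route. Rather than re-running the proof of \Cref{lem:periodbounding} with an extra parameter $w\in S$, it absorbs the parameter into the base: define a single vector field $X(v,x) = (0,\natural(v)(x))$ on $V\times M$, whose flow is $e^{\tau X}\cdot(v,x) = (v, e^{\tau v}\cdot x)$, and apply \Cref{lem:periodbounding} directly to $X$ on the compact set $\{\norm{v}\leq 1\}\times K$. If $e^{v}\cdot x = x$ with $0<\norm{v}<\epsilon$, write $v = \norm{v}\,w$ with $\norm{w}=1$; then $e^{\norm{v}X}\cdot(w,x) = (w,x)$, so $X(w,x)=0$, i.e.\ $\natural(v)(x)=0$. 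Your approach has the virtue of being self-contained and making the uniformity explicit, while the paper's trick of lifting to $V\times M$ avoids repeating any estimates and reduces the lemma to a one-line corollary.
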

		\begin{proof}
			On $V\times M$, there is a natural vector field $X(v,x)=(0,\natural(v)(x))$
			whose flow is $e^{\tau X}\cdot (v,x)=(v,e^{\tau v}\cdot x)$.
			The result follows from \Cref{lem:periodbounding} applied to $X$ and $ \{v\in V:\norm{v}\leq 1\}\times K$.
		\end{proof}
		
				We now prove that $R$ is closed. It is clear that $R$ is closed away from $t=0$. 
		By \Crefitem{thm:convergence_mathbbG0}{part2}, $\{(v,L,x,0,w,L,x,0):w^{-1}\cdot v\in \natural_{x,0}^{-1}(L) \}$ is closed.
		So, we need to show that if $(v_n)_{n\in \N},(w_n)_{n\in \N}\subseteq V$, $(x_n)_{n\in \N}\subseteq M$, $(t_n)_{n\in \N}\subseteq \R_+^\nu\backslash 0$ are sequences such that 
			\begin{equation*}\begin{aligned}
				v_n\to v,\ w_n\to w,\ x_n\to x,\ t_n\to 0,\ \ker(\natural_{x_n,t_n})\to \natural_{x,0}^{-1}(L),\ e^{\alpha_{t_n}(-w_n)}e^{\alpha_{t_n}(v_n)}\cdot x_n= x_n,
			\end{aligned}\end{equation*}
		 then $w^{-1}\cdot v\in \natural_{x,0}^{-1}(L)$.
		Let $\hat{\psi}$ as in \eqref{eqn:composition_bisub_tilde_psi}. 
		Since $(-w_n,v_n,x_n,t_n)\to (-w,v,x,0)\in \dom(\hat{\psi})$, we deduce that for $n$ big enough, $(-w_n,v_n,x_n,t_n)\in \dom(\hat{\psi})$.
		By \eqref{eqn:product_phi_compsotion_bisub}, $e^{\alpha_{t_n}(\hat{\psi}(-w_n,v_n,x_n,t_n))}\cdot x_n=x_n$.
		Since $\hat{\psi}(-w_n,v_n,x_n,t_n)\to \hat{\psi}(-w,v,x,0)=w^{-1}\cdot v$, we deduce from \Cref{thm:periodicbounding} that for $n$ big enough, $\hat{\psi}(-w_n,v_n,x_n,t_n)\in \ker(\natural_{x_n,t_n})$.
		By \Cref{prop:grass_prop}, $w^{-1}\cdot v\in \natural_{x,0}^{-1}(L)$. This finishes the proof that $R$ is closed.

		Let $v_0,w_0\in V$, $(L_0,x_0)\in \cG^{(0)}$ such that $w_0^{-1}\cdot v_0\in \natural_{x_0,0}^{-1}(L_0)$.
		Consider the smooth manifold 
			\begin{equation*}\begin{aligned}
				A=\{(v,w,L,x,t)\in V\times V\times \Grass(V)\times M\times \R_+^\nu:(-w,v,x,t)\in \dom(\hat{\psi}),\hat{\psi}(-w,v,x,t)\in L\}.		
			\end{aligned}\end{equation*}
		Take any smooth map $g:V\to \R^{\dim(V)-\dim(M)}$ which has the property that its restriction to $v_0\cdot \natural_{x_0,0}^{-1}(L_0)$ is a local diffeomorphism at $w_0$.
		The differential of the map 
			\begin{equation*}\begin{aligned}
				A\to V\times \Grass(V)\times M\times \R_+^\nu\times \R^{\dim(V)-\dim(M)},\quad (v,w,L,x,t)\mapsto (v,L,x,t,g(w))		
			\end{aligned}\end{equation*}
		at $(v_0,w_0,\natural_{x_0,0}^{-1}(L_0),x_0,0)$ is a bijection. It follows that it is a local diffeomorphism at the point $(v_0,w_0,\natural_{x_0,0}^{-1}(L_0),x_0,0)$.
		Therefore, by restricting to $\mathbb{G}^{(0)}$ using the embedding $\Grass(\natural)$ from \eqref{eqn:inclusion_mathbbG0}, the map $(v,\gamma,w,\eta)\in R\mapsto (v,\gamma,g(w))\in V\times \mathbb{G}^{(0)}\times \R^{\dim(V)-\dim(M)}$ is a local diffeomorphism at $(v_0,L_0,x_0,0,w_0,L_0,x_0,0)$.
		This finishes the proof of \Cref{thm:topology_mathbbG}.
	\end{linkproof}

		 \begin{linkproof}{convergence_mathbbG}
			Both parts are proved similarly.
			One direction follows from continuity of $\cQ_{V}$ for any $\nu$-graded basis $(V,\natural)$ which follows from \Crefitem{thm:topology_mathbbG}{invariance}.
			The other follows from the fact that $\cQ_V$ is open which follows from \Crefitem{thm:topology_mathbbG}{submersion}.
		 \end{linkproof}

	\begin{linkproof}{convergence_mathbbG_practical}
		By \Cref{thm:convergence_mathbbG}, it suffices to prove that if $(y_n,x_n,t_n)\to (\natural_{x,0}(v)L,x,0)$ and $(x_n,t_n)\to (L,x,0)$, then the equation $e^{\alpha_{t_n}(v_n)}\cdot x_n=y_n$ admits a solution in $S$ for $n$ big enough that satisfies $v_n\to v$.
		By \Cref{lem:bigger_bi_graded_Lie}, we can suppose that $(V,\natural)$ is a $\nu$-graded Lie basis.
		Notice that by our hypothesis on $S$, $\dim(S)=\dim(M)$.
		The map 
			\begin{equation*}\begin{aligned}
				S\times \natural_{x,0}^{-1}(L)\to V,\quad (s,w)\mapsto sw		
			\end{aligned}\end{equation*}
		is a diffeomorphism because of \Cref{rem:natural_x_0_is_a_lie_algebra_homo}, and that $\natural_{x,0}^{-1}(L)$ is a subgroup of $V$ which contains the subgroup $\ker(\natural_{x,0})$.
		Let $E\to \Grass(V)$ be the tautological real vector bundle whose fiber over $L'\in \Grass(V)$ is $L'$, and $\hat{\psi}$ as in \eqref{eqn:composition_bisub_tilde_psi}.
		By our hypothesis on $S$ and \eqref{eqn:product_phi_compsotion_bisub}, the map 
			\begin{equation}\label{eqn:qsdfkkqsdflkqsdmlfkmùlqsdklfmqskdùf}\begin{aligned}
					S\times E\times M\times \R_+^\nu\to  V\times \Grass(V)\times M\times \R_+^\nu,\quad (s,l,L',y,t)\mapsto (\hat{\psi}(s,l,y,t),L',y,t)
			\end{aligned}\end{equation}
		is a local diffeomorphism at $(v,0,\natural_{x,0}^{-1}(L),x,0)$.
		Now, let $v_n\in V$ be a sequence such that $y_n=e^{\alpha_{t_n}(v_n)}\cdot x_n$ and $v_n\to v$ which exists by \Cref{thm:convergence_mathbbG}.
		So, for $n$ big enough, $(v_n,\ker(\natural_{x_n,t_n}),x_n,t_n)$ is in the image of \eqref{eqn:qsdfkkqsdflkqsdmlfkmùlqsdklfmqskdùf}.
		Hence, there exists a unique $s_n\in S$ and $l_n\in \natural_{x_n,t_n}^{-1}(\ker(\natural_{x_n,t_n}))$ such that $\hat{\psi}(s_n,l_n,x_n,t_n)=v_n$.
		Furthermore, $(s_n,l_n,\ker(\natural_{x_n,t_n}),x_n,t_n)\to (v,0,\natural_{x,0}^{-1}(L),x,0)$.
		So, $s_n\to v$.
		By \eqref{eqn:product_phi_compsotion_bisub}, $y_n=e^{\alpha_{t_n}(v_n)}\cdot x_n=e^{\alpha_{t_n}(s_n)}e^{\alpha_{t_n}(l_n)}\cdot x_n=e^{\alpha_{t_n}(s_n)}\cdot x_n$.
	\end{linkproof}
		 \begin{linkproof}{tangent_groupoid_is_smooth}
		The map $s$ is smooth because $s\circ \cQ_V:\dom(\cQ_V)\subseteq V\times \mathbb{G}^{(0)}\to \mathbb{G}^{(0)}$ is the projection onto the second coordinate. It is a submersion by \Cref{prop:composition_submersion} and \Crefitem{thm:topology_mathbbG}{submersion}.
		The identity map $u:\mathbb{G}^{(0)}\to \mathbb{G}$ is smooth because $u(\gamma)=\cQ_V(0,\gamma)$ and $\cQ_V$ is smooth. 
		Let us check that the inverse map $\iota:\mathbb{G}\to \mathbb{G}$ is smooth.
		It suffices to prove that $\iota\circ \cQ_{V}$ is smooth. 
		By \Cref{lem:r_smooth}, the following map is smooth 
			\begin{equation}\label{eqn:hat_iota}\begin{aligned}
					\hat{\iota}:\dom(\cQ_V)\to  V\times \mathbb{G}^{(0)},\quad \hat{\iota}(v,\gamma)=(-v,r\circ \cQ_V(v,\gamma)).
			\end{aligned}\end{equation}
		One can easily check that the diagram 
			\begin{equation}\label{eqn:comm_diag_hat_iota}\begin{aligned}
				\begin{tikzcd}
					\dom(\cQ_V)\arrow[d,"\cQ_{V}"]\arrow[r,"\hat{\iota}"]&V\times \mathbb{G}^{(0)}\arrow[d,"\cQ_{V}"]\\\mathbb{G}\arrow[r,"\iota"]&\mathbb{G}
					\end{tikzcd}
			\end{aligned}\end{equation}
		commutes. It follows that $\iota$ is smooth. Since $r=s\circ \iota$, it follows that $r$ is smooth.

		Let us show that the multiplication map $m:\mathbb{G}^{(2)}\to \mathbb{G}$ is smooth.
		The restriction of $m$ to the $M\times M\times \R_+^\times$ part of $\mathbb{G}$ is smooth. 
		So, we need to show that the map
			\begin{equation*}\begin{aligned}
				\{(v,\gamma,w,\eta)\in \dom(\cQ_V)\times \dom(\cQ_V):(\cQ_V(v,\gamma),\cQ_V(w,\eta))\in \mathbb{G}^{(2)}\}\to \mathbb{G}\\ (v,\gamma,w,\eta)\mapsto \cQ_V(v,\gamma)\cQ_V(w,\eta)				
			\end{aligned}\end{equation*}
		is smooth. This map is precisely the map $\cQ_V^2$ from \eqref{eqn:QV2_is_QV_times_QV}.
		Near $t=0$, it is smooth by \eqref{eqn:diag_composition_bisub}.
		Away from $t=0$, it is smooth by Cauchy-Lipschitz theorem, see \eqref{eqn:cQv2}.
		Hence, $m$ is smooth.

		It remains to show that $\ker(\odif{s})$ is the same as $A(\mathbb{G})$ constructed in \Cref{sec:cotangent_cone}.
		They clearly are equal as sets.
		Let $(V,\natural)$ be a $\nu$-graded basis. The map $\odif{\cQ_V}:\ker(\odif{(s\circ \cQ_V)})\to \cQ_V^*(\ker(\odif{s}))$ is a smooth submersion because $s$ and $\cQ_V$ are submersions.
		We now restrict it to $\{0\}\times \mathbb{G}^{(0)}$, and we get the map $A(\natural)$ from \eqref{eqn:Anatural}. The result follows.
		\end{linkproof}
		\begin{linkproof}{submersion_cQ}
		By applying \Cref{prop:cor_triangle_isom} to the submersions $\cQ_V:\dom(\cQ_V)\to \mathbb{G}$ and $s:\mathbb{G}\to \mathbb{G}^{(0)}$, 
		we deduce that for any $k\in \C$, we have a natural isomorphism 
			\begin{equation}\label{eqn:skqodjfosdjf1}\begin{aligned}
				\Omega^k(V)\simeq \Omega^k(\ker(\odif{\cQ_V}))\otimes \cQ_V^*(\Omega^{k,0}(\mathbb{G})).
			\end{aligned}\end{equation}
		Let $\hat{\iota}$ as in \eqref{eqn:hat_iota}.
		By \eqref{eqn:comm_diag_hat_iota}, the following diagram commutes 
		\begin{equation} \begin{tikzcd}
			V\times \mathbb{G}^{(0)} \arrow[rd,"r\circ \cQ_V"']\arrow[r, "\hat{\iota}"]    &  V\times \mathbb{G}^{(0)}  \arrow[d,"s\circ \cQ_V"]\\
										  & \mathbb{G}^{(0)} 
		\end{tikzcd}.\end{equation}
		Since $\hat{\iota}$ is an open embedding, $\ker(\odif{(r\circ \cQ_V)})\simeq \hat{\iota}^*(\ker(\odif{(s\circ \cQ_V)}))\simeq \hat{\iota}^*(V)\simeq V$.
		By \Cref{prop:cor_triangle_isom} applied to $r$ and $\cQ_V$, we deduce that 
		\begin{equation}\label{eqn:skqodjfosdjf2}\begin{aligned}
			\Omega^k(V)\simeq \Omega^k(\ker(\odif{\cQ_V}))\otimes \cQ_V^*(\Omega^{0,k}(\mathbb{G})).
		\end{aligned}\end{equation}
		Taking the tensor product of \eqref{eqn:skqodjfosdjf1} and \eqref{eqn:skqodjfosdjf2}, the result follows.
				\end{linkproof}
\section{Proofs of the results in Section \ref{sec:equivariance_of_norms}}\label{sec:equivariance_of_norms_proof}

		\begin{linkproof}{quasi-Lie-Debord Skandalis}
			The map $\alpha_\lambda:\mathbb{G}\to \mathbb{G}$ is well-defined because of \Cref{rem:map_alpha_lambda_G0_well_defined}.	
			It is easily seen to be a groupoid automorphism.
			So, we only need to show that it is smooth.	
			The following diagram commutes 
			\begin{equation} \begin{tikzcd}
				V\times \mathbb{G}^{(0)}\arrow[d,"\cQ_V"]\arrow[r, "\alpha_\lambda"]    &V\times \mathbb{G}^{(0)}    \arrow[d,"\cQ_V"]\\
								\mathbb{G}			\arrow[r, "\alpha_\lambda"]& \mathbb{G}
			\end{tikzcd}\end{equation}
			Hence, $\alpha_\lambda$ is smooth.	
		\end{linkproof}

	\begin{linkproof}{L1normRiemMetric}
		Let $(V,\natural)$ be a $\nu$-graded basis. 
		We need to show that $\omega\circ \cQ_V$ is a smooth section of $\cQ^*_V(\Omega^{\frac{1}{2},-\frac{1}{2}}(\mathbb{G}))$.
		By \Cref{thm:submersion_cQ}, we have a natural isomorphism 
				$\C\simeq \cQ^*_V(\Omega^{\frac{1}{2},-\frac{1}{2}}(\mathbb{G}))$.
		We define a smooth function $f:\tdom(\cQ_V)\to \C$ as follows:
		If $(v,x,t)\in \tdom(\cQ_V)$, then $y\mapsto e^{\alpha_{t}(v)}\cdot y$ is a local diffeomorphism at $x$ which maps $x$ to $e^{\alpha_{t}(v)}\cdot x$.
		So, its differential at $x$ is an isomorphism $L:T_xM\to T_{e^{\alpha_{t}(v)}}M$.
		Let 
			\begin{equation*}\begin{aligned}
				f(v,x,t)=L^*\left(\vol_{e^{\alpha_{t}(v)}\cdot x}^\frac{1}{2}\right)\vol_{x}^{-\frac{1}{2}}
			\end{aligned}\end{equation*}
		The function $f$ is clearly smooth.
		One can check that $f\circ \pi_{V\times \mathbb{G}^{(0)}}:\dom(\cQ_V)\to \C$ corresponds to $w\circ \cQ_V$ using the isomorphism from \Cref{thm:submersion_cQ}.
		This shows that $\omega$ is smooth.
		It is obvious that $\omega$ satisfies \eqref{eqn:omega_symmetry_condition} on $M\times M\times \R_+^\nu\backslash 0$.
		By density, it follows that $\omega$ satisfies \eqref{eqn:omega_symmetry_condition}.
	\end{linkproof}

\section{Proofs of the results in Section \ref{sec:smooth_functions_vanishing_Fourier_Transform}}\label{sec:smooth_functions_vanishing_Fourier_Transform_proof}
	
\begin{linkproof}{algebra_structure_invariant_functions}
		For any $\nu$-graded basis $(V,\natural)$, let
			\begin{equation*}\begin{aligned}
				\mathcal{A}_V:=C^\infty_c(\tdom(\cQ_V),\Omega^1(V))\times C^\infty_c(M\times M\times \R_+^\nu\setminus \{0\},\Omega^\frac{1}{2})
			\end{aligned}\end{equation*}
		equipped with the product topology.
		Given two $\nu$-graded basis $(V,\natural)$ and $(V',\natural')$, we will construct a continuous linear map 
			$T_{V',V}:\mathcal{A}_V\to \mathcal{A}_{V'}$
		such that $q_{V'}\circ T_{V',V}=q_V$.
		Constructing the map $T_{V',V}$ finishes the proof of \Crefitem{thm:algebra_structure_invariant_functions}{invariance}.
		Using the $\nu$-graded basis $(V\oplus V',\natural\oplus \natural')$, we only need to define $T_{V\oplus V',V}$ and $T_{V',V\oplus V'}$ because then we can take $T_{V',V}:=T_{V',V\oplus V'}\circ T_{V\oplus V',V}$.
		So, without loss of generality we can suppose that $(V',\natural')\subseteq (V,\natural)$. 
		We will define $T_{V,V'}$ and $T_{V',V}$.
		Let $\tilde{\phi}$ and $\phi$ be as in \Cref{thm:change_of_basis_map}.
		The following diagram commutes 
		\begin{equation*} \begin{tikzcd}
			\dom(\tilde{\phi})\subseteq V\times M\times \R_+^\nu\arrow[rd,"\pi"']\arrow[r, "\tilde{\phi}"]    & V'\times M\times \R_+^\nu   \arrow[d,"\pi'"]\\
										  &M\times \R_+^\nu 
		\end{tikzcd},\end{equation*} 
		where $\pi,\pi'$ are the obvious projections.
		By \Cref{prop:cor_triangle_isom}, 
			$\Omega^1(\ker(\odif{\tilde{\phi}}))\simeq \Omega^1(V/V')$.
		In particular, we have an integration along the fibers map 
			\begin{equation*}\begin{aligned}
				\tilde{\phi}_*:C^\infty_c(\dom(\phi),\Omega^1(V))\to C^\infty_c(V'\times M\times \R_+^\nu,\Omega^1(V'))	
			\end{aligned}\end{equation*}
		Since $\phi$ is the pullback of $\tilde{\phi}$ as in \Cref{lem:pullback_local_diff}, it follows that we have an integration map 
			\begin{equation*}\begin{aligned}
				\phi_*:C^\infty_c(\dom(\phi),\Omega^1(V))\to C^\infty_c(V'\times \mathbb{G}^{(0)},\Omega^1(V'))
			\end{aligned}\end{equation*}
			which satisfies
			\begin{equation}\label{eqn:phi_hat_phi_tilde_pushforward}\begin{aligned}
				\phi_*(g\circ \pi_{V\times \mathbb{G}^{(0)}})=\tilde{\phi}_*(g)\circ \pi_{V'\times \mathbb{G}^{(0)}},\quad \forall g\in C^\infty_c(\dom(\tilde{\phi}),\Omega^1(V)).		
			\end{aligned}\end{equation}
		Furthermore, by \eqref{eqn:comm_diag_phi_hat}, 
			\begin{equation}\label{eqn:qksjdkofjqsodjfkqsjdjfkqs}\begin{aligned}
				\cQ_{V'*}\circ \phi_*=\cQ_{V*}.		
			\end{aligned}\end{equation}
		We can now define $T_{V,V'}$ and $T_{V',V}$.
		\begin{itemize}
			\item   Let $\chi\in C^\infty(V\times M\times \R_+^\nu)$ equal to $1$ in a neighborhood of $V\times M\times \{0\}$ and $\supp(\chi)\subseteq \dom(\tilde{\phi})$.
					By \eqref{eqn:phi_hat_phi_tilde_pushforward} and \eqref{eqn:qksjdkofjqsodjfkqsjdjfkqs}, 
					we can take 
						\begin{equation*}\begin{aligned}
							T_{V',V}(g,h)=\left(\tilde{\phi}_*(\chi g),h+\cQ_{V*}(((1-\chi)g)\circ \pi_{V\times \mathbb{G}^{(0)}})\right).
						\end{aligned}\end{equation*}
			\item  	Let $\chi\in C^\infty(V'\times M\times \R_+^\nu)$ equal to $1$ in a neighborhood of $V'\times M\times \{0\}$ and $\supp(\chi)\subseteq \im(\tilde{\phi})$.
					Let $k\in C^\infty(\dom(\tilde{\phi}),\Omega^1(V/V'))$ such that $\tilde{\phi}_{|\supp(k)}:\supp(k)\to \im(\tilde{\phi})$ is proper and 
					$\tilde{\phi}_*(k)\in C^\infty(\im(\tilde{\phi}))$ is equal to $1$.
					The function $k$ exists because $\tilde{\phi}$ is a submersion.
					We define 
						\begin{equation*}\begin{aligned}
							T_{V,V'}(g,h)=\left(k ((\chi g)\circ \tilde{\phi}),h+\cQ_{V'*}(((1-\chi)g)\circ \pi_{V'\times \mathbb{G}^{(0)}})\right),
						\end{aligned}\end{equation*}
						where $k ((\chi g)\circ \tilde{\phi})$ is the pointwise multiplication of $k$ and $(\chi g)\circ \tilde{\phi}\in C^\infty(\dom(\tilde{\phi}),\Omega^1(V'))$.
					To see that $q_V\circ T_{V',V}=q_{V'}$, let $l=k( (\chi g)\circ \tilde{\phi})$. We have
						\begin{equation*}\begin{aligned}
							\cQ_{V*}\left(l\circ \pi_{V\times \mathbb{G}^{(0)}}\right)&=\cQ_{V'*}\left( \phi_*\left(l\circ \pi_{V\times \mathbb{G}^{(0)}}\right)\right)&&\text{by }\eqref{eqn:qksjdkofjqsodjfkqsjdjfkqs}\\
							&=\cQ_{V'*}\Big( \phi_*\Big((k\circ \pi_{V\times \mathbb{G}^{(0)}})  ((\chi g)\circ \pi_{V'\times \mathbb{G}^{(0)}}\circ \phi)\Big)\Big)&&\text{by }\eqref{eqn:ksqldjflqdsjfmqsdjf_2}\\
							&=\cQ_{V'*}\left( \phi_*\left(k\circ \pi_{V\times \mathbb{G}^{(0)}} \right) (\chi g)\circ \pi_{V'\times \mathbb{G}^{(0)}}\right)&&\\
							&=\cQ_{V'*}\left(( \tilde{\phi}_*(k) \chi g)\circ \pi_{V'\times \mathbb{G}^{(0)}}\right)&&\text{by }\eqref{eqn:phi_hat_phi_tilde_pushforward}\\
							&=\cQ_{V'*}\left((\chi g)\circ \pi_{V'\times \mathbb{G}^{(0)}}\right).
						\end{aligned}\end{equation*}
					
		\end{itemize}		
		We have thus proved \Crefitem{thm:algebra_structure_invariant_functions}{invariance}.
		Continuity of the inclusion $\cinv\hookrightarrow \cbbG$ is obvious.
		Since $\cinv$ is a quotient of an LF space, we only need to check that $\ker(q_V)$ is closed which is straightforward to see.
		This finishes the proof of \Crefitem{thm:algebra_structure_invariant_functions}{space}.
		By equivariance of $\cQ_V$, we have
		$\alpha_{\lambda}(q_V(g,h))=q_V(\alpha_\lambda(g),\alpha_{\lambda}(h))$. 
		This proves \Crefitem{thm:algebra_structure_invariant_functions}{dilation}.
		\begin{lem}\label{lem:change_open_set_invariant_function}
			Let $(V,\natural)$ be a $\nu$-graded basis, $f\in \cinv$. 
			If $U\subseteq \tdom(\cQ_V)$ is an open neighborhood of $V\times M\times \{0\}$, then there exists $(\tilde{f},\dbtilde{f})\in q_V^{-1}(f)$ such that
			 $\supp(\tilde{f})\subseteq U$.
		\end{lem}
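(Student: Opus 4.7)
The plan is a cut-and-paste argument: take an arbitrary representative of $f$ and transfer the portion of its first component that lies outside $U$ into the second component by pushing it forward via $\cQ_V$. Since that portion is supported away from $V\times M\times\{0\}$, its image under $\cQ_V$ lands in $M\times M\times\R_+^\nu\setminus\{0\}$, where it can be absorbed into $\dbtilde{f}$ without changing $q_V$.

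More concretely, I would proceed as follows. First, pick any $(\tilde{f}_0,\dbtilde{f}_0)\in q_V^{-1}(f)$. Next, using \Cref{prop:parition_of_unity} on the differential space $V\times M\times \R_+^\nu$, choose a smooth cutoff $\chi\in C^\infty(V\times M\times \R_+^\nu)$ with $\supp(\chi)\subseteq U$ and such that $\chi\equiv 1$ on some open neighborhood $W$ of $V\times M\times\{0\}$ with $\overline{W}\subseteq U$. Split
\begin{equation*}
  \tilde{f}_0 = \chi\tilde{f}_0 + (1-\chi)\tilde{f}_0,
\end{equation*}
and set $\tilde{f}:=\chi\tilde{f}_0$ and
\begin{equation*}
  \dbtilde{f}:=\dbtilde{f}_0 + \cQ_{V*}\bigl(((1-\chi)\tilde{f}_0)\circ\pi_{V\times\mathbb{G}^{(0)}}\bigr).
\end{equation*}
By construction $\tilde{f}\in C^\infty_c(\tdom(\cQ_V),\Omega^1(V))$ has support contained in $U$, and
\begin{equation*}
  q_V(\tilde{f},\dbtilde{f}) = \cQ_{V*}\bigl(\tilde{f}_0\circ\pi_{V\times\mathbb{G}^{(0)}}\bigr) + \dbtilde{f}_0 = f.
\end{equation*}

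The only thing left to verify is that $\dbtilde{f}$ genuinely lies in $C^\infty_c(M\times M\times \R_+^\nu\setminus\{0\},\omegahalf)$. Since $(1-\chi)\tilde{f}_0$ vanishes on the neighborhood $W$ of $V\times M\times\{0\}$ and is compactly supported in $\tdom(\cQ_V)$, its support $K$ is a compact subset of $V\times M\times\R_+^\nu\setminus\{0\}$. On this open subset the map $\pi_{V\times\mathbb{G}^{(0)}}$ restricts to the identity, so $((1-\chi)\tilde{f}_0)\circ\pi_{V\times\mathbb{G}^{(0)}}$ is a compactly supported smooth section on the open subset $V\times M\times\R_+^\nu\setminus\{0\}$ of $\dom(\cQ_V)$. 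There $\cQ_V$ acts by $(v,x,t)\mapsto (e^{\alpha_t(v)}\cdot x,x,t)$ and lands in $M\times M\times \R_+^\nu\setminus\{0\}$, and its restriction to $K$ is proper, so the pushforward is a compactly supported smooth half-density supported in $M\times M\times \R_+^\nu\setminus\{0\}$, as required. There is no real obstacle here; this is essentially a bookkeeping argument built on the fact that $\cQ_V$ already identifies the open stratum $V\times M\times\R_+^\nu\setminus\{0\}$ with a subset of $M\times M\times\R_+^\nu\setminus\{0\}$, so the "tail" of $\tilde{f}_0$ outside $U$ can always be re-expressed as a genuine smooth half-density on the open part of $\mathbb{G}$.
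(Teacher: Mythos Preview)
Your proposal is correct and follows exactly the same argument as the paper: pick an arbitrary preimage, multiply by a cutoff $\chi$ supported in $U$ that equals $1$ near $V\times M\times\{0\}$, and push the leftover $(1-\chi)\tilde{f}_0$ forward via $\cQ_V$ into the second slot. Your justification that this pushforward lands in $C^\infty_c(M\times M\times\R_+^\nu\setminus\{0\},\omegahalf)$ is in fact slightly more detailed than the paper's.
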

		\begin{proof}
			Let $\chi\in C^\infty(V\times M\times \R_+^\nu)$ which is equal to $1$ in a neighborhood of $V\times M\times \{0\}$ and $\supp(\chi)\subseteq U$.
			If $(\tilde{f},\dbtilde{f})\in q_V^{-1}(f)$ 
			then, $\cQ_{V*}((\tilde{f}(1-\chi))\circ \pi_{V\times \mathbb{G}^{(0)}})\in C^\infty_c(M\times M\times \R_+^\nu\setminus \{0\},\omegahalf)$.
			So, $(\tilde{f}\chi,\dbtilde{f}+\cQ_{V*}((\tilde{f}(1-\chi))\circ \pi_{V\times \mathbb{G}^{(0)}}))\in q_V^{-1}(f)$.
		\end{proof}
		We fix a $\nu$-graded Lie basis $(V,\natural)$.
		The space $C^\infty_c(M\times M\times \R_+^\nu\setminus \{0\},\omegahalf)$ is two-sided ideal of $\cbbG$.
		So, to prove \Crefitem{thm:algebra_structure_invariant_functions}{algebra},	we only need to consider elements of the form $\cQ_{V*}(g\circ \pi_{V\times \mathbb{G}^{(0)}})$.
		By \Cref{lem:change_open_set_invariant_function}, we can suppose that $\supp(g)\subseteq \tdom(\cQ_V)$.
		Let $\hat{\iota}$ as in \eqref{eqn:hat_iota}.
		We also define 
			\begin{equation*}\begin{aligned}
				\tilde{\iota}:\tdom(\cQ_V)\to V\times M\times \R_+^\nu,\quad \tilde{\iota}(v,x,t)=(-v,e^{\alpha_{t}(v)}\cdot x,t).
			\end{aligned}\end{equation*}
		Both $\tilde{\iota}$ and $\hat{\iota}$ are open embeddings, and $\hat{\iota}$ is the pullback of $\tilde{\iota}$ as in \Cref{lem:pullback_local_diff}. 
		So, by \eqref{eqn:integration_pullback} and \eqref{eqn:comm_diag_hat_iota},
			\begin{equation*}\begin{aligned}
				\iota_*(\cQ_{V*}(g\circ \pi_{V\times \mathbb{G}^{(0)}}))=\cQ_{V*}(\hat{\iota}_*(g\circ \pi_{V\times \mathbb{G}^{(0)}}))=\cQ_{V*}(\tilde{\iota}_*(g)\circ \pi_{V\times \mathbb{G}^{(0)}}).
			\end{aligned}\end{equation*}
		Hence, $\cinv$ is closed under taking adjoints.
		
		Let us prove that $\cinv$ is closed under taking convolution.
		Let $\tilde{\psi},\psi$ as in \Cref{thm:composition_bisub}.
		By \eqref{eqn:composition_bisub_tilde_psi}, the following diagram commutes:
		\begin{equation*} \begin{tikzcd}
			\dom(\tilde{\psi})\subseteq V\times V\times M\times \R_+^\nu\arrow[rd,"\pi"']\arrow[r, "\tilde{\psi}"]    & V\times M\times \R_+^\nu  \arrow[d,"\pi'"]\\
										  &M\times \R_+^\nu
		\end{tikzcd},\end{equation*} 
		where $\pi,\pi'$ are the obvious projections. By \Cref{prop:cor_triangle_isom}, we deduce that $\ker(\odif{\tilde{\psi}})\simeq \Omega^1(V)$.
		Hence, we have an integration along the fibers map 
			\begin{equation*}\begin{aligned}
				\tilde{\psi}_*:C^\infty_c(\dom(\tilde{\psi}),\Omega^2(V))\to C^\infty_c(V\times M\times \R_+^\nu,\Omega^1(V)).		
			\end{aligned}\end{equation*}
		Since $\psi$ is the pullback of $\tilde{\psi}$, by \Cref{lem:pullback_local_diff}, we have an integration along the fibers map 
		\begin{equation*}\begin{aligned}
			\psi_*:C^\infty_c(\dom(\psi),\Omega^2(V))\to C^\infty_c(V\times \mathbb{G}^{(0)},\Omega^1(V))
		\end{aligned}\end{equation*}
		which satisfies
			\begin{equation}\label{eqn:integration_pullback_hat_psi}\begin{aligned}
				\psi_*(g\circ \pi_{V\times \mathbb{G}^{(0)}})=\tilde{\psi}_*(g)\circ \pi_{V\times \mathbb{G}^{(0)}},\quad \forall g\in C^\infty_c(\dom(\tilde{\psi}),\Omega^2(V)).		
			\end{aligned}\end{equation}
		By  \eqref{eqn:diag_composition_bisub} and \Cref{prop:cor_triangle_isom}, we deduce that 
			\begin{equation*}\begin{aligned}
				\Omega^1(\ker(\odif{\cQ_V^2}))\simeq \psi^*(\Omega^1(\ker(\odif{\cQ_V})))\otimes \Omega^1(\ker(\odif{\psi}))\simeq \psi^*(\Omega^1(\ker(\odif{\cQ_V})))\otimes \Omega^1(V).
			\end{aligned}\end{equation*}
		We deduce from pulling back \eqref{eqn:integration_denstiy_iso_exponential} with $k=l=\frac{1}{2}$ using $\psi$ that
			\begin{equation*}\begin{aligned}
				\cQ_V^{2*}(\Omega^{\frac{1}{2}}(\mathbb{G}))\otimes \Omega^1(\ker(\odif{\cQ_V^2}))	&\simeq \psi^{*}(\cQ_V^*(\Omega^{\frac{1}{2}}(\mathbb{G})))\otimes \Omega^1(\ker(\odif{\cQ_V^2}))\\
				&\simeq  \psi^{*}(\cQ_V^*(\Omega^{\frac{1}{2}}(\mathbb{G})))\otimes\psi^*(\Omega^1(\ker(\odif{\cQ_V})))\otimes \Omega^1(V)\\
				&\simeq \psi^*(\Omega^1(V))\otimes \Omega^1(V)\simeq \Omega^2(V).
			\end{aligned}\end{equation*}
		In particular, we have an integration along the fibers map 
			\begin{equation*}\begin{aligned}
				\cQ_{V*}^{2}:C^\infty_c(\dom(\cQ_V^2),\Omega^2(V))\to \cbbG	
			\end{aligned}\end{equation*}		
		which by \eqref{eqn:diag_composition_bisub} satisfies 
			\begin{equation}\label{eqn:klqsjdkfjqsmd}\begin{aligned}
						\cQ_{V*}^2=\cQ_{V*}\circ \psi_*
			\end{aligned}\end{equation}
			In the following lemma, we insist on equivariance. More precisely, we prove that $U$ satisfies \eqref{eqn:stron_equivariance_U}.
			 This won't be needed in the proof of this theorem, but it will be needed later on in the proof of \Cref{thm:equivariance_Cn_norm} and \Cref{thm:estimate_weakly_commut_convolution}.
		\begin{lem}\label{lem:support_convolution_invariant_functions}
			For any $(\Rpt)^\nu$-equivariant open neighborhood $L\subseteq V\times V\times M\times \R_+^\nu$ of $V\times V\times M\times \{0\}$,
			there exists an open neighborhood $U\subseteq \tdom(\cQ_V)$ of $V\times M\times \{0\}$ such that if $(w,x,t)\in U$ and $(v,e^{\alpha_{t}(w)}\cdot x,t)\in U$, then $(v,w,x,t)\in L$. Furthermore, $U$ satisfies the following: 
				\begin{equation}\label{eqn:stron_equivariance_U}\begin{aligned}
					\forall v\in V,x\in M,\lambda,t\in \R_+^\nu, \text{ if }(v,x,\lambda t)\in U,\text{ then }(\alpha_\lambda(v),x,t)\in U.
				\end{aligned}\end{equation}
			In particular, $U$ is $(\Rpt)^\nu$-equivariant.
		\end{lem}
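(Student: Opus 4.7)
The plan is to take
\begin{equation*}
U=\{(v,x,t)\in\tdom(\cQ_V):\norm{\alpha_t(v)}<\delta(x)\}
\end{equation*}
for a fixed norm $\norm{\cdot}$ on $V$ compatible with the weight decomposition, and a continuous function $\delta\colon M\to\Rpt$ to be chosen. The strong equivariance \eqref{eqn:stron_equivariance_U} is then immediate, since the identity $\alpha_t\circ\alpha_\lambda=\alpha_{\lambda t}$ holds for all $\lambda,t\in\R_+^\nu$ (this is the whole point of the conventions $0^0=1$, $0^a=0$), and since membership in $\tdom(\cQ_V)$ depends on $v$ and $t$ only through $\alpha_t(v)$; both the inequality $\norm{\alpha_{\lambda t}(v)}<\delta(x)$ and the domain condition are preserved under the substitution $(v,x,\lambda t)\rightsquigarrow(\alpha_\lambda(v),x,t)$.

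To choose $\delta$, I first establish the pointwise inclusion: for every $x_0\in M$ there exist a compact neighborhood $K$ of $x_0$ and $\delta_0>0$ such that
\begin{equation*}
\{(v,w,x,t)\in V\times V\times K\times\R_+^\nu:\norm{\alpha_t(v)}<\delta_0,\ \norm{\alpha_t(w)}<\delta_0\}\subseteq L.
\end{equation*}
This I prove by contradiction using the $(\Rpt)^\nu$-equivariance of $L$. A putative sequence $(v_n,w_n,x_n,t_n)\notin L$ with $x_n\to x_0$ and $\epsilon_n:=\max(\norm{\alpha_{t_n}(v_n)},\norm{\alpha_{t_n}(w_n)},1/n)\to 0^+$ must have $t_n\neq 0$, for otherwise $(v_n,w_n,x_n,0)$ already lies in $V\times V\times M\times\{0\}\subseteq L$. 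Setting $N:=\max\{k_1+\cdots+k_\nu:V^k\neq 0\}$, which is finite by finite-dimensionality of $V$, and partitioning $\{1,\dots,\nu\}=I_n\sqcup J_n$ with $I_n:=\{i:t_{n,i}>0\}$, define $\lambda_n\in(\Rpt)^\nu$ by
\begin{equation*}
\lambda_{n,i}:=
\begin{cases}
\epsilon_n^{-1/(2N)}\,t_{n,i}, & i\in I_n,\\
\epsilon_n^{1/(2N)}, & i\in J_n.
\end{cases}
\end{equation*}
A weight-by-weight check shows $\alpha_{\lambda_n}(v_n)\to 0$, $\alpha_{\lambda_n}(w_n)\to 0$ and $\lambda_n^{-1}t_n\to 0$, so $\alpha_{\lambda_n}(v_n,w_n,x_n,t_n)\to(0,0,x_0,0)\in L$; openness of $L$ and the $(\Rpt)^\nu$-equivariance of $L$ then force $(v_n,w_n,x_n,t_n)\in L$ for large $n$, a contradiction.

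To assemble the global $\delta$, I use paracompactness of $M$ to extract a countable locally finite subcover by interiors $W_n\Subset K_n$ with associated constants $\delta_n>0$ satisfying the pointwise inclusion, and construct a continuous $\delta\colon M\to\Rpt$ with $\delta(x)\leq\delta_n$ for every $x\in K_n$ (such $\delta$ exists by minorizing the positive lower semicontinuous function $x\mapsto\tfrac{1}{2}\min\{\delta_n:x\in K_n\}$ on the paracompact space $M$). Shrinking $\delta$ further if needed, I ensure the flow estimate that $(w,x,t)\in U$ with $x\in W_n$ forces $e^{\alpha_t(w)}\cdot x\in K_n$, which follows from the $C^0$-boundedness of the generating vector fields on compact subsets. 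Then for $(w,x,t),(v,e^{\alpha_t(w)}\cdot x,t)\in U$, picking $n$ with $x\in W_n$ we obtain $\norm{\alpha_t(w)}<\delta(x)\leq\delta_n$, the flow estimate gives $e^{\alpha_t(w)}\cdot x\in K_n$, hence $\norm{\alpha_t(v)}<\delta(e^{\alpha_t(w)}\cdot x)\leq\delta_n$, and the pointwise inclusion on $K_n$ delivers $(v,w,x,t)\in L$.

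The main technical obstacle is the weighted scaling estimate: in the boundary case $J_n\neq\emptyset$, weights $k$ with some $k_j\neq 0$ for $j\in J_n$ produce components $v_{n,k}$ that are not controlled by $\norm{\alpha_{t_n}(v_n)}$, and the role of the contracting factor $\prod_{j\in J_n}\epsilon_n^{k_j/(2N)}$ coming from the $J_n$-part of $\lambda_n$ is precisely to compensate these uncontrolled directions; the exponent $1/(2N)$ is the calibration that works simultaneously for all weights, whose totality is finite thanks to $\dim V<\infty$.
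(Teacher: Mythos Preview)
Your overall architecture matches the paper's: take $U=\{(v,x,t)\in\tdom(\cQ_V):\norm{\alpha_t(v)}<\delta(x)\}$, reduce to a local inclusion of the form $\{\max(\norm{\alpha_t(v)},\norm{\alpha_t(w)})<\delta_0,\ x\in K\}\subseteq L$, and then handle the flow displacement by a continuity estimate. The strong equivariance and the global assembly are fine.

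The gap is in your scaling argument for the pointwise inclusion. For a ``bad'' weight $k$ (i.e.\ one with $k_j>0$ for some $j\in J_n$) you have $t_n^k=0$, hence $\norm{\alpha_{t_n}(v_n)}<\epsilon_n$ imposes \emph{no} constraint whatsoever on $v_{n,k}$; it could be $n^{n}$. Your contracting factor $\prod_{j\in J_n}\epsilon_n^{k_j/(2N)}$ goes to zero, but it is multiplied by an a priori unbounded quantity, so $\lambda_n^k v_{n,k}$ need not converge (and the remaining factor $\prod_{i\in I_n}t_{n,i}^{k_i}$ is also uncontrolled). The sentence ``the role of the contracting factor \dots\ is precisely to compensate these uncontrolled directions'' is therefore unjustified: a fixed rate $\epsilon_n^{1/(2N)}$, determined before seeing $v_n$, cannot dominate an arbitrary sequence.

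The fix is simple and is essentially what the paper does (compare the analogous step in the proof of the change-of-basis lemma): drop the contradiction framing and, for a \emph{fixed} $(v,w,x,t)$ with $\norm{\alpha_t(v)},\norm{\alpha_t(w)}<\delta_0$, choose $\lambda\in(\Rpt)^\nu$ with $\lambda_i=t_i/s$ for $i\in I$ and $\lambda_j=\epsilon$ for $j\in J$, where $s$ is a fixed small number and $\epsilon\to 0^+$. Then $\lambda^{-1}t$ has all entries $\leq s$, and as $\epsilon\to 0^+$ one has $\alpha_\lambda(v)\to\alpha_{t/s}(v)$; the good components of the latter are bounded by $s^{-N}\delta_0$ while the bad ones vanish. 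Choosing first $s$ small (so that $\lambda^{-1}t$ lies in the $t$-box of a product neighborhood of $(0,0,x_0,0)$ contained in $L$) and then $\delta_0$ small relative to $s$ puts $\alpha_\lambda(v,w,x,t)$ inside that box for $\epsilon$ small; equivariance of $L$ finishes. The key difference from your argument is that here $\epsilon$ is allowed to depend on the particular $(v,w)$, which is exactly what is needed to kill the uncontrolled components.
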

		\begin{proof}
			Let $\norm{\cdot}$ be a Euclidean norm on $V$.
			Since $\tdom(\cQ_V)$ and $L$ are equivariant open neighborhoods of $\{0\}\times M\times \{0\}\subseteq V\times M\times \R_+^\nu$ and $\{0\}\times \{0\}\times M\times \{0\}\subseteq V\times V\times  M\times \R_+^\nu$ respectively, it 
			follows that there exists a continuous function $\chi:M\to \Rpt$ such that 
				\begin{equation*}\begin{aligned}
					&\{(v,x,t)\in V\times M\times \R_+^\nu:\norm{\alpha_t(v)}<2\chi(x)\}\subseteq \tdom(\cQ_V)\\ &\{(v,w,x,t)\in V\times M\times \R_+^\nu:\max(\norm{\alpha_t(v)},\norm{\alpha_t(w)})<2\chi(x)\}\subseteq L.
				\end{aligned}\end{equation*}
			By a compactness argument, there exists a continuous function $\kappa:M\to \Rpt$ such that for any $x\in M$, $v\in V$ if $\norm{v}< \kappa(x)$, then $\chi(e^{v}\cdot x)<2\chi(x)$.
			We take $U=\{(v,x,t):\norm{\alpha_t(v)}<\min(\chi(x),\kappa(x))\}$.
		\end{proof}
		By \Cref{lem:support_convolution_invariant_functions} applied to $L=\dom(\tilde{\psi})$, we get an equivariant open set $U\subseteq \tdom(\cQ_V)$.
		Let $g_1,g_2\in C^\infty_c(\tdom(\cQ_V),\Omega^1(V))$ such that $\supp(g_1)\subseteq U$ and $\supp(g_2)\subseteq U$.
		We define
			\begin{equation*}\begin{aligned}
				\quad h(v,w,x,t)=g_1(v,e^{\alpha_{t}(w)}\cdot x,t)g_2(w,x,t)\in C^\infty_c(\dom(\tilde{\psi}),\Omega^2(V))		
			\end{aligned}\end{equation*}	
		We have
			\begin{equation}\label{eqn:skijokfojspqkodfjpqskdjfqjspdf}\begin{aligned}
				\cQ_{V*}(g_1\circ \pi_{V\times \mathbb{G}^{(0)}})	\ast \cQ_{V*}(g_2\circ \pi_{V\times \mathbb{G}^{(0)}})&=	\cQ_{V*}^2(h\circ \pi_{V\times V\times \mathbb{G}^{(0)}})&&\text{by }\eqref{eqn:QV2_is_QV_times_QV}\\
				&=\cQ_{V*}(\psi_*(h \circ \pi_{V\times V\times \mathbb{G}^{(0)}}))&&\text{by }\eqref{eqn:klqsjdkfjqsmd}\\
				&=\cQ_{V*}(\tilde{\psi}_*(h)\circ \pi_{V\times \mathbb{G}^{(0)}})&&\text{by }\eqref{eqn:integration_pullback_hat_psi}
			\end{aligned}\end{equation}
		So, $\cinv$ is closed under convolution.
	\end{linkproof}
	\begin{rem}\label{rem:product_and _adjoint_invariant_functions}
		In the proof of \Crefitem{thm:algebra_structure_invariant_functions}{algebra}, using \eqref{eqn:product_phi_compsotion_bisub} and \eqref{eqn:composition_bisub_tilde_psi}, we proved that if $(V,\natural)$ is a $\nu$-graded Lie basis, then
		\begin{itemize}
			\item     for any $g_1,g_2\in C^\infty_c(\tdom(\cQ_V),\Omega^1(V))$, we can find a $g\in C^\infty_c(\tdom(\cQ_V),\Omega^1(V))$ such that 
				\begin{equation*}\begin{aligned}
					\cQ_{V*}(g_1\circ\pi_{V\times \mathbb{G}^{(0)}})\ast \cQ_{V*}(g_2\circ \pi_{V\times \mathbb{G}^{(0)}})- \cQ_{V*}(g\circ \pi_{V\times \mathbb{G}^{(0)}})\in C^\infty_c(M\times M\times \R^\nu_+\backslash 0,\omegahalf),\\
					g(v,x,0)=\int_{w\in V}g_1(w,x,0)g_2(w^{-1}v,x,0),\quad \forall v\in V,x\in M
				\end{aligned}\end{equation*}			
			\item for any $g\in C^\infty_c(\tdom(\cQ_V),\Omega^1(V))$, we can find an $h\in C^\infty_c(\tdom(\cQ_V),\Omega^1(V))$ such that 
				\begin{equation*}\begin{aligned}
					\cQ_{V*}(g\circ\pi_{V\times \mathbb{G}^{(0)}})^*- \cQ_{V*}(h\circ \pi_{V\times \mathbb{G}^{(0)}})\in C^\infty_c(M\times M\times \R^\nu_+\backslash 0,\omegahalf)\\
					h(v,x,0)=\overline{g(-v,x,0)},\quad \forall v\in V,x\in M
				\end{aligned}\end{equation*}
		\end{itemize}
	\end{rem}
	
	\begin{linkproof}{compatability_cinvdiv_cinv}
		Since $\delta_{u\circ \pi_{\mathbb{G}^{(0)}}}^*=\delta_{\overline{u}\circ \pi_{\mathbb{G}^{(0)}}}$ and $\theta_k(X)^*=-\theta_k(X)$, and using \Crefitem{thm:algebra_structure_invariant_functions}{algebra},
		we only need to consider 
		right multiplication by invariant functions.
		By \Crefitem{prop:proper_support_distribution}{2}, $ C^\infty_c(M\times M\times \R_+^\nu\backslash 0,\omegahalf)$ is a two-sided ideal in $C^{-\infty}_{r,s}(\mathbb{G},\omegahalf)$.
		So in the proof of \Cref{thm:compatability_cinvdiv_cinv}, we only need to consider invariant functions of the form $\cQ_{V*}(g\circ \pi_{V\times \mathbb{G}^{(0)}})$.
		Let $(V,\natural)$ be a $\nu$-graded Lie basis. We have	
				\begin{equation*}\begin{aligned}
				\delta_{u\circ \pi_{\mathbb{G}^{(0)}}}\ast \cQ_{V*}(g\circ \pi_{V\times \mathbb{G}^{(0)}})=\cQ_{V*}(l\circ \pi_{V\times \mathbb{G}^{(0)}}),
			\end{aligned}\end{equation*}
		where $l(v,x,t)=u(e^{\alpha_{t}(v)}\cdot x,t) g(v,x,t)$.
		This proves \Crefitem{thm:compatability_cinvdiv_cinv}{inclusion_smooth_cinvdiv}.
		For \Crefitem{thm:compatability_cinvdiv_cinv}{inclusion_vectorfield_cinvdiv}, let $k\in \Z_+^\nu$ and $X\in \cF^{k}$.
		By the definition of a $\nu$-graded basis, we can find a finite family $(f_i)_{i\in I}\subseteq C^\infty(M)$, $(v_{i})_{i\in I}\subseteq V$ such that $v_i\in V^{a(i)}$ with $a(i)\preceq k$ and 
			$\natural(X)=\sum_{i\in I}	f_i \natural(v_i)$.
		In particular, 
			\begin{equation}\label{eqn:qskdofjpokqsdjfmjkmds}\begin{aligned}
				\theta_{k}(X)=		\sum_{i\in I}	\delta_{\left(f_i t^{k-a(i)}\right)\circ \pi_{\mathbb{G}^{(0)}}}\ast \theta_{a(i)}(\natural(v_i)).	
			\end{aligned}\end{equation}
		So, by \Crefitem{thm:compatability_cinvdiv_cinv}{inclusion_smooth_cinvdiv}, we need to show that if $v_0\in V^{k}$, then $\theta_k(\natural(v_0))\ast \cinv\subseteq \cinv$.
		Let $X=\natural(v_0)$, $\tilde{\psi}$ and $\psi$ be as in \Cref{thm:composition_bisub}, $\tilde{X},\hat{X}$ be partially-defined vector fields on $V\times M\times \R_+^\nu$ and $V\times \mathbb{G}^{(0)}$ respectively by the formulas 
			\begin{equation}\label{eqn:kqlsdjkfmjqsmldjflmjsqdklmfjkmlsdqjkmlfj}\begin{aligned}
				\tilde{X}(v,x,t)= \odv*{\tilde{\psi}(\tau v_0,v,x,t)}{\tau}_{\tau=0},\quad 		\hat{X}(v,\gamma)= \odv*{\psi(\tau v_0,v,\gamma)}{\tau}_{\tau=0}.
			\end{aligned}\end{equation}
		Here, we use \Crefitem{thm:composition_bisub}{4} to ensure that $\tilde{X}$ and $\hat{X}$ are vector fields.
		The vector fields $\tilde{X}$ and $\hat{X}$ are well-defined in open neighborhoods of $V\times M\times \{0\}$ and $V\times \cGF^{(0)}\times \{0\}$.
		We claim that 
			\begin{equation}\label{eqn:qklsdfjas}\begin{aligned}
				\odif{\cQ_V}(\hat{X})=\theta_{k}(X)_R\circ \cQ_V		
			\end{aligned}\end{equation}
		where $\theta_{k}(X)_R$ is the right-invariant vector field associated to $ \theta_{k}(X)$ by \Cref{prop:left_right_invariant_sections}.
		By density of $V\times M\times \R_+^\nu\backslash 0$ in $V\times \mathbb{G}^{(0)}$, it is enough to check \eqref{eqn:qklsdfjas} for $t\neq 0$.
		We have for $t\neq 0$,
			\begin{equation*}\begin{aligned}
				\odif{\cQ_V}(\hat{X})(v,x,t)&=\odv*{\cQ_V(\psi(\tau v_0,v,x,t))}{\tau}_{\tau=0}\\
				&=	\odv*{(e^{\tau t^{k} v_0}e^{\alpha_{t}(v)}\cdot x,x,t)}{\tau}_{\tau=0}=\theta_{k}(X)_R(e^{\alpha_t(v)}\cdot x,x,t)=\theta_{k}(X)_R(\cQ_V(v,x,t)),
			\end{aligned}\end{equation*}
		where we used \eqref{eqn:diag_composition_bisub} in the second equality and \eqref{eqn:left_right_invariant_vector_fields_example} in the third equality.
		So, it follows that if $g\in C^\infty_c(\tdom(\cQ_V),\Omega^1(V))$ and $\supp(g)$ is a subset of the domain of definition of $\tilde{X}$ which we can suppose by \Cref{lem:change_open_set_invariant_function}, then
			\begin{equation}\label{eqn:qjkshdmfjkqsdjfjqmsdfjmqsdjfmqsd}\begin{aligned}
		\theta_{k}(X)\ast \cQ_{V*}(g\circ \pi_{V\times \mathbb{G}^{(0)}})=		\cL_{\theta_{k}(X)_R}\left(\cQ_{V*}(g\circ \pi_{V\times \mathbb{G}^{(0)}})\right)
		&=\cQ_{V*}\left(\cL_{\hat{X}}( g\circ \pi_{V\times \mathbb{G}^{(0)}})\right)\\
		&=\cQ_{V*}\left(\cL_{\tilde{X}}( g)\circ \pi_{V\times \mathbb{G}^{(0)}}\right)
			\end{aligned}\end{equation}
		This finishes the proof of \Crefitem{thm:compatability_cinvdiv_cinv}{inclusion_vectorfield_cinvdiv}.
	\end{linkproof}
	\begin{rem}\label{rem:more_details_on_proof}
		The proof of \Crefitem{thm:compatability_cinvdiv_cinv}{inclusion_vectorfield_cinvdiv} shows that if $(V,\natural)$ is a $\nu$-graded Lie basis, $v_0\in V^k$, and $f\in C^\infty_c(\tdom(\cQ_V),\Omega^1(V))$, then
		we can find $g\in C^\infty_c(\tdom(\cQ_V),\Omega^1(V))$ such that 
			\begin{equation*}\begin{aligned}
				\cQ_{V*}(g\circ \pi_{V\times \mathbb{G}^{(0)}})-\theta_{k}(\natural(v_0))\ast \cQ_{V*}(f\circ \pi_{V\times \mathbb{G}^{(0)}})		\in C^\infty_c(M\times M\times \R^\nu_+\backslash 0,\omegahalf)\\
				g(v,x,0)=\mathcal{L}_{X}(f)(v,x,0),
			\end{aligned}\end{equation*}
		where $X\in \cX(V)$ is the right-invariant vector field on $V$ associated to $v_0$.
	\end{rem}

	\begin{linkproof}{equivariance_Cn_norm}
		Let $(V,\natural)$ be a $\nu$-graded Lie basis.
		A critical observation in the proof is that $\alpha_\lambda:V\to V$ is well-defined for $\lambda\in \R_+^\nu$.
		By taking adjoint, it suffices to only consider the function 
				$ \lambda\mapsto  g\ast\alpha_\lambda(f)$.		
		We will show that for each $C>0$, the function $\lambda\mapsto g\ast \alpha_\lambda(f)$ extends smoothly to $\lambda\in [0,C]^\nu$.
		So, we fix $C>0$.
		We first show that the family $(g\ast\alpha_\lambda(f))_{\lambda\in ]0,C]^\nu}$ is uniformly compactly supported.
		By \eqref{eqn:supp_product_functions} and \eqref{eqn:support_automorphism}, 
			$\supp(g\ast \alpha_\lambda(f))\subseteq \supp(g) \alpha_\lambda(\supp(f))$.
		Therefore, it suffices to show that $\overline{\bigcup_{\lambda\in ]0,C]^\nu}  \alpha_\lambda(\supp(f))}$ is a proper subset of $\mathbb{G}$.
		\begin{lem}\label{thm:proper_alpha_lambda_mu}
			If $K\subseteq \mathbb{G}$ is a compact subset, then
					$\overline{\bigcup_{\lambda\in ]0,C]^\nu}  \alpha_\lambda(K)}$ 
			 is a proper subset of $\mathbb{G}$.
			 Furthermore, if $K\subseteq M\times M\times \R_+^\nu\backslash 0$, then $\overline{\bigcup_{\lambda\in ]0,C]^\nu}  \alpha_\lambda(K)} \subseteq M\times M\times \R_+^\nu\backslash 0$.
		\end{lem}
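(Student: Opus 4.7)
The plan is to prove each claim by a sequential compactness argument based on the description of the topology of $\mathbb{G}$ via the charts $\cQ_V$.

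For the second claim, observe that the $t$-coordinate of $\alpha_\lambda(\gamma)$ equals $\lambda^{-1}$ times that of $\gamma$. If $K\subseteq M\times M\times\R_+^\nu\backslash 0$ is compact, its $t$-projection $T_K$ is compact in $\R_+^\nu\backslash 0$, so the continuous function $t\mapsto\max_i t_i$ attains a positive minimum $\epsilon>0$ on $T_K$. For $\lambda\in \,]0,C]^\nu$ one has $\max_i(\lambda^{-1}t)_i\geq(1/C)\max_i t_i\geq\epsilon/C$, so the $t$-coordinates of all points of $\bigcup_\lambda\alpha_\lambda(K)$ lie in the closed set $\{t\in\R_+^\nu:\max_i t_i\geq\epsilon/C\}$, which avoids $0$; the closure therefore remains in $M\times M\times\R_+^\nu\backslash 0$.

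For the main properness claim, the relation $\iota\circ\alpha_\lambda=\alpha_\lambda\circ\iota$ together with the fact that $\iota$ is a diffeomorphism reduce matters to properness of $s$ on $W:=\overline{\bigcup_{\lambda\in \,]0,C]^\nu}\alpha_\lambda(K)}$. Fix a compact $K'\subseteq\mathbb{G}^{(0)}$; by metrizability of $\mathbb{G}$ from \MyCref{thm:topology_mathbbG}[topology] and a standard diagonal argument, it suffices to extract a convergent subsequence from any sequence $\gamma_n=\alpha_{\lambda_n}(\gamma_n')$ with $\gamma_n'\in K$, $\lambda_n\in \,]0,C]^\nu$ and $s(\gamma_n)\in K'$. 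By compactness we may assume $\gamma_n'\to\gamma^*\in K$, $\lambda_n\to\lambda^*\in[0,C]^\nu$ and $s(\gamma_n)\to\eta^*\in K'$. Cover $K$ by finitely many open sets each of which is either contained in $M\times M\times\R_+^\nu\backslash 0$ or of the form $\cQ_V(U)$ with $U$ relatively compact in $\dom(\cQ_V)$ for a fixed $\nu$-graded basis $(V,\natural)$. In the first case the $t$-coordinate of $\gamma_n$ equals that of $s(\gamma_n)$, which stays in a compact set, and \MyCref{thm:convergence_mathbbG}[sequence] yields convergence. In the second, write $\gamma_n'=\cQ_V(v_n,\gamma_n'')$ with $(v_n,\gamma_n'')\in\overline{U}$; equivariance of $\cQ_V$ gives $\gamma_n=\cQ_V(\alpha_{\lambda_n}(v_n),s(\gamma_n))$, and because $\alpha_\lambda$ scales $V^k$ by $\lambda^k$ with $|\lambda^k|\leq C^k$ for $\lambda\in \,]0,C]^\nu$, the sequence $\alpha_{\lambda_n}(v_n)$ lies in a fixed compact subset of $V$ and admits a subsequential limit $v^*$.

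The principal technical issue will be to verify that the limit $(v^*,\eta^*)\in V\times\mathbb{G}^{(0)}$ belongs to $\dom(\cQ_V)$, so that continuity of $\cQ_V$ delivers $\gamma_n\to\cQ_V(v^*,\eta^*)\in\mathbb{G}$. This is automatic when $\eta^*\in\cGF^{(0)}\times\{0\}$ by the explicit description of $\dom(\cQ_V)$ in \eqref{eqn:domprime_cQV}; when $\eta^*$ has nonzero $t$-coordinate the whole sequence $(\alpha_{\lambda_n}(v_n),s(\gamma_n))$ already lies in the $(\Rpt)^\nu$-invariant open set $\dom(\cQ_V)\cap(V\times M\times\R_+^\nu\backslash 0)$, and the remaining possibility that the limit escapes this open set reduces via \Cref{lem:bigger_bi_graded_Lie} (enlarging $(V,\natural)$ to ensure a larger flow domain) and the already handled case of covering open sets contained in $M\times M\times\R_+^\nu\backslash 0$.
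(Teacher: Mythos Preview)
Your overall strategy---covering $K$ by $\cQ_V$-charts and using the equivariance $\alpha_\lambda\circ\cQ_V=\cQ_V\circ\alpha_\lambda$ together with the elementary bound on $\alpha_\lambda|_V$ for $\lambda\in\,]0,C]^\nu$---is a legitimate alternative to the paper's proof, which instead performs a direct case analysis via the sequential description of the topology of $\mathbb{G}$ in \Cref{thm:convergence_mathbbG}. The paper never covers $K$: it simply passes to subsequences so that $\eta_n$ is either always in $\cG\times\{0\}$ or always in $M\times M\times\R_+^\nu\backslash 0$, and then tracks the limit explicitly using \Cref{thm:convergence_mathbbG} and the embedding $\Grass(\natural)$.

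However, your final paragraph does not actually close the gap you correctly identify. Enlarging $(V,\natural)$ via \Cref{lem:bigger_bi_graded_Lie} does not enlarge the flow domain: for $v\in V$ the vector field $\natural(\alpha_t(v))$ is unchanged after passing to $V\oplus V'$, so the point $(v^*,x^*,t^*)$ lies in $\dom(\cQ_{V\oplus V'})$ if and only if it lies in $\dom(\cQ_V)$. And ``reducing to the already handled case'' does not obviously apply, since you fixed the covering of $K$ in advance and are now stuck inside a $\cQ_V(U)$-chart.

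The correct completion is short. Write $t''$ for the $t$-coordinate of $\gamma''=\lim\gamma_n''$; since $t_n''=\lambda_n(\lambda_n^{-1}t_n'')$, one has $t''=\lambda^*t^*$ coordinate-wise. Hence
\[
\alpha_{t^*}(v^*)=\alpha_{t^*}(\alpha_{\lambda^*}(v))=\alpha_{\lambda^*t^*}(v)=\alpha_{t''}(v).
\]
If $t''\neq 0$ then $\gamma^*\in M\times M\times\R_+^\nu\backslash 0$, whence $y_n\to y^*$ by continuity and $\gamma_n=(y_n,x_n,\lambda_n^{-1}t_n'')\to(y^*,x^*,t^*)$ directly. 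If $t''=0$ then $\alpha_{t^*}(v^*)=\alpha_0(v)=0$ (since $V^0=0$), so the flow at $(v^*,x^*,t^*)$ is the identity; the submersion condition there reduces to surjectivity of $\natural_{x^*,t^*}$, which holds for $t^*\neq 0$ by \eqref{eqn:finit_N_condition}. Thus $(v^*,\eta^*)\in\dom(\cQ_V)$ after all, and continuity of $\cQ_V$ gives the limit.
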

		\begin{proof}
			By symmetry between $s$ and $r$, it is enough to show that if $(\gamma_n)_{n\in \N}\subseteq \overline{\bigcup_{\lambda\in ]0,C]^\nu}  \alpha_\lambda(K)}$ is a sequence such that $s(\gamma_n)$ converges in $\mathbb{G}^{(0)}$, then $(\gamma_n)_{n\in \N}$ has a convergent subsequence.
			Since $\mathbb{G}$ is metrizable, we can without loss of generality suppose that $(\gamma_n)_{n\in \N}\subseteq \bigcup_{\lambda\in ]0,C]^\nu}  \alpha_\lambda(K)$.
			By passing to a subsequence, we can suppose that $\gamma_n=\alpha_{\lambda_n}(\eta_n)$ with $\eta_n\in K$ converges to $\eta\in K$ and  $\lambda_n\in ]0,C]^\nu$ converges to $\lambda\in [0,C]^\nu$.
			We can also suppose that either $\eta_n\in M\times M\times \R_+^\nu\backslash 0$ for all $n\in \N$, or $\eta_n\in \cG^{(0)}\times \{0\}$ for all $n\in \N$.
			The proof is now divided into the two cases:
			\begin{enumerate}
				\item     If $\eta_n=(A_n,x_n,0)$ converges to $\eta=(A,x,0)$, then by \Crefitem{thm:convergence_mathbbG}{2}, $x_n\to x$, $A_n=\natural_{x_n,0}(v_nL_n)$, $A=\natural_{x,0}(vL)$ where $L_n\in \Grass(V)$, $v_n\in V$, $L_n\to L$ and $v_n\to v$.
						Convergence of $s(\alpha_{\lambda_n}(\eta_n))$ in $\mathbb{G}^{(0)}$ implies that $\alpha_{\lambda_n}(L_n)$ converges to some subspace $L'\in \Grass(V)$. It follows that $\alpha_{\lambda_n}(\eta_n)=(\natural_{x_n,0}(\alpha_{\lambda_n}(v_n)\alpha_{\lambda_n}(L_n)),x_n,0)\to (\natural_{x,0}(\alpha_\lambda(v)L'),x,0)$. 
				\item If $\eta_n=(y_n,x_n,t_n)$, then either $\eta\in M\times M\times \R_+^\nu\backslash 0$ or $\eta\in \cG^{(0)}\times \{0\}$. 
				\begin{enumerate}
					\item If $\eta=(y,x,t)$, then $y_n\to y$, $x_n\to x$, $t_n\to t$. Convergence of $s(\alpha_{\lambda_n}(\eta_n))$ in $\mathbb{G}^{(0)}$ implies that $\lambda_n^{-1}t_n\to s$ for some $s\in \R_+^\nu$.
					This implies that $t=\lambda s$. So, $s\neq 0$. Hence, $\alpha_{\lambda_n}(y_n,x_n,t_n)=(y_n,x_n,\lambda_n^{-1}t_n)\to (y,x,s)$.
					\item If   $\eta=(A,x,0)$, then by \Crefitem{thm:convergence_mathbbG}{sequence}, $x_n\to x$, $t_n\to 0$, $\ker(\natural_{x_n,t_n})\to L$ in $\Grass(V)$, $y_n=\exp(\alpha_{t_n}(v_n))\cdot x_n$ for some $v_n\in V$, $v_n\to v$, $A=\natural_{x,0}(vL)$.
							Convergence of $s(\alpha_{\lambda_n}(\eta_n))$ in $\mathbb{G}^{(0)}$ implies that $\ker(\natural_{x_n,\lambda_{n}^{-1}t_n})\to L'$ for some $L'\in \Grass(V)$.
							Since $y_n=\exp(\alpha_{\lambda_{n}^{-1}t_n}(\alpha_{\lambda_n}(v_n)))\cdot x_n$ and $\alpha_{\lambda_n}(v_n)\to \alpha_{\lambda}(v)$, it follows from \Crefitem{thm:convergence_mathbbG}{sequence} that $(y_n,x_n,\lambda_{n}^{-1}t_n)$ converges to $(\natural_{x,0}(\alpha_{\lambda}(v)L'),x,0)$.
				\end{enumerate}
				If $K\subseteq M\times M\times \R_+^\nu\backslash 0$, then only Case 2.a occurs. \qedhere
			\end{enumerate}
		\end{proof}
		By \Cref{thm:proper_alpha_lambda_mu}, we only need to show that the function $\lambda\mapsto g\ast \alpha_\lambda(f)$ extends to a smooth function $[0,C]^\nu\to C^\infty(\mathbb{G},\omegahalf)$.
		By decomposing $f$ as in \eqref{eqn:sum_smooth_decompo_invariatn_function}, there are two cases to consider:
		\begin{itemize}
			\item   The first case is when $f\in C^\infty_c(M\times M\times \R_+^\nu\backslash 0,\omegahalf)$.
					By \Cref{thm:proper_alpha_lambda_mu}, there exists a compact subset $K\subseteq M\times M\times \R_+^\nu\backslash 0$ such that $\supp(g\ast \alpha_\lambda(f))\subseteq K$ for all $\lambda\in ]0,C]^\nu$.
					On $M\times M\times \R_+^\nu\backslash 0$, we have 
						\begin{equation*}\begin{aligned}
								 g\ast \alpha_\lambda(f)(z,x,t)=\int_M g(z,y, t)f(y,x,\lambda t)		
						\end{aligned}\end{equation*}
					This clearly extends smoothly to $[0,C]^\nu$. 
					Furthermore, $\alpha_{0}(f)=0$. So, \eqref{eqn:value_of_alpha_00} follows.
			\item The second case is when $f=\cQ_{V*}(\tilde{f}\circ \pi_{V\times \mathbb{G}^{(0)}})$ where $\tilde{f}\in C^\infty_c(\tdom(\cQ_V),\Omega^1(V))$.
			We have
				\begin{equation*}\begin{aligned}
					g\ast f (\gamma)=\int_V h(v,\gamma)g(\gamma \cQ_V(v,s(\gamma))^{-1})\tilde{f}(v,\pi_{\mathbb{G}^{(0)}}\circ s(\gamma))	,
				\end{aligned}\end{equation*}
			where $h(v,\gamma):\omegahalf A(\mathbb{G})_{r(\cQ_V(v,s(\gamma)))} \to \omegahalf A(\mathbb{G})_{s(\gamma)}$ is the canonical isomorphism which comes from $\cQ_V^*(\Omega^{\frac{1}{2},0}(\mathbb{G}))\simeq \cQ_V^*(\Omega^{0,\frac{1}{2}}(\mathbb{G}))$, see \Cref{thm:submersion_cQ}.
			Let $\beta_{\lambda}:M\times \R_+^\nu\to M\times \R_+^\nu$ for $\lambda\in \R_+^\nu$ be the map $(x,t)\mapsto (x,\lambda t)$.
			Since $\alpha_\lambda(f)=\cQ_{V*}(\alpha_\lambda(\tilde{f})\circ \pi_{V\times \mathbb{G}^{(0)}})$,
			we have
				\begin{equation*}\begin{aligned}
					g\ast \alpha_\lambda(f) (\gamma)&=\int_V h(v,\gamma)g(\gamma \cQ_V(v,s(\gamma))^{-1}) \alpha_\lambda(\tilde{f})(v,\pi_{\mathbb{G}^{(0)}}\circ s(\gamma))\\
					&=	\int_V h(\alpha_\lambda(v),\gamma)g(\gamma \cQ_V(\alpha_\lambda(v),s(\gamma))^{-1}) \tilde{f}(v,\beta_{\lambda}\circ \pi_{\mathbb{G}^{(0)}}\circ s(\gamma)),		
				\end{aligned}\end{equation*}
			This clearly extends to $\lambda\in [0,C]^\nu$. Furthermore, 
				$g\ast \alpha_{0}(f)(\gamma)=	g(\gamma)	\int_V  \tilde{f}(v,\beta_{0}\circ \pi_{\mathbb{G}^{(0)}}\circ s(\gamma))$.
			This proves \eqref{eqn:value_of_alpha_00} in this case.
		\end{itemize}
		We have thus proved that $g\ast \alpha_\lambda(f)$ extends to a smooth map $\R_+^\nu\to \cbbG$.
		Now suppose that $g\in \cinv$.
		Again by decomposing $f$ and $g$ as in \eqref{eqn:sum_smooth_decompo_invariatn_function}, we reduce to the case where 
		$f=\cQ_{V*}(\tilde{f}\circ \pi_{V\times \mathbb{G}^{(0)}})$ and $g=\cQ_{V*}(\tilde{g}\circ \pi_{V\times \mathbb{G}^{(0)}})$ where $\tilde{f},\tilde{g}\in C^\infty_c(\tdom(\cQ_V),\Omega^1(V))$.
		Let $\psi$ be as in \Cref{thm:composition_bisub}, $\tilde{\psi}_\lambda$ the map $(v,w,x,t)\mapsto\psi(v,\alpha_\lambda(w),x,t)$.
		\begin{lem}\label{lem:qjksdfqhsdljkflqsldqsdfqsdf}
			There exists an open $(\Rpt)^\nu$-equivariant neighborhood $L\subseteq \dom(\tilde{\psi})$ of $V\times V\times M\times \{0\}$ such that for any $\lambda\in [0,C]^\nu$, $\tilde{\psi}_\lambda$
			is a submersion on $L_\lambda:=\{(v,w,x,t):(v,\alpha_\lambda(w),x,t)\in L\}$.
		\end{lem}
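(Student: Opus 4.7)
The plan is to reduce the lemma to a tube-lemma compactness argument over the compact parameter space $[0,C]^\nu$, built on a single key observation at $t=0$.

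The critical observation is that for every $\lambda \in [0,C]^\nu$ and every point of $V \times V \times M \times \{0\}$, the map $\tilde{\psi}_\lambda$ is automatically a submersion there. Indeed, \eqref{eqn:composition_bisub_tilde_psi} and \eqref{eqn:product_phi_compsotion_bisub} give $\tilde{\psi}_\lambda(v,w,x,0)=(v\cdot \alpha_\lambda(w),x,0)$, so the partial derivative with respect to $v$ at $(v_0,w_0,x_0,0)$ is the differential of right multiplication by $\alpha_\lambda(w_0)$ in the nilpotent BCH-group $V$, which is a linear isomorphism $V\to V$; together with the trivial $(\delta x,\delta t)$-dependence in the other factors, this yields surjectivity regardless of $\lambda$. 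Note that $\tilde{\psi}_\lambda$ factors as $\tilde{\psi}\circ (\mathrm{id}_V\times\alpha_\lambda\times\mathrm{id})$, so both its value and its differential at $(v,w,x,t)$ depend on $w$ only through $w':=\alpha_\lambda(w)$; being a submersion at $(v,w,x,t)$ is thus a condition on $(\lambda,v,w',x,t)$.

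Accordingly, I would introduce the set
\begin{equation*}
S=\bigl\{(\lambda,v,w',x,t)\in[0,C]^\nu\times\dom(\tilde{\psi}):\tilde{\psi}_\lambda\text{ is a submersion at every }(v,w,x,t)\text{ with }\alpha_\lambda(w)=w'\bigr\},
\end{equation*}
which is open (the submersion condition is open and depends smoothly on all the parameters), and contains $[0,C]^\nu\times V\times V\times M\times\{0\}$ by the preceding observation. Since $[0,C]^\nu$ is compact, the projection of its complement onto $\dom(\tilde{\psi})$ is closed, so
\begin{equation*}
L:=\bigl\{(v,w',x,t)\in\dom(\tilde{\psi}):(\lambda,v,w',x,t)\in S\text{ for all }\lambda\in[0,C]^\nu\bigr\}
\end{equation*}
is an open neighborhood of $V\times V\times M\times\{0\}$. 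Unwinding the definition: if $(v,w,x,t)\in L_\lambda$, i.e.\ $(v,\alpha_\lambda(w),x,t)\in L$, then applying the defining condition of $S$ at this particular $\lambda$ and taking the witness $w$ itself gives that $\tilde{\psi}_\lambda$ is a submersion at $(v,w,x,t)$, as required.

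It remains to check $(\Rpt)^\nu$-equivariance of $L$. For $\mu\in(\Rpt)^\nu$, the dilations $\alpha_\lambda$ and $\alpha_\mu$ commute on $V$ (they act diagonally on the grading), and equivariance of $\tilde{\psi}$ gives $\tilde{\psi}_\lambda\circ\alpha_\mu=\alpha_\mu\circ\tilde{\psi}_\lambda$, from which it follows that the submersion condition at $\alpha_\mu(v,w,x,t)$ is equivalent to the submersion condition at $(v,w,x,t)$; combined with the equivariance of $\dom(\tilde{\psi})$, this yields $\alpha_\mu(L)=L$. The main obstacle is really only the $t=0$ computation of the $v$-derivative of $\tilde{\psi}_\lambda$; once this is noted, everything else is a routine open-condition argument combined with the tube lemma for the compact factor $[0,C]^\nu$.
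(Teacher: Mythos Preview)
Your proof is correct and follows essentially the same strategy as the paper: the key observation is that at $t=0$ the $v$-partial derivative of $\tilde\psi$ is right multiplication in the BCH group $V$ (an isomorphism), and then compactness of $[0,C]^\nu$ upgrades this to an open neighborhood via a tube-lemma argument. The paper packages this slightly more directly: it defines $L$ as the set of $(v,w,x,t)$ such that for all $\lambda\in[0,C]^\nu$ the differential of $\tilde\psi$ at $(v,\alpha_\lambda(w),x,t)$ restricted to $V\times\{0\}\times T_xM\times\R^\nu$ is surjective, thereby sidestepping any discussion of $\mathrm{im}(\alpha_\lambda)$. One small remark: your assertion that $S$ is open because the condition ``depends smoothly on all the parameters'' is a little quick, since the dependence on $\lambda$ is really through the subspace $\alpha_\lambda(V)$, which is only lower-semicontinuous in $\lambda$; the openness does hold (because $\alpha_\lambda(V)\supseteq\alpha_{\lambda_0}(V)$ for $\lambda$ near $\lambda_0$), but it deserves a word.
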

		\begin{proof}
			Without loss of generality, we suppose $1\leq C$.
			Let $L$ be the set of points $(v,w,x,t)$ such that for any $\lambda\in [0,C]^\nu$, $(v,\alpha_\lambda(w),x,t)\in \dom(\tilde{\psi})$ and the differential of $\tilde{\psi}$ at $(v,\alpha_\lambda(w),x,t)$ is surjective as a map $V\times \{0\}\times T_xM\times \R\to V\times T_xM\times \R$.
			The set $L\subseteq \dom(\tilde{\psi})$ because $1\leq C$.
			It is equivariant by equivariance of $\tilde{\psi}$.
			It is open by compactness of $[0,C]^\nu$.
			By \Crefitem{thm:composition_bisub}{3}, it contains $V\times V\times M\times \{0\}$.
		\end{proof}
		We apply \Cref{lem:support_convolution_invariant_functions} with $L$ to get an equivariant open set $U\subseteq \tdom(\cQ_V)$.
		By \Cref{lem:support_convolution_invariant_functions}, 
		we can suppose that $\tilde{f}$ and $\tilde{g}$ are supported in $U$. 
		Since $U$ is equivariant, $\alpha_\lambda(\tilde{f})$ is also supported in $U$ for all $\lambda\in (\Rpt)^\nu$. 
		Consider
			\begin{equation*}\begin{aligned}
				&h_{\lambda}(v,w,x,t)=\tilde{g}(v,e^{\alpha_{t}(w)}\cdot x,t)\alpha_\lambda(\tilde{f})(w,x,t)\in C^\infty_c(L,\Omega^2(V)), &&\lambda\in ]0,C]^\nu	 \\
				&k_{\lambda}(v,w,x,t)=\tilde{g}(v,e^{\alpha_{\lambda t}(w)}\cdot x,t)\tilde{f}(w,x,\lambda t)\in C^\infty_c(L_\lambda,\Omega^2(V)), &&\lambda\in [0,C]^\nu.
			\end{aligned}\end{equation*}
		We remark that to show that $k_\lambda$ is supported in $L_\lambda$, we use \eqref{eqn:stron_equivariance_U}.
		As in \eqref{eqn:skijokfojspqkodfjpqskdjfqjspdf}, we have
			\begin{equation}\label{eqn:qjskdofjoksqdjfjqsdjfmqsidjf}\begin{aligned}
				g\ast \alpha_\lambda(f)=\cQ_{V*}(\tilde{\psi}_* (h_{\lambda})\circ \pi_{V\times \mathbb{G}^{(0)}}).
			\end{aligned}\end{equation}
		So, we need to prove that $\lambda\in ]0,C]^\nu\to C^\infty_c(\tdom(\cQ_V),\Omega^1(V))$ extends smoothly to $[0,C]\nu$.	
		To this end,
			\begin{equation}\label{eqn:psi_h_alpha_lambda}\begin{aligned}
				\tilde{\psi}_* (h_{\lambda})(v',x,t)&=\int_{\tilde{\psi}(v,w,x,t)=(v',x,t)}	\tilde{g}(v,e^{\alpha_{t}(w)}\cdot x,t)\alpha_\lambda(\tilde{f})(w,x,t)\\
				&=\int_{\tilde{\psi}(v,\alpha_\lambda(w),x,t)=(v',x,t)}	\tilde{g}(v,e^{\alpha_{\lambda t}(w)}\cdot x,t)\tilde{f}(w,x,\lambda t)=\tilde{\psi}_{\lambda*}(k_\lambda)(v',x,t).
			\end{aligned}\end{equation}
		The map $\tilde{\psi}_{\lambda}$ and $k_\lambda$ extend smoothly to $\lambda\in [0,C]^\nu$.
		Since the maps $(k_\lambda)_{[0,C]^\nu}$ are uniformly compactly supported, the result follows.
	\end{linkproof}

\section{Proofs of the results in Section \ref{sec:bi-grading_algebra_diff}}\label{sec:bi-grading_algebra_diff_proof}
	\begin{linkproof}{inclusion_diff_op}

		Let $f\in \cbbG$. 
		By writing $D$ as sum of monomials, and using  \Cref{ex:Right_Left_invariant_vector_field_Pair}, we see that $\theta_{k}(D)\ast f (y,x,t)=t^k D_y(f)(y,x,t)$, 
		where $D_y$ means that $D$ acts on the $y$-variable.
		It follows that $\theta_{k}(D)\ast f $ on $M\times M\times \R_+^\nu\backslash 0$ doesn't depend on the presentation of $D$ as sum of monomials.
		By density of  $M\times M\times \R_+^\nu\backslash 0$ in $\mathbb{G}$, independence of the presentation of $D$ as sum of monomials follows.
		For any $k\in \Z_+^\nu$, the distribution $\theta_k(\delta_1)$ is in the center of $\cinv$, see \Crefitem{ex:distributions_quasi_Lie_groupods}{dirac}. 
		From this \eqref{eqn:theta_product} follows.
	\end{linkproof}
	
	\begin{linkproof}{invariant_vanish_algebra}
		If $f\in C^\infty_c(M\times M\times (\Rpt)^\nu,\omegahalf)$ and $l\in \Z_+^\nu$, then $f=\theta_l(\delta_1)\ast g$, where $g\in C^\infty_c(M\times M\times (\Rpt)^\nu,\omegahalf)$ is defined by $g(y,x,t)=t^{-l}f(y,x,t)$.
		This proves \Crefitem{thm:invariant_vanish_algebra}{inclusion}.
		Let us prove $\cinvf{k}^*=\cinvf{k}$.
		We need to show that if $f\in \cinv$, $D\in \DO^k(M)$ for some $k\in \Z_+^\nu$, then $f\ast \theta_k(D)\in \cinvf{k}$.
		By writing $D$ as sum of monomials, we can suppose that $D$ is a monomial.
		By using the fact that $\theta_l(\delta_1)$ is in the center of $\cinvdis$ for any $l\in \Z_+^\nu$, we deduce that it suffices to show that if $X\in \cF^k$, then $f\ast \theta_k(X)\in \cinvf{k}$.
		Let $(V,\natural)$ be a $\nu$-graded Lie basis.
		We fix a linear basis of $V$ of pure elements $v_1,\cdots,v_n$ such that $v_i\in V^{\omega(i)}$ for all $i$.
		By the definition of a $\nu$-graded basis, we can write $\natural(X)=\sum_{i=1}^n f_i\natural(v_i)$ with $f_i=0$ if $\omega(i)\npreceq k$.
		So, $\theta_k(X)=\sum_{i=1}^n \theta_{k-\omega(i)}(\delta_1) \ast\delta_{f_i\circ \pi_{\mathbb{G}^{(0)}}}\ast\theta_{\omega(i)}(v_i)$. Hence,
			\begin{equation*}\begin{aligned}
				f\ast \theta_k(X)=\sum_{i=1}^n	\theta_{k-\omega(i)}(\delta_1) \ast f\ast \delta_{f_i\circ \pi_{\mathbb{G}^{(0)}}}\ast \theta_{\omega(i)}(\natural(v_i)).
			\end{aligned}\end{equation*}
		By replacing $f$ with $f\ast \delta_{f_i\circ \pi_{\mathbb{G}^{(0)}}}$ (here we use \Crefitem{thm:compatability_cinvdiv_cinv}{inclusion_smooth_cinvdiv}), we deduce that it suffices to show that for any $i\in \bb{1,n}$, if $f\in \cinv$, then $f\ast \theta_{\omega(i)}(\natural(v_i))\in \cinvf{\omega(i)}$.

		Let $\tilde{\psi}$ be as in \Cref{thm:composition_bisub}.
		For each $i\in \bb{1,n}$, as in \eqref{eqn:kqlsdjkfmjqsmldjflmjsqdklmfjkmlsdqjkmlfj}, let $\tilde{X}_i,\tilde{Y}_i$ be the partially-defined vector fields on $V\times M\times \R_+^\nu$ defined by
			\begin{equation}\label{eqn:tildeX_vector_fields}\begin{aligned}
				&\tilde{X}_i(v,x,t)= \odv*{\tilde{\psi}(\tau v_i,v,x,t)}{\tau}_{\tau=0}, 		&&\tilde{Y}_i(v,x,t)= \odv*{\tilde{\psi}(v,\tau v_i,x,t)}{\tau}_{\tau=0}
			\end{aligned}\end{equation}
		where we use \Crefitem{thm:composition_bisub}{4} to make sure that $\tilde{X}_i$ and $\tilde{Y}_i$ are vector fields.
		The vector fields $\tilde{X}_i$ and $\tilde{Y}_i$ are tangent to $V$.
		By equivariance of $\tilde{\psi}$, the vector fields are well-defined in an equivariant neighborhood of $V\times M\times \{0\}$, and $\alpha_\lambda(\tilde{X}_i)=\lambda^{\omega(i)}\tilde{X}_i$ and $\alpha_\lambda(\tilde{Y}_i)=\lambda^{\omega(i)}\tilde{Y}_i$.
		By \Crefitem{thm:composition_bisub}{3}, on $V\times M\times \{0\}$,
		$\tilde{X}_i$ and $\tilde{Y}_i$ are the right and left invariant vector fields on $V$ associated to $v_i$.
		So, on $V\times M\times \{0\}$, the vector fields $\tilde{X}_1,\cdots,\tilde{X}_n$ are linearly independent.
		Let $U\subseteq V\times M\times \R_+^\nu$ be the set of point at which the vector fields $\tilde{X}_1,\cdots,\tilde{X}_n$ are well-defined and linearly independent, and the vector fields $\tilde{Y}_1,\cdots,\tilde{Y}_n$ are well-defined.
		The set $U$ is an equivariant neighborhood of $V\times M\times \{0\}$.
		On $U$, the vector fields $\tilde{X}_1,\cdots,\tilde{X}_n$ linearly span the vector fields tangent to $V$, so there exists a unique family $(P_{i,j})_{i,j\subseteq \bb{1,n}}\subseteq C^\infty(U)$ such that 
			\begin{equation*}\begin{aligned}
				\tilde{Y}_i=\sum_{j=1}^n P_{i,j}\tilde{X}_j.		
			\end{aligned}\end{equation*}
		Again, by equivariance, $\alpha_{\lambda}(P_{i,j})=\lambda^{\omega(i)-\omega(j)}P_{i,j}$.
		On $U$, the differential operators $\cL_{P_{i,j}\tilde{X}_j}(\cdot)$ and $\cL_{\tilde{X}_j}(P_{i,j}\cdot )$ acting on $C^\infty(U,\Omega^1(V))$ have the same classical principal symbol.
		So, there exists a smooth function $H_{i,j}\in C^\infty(U)$ such that 
			\begin{equation*}\begin{aligned}
				\cL_{P_{i,j}\tilde{X}_j}(g)=\cL_{\tilde{X}_j}(P_{i,j}g)+H_{i,j}g,\quad \forall g\in C^\infty(U,\Omega^1(V)).
			\end{aligned}\end{equation*}
		By equivariance, $\alpha_{\lambda}(H_{i,j})=\lambda^{\omega(i)}H_{i,j}$.
		By replacing $U$ with a smaller open neighborhood of $V\times M\times \{0\}$, we can suppose that $U$ satisfies \eqref{eqn:stron_equivariance_U}, see proof of \Cref{lem:support_convolution_invariant_functions}. 
		\begin{lem}\label{lem:equiv_functions_U}
			For any $k\in \Z^\nu$, let $\max(k,0):=(\max(k_1,0),\cdots,\max(k_\nu,0))\in \Z_+^\nu$. If $P\in C^\infty(U)$ satisfies $\alpha_\lambda(P)=\lambda^k(P)$ for all $\lambda\in (\Rpt)^\nu$, then there exists $P'\in C^\infty(U)$ such that $P(v,x,t)=t^{\max(k,0)}P'(v,x,t)$.
 		\end{lem}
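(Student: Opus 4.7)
The plan is to write down $P'$ explicitly using the quasi-homogeneity to perform a change of variables that normalizes to $1$ the coordinates of $t$ along the "non-negative weight" directions. Unpacking the paper's convention $\alpha_\lambda(f)=f\circ\alpha_{\lambda^{-1}}$ and substituting $\mu=\lambda^{-1}$, the hypothesis $\alpha_\lambda(P)=\lambda^kP$ is equivalent to the identity
\[
P(v,x,t)\;=\;\mu^{k}\,P\bigl(\alpha_\mu(v),\,x,\,\mu^{-1}t\bigr),\qquad \forall(v,x,t)\in U,\ \mu\in(\Rpt)^\nu.
\]
Partition $\bb{1,\nu}=J\sqcup I$ with $J=\{i:k_i\ge 0\}$ and $I=\{i:k_i<0\}$, and write $(a_J,b_I)$ for the multi-index whose $i$-th component equals $a_i$ for $i\in J$ and $b_i$ for $i\in I$. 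I would then define
\[
P'(v,x,t)\;:=\;P\bigl(\alpha_{(t_J,1_I)}(v),\,x,\,(1_J,t_I)\bigr),
\]
which only involves polynomial dependence on $t_J$ (through $\alpha_{(t_J,1_I)}$, since each $v\in V^l$ is scaled by $\prod_{i\in J}t_i^{l_i}$) and no division by any coordinate of $t$.

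The decisive point is that $P'$ is well-defined and smooth on \emph{all} of $U$: the evaluation point $(\alpha_{(t_J,1_I)}(v),x,(1_J,t_I))$ must lie in $U$ whenever $(v,x,t)\in U$. This is exactly what \eqref{eqn:stron_equivariance_U} gives, applied with $\lambda=(t_J,1_I)\in\R_+^\nu$ and new $t'=(1_J,t_I)\in\R_+^\nu$, since $\lambda\cdot t'=t$. This is precisely the reason the stronger condition \eqref{eqn:stron_equivariance_U}, rather than mere $(\Rpt)^\nu$-equivariance, was imposed on $U$ in the preceding construction.

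To verify the identity $P=t^{\max(k,0)}P'$, I first restrict to the open dense subset $\{(v,x,t)\in U:t\in(\Rpt)^\nu\}$. There one is allowed to take $\mu=(t_J,1_I)\in(\Rpt)^\nu$ in the homogeneity identity, and a direct computation yields $\mu^{k}=\prod_{i\in J}t_i^{k_i}=t^{\max(k,0)}$ together with $\mu^{-1}t=(1_J,t_I)$, so $P(v,x,t)=t^{\max(k,0)}P'(v,x,t)$ on this dense set. Since both sides are smooth on $U$, in particular continuous, the identity extends to all of $U$. No serious obstacle is expected; the argument is essentially a careful bookkeeping exercise exploiting the strong equivariance property \eqref{eqn:stron_equivariance_U} and the choice of $\mu$ that normalizes exactly the $J$-coordinates.
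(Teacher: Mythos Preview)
Your proof is correct and coincides with the paper's argument: both define $P'(v,x,t)=P(\alpha_{\tilde t}(v),x,\dbtilde t)$ with $\tilde t=(t_J,1_I)$ and $\dbtilde t=(1_J,t_I)$, invoke \eqref{eqn:stron_equivariance_U} to guarantee the evaluation point lies in $U$, verify the identity $P=t^{\max(k,0)}P'$ on $\{t\in(\Rpt)^\nu\}$ via homogeneity, and extend by continuity. Your presentation is slightly more explicit in noting that $\alpha_{(t_J,1_I)}(v)$ is polynomial in $t_J$, which makes the smoothness of $P'$ on all of $U$ transparent.
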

		\begin{proof}
			If $t\in (\Rpt)^\nu$, let $\tilde{t},\dbtilde{t}\in (\Rpt)^\nu$ be defined by 
				\begin{equation*}\begin{aligned}
					\tilde{t}_i=\begin{cases}
					t_i \quad \text{if }k_i\geq 0\\
					1 \quad \text{if }k_i< 0
					\end{cases},\quad  \dbtilde{t}_i=\begin{cases}
						1 \quad \text{if }k_i\geq 0\\
						t_i \quad \text{if }k_i< 0
						\end{cases}.
				\end{aligned}\end{equation*}
			We have $\tilde{t}\dbtilde{t}=t$. So, $P(v,x,t)=\alpha_{\tilde{t}}(P)(\alpha_{\tilde{t}}(v),x,\dbtilde{t})=\tilde{t}^kP(\alpha_{\tilde{t}}(v),x,\dbtilde{t})=t^{\max(k,0)}P(\alpha_{\tilde{t}}(v),x,\dbtilde{t})$. We can thus take $P'(v,x,t)=P(\alpha_{\tilde{t}}(v),x,\dbtilde{t})$
			which extends smoothly to $t\in \R_+^\nu$.
			Notice that $P'$ is well-defined on $U$ because of \eqref{eqn:stron_equivariance_U}.
		\end{proof}
		Now, we will show that $f\ast \theta_{\omega(i)}(\natural(v_i))\in \cinvf{\omega(i)}$. 
		By decomposing $f$ as in \eqref{eqn:sum_smooth_decompo_invariatn_function}, we have two cases to consider:
		     If $f\in C^\infty_c(M\times M\times \R_+^\nu\backslash 0,\omegahalf)$, then 
			$f\ast \theta_{\omega(i)}(\natural(v_i))(y,x,t)=t^{\omega(i)}\natural(v_i)_x(f)(y,x,t)$.
		Let $g\in C^\infty_c(M\times M\times \R_+^\nu\backslash 0,\omegahalf)$ be defined by $g(y,x,t)=\natural(v_i)_x(f)(y,x,t)$.
		Therefore,
			$f\ast \theta_{\omega(i)}(\natural(v_i))=\theta_{\omega(i)}(\delta_1)\ast g\in \cinvf{\omega(i)}$.
		
			The second case is when $f=\cQ_{V*}(\tilde{f}\circ \pi_{V\times \mathbb{G}^{0}})$ for some $\tilde{f}\in C^\infty_c(\tdom(\cQ_V),\Omega^1(V))$.
			We can suppose by \Cref{lem:change_open_set_invariant_function}, that $\supp(\tilde{f})\subseteq U$.
			Like in \eqref{eqn:qjkshdmfjkqsdjfjqmsdfjmqsdjfmqsd}, we have 
				\begin{equation*}\begin{aligned}
				&f\ast \theta_{\omega(i)}(\natural(v_i))\\
				&=\cQ_{V*}(\cL_{\tilde{Y}_{i}}(\tilde{f})\circ \pi_{V\times \mathbb{G}^{(0)}})\\
				&=\cQ_{V*}\left(\cL_{\sum_{j=1}^n P_{i,j}\tilde{X}_{j}}(\tilde{f})\circ \pi_{V\times \mathbb{G}^{(0)}}\right)\\
				&=\cQ_{V*}\left(\left(\sum_{j=1}^n \cL_{\tilde{X}_{j}}(P_{i,j}\tilde{f})+ H_{i,j}\tilde{f}\right)\circ \pi_{V\times \mathbb{G}^{(0)}}\right)\\
				&=\cQ_{V*}\left(\left(\sum_{j=1}^n \cL_{\tilde{X}_{j}}(t^{\max(\omega(i)-\omega(j),0)}P_{i,j}'\tilde{f})+ t^{\omega(i)}H'_{i,j}\tilde{f}\right)\circ \pi_{V\times \mathbb{G}^{(0)}}\right)\\
				&=\sum_{j=1}^n  \theta_{\omega(j)}(X_j)\ast \theta_{\max(\omega(i)-\omega(j),0)}(\delta_1)\ast \cQ_{V*}\left(\left(P_{i,j}'\tilde{f}\right)\circ \pi_{V\times \mathbb{G}^{(0)}}\right)\\
				&+ \sum_{j=1}^n\theta_{\omega(i)}(\delta_1)\ast \cQ_{V*}\left(\left(H_{i,j}'\tilde{f}\right)\circ \pi_{V\times \mathbb{G}^{(0)}}\right).
				\end{aligned}\end{equation*}
				By \eqref{eqn:theta_product_cinfk}, the first sum belongs to $\cinvf{\omega(j)+\max(\omega(i)-\omega(j),0)}$ and the second belongs to $\cinvf{\omega(i)}$.
				Since, $\omega(i)\preceq \omega(j)+\max(\omega(i)-\omega(j),0)$ for all $i,j\in\bb{1,n}$, by \eqref{eqn:theta_product_cinfk}, the first sum also belongs to $\cinvf{\omega(i)}$.
				This finishes the proof of  $\cinvf{k}^*=\cinvf{k}$ for all $k\in \Z_+^\nu$.
  The identity $\cinvf{k}^*=\cinvf{k}$ together with \Crefitem{thm:algebra_structure_invariant_functions}{algebra} imply \Crefitem{thm:invariant_vanish_algebra}{alternate}.
		The inclusion $\cinvf{k}\ast \cinvf{l}\subseteq \cinvf{k+l}$ easily follows from \Crefitem{thm:invariant_vanish_algebra}{alternate}
	\end{linkproof}
	
	\begin{linkproof}{derivative_vanish_at_0}
		If $l\in \Z_+^\nu$, $D\in \DO^l(M)$ and $f\in \cinv$, then 
		\begin{equation*}\begin{aligned}
			\alpha_{(1,\cdots,1,\lambda_i,1,\cdots,1)}(\theta_l(D)\ast f)=\lambda_i^{l_i}\theta_l(D)\ast f.
		\end{aligned}\end{equation*}
		So, 
		\begin{equation*}\begin{aligned}
			\odv*{\alpha_{(1,\cdots,1,\lambda_i,1,\cdots,1)}(\theta_l(D)\ast f)}{\lambda_{i}}_{\lambda_i=1}=\theta_l(D)\ast \left(l_if+\odv*{\alpha_{(1,\cdots,1,\lambda_i,1,\cdots,1)}(f)}{\lambda_{i}}_{\lambda_i=1}\right).		
		\end{aligned}\end{equation*}
		If $l_i\neq 0$, then $\odv*{\alpha_{(1,\cdots,1,\lambda_i,1,\cdots,1)}(\theta_l(D)\ast f)}{\lambda_{i}}_{\lambda_i=1}\in \cinvf{l}$ which is what we need to prove.
		If $l_i=0$, then it suffices to show that $\odv*{\alpha_{(1,\cdots,1,\lambda_i,1,\cdots,1)}(f)}{\lambda_{i}}_{\lambda_i=1}\in \cinvf{(0,\cdots,0,1,0,\cdots,0)}$.
		Let $(V,\natural)$ be a $\nu$-graded Lie basis. 
		By decomposing $f$ as in \eqref{eqn:dfn_algebra_vanishing_Fourier_invariant}, there are two cases to consider:
		  If $f\in C^\infty_c(M\times M\times \R_+^\nu\backslash 0,\omegahalf)$, then 
			\begin{equation*}\begin{aligned}
				\odv*{\alpha_{(1,\cdots,1,\lambda_i,1,\cdots,1)}(f)}{\lambda_{i}}_{\lambda_i=1}= t_i\pdv*{f}{t_{i}}=\theta_{(0,\cdots,0,1,0,\cdots,0)}(1)\ast\pdv*{f}{t_{i}} \in \cinvf{(0,\cdots,0,1,0,\cdots,0)}.
			\end{aligned}\end{equation*}
				If $f=\cQ_{V*}(\tilde{f}\circ \pi_{V\times \mathbb{G}^{(0)}})$ for some $\tilde{f}\in C^\infty_c(\tdom(\cQ_V),\Omega^1(V))$, then 
				\begin{equation}\label{eqn:HTYUIOOP1}\begin{aligned}
					\odv*{\alpha_{(1,\cdots,1,\lambda_i,1,\cdots,1)}(f)}{\lambda_{i}}_{\lambda_i=1}=\cQ_{V*}\left(\odv*{\alpha_{(1,\cdots,1,\lambda_i,1,\cdots,1)}(\tilde{f})}{\lambda_{i}}_{\lambda_i=1}\circ \pi_{V\times \mathbb{G}^{(0)}}\right)
				\end{aligned}\end{equation}
			Let $v_1,\cdots,v_n\in V$ be a pure basis such that $v_j\in V^{\omega(j)}$, $\tilde{X}_1,\cdots,\tilde{X}_n$ as in \eqref{eqn:tildeX_vector_fields},
			 $(v_j^*)_{j\in \bb{1,n}}\subseteq C^\infty(V\times M\times \R_+^\nu)$ the smooth functions $v_j^*(\sum_{k=1}^n c_k v_k,x,t)=c_j$.
			Notice that $\alpha_\lambda(v_j^*)=\lambda^{-\omega(j)}v_j^*$.
			Let $U\subseteq V\times M\times \R_+^\nu$ 
			be the open set at which the vector fields $\tilde{X}_1,\cdots,\tilde{X}_n$ are well-defined and linearly independent, and the matrix $(\tilde{X}_j(v_k^*))_{j,k\in \bb{1,n}}$ is invertible.
			By equivariance of $(v_j)_{j\in \bb{1,n}}$ and $(\tilde{X}_j)_{j\in \bb{1,n}}$, it follows that $U$ is equivariant.
			On $V\times M\times \{0\}$, the vector fields $(\tilde{X}_j)_{j\in \bb{1,n}}$ are the right-invariant vector fields on $V$ associated to $(v_j)_{j\in \bb{1,n}}$.
			Since $V$ is graded nilpotent, the matrix $(\tilde{X}_j(v_k^*))_{j,k\in \bb{1,n}}$ is (under some permutation of indices) an upper triangular matrix with $1$ on the diagonal.
			Hence, $V\times M\times \{0\}\subseteq U$.
			By reducing $U$ if necessary, we can suppose that $U$ satisfies \eqref{eqn:stron_equivariance_U}, see proof of \Cref{lem:support_convolution_invariant_functions}.
			Since $U$ satisfies \eqref{eqn:stron_equivariance_U}, \Cref{lem:equiv_functions_U} can be used.
			By \Cref{lem:change_open_set_invariant_function}, we can suppose that $\supp(\tilde{f})\subseteq U$.
			Let $(A_{j,k})_{j,k\in \bb{1,n}}\subseteq C^\infty(U)$ be the inverse of $(\tilde{X}_j(v_j^*))_{j,k\in \bb{1,n}}$.
			We have $\alpha_\lambda(A_{j,k})=\lambda^{\omega(j)-\omega(k)}A_{j,k}$.
			On $U$, there exists unique smooth functions $(P_{i,j})_{i\in \bb{1,\nu}, j\in \bb{1,n}}\subseteq C^\infty(U)$ and $(H_i)_{i\in \bb{1,\nu}}\subseteq C^\infty(U)$ such that 
				\begin{equation}\label{eqn:qksdjfkoqsdjofjoqsdjjfoqjsdfjmqsdf}\begin{aligned}
					\odv*{\alpha_{(1,\cdots,1,\lambda_i,1,\cdots,1)}(g)}{\lambda_{i}}_{\lambda_i=1}=t_i\pdv*{g}{t_{i}}+ \sum_{j=1}^n \cL_{\tilde{X}_j}(P_{i,j} g)+H_ig,\quad \forall g\in C^\infty(U,\Omega^1(V)),i\in\bb{1,\nu}.
				\end{aligned}\end{equation}
			By replacing $g$ with $v_k^*g$, we deduce that
				\begin{equation}\label{eqn:qskdjjflkoqjsdmjfmqsdf}\begin{aligned}
					P_{i,j}=-\sum_{k=1}^{n}\omega(k)_{i}v_k^*A_{k,j}.
				\end{aligned}\end{equation}
			So, by \eqref{eqn:HTYUIOOP1} and \eqref{eqn:qksdjfkoqsdjofjoqsdjjfoqjsdfjmqsdf}, we have 
				\begin{equation*}\begin{aligned}
					\odv*{\alpha_{(1,\cdots,1,\lambda_i,1,\cdots,1)}(f)}{\lambda_{i}}_{\lambda_i=1}&=\theta_{(0,\cdots,0,1,0,\cdots,0)}(\delta_1)\ast \cQ_{V*}\left(\left(\pdv*{\tilde{f}}{t_{i}}\right)\circ \pi_{V\times\mathbb{G}^{(0)}}\right)\\
					&-\sum_{j,k=1}^n\omega(k)_i \theta_{\omega(j)}(X_j)\ast \cQ_{V*}\left(\left(v_k^*A_{k,j}\tilde{f}\right) \circ \pi_{V\times\mathbb{G}^{(0)}}\right)\\&+\cQ_{V*}\left(\left(H_i\tilde{f}\right)\circ \pi_{V\times\mathbb{G}^{(0)}}\right)
				\end{aligned}\end{equation*}
			It is clear that $\theta_{(0,\cdots,0,1,0,\cdots,0)}(\delta_1)\ast \cQ_{V*}\left(\left(\pdv*{\tilde{f}}{t_{i}}\right)\circ \pi_{V\times\mathbb{G}^{(0)}}\right)\in \cinvf{(0,\cdots,0,1,0,\cdots,0)}$.
			For the second term, since $\alpha_{\lambda}(A_{k,j})=\lambda^{\omega(k)-\omega(j)}$, by \Cref{lem:equiv_functions_U}, $\cQ_{V*}\left(\left(v_k^*A_{k,j}\tilde{f}\right) \circ \pi_{V\times\mathbb{G}^{(0)}}\right)\in \cinvf{\max(\omega(k)-\omega(j),0)}$.
			Since $\omega(k)\preceq \omega(j)+\max(\omega(k)-\omega(j),0)$, by \eqref{eqn:theta_product_cinfk}, we have
				\begin{equation*}\begin{aligned}
					\theta_{\omega(j)}(X_j)\ast \cQ_{V*}\left(\left(v_k^*A_{k,j}\tilde{f}\right) \circ \pi_{V\times\mathbb{G}^{(0)}}\right)\in \cinvf{\omega(j)+\max(\omega(k)-\omega(j),0)}\subseteq \cinvf{\omega(k)}.
				\end{aligned}\end{equation*}
			So, since we multiply with the factor $\omega(k)_i$, only the terms with $\omega(k)_i>0$ count, which are terms such that $(0,\cdots,0,1,0,\cdots,0)\preceq \omega(k)$.
			So, by \eqref{eqn:theta_product_cinfk}, the second sum also belongs to $\cinvf{(0,\cdots,0,1,0\cdots,0)}$.
			To prove that $\cQ_{V*}\left(\left(H_i\tilde{f}\right)\circ \pi_{V\times\mathbb{G}^{(0)}}\right)\in \cinvf{(0,\cdots,0,1,0,\cdots,0)}$,
			by \Cref{lem:equiv_functions_U}, it suffices to show that $H_i$ can be written as a sum of functions of the form $BC$ with $\alpha_\lambda(B)=\lambda^l B$ and $0<l_i$ for some $l\in \Z^\nu$.
			Let $\mu\in \Omega^1(V)$ be a non-zero density. We see $\mu$ as a constant function in $C^\infty(U,\Omega^1(V))$.
			We have $\alpha_{\lambda}(\mu)=\lambda^{-\sum_{j=1}^n \omega(j)}\mu$.
			So, by replacing $g$ with $\mu$ in \eqref{eqn:qksdjfkoqsdjofjoqsdjjfoqjsdfjmqsdf},
			we deduce that 
				\begin{equation*}\begin{aligned}
					H_i&=\sum_{j=1}^n-\frac{\cL_{\tilde{X}_j}(P_{i,j}\mu)}{\mu}-\omega(j)_i=\sum_{j,k=1}^n \omega(k)_{i}v_k^* \frac{\cL_{\tilde{X}_j}(A_{k,j}\mu)}{\mu}+(\omega(k)_{i}-\omega(j)_i)\cL_{\tilde{X}_j}(v_k^*)A_{k,j}.
				\end{aligned}\end{equation*}
			For the first term, we have
				\begin{equation*}\begin{aligned}
					 \alpha_\lambda\left(\frac{\cL_{\tilde{X}_j}(A_{k,j}\mu)}{\mu}\right)=\lambda^{\omega(k)}\frac{\cL_{\tilde{X}_j}(A_{k,j}\mu)}{\mu}
				\end{aligned}\end{equation*}
			Since we only sum over terms with $\omega(k)_i>0$, we take $B=\frac{\cL_{\tilde{X}_j}(A_{k,j}\mu)}{\mu}$ and $C=1$.
			For the second term either $\omega(k)_i<\omega(j)_i$ or $\omega(k)_i>\omega(j)_i$ (otherwise $\omega(k)_{i}-\omega(j)_i=0$). In the first case we take $B=\cL_{\tilde{X}_j}(v_k^*)$ and $C=A_{k,j}$. In the second case, we take $B=A_{k,j}$ and $C=\cL_{\tilde{X}_j}(v_k^*)$.
	\end{linkproof}
\section{Proofs of the results in Section \ref{sec:weakly_comm_struct}}\label{sec:weakly_comm_struct_proof}
	In this section, we suppose that $\cF^{\bullet}$ is weakly commutative.
	\begin{linkproof}{estimate_weakly_commut_convolution}
	\begin{lem}\label{lem:weak_commutative_graded_basis}
		There exists a $\nu$-graded Lie basis $(V,\natural)$ such that $V=\oplus_{i\in \bb{1,\nu}}V^{\weak{i}}$.
	\end{lem}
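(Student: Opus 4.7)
The plan is to build $(V,\natural)$ axis-by-axis. For each $i\in\bb{1,\nu}$, the collection $(\cF^{ke_{i}})_{k\in\Z_{+}}$, where $e_{i}$ denotes the $i$-th standard basis vector of $\Z_{+}^{\nu}$, forms a $1$-graded sub-Riemannian structure of depth $N_{i}$ on $M$. Applying \Cref{lem:bigger_bi_graded_Lie} in the $\nu=1$ setting (starting, e.g., from the trivial $1$-graded basis generating this structure), we obtain a $1$-graded Lie basis $(V_{i},\natural_{i})$ for $(\cF^{k e_{i}})_{k\in\Z_{+}}$. Set
\[
V:=\bigoplus_{i=1}^{\nu}V_{i},
\]
regraded as a $\nu$-graded vector space by placing $V_{i}^{k}$ in degree $ke_{i}$, and let $\natural:V\to\cX(M)$ be the direct sum of the $\natural_{i}$. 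In particular $V=\oplus_{i}V^{\weak{i}}$ by construction, and $V^{0}=0$.

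Next I would equip $V$ with a Lie bracket by keeping the original bracket of each $V_{i}$ and declaring $[V_{i},V_{j}]=0$ for $i\neq j$. Compatibility with the $\nu$-grading and the Jacobi identity are immediate. It remains to verify the axioms of a $\nu$-graded Lie basis. Condition \MyCref{dfn:graded_lie_basis}[2] follows directly from $\natural_{i}(V_{i}^{k})\subseteq\cF^{ke_{i}}$. For \MyCref{dfn:graded_lie_basis}[3], the $1$-graded bases generate each $\cF^{k e_{i}}$ as a $C^{\infty}(M,\R)$-module from $\natural_{i}(\oplus_{l\leq k}V_{i}^{l})$, and \emph{weak commutativity} \eqref{eqn:weakly_commuting} gives $\cF^{k}=\sum_{i}\cF^{k_{i}e_{i}}$, so taken together the $\natural(V^{l e_{i}})$ with $le_{i}\preceq k$ generate $\cF^{k}$.

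The key point is the Lie-bracket compatibility \eqref{eqn:bi_graded_lie_basis_commutator}. Within a single block $V_{i}$, this reduces to the corresponding $1$-graded compatibility of $(V_{i},\natural_{i})$, together with the trivial inclusion $\cF^{\prec_{1}m}\subseteq\cF^{\prec me_{i}}$ (the $1$-parameter filtration is coarser than the restriction of the $\nu$-parameter filtration to the $i$-th axis). The interesting case is $v\in V^{k_{i}e_{i}}$, $w\in V^{l_{j}e_{j}}$ with $i\neq j$, where $[v,w]=0$ by definition, so that \eqref{eqn:bi_graded_lie_basis_commutator} becomes
\[
[\natural(v),\natural(w)]\in\cF^{\prec\,k_{i}e_{i}+l_{j}e_{j}}.
\]
Now $[\natural(v),\natural(w)]\in[\cF^{k_{i}e_{i}},\cF^{l_{j}e_{j}}]\subseteq\cF^{k_{i}e_{i}+l_{j}e_{j}}$, and weak commutativity applied to the multi-index $k_{i}e_{i}+l_{j}e_{j}$ (whose only nonzero coordinates are the $i$-th and $j$-th) gives
\[
\cF^{k_{i}e_{i}+l_{j}e_{j}}=\cF^{k_{i}e_{i}}+\cF^{l_{j}e_{j}}.
\]
Since both $k_{i}e_{i}\prec k_{i}e_{i}+l_{j}e_{j}$ and $l_{j}e_{j}\prec k_{i}e_{i}+l_{j}e_{j}$, the right-hand side is contained in $\cF^{\prec\,k_{i}e_{i}+l_{j}e_{j}}$, which is exactly what we needed. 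This is the only nontrivial step; the rest is bookkeeping, and there is no serious obstacle because weak commutativity was engineered precisely to kill the cross brackets at top order.
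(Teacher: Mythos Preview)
Your proof is correct. The paper takes a slightly different route to the same construction: it starts from an arbitrary $\nu$-graded Lie basis $(V,\natural)$ (which exists by \Cref{lem:bigger_bi_graded_Lie}), throws away the mixed-degree components to obtain $\oplus_{i}V^{\weak{i}}$ with the restricted $\natural$, and then redefines the Lie bracket by keeping the original bracket within each $V^{\weak{i}}$ and setting cross brackets to zero. Weak commutativity ensures that the restricted object is still a $\nu$-graded basis, and the verification of \eqref{eqn:bi_graded_lie_basis_commutator}---which the paper leaves implicit---is exactly the computation you wrote out for the cross terms. Your axis-by-axis construction via $1$-graded Lie bases is equally valid and perhaps more transparent about why the object exists; the paper's version is marginally shorter because it reuses the ambient $\nu$-graded Lie basis rather than invoking \Cref{lem:bigger_bi_graded_Lie} $\nu$ times. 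Either way, the only substantive point is the one you identified: weak commutativity forces $\cF^{k_ie_i+l_je_j}=\cF^{k_ie_i}+\cF^{l_je_j}\subseteq\cF^{\prec k_ie_i+l_je_j}$ when $i\neq j$.
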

	\begin{proof}
		Let $(V,\natural)$ be a $\nu$-graded Lie basis of $\cF$.
		By weak commutativity of $\cF$, we deduce that $(\oplus_{i\in \bb{1,\nu}}V^{\weak{i}},\natural_{|\oplus_{i\in \bb{1,\nu}}V^{\weak{i}}})$ is a $\nu$-graded basis of $\cF$.
		We define a Lie bracket on $\oplus_{i\in \bb{1,\nu}}V^{\weak{i}}$ as follows: If $v,w\in V^{\weak{i}}$, then $[v,w]$ is equal to their Lie bracket in $V$.
		If $v\in V^{\weak{i}},w\in V^{\weak{j}}$ with $i\neq j$, then $[v,w]=0$.
		One can check that $(\oplus_{i\in \bb{1,\nu}}V^{\weak{i}},\natural_{|\oplus_{i\in \bb{1,\nu}}V^{\weak{i}}})$ is a $\nu$-graded Lie basis.
	\end{proof}
	It suffices to show that  for any $C_1,C_2\in \R$ such that $0<C_2<2C_1$, \eqref{eqn:lambda_mu_map_azs} to a smooth map 
	$\{(\lambda,\mu)\in [0,C_1]^{2\nu}:\lambda+\mu\in [C_2,2C_1]^\nu\}\to \cinv$. We fix $C_1$ and $C_2$.
	\begin{lem}\label{lem:suppott_convolution_alpha_lambda_mu}
		If $K_1\subseteq M\times M\times \R_+^\nu\backslash 0$ and $K_2\subseteq \mathbb{G}$ are compact subsets,  
			\begin{equation*}\begin{aligned}
				K=\bigcup_{\{(\lambda,\mu)\in ]0,C_1]^{2\nu}:\lambda+\mu\in [C_2,2C_1]^\nu\}}\alpha_{\lambda}(K_1)\ast \alpha_{\mu}(K_2),
			\end{aligned}\end{equation*}
		then the closure of $K$ in $\mathbb{G}$ is a compact subset of $M\times M\times \R_+^\nu\backslash 0$.
	\end{lem}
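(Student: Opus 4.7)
The plan is to parametrize a general element of $K$ explicitly via an auxiliary time parameter $\tau$, and then to exploit \emph{both} sides of the constraint $\lambda+\mu\in[C_2,2C_1]^\nu$ in order to confine $\tau$ to a compact subset of $\R_+^\nu\setminus\{0\}$. To start, let $(\eta,\xi)\in K_1\times K_2$ and an admissible $(\lambda,\mu)$ yield an element $\alpha_\lambda(\eta)\cdot\alpha_\mu(\xi)\in K$. Since $\eta\in M\times M\times\R_+^\nu\setminus\{0\}$, the source $s(\alpha_\lambda(\eta))$ lies in $M\times\R_+^\nu\setminus\{0\}$, and because $\alpha_\mu$ preserves the decomposition $\mathbb{G}^{(0)}=(M\times\R_+^\nu\setminus\{0\})\sqcup(\cG^{(0)}\times\{0\})$, composability forces $\xi$ itself into $M\times M\times\R_+^\nu\setminus\{0\}$. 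Writing $\eta=(y,y',t)$ and $\xi=(z,z',\sigma)$, one then has $y'=z$ and $\tau:=\lambda^{-1}t=\mu^{-1}\sigma$, so the product equals $(y,z',\tau)\in M\times M\times\R_+^\nu\setminus\{0\}$ and the decisive relation is
\[
(\lambda_i+\mu_i)\tau_i \;=\; t_i+\sigma_i,\qquad i\in\bb{1,\nu}.
\]

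Next I would bound $\tau$ componentwise on both sides. By compactness of $K_1$ and $K_2$ there is $T>0$ with $t_i,\sigma_i\leq T$; the hypothesis $\lambda_i+\mu_i\geq C_2$ then gives $\tau_i\leq 2T/C_2$, so $\tau\in[0,2T/C_2]^\nu$. For the lower bound, the continuous positive function $(y,y',t)\mapsto\max_i t_i$ attains a positive minimum $c_1>0$ on the compact set $K_1\subseteq M\times M\times\R_+^\nu\setminus\{0\}$; hence for every $\eta\in K_1$ there is some index $i(\eta)\in\bb{1,\nu}$ with $t_{i(\eta)}\geq c_1$, and the bound $\lambda_i+\mu_i\leq 2C_1$ yields $\tau_{i(\eta)}\geq c_1/(2C_1)>0$.

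To conclude, since $\mathbb{G}$ is metrizable by \MyCref{thm:topology_mathbbG}[topology], it suffices to verify that every sequence $(\gamma_n)\subseteq K$ admits a subsequence converging in $M\times M\times\R_+^\nu\setminus\{0\}$. Writing $\gamma_n=(y_n,z_n',\tau_n)$ from data $(\eta_n,\xi_n,\lambda_n,\mu_n)$, I would extract subsequences along which $\eta_n\to\eta^*\in K_1$ and $\xi_n\to\xi^*\in K_2$ (compactness), forcing $y_n\to y^*$ and $z_n'\to w^*$ in $M$ via continuity of the source and range maps; $(\lambda_n,\mu_n)\to(\lambda^*,\mu^*)$ in the admissible region; $\tau_n\to\tau^*\in[0,2T/C_2]^\nu$; and, by pigeonhole on the finite set $\bb{1,\nu}$, the index $i(\eta_n)$ equals some $i_0$ throughout. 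The lower bound of the previous paragraph then gives $\tau^*_{i_0}\geq c_1/(2C_1)>0$, so $\tau^*\in\R_+^\nu\setminus\{0\}$; since $M\times M\times\R_+^\nu\setminus\{0\}$ is an open subspace of $\mathbb{G}$ by \MyCref{thm:topology_mathbbG}[open_subset], coordinatewise convergence yields $\gamma_n\to(y^*,w^*,\tau^*)$ in $\mathbb{G}$ with limit in the smooth part.

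The principal technical point is the non-degeneracy $\tau^*\neq 0$. The upper bound on $\tau$ is an immediate consequence of $\lambda_i+\mu_i\geq C_2$, but the lower bound relies essentially on the hypothesis $K_1\subseteq M\times M\times\R_+^\nu\setminus\{0\}$ (which supplies the uniform estimate $\max_i t_i\geq c_1$) combined with $\lambda_i+\mu_i\leq 2C_1$. This is exactly what prevents limiting sequences from escaping onto the fiber $\cG\times\{0\}$; the remainder of the argument is routine compactness together with a finite pigeonhole.
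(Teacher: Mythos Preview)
Your proof is correct and follows essentially the same approach as the paper: both hinge on the identity $(\lambda_i+\mu_i)\tau_i=t_i+\sigma_i$, use the constraint $\lambda+\mu\in[C_2,2C_1]^\nu$ for the two-sided bound on $\tau$, and exploit $K_1\subseteq M\times M\times\R_+^\nu\setminus\{0\}$ to ensure $t\neq 0$ and hence $\tau\neq 0$. The only cosmetic difference is that the paper takes limits of the identity directly (writing $\lambda_n^{-1}t_n=(\mu_n+\lambda_n)^{-1}(\mu_n\lambda_n^{-1}t_n+t_n)$ and passing to the limit), whereas you extract uniform upper and lower bounds and then use pigeonhole on the index; these are equivalent packagings of the same estimate.
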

	\begin{proof}
		Let $(\gamma_n)_{n\in \N}\subseteq \overline{K}$ be a sequence.
		Since $\mathbb{G}$ is metrizable, we fix a metric $d$ that induces the topology on $\mathbb{G}$.
		Let $(\gamma'_{n})_{n\in \N}\subseteq K$ be a sequence such that $d(\gamma_n,\gamma_n')<\frac{1}{n}$.
		We write $\gamma_n'=\alpha_{\lambda_n}(\eta_n)\alpha_{\mu_n}(\xi_n)$. By passing to a subsequence, we can suppose that $\lambda_n\to \lambda\in [0,C_1]^{\mu}$, $\mu_n\to \mu\in [0,C_1]^\nu$, $\eta_n\to \eta\in K_1$, $\xi_n\to \xi\in K_2$ and
		$\lambda+\mu\in [C_2,2C_1]^\nu$.
		We write $\eta_n=(z_n,y_n,t_n)$ and $\eta=(z,y,t)$.
		For the pair $\alpha_{\lambda_n}(\eta_n)=(z_n,y_n,\lambda_n^{-1}t_n)$ and $\alpha_{\mu_n}(\xi_n)$ to be composable, necessarily $\xi_n=(y_n,x_n,\mu_n\lambda_n^{-1}t_n)$ for some $(x_n)_{n\in N}\subseteq M$.
		Since $\xi_n$ converges in $\mathbb{G}$, $x_n\to x$ and $\mu_n\lambda_n^{-1}t_n\to s$ for some $x\in M$ and $s\in \R_+^\nu$. 
		Hence, 
			\begin{equation*}\begin{aligned}
				\lambda_{n}^{-1}t_n=(\mu_n+\lambda_n)^{-1}( \mu_n\lambda_n^{-1}t_n+t_n)\to (\lambda+\mu)^{-1}(s+t)\in \R^\nu_+\backslash 0
			\end{aligned}\end{equation*}
		Therefore, $\gamma_n'=(z_n,x_n,\lambda_{n}^{-1}t_n)\to (z,x,(\lambda+\mu)^{-1}(s+t))\in M\times M\times \R_+^\nu\backslash 0$.
		So, $\gamma_n$ also converges to $(z,x,(\lambda+\mu)^{-1}(s+t))$.
	\end{proof}
	By decomposing $f$ and $g$ as in \eqref{eqn:sum_smooth_decompo_invariatn_function}, the proof is divided into $4$-cases: 
	
	\begin{enumerate}
		\item     If $f$ and $g$ are in $C^\infty_c(M\times M\times \R_+^\nu\backslash 0,\omegahalf)$, then by \Cref{lem:suppott_convolution_alpha_lambda_mu}, there exists $K\subseteq M\times M\times \R_+^\nu\backslash 0$ such that
		 $\supp(\alpha_\lambda(f)\ast \alpha_\mu(g))\subseteq K$ for all $\lambda,\mu\in ]0,C_1]^{\nu}$ such that $\lambda+\mu\in [C_2,2C_1]^\nu$. We have
		 \begin{equation*}\begin{aligned}
			\alpha_\lambda(f)\ast \alpha_\mu(g)(z,x,t)=\int_M f(z,y,\lambda t)g(y,x,\mu t)
		\end{aligned}\end{equation*}
		This extends to $\{(\lambda,\mu)\in [0,C_1]^{2\nu}:\lambda+\mu\in [C_2,2C_1]^\nu\}$. The result follows in this case.
		\item If $f\in C^\infty_c(M\times M\times \R_+^\nu\backslash 0,\omegahalf)$ and $g=\cQ_{V*}(\tilde{g}\circ \pi_{V\times \mathbb{G}^{(0)}})$ for some $\tilde{g}\in C^\infty_c(\tdom(\cQ_V),\Omega^1(V))$, then again by \Cref{lem:suppott_convolution_alpha_lambda_mu}, there exists $K\subseteq M\times M\times \R_+^\nu\backslash 0$ such that
		$\supp(\alpha_\lambda(f)\ast \alpha_\mu(g))\subseteq K$ for all $(\lambda,\mu)\in ]0,C_1]^{2\nu}$ such that $\lambda+\mu\in [C_2,2C_1]^\nu$. We have 
		\begin{equation*}\begin{aligned}
			\alpha_\lambda(f)\ast \alpha_\mu(g)(z,x,t)&=\int_V  \alpha_\lambda(f)(z,e^{\alpha_{t}(v)}\cdot x,t) \alpha_\mu(\tilde{g})(v,x,t)h(v,x,t)\\&=\int_V f(z,e^{\alpha_{\mu t}(v)}\cdot x,\lambda t)\tilde{g}(v,x,\mu t)h(\alpha_\mu(v),x, t) ,
		\end{aligned}\end{equation*}
		where $h(v,x,t)$ is the isomorphism $\Omega^{\frac{1}{2}}(T_{e^{\alpha_t(v)}\cdot x}M)\to \Omega^\frac{1}{2}( T_{x}M)$ induced from the differential of the map $M\to M$, $x\mapsto e^{\alpha_t(v)}\cdot x$.
		This extends to $\{(\lambda,\mu)\in [0,C_1]^{2\nu}:\lambda+\mu\in [C_2,2C_1]^\nu\}$. The result follows in this case.
	\item If $f=\cQ_{V*}(\tilde{f}\circ \pi_{V\times \mathbb{G}^{(0)}})$ for some $\tilde{f}\in C^\infty_c(\tdom(\cQ_V),\Omega^1(V))$ and $g\in C^\infty_c(M\times M\times \R_+^\nu\backslash 0,\omegahalf)$, then by taking the adjoint, this case follows from the previous case.
	\item It remains the case where $f=\cQ_{V*}(\tilde{f}\circ \pi_{V\times \mathbb{G}^{(0)}})$ and $g=\cQ_{V*}(\tilde{g}\circ \pi_{V\times \mathbb{G}^{(0)}})$ for some $\tilde{f},\tilde{g}\in C^\infty_c(\tdom(\cQ_V),\Omega^1(V))$.
	Let $\psi$ and $\tilde{\psi}$ be maps as in \Cref{thm:composition_bisub}.
	We define the map 
	\begin{equation*}\begin{aligned}
		\tilde{\psi}^{\lambda,\mu}(v_1,v_2,x,t)=\tilde{\psi}(\alpha_{\lambda}(v_1),\alpha_{\mu}(v_2),x,t).
	\end{aligned}\end{equation*}

	\begin{lem}\label{lem:skodqjfmkojsdlkmjfmqjsdf}
		We can find an equivariant open neighborhood  $L\subseteq \dom(\tilde{\phi})$  of $V\times V\times M\times \{0\}$ such that for any $\lambda,\mu\in [0,C_1]^{\nu}$, if $\lambda+\mu\in [C_2,2C_1]^\nu$, 
		then $\tilde{\psi}^{\lambda,\mu}$ is a submersion on $L^{\lambda,\mu}=\{(v_1,v_2,x,t):(\alpha_\lambda(v_1),\alpha_{\mu}(v_2),x,t)\in L\}$.
	\end{lem}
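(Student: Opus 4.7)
My plan is to reduce the submersivity of $\tilde{\psi}^{\lambda,\mu}$ on the face $t=0$ to a block--diagonal linear algebra calculation afforded by the direct sum $V = \bigoplus_i V^{\weak{i}}$ of \Cref{lem:weak_commutative_graded_basis}, and then to spread this property to an equivariant open neighborhood by a tube--lemma argument followed by averaging over $(\Rpt)^\nu$.

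\emph{Step 1 (pointwise submersion at $t=0$).} Using \Cref{lem:weak_commutative_graded_basis} I choose a $\nu$-graded Lie basis $(V,\natural)$ so that $V = \bigoplus_{i=1}^\nu V^{\weak{i}}$ with the $V^{\weak{i}}$ pairwise commuting (hence commuting ideals of $V$). By property \ref{thm:composition_bisub:3}, $\tilde{\psi}(w_1, w_2, x, 0) = (w_1 \cdot w_2, x, 0)$, and commutativity of the summands gives the block decomposition
\begin{equation*}
\tilde{\psi}^{\lambda,\mu}(v_1, v_2, x, 0) = \Bigl(\sum_{i=1}^{\nu} \alpha_\lambda(\pi_i v_1) \cdot \alpha_\mu(\pi_i v_2),\ x,\ 0\Bigr),
\end{equation*}
where $\pi_i : V \to V^{\weak{i}}$ is the projection and each summand lies in $V^{\weak{i}}$. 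On $V^{\weak{i}}$ the dilation $\alpha_\lambda$ is multiplication by $\lambda_i^j$ on $V^{(0,\ldots,j,\ldots,0)}$, so it is a linear automorphism precisely when $\lambda_i > 0$, and similarly for $\mu$. Since multiplication in the nilpotent group $V^{\weak{i}}$ is a submersion (its partials are left/right translations, which are global diffeomorphisms of $V^{\weak{i}}$), the image of the partial differential of the $i$-th component in the $(V \times V)$--direction covers $V^{\weak{i}}$ as soon as $\lambda_i + \mu_i > 0$. Therefore, whenever $\lambda + \mu \in (\Rpt)^\nu$, $\tilde{\psi}^{\lambda,\mu}$ is a submersion at every point of $V \times V \times M \times \{0\}$.

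\emph{Step 2 (tube lemma).} The set
\begin{equation*}
\Omega := \bigl\{(v_1, v_2, x, t, \lambda, \mu) : (\alpha_\lambda v_1, \alpha_\mu v_2, x, t) \in \dom(\tilde{\psi}),\ \tilde{\psi}^{\lambda,\mu}\ \text{is a submersion at}\ (v_1, v_2, x, t)\bigr\}
\end{equation*}
is open in $V \times V \times M \times \R_+^\nu \times \R_+^{2\nu}$, and by Step~1 contains $V \times V \times M \times \{0\} \times K$ with
\begin{equation*}
K := \{(\lambda, \mu) \in [0, C_1]^{2\nu} : \lambda + \mu \in [C_2, 2C_1]^\nu\}
\end{equation*}
compact. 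Covering $V \times V \times M \times \{0\}$ by a locally finite family of relatively compact opens and applying the tube lemma to each produces an open neighborhood $L_0 \subseteq \dom(\tilde{\psi})$ of $V \times V \times M \times \{0\}$ with $L_0 \times K \subseteq \Omega$.

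\emph{Step 3 (equivariantization).} Finally I set $L := \bigcup_{\rho \in (\Rpt)^\nu} \alpha_\rho(L_0)$, which is an open $(\Rpt)^\nu$-equivariant neighborhood of $V \times V \times M \times \{0\}$ still contained in $\dom(\tilde\psi)$. Because $\tilde{\psi}$ is equivariant and $\alpha_\lambda \alpha_\rho = \alpha_\rho \alpha_\lambda$ on $V$, the condition defining $\Omega$ transports along orbits of the diagonal $(\Rpt)^\nu$-action on $(v_1, v_2, x, t)$: if $(v_1', v_2', x, t') \in L_0$ and $(v_1, v_2, x, t) = \alpha_\rho(v_1', v_2', x, t')$, then $(\alpha_\lambda v_1, \alpha_\mu v_2, x, t) = \alpha_\rho(\alpha_\lambda v_1', \alpha_\mu v_2', x, t')$, so $\tilde\psi$ being a submersion at the second point transfers to the first. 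Hence $L$ still has the required submersion property for every $(\lambda,\mu) \in K$.

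The main obstacle is the bookkeeping in Step~1: one must use weak commutativity in its full strength (the $V^{\weak{i}}$ are not just subalgebras but commuting ideals, so that BCH really splits as a direct product of BCHs on each $V^{\weak{i}}$), and must correctly identify how the left/right translations in the nilpotent subgroup $V^{\weak{i}}$ interact with the images of the linear maps $\alpha_\lambda|_{V^{\weak{i}}}$ and $\alpha_\mu|_{V^{\weak{i}}}$. Once this decomposition is in hand, Steps~2 and 3 are standard.
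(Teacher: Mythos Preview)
There is a gap: the $L$ you construct satisfies the wrong condition. After Step~3 you have an equivariant open $L$ such that $\tilde\psi^{\lambda,\mu}$ is a submersion \emph{on $L$} for every $(\lambda,\mu)\in K$. The lemma, however, demands that $\tilde\psi^{\lambda,\mu}$ be a submersion on $L^{\lambda,\mu}=\{(v_1,v_2,x,t):(\alpha_\lambda v_1,\alpha_\mu v_2,x,t)\in L\}$. These are not the same set: the twist $(v_1,v_2,x,t)\mapsto(\alpha_\lambda v_1,\alpha_\mu v_2,x,t)$ is not one of the diagonal dilations $\alpha_\rho$ under which $L$ is invariant, and when some $\lambda_i$ or $\mu_i$ vanishes it is not even bijective. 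To see the mismatch concretely, note that ``$\tilde\psi^{\lambda,\mu}$ is a submersion at $(v_1,v_2,x,t)$'' is a condition on the \emph{post-dilated} point $(w_1,w_2,x,t)=(\alpha_\lambda v_1,\alpha_\mu v_2,x,t)$: it says $d\tilde\psi_{(w_1,w_2,x,t)}$ restricted to $\mathrm{im}(\alpha_\lambda,\alpha_\mu)\times T_xM\times\R^\nu$ is surjective. The lemma needs this for every $(w_1,w_2,x,t)\in L$; your tube-lemma argument only checks it at points $(w_1,w_2)=(\alpha_\lambda v_1,\alpha_\mu v_2)$ with $(v_1,v_2,x,t)\in L_0$, and when $\lambda_i=0$ this forces $w_1$ into the proper subspace $\bigoplus_{j\neq i}V^{\weak{j}}$.

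The repair is small: run the tube lemma on $\Omega'=\{(w_1,w_2,x,t,\lambda,\mu): d\tilde\psi_{(w_1,w_2,x,t)}\circ(\alpha_\lambda,\alpha_\mu,\mathrm{id},\mathrm{id})\ \text{surjective}\}$, parameterized by the post-dilated point. This is still open (the composite linear map varies continuously in all variables, including at $\lambda_i=0$), and your block-diagonal argument from Step~1 works verbatim at an arbitrary base point $(w_1,w_2,x,0)$, giving $V\times V\times M\times\{0\}\times K\subseteq\Omega'$. The paper avoids the issue by a different device: it introduces the finitely many linear sections $T_S:V\hookrightarrow V\times V$, $S\subseteq\Z_+^\nu$, sending each $V^k$ to the first or second factor, and defines $L$ by requiring $d\tilde\psi_{(\alpha_\lambda v_1,\alpha_\mu v_2,x,t)}\circ T_S$ to be surjective for all $(\lambda,\mu)\in K$ and all $S$. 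Since $\mathrm{im}(\alpha_\lambda,\alpha_\mu)$ always contains the image of some $T_S$ (put $V^{\weak{i}}$ in the first factor when $\lambda_i>0$, in the second when $\lambda_i=0$), this yields the submersion on $L^{\lambda,\mu}$ directly, and the finite conjunction is manifestly open and equivariant.
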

	\begin{proof}
		For any $S\subseteq \Z_+^\nu$, let $T_S:V\to V\times V$ be the linear map which sends $v\in V^{k}$ to $(v,0)$ if $k\in S$ and to $(0,v)$ if $k\notin S$.
		The set $\{T_S:S\subseteq \Z_+^\nu\}$ is finite because $\{k\in \Z_+^\nu:V^k\neq 0\}$ is finite which is implied by finite dimensionality of $V$.
		Without loss of generality, we suppose $C_2\leq 1\leq C_1$.
			Let $L$ be the set of points $(v_1,v_2,x,t)$ such that for any $(\lambda,\mu)\in [0,C_1]^{2\nu}$ such that $\lambda+\mu\in [C_2,2C_1]^\nu$, one has $(\alpha_\lambda(v_1),\alpha_\mu(v_2),x,t)\in \dom(\tilde{\psi})$ and 
			the linear map $\odif{\tilde{\psi}}_{(\alpha_\lambda(v_1),\alpha_{\mu}(v_2),x,t)}\circ (T_S,\mathrm{Id_{T_xM},\mathrm{Id}_{\R}}):V\times T_xM\times \R\to V\times T_{x}M\times \R$ is surjective for any $S\subseteq \Z_+^\nu$.
			The set $L\subseteq \dom(\tilde{\psi})$ because $C_2\leq 1\leq C_1$, so one can take $\lambda=\mu=(1,\cdots,1)$.
			It is equivariant by equivariance of $\tilde{\psi}$.
			It is open by a compactness argument (here we use finiteness of $\{T_S:S\subseteq \Z_+^\nu\}$).
			By \Crefitem{thm:composition_bisub}{3} together with the fact that $V=\oplus_{i\in \bb{1,\nu}}V^{\weak{i}}$, we deduce that $L$ contains $V\times V\times M\times \{0\}$.
	\end{proof}
	Let $U$ be as in \Cref{lem:support_convolution_invariant_functions} applied to $L$.
	By \Cref{lem:change_open_set_invariant_function} and the cases we previously treated, we can suppose that $\tilde{f}$ and $\tilde{g}$ are supported in $U$.
	We deduce that 
		\begin{equation*}\begin{aligned}
			h^{\lambda,\mu}(v_1,v_2,x,t)&=\alpha_\lambda(\tilde{f})(v_1,e^{\alpha_{t}(v_2)}\cdot x,t)\alpha_\mu(\tilde{g})(v_2,x,t)\in C^\infty_c(L,\Omega^2(V))&& \lambda,\mu\in ]0,C_1]^\nu\\
			\kappa^{\lambda,\mu}(v_1,v_2,x,t)&=\tilde{f}(v_1,e^{\alpha_{\mu t}(v_2)}\cdot x, \lambda t)\tilde{g}(v_2,x,\mu t)\in C^\infty_c(L^{\lambda,\mu},\Omega^2(V))&& \lambda,\mu\in [0,C_1]^\nu
		\end{aligned}\end{equation*}
		and such that $\lambda+\mu\in [C_2,2C_1]^\nu$. Here, we use \eqref{eqn:stron_equivariance_U} to show that $	\kappa^{\lambda,\mu}$ is supported in $L^{\lambda,\mu}$.
	By \eqref{eqn:diag_composition_bisub}, $\alpha_\lambda(f)\ast \alpha_\mu(g)=\cQ_{V*}(\tilde{\psi}_* (h^{\lambda,\mu})\circ \pi_{V\times \mathbb{G}^{(0)}})$.
	It suffices to show that $(\lambda,\mu) \mapsto \tilde{\psi}_* (h_{\lambda,\mu})$ extends to a smooth map $\{(\lambda,\mu)\in [0,C_1]^{2\nu}:\lambda+\mu\in [C_2,2C_1]^\nu\}\to C^\infty_c(\tdom(\cQ_V^2),\Omega^2(V))$.
	We have
	\begin{equation}\label{eqn:psi_h_alpha_lambda_mu}\begin{aligned}
		\tilde{\psi}_* (h^{\lambda,\mu})(v,x,t)
		&=\int_{\tilde{\psi}(v_1,v_2,x,t)=(v,x,t)}	\alpha_\lambda(\tilde{f})(v_1,e^{\alpha_{t}(v_2)}\cdot x,t)\alpha_\mu(\tilde{g})(v_2,x,t)\\
		&=\int_{\tilde{\psi}(\alpha_\lambda(v_1),\alpha_\mu(v_2),x,t)=(v,x,t)}	\tilde{f}(v_1,e^{\alpha_{\mu t}(v_2)}\cdot x, \lambda t)\tilde{g}(v_2,x,\mu t)
		\\&=\tilde{\psi}^{\lambda,\mu}_*(\kappa^{\lambda,\mu})(v,x,t)
	\end{aligned}\end{equation}
	Since $\kappa^{\lambda,\mu}$ are uniformly compactly supported, the result follows.
	\end{enumerate}
	This finishes the proof \Cref{thm:estimate_weakly_commut_convolution}.\qedhere
	\end{linkproof}

	\begin{linkproof}{extension_parameters}
			By \Cref{lem:weak_commutative_graded_basis}, let $(V,\natural)$ be $\nu$-graded Lie basis $\cF$ such that $V=\bigoplus_{i=1}^{\nu} V^{\weak{i}}$.
			Let $\tilde{V}=\bigoplus_{i=1}^{\tilde{\nu}} V^{\weak{i}}$ and $\dbtilde{V}=\bigoplus_{i=\tilde{\nu}+1}^{\nu} V^{\weak{i}}$.
			The pairs $(\tilde{V},\natural_{|\tilde{V}})$ and $(\dbtilde{V},\natural_{|\dbtilde{V}})$
			are $\tilde{\nu}$ and $\dbtilde{\nu}$ graded basis of $\tilde{\cF}$ and $\dbtilde{\cF}$ respectively.
 			To prove \Cref{thm:extension_parameters}, we need to check that \eqref{eqn:Convolution_extension_parameters} extends to a smooth invariant function on $\mathbb{G}$.
			By \eqref{eqn:sum_smooth_decompo_invariatn_function}, there are $4$-cases to consider:
			\begin{enumerate}
				\item     If $f\in C^\infty_c(M\times M\times \R^{\tilde{\nu}}_+\backslash 0,\omegahalf)$ and $g\in C^\infty_c(M\times M\times \R^{\dbtilde{\nu}}_+\backslash 0,\omegahalf)$, then $\Psi(f,g)$ is easily seen to define an element in $C^\infty_c(M\times M\times \R^{\nu}_+\backslash 0)$
				\item  If $f\in C^\infty_c(M\times M\times \R^{\tilde{\nu}}_+\backslash 0,\omegahalf)$ and $g=\cQ_{\dbtilde{V}*}(\tilde{g}\circ \pi_{\dbtilde{V}\times \dbtilde{\mathbb{G}}^{(0)}})$
				where $\tilde{g}\in C^{\infty}_{c}(\tdom(\cQ_{\dbtilde{V}}),\Omega^1\dbtilde{V})$, then we have the following lemma:
						\begin{lem}\label{lem:qksdjfmjsqmdjfljsqldmjfkmljqsdlmjflmqjsdlmjfkmlqs}
							If $K_1\subseteq M\times M\times \R^{\tilde{\nu}}_+\backslash 0$ and $K_2\subseteq \dbtilde{\mathbb{G}}$ are compact subsets, and 
								\begin{equation*}\begin{aligned}
									K=\{(z,x,t,s)\in M\times M\times \R^\nu_+\backslash 0:\exists y\in M, (z,y,t)\in K_1,(y,x,s)\in K_2\}		
								\end{aligned}\end{equation*}
							then the closure of $K$ in $\mathbb{G}$ is a compact subset of $M\times M\times (\R^{\tilde{\nu}}_+\backslash 0)\times \R^{\dbtilde{\nu}}_+$.
						\end{lem}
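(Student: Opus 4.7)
The plan is to argue by sequential extraction: any sequence in $K$ should, after passing to subsequences, converge in $\mathbb{G}$ to a point of the specified form, and since $\mathbb{G}$ is metrizable this is enough for both compactness of $\overline{K}$ and the containment statement. The key geometric observation is that the $\R_{+}^{\tilde{\nu}}$-coordinate stays bounded away from $0$ because $K_1$ is compact inside $M\times M\times (\R_+^{\tilde{\nu}}\setminus 0)$, so we never approach the blow-up locus in the $t$-direction; only the $s$-direction can touch $0$, and this keeps us inside the open pair-groupoid stratum $M\times M\times (\R_+^{\nu}\setminus 0)\subseteq \mathbb{G}$, where convergence is standard.

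First, I would pick $\gamma_n=(z_n,x_n,t_n,s_n)\in K$ and, by definition of $K$, choose $y_n\in M$ with $(z_n,y_n,t_n)\in K_1$ and $(y_n,x_n,s_n)\in K_2$. Compactness of $K_1\subseteq M\times M\times (\R_{+}^{\tilde{\nu}}\setminus 0)$ yields, after extraction, a limit $(z,y,t)\in K_1$ with $t\in \R_{+}^{\tilde{\nu}}\setminus 0$. Compactness of $K_2\subseteq \dbtilde{\mathbb{G}}$ then gives, after a further extraction, $(y_n,x_n,s_n)\to \xi\in K_2$ in $\dbtilde{\mathbb{G}}$. Using continuity of the source map on $\dbtilde{\mathbb{G}}$ together with continuity of the projection $\pi_{\dbtilde{\mathbb{G}}^{(0)}}$ (\Crefitem{thm:G0_locally_compact}{cont}), I deduce that $x_n\to x$ in $M$ and $s_n\to s$ in $\R_{+}^{\dbtilde{\nu}}$ for some $x\in M$ and some $s\in \R_{+}^{\dbtilde{\nu}}$ (which may or may not equal $0$).

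Now the decisive step: since $t\neq 0$, we have $(t,s)\in \R_{+}^{\nu}\setminus 0$, so the point $(z,x,t,s)$ lies in the open subset $M\times M\times (\R_{+}^{\nu}\setminus 0)\subseteq \mathbb{G}$, and by \Crefitem{thm:topology_mathbbG}{open_subset} the topology of $\mathbb{G}$ agrees with the product topology on this open set. Therefore $(z_n,x_n,t_n,s_n)\to (z,x,t,s)$ in $\mathbb{G}$, and the limit belongs to $M\times M\times (\R_{+}^{\tilde{\nu}}\setminus 0)\times \R_{+}^{\dbtilde{\nu}}$ as required. This sequential extraction simultaneously shows that $K$ is sequentially precompact and that every limit point of $K$ lies in $M\times M\times (\R_{+}^{\tilde{\nu}}\setminus 0)\times \R_{+}^{\dbtilde{\nu}}$; metrizability of $\mathbb{G}$ (\Crefitem{thm:topology_mathbbG}{topology}) then upgrades this to compactness of $\overline{K}$ inside that subset.

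The only step where something could go wrong is the control on the limit of $s_n$: a priori $(y_n,x_n,s_n)\to \xi$ in $\dbtilde{\mathbb{G}}$ could land either in the pair-groupoid stratum (in which case $s_n$ converges to some $s\neq 0$ and everything is transparent) or in the blow-up stratum $\dbtilde{\cG}\times\{0\}$ (in which case $s_n\to 0$ and $x_n$ is forced by source-continuity to converge to $y$). I expect the small subtlety to be bookkeeping this dichotomy cleanly and confirming in both subcases that the resulting $(z,x,t,s)$ lies in the open stratum of $\mathbb{G}$; but since $t\neq 0$ is guaranteed by $K_1$, both subcases collapse and no genuine difficulty arises.
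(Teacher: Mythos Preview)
Your proof is correct and follows essentially the same sequential-extraction argument as the paper: extract using compactness of $K_1$ and $K_2$, then observe that $t\neq 0$ forces the limit into the open pair-groupoid stratum of $\mathbb{G}$. The only cosmetic difference is that the paper starts from a sequence in $\overline{K}$ and approximates it by one in $K$ using a metric, whereas you work directly with sequences in $K$ and invoke metrizability at the end to pass to $\overline{K}$; both are equivalent.
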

						\begin{proof}
							Let $(\gamma_n)_{n\in \N}\subseteq \overline{K}$ be a sequence. Since $\mathbb{G}$ is a metrizable, we fix a distance $d$.
							Let $(\tilde{\gamma}_n)_{n\in \N}\subseteq K$ be a sequence such that $d(\gamma_n,,\tilde{\gamma}_n)<\frac{1}{n}$.
							We can find sequences $(z_n)_{n\in \N},(y_n)_{n\in \N},(x_n)_{n\in \N}\subseteq M$, $(t_n)_{n\in \N}\subseteq \R^{\tilde{\nu}}_+\backslash 0$, $(s_n)_{n\in \N}\subseteq \R^{\dbtilde{\nu}}_+\backslash 0$
							such that $(z_n,y_n,t_n)\in K_1$, $(y_n,x_n,s_n)\in K_2$ and $\tilde{\gamma}_n=(z_n,x_n,t_n,s_n)$.
							By passing to a subsequence, we can suppose that $(z_n,y_n,t_n)$ converges to $(z,y,t)\in M\times M\times \R^{\tilde{\nu}}_+\backslash 0$, and $(y_n,x_n,s_n)$ converges to $\eta\in K_2$.
							Convergence of $(y_n,x_n,s_n)$ implies that $y_n\to  y\in M$, $x_n\to x\in M$, $s_n\to s\in \R^{\dbtilde{\nu}}_+$.
							So, $\tilde{\gamma}_n\to (z,x,t,s)\in M\times M\times (\R^{\tilde{\nu}}_+\backslash 0)\times \R^{\dbtilde{\nu}}_+$. Hence, $\gamma_n$ also converges to $(z,x,t,s)$.
						\end{proof}
						By \Cref{lem:qksdjfmjsqmdjfljsqldmjfkmljqsdlmjflmqjsdlmjfkmlqs}, it suffices to check that $\Psi(f,g)$ extends to a smooth function on $M\times M\times (\R^{\tilde{\nu}}_+\backslash 0)\times \R^{\dbtilde{\nu}}_+$.
						We have 
							\begin{equation*}\begin{aligned}
								\Psi(f,g)(z,x,t,s)=\int_V h(v,x,s) f(z,e^{\alpha_s(v)}\cdot x,t) \tilde{g}(v,x,s),	
							\end{aligned}\end{equation*}
						where $h(x,v,s):\Omega^{\frac{1}{2}}(T_{e^\alpha_s(v)\cdot x}M)\to \Omega^{\frac{1}{2}}(T_xM)$ is the canonical isomorphism induced from the differential of the diffeomorphism $e^{\alpha_s(v)}$ at $x$.

						\item The case where  $f=\cQ_{\tilde{V}*}(\tilde{f}\circ \pi_{\tilde{V}\times \tilde{\mathbb{G}}^{(0)}})$ and $g\in C^\infty_c(M\times M\times \R^{\dbtilde{\nu}}_+\backslash 0,\omegahalf)$
							where $\tilde{f}\in C^{\infty}_{c}(\tdom(\cQ_{\tilde{V}}),\Omega^1\tilde{V})$  is similar to the second case and left to the reader.
						\item It remains the case where $f=\cQ_{\tilde{V}*}(\tilde{f}\circ \pi_{\tilde{V}\times \tilde{\mathbb{G}}^{(0)}})$ and $g=\cQ_{\dbtilde{V}*}(\tilde{g}\circ \pi_{\dbtilde{V}\times \dbtilde{\mathbb{G}}^{(0)}})$
							where $\tilde{f}\in C^{\infty}_{c}(\tdom(\cQ_{\tilde{V}}),\Omega^1(\tilde{V}))$ and $\tilde{g}\in C^{\infty}_{c}(\tdom(\cQ_{\dbtilde{V}}),\Omega^1(\dbtilde{V}))$.
						Let $\tilde{\psi}$ be as in \Cref{thm:composition_bisub}. Consider t he locally-defined map
							\begin{equation*}\begin{aligned}
								\kappa:V\times M\times \R_+^{\nu}\to V\times M\times \R_+^\nu,\quad (\tilde{v}+\dbtilde{v},x,t,s)\mapsto \tilde{\psi}(\tilde{v},\dbtilde{v},x,t,s)	
							\end{aligned}\end{equation*}
						By \Crefitem{thm:composition_bisub}{3}, $\kappa(\tilde{v}+\dbtilde{v},x,0,0)=(\tilde{v}+\dbtilde{v},x,0,0)$.
						So, the differential of $\kappa$ is bijective at $t=s=0$.
						Therefore, by restricting its domain, we can suppose that $\kappa$ is an open embedding whose image contains an open neighborhood of $V\times M\times \{0\}\times \{0\}$.
						We define 
							\begin{equation*}\begin{aligned}
								h\in C^\infty_c(V\times M\times \R_+^{\nu},\Omega^1(V)),\quad h\circ \kappa(\tilde{v}+\dbtilde{v},x,t,s)=\tilde{f}(\tilde{v},e^{\alpha_s(\dbtilde{v})}\cdot x,t)\tilde{g}(\dbtilde{v},x,s),\\
							\end{aligned}\end{equation*}
						Here, we use \Cref{lem:change_open_set_invariant_function} to reduce the support of $\tilde{f}$ and $\tilde{g}$ so that $h$ is well-defined and $\supp(h)\subseteq \tdom(\cQ_V)$.
						Let $(z,x,t,s)\in M\times M\times (\R_+^{\tilde{\nu}}\backslash 0 )\times (\R_+^{\dbtilde{\nu}}\backslash 0)$. We have 
							\begin{equation*}\begin{aligned}
								\cQ_{V*}(h\circ \pi_{V\times \mathbb{G}^{(0)}})(z,x,t,s)&=\int_{\{v\in V:\cQ_{V}(v,x,t,s)=(z,x,t,s)\}}	h(v,x,t,s)
								\\&=\int_{\{\tilde{v}+\dbtilde{v}\in V:\cQ_{V}(\kappa(\tilde{v}+\dbtilde{v},x,t,s))=(z,x,t,s)\}}	\tilde{f}(\tilde{v},e^{\alpha_s(\dbtilde{v})}\cdot x,t)\tilde{g}(\dbtilde{v},x,s)
								\\&=\int_{\{\tilde{v}+\dbtilde{v}\in V:e^{\alpha_{t}(\tilde{v})}e^{\alpha_{s}(\dbtilde{v})}\cdot x=z\}}	\tilde{f}(\tilde{v},e^{\alpha_s(\dbtilde{v})}\cdot x,t)\tilde{g}(\dbtilde{v},x,s)
									\\&=\Phi(f,g)(z,x,t,s),
							\end{aligned}\end{equation*}
						where in the third equality we used \eqref{eqn:product_phi_compsotion_bisub}.
						
			\end{enumerate}
			This finishes the proof that $\Phi$ is well-defined.
			It straightforward to check that on $ M\times M\times (\R_+^{\tilde{\nu}}\backslash 0 )\times (\R_+^{\dbtilde{\nu}}\backslash 0)$, 
				\begin{equation*}\begin{aligned}
					\alpha_{(\lambda,\mu)}(\Phi(f,g))=\Phi(\alpha_{\lambda}(f),\alpha_{\mu}(g)),\quad \Phi(\theta_k(\tilde{D})\ast f,g\ast \theta_l(\dbtilde{D}))=\theta_{(k,0)}(\tilde{D})\ast \Phi(f,g)\ast \theta_{(0,l)}(\dbtilde{D}).
				\end{aligned}\end{equation*}
			By density of $ M\times M\times (\R_+^{\tilde{\nu}}\backslash 0 )\times (\R_+^{\dbtilde{\nu}}\backslash 0)$ in $\mathbb{G}$ and \eqref{eqn:dfn_algebra_vanishing_Fourier_invariant} and \Crefitem{thm:invariant_vanish_algebra}{alternate}, \eqref{eqn:homogenity_extension} follows.
	\end{linkproof}
	\begin{linkproof}{commutator_phi}
			First notice that if any smooth function in $C^\infty_c(M\times M\times \R^\nu\backslash 0,\omegahalf)$ can be written as a sum $\sum_{i=1}^\nu t_ih_i$ for some $h_i\in C^\infty_c(M\times M\times \R^\nu\backslash 0,\omegahalf)$.
			Now, we follow the proof of \MyCref{thm:extension_parameters}. 
			As we did, we divide into $4$ cases depending on the type of $f,g$. In the first three cases, both $\Phi(f,g),\Phi(g,f)\in C^\infty_c(M\times M\times \R^\nu\backslash 0,\omegahalf)$. So, the difference can be written as in \eqref{eqn:commutator_phi}.
			In the fourth case, $\Phi(f,g)$ and $\Phi(g,f)$ are shown to be equal to $\cQ_{V*}(h\circ \pi_{V\times \mathbb{G}^{(0)}})$ and $\cQ_{V*}(h'\circ \pi_{V\times \mathbb{G}^{(0)}})$ for some smooth functions $h,h'\in C^\infty_c(V\times M\times \R_+^\nu,\Omega^1(V))$.
			The functions $h,h'$ agree at $(t,s)=0$. Both satisfy 
				\begin{equation*}\begin{aligned}
					 h(\tilde{v}+\dbtilde{v},x,0)=h'(\tilde{v}+\dbtilde{v},x,0)=\tilde{f}(\tilde{v},x,0)\tilde{g}(\dbtilde{v},x,0).
				\end{aligned}\end{equation*}
			Here we are using the fact that in the Lie group $V$, the components $\tilde{v}$ and $\dbtilde{v}$ commute (in fact their product is their sum since the commutator vanishes).
			So, by Taylor series expansion which is possible because $h,h'$ live on a smooth manifold (and not $\mathbb{G}$), the difference can be written as in \eqref{eqn:commutator_phi}.
	\end{linkproof}

\linkchap{Multi-graded pseudo-differential calculus}{chapter_bi-garded_pseudo_diff}
	\paragraph{Conventions and notations:}
	We fix throughout this chapter a \textbf{weakly commutative} $\nu$-graded sub-Riemannian structure $\cF^\bullet$ on a smooth manifold $M$ without boundary.
	For simplicity, we will state and prove the results in this chapter without adding any vector bundles. We remark that adding vector bundles is purely cosmetic and adds no real technical difficulty.
	\begin{enumerate}
		\item We denote by 
		\begin{equation}\label{eqn:1underline}\begin{aligned}
			\underline{1}=(1,\cdots,1)\in \Z_+^\nu.		
		\end{aligned}\end{equation}
		\item  
			When we consider integrals,  we use the convention $\odif{\lambda}:=\odif{\lambda_1}\cdots \odif{\lambda_\nu}$.
			Combining \eqref{eqn:1underline} with \eqref{eqn:tk}, we obtain
			\begin{equation*}\begin{aligned}
				\frac{\odif{\lambda}}{\lambda^{k+\underline{1}}}=\frac{\odif{\lambda_1}\cdots \odif{\lambda_\nu}}{\lambda_1^{k_1+1}\cdots \lambda_\nu^{k_\nu+1}},\quad k\in \C^\nu.		  
			  \end{aligned}\end{equation*}
			\item If $k\in \C^\nu$, then 
					\begin{equation*}\begin{aligned}
						\Re(k):=(\Re(k_1),\cdots,\Re(k_\nu))\in \R^\nu,\quad \overline{k}:=(\overline{k_1},\dots,\overline{k_\nu})\in \C^\nu.		
					\end{aligned}\end{equation*}
		\item		We denote by $\C^\nu_{<0}$ (and $\C^\nu_{\leq 0}$) the set of $k\in \C^\nu$ such that $\Re(k_i)<0$ (and $\Re(k_i)\leq 0$) for all $i\in \bb{1,\nu}$.
		\item We extend \Cref{dfn:Schwartz_functions_vanish} to $k\in \C^\nu$ by 
			\begin{equation*}\begin{aligned}
					\cinvf{k}:=\cinvf{(l_1,\cdots,l_\nu)}
			\end{aligned}\end{equation*}
		where $l_i$ is the smallest non-negative integer such that $\Re(k_i)< l_i+1$. For example if $k\in \C^\nu_{<0}$, then $\cinvf{k+\underline{1}}=\cinv$.
		\item To avoid confusion, we reserve the symbols $u,u',u''$ for distributions on $\mathbb{G}$ and $v,v',v''$ for distributions on $M\times M$, and $w,w',w''$ for distributions on $M$.
			In \MyCref{sec:main_theorem_proof}, we will reserve the symbols $\chi,\chi',\chi''$ for classical pseudo-differential operators on $M$.
	\end{enumerate}

\linksec{Multi-graded pseudo-differential operators}{bi_graded_pseudo_diff}
		In this section, we define a $\nu$-graded pseudo-differential calculus of operators on $M$.
		Our operators are defined using iterated integrals of smooth invariant functions on $\mathbb{G}$.
		The inspiration for our formula comes from the work of Debord and Skandalis \cite{DebordSkandalis1}.
			
			\begin{linkprop}{integralIkl}
				Let $k\in \C^\nu$, $f\in \cinvf{k+\underline{1}}$, $g\in \cbbG$. The integrals 
					\begin{equation*}\begin{aligned}
						\int_{]0,1]^\nu} g\ast \alpha_\lambda(f)\frac{\odif{\lambda}}{\lambda^{k+\underline{1}}},\qquad \int_{]0,1]^\nu} \alpha_\lambda(f)\ast g\frac{\odif{\lambda}}{\lambda^{k+\underline{1}}}
					\end{aligned}\end{equation*}
				converge in $\cbbG$. Furthermore, if $g\in \cinv$, then the integrals converge in the topology of $\cinv$.
			\end{linkprop}
			By \Cref{thm:integralIkl}, we can define
			\begin{equation*}\begin{aligned}
				I_k(f)=\int_{]0,1]^\nu} \alpha_\lambda(f)\frac{\odif{\lambda}}{\lambda^{k+\underline{1}}}\in \cinvdis.
			\end{aligned}\end{equation*}
			
			Let $t\in \R^\nu_+\backslash 0$. Since $M\times \{t\}$ is a saturated subset of $\mathbb{G}^{(0)}$, by \eqref{eqn:restriction_map_saturated_distribution_closed}, we have a restriction map
			\begin{equation}\label{eqn:restriction_map_at_ev_11_distribution}\begin{aligned}
				C^{-\infty}_{r,s}(\mathbb{G},\omegahalf)\to  C^{-\infty}_{r,s}(M\times M,\omegahalf),\quad u\mapsto u_{|t}.
			\end{aligned}\end{equation}
			Recall that for any element $v\in C^{-\infty}_{r,s}(M\times M,\omegahalf)$, the convolution
				\begin{equation*}\begin{aligned}
					v\ast \cdot:C^{-\infty}(M,\Omega^\frac{1}{2})\to C^{-\infty}(M,\omegahalf)
				\end{aligned}\end{equation*}
				is well-defined, and it satisfies $v\ast C^\infty(M,\Omega^\frac{1}{2})\subseteq  C^\infty(M,\omegahalf) $, see \Crefitem{ex:distributions_quasi_Lie_groupods}{3}.
		Let 
		\begin{equation}\label{eqn:dfn_pseudo}\begin{aligned}
			\Psi^{k}_c(M):=\{I_k(f)_{|\underline{1}}:f\in \cinvf{k+\underline{1}}\},\quad \Psi^{\prec k}_c(M):=\sum_{l\in \C^\nu,l\prec k}\Psi^{l}_c(M).
		\end{aligned}\end{equation}
		Elements of $\Psi^{k}_c(M)$ are called \textit{compactly supported $\nu$-graded pseudo-differential operators on $M$.}
		\begin{rem}\label{rem:local_structure}
			Notice that our pseudo-differential operators have a particularly simple description.
			We will trivialize densities in this remark.
			If $k\in \C^\nu_{<0}$, then a kernel in $v\in C^{-\infty}_c(M\times M)$ belongs to $\Psi^k_c(M)$ if for any (or for some by \Crefitem{thm:algebra_structure_invariant_functions}{invariance}) $\nu$-graded basis $(V,\natural)$ such that $V$ is equipped with a Euclidean metric, there exists $f\in C^\infty_c(\tdom(\cQ_V))\subseteq C^\infty_c(V\times M\times \R_+^\nu)$ such that 
				\begin{equation}\label{eqn:description_of_pseduo_diff}\begin{aligned}
					v(y,x)-\int_{]0,1]^\nu}\int_{\{a\in V:e^{a}\cdot x=y\}} f(\alpha_{t^{-1}}(a),x,t) t^{-\dim_h(V)}\frac{\odif{t}\odif{a}}{t^{k+\underline{1}}}\in C^\infty_c(M\times M),
				\end{aligned}\end{equation}
			where $\dim_h(V)\in \Z_+^\nu$ is defined by $\dim_h(V)=\sum_{k\in \Z^\nu}\dim(V^k)k$.
			The set $\{a\in V:e^{a}\cdot x=y\}$ is a submanifold of $V$ near $a=0$. The set $\tdom(\cQ_V)$ is chosen small enough that the integral above only involves the part where $\{a\in V:e^{a}\cdot x=y\}$ is a submanifold which inherits then a Riemannian structure from the Euclidean structure of $V$.
			The measure $\odif{a}$ means we integrate against this Riemannian structure.
			This remark shows that to define our pseudo-differential operators we don't need the groupoid $\mathbb{G}\rightrightarrows \mathbb{G}^{(0)}$. 
			We need the groupoid $\mathbb{G}\rightrightarrows \mathbb{G}^{(0)}$ to ultimately prove \Cref{thm:main_theorem}
		\end{rem}
		
		\begin{linkthm}{main_prop_bi_graded_pseudo_diff_M}
			Let $k,l\in \C^\nu$.
			\begin{enumerate}
				\item\label{thm:main_prop_bi_graded_pseudo_diff_M:inclusion_smoothing} One has $C^\infty_c(M\times M,\omegahalf)\subseteq \Psi^k_c(M)$. 
				\item\label{thm:main_prop_bi_graded_pseudo_diff_M:1} If $v\in \Psi^{k}_c(M)$, then $v$ is compactly supported and $\WF(v)\subseteq \{(\xi,-\xi;x,x):(\xi,x)\in T^*M\backslash 0\}$.
				In particular, $\singsupp(v)\subseteq \{(x,x):x\in M\}$, and 
					\begin{equation}\label{eqn:WF_multi_pseudo}\begin{aligned}
						\WF(v\ast w)\subseteq \WF(w)\quad \forall w\in C^{-\infty}_c(M,\omegahalf).
					\end{aligned}\end{equation}
				\item\label{thm:main_prop_bi_graded_pseudo_diff_M:3} If $k\preceq l$, then $\Psi^{k}_c(M)\subseteq \Psi^{l}_c(M)$.
				\item\label{thm:main_prop_bi_graded_pseudo_diff_M:2} If $k\in \Z_+^\nu$, then $\DO^{k}_c(M)\subseteq \Psi^{k}_c(M)$.
				\item\label{thm:main_prop_bi_graded_pseudo_diff_M:4} We have
				\begin{equation*}\begin{aligned}
						\Psi^{k}_c(M)^*=\Psi^{\overline{k}}_c(M),\quad \Psi^{k}_c(M)\ast \Psi^{l}_c(M)\subseteq \Psi^{k+l}_c(M).
					\end{aligned}\end{equation*}
				\item\label{thm:main_prop_bi_graded_pseudo_diff_M:7} If $k\in\C^\nu_{<0}$, then $\Psi^{k}_c(M)\subseteq L^1(M\times M,\omega)$, where $\omega\in C^\infty(M\times M,\Omega^{\frac{1}{2},-\frac{1}{2}})$ is any section satisfying \eqref{eqn:omega_symmetry_condition}, 
																	and $L^1(M\times M,\omega)$ is the $L^1$-Banach algebra defined in \Cref{sec:compl}.
				\item\label{thm:main_prop_bi_graded_pseudo_diff_M:5} 
				There exists $C\in\R$, such that if $n\in \N $ and $k\in \C^\nu$ satisfy $\sum_{i=1}^\nu \Re(k_i)N_i^{\sgn(\Re(k_i))}\leq C-n$, then $\Psi^{k}_c(M)\subseteq C^n_c(M\times M,\omegahalf)$.
				\item\label{thm:main_prop_bi_graded_pseudo_diff_M:6}   If $g\in C^\infty_c(M)=\DO^0_c(M)$ and $v\in \Psi^k_{c}(M)$, then $[\delta_g,v]\in \Psi^{\prec k}_c(M)$.
			\end{enumerate}
		\end{linkthm}
		Let $\Psi^{k}(M)$ be the space of properly supported kernels
		$v\in C^{-\infty}_{r,s}(M\times M,\omegahalf)$ such that for any $f\in C^\infty_c(M)$, $\delta_f\ast v\in \Psi^{k}_c(M)$.
		By \Crefitem{thm:main_prop_bi_graded_pseudo_diff_M}{2} and \Crefitem{thm:main_prop_bi_graded_pseudo_diff_M}{4}, $\Psi^{k}_c(M)\subseteq \Psi^{k}(M)$.
		We also define $\Psi^{\prec k}(M):=\sum_{l\in \C^\nu,l\prec k}\Psi^{l}(M)$. It is straightforward to extend \Cref{thm:main_prop_bi_graded_pseudo_diff_M} to $\Psi^{k}(M)$.

		\begin{linkthm}{asymptotic_sum}[Asymptotic sums]
				Let $k\in \C^\nu$ and for each $\tau\in \Z_+^\nu$, let $v_\tau\in \Psi^{k-\tau}(M)$.
				Then there exists $v\in \Psi^{k}(M)$ such that 
				for any finite subset $S$ of $\Z_+^\nu$ and $l_1,\cdots,l_n\in \Z_+^\nu$, if
					\begin{equation*}\begin{aligned}
							\left\{\tau\in \Z_+^\nu:\tau \notin S\text{ and }v_\tau\neq 0\right\}\subseteq \bigcup_{i=1}^n\left\{\tau\in \Z_+^\nu:l_i\preceq \tau\right\},
					\end{aligned}\end{equation*}
				then $v-\sum_{\tau \in S}v_{\tau}\in \Psi^{ k-l_1}(M)+\cdots +\Psi^{ k-l_n}(M)$.
				The operator $v$ is unique up to addition by an element of $C^\infty(M\times M,\omegahalf)$.
		\end{linkthm}
		\paragraph{Classical calculus:}
		We now give the simplest example of our calculus.
		\begin{linkprop}{classical_pseudo}
				If $\nu=1$ and $\cF^1=\cX(M)$, then $\Psi^k(M)$ coincides with the space of properly supported classical pseudo-differential operators of classical order $k$.
		\end{linkprop}
		
		\paragraph{Compatibility with restriction to an open set.}
			Let $U\subseteq M$ be an open subset.
			By \Cref{prop:pullback_weighted_subRiem}, let $\cF_{|U}^\bullet$ be the pullback of the sub-Riemannian structure $\cF^\bullet$ by the inclusion map $U\hookrightarrow M$. 
			The tangent groupoid defined in \Cref{chap:bi-graded_tangent_groupoid} associated to $\cF_{|U}^\bullet$ is the restriction of 
			$\mathbb{G}\rightrightarrows \mathbb{G}^{(0)}$ to the saturated open subset $\pi_{\mathbb{G}^{(0)}}^{-1}(U\times \R_+^\nu)$ of $\mathbb{G}^{(0)}$ where $\pi_{\mathbb{G}^{(0)}}$ is the map from \eqref{eqn:proj_mathbbG0}.
			The sub-Riemannian structure $\cF_{|U}$ is weakly commutative.
			So, we can define a pseudo-differential calculus on $U$ associated to $\cF_{|U}^\bullet$ which we denote by $\Psi^\bullet(U)$.
			One can check that if $k\in \C^\nu$, then			
				\begin{equation}\label{eqn:compatability_pseudo_restriction_open}\begin{aligned}
					\Psi^{k}_c(U)=\{v\in \Psi^k_c(M):\supp(v)\subseteq U\times U\}.
				\end{aligned}\end{equation}

\linksec{Representation theory of the tangent groupoid}{Tangent_groupoid_representation}
In this section and the following sections, for any $x\in M$, it is convenient to denote by $\GrF{x}$ the space $\grF{x}$ seen as a Lie group.
The reason for this is that if $L\subseteq \mathfrak{g}_x$ is a Lie subalgebra, then $\mathfrak{g}_x/L$ denotes the vector space quotient, while $\GrF{x}/L$ denotes the homogeneous space quotient of $\mathfrak{G}_x$ by $L$ seen as a Lie subgroup.
Starting from this section, we will make use of the irreducible unitary representations of $C^*$-algebras and weak containment, as well as Kirillov's orbit method for nilpotent Lie groups, see \cite[Chapter 3 and 4]{Dixmier} and \Cref{chap:Orbit_method} for more details.

		\begin{linkthm}{amenability_tanget}
			The groupoid $\mathbb{G}\rightrightarrows \mathbb{G}^{(0)}$ has the stable weak containment property (SWCP). 
			In particular $C^*\mathbb{G}=C^*_r\mathbb{G}$ and $C^*\cG=C^*_r\cG$ and $C^*\cG_x=C^*_r\cG_x$ for any $x\in M$.
		\end{linkthm}
		Let $C^*_{\mathrm{inv}}\mathbb{G}$ be the closure of $\cinv$ in $C^*\mathbb{G}$. The goal of this section is to describe the space of irreducible representations of $C^*_{\mathrm{inv}}\mathbb{G}$ up to unitary equivalence.

\begin{dfn}\label{dfn:Hellfer_Nourrigat_set}
	Let $(\xi,x)\in T^*M\backslash 0$. The Helffer-Nourrigat cone $\HL{(\xi,x)}\subseteq \mathfrak{g}^*_x$ is the set of all $\eta\in \mathfrak{g}^*_x$ such that there exists sequences 
	$(\xi_n,x_n)_{n\in \N}\subseteq  T^*M\backslash 0$, ${(t_n)}_{n\in \N}\subseteq \R_+^\nu$
				such that $x_n\to x$, $t_n\to 0$ and, 
					\begin{equation}\label{eqn:Hellfer_Nourrigat_set_condition_conv}\begin{aligned}
						\langle \xi_n,t_n^{k} X(x_n)\rangle\to \langle \eta,[X]_{k,x}\rangle,\quad \forall k\in \Z_+^\nu,X\in \cF^k,
					\end{aligned}\end{equation}
 	and $(\frac{\xi_n}{\norm{\xi_n}},x_n)$ converges to $(\frac{\xi}{\norm{\xi}},x)$ in $T^*M$, where $\norm{\cdot}$ is any Riemannian metric on $M$.
	We also define the Helffer-Nourrigat cone at $x$ by
		\begin{equation}\label{eqn:union_hellfer_microlocal}\begin{aligned}
		\HL{x}=\bigcup_{\xi\in T^*_xM\backslash 0}\HL{(\xi,x)}\subseteq \mathfrak{g}_x^*.
	\end{aligned}\end{equation}
\end{dfn}
Clearly $\HL{(\lambda\xi,x)}= \HL{(\xi,x)}$ for any $\lambda\in \Rpt$.
\begin{rem}\label{rem:Helffer_Nourrigat_set_x}
	 By compactness of the sphere cotangent bundle, $\eta\in \HL{x}$, if and only if there exists sequences 
	$(\xi_n,x_n)_{n\in \N}\subseteq  T^*M$, ${(t_n)}_{n\in \N}\subseteq \R_+^\nu$
				such that $x_n\to x$, $t_n\to 0$ and, 
					\begin{equation}\label{eqn:Hellfer_Nourrigat_set_condition_conv_2}\begin{aligned}
						\langle \xi_n,t_n^{k} X(x_n)\rangle\to \langle \eta,[X]_{k,x}\rangle,\quad \forall k\in \Z_+^\nu,X\in \cF^k,
					\end{aligned}\end{equation}
	Notice that one can allow $\xi_n=0$ in the definition of $\HL{x}$ in contrast to the definition of $\HL{(\xi,x)}$.
	This is simply because we divide by $\norm{\xi_n}$ in the definition of $\HL{(\xi,x)}$.
\end{rem}
		\begin{linkprop}{Helffer_Nourrigat_cone}
			\begin{enumerate}
				\item The sets $\HL{(\xi,x)}$ and $\HL{x}$ are closed subsets of $\mathfrak{g}_x^*$ which are invariant under the co-adjoint action of $\GrF{x}$ and dilations (graded or ungraded), i.e., 
					\begin{equation}\label{eqn:dilations_HN_sets}\begin{aligned}
						&\eta\in \HL{(\xi,x)}&&\implies -\eta \in \HL{(-\xi,x)},\\
						&\lambda\in \R_+,\eta\in \HL{(\xi,x)}&&\implies \lambda \eta\in \HL{(\xi,x)},\\
						&\lambda\in \R, \eta\in \HL{x}&&\implies \lambda \eta\in \HL{x}\\
						&\lambda\in \R_+^\nu,\eta\in \HL{(\xi,x)}&&\implies  \alpha_\lambda(\eta)\in \HL{(\xi,x)}\\
						&\lambda\in \R_+^\nu,\eta\in \HL{x}&&\implies  \alpha_\lambda(\eta)\in \HL{x},	
					\end{aligned}\end{equation}
				where $\alpha_\lambda$ is defined as in \eqref{eqn:dilations_basic_V} using the grading on $\mathfrak{g}^*_x$.
				\item One has \begin{equation}\label{eqn:Hellfer_Nourrigat_set_condition_union_cones}\begin{aligned}
					\HL{x}=\bigcup_{L\in \cG^{(0)}_x} L^\perp,
				\end{aligned}\end{equation}
				where $\cG^{(0)}_x$ is the set of tangent cones at $x$.
			\end{enumerate}
		\end{linkprop}
			We denote by $\widehat{\mathrm{H\!N}}_{(\xi,x)}$ and $\widehat{\mathrm{H\!N}}_{x}$, the set of irreducible  unitary representations of $\GrF{x}$ associated to the coadjoint orbits inside $\HL{(\xi,x)}$ and $\HL{x}$ using \eqref{eqn:Orbit_method_map}.
			The following proposition gives an extremely convenient alternate definition of $\widehat{\mathrm{H\!N}}_{x}$.
		\begin{linkprop}{weakly_contained_Helffer_Nourrigat}
			Let $x\in M$, $\pi$ be an irreducible unitary representation of $\GrF{x}$.
			Then $\pi\in \widehat{\mathrm{H\!N}}_x$ if and only if there exists $L\in \cG^{(0)}_x$ such that $\pi$ is weakly contained in $\Xi_{\GrF{x}/L,0}$, see \eqref{eqn:dfn_representation_quasi_induite}.
		\end{linkprop}
		\paragraph{Spectrum of $C^*_{\mathrm{inv}}\mathbb{G}$:} 
		Let \begin{equation*}\begin{aligned}
				\widehat{\mathrm{H\!N}}=\{(\pi,x):x\in M,\pi\in \widehat{\mathrm{H\!N}}_x\},\quad
				\widehat{\mathbb{G}}_{\mathrm{inv}}=\R_+^{\nu}\backslash 0\sqcup \widehat{\mathrm{H\!N}}\times \{0\}.
			\end{aligned}\end{equation*}
		We will construct a natural bijection 
			$\widehat{\mathbb{G}}_{\mathrm{inv}}\to \widehat{C^*_{\mathrm{inv}}\mathbb{G}}$
		where $\widehat{C^*_{\mathrm{inv}}\mathbb{G}}$ is the space of equivalence classes of irreducible representations of $C^*_{\mathrm{inv}}\mathbb{G}$.
		\begin{itemize}
			\item   Let $t\in \R_+^\nu\backslash 0$. By \Cref{ex:Regular_representation} and \Cref{rem:equivalence}, we have a representation $\pi_{t}$ of $\cinv$ which acts on $L^2(M)$, and is given by 
			\begin{equation}\label{eqn:pi_nonzero_parameter_KL2M}\begin{aligned}
				\pi_{t}(f)g(x):=\int_{M}f(x,y,t)g(y)		,\quad f\in \cbbG,g\in L^2(M).
			\end{aligned}\end{equation}
			The representation $\pi_t$ extends to an irreducible representation of $C^*_{\mathrm{inv}}\mathbb{G}$ because it factors through $\cK(L^2M)$.
			\item Let $(\pi,x)\in \widehat{\mathrm{H\!N}}$.
			We define $\pi_{\mathrm{inv}}:\cinv\to \cL(L^2(\pi))$ as follows: If $f\in \cinv$, $(V,\natural)$ a $\nu$-graded basis and $\tilde{f}$ are as in \eqref{eqn:sum_smooth_decompo_invariatn_function}, then 
			\begin{equation}\label{eqn:pi_inv_definition}\begin{aligned}
				\pi_{\mathrm{inv}}(f):=\pi(\natural_{x,0*}(\tilde{f}(\cdot,x,0))),
			\end{aligned}\end{equation}
			where $\natural_{x,0*}(\tilde{f})\in C^\infty_c(\GrF{x},\Omega^1 (\mathfrak{g}_x))$ is integration of $\tilde{f}$ along the fibers of $\natural_{x,0}:V\to \mathfrak{g}_x$.	
			
			\begin{linkprop}{representation_invariant}
				The representation $\pi_{\mathrm{inv}}$ is well-defined, i.e., $\pi_{\mathrm{inv}}(f)$ doesn't depend on the choice of $(V,\natural)$ and $\tilde{f}$. 
				Furthermore, $\pi_{\mathrm{inv}}$ extends to an irreducible representation of $C^*_{\mathrm{inv}}\mathbb{G}$.
				Moreover, 
					\begin{equation}\label{eqn:Cinfinty_cinv}\begin{aligned}
							C^\infty(\pi)=\pi(C^\infty_c(\mathfrak{G}_x,\Omega^1(\mathfrak{g}_x)))L^2(\pi)=\pi_{\mathrm{inv}}(\cinv)L^2(\pi),		
					\end{aligned}\end{equation}
			\end{linkprop}
			\end{itemize}
			
		\paragraph{Topology on $\widehat{\mathbb{G}}_{\mathrm{inv}}$:}
Let 
	\begin{equation*}\begin{aligned}
		\mathfrak{g}^*:=\{(\eta,x):x\in M,\eta \in \mathfrak{g}_x^*\}		
	\end{aligned}\end{equation*}
	be the disjoint union of the dual of all osculating Lie algebras.
	 We equip  $\mathfrak{g}^*$ with the topology which makes the following map a topological embedding:
			\begin{equation*}\begin{aligned}
				\mathfrak{g}^*\hookrightarrow V^*\times M,\quad (\eta,x) \mapsto (\eta\circ \natural_{x,0},x),
			\end{aligned}\end{equation*}
where $(V,\natural)$ is a $\nu$-graded basis.
			The topology doesn't depend on the choice of the $\nu$-graded basis. To see this, we write elements of one basis using another, see the proof of \Crefitem{thm:G0_locally_compact}{indep}. 
			The space $\widehat{\mathrm{H\!N}}$ is a quotient of $\mathfrak{g}^*$. We equip it with the quotient topology.
		\begin{dfn}\label{dfn:toplogy_mathbbG_hat}
		We define a topology $\widehat{\mathbb{G}}_{\mathrm{inv}}$ as follows: A subset $A\sqcup B\times \{0\}\subseteq \widehat{\mathbb{G}}_{\mathrm{inv}}$ where $A\subseteq \R_+^\nu\backslash 0$ and $B\subseteq \widehat{\mathrm{H\!N}}$ is closed if and only if
		\begin{enumerate}
						\item  The set $A$ is closed in $\R_+^\nu\backslash 0$.
			\item     The set $B$ is closed in $\widehat{\mathrm{H\!N}}$.
			\item If $(L,x,0)\in \overline{M\times A}$, where $\overline{M\times A}$ is the closure of $M\times A$ in $\mathbb{G}^{(0)}$, then $(\pi,x)\in B$ for any $\pi$ an irreducible unitary representation $\pi$ of $\mathfrak{G}_x
			$ weakly contained in $\Xi_{\mathfrak{G}_x/L,0}$.
		\end{enumerate}
		\end{dfn}
 		\begin{linkthm}{Type1_tangent_groupoid}
			 The $C^*$-algebra $C^*_{\mathrm{inv}}\mathbb{G}$ is separable of Type \RNum{1}. Furthermore, the map
					\begin{equation}\label{eqn:representations_of_tangent_groupoid}\begin{aligned}
						\widehat{\mathbb{G}}_{\mathrm{inv}}\to \widehat{C^*_{\mathrm{inv}}\mathbb{G}},\quad t \in \R_+^\nu\backslash 0\mapsto [\pi_t],\quad (\pi,x,0)\in \widehat{\mathrm{H\!N}}\times \{0\}\mapsto [\pi_{\mathrm{inv}}]		
					\end{aligned}\end{equation}
				is a homeomorphism, where $\widehat{C^*_{\mathrm{inv}}\mathbb{G}}$ is the spectrum of $C^*_{\mathrm{inv}}\mathbb{G}$, i.e., the space of equivalence classes of irreducible representations of $C^*_{\mathrm{inv}}\mathbb{G}$ equipped with the Fell topology.
		\end{linkthm}

		\paragraph{Non-singular representations:}
		The classical principal symbol of a classical pseudo-differential operator isn't well-defined at the cotangent vector $0$ which corresponds to the trivial representation of the tangent space.
		In our setting, we call the representations at which the principal symbol is defined, \textit{the non-singular Helffer-Nourrigat cone}. 
		Recall the decomposition $\mathfrak{g}_x=\oplus_{i=1}^\nu\mathfrak{g}_x^{\weak{i}} $ from \eqref{eqn:weakly_commuting_lie_algebra}. We define 
			\begin{equation*}\begin{aligned}
				\HLnon_{(\xi,x)}&=\{\eta\in \HL{(\xi,x)}:\eta_{|\mathfrak{g}_x^{\weak{i}}}\neq 0\ \forall i\in \bb{1,\nu}\}.\\
				\HLnon_{x}&=\{\eta\in \HL{x}:\eta_{|\mathfrak{g}_x^{\weak{i}}}\neq 0\ \forall i\in \bb{1,\nu}\}=\bigcup_{\xi\in T^*_xM\backslash 0}\HLnon_{(\xi,x)}.
			\end{aligned}\end{equation*}
		The set	$\{\eta\in \mathfrak{g}_x^*:\eta_{|\mathfrak{g}_x^{\weak{i}}}\neq 0\ \forall i\in \bb{1,\nu}\}$
		is closed under the co-adjoint action. Hence, the same holds for $\HLnon_{(\xi,x)}$ and $\HLnon_{x}$.
		So, by \eqref{eqn:Orbit_method_map}, we can define $\HatHLnon_{(\xi,x)}$ and $\HatHLnon_{x}$ as the representations corresponding to $\HLnon_{(\xi,x)}$ and $\HLnon_{x}$ respectively.
		We have, 
			\begin{equation}\label{eqn:HatHL__Nonsing_is_Union}\begin{aligned}
								\HatHLnon_{x}=\bigcup_{\xi\in T^*_xM\backslash 0}\HatHLnon_{(\xi,x)}.
			\end{aligned}\end{equation}		
		Finally, let
			\begin{equation*}\begin{aligned}
				\HatHLnon=\{(\pi,x):x\in M,\pi \in \HatHLnon_{x}\}
			\end{aligned}\end{equation*}
			equipped with the subspace topology from $\widehat{\mathrm{H\!N}}$.

\linksec{Principal symbol}{principal_symbol}
	Let $(\pi,x)\in \widehat{\mathrm{H\!N}}$, $u\in \cinvdis$. 
	By \eqref{eqn:Cinfinty_cinv}, we can define $\pi_{\mathrm{inv}}(u):C^{\infty}(\pi)\to C^{\infty}(\pi)$ by $\pi(u)\pi(f)\xi=\pi(u\ast f)\xi$ for all $f\in \cinv$ and $\xi\in L^2(\pi)$.
	This is well-defined because $\pi$ is non-degenerate.
 	By duality, $\pi_{\mathrm{inv}}(u)$ extends to a continuous linear map $C^{-\infty}(\pi)\to C^{-\infty}(\pi)$.
	\begin{dfn}\label{dfn:principal_symbol}
		Let $k\in \C^\nu,v\in \Psi^{k}_c(M)$. The principal symbol of $v$ at $(\pi,x)\in \HatHLnon$ is defined by
		\begin{equation}\label{eqn:dfn_principal_symbol}\begin{aligned}
			\sigma^{k}(v,\pi,x)\xi:=\lim_{\lambda_1\to +\infty,\cdots,\lambda_\nu\to +\infty}\pi_{\mathrm{inv}}\left(\lambda^{-k}\alpha_{\lambda}(I_k(f))\right)\xi,
		\end{aligned}\end{equation}
		 where $f\in \cinvf{k+\underline{1}}$ is any smooth invariant function such that $I_k(f)_{|\underline{1}}= v$.
		More generally, if $v\in \Psi^k(M)$, then the symbol is defined by $\sigma^k(v,\pi,x):=\sigma^k(\delta_g\ast v,\pi,x)$ where $g\in C^\infty_c(M)$ is any compactly supported smooth function such that $g(x)=1$. 
	\end{dfn}
	\begin{linkthm}{principal_symbol_pseudo_diff_well_defined}
		\begin{enumerate}
			\item\label{thm:principal_symbol_pseudo_diff_well_defined:negative_infinity} If $\xi\in C^{-\infty}(\pi)$, then \eqref{eqn:dfn_principal_symbol} converges in the topology of $C^{-\infty}(\pi)$.
			Furthermore, the linear map $\sigma^k(v,\pi,x):C^{-\infty}(\pi)\to C^{-\infty}(\pi)$ is continuous.
			\item\label{thm:principal_symbol_pseudo_diff_well_defined:2} The linear map $\sigma^k(v,\pi,x):C^{-\infty}(\pi)\to C^{-\infty}(\pi)$ doesn't depend on the choice of $g\in C^\infty_c(M)$ and $f\in \cinvf{k+\underline{1}}$.
			\item\label{thm:principal_symbol_pseudo_diff_well_defined:Infinity}
			If $\xi\in C^{\infty}(\pi)$, then \eqref{eqn:dfn_principal_symbol} converges in the topology of $C^{\infty}(\pi)$ and the induced linear map $\sigma^k(v,\pi,x):C^{\infty}(\pi)\to C^{\infty}(\pi)$ is continuous. 
			\item\label{thm:principal_symbol_pseudo_diff_well_defined:3} For all $k,k'\in \C^\nu,v\in \Psi^{k}(M),v'\in \Psi^{k'}(M)$, we have
					\begin{equation*}\begin{aligned}
						&\sigma^{k+k'}(vv',\pi,x)=\sigma^{k}(v,\pi,x)\sigma^{k'}(v',\pi,x):C^{-\infty}(\pi)\to C^{-\infty}(\pi)\\ &\langle \sigma^{k}(v,\pi,x)\xi,\eta\rangle_{L^2(\pi)}=\langle \xi,\sigma^{\overline{k}}(v^*,\pi,x)\eta \rangle_{L^2(\pi)},\quad \forall \xi\in C^\infty(\pi),\eta\in C^{-\infty}(\pi) \\
					\end{aligned}\end{equation*}
			\item\label{thm:principal_symbol_pseudo_diff_well_defined:5} If $v\in \Psi^{\prec k}(M)$, then $\sigma^k(v,\pi,x):C^{-\infty}(\pi)\to C^{-\infty}(\pi)$  vanishes.
			\item\label{thm:principal_symbol_pseudo_diff_well_defined:4} 	For any $\lambda\in (\Rpt)^\nu$, 
				$\sigma^{k}(v,\pi\circ \alpha_\lambda,x)=\lambda^k\sigma^{k}(v,\pi,x)$.
		\item\label{thm:principal_symbol_pseudo_diff_well_defined:concrete_computation}If $f\in C^\infty(M)= \DO^0(M)$, then  $\sigma^0(\delta_f,\pi,x)=f(x)\mathrm{Id}_{C^{-\infty}(\pi)}$
		\item\label{thm:principal_symbol_pseudo_diff_well_defined:concrete_computation2}If $X\in \cF^{k}$, then
		$\sigma^k(X,\pi,x)=\pi([X]_{k,x})$,
		where $[X]_{k,x}\in \mathfrak{g}_x$ defined in the paragraph preceding \eqref{eqn:Lie_bracket_gx} is seen as an element of $C^{-\infty}_{r,s}(\mathfrak{G}_x,\omegahalf)$, see 
		\Crefitem{ex:distributions_quasi_Lie_groupods}{inclusion_Lie_algebroid}.
		\end{enumerate}
	\end{linkthm}

	\Crefitem{thm:principal_symbol_pseudo_diff_well_defined}{3}, \Crefitem{thm:principal_symbol_pseudo_diff_well_defined}{5}, \Crefitem{thm:principal_symbol_pseudo_diff_well_defined}{concrete_computation} and \Crefitem{thm:principal_symbol_pseudo_diff_well_defined}{concrete_computation2} give a concrete way of
	computing the principal symbol of differential operators by writing the operator as sum of monomials.
\linksec{Sobolev spaces}{sobolev_spaces}
			
			\begin{dfn}[Sobolev spaces on $M$]
				\label{dfn:Sobolev_spaces_on_M}
					Let  $k\in \R^\nu$. 
			We denote by $H^k_{\mathrm{loc}}(M)$ the space of $w\in C^{-\infty}(M,\omegahalf)$ such that for all $v\in \Psi^{k}_c(M)$, $v\ast w\in L^2(M)$.
			We equip $H^k_{\mathrm{loc}}(M)$ with the weakest topology that makes the maps 
				\begin{equation*}\begin{aligned}
					H^k_{\mathrm{loc}}(M)\to L^2(M),\quad w\mapsto v\ast w
				\end{aligned}\end{equation*}
			continuous for every $v\in\Psi^{k}_c(M)$. 
			If $M$ is compact, then we will use the notation $H^k(M)$ instead.
			\end{dfn}

			\begin{linkthm}{bounded_M_Sobolev}
				\begin{enumerate}
					\item\label{thm:bounded_M_Sobolev:1} 		$H^{0}_{\mathrm{loc}}(M)=L^2_{\mathrm{loc}}(M)$, $C^{-\infty}(M,\omegahalf)=\bigcup_{k\in \R^\nu}H^k_{\mathrm{loc}}(M)$, and $C^{\infty}(M,\omegahalf)=\bigcap_{k\in \R^\nu}H^k_{\mathrm{loc}}(M)$.						
					\item\label{thm:bounded_M_Sobolev:2}     	For any $k\in \R^\nu$, $H^k_{\mathrm{loc}}(M)$ is a Fréchet space. It is a Hilbert space if $M$ is compact.
					\item\label{thm:bounded_M_Sobolev:3}  		If $k_i\leq l_i$ for all $i\in \bb{1,\nu}$, then $H^{l}_{\mathrm{loc}}(M)\subseteq H^k_{\mathrm{loc}}(M)$ and the inclusion is continuous. 
					\item\label{thm:bounded_M_Sobolev:4}  		If $k_i\leq l_i$ for all $i\in \bb{1,\nu}$ and at least one of the inequalities is strict and $M$ is compact, then the inclusion $H^{l}(M)\subseteq H^k(M)$  is compact.
					\item\label{thm:bounded_M_Sobolev:5}  		There exists $C\in \R$ such that for any $n\in\N$, $k\in \R^\nu$, if $\sum_{i=1}^{\nu}k_iN_i^{-\sgn(k_i)}\geq C+n$, then $H^k_{\mathrm{loc}}(M)\subseteq C^n(M)$, and the inclusion is continuous.
					\item\label{thm:bounded_M_Sobolev:6}  		Let $k\in\C^\nu$, $v\in \Psi^{k}(M)$. 
																For any $l\in \R^\nu$, if $w\in H^{\Re(k)+l}_{\mathrm{loc}}(M)$, then $v\ast w\in H^{l}_{\mathrm{loc}}(M)$. 
																Furthermore, the map $v\ast \cdot: H^{\Re(k)+l}_{\mathrm{loc}}(M)\to H^{l}_{\mathrm{loc}}(M)$ is continuous.
				\end{enumerate}
			\end{linkthm}
	\begin{dfn}[Sobolev spaces on representations]
	Let $(\pi,x)\in \HatHLnon$, $k\in \R^\nu$. 
	We denote by $H^k(\pi)$ the space of all $\xi\in C^{-\infty}(\pi)$ such that $\sigma^k(v,\pi,x)\xi\in L^2(\pi)$ for all $v\in \Psi^k_c(M)$.
	We equip $H^k(\pi)$ with the weakest topology that makes the maps 
		\begin{equation*}\begin{aligned}
				H^k(\pi)\to L^2(\pi),\quad \xi\mapsto \sigma^k(v,\pi,x)\xi
		\end{aligned}\end{equation*}
		continuous for every $v\in \Psi^k_c(M)$.
	\end{dfn}

	\begin{linkthm}{bounded_pi_Sobolev}
		\begin{enumerate}
			\item\label{thm:bounded_pi_Sobolev:1} 		$H^{0}(\pi)=L^2(\pi)$, $C^{-\infty}(\pi)=\bigcup_{k\in \R^\nu}H^k(\pi)$, $C^\infty(\pi)=\bigcap_{k\in \R^\nu}H^k(\pi)$.
			\item\label{thm:bounded_pi_Sobolev:2}     	The space $H^k(\pi)$ is a Hilbertian space.
			\item\label{thm:bounded_pi_Sobolev:3}  		If $k_i\leq l_i$ for all $i\in \bb{1,\nu}$, then $H^{l}(\pi)\subseteq H^k(\pi)$ and the inclusion is continuous.
			\item\label{thm:bounded_pi_Sobolev:4}  		If $k_i<l_i$ for all $i\in \bb{1,\nu}$, then the inclusion $H^{l}(\pi)\subseteq H^k(\pi)$ is compact. 
			\item\label{thm:bounded_pi_Sobolev:5}  		Let $k\in\C^\nu$, $v\in \Psi^{k}(M)$. 
						For any $l\in \R^\nu$, $\xi\in H^{\Re(k)+l}(\pi)$, one has $\sigma^k(v,\pi,x)\xi\in H^{l}(\pi)$. Furthermore, the map $\sigma^k(v,\pi,x):H^{\Re(k)+l}(\pi)\to H^{l}(\pi)$ is continuous.
		\end{enumerate}
	\end{linkthm}
	\paragraph{Vanishing of the principal symbol:}
	 	For classical pseudo-differential calculus, vanishing of the symbol implies that the operator has order $1$ less.
		In our calculus, we have the following result:
		\begin{linkthm}{vanishing_of_principal_symbol_Total}
				If $M$ is compact, $k\in \C^\nu$, $v\in \Psi^k(M)$, then the following are equivalent:
				\begin{enumerate}
				\item 	The principal symbol $\sigma^k(v,\pi,x):C^\infty(\pi)\to C^\infty(\pi)$ vanishes for all $(\pi,x)\in \HatHLnon$.
				\item 	For any (or for some) $l\in \R^\nu$,  the map $v\ast \cdot: H^{\Re(k)+l}(M)\to H^{l}(M)$ is compact.
				\end{enumerate}
		\end{linkthm}
		\paragraph{Compactness and Fredholmness of the symbol:}
		The following theorem has no analogue in the classical pseudo-differential calculus.
		This is because $H^{k}(\pi)=\C$ in the classical pseudo-differential calculus, and so, the symbol as a map is always Fredholm and compact.
		Furthermore, $\{(t\xi,x):t\in \Rpt\}$ is a closed subset of $T^*M\backslash \{0\}$ for any $(\xi,x)\in T^*M$, and so, the set $S$ defined below is always empty in the classical case.
		\begin{linkthm}{principal_symbol_characterizations}
		Let $k \in \C^\nu$, $v \in \Psi^k(M)$, and $(\pi,x) \in \HatHLnon$,
		\[
		S = \overline{\{(\pi \circ \alpha_\lambda, x) : \lambda \in (\Rpt)^\nu\}}
		\setminus \{(\pi \circ \alpha_\lambda, x) : \lambda \in (\Rpt)^\nu\},
		\]
		where the closure is taken in $\HatHLnon$.  
		The following hold:
		\begin{enumerate}
		\item\label{thm:principal_symbol_characterizations:V} For every $(\pi',x) \in S$, 
		$ C^\infty(\pi') \xrightarrow{\sigma^k(v,\pi',x)} C^\infty(\pi')$ vanishes  
		$\Longleftrightarrow$  
		for any (or for some) $l \in \R^\nu$, the map
		$
		H^{\Re(k)+l}(\pi) \xrightarrow{\sigma^k(v,\pi,x)} H^l(\pi)
		$
		is compact.

		\item\label{thm:principal_symbol_characterizations:I} 
		The following are equivalent:
		\begin{enumerate}
			\item\label{thm:principal_symbol_characterizations:I:a}  For every $(\pi',x) \in S$, 
		$ C^\infty(\pi') \xrightarrow{\sigma^k(v,\pi',x)} C^\infty(\pi')$ and $ C^\infty(\pi') \xrightarrow{\sigma^{\bar{k}}(v^*,\pi',x)} C^\infty(\pi')$ are injective
		\item\label{thm:principal_symbol_characterizations:I:b}   For every $(\pi',x) \in S$, the maps 
			\begin{equation*}\begin{aligned}
				\sigma^k(v,\pi',x)&:C^{-\infty}(\pi')\xrightarrow{\phantom{aaaaaaaaa}} C^{-\infty}(\pi')&&\\
				\sigma^k(v,\pi',x)&:H^l(\pi')\xrightarrow{\phantom{aaaaaaaaaaa}} H^{l-\Re(k)}(\pi')&&\forall l\in \R^\nu\\
				\sigma^k(v,\pi',x)&:C^\infty(\pi')\xrightarrow{\phantom{aaaaaaaaaa}} C^\infty(\pi')&&		
			\end{aligned}\end{equation*}
		are topological isomorphisms.
		\item\label{thm:principal_symbol_characterizations:I:c}  
		For any (or for some) $l\in \R^\nu$, the map
		$
		H^{\Re(k)+l}(\pi) \xrightarrow{  \sigma^k(v,\pi,x)} H^l(\pi)
		$
		is Fredholm.
		\end{enumerate}
		
		If the above holds, then the kernel of $H^{\Re(k)+l}(\pi) \xrightarrow{  \sigma^k(v,\pi,x)} H^l(\pi)$ consists of only smooth vectors (and so it doesn't depend on $l$).
		\end{enumerate}
		\end{linkthm}

\linksec{Maximal hypoellipticity and the main theorem}{main_theorem}
		
	If $k\in \C$, then we denote by $\Psi^k_{\mathrm{classical}}(M)$ ($\Psi^k_{c,\mathrm{classical}}(M)$) the subspace of $C^{-\infty}_{r,s}(M\times M,\omegahalf)$ of properly supported (compactly supported) classical pseudo-differential operators of order $k$.
	\begin{dfn}\label{dfn:Microlocality}
		Let $l\in \R^\nu$ and $w\in C^{-\infty}(M,\omegahalf)$.
		We define $\WF_{l}(w)\subseteq T^*M\backslash 0$ by the formula 
		\begin{equation*}\begin{aligned}
				(\xi,x)\notin \WF_l(w)\Leftrightarrow \exists w'\in H^l_{\mathrm{loc}}(M), \text{ such that } (\xi,x)\notin \WF(w-w').
			\end{aligned}\end{equation*}
	\end{dfn}
	It is clear that $\WF_l(w)$ is a closed cone. 
	We now give its main properties.
	\begin{linkthm}{multi_parameter_wave_front_set}
		\begin{enumerate}
			\item\label{thm:multi_parameter_wave_front_set:7} If $l_i\leq k_i$ for all $i\in \bb{1,\nu}$, then $\WF_l(w)\subseteq \WF_k(w)$.
			\item\label{thm:multi_parameter_wave_front_set:1} $\WF_l(w)=\emptyset$ if and only if $w\in H^l_{\mathrm{loc}}(M)$.
			\item\label{thm:multi_parameter_wave_front_set:2} $\WF(w)=\overline{\bigcup_{l\in \R^\nu}\WF_l(w)}$, and $\WF_l(w+w')\subseteq \WF_l(w)\cup\WF_l(w')$.
			\item\label{thm:multi_parameter_wave_front_set:3} If $k\in \C^\nu$ and $v\in \Psi^k(M)$, then $\WF_{l}(v\ast w)\subseteq \WF_{\Re(k)+l}(w)	$.
			\item\label{thm:multi_parameter_wave_front_set:5} One has $(\xi,x)\notin \WF_l(w)$ if and only if  there exists $\chi\in \Psi^0_{\mathrm{classical}}(M)$,  such that  $\chi$ is elliptic at $(\xi,x)$ and $\chi\ast w\in H^l_{\mathrm{loc}}(M)$.
		\item\label{thm:multi_parameter_wave_front_set:4} There exists
		$\chi\in \Psi^0_{\mathrm{classical}}(M)$ such that $\chi$ is elliptic on the complement of $\WF_l(w)$ and $\chi\ast w\in H^l_{\mathrm{loc}}(M)$.
		\end{enumerate}
	\end{linkthm}
		\begin{linkthm}{main_theorem}[Main theorem]
			Let $k\in\C^\nu$, $v\in \Psi^{k}(M)$,
				\begin{equation*}\begin{aligned}
					\Gamma=\left\{(\xi,x)\in T^*M\backslash 0:\forall (\pi,x)\in \HatHLnon_{(\xi,x)},\ \sigma^{k}(v,\pi,x):C^\infty(\pi)\to C^\infty(\pi) \text{ is injective}\right\} 
				\end{aligned}\end{equation*}
			The following hold: 
			\begin{enumerate}
				 \item\label{thm:main_theorem:left-inv} 	If $(\xi,x)\in \Gamma$, $\pi\in \HatHLnon_{(\xi,x)}$, then the operators 
					 \begin{equation*}\begin{aligned}
						& \sigma^{k}(v,\pi,x):C^{-\infty}(\pi)\to C^{-\infty}(\pi)&&
						\\ &\sigma^{k}(v,\pi,x):H^{l+k}(\pi)\to H^l(\pi)&&\text{ for all }l\in \R^\nu
						\\ &\sigma^{k}(v,\pi,x):C^\infty(\pi)\to C^\infty(\pi)&&
					 \end{aligned}\end{equation*}
				 are left-invertible.
				\item\label{thm:main_theorem:inj} The cone $\Gamma$ is open, and $\Gamma\subseteq \{(\xi,x)\in T^*M\backslash 0:(-\xi,\xi;x,x)\in \WF(v)\}$. 
				\item\label{thm:main_theorem:parametrix}
				For any closed cone $\Gamma'\subseteq \Gamma$. 
				There exists $v'\in C^{-\infty}_{r,s}(M\times M,\omegahalf)$ such that for any $l\in \R^\nu$ and $w\in C^{-\infty}(M,\omegahalf)$,
					\begin{equation}\label{eqn:parametrix_WF}\begin{aligned}
							\WF_l(v'\ast w)\subseteq \WF_{l-\Re(k)}(w), 	\ \WF_l(v^{\prime *}\ast w)\subseteq \WF_{l-\Re(k)}(w), \ \WF(v'\ast v\ast w-w)\cap \Gamma'=\emptyset.
					\end{aligned}\end{equation}
					In particular, by \MyCref{thm:multi_parameter_wave_front_set}[2], $\WF(v'\ast w)\subseteq \WF(w)$ and $\WF(v^{\prime *}\ast w)\subseteq \WF(w)$, and so $v$ is smooth off the diagonal.
				 \item\label{thm:main_theorem:micro_local_max_hypo}    For any $l\in \R^\nu$, and $w\in C^{-\infty}(M,\omegahalf)$,
					 \begin{equation}\label{eqn:sdqjofjlkqsdjf}\begin{aligned}
						 					 \WF_{l+\Re(k)}(w)\cap \Gamma=\WF_{l}(v\ast w)\cap \Gamma.
					 \end{aligned}\end{equation}
				 Furthermore, if $\Gamma'$ is any cone such that \eqref{eqn:sdqjofjlkqsdjf} holds (with $\Gamma$ replaced by $\Gamma'$) for some $l\in \R^\nu$, then $\Gamma'\subseteq \Gamma$.
				\item\label{thm:main_theorem:inequa}For any $v''\in \Psi^{k}_c(M)$, there exists $\chi\in \Psi^0_{\mathrm{classical}}(M)$, $\chi'\in C^\infty_c(M\times M,\omegahalf)$ and $C>0$ such that $\chi$ is elliptic on $\Gamma$ and 
						 \begin{equation}\label{eqn:inequ_microlocal_main_thm}\begin{aligned}
								 \norm{\chi\ast v''\ast w}_{L^2(M)}\leq C(\norm{v\ast w}_{L^2(M)}+\norm{\chi'\ast w}_{L^2(M)}),\quad \forall w\in C^\infty(M,\omegahalf).
						 \end{aligned}\end{equation}
					 If $\Gamma=T^*M\backslash 0$, then one can suppose that $\chi$ is the identity operator.
				
					\item\label{thm:main_theorem:Fredholm} If $M$ is compact and $\Gamma=T^*M\backslash 0$, then
						$v\ast \cdot:H^{l+\Re(k)}(M)\to H^{l}(M)$ is left-invertible modulo compact operators for any $l\in \R^\nu$.
			\end{enumerate}
			We say that $v$ is microlocally maximally hypoelliptic of order $k$ on a cone $\Gamma'$ if $\Gamma'\subseteq \Gamma$. 
			If $\Gamma=T^*M\backslash 0$, then we say that $v$ is maximally hypoelliptic of order $k$.
		\end{linkthm}
		\begin{rem}\label{rem:on_main_thm}
		\begin{enumerate}
			\item  		Inequalities \eqref{eqn:inequ_microlocal_main_thm} are called the microlocal maximal hypoellipticity inequalities.
			\item   We defer the question of whether the parametrix $v'$ belongs to $\Psi^{-k}(M)$ to a forthcoming article.
		\end{enumerate}
		\end{rem}
		We also have the two-sided version of \MyCref{thm:main_theorem}
		\begin{linkthm}{main_theorem_double_sided}
			Let $k\in\C^\nu$, $v\in \Psi^{k}(M)$,
				\begin{equation*}\begin{aligned}
					\Gamma=\{(\xi,x)\in T^*M\backslash 0:\forall (\pi,x)\in \HatHLnon_{(\xi,x)},\ &\sigma^{k}(v,\pi,x):C^\infty(\pi)\to C^\infty(\pi) \text{ and } \\&\sigma^{\bar{k}}(v^*,\pi,x):C^\infty(\pi)\to C^\infty(\pi)\text{ are injective}\} 
				\end{aligned}\end{equation*}
			The following hold: 
			\begin{enumerate}
				 \item 	If $(\xi,x)\in \Gamma$, $\pi\in \HatHLnon_{(\xi,x)}$, then the operators 
					 \begin{equation*}\begin{aligned}
						& \sigma^{k}(v,\pi,x):C^{-\infty}(\pi)\to C^{-\infty}(\pi)&&
						\\ &\sigma^{k}(v,\pi,x):H^{l+k}(\pi)\to H^l(\pi)&&\text{ for all }l\in \R^\nu
						\\ &\sigma^{k}(v,\pi,x):C^\infty(\pi)\to C^\infty(\pi)&&
					 \end{aligned}\end{equation*}
				 are topological isomorphisms.
				\item The cone $\Gamma$ is open, and $\Gamma\subseteq \{(\xi,x)\in T^*M\backslash 0:(-\xi,\xi;x,x)\in \WF(v)\cap\WF(v^*)\}$. 
				\item
				For any closed cone $\Gamma'\subseteq \Gamma$, 
				there exists $v'\in C^{-\infty}_{r,s}(M\times M,\omegahalf)$ such that for any $l\in \R^\nu$ and $w\in C^{-\infty}(M,\omegahalf)$,
					\begin{equation}\label{eqn:parametrix_WF2}\begin{aligned}
							\WF_l(v'\ast w)\subseteq \WF_{l-\Re(k)}(w), 	\quad \WF_l(v^{\prime *}\ast w)\subseteq \WF_{l-\Re(k)}(w),\\
							 \ \WF(v'\ast v\ast w-w)\cap \Gamma'=\WF( v\ast v'\ast w-w)\cap \Gamma'=\emptyset.
					\end{aligned}\end{equation}
		
				 \item    For any $l\in \R^\nu$, and $w\in C^{-\infty}(M,\omegahalf)$,
					 \begin{equation}\label{eqn:sdqjofjlkqsdjf2}\begin{aligned}
						 					 \WF_{l+\Re(k)}(w)\cap \Gamma=\WF_{l}(v\ast w)\cap \Gamma=\WF_{l}(v^*\ast w)\cap \Gamma.
					 \end{aligned}\end{equation}
				 Furthermore, if $\Gamma'$ is any cone such that \eqref{eqn:sdqjofjlkqsdjf} holds (with $\Gamma$ replaced by $\Gamma'$) for some $l\in \R^\nu$, then $\Gamma'\subseteq \Gamma$.
					\item If $M$ is compact and $\Gamma=T^*M\backslash 0$, then
						$v\ast \cdot:H^{l+\Re(k)}(M)\to H^{l}(M)$ is Fredholm for any $l\in \R^\nu$.
			\end{enumerate}
			We say that $v$ is microlocally maximally hypoelliptic of order $k$ on a cone $\Gamma'$ if $\Gamma'\subseteq \Gamma$. 
			If $\Gamma=T^*M\backslash 0$, then we say that $v$ is maximally hypoelliptic of order $k$.
			If $v$ satisfies the above conditions, then $v$ is called bi-maximally hypoelliptic of order $k$.
		\end{linkthm}

\linksec{Extension of parameters}{extension_of_parameters}
		Let $\tilde{\nu},\dbtilde{\nu}\in \bb{1,\nu}$ such that $\nu=\tilde{\nu}+\dbtilde{\nu}$.
		Let $\tilde{\cF}^{\bullet}$ and $\dbtilde{\cF}^\bullet$ be as in \eqref{eqn:extension_bigradedStructure} and \eqref{eqn:cF_tilde_dbtilde}. 
		Since $\tilde{\cF}$ and $\dbtilde{\cF}$ are weakly commutative, we can define all the objects we defined so far for $\tilde{\cF}$ and $\dbtilde{\cF}$. The corresponding objects are denoted by $\tilde{\Psi}^\bullet(M)$, $\tilde{\mathfrak{g}}_x$, $\tilde{\mathfrak{G}}_x$ $\tilde{\HL{}}$, $\tilde{H}^\bullet$, $\tilde{\sigma}$ and $\dbtilde{\Psi}^\bullet(M)$, $\dbtilde{\mathfrak{g}}_x$, $\dbtilde{\mathfrak{G}}_x$ $\dbtilde{\HL{}}$, $\dbtilde{H}^\bullet$, $\dbtilde{\sigma}$. 
		\begin{linkthm}{extending_parameters_pseudo_diff}
			\begin{enumerate}
				\item\label{thm:extending_parameters_pseudo_diff:inclusion}   If $k\in \C^{\tilde{\nu}},l\in \C^{\dbtilde{\nu}}$, then 
					\begin{equation*}\begin{aligned}
						\tilde{\Psi}^{k}(M)\ast \dbtilde{\Psi}^{l}(M)\subseteq \Psi^{(k,l)}(M),\quad \forall k\in \C^{\tilde{\nu}},l\in \C^{\dbtilde{\nu}}.\\
					\end{aligned}\end{equation*}
				In particular, $\tilde{\Psi}^{k}(M)\subseteq \Psi^{(k,0,\cdots,0)}(M)$, and $\dbtilde{\Psi}^{l}(M)\subseteq \Psi^{(0,\cdots,0,l)}(M)$.
				Furthermore, 
					\begin{equation}\label{eqn:commutator_lower_order_extension}\begin{aligned}
						[\tilde{\Psi}^{k}(M), \dbtilde{\Psi}^{l}(M)]\subseteq \Psi^{\prec(k,l)}(M)
					\end{aligned}\end{equation}
				
				\item\label{thm:extending_parameters_pseudo_diff:Sobolev}
				 If $k\in \R^{\tilde{\nu}},l\in \R^{\dbtilde{\nu}}$, then 
					\begin{equation*}\begin{aligned}
										\tilde{H}^k_{\mathrm{loc}}(M)=H^{(k,0,\cdots,0)}_{\mathrm{loc}}(M),\quad  \dbtilde{H}^l_{\mathrm{loc}}(M)=H^{(0,\cdots,0,l)}_{\mathrm{loc}}(M).		
					\end{aligned}\end{equation*}
				\item\label{thm:extending_parameters_pseudo_diff:osculating} For any $x\in M$, $\tilde{\mathfrak{g}}_x=\oplus_{i\in \bb{1,\tilde{\nu}}}\mathfrak{g}^{\widehat{i}}_x$ and $\dbtilde{\mathfrak{g}}_x=\oplus_{i\in \bb{\tilde{\nu}+1,\nu}}\mathfrak{g}^{\widehat{i}}_x$, see \eqref{eqn:weakly_commuting_lie_algebra}.
					In particular, 
						\begin{equation}\label{eqn:osculating_Lie_algebra_sum_extension}\begin{aligned}
							\mathfrak{g}_x=\tilde{\mathfrak{g}}_x\oplus \dbtilde{\mathfrak{g}}_x, \text{ and } \mathfrak{G}_x= \tilde{\mathfrak{G}}_x\times \dbtilde{\mathfrak{G}}_x.
						\end{aligned}\end{equation}
				\item\label{thm:extending_parameters_pseudo_diff:Helffer_Nourrigat} For any $x\in M$, 
					\begin{equation*}\begin{aligned}
						\HN_x\subseteq \{\xi+\eta:\xi\in \tilde{\HN}_x,\eta\in\dbtilde{\HN}_x\},
					\end{aligned}\end{equation*}
				where we are using the decomposition $\mathfrak{g}_x^*=\tilde{\mathfrak{g}}^*_x\oplus \dbtilde{\mathfrak{g}}_x^*$.
			\end{enumerate}
		\end{linkthm}
		Let $\xi\in \tilde{\HN}^{\mathrm{Nonsing}}_x$, $\eta\in\dbtilde{\HN}^{\mathrm{Nonsing}}_x$ such that $\xi+\eta\in \HN_x$. 
		So, automatically $\xi+\eta\in \HLnon$.
		Let $\tilde{\pi}$ and $\dbtilde{\pi}$ be the representations of $\tilde{\mathfrak{G}}_x$, $\dbtilde{\mathfrak{G}}_x$ associated by Kirillov's orbit method to $\xi$ and $\eta$.  
		By Kirillov's orbit method, the representation $\pi=\tilde{\pi}\otimes \dbtilde{\pi}$ acting on $L^2(\tilde{\pi})\otimes L^2(\dbtilde{\pi})$ is the representation associated by Kirillov's orbit method to $\xi+\eta$, see \Cref{chap:Orbit_method}.
		\begin{linkthm}{principal_symbol_extension}
			We have
			\begin{equation}\label{eqn:qkosdjoqskjdkoqsjdkqsjdkqs}\begin{aligned}
				\sigma^{(k,0,\cdots,0)}(v,\pi,x)=\tilde{\sigma}^{k}(v,\tilde{\pi},x)\otimes \mathrm{Id}_{L^2(\dbtilde{\pi})},\quad \forall k\in \C^{\tilde{\nu}},v\in \tilde{\Psi}^{k}(M)\\
				\sigma^{(0,\cdots,0,l)}(v',\pi,x)=\mathrm{Id}_{L^2(\tilde{\pi})}\otimes \dbtilde{\sigma}^{l}(v',\dbtilde{\pi},x),\quad \forall l\in \C^{\dbtilde{\nu}},v'\in \dbtilde{\Psi}^{l}(M).
			\end{aligned}\end{equation}
			Furthermore, 
				\begin{equation}\label{eqn:Sobolev_spaces_extension_representation}\begin{aligned}
					H^{(k,l)}(\pi)=\tilde{H}^k(\tilde{\pi})\otimes \dbtilde{H}^l(\dbtilde{\pi}),\quad\forall (k,l)\in \R^\nu.
				\end{aligned}\end{equation}
		\end{linkthm}
	\section{Example}\label{sec:example_pseudo-diff}
			In this section, we will give some computations of the Helffer-Nourrigat cone and the principal symbol in some concrete example.
			We fix $N\geq 2$. 
			Consider the structure defined in \Crefitem{exs:exs_weighted}{Weighted_bigraded} associated to the families $\{\partial_x,\partial_y\}$ and $\{\partial_x,x^{N-1}\partial_y\}$.
			We will use the notation of \MyCref{sec:tangent_groupoid_exs} where this structure was studied.
			A star is added to denote the dual elements of \eqref{eqn:kqsdkfjqposjfkpoqsfdf} which form a basis of the osculating group, see \eqref{eqn:qksodfjopkqsdjkofjqskopdfjqspd}.
			We will only compute the non-singular Helffer-Nourrigat cone and the corresponding representations.

			If $x\neq 0$, then 
			\begin{equation*}\begin{aligned}
				\HLnon_{(x,y)}= \left\{\xi X_1^*+\eta X_2^*+\mu \xi Y^*+x^{N-1}\mu \eta Z_1^*:(\xi,\eta)\in T^*_{(x,y)}\R^2\backslash 0,\mu\in \Rpt\right\}\subseteq  \mathfrak{g}_{(x,y)}^*.
			\end{aligned}\end{equation*}
			If $x=0$, then
			\begin{equation*}\begin{aligned}
				\HLnon_{(0,y)}=&\left\{\xi X_1^*+\eta X_2^*+\mu Y^*:\xi,\eta,\mu\in \R,\mu\xi>0\right\}\\
				\sqcup&\left\{ \eta X_2^*+\mu Y^*+\sum_{k=1}^N a^{k-1} b Z_k^*:\eta,b\in\R^\times,\mu,a\in \R,a^{N-1}b\eta\geq 0\right\}\\
									\sqcup&\left\{ \eta X_2^*+\mu Y^*+  b Z_N^*:\eta \in \R^\times, (\mu,b)\in \R^2\backslash (0,0),b\eta\geq 0\right\}
			\end{aligned}\end{equation*}
			Furthermore, one can check that for any $(\xi,\eta;x,y)\in T^*\R^2\backslash 0$, 
				\begin{equation*}\begin{aligned}
					a X_1^*+bX_2^*+cY+\sum_{i=1}^Nd_i Z_i\in \HLnon_{(\xi,\eta;x,y)}		\iff \exists\lambda\in \Rpt,(a,b)=\lambda(\xi,\eta).
				\end{aligned}\end{equation*}
			
		We compute the corresponding representations using the Kirillov's orbit method.	
			If $x\neq 0$, then each element in $\HL{(x,y)}$ gives a one-dimensional representation of $\mathfrak{G}_{(x,y)}$ whose derivative is:
				\begin{equation*}\begin{aligned}
					\pi_{\xi,\eta,\mu}(X_1)=i\xi,\quad \pi_{\xi,\eta,\mu}(X_2)=i \eta,\quad \pi_{\xi,\eta,\mu}(Y)=i\mu \xi,\quad \pi_{\xi,\eta,\mu}(Z_1)=ix^{N-1}\mu \eta,				
				\end{aligned}\end{equation*}
				and $\pi_{\xi,\eta,\mu}(Z_j)=0$ for all $j\in \bb{2,N}$.
			
			If $x=0$, the decomposition of $\HL{(0,y)}$ into co-Adjoint orbits is given by 
				\begin{equation*}\begin{aligned}
					\HLnon{(0,y)}=&\bigsqcup_{\xi,\eta,\mu\in \R,\mu\xi>0}\left\{\xi X_1^*+\eta X_2^*+\mu Y^*\right\}\\					
	&\bigsqcup_{\eta\in \Rpt,(\mu,b)\in \R^2\backslash 0}\{\eta X_2^*+\mu Y^*+bZ_1^*\} \\
	&\bigsqcup_{\eta,b\in \R,b\eta>0}	\Ad^*(\mathfrak{G}_{(0,y)})\left(\eta X_2^*+bZ_N^*\right).
				\end{aligned}\end{equation*}
				In the following computation, we added a superscript to distinguish the families of representations.
			The first family gives the one-dimensional representation of $\mathfrak{G}_{(x,y)}$ whose derivative is:
				\begin{equation*}\begin{aligned}
					\pi^{(1)}_{\xi,\eta,\mu}(X_1)=i \xi,\quad \pi^{(1)}_{\xi,\eta,\mu}(X_2)=i \eta,\quad \pi^{(1)}_{\xi,\eta,\mu}(Y)=i\mu,\quad \pi^{(1)}_{\xi,\eta,\mu}(Z_j)=0,\quad \forall j\in \bb{1,N}.		
				\end{aligned}\end{equation*}
			The second family gives one-dimensional representations whose derivative is:
				\begin{equation*}\begin{aligned}
					\pi^{(2)}_{\eta,\mu,b}(X_1)=0,\ \pi^{(2)}_{\eta,\mu,b}(X_2)=i \eta,\ \pi^{(2)}_{\eta,\mu,b}(Y)=i\mu ,\ \pi^{(2)}_{\eta,\mu,b}(Z_1)=i b,\ \pi^{(2)}_{\eta,\mu,b}(Z_j)=0 \quad\forall j\in \bb{2,N},		
				\end{aligned}\end{equation*}
			The third family gives infinite-dimensional representations acting on $L^2(\R)$ whose derivative is:
				\begin{equation*}\begin{aligned}
					\pi^{(3)}_{\eta,b}(X_1)=0,\quad \pi^{(3)}_{\eta,b}(X_2)=i \eta ,\quad \pi^{(3)}_{\eta,b}(Y)=\odv{}{t},\quad \pi^{(3)}_{\eta,b}(Z_j)=i t^{N-j} b,\quad \forall j\in \bb{1,N},		
				\end{aligned}\end{equation*}
			
			Fix $c\in \C$. Consider the differential operator 
				\begin{equation*}\begin{aligned}
					D_c=\left(\frac{\partial^2}{\partial x^2}+\frac{\partial^2}{\partial y^2}\right)	\left(\frac{\partial^2}{\partial x^2}+x^{2(N-1)}\frac{\partial^2}{\partial y^2}\right)^{N}+c\frac{\partial^{4}}{\partial y^{4}}\\
				\end{aligned}\end{equation*}
				It belongs to $\DO^{(2,2N)}(\R^2)$ because it can be written as 
					\begin{equation*}\begin{aligned}
								D_c=\left(X_1^2+X_2^2\right)	\left(Y^2+Z_1^2\right)^N+cZ_N^2X_2^{2},
					\end{aligned}\end{equation*}
			The principal symbol is given by 
				\begin{equation*}\begin{aligned}
					&\sigma^{(2,2N)}(D_c,\pi_{\xi,\eta,\mu},(x,y))&&=		\mu^2(\xi^2+\eta^2)(\xi^2+x^{2(N-1)}\eta^2)&&& x\neq 0\\
					&\sigma^{(2,2N)}(D_c,\pi^{(1)}_{\xi,\eta,\mu},(0,y))&&=		\mu^2(\xi^2+\eta^2)&&&\\
					&\sigma^{(2,2N)}(D_c,\pi^{(2)}_{\eta,\mu,b},(0,y))&&=		\eta^2(\mu^2+b^2)&&& \\
					&\sigma^{(2,2N)}(D_c,\pi^{(3)}_{\eta,b},(0,y))&&=	-\eta^2\left(\frac{\mathrm{d}^2}{\mathrm{d}t^2}-b^2t^{2(N-1)}\right)^N+cb^2\eta^2&&& \\
				\end{aligned}\end{equation*}
			The first three are always non-zero.
			It follows from \MyCref{thm:principal_symbol_characterizations}[I] that $\sigma^{(2,2N)}(D_c,\pi_{\eta,b}^{(3)},(0,y)):H^{(2,2N)}(\pi_{\eta,b}^{(3)})\to L^2(\R)$ is a Fredholm operator.
			So, by taking $c=0$, and using \MyCref{thm:bounded_M_Sobolev}[4], it follows that $\left(\frac{\mathrm{d}^2}{\mathrm{d}t^2}-b^2t^{2(N-1)}\right)^N$ as an operator $L^2(\R)\to L^2(\R)$ has a compact resolvent.
			Since it is  self-adjoint, it is diagonalizable. Its eigenvectors are elements of the kernel of $\sigma^{(2,2N)}(D_c,\pi_{\eta,b}^{(3)},(0,y))$ as $c$ varies.
			Hence, by \MyCref{thm:principal_symbol_characterizations}[I], the eigenvectors are smooth vectors of the representation $\pi_{\eta,b}^{(3)}$.
			By \MyCref{thm:smooth_vectors_Kirillov}, smooth vectors of $\pi_{\eta,b}^{(3)}$ are Schwartz functions on $\R$.
			Let $(\lambda_n)\subseteq \R$ be the eigenvalues of $\frac{1}{b^2}\left(\frac{\mathrm{d}^2}{\mathrm{d}t^2}-b^2t^{2(N-1)}\right)^N$.
			The sequence of eigenvalues doesn't depend on $b$ because all these operators are unitarily equivalent to each other the dilation operators.
			To summarize, the following hold:
			\begin{prop}\label{prop:sdkfjmdsqfqsd}
				For any $c\in \C$, the operator $D_c$ is microlocally maximally hypoelliptic of order $(2,2N)$ on the cone $\{(\xi,\eta;x,y)\in T^*\R^2\backslash 0:(\xi,x)\neq (0,0)\}$.
				Furthermore, it is maximally hypoelliptic of order $(2,2N)$ if and only if $c\notin\{\lambda_n:n\in \N\}$.
			\end{prop}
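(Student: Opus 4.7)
}
The plan is to apply \MyCref{thm:main_theorem}, which reduces both assertions to verifying pointwise injectivity of $\sigma^{(2,2N)}(D_c,\pi,x)$ on $C^\infty(\pi)$ for every $(\pi,x)\in \HatHLnon_{(\xi,\eta;x,y)}$, for the relevant cotangent vectors $(\xi,\eta;x,y)$. Since the representations in $\HatHLnon$ and the symbol of $D_c$ on each of them have already been computed explicitly above, the proof amounts to running through the classification case by case and then invoking the spectral information established in the paragraph preceding the statement.

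For the microlocal part, I would enumerate the cases arising from the characterization of $\HatHLnon_{(\xi,\eta;x,y)}$: any element must have $X_1^*$- and $X_2^*$-coefficients equal to $\lambda(\xi,\eta)$ for some $\lambda>0$. If $x\neq 0$, only the one-dimensional representations $\pi_{\xi_r,\eta_r,\mu}$ appear and the symbol $\mu^2(\xi_r^2+\eta_r^2)(\xi_r^2+x^{2(N-1)}\eta_r^2)$ is a positive real number. If $x=0$ and $\xi\neq 0$, the second and third families are excluded (both have vanishing $X_1^*$-component), while the surviving first family gives the non-zero symbol $\mu^2(\xi_r^2+\eta_r^2)$. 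In either situation $\sigma^{(2,2N)}(D_c,\pi,x)$ is multiplication by a non-zero scalar, hence injective on $C^\infty(\pi)$. By \MyCref{thm:main_theorem}[micro_local_max_hypo], $D_c$ is microlocally maximally hypoelliptic on $\{(\xi,\eta;x,y):(\xi,x)\neq (0,0)\}$ for every $c\in\C$.

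For the global part, it remains to examine the boundary case $x=0$, $\xi=0$, $\eta\neq 0$. Here the first family disappears (the condition $\mu\xi_r>0$ forces $\xi_r\neq 0$), the second family contributes the symbol $\eta_r^2(\mu^2+b^2)$ which is strictly positive on the allowed parameter set $(\mu,b)\in\R^2\setminus 0$, and the third family $\pi^{(3)}_{\eta_r,b}$ contributes the operator
\begin{equation*}
\sigma^{(2,2N)}(D_c,\pi^{(3)}_{\eta_r,b},(0,y))=\eta_r^2\bigl(-T_b+cb^2\bigr),\qquad T_b:=\left(\tfrac{\mathrm{d}^2}{\mathrm{d}t^2}-b^2 t^{2(N-1)}\right)^N,
\end{equation*}
with $\eta_r=\lambda\eta\neq 0$ and $b\neq 0$. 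By \MyCref{thm:smooth_vectors_Kirillov}, $C^\infty(\pi^{(3)}_{\eta_r,b})$ is the Schwartz space on $\R$, so injectivity on smooth vectors is equivalent to the absence of Schwartz solutions to $T_b\varphi=cb^2\varphi$. The key input from the preceding discussion is that $T_b$ is self-adjoint on $L^2(\R)$ with compact resolvent and that its eigenvectors are smooth vectors, i.e.\ Schwartz; moreover $\tfrac{1}{b^2}T_b$ is unitarily equivalent (via dilation) to a $b$-independent operator whose eigenvalues are the $\lambda_n$. Thus a Schwartz eigenvector exists if and only if $c\in\{\lambda_n:n\in\N\}$.

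Combining these three blocks, I conclude via \MyCref{thm:main_theorem}[micro_local_max_hypo] that the largest cone of microlocal maximal hypoellipticity equals the whole of $T^*\R^2\setminus 0$ precisely when the symbols in the third family are all injective on Schwartz, i.e.\ precisely when $c\notin\{\lambda_n:n\in\N\}$. The only non-routine step is the identification of $C^\infty(\pi^{(3)}_{\eta_r,b})$ with the Schwartz space and the resulting transfer between $L^2$-spectral theory of $T_b$ and symbol injectivity on smooth vectors; this however is exactly what \MyCref{thm:principal_symbol_characterizations}[I] together with \MyCref{thm:smooth_vectors_Kirillov} delivers, so there is no further obstacle.
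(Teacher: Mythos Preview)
Your proposal is correct and follows essentially the same route as the paper: the proposition is stated immediately after the explicit computation of all symbols and the spectral analysis of the third family, and the proof amounts to exactly the case analysis you describe, invoking \Cref{thm:main_theorem} together with the characterization of $\HatHLnon_{(\xi,\eta;x,y)}$ via the $(X_1^*,X_2^*)$-coefficients. Your identification of which families survive at each cotangent point, and your use of \Cref{thm:principal_symbol_characterizations} and \Cref{thm:smooth_vectors_Kirillov} to pass between $L^2$-eigenvectors and Schwartz kernel elements, matches the paper's argument.
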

			We remark that the operator $D_c$ also belongs to $\DO^{(2+2N,0)}(\R^2)$ (that is it has classical order is $2+2N$) and it belongs to $\DO^{(0,4N)}(\R^2)$ because it can be written as 
				\begin{equation*}\begin{aligned}
				D_c=\left(Y^2+Z_N^2\right)	\left(Y^2+Z_1^2\right)^N+cZ_N^4.		
				\end{aligned}\end{equation*}
			The order $4N$ is optimal because of \MyCref{thm:principal_symbol_pseudo_diff_well_defined}[5] and
				\begin{equation*}\begin{aligned}
					\sigma^{(0,4N)}(D_c,	\pi_{\xi,\eta,\mu}^{(1)},(0,y))=(-1)^{N+1}\mu^{2+2N}\neq 0		
				\end{aligned}\end{equation*}
			Nevertheless, we have 
				\begin{equation*}\begin{aligned}
				\sigma^{(0,4N)}(D_c,\pi_{\eta,\mu,b}^{(2)},(0,y))=(-1)^{N+1}\mu^2(\mu^2+b^2)^N
				\end{aligned}\end{equation*}
			which vanishes when $\mu=0$. So, $D_c$ is not maximally hypoelliptic of order $(0,4N)$ (for any $c\in \C$).
			It is also not elliptic which means it is not maximally hypoelliptic of order $(2+2N,0)$.
			\chapter{Proofs of the results in Chapter \ref{chap:chapter_bi-garded_pseudo_diff}}\label{chap:chapter_bi-garded_pseudo_diff_proof}
\section{Proofs of the results in Section \ref{sec:bi_graded_pseudo_diff}}\label{sec:bi_graded_pseudo_diff_proof}	
	\begin{linkproof}{integralIkl}
		By symmetry, we only consider the first integral.
		By \eqref{eqn:invariant_vanish_algebra:alternate}, we can suppose that $f=h\ast \theta_l(D)$ for some $h\in \cinv$, $l\in \Z_+^\nu$, $D\in \DO^l(M)$
		such that $k-l\in \C^\nu_{<0}$.
		By \eqref{eqn:homogenity_theta}, 
			\begin{equation*}\begin{aligned}
				\int_{]0,1]^\nu} g\ast \alpha_{\lambda}(f)\frac{\odif{\lambda}}{\lambda^{k+\underline{1}}}=\left(\int_{]0,1]^\nu}g\ast \alpha_{\lambda}(h) \frac{\odif{\lambda}}{\lambda^{k-l+\underline{1}}} \right)\ast \theta_l(D)
			\end{aligned}\end{equation*}
		The integral $\int_{]0,1]^\nu}g\ast \alpha_{\lambda}(h) \frac{\odif{\lambda}}{\lambda^{k-l+\underline{1}}}$
		converges using \Cref{thm:equivariance_Cn_norm}.
	\end{linkproof}

		The following identities which follow from \eqref{eqn:homogenity_theta} will be used repeatedly
			\begin{equation}\label{eqn:identities_I_Use_a_alot_vector_field_out_2}\begin{aligned}
				I_k(\alpha_\lambda(f))&=\alpha_\lambda(I_k(f)),\\   I_{k+l}(\theta_{l}(D)\ast f)&=\theta_{l}(D)\ast I_{k}(f),\\ I_{k+l}(f\ast \theta_{l}(D))&= I_{k}(f)\ast \theta_{l}(D),
			\end{aligned}\end{equation}
			where $\lambda\in (\Rpt)^\nu$ and $f\in \cinvf{k}$. It is useful in this chapter to introduce the spaces
				\begin{equation*}\begin{aligned}
					\Psi^{k}(\mathbb{G})=I_k(\cinvf{k+\underline{1}} ),\quad
						\Psi^{\preceq k}(\mathbb{G}):=\sum_{l\in \C^\nu,l\preceq k}\Psi^{l}(\mathbb{G}), \quad\Psi^{\prec k}(\mathbb{G}):=\sum_{l\in \C^\nu,l\prec k}\Psi^{l}(\mathbb{G}).		
					\end{aligned}\end{equation*}
	By \eqref{eqn:identities_I_Use_a_alot_vector_field_out_2}, we obtain 
	\begin{equation}\label{eqn:main_prop_bi_graded_pseudo_diff:diff}\begin{aligned}
		\theta_l(\DO^{l}(M))\ast \Psi^{k}(\mathbb{G})\subseteq \Psi^{k+l}(\mathbb{G}),\quad \Psi^{k}(\mathbb{G})\ast \theta_l(\DO^{l}(M))\subseteq \Psi^{k+l}(\mathbb{G}).
	\end{aligned}\end{equation}

		\begin{lem}\label{thm:main_prop_bi_graded_pseudo_diff}
			\begin{enumerate}
				\item\label{thm:main_prop_bi_graded_pseudo_diff:3} For any $k,l\in \C^\nu$,
					 $\Psi^{k}(\mathbb{G})^*=\Psi^{\overline{k}}(\mathbb{G})$ and $\Psi^{k}(\mathbb{G})\ast \Psi^{l}(\mathbb{G})\subseteq \Psi^{k+l}(\mathbb{G})$.
				\item\label{thm:main_prop_bi_graded_pseudo_diff:L1} If $k\in \C^\nu_{<0}$, then $\Psi^{k}(\mathbb{G})\subseteq  L^1(\mathbb{G},\omega)$ for any $\omega\in C^\infty(\mathbb{G},\Omega^{\frac{1}{2},-\frac{1}{2}})$ obtained from \Cref{thm:L1normRiemMetric}.
				\item\label{thm:main_prop_bi_graded_pseudo_diff:incl}If $l\in \Z_+^\nu$, then $\theta_l(\DO^{l}_c(M))\subseteq \Psi^{\preceq l}(\mathbb{G})$.

			\end{enumerate}
		\end{lem}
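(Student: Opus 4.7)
The plan is to prove the three items in order. The adjoint identity in Part 1 follows immediately from $\alpha_\lambda(f)^{\ast}=\alpha_\lambda(f^{\ast})$ combined with the fact that $\cinvf{k+\underline{1}}$ depends only on $\Re(k)$ and is closed under taking adjoints by \Crefitem{thm:invariant_vanish_algebra}{algebra}, hence $I_k(f)^{\ast}=I_{\bar k}(f^{\ast})$. For Part 2, the triangle inequality applied inside the defining integral together with the $L^1$-invariance \eqref{eqn:L1norm_invariance} gives
\begin{equation*}
\norm{I_k(f)}_{L^1(\mathbb{G},\omega)}\le\norm{f}_{L^1(\mathbb{G},\omega)}\int_{]0,1]^\nu}\frac{d\lambda}{\lambda^{\Re(k)+\underline{1}}},
\end{equation*}
and the scalar integral is finite precisely when each $\Re(k_i)<0$.

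For the composition part, I would start from the iterated integral
\begin{equation*}
I_k(f)\ast I_l(g)=\int_{]0,1]^{2\nu}}\alpha_\lambda(f)\ast\alpha_\mu(g)\,\frac{d\lambda}{\lambda^{k+\underline{1}}}\frac{d\mu}{\mu^{l+\underline{1}}},
\end{equation*}
whose convergence in $\cbbG$ is supplied by \Cref{thm:estimate_weakly_commut_convolution}; here weak commutativity is essential, since it provides joint smoothness of $\alpha_\lambda(f)\ast\alpha_\mu(g)$ on $\{\lambda+\mu\in(\Rpt)^\nu\}$. The region $]0,1]^{2\nu}$ splits into the $2^\nu$ pieces indexed by $S\subseteq\bb{1,\nu}$ according to whether $\lambda_i\le\mu_i$ (for $i\in S$) or $\mu_i<\lambda_i$ (for $i\notin S$). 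On the $S$-piece I substitute $\lambda_i=\rho_i\mu_i$ for $i\in S$ and $\mu_i=\rho_i\lambda_i$ for $i\notin S$; using $\alpha_\lambda\circ\alpha_\mu=\alpha_{\lambda\mu}$, one extracts a common outer dilation $\alpha_\sigma$ with $\sigma_i=\max(\lambda_i,\mu_i)$ ranging over $]0,1]^\nu$, leaving an inner $\rho$-integral. Writing $f=\theta_{k'}(D)\ast f'$ and $g=\theta_{l'}(D')\ast g'$ via \eqref{eqn:dfn_algebra_vanishing_Fourier_invariant} with $k'\succeq k+\underline{1}$, $l'\succeq l+\underline{1}$ and $f',g'\in\cinv$, the inner scalar integrals of the form $\int_0^1\rho_i^{k'_i-k_i-1}d\rho_i$ converge, and \Crefitem{thm:invariant_vanish_algebra}{algebra} together with \eqref{eqn:theta_product_cinfk} identifies the outer integrand as an element of $\cinvf{k+l+\underline{1}}$, so each piece has the form $I_{k+l}(h_S)$.

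Part 3 proceeds in three stages. First, I establish $\cinv\subseteq\Psi^{-\underline{1}}(\mathbb{G})$. For $\phi\in\cinv$, set $\tilde\phi:=\prod_{i=1}^\nu(\mathrm{Id}+t_i\partial_{t_i})\phi=\partial_{t_1}\cdots\partial_{t_\nu}(t^{\underline{1}}\phi)$; each factor preserves $\cinv$ because $t_i\partial_{t_i}=\partial_{\mu_i}|_{\mu_i=1}\alpha_{(\ldots,\mu_i,\ldots)}$ lands in $\cinvf{e_i}\subseteq\cinv$ by \Cref{thm:derivative_vanish_at_0}. The substitution $\mu_i=\lambda_i t_i$ on $M\times M\times(\Rpt)^\nu$ gives
\begin{equation*}
I_{-\underline{1}}(\tilde\phi)(y,x,t)=\frac{1}{t^{\underline{1}}}\int_{[0,t]}\partial_{\mu_1}\cdots\partial_{\mu_\nu}(\mu^{\underline{1}}\phi)\,d\mu=\phi(y,x,t),
\end{equation*}
where the boundary terms at $\mu_i=0$ vanish because $\mu^{\underline{1}}\phi|_{\mu_i=0}=0$, and density extends the identity to all of $\mathbb{G}$. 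Second, for $u\in C^\infty_c(M)$, I pick $\phi\in\cinv$ with $\alpha_0(\phi)=\theta_0(\delta_u)$ (possible by \eqref{eqn:value_of_alpha_00}) and apply the multi-variable fundamental theorem of calculus on $[0,1]^\nu$ to $\lambda\mapsto\alpha_\lambda(\phi)$:
\begin{equation*}
\sum_{S\subseteq\bb{1,\nu}}(-1)^{|S|}\alpha_{(\underline{1}_{S^c},\underline{0}_S)}(\phi)=I_0(g),\qquad g:=\prod_{i=1}^\nu\partial_{\mu_i}\big|_{\mu_i=1}\alpha_{(\ldots,\mu_i,\ldots)}(\phi)\in\cinvf{\underline{1}}
\end{equation*}
(iterating \Cref{thm:derivative_vanish_at_0}). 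Solving for $\alpha_0(\phi)=\theta_0(\delta_u)$ and inducting on $\nu$, identifying each partial dilation $\alpha_{(\underline{1}_{S^c},\underline{0}_S)}(\phi)$ for $\emptyset\ne S\ne\bb{1,\nu}$ with an element of the $(\nu-|S|)$-parameter sub-calculus via the $\Phi$-map of \Cref{thm:extension_parameters}, yields $\theta_0(\delta_u)\in\cinv+\Psi^0(\mathbb{G})+\Psi^{\preceq 0}(\mathbb{G})=\Psi^{\preceq 0}(\mathbb{G})$. Third, decomposing an arbitrary $D\in\DO^l_c(M)$ as a finite sum of monomials $\delta_f X_1\cdots X_n$ with $f\in C^\infty_c(M)$ and $X_j\in\cF^{k_j}$, the factorization \eqref{eqn:theta_product} together with \eqref{eqn:main_prop_bi_graded_pseudo_diff:diff} gives $\theta_l(D)\in\Psi^{\preceq l}(\mathbb{G})$.

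The main obstacle I foresee is the partial-dilation induction in the second stage of Part 3: although the endpoints $\phi$ and $\alpha_0(\phi)$ have clear meanings, the intermediate dilations $\alpha_{(\underline{1}_{S^c},\underline{0}_S)}(\phi)$ with proper nonempty $S$ live on $\mathbb{G}$ but morally correspond to elements of lower-parameter pseudo-differential calculi; rigorously identifying them via the extension-of-parameters machinery requires careful bookkeeping and uses weak commutativity of $\cF$ in an essential way beyond what is needed for the composition in Part 1.
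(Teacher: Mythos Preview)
Your Parts 1 and 2 match the paper. For the composition, the paper first reduces to $k,l\in\C^\nu_{<0}$ and $f,g\in\cinv$ via \eqref{eqn:identities_I_Use_a_alot_vector_field_out_2} and \eqref{eqn:main_prop_bi_graded_pseudo_diff:diff}, then splits $]0,1]^{2\nu}$ according to $\lambda_i\lessgtr\mu_i$ and changes variables exactly as you describe; the inner integral becomes $\int_{]0,1]^\nu}\alpha_{\lambda_S}(f)\ast\alpha_{\lambda_{S^c}}(g)\,\frac{d\lambda}{\lambda^{\tau_S}}$, which is in $\cinv$ by \Cref{thm:estimate_weakly_commut_convolution}.

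For Part~3 your stated plan for the partial dilations does not go through. The map $\Phi$ of \Cref{thm:extension_parameters} takes a \emph{pair} of lower-parameter invariant functions and convolves them into $\cinv$; it neither factors an arbitrary $\phi\in\cinv$ as $\Phi(f_1,f_2)$ nor identifies the single-argument partial dilation $\alpha_{(\underline{1}_{S^c},\underline{0}_S)}(\phi)\in\cinvdis$ with anything in a lower-parameter calculus. Without such an identification the induction on $\nu$ has nothing to feed on, and the pseudo-differential extension result \Cref{thm:extending_parameters_pseudo_diff} is proved later, so invoking it here would be circular. One \emph{can} salvage your approach by iterating the Stage~1/Stage~2 ideas coordinate by coordinate to rewrite each partial dilation as an $I_{\kappa}$, but this is bookkeeping-heavy and not what you proposed.

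The paper sidesteps the partial-dilation terms with one small trick: instead of integrating $\partial_{\lambda_1}\cdots\partial_{\lambda_\nu}\alpha_\lambda(g)$, it integrates
\[
\partial_{\lambda_1}\cdots\partial_{\lambda_\nu}\Big((\lambda_1-1)\cdots(\lambda_\nu-1)\,\alpha_\lambda(g)\Big)
\]
over $]0,1]^\nu$. The factor $\prod_i(\lambda_i-1)$ kills every boundary contribution on a face $\{\lambda_i=1\}$, so the only surviving corner is $\lambda=0$, which is exactly $\alpha_0(g)=\theta_0(\delta_f)$. Expanding via the product rule and the chain-rule identity $\partial_{\lambda_{s_1}}\cdots\partial_{\lambda_{s_n}}\alpha_\lambda(g)=(\lambda_{s_1}\cdots\lambda_{s_n})^{-1}\alpha_\lambda(g_S)$ yields
\[
\theta_0(\delta_f)=\sum_{S\subseteq\bb{1,\nu}}\ \sum_{S'\subseteq S}(-1)^{|S'|}I_{\kappa_{S'}}(g_S),
\qquad
\kappa_{S'i}=\begin{cases}0,&i\in S',\\-1,&i\notin S',\end{cases}
\]
with each $g_S\in\cinvf{\kappa_S+\underline{1}}\subseteq\cinvf{\kappa_{S'}+\underline{1}}$ by iterated \Cref{thm:derivative_vanish_at_0}, hence $I_{\kappa_{S'}}(g_S)\in\Psi^{\kappa_{S'}}(\mathbb{G})\subseteq\Psi^{\preceq 0}(\mathbb{G})$. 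No partial dilations, no induction on $\nu$, no $\Phi$. Your Stage~1 computation is, in fact, the $S=\emptyset$ term of this expansion---the right instinct, just missing the weighting that eliminates the troublesome boundary pieces.
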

		
		\begin{proof}

			The identity $\Psi^{k}(\mathbb{G})^*=\Psi^{\overline{k}}(\mathbb{G})$ follows from \Crefitem{thm:invariant_vanish_algebra}{algebra}.
			To prove $\Psi^{k}(\mathbb{G})\ast \Psi^{l}(\mathbb{G})\subseteq \Psi^{k+l}(\mathbb{G})$, by using \eqref{eqn:dfn_algebra_vanishing_Fourier_invariant}, \eqref{eqn:invariant_vanish_algebra:alternate}, \eqref{eqn:identities_I_Use_a_alot_vector_field_out_2} and \eqref{eqn:main_prop_bi_graded_pseudo_diff:diff},
			it suffices to show that 
			if $k,l\in \C^\nu_{<0}$ and $f,g\in \cinv$, then $I_k(f)\ast I_{l}(g)\in \Psi^{k+l}(\mathbb{G})$.
			For each $S\subseteq \bb{1,\nu}$, let 
				\begin{equation*}\begin{aligned}
					\Lambda_S=\{(\lambda,\mu)\in ]0,1]^{2\nu}:\forall i\in S, \lambda_i\leq \mu_i,\ \forall i\in S^c, \lambda_i>\mu_i \}.
				\end{aligned}\end{equation*}
			By a simple change of variables, we have
			\begin{equation}\label{eqn:qimskjfiqjsdmlfjlkmqsdmfjqsdjflmqjsmdjf2}\begin{aligned}
					I_k(f)\ast I_{l}(g)
					= \int_{]0,1]^{2\nu}}\alpha_\lambda(f)\ast  \alpha_{\mu}(g)\frac{\odif{\lambda} \odif{\mu} }{\lambda^{k+\underline{1}}\mu^{l+\underline{1}}} 
					&=\sum_{S\subseteq \bb{1,\nu}}\int_{\Lambda_S}\alpha_\lambda(f)\ast  \alpha_{\mu}(g)\frac{\odif{\lambda} \odif{\mu} }{\lambda^{k+\underline{1}}\mu^{l+\underline{1}}}
					\\&=\sum_{S\subseteq \bb{1,\nu}}I_{k+l}\left( \int_{]0,1]^{\nu}}\alpha_{\lambda_{S}}(f) \ast \alpha_{\lambda_{S^c}}(g)\frac{\odif{\lambda}}{\lambda^{\tau_S}} \right),
					\end{aligned}\end{equation}
			where 
				\begin{equation}\label{eqn:qimskjfiqjsdmlfjlkmqsdmfjqsdjflmqjsmdjf}\begin{aligned}
					\lambda_S=\begin{cases}
						\lambda_i &\text{if }i\in S\\
						1 &\text{if }i\notin S
					\end{cases},\quad \lambda_{S^c}=\begin{cases}
						\lambda_i &\text{if }i\notin S\\
						1 &\text{if }i\in S
					\end{cases},\quad \tau_{Si}=\begin{cases}
						k_i+1 &\text{if }i\in S\\
						l_i+1 &\text{if }i\notin S
					\end{cases}	.
				\end{aligned}\end{equation}
			By \Cref{thm:estimate_weakly_commut_convolution}, the integral $\int_{]0,1]^{\nu}}\alpha_{\lambda_{S}}(f) \ast \alpha_{\lambda_{S^c}}(g)\frac{\odif{\lambda}}{\lambda^{\tau_S}}$ defines an element of $\cinv$.
			This finishes the proof of \Crefitem{thm:main_prop_bi_graded_pseudo_diff}{3}.
			\Crefitem{thm:main_prop_bi_graded_pseudo_diff}{L1} follows immediately from \eqref{eqn:L1norm_invariance}.

			To prove \Crefitem{thm:main_prop_bi_graded_pseudo_diff}{incl}, by \eqref{eqn:main_prop_bi_graded_pseudo_diff:diff}, it suffices to show that $\theta_0(C^\infty_c(M))\subseteq \Psi^{\preceq 0}(\mathbb{G})$.
			Let $f\in C^\infty_c(M)$. 
			We fix $g\in \cinv$ any invariant function such that $\alpha_{0}(g)=\theta_{0}(\delta_f)$, see \eqref{eqn:value_of_alpha_00}. 
				For example, after fixing a $\nu$-graded basis $(V,\natural)$, one takes $g=\cQ_{V*}(\tilde{g}\circ \pi_{V\times \mathbb{G}^{(0)}})$, where $\tilde{g}\in C^\infty_c(\tdom(\cQ_V),\Omega^1(V))$ satisfies	
				 $\int_{v\in V} \tilde{g}(v,x,0)=f(x)$.
				Clearly,
					\begin{equation}\label{eqn:tempo_eqn_jksdmfkjqsdkljfklmqjsd}\begin{aligned}
						\theta_{0}(\delta_f)=\alpha_{0}(g)=\int_{]0,1]^\nu}\pdv*[mixed-order={\nu}]{\Big((\lambda_1-1)\cdots(\lambda_\nu-1)\alpha_\lambda(g)\Big)}{\lambda_1\cdots,\lambda_\nu}\odif{\lambda}.
					\end{aligned}\end{equation}
				Let $S=\{s_1,\cdots,s_n\}\subseteq \bb{1,\nu}$. We define 
					\begin{equation}\label{eqn:g_S}\begin{aligned}
						g_S=\pdv*[mixed-order={|S|}]{\alpha_{\lambda}(g)}{\lambda_{s_1}\cdots,\lambda_{s_n}}_{\lambda_{s_1}=1,\cdots,\lambda_{s_n}=1}.
					\end{aligned}\end{equation}
				 By the chain rule,
					\begin{equation}\label{eqn:diff_tempo_formula_jkqhjsk}\begin{aligned}
						\pdv*[mixed-order={|S|}]{\alpha_{\lambda}(g)}{\lambda_{s_1}\cdots,\lambda_{s_n}}=\frac{1}{\lambda_{s_1}\cdots\lambda_{s_n}}\alpha_\lambda(g_S),\quad \forall \lambda\in (\Rpt)^\nu.
					\end{aligned}\end{equation}
				So, putting \eqref{eqn:tempo_eqn_jksdmfkjqsdkljfklmqjsd} and \eqref{eqn:diff_tempo_formula_jkqhjsk} together, we obtain   
					\begin{equation}\label{eqn:theta_0_sum_lower_order}\begin{aligned}
						\theta_{0}(\delta_f)=\sum_{S\subseteq \bb{1,\nu}}\int_{]0,1]^\nu} \alpha_{\lambda}(g_S)\prod_{s\in S}\left(1-\lambda_s^{-1}\right)  \odif{\lambda}=\sum_{S\subseteq \bb{1,\nu}}\sum_{S'\subseteq S}(-1)^{|S'|}I_{\kappa_{S'}}(g_S),
					\end{aligned}\end{equation}
				where $\kappa_{S'i}=-1$ if $i\notin S'$, $\kappa_{S'i}=0$ if $i\in S'$.
				By \Cref{thm:derivative_vanish_at_0}, $g_S\in \cinvf{\kappa_{S}+\underline{1}}$. 
				Since $\kappa_{S'}+\underline{1}\preceq \kappa_{S}+\underline{1}$, by \eqref{eqn:theta_product_cinfk}, $g_S\in \cinvf{\kappa_{S'}+\underline{1}}$.
				So, $I_{\kappa_{S'}}(g_S)\in \Psi^{\kappa_{S'}}(\mathbb{G})\subseteq \Psi^{\preceq 0}(\mathbb{G})$.
			\end{proof}
			\begin{rem}\label{rem:decomposing_pseudo_M}
				By \eqref{eqn:identities_I_Use_a_alot_vector_field_out_2} and \eqref{eqn:dfn_algebra_vanishing_Fourier_invariant}, 
				any element $\Psi^{k}_c(M)$ can be written as sum of elements of the form $D\ast v$ where $D\in \DO^l(M)$ and $v\in \Psi^{k-l}_c(M)$ with $k-l\in \C^\nu_{<0}$.
				This remark will be used repeatedly to reduce theorems about positive order operators to negative order operators.
			\end{rem}
			\begin{linkproof}{main_prop_bi_graded_pseudo_diff_M}
				Let us prove \Crefitem{thm:main_prop_bi_graded_pseudo_diff_M}{inclusion_smoothing}.
				Let $f\in C^\infty_c(M\times M,\omegahalf)$. We fix any $g\in C^\infty_c((\Rpt)^\nu)$ such that $\int_{]0,1]^\nu}g(t)\frac{dt}{t^{k+\underline{1}}}=1$.
				So, $f(y,x)g(t)\in C^\infty_c(M\times M\times (\Rpt)^\nu,\omegahalf)$. By \Crefitem{thm:invariant_vanish_algebra}{inclusion}, it follows that $f(y,x)g(t)\in \cinvf{k+\underline{1}}$.
				Clearly, $I_k(fg)_{|\underline{1}}=f$.

				Let us prove \Crefitem{thm:main_prop_bi_graded_pseudo_diff_M}{1}. Let $f\in \cinvf{k+\underline{1}}$, $\pi:\mathbb{G}\to M\times M$ be the natural projection 
					\begin{equation}\label{eqn:natural_proj_pi}\begin{aligned}
						\pi(y,x,t)=(y,x),\quad \pi(A,x,0)=(x,x).		
					\end{aligned}\end{equation}
				The function $\pi$ is smooth because $r,s,\pi_{\mathbb{G}^{(0)}}$ are smooth.
				By \eqref{eqn:support_restriction_dist} and \eqref{eqn:support_automorphism},
					\begin{equation*}\begin{aligned}
						\supp\left(I_k(f)_{|\underline{1}}\right)=\supp(I_k(f))\cap (M\times M\times \{\underline{1}\})\subseteq \pi\left(\supp(I_k(f))\right)&\subseteq \pi\left(\overline{\bigcup_{\lambda\in ]0,1]^\nu}\supp(\alpha_\lambda(f))}\right)\\
						&\subseteq	\overline{\bigcup_{\lambda\in ]0,1]^\nu}\pi\left(\alpha_\lambda(\supp(f))\right)}\\&=\pi(\supp(f)).
					\end{aligned}\end{equation*}
				So, $\supp(I_k(f)_{|\underline{1}})$ is compact.
				To compute the wave-front set, by \Cref{rem:decomposing_pseudo_M}, we can suppose that $k\in \C^\nu_{<0}$.
				We have that
					\begin{equation}\label{eqn:qsjkdjfkjsdlfq}\begin{aligned}
						I_{k}(C^\infty_c(M\times M\times \R_+^\nu\backslash 0,\omegahalf))_{|\underline{1}}\subseteq C^\infty_c(M\times M,\omegahalf).	
					\end{aligned}\end{equation}
				Let $(V,\natural)$ be a $\nu$-graded basis as in \Cref{lem:weak_commutative_graded_basis}, $\tilde{f}$ as in \eqref{eqn:sum_smooth_decompo_invariatn_function}.
				By \eqref{eqn:qsjkdjfkjsdlfq}, 
					\begin{equation*}\begin{aligned}
						I_k(f)_{|\underline{1}}-\phi_*\left(\int_{]0,1]^\nu} \tilde{f}(\alpha_{t^{-1}}(a),x,t)\frac{\odif{t}}{t^{k+\underline{1}}}\right)\in C^\infty_c(M\times M,\omegahalf),
					\end{aligned}\end{equation*}
				where $\phi:V\times M\to M\times M$ is the map $(a,x)\mapsto (e^a\cdot x,x)$.
				For each $S\subseteq \bb{1,\nu}$, let $V_S=\{\sum_{i\in S}v_i:v_i\in V^{\weak{i}}\}\subseteq V$.
				Since $\tilde{f}$ is compactly supported, it is straightforward to see that  
					\begin{equation*}\begin{aligned}
						\WF\left(\int_{]0,1]^\nu} \tilde{f}(\alpha_{t^{-1}}(a),x,t)\frac{\odif{t}}{t^{k+\underline{1}}}\right)\subseteq \bigcup_{S\subseteq \bb{1,\nu}}N^{V\times M}_{V_S\times M},
					\end{aligned}\end{equation*}
				where $N^{V\times M}_{V_S\times M}$ is the conormal bundle of $V_S\times M$ in $V\times M$.
				For each $S\neq \emptyset$, the map $\phi$ seen as a map $V_S\times M\to M\times M$ is a submersion because of \eqref{eqn:finit_N_condition}, and the map $\phi$ seen as a map $V_\emptyset\times M\to \{(x,x):x\in M\}$ is a submersion.
				\Crefitem{thm:main_prop_bi_graded_pseudo_diff_M}{1} now follows from the following: 
				\begin{lem}[{\cite[Eq. (3.6) on p. 332]{GuilleminSternbergBook}}]\label{lem:pushforward_distributions}
        		Let $M,M'$ be smooth manifolds, $\phi:M\to M'$ a submersion, $Z_1,\cdots,Z_n\subseteq M$ and $Z'_1,\cdots,Z_n'\subseteq M'$ submanifolds such that $\phi(Z_i) \subseteq Z'_i$ and $\phi_{|Z_i}:Z_i\to Z'_i$ is a submersion,
				$w\in C^{-\infty}_c(M,\Omega^1(\ker(\odif{\phi})))$ a distribution.
				If $\WF(w)\subseteq \bigcup_{i=1}^nN^{M}_{Z_i}$, then $\WF(\phi_*(u))\subseteq \bigcup_{i=1}^{n}N^{M'}_{Z_i'}$.
				\end{lem}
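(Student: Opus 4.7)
The plan is to reduce this to the classical Hörmander formula for the wave-front set of the pushforward of a compactly supported distribution along a submersion, and then do a short linear-algebraic check involving the conormal bundles.

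\textbf{Step 1: invoke the pushforward formula.} Since $\phi$ is a submersion and $w \in C^{-\infty}_c(M,\Omega^1(\ker(\odif{\phi})))$ is compactly supported, the integration along the fibers $\phi_*(w)$ is a well-defined compactly supported distribution on $M'$ (the density factor $\Omega^1(\ker(\odif{\phi}))$ is precisely what is needed to integrate along fibers of $\phi$, cf.~\eqref{eqn:integration_along_fibers}). The classical Hörmander pushforward theorem for submersions (see e.g.\ \cite[Theorem 8.2.12 / Theorem 8.2.4]{HormanderBook1}) gives the inclusion
\begin{equation*}
\WF(\phi_*(w)) \;\subseteq\; \bigl\{(\xi',y) \in T^*M'\setminus 0 \,:\, \exists\, x \in \phi^{-1}(y),\ (\odif{\phi}_x^*(\xi'),x) \in \WF(w)\bigr\}.
\end{equation*}
I would either quote this directly, or, to stay self-contained, note that it follows by testing $\phi_*(w)$ against oscillatory exponentials $e^{-i\langle \xi',\phi(x)\rangle}\chi(x)$ and applying stationary-phase/non-stationary-phase estimates in the fiber variable, using that the phase has critical points exactly where $\odif{\phi}_x^*(\xi') = 0$.

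\textbf{Step 2: localize into one of the conormal pieces.} Take any $(\xi',y) \in \WF(\phi_*(w))$. By Step 1, there exist $x \in \phi^{-1}(y)$ and an index $i \in \bb{1,n}$ such that $(\odif{\phi}_x^*(\xi'),x) \in N^M_{Z_i}$. By definition of the conormal bundle, this means $x \in Z_i$ and $\odif{\phi}_x^*(\xi')$ annihilates $T_xZ_i$. In particular, since $\phi(Z_i) \subseteq Z'_i$, we have $y = \phi(x) \in Z'_i$.

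\textbf{Step 3: conclude $\xi' \in N^{M'}_{Z'_i,y}$.} This is the key linear-algebraic step, and uses the hypothesis that $\phi_{|Z_i}:Z_i \to Z'_i$ is a submersion. Surjectivity of $\odif{(\phi_{|Z_i})}_x:T_xZ_i \to T_yZ'_i$ means that every $v' \in T_yZ'_i$ can be written as $v' = \odif{\phi}_x(v)$ for some $v \in T_xZ_i$. Then
\begin{equation*}
\langle \xi',v'\rangle \;=\; \langle \xi',\odif{\phi}_x(v)\rangle \;=\; \langle \odif{\phi}_x^*(\xi'),v\rangle \;=\; 0,
\end{equation*}
the last equality by Step 2. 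Hence $\xi' \in N^{M'}_{Z'_i,y}$, so $(\xi',y) \in N^{M'}_{Z'_i} \subseteq \bigcup_{j=1}^n N^{M'}_{Z'_j}$, which gives the desired inclusion.

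\textbf{Main obstacle.} There is no serious conceptual obstacle; the one point that requires care is Step 1, where one must check that the Hörmander pushforward formula is set up correctly with the density bundle $\Omega^1(\ker(\odif{\phi}))$ rather than with a trivialization. If preferred, one can avoid this by working locally: since $\phi$ is a submersion, one can assume $M = M'\times \R^k$ with $\phi$ the projection, in which case $\Omega^1(\ker(\odif{\phi}))$ is canonically trivialized, integration along the fibers is the usual fiber integral, and the wave-front bound of Step 1 is entirely standard.
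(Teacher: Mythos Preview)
Your proof is correct; this is the standard argument via the Hörmander pushforward bound followed by the elementary observation that a submersion $Z_i\to Z_i'$ pulls the conormal bundle of $Z_i'$ back into that of $Z_i$. The paper does not supply its own proof of this lemma but simply cites it from \cite{GuilleminSternbergBook}, so there is nothing to compare against.
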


				Let us prove \Crefitem{thm:main_prop_bi_graded_pseudo_diff_M}{3}. Let $f\in \cinvf{k+\underline{1}}$.
				By \eqref{eqn:identities_I_Use_a_alot_vector_field_out_2}, $I_{l}(\theta_{l-k}(\delta_1)\ast f)=\theta_{l-k}(\delta_1)\ast I_k(f)$.
				Since the restriction of $\theta_{l-k}(\delta_1)\ast I_k(f)$ at $t=\underline{1}$ is equal to $I_{k}(f)_{|\underline{1}}$, \Crefitem{thm:main_prop_bi_graded_pseudo_diff_M}{3} follows.
				\Crefitem{thm:main_prop_bi_graded_pseudo_diff_M}{2} follows from taking the restriction at $t=\underline{1}$ in \Crefitem{thm:main_prop_bi_graded_pseudo_diff}{incl} and using \Crefitem{thm:main_prop_bi_graded_pseudo_diff_M}{3}.
				\Crefitem{thm:main_prop_bi_graded_pseudo_diff_M}{4} follows from \Crefitem{thm:main_prop_bi_graded_pseudo_diff}{3}.
	
				To prove \Crefitem{thm:main_prop_bi_graded_pseudo_diff_M}{7}, notice that any section of $C^\infty(M\times M,\Omega^{\frac{1}{2},-\frac{1}{2}})$ which satisfies \eqref{eqn:omega_symmetry_condition} is of the form $\omega(y,x)=\vol_x^{\frac{1}{2}}\otimes \vol_{x}^{-\frac{1}{2}}$ for some Riemannian metric on $M$.
				So, the result follows from  \Crefitem{thm:main_prop_bi_graded_pseudo_diff}{L1} by restricting at $t=\underline{1}$.

				\Crefitem{thm:main_prop_bi_graded_pseudo_diff_M}{5} is a corollary of the classical Sobolev embedding theorem together with \Crefitem{thm:main_prop_bi_graded_pseudo_diff_M}{4}.
				Its proof is divided into several lemmas:
				\begin{lem}\label{lem:Sobolev_Psi_M}
					Let $n\in \N$, $k\in \C^\nu_{<0}$, $S\subseteq \bb{1,\nu}$. 
					If $\sum_{i\in S}\Re(k_i)N_i^{-1}<-\dim(M)-1-\nu-n$, then 
						\begin{equation*}\begin{aligned}
							\Psi^{k}_c(M)\subseteq C^{n}_c(M\times M,\omegahalf).
						\end{aligned}\end{equation*}
				\end{lem}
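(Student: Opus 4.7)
The plan is to reduce the lemma to the classical Sobolev embedding on $M\times M$ combined with the fact that a sufficiently negative multi-order automatically puts the kernel in $L^\infty$. Concretely, since $\dim(M\times M)=2\dim(M)$, the classical embedding $H^{m_0}_{\mathrm{classical}}(M\times M)\hookrightarrow C^n(M\times M)$ holds for the integer $m_0:=n+\dim(M)+1$. Fixing $v=I_k(f)|_{\underline{1}}\in \Psi^k_c(M)$ with $f\in\cinvf{k+\underline{1}}=\cinv$ (the second equality uses $k\in\C^\nu_{<0}$), it therefore suffices to show that $Dv\in L^2_c(M\times M,\omegahalf)$ for every classical differential operator $D$ on $M\times M$ of order at most $m_0$.

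The crucial observation is that $D$ is a linear combination of monomials in classical vector fields acting on the $y$- and $x$-factors, and that thanks to \eqref{eqn:finit_N_condition} each classical vector field belongs to $\cF^{N_ie_i}$ for every $i\in S$. I plan to assign to each of the $|\beta|\le m_0$ vector field factors an index $i\in S$ (letting $n_i\in\Z_+$ count the factors assigned to $i$, with $\sum_{i\in S}n_i=|\beta|$) and to lift the monomial acting on $v$ to the groupoid as $\theta_{\tau_l}(D_l)\ast v\ast\theta_{\tau_r}(D_r)$, with $\tau_l+\tau_r=\tau:=\sum_{i\in S}N_i n_i\,e_i$, via \eqref{eqn:theta_product}. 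The identity \eqref{eqn:identities_I_Use_a_alot_vector_field_out_2} then gives
\begin{equation*}
\theta_{\tau_l}(D_l)\ast I_k(f)\ast\theta_{\tau_r}(D_r)=I_{k+\tau}\!\bigl(\theta_{\tau_l}(D_l)\ast f\ast\theta_{\tau_r}(D_r)\bigr),
\end{equation*}
and since $\theta_{N_ie_i}(X)|_{t=\underline{1}}$ agrees with the classical action of $X$ on the $M\times M$-slice, restriction at $t=\underline{1}$ turns this into the classical derivative $Dv\in\Psi^{k+\tau}_c(M)$.

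It remains to pick the distribution $(n_i)_{i\in S}$ so that $k+\tau\in\C^\nu_{<0}$: for $i\notin S$ this is automatic, while for $i\in S$ it requires $n_i<-\Re(k_i)/N_i$, so the maximal admissible total is $\sum_{i\in S}(\lceil-\Re(k_i)/N_i\rceil-1)\ge -\sum_{i\in S}\Re(k_i)N_i^{-1}-|S|\ge -\sum_{i\in S}\Re(k_i)N_i^{-1}-\nu$. The hypothesis $\sum_{i\in S}\Re(k_i)N_i^{-1}<-\dim(M)-1-\nu-n$ makes this budget strictly larger than $m_0$, so any $|\beta|\le m_0$ derivatives can be distributed. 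Once $k+\tau\in\C^\nu_{<0}$, writing $Dv=I_{k+\tau}(g)|_{\underline{1}}$ with $g\in\cinv$ and trivialising half-densities by a Riemannian metric, the pointwise bound
\begin{equation*}
|Dv(y,x)|\le \|g\|_{L^\infty(\mathbb{G})}\int_{]0,1]^\nu}\frac{\odif{\lambda}}{\lambda^{\Re(k+\tau)+\underline{1}}}<\infty
\end{equation*}
together with compact support yields $Dv\in L^\infty_c\subseteq L^2_c$. The only non-routine point, and the one I will need to state precisely, is that $g\in\cinv$ is genuinely bounded on $\mathbb{G}$ after such a trivialisation (so that the scalar integral above makes sense uniformly as $\lambda\to 0$, even though the tangent cones at points of $\cG^{(0)}$ can depend on the approach sequence); this is a direct consequence of compactness of $\supp(g)\subseteq\mathbb{G}$ and continuity of $g$ as a smooth section, and causes no real difficulty.
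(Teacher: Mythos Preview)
Your strategy of differentiating the kernel and distributing at most $m_0=n+\dim(M)+1$ classical derivatives among indices $i\in S$ so as to land in some $\Psi^{k+\tau}_c(M)$ with $k+\tau\in\C^\nu_{<0}$ is correct and is exactly how the paper proceeds. The gap is in the last step, the pointwise estimate
\[
|Dv(y,x)|\le \|g\|_{L^\infty(\mathbb{G})}\int_{]0,1]^\nu}\frac{\odif{\lambda}}{\lambda^{\Re(k+\tau)+\underline{1}}}.
\]
After trivialising by the Riemannian half-density $\vol_y^{1/2}\otimes\vol_x^{1/2}$, the scalar function representing $g\in C^\infty_c(\mathbb{G},\omegahalf)$ is \emph{not} bounded on $M\times M\times(\Rpt)^\nu$: the Riemannian section does not extend to a smooth nowhere-vanishing section of $\omegahalf(\mathbb{G})$ across $\cG\times\{0\}$; in the local frame furnished by $A(\natural)$ it degenerates as $t\to 0$, so the quotient $g/(\vol_y^{1/2}\vol_x^{1/2})$ blows up near the diagonal. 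You can see this directly in \eqref{eqn:description_of_pseduo_diff}, where the scalar kernel carries an explicit factor $t^{-\dim_h(V)}$. The paper warns about precisely this in the remark following \Cref{thm:L1normRiemMetric}: for scalar (non-density-valued) $f\in C^\infty_c(\mathbb{G})$ the fibrewise integrals are usually infinite, and it is only the interplay of the topology of $\mathbb{G}$ with the density bundle that makes the $L^1$-norm finite. Compactness of $\supp(g)$ in $\mathbb{G}$ does not save you, since you are dividing by a section that vanishes on the boundary stratum.

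The fix is to replace your $L^\infty$ bound by the $L^1$ bound that \emph{is} available: \Crefitem{thm:main_prop_bi_graded_pseudo_diff_M}{7} gives $\Psi^{k'}_c(M)\subseteq L^1(M\times M,\omega)$ for every $k'\in\C^\nu_{<0}$, and this rests on the $\alpha_\lambda$-invariance \eqref{eqn:L1norm_invariance} of the groupoid $L^1$-norm (the $L^\infty$ norm after Riemannian trivialisation has no such invariance). One then invokes the $L^1$-Sobolev embedding $W^{1,\dim(M)+1}\hookrightarrow C^0$ instead of the $L^2$ version. Your derivative-counting argument survives verbatim; only the norm at the bottom and the choice of Sobolev embedding need to be swapped.
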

				\begin{proof}
					The Sobolev inclusion, $W^{1,\dim(M)+1}(\R^{\dim(M)})\subseteq C^0(\R^{\dim(M)})$ (the $L^1$ Sobolev space of order $\dim(M)+1$ is contained in $C^0$), implies that if $T\in L^1_s(M\times M,\omega)$ satisfies $X_1\ast \cdots \ast X_{\dim(M)+1}\ast T\in L^1_s(M\times M,\omega)$ for every $X_1,\cdots, X_{\dim(M)+1}\in \cX(M)$, then $T\in C^0(M\times M,\omegahalf)$.
				Let $X\in \cX(M)$. Since $X\in \cF^{(0,\cdots,0,N_i,0,\cdots,0)}$.
				By \Crefitem{thm:main_prop_bi_graded_pseudo_diff_M}{2} and \Crefitem{thm:main_prop_bi_graded_pseudo_diff_M}{4},
					\begin{equation*}\begin{aligned}
						X\ast \Psi^{k}_c(M)\subseteq \Psi^{(k_1,\cdots,k_{i-1},k_i+N_i,k_{i+1},\cdots,k_\nu)}_c(M),\quad	\Psi^{k}_c(M)\ast X\subseteq \Psi^{(k_1,\cdots,k_{i-1},k_i+N_i,k_{i+1},\cdots,k_\nu)}_c(M).
					\end{aligned}\end{equation*}
				Now, the lemma follows from the Sobolev inequalities and \Crefitem{thm:main_prop_bi_graded_pseudo_diff_M}{7}.
				\end{proof}
				\begin{lem}\label{lem:decomposition_D}
					For any $l\in \Z_+^\nu$, $S\subseteq \bb{1,\nu}$, let $l_S\in \Z_+^\nu$ be defined by $l_{Si}=l_i$ if $i\in S$ and $0$ otherwise.
					So, $l=l_S+l_{S^c}$. Then, any $D\in \DO^l(M)$ can be written as a sum of differential operators of the form $D_1D_2$ with $D_1\in \DO^{l_S}(M)$ and $D_2\in \DO^{l_{S^c}}(M)$.
				\end{lem}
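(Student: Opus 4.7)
By $\C$-linearity it suffices to establish the decomposition for a single monomial $X_1 \cdots X_n$ with $X_i \in \cF^{a(i)}$ and $a(1) + \cdots + a(n) \preceq l$. The plan is to use weak commutativity \eqref{eqn:weakly_commuting} to ``purify'' each factor and then inductively move the pure $S$-factors to the left of the pure $S^c$-factors, paying for each swap with a commutator whose length is shorter.

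Writing $a(i) = \sum_{j=1}^\nu a(i)_j e_j$ and applying weak commutativity $\cF^{a(i)} \subseteq \sum_{j=1}^\nu \cF^{a(i)_j e_j}$, I would first reduce to the case where each factor $X_i$ lies in a \emph{single}-axis space $\cF^{\weak{j(i)}}$ with multi-degree $b(i) = a(i)_{j(i)} e_{j(i)}$ for some $j(i) \in \bb{1,\nu}$; the budget $\sum_i b(i) \preceq l$ is preserved. To such a pure monomial I associate its inversion number $N = \#\{(i,k) : i < k,\ j(i) \in S^c,\ j(k) \in S\}$, and prove the decomposition by induction on the pair $(n, N)$ in lexicographic order. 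The base case $N=0$ is immediate: the product already splits as $D_1 D_2$ with $D_1$ collecting the (left) $S$-factors (a monomial of total multi-degree $\preceq l_S$) and $D_2$ the (right) $S^c$-factors (total multi-degree $\preceq l_{S^c}$).

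For the induction step with $N \geq 1$, pick any adjacent pair $X_i X_{i+1}$ with $j(i) \in S^c$ and $j(i+1) \in S$ and write
\[
X_1 \cdots X_n = X_1 \cdots X_{i-1} X_{i+1} X_i X_{i+2} \cdots X_n + X_1 \cdots X_{i-1} [X_i, X_{i+1}] X_{i+2} \cdots X_n.
\]
The first summand is a pure monomial of length $n$ with strictly fewer inversions, so the inductive hypothesis applies. For the second, \MyCref{dfn:weighted_sub_riemannian_structure}[eqn:kjqsdfjmljqsdmjfmqsdljfkmqlsdf] gives $[X_i, X_{i+1}] \in \cF^{b(i)+b(i+1)}$, and weak commutativity then yields
\[
\cF^{b(i)+b(i+1)} = \cF^{b(i)_{j(i)} e_{j(i)}} + \cF^{b(i+1)_{j(i+1)} e_{j(i+1)}} = \cF^{\weak{j(i)}} \cap \cF^{b(i)} + \cF^{\weak{j(i+1)}} \cap \cF^{b(i+1)},
\]
so $[X_i, X_{i+1}]$ splits as $Z + W$ with $Z \in \cF^{b(i)}$ and $W \in \cF^{b(i+1)}$, each still a single-axis element. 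Substituting produces two pure monomials of length $n-1$ whose total multi-degree is again $\preceq l$, and the inductive hypothesis applies.

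The only real obstacle is the degree bookkeeping, and it is dispatched in exactly one place: when the commutator term shortens the monomial by one factor, one must verify that neither piece of $[X_i,X_{i+1}]$ exceeds the original budget. This is where weak commutativity is essential — it is precisely the statement that the single-axis summands of $\cF^{b(i)+b(i+1)}$ have multi-degrees bounded respectively by $b(i)_{j(i)} e_{j(i)} \preceq b(i)$ and $b(i+1)_{j(i+1)} e_{j(i+1)} \preceq b(i+1)$, so the resulting pure monomials still have total multi-degree $\preceq l$, hence split into $\DO^{l_S}(M) \cdot \DO^{l_{S^c}}(M)$ by induction.
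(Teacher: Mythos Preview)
Your proof is correct and follows essentially the same approach as the paper: reduce to monomials, use weak commutativity to make each factor single-axis, and then reorder by swapping adjacent factors, handling the commutator via $[X,Y]\in\cF^{b(i)+b(i+1)}=\cF^{b(i)}+\cF^{b(i+1)}$ (which is exactly the identity the paper invokes). Your lexicographic induction on $(n,N)$ simply makes explicit what the paper compresses into the phrase ``by induction on $n$''; the only minor slip is the reference for the bracket condition, which should be~\eqref{eqn:Liebracket_cFi} rather than item~\ref{eqn:kjqsdfjmljqsdmjfmqsdljfkmqlsdf} of Definition~\ref{dfn:weighted_sub_riemannian_structure}.
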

				\begin{proof}
				We can suppose that $D$ is a monomial. Furthermore, by \eqref{eqn:weakly_commuting}, we can further suppose that $D$ is a monomial of the form $X_1\cdots X_n$ with $X_i\in \cF^{k_i}$ and each $k_i$ is the form 
				$(\overbrace{0,\cdots,0}^a,b,\overbrace{0\cdots,0}^{\nu-a-1})$ for some $a\in \bb{0,\nu-1}$, and $b\in\Z_+$.
				Now, to conclude we need to be able to reorder the vector fields. This follows by induction on $n$ from the fact that if $X\in \cF^{(0,\cdots,0,a,0,\cdots,0)}$ and $Y\in \cF^{(0,\cdots,0,b,0,\cdots,0)}$ with $a$ and $b$ not in the same position, then
				\begin{equation*}
					XY-YX=[X,Y]\in \cF^{(0,\cdots,0,a,0,\cdots,0)+(0,\cdots,0,b,0,\cdots,0)}=	\cF^{(0,\cdots,0,a,0,\cdots,0)}+\cF^{(0,\cdots,0,b,0,\cdots,0)}\qedhere
				\end{equation*}
				\end{proof}
				\begin{lem}\label{lem:inclusion_unusual_pseudo_diff}
					For any $k\in \C^\nu$, any element $v\in \Psi^k_c(M)$ can be written as sum of elements of the form $D\ast v'$ with $D\in \DO^l(M)$ and $v'\in \Psi^{k-l}_c(M)$ such that $k-l\in \C^\nu_{<0}$ and for any $i\in \bb{1,\nu}$, either $l_i=0$ (which happens when $\Re(k_i)<0$) or $-N_i\leq \Re(k_i)-l_i$ (which happens when $\Re(k_i)\geq 0$)
				\end{lem}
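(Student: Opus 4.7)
The plan is to begin from the decomposition $v=\sum_\alpha D_\alpha\ast v_\alpha$ furnished by \Cref{rem:decomposing_pseudo_M}, where $D_\alpha\in \DO^{l^{(\alpha)}}(M)$, $v_\alpha\in \Psi^{k-l^{(\alpha)}}_c(M)$, and $k-l^{(\alpha)}\in \C^\nu_{<0}$, and then to refine each summand one coordinate at a time until every $l^{(\alpha)}_i$ lies in the admissible set $\{0\}\cup\{1,\ldots,\lfloor\Re(k_i)+N_i\rfloor\}$. I will proceed by induction on the total weight $\sum_i l^{(\alpha)}_i$.

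For a non-admissible term, pick a coordinate $i$ violating the bound and use \Cref{lem:decomposition_D} with $S=\{i\}^c$ to write $D_\alpha=\sum_\beta D_\beta^{(1)} D_\beta^{(2)}$ with $D_\beta^{(2)}\in \DO^{l^{(\alpha)}_i e_i}(M)$, isolating the $i$-th direction. When $\Re(k_i)<0$, I absorb $D_\beta^{(2)}$ into $v_\alpha$: by \Crefitem{thm:main_prop_bi_graded_pseudo_diff_M}{4} the product lies in $\Psi^{k-l^{(\alpha)}+l^{(\alpha)}_i e_i}_c(M)$, whose exponent remains in $\C^\nu_{<0}$ because its $i$-th real part equals $\Re(k_i)<0$, and the new $i$-weight has been driven to $0$ in a single step. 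When $\Re(k_i)\geq 0$ and $l^{(\alpha)}_i>\Re(k_i)+N_i$, I expand $D_\beta^{(2)}$ as a sum of monomials $f_\gamma X_{\gamma,1}\cdots X_{\gamma,n_\gamma}$ using minimum weight assignments $a_{\gamma,j}\leq N_i$ (possible because $\cX(M)=\cF^{N_i e_i}$). For each such monomial, form the right-partial-sums $s_m=\sum_{j>n_\gamma-m}a_{\gamma,j}$, which are strictly increasing integers with consecutive jumps at most $N_i$. If the monomial has total weight $\geq l^{(\alpha)}_i-\Re(k_i)-N_i$, then the half-open target interval $[l^{(\alpha)}_i-\Re(k_i)-N_i,\,l^{(\alpha)}_i-\Re(k_i))$ of length $N_i$ must be hit by some $s_m$; I peel off the right block $X_{\gamma,n_\gamma-m+1}\cdots X_{\gamma,n_\gamma}$ and absorb it into $v_\alpha$, leaving a new $i$-weight $l^{(\alpha)}_i-s_m\in(\Re(k_i),\Re(k_i)+N_i]$, which is admissible. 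Monomials of total weight $<l^{(\alpha)}_i-\Re(k_i)-N_i$ are absorbed wholesale (their absorbed weight is still below $l^{(\alpha)}_i-\Re(k_i)$, preserving the $\C^\nu_{<0}$ condition); the resulting summand may still be non-admissible but has strictly smaller total weight, so the induction hypothesis applies to it.

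The main obstacle is the combinatorial step in the case $\Re(k_i)\geq 0$: cutting a monomial at a position whose accumulated right-weight lands in a prescribed interval of length $N_i$. This rests on the fact that $\cX(M)=\cF^{N_i e_i}$, which forces every minimum-weight single factor to have weight $\leq N_i$ in the $i$-direction, so the integer-valued partial sums advance by at most $N_i$ and cannot skip an interval of length $N_i$. Everything else is routine bookkeeping with \Cref{lem:decomposition_D}, the composition rule \Crefitem{thm:main_prop_bi_graded_pseudo_diff_M}{4}, and the inclusion $\Psi^{k'}_c(M)\subseteq \Psi^{k''}_c(M)$ when $k'\preceq k''$ from \Crefitem{thm:main_prop_bi_graded_pseudo_diff_M}{3}; since every reduction step strictly decreases $\sum_i l^{(\alpha)}_i$, the induction terminates.
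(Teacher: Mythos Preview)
Your approach is essentially the paper's: both start from \Cref{rem:decomposing_pseudo_M}, isolate one coordinate via \Cref{lem:decomposition_D}, expand the isolated factor into monomials, and peel off trailing vector fields of $i$-weight $\le N_i$ to land the $i$-th order in the target window. The paper simply processes the coordinates one by one in a single pass each; you recast the same manoeuvre as an induction on the total weight $\sum_i l^{(\alpha)}_i$.

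That recasting introduces a gap. In the ``absorb wholesale'' branch you assert both that $\C^\nu_{<0}$ is preserved \emph{and} that the total weight strictly decreases. When the monomial has length zero --- i.e.\ it is a pure function $\delta_{f_\gamma}$ with $W_\gamma=0$, which does occur since $C^\infty(M)=\DO^0\subseteq\DO^{l^{(\alpha)}_ie_i}$ --- the two claims are incompatible: taking the new $i$-order to be $l^{(\alpha)}_i-W_\gamma=l^{(\alpha)}_i$ preserves $\C^\nu_{<0}$ but leaves the weight unchanged (the induction stalls), while taking it to be $0$ lowers the weight but makes $\Re(k_i)-l'_i=\Re(k_i)\ge 0$, so the induction hypothesis no longer applies. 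The fix is one line: after absorbing, \emph{declare} the new $i$-order to be any integer $m$ with $\Re(k_i)<m\le\Re(k_i)+N_i$. This is legal because the leftover $D_\beta^{(1)}$ has actual $i$-weight $0\le m$, and the absorbed piece lies in $\Psi^{k-l^{(\alpha)}+W_\gamma e_i}_c\subseteq\Psi^{k-l'}_c$ since $m\le\Re(k_i)+N_i<l^{(\alpha)}_i-W_\gamma$. With this choice the $i$-th coordinate is admissible outright and the total weight drops by $l^{(\alpha)}_i-m>0$; the paper's single-pass formulation never commits to a specific $l'_i$ at this point and so never runs into the problem.
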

				\begin{proof}
					By \Cref{rem:decomposing_pseudo_M}, we can suppose that $v=D\ast v'$ with $D\in \DO^l(M)$ and $v'\in \Psi^{k-l}_c(M)$ and $k-l\in \C^\nu_{<0}$.
					We now optimize this decomposition on each parameter. More precisely, staring from the last parameter, using \Cref{lem:decomposition_D}, we can suppose that $D=D_1 D_2$ with $D_1\in \DO^{(l_1,\cdots,l_{\nu-1},0)}$ and $D_2\in \DO^{(0,\cdots,0,l_\nu)}$.
					By writing $D_2$ as sum of monomials, we can suppose that either $l_\nu=0$ or $-N_\nu\leq \Re(k_\nu)-l_\nu$. We then do the same for the other parameters.
				\end{proof}
				Now, let $k\in \C^\nu$, $n\in \N$ such that $\sum_{i=1}^\nu \Re(k_i)N_i^{\sgn(\Re(k_i))}\leq -\dim(M)-1-\nu-\sum_{i=1}^{\nu}N_i^2 -n $ and  $v\in \Psi^k(M)$. 
				We write $v$ as in \Cref{lem:inclusion_unusual_pseudo_diff}.
				Let $S=\{i:\Re(k_i)< 0\}$. We have
					\begin{equation*}\begin{aligned}
					\sum_{i\in S}\Re(k_i-l_i)N_i^{-1}=	\sum_{i\in S}\Re(k_i)N_i^{-1} &\leq -\dim(M)-1-\nu-n -\sum_{i=1}^{\nu}N_i^2-\sum_{i\notin S}\Re(k_i)N_i\\&\leq -\dim(M)-1-\nu-n - \sum_{i\notin S}l_iN_i
					\end{aligned}\end{equation*}
				So, by \Cref{lem:Sobolev_Psi_M}, $v'\in C^{n+\sum_{i\notin S}l_iN_i}_c(M\times M,\omegahalf)$.
				Since $D\in \DO^l(M)$ is a differential operator of classical order $\leq \sum_{i\notin S}l_iN_i$, \Crefitem{thm:main_prop_bi_graded_pseudo_diff_M}{5} follows.

				Let us prove \Crefitem{thm:main_prop_bi_graded_pseudo_diff_M}{6}. By \Cref{rem:decomposing_pseudo_M} and \Crefitem{thm:properties_bi_grading_diff_op}{3}, we can suppose that $k\in \C^\nu_{<0}$.
				Let $f\in \cinv$, $h\in C^\infty(M\times M)$ defined by $h(x,y)=g(x)-g(y)$, $\tilde{h}=h\circ \pi\in C^\infty(\mathbb{G})$ where $\pi$ is defined in \eqref{eqn:natural_proj_pi}.
				Since $\pi\circ\alpha_\lambda=\pi$, $\alpha_{\lambda}(\tilde{h})=\tilde{h}$ for all $ \lambda\in (\Rpt)^\nu$.
				Furthermore, 
					\begin{equation*}\begin{aligned}
						I_k(\tilde{h}f)_{|\underline{1}}=\delta_g\ast I_k(f)_{|\underline{1}}-I_k(f)_{|\underline{1}}\ast \delta_g.
					\end{aligned}\end{equation*}
				Let $(V,\natural)$ be a $\nu$-graded basis, $\tilde{f}$ as in \Cref{dfn:space_of_invariant_functions}.
				We define $r\in  C^\infty_c( \tdom(\cQ_V),\Omega^1(V))$ as follows:
					\begin{equation*}\begin{aligned}
						r(v,x,t)=h(e^{\alpha_{t}(v)}\cdot x,x)\tilde{f}(v,x,t).		
					\end{aligned}\end{equation*}
				Notice that $\cQ_{V_*}(r\circ \pi_{V\times \mathbb{G}^{(0)}})=\tilde{h} \cQ_{V^*}(\tilde{f}\circ \pi_{V\times \mathbb{G}^{(0)}})$.
				So, 
					\begin{equation*}\begin{aligned}
						\tilde{h}f-\cQ_{V_*}(r\circ \pi_{V\times \mathbb{G}^{(0)}})=\tilde{h}(f-\cQ_{V^*}(\tilde{f}\circ \pi_{V\times \mathbb{G}^{(0)}})) \in C^\infty_c(M\times M\times \R_+^\nu\backslash 0,\omegahalf).
					\end{aligned}\end{equation*}
				By \eqref{eqn:qsjkdjfkjsdlfq}, it suffices to show that $I_{k}(\cQ_{V_*}(r\circ \pi_{V\times \mathbb{G}^{(0)}}))_{|\underline{1}}\in \Psi^{\prec k}(M)$.
				The function $r$ vanishes at $t=0$ because $h$ vanishes on the diagonal.
				So, $r=\sum_{i=1}^\nu t_ir_i$ for some functions $r_i\in C^\infty_c(\tdom(\cQ_V),\Omega^1(V))$.
				Hence, 
					\begin{equation*}\begin{aligned}
						I_{k}(\cQ_{V_*}(r\circ \pi_{V\times \mathbb{G}^{(0)}}))_{|\underline{1}}=\sum_{i=1}^\nu I_{k-(\underbrace{0,\cdots,0}_{i-1},1,\underbrace{0,\cdots,0}_{\nu-i})}(\cQ_{V*}(r_i\circ \pi_{V\times \mathbb{G}^{(0)}}))_{|\underline{1}}.
					\end{aligned}\end{equation*}
				This finishes the proof of \Cref{thm:main_prop_bi_graded_pseudo_diff_M}.
			\end{linkproof}
			\begin{linkproof}{asymptotic_sum}
				If $v$ and $v'$ both satisfy the condition of theorem, then $v-v'\in \Psi^{k-(n,0,\cdots,0)}(M)+\cdots+\Psi^{k-(0,\cdots,0,n)}(M)$ for all $n\in \N$.
				It follows from \Crefitem{thm:main_prop_bi_graded_pseudo_diff_M}{5} that $v-v'\in C^\infty(M\times M,\omegahalf)$. This proves uniqueness.

				We now prove existence.
				The set $S=\{l\in \Z_+^\nu:l_i\leq \max(\Re(k_i)+N_i,0),\ \forall i \in \bb{1,\nu}\}$ is finite.
				By \Crefitem{thm:properties_bi_grading_diff_op}{5}, we fix a family of generators of the modules $\DO^l(M)$ for each $l\in S$.
				Let $D_1\in \DO^{l_1}(M),\cdots,D_n\in \DO^{l_n}(M)$ be the generators.
				By \Cref{lem:inclusion_unusual_pseudo_diff}, each $v_\tau$ can be written as a sum $v_\tau=\sum_{i=1}^n D_i \ast v_{\tau,i}$ with $v_{\tau,i}\in \Psi^{k-l_i-\tau}(M)$ and $k-l_i-\tau\in \C^\nu_{<0}$.
				By convention, $v_{\tau,i}=0$ if $v_\tau=0$.
				It is enough to prove the existence of the asymptotic sum $\sum_{\tau}v_{\tau,i}$ for each $i$.
				Here, we make use of the fact that for each $i\in \bb{1,n}$, $\{\tau\in \Z_+^\nu:v_{\tau,i}\neq 0\}\subseteq \{\tau\in \Z_+^\nu:v_\tau\neq 0\}$.
				
				Therefore, without loss of generality we can suppose that $k-\tau\in \C_{-}^\nu$ for every $v_{\tau}\neq 0$.
				In the following argument, we only run over $\tau$'s such that $v_\tau\neq 0$.
				By using a partition of unity argument, we can suppose that for some compact $K\subseteq M\times M$, $\supp(v_{\tau})\subseteq K$ for all $\tau\in \Z_+^\nu$.
				There exists functions $f_{\tau}\in \cinv$ such that $v_\tau(y,x)=I_{k-\tau}(f_{\tau})_{|\underline{1}}$.
				We fix a $\nu$-graded basis $(V,\natural)$.
				Let $\tilde{f}_{\tau}$ be as in \eqref{eqn:sum_smooth_decompo_invariatn_function}.
				By \eqref{eqn:qsjkdjfkjsdlfq}, we can suppose that $v_\tau(y,x)=I_{k-\tau}(\cQ_{V*}(\tilde{f}_{\tau}\circ \pi_{V\times \mathbb{G}^{(0)}}))_{|\underline{1}}$.
				Furthermore, by \Cref{lem:change_open_set_invariant_function}, we can suppose that the supports of the functions $\tilde{f}_{\tau}(v,x,t)$ are uniformly compact.
				Let $\chi\in C^\infty_c(\R^\nu)$ equal to $1$ in a neighborhood of $0$.
				Consider 
					\begin{equation*}\begin{aligned}
						\tilde{f}(v,x,t)=\sum_{\tau\in \Z_+^\nu} 	\chi(\epsilon_\tau t )t^{\tau}\tilde{f}_{\tau}(v,x,t),	
					\end{aligned}\end{equation*}
				where $\epsilon_\tau>0$ are constants small enough so that the sum converges in $C^\infty_c(\tdom(\cQ_V),\Omega^1(V))$.
				This can be done using an elementary diagonalization argument.
				Now, let 
					\begin{equation*}\begin{aligned}
						v=I_{k}\Big(\cQ_{V*}(\tilde{f}\circ \pi_{V\times \mathbb{G}^{(0)}})\Big)_{|\underline{1}}\in \Psi^k_c(M).	
					\end{aligned}\end{equation*}
				It is straightforward to check that $v$ satisfies the required properties, see the proof of \Crefitem{thm:main_prop_bi_graded_pseudo_diff_M}{3}.
			\end{linkproof}
			\begin{linkproof}{classical_pseudo}
					By \eqref{eqn:compatability_pseudo_restriction_open}, we can suppose that $M=\R^n$.
					Clearly in the case $\nu=1$ and $\cF^{1}=\cX(M)$, the order on differential operators defined in \Cref{sec:bi_graded_pseudo_diff} coincides with the classical order.
					So, by \Cref{rem:decomposing_pseudo_M}, we can further suppose that $k<0$.
					We also choose the obvious $1$-graded basis $V=\R^n$, and $\natural:\R^n\to \cX(\R^n)$ the obvious map whose image is the set of constant vector fields.
					Notice that in this case $\tdom(\cQ)=V\times M\times \R_+$.
					So, if $f\in C^\infty_c(V\times M\times \R_+)$ (we are trivializing all densities in this proof), and $\tilde{f}=\cQ_{V*}(f\circ \pi_{V\times M})\in \cinv$, then by a direct computation or using \eqref{eqn:description_of_pseduo_diff}, we have.
						\begin{equation*}\begin{aligned}
							I_k(\tilde{f})_{|\underline{1}}(y,x)=\int_0^1 f(\frac{y-x}{t},x,t) t^{-n}\frac{\odif{t}}{t^{k+1}}		
						\end{aligned}\end{equation*}
					By Fourier inversion formula, we have 
						\begin{equation*}\begin{aligned}
							f(v,x,t)=\int_{\R^n} g(\xi,x,t)e^{i \langle \xi,v\rangle}\odif{\xi},	
						\end{aligned}\end{equation*}
					where $g$ is the Fourier transform of $f$ in the first variable. Hence, 
					\begin{equation*}\begin{aligned}
							I_k(\tilde{f})_{|\underline{1}}(y,x)=\int_{\R^n}e^{i \langle \xi,y-x\rangle}\int_0^1 g(t\xi ,x,t) \frac{\odif{t}}{t^{k+1}}	\odif{\xi}	
						\end{aligned}\end{equation*}
						The integral $\int_0^1 g(t\xi ,x,t) \frac{\odif{t}}{t^{k+1}}$ defines a classical symbol of order $k$. Furthermore, any classical symbol of order $k$ can be written in this form.
				\end{linkproof}

\section{Proofs of the results in Section \ref{sec:Tangent_groupoid_representation}}\label{sec:Tangent_groupoid_representation_proof}
	\begin{linkproof}{amenability_tanget}
			The set $A=M\times \R_+^\nu\backslash 0$ is a saturated subset of $\mathbb{G}^{(0)}$.
			Furthermore, $\mathbb{G}_{A^c}=\cG$.
			By \Cref{prop:stable_weak_amenable_saturated}, it suffices to show that $\mathbb{G}_{A}$ and $\mathbb{G}_{A^c}$ have the SWCP.
			It is clear that the groupoid $\mathbb{G}_{A}=M\times M\times \R_+^\nu\backslash 0\rightrightarrows M\times \R_+^\nu\backslash 0$ has the SWCP.
			The groupoid $\mathbb{G}_{A^c}$ is equal to the groupoid $\cG\rightrightarrows \cG^{(0)}$.
			Let $(V,\natural)$ be a $\nu$-graded Lie basis.
			The group $V$ acts on the space $\cG^{(0)}$ by the map $v\cdot (L,x)=(\natural_{x,0}(v)L\natural_{x,0}(-v),x)$.
			So, we can form the crossed product groupoid $V\ltimes \cG^{(0)}\rightrightarrows \cG^{(0)}$, see \eqref{eqn:crossed_product_groupoid}.
			We have an obvious surjective submersion map 
				\begin{equation*}\begin{aligned}
					V\rtimes \cG^{(0)}		\to \cG,\quad ((L,x),v,(L',x))\mapsto (\natural_{x,0}(v)L',x)
				\end{aligned}\end{equation*}
			The result now follows from \Cref{prop:Crossed_product_amenable}.
		\end{linkproof}
	\begin{linkproof}{Helffer_Nourrigat_cone}
		The identities \eqref{eqn:dilations_HN_sets} are rather straightforward. They come from replacing $(\xi_n,x)$ by $(\lambda \xi_n,x_n)$ for $\lambda\in \R$, and $t_n$ by $\lambda t_n$ for $\lambda\in \R_+^\nu$.
		This proves \eqref{eqn:dilations_HN_sets}.

			We now prove \eqref{eqn:Hellfer_Nourrigat_set_condition_union_cones}. 
		It is rather easy to see that one can replace ${(t_n)}_{n\in \N}\subseteq \R_+^\nu$ by ${(t_n)}_{n\in \N}\subseteq \R_+^\nu\backslash 0$	in the definition of the Helffer-Nourrigat cone.	
		Let $(V,\natural)$ be a $\nu$-graded basis.
		From the definition of a $\nu$-graded basis, it follows that $\eta\in \HL{x}$ if and only if there exists sequences $(\xi_n,x_n)_{n\in \N}\subseteq  T^*M$, ${(t_n)}_{n\in \N}\subseteq \R_+^\nu\backslash 0$
			such that $x_n\to x$, $t_n\to 0$, and $\xi_n\circ \natural_{x_n,t_n}\in V^*$ converges to $ \eta\circ \natural_{x,0}\in V^*$.

			Let $\eta\in \HL{x}$. 
			By passing to a subsequence, we can suppose that $\ker(\natural_{x_n,t_n})$ converges. By \Cref{thm:conv_kernel_mathbbG0}, the limit is of the form $\natural_{x,0}^{-1}(L)$ for some $L\in \cG^0_x$.
			Clearly, $\xi_n\circ \natural_{x_n,t_n}\in \ker(\natural_{x_n,t_n})^\perp$. So, by \Cref{prop:grass_prop}, $ \eta\circ \natural_{x,0}\in \natural_{x,0}^{-1}(L)^\perp$ which implies that $\eta \in L^\perp$.

			Conversely, if $\eta\in L^{\perp}$, then there exists sequences $(x_n)_{n\in \N}\subseteq M$, $(t_n)_{n\in \N}\subseteq \R_+^\nu\setminus \{0\}$ such that
		$x_n\to x$, $t_n\to 0$ and $\ker(\natural_{x_n,t_n})\to \natural_{x,0}^{-1}(L)$.
		Therefore, $\ker(\natural_{x_n,t_n})^\perp\to \natural_{x,0}^{-1}(L)^\perp$.
		Since $\eta\in L^\perp$, $\eta\circ \natural_{x,0}\in \natural_{x,0}^{-1}(L)^\perp$.
		By \Cref{prop:grass_prop}, there exists $\eta_n\in \ker(\natural_{x_n,t_n})^\perp$ such that $\eta_n\to \eta\circ \natural_{x,0}$. 
		Since $\eta_n\in \ker(\natural_{x_n,t_n})^\perp$,
		there exists unique $\xi_n\in T_{x_n}^*M$
		such that $\eta_n=\xi_n\circ \natural_{x_n,t_n}$.
		So, $\eta \in \HL{x}$.
	
		The set $\HL{x}$ is closed under the co-adjoint action of $\mathfrak{G}_x$ because of \Crefitem{thm:Lie_algebra}{adjoint_action}. 

		The set $\HL{x}$ is closed because if $\eta_n\in \HL{x}$ converges to $\eta\in \mathfrak{g}_x^*$, then for each $n\in \N$, there exists $L_n\in \cG^0_x$ such that $\eta_n\in L_n^\perp$.
		By passing to a subsequence, we can suppose that $L_n$ converges to some $L\in \cG^0_x$. By \Cref{prop:grass_prop}, $\eta\in L^{\perp}$.
		
		It remains to show that $\HL{(\xi,x)}$ is closed, and closed under the co-adjoint action of $\mathfrak{G}_x$.
		Consider the $(\nu+1)$-sub Riemannian structure of depth $(N,1)$ defined by
			\begin{equation}\label{eqn:extension_structure}\begin{aligned}
				\tilde{\cF}^{(k,0)}=\cF^{k},\quad \tilde{\cF}^{(k,1)}=\cX(M).	
			\end{aligned}\end{equation}
		This structure is weakly-commutative.
		Its osculating groups at $x$ is equal to $\mathfrak{g}_x\oplus T_xM$, see \eqref{eqn:weakly_commuting_lie_algebra}.
		It has its own Helffer-Nourrigat cone $\widetilde{\mathrm{H\! N}}_x\subseteq \mathfrak{g}_x^*\oplus T^*_xM$.
		It is immediate to see that
			\begin{equation}\label{eqn:Helffer-Nourrigat_extension_eqn}\begin{aligned}
						\widetilde{\mathrm{H\! N}}_x= (\HL{x}\times\{0\})\sqcup\bigsqcup_{\xi\in T^*_xM\backslash 0}\HL{(\xi,x)}\times \{\xi\},\ 	\widetilde{\mathrm{H\! N}}_{(\xi,x)}=(\HL{(\xi,x)}\times\{0\})\sqcup \bigsqcup_{t\in \Rpt} \HL{(\xi,x)}\times \{t\xi\}
			\end{aligned}\end{equation}
		We have proved above that $\widetilde{\mathrm{H\! N}}_x$ is a closed subset of $\mathfrak{g}_x^*\oplus T_x^*M$ which closed under the co-adjoint action of the group $\mathfrak{G}_x\times T_xM$.
		So, $\HL{(\xi,x)}$ is a closed subset of $\mathfrak{g}_x^*$ which is closed under the co-adjoint action of the group $\mathfrak{G}_x$.
	\end{linkproof}
	\begin{linkproof}{weakly_contained_Helffer_Nourrigat}
		Let $L\in \cG^0_x$. By \Cref{prop:support_Kirillovs_Orbit_Method_Induced}, the set of irreducible unitary representations which are weakly contained in $\Xi_{\GrF{x}/L,0}$ correspond by the orbit method \eqref{eqn:Orbit_method_map} to 
			$\overline{\bigcup_{g\in \GrF{x}}(Ad(g)L)^\perp}/\mathfrak{G}_x$.
		By \Crefitem{thm:Lie_algebra}{adjoint_action} and \eqref{eqn:Hellfer_Nourrigat_set_condition_union_cones}, and that $\HN_x$ is a closed subset of $\mathfrak{g}_x^*$, we deduce that $\HN_x= \bigcup_{L\in \cG^0_x}\overline{\bigcup_{g\in \GrF{x}}(Ad(g)L)^\perp}$. The result follows.
	\end{linkproof}

		\begin{linkproof}{representation_invariant}
				By \Cref{thm:weakly_contained_Helffer_Nourrigat}, $\pi$ is weakly contained in $\Xi_{\GrF{x}/L,0}$ for some tangent cone $L$ at $x$.
				Let $\Xi_{(L,x,0)}$ be the regular representation of the groupoid $\mathbb{G}\rightrightarrows \mathbb{G}^{(0)}$, see \Cref{ex:Regular_representation}.
				Let $f\in\cinv$, $(V,\natural)$ be a $\nu$-graded basis, $\tilde{f}$ as in \eqref{eqn:sum_smooth_decompo_invariatn_function}.
				The space $\mathbb{G}_{(L,x,0)}=\{(gL,x,0):g\in \GrF{x}\}$ identifies with $\GrF{x}/L$ and $\Xi_{(L,x,0)}(f)=\Xi_{\GrF{x}/L,0}(\natural_{x,0*}(\tilde{f}(\cdot,x,0)))$.
				So, 
					\begin{equation*}\begin{aligned}
						\norm{\pi\left(\natural_{x,0*}(\tilde{f}(\cdot,x,0))\right)}\leq 		\norm{\Xi_{\GrF{x}/L,0}(\natural_{x,0*}(\tilde{f}(\cdot,x,0)))}=\norm{\Xi_{(L,x,0)}(f)}\leq \norm{f}_{C^*\mathbb{G}},
					\end{aligned}\end{equation*}
				where in the first identity we used the fact that $\pi$ is weakly contained in $\Xi_{\GrF{x}/L,0}$.
				It follows that $\pi_{\mathrm{inv}}$ is well-defined and extends to $C^*_{\mathrm{inv}}\mathbb{G}$.
				The equality 
					\begin{equation}\label{eqn:pismooth_pi_inv_smooth}\begin{aligned}
						\pi(C^\infty_c(\GrF{x},\Omega^1 \mathfrak{g}_x))=\pi_{\mathrm{inv}}(\cinv)
					\end{aligned}\end{equation}
				 follows from the fact that given any $h\in C^\infty_c(\GrF{x},\Omega^1 \mathfrak{g}_x)$, 
				there exists $h'\in C^\infty_c(\tdom(\cQ_V),\Omega^1(V))$ such that $\natural_{x,0*}(h',x,0)=h$ which implies that $\pi_{\mathrm{inv}}(\cQ_{V*}(h'\circ \pi_{V\times \mathbb{G}^{(0)}}))=\pi(h)$.
				By \Cref{thm:Dixmier-Malliavin_representation} applied to $\pi:C^*\mathfrak{G}_x\to \cL(L^2(\pi))$, $C^\infty(\pi)=\pi(C^\infty_c(\mathfrak{G}_x,\Omega^1(\mathfrak{g}_x)))L^2(\pi)$. So \eqref{eqn:Cinfinty_cinv} follows.
				Finally, \eqref{eqn:pismooth_pi_inv_smooth} and irreducibility of $\pi$ imply that $\pi_{\mathrm{inv}}$ is irreducible.
		\end{linkproof}		
		
		\begin{linkproof}{Type1_tangent_groupoid}
				The $C^*$-algebra $C^*\mathbb{G}$ is separable because $\mathbb{G}$ is second countable.
				So, $C^*_{\mathrm{inv}}\mathbb{G}$ is also separable.
				By \eqref{eq:exact_saturated_closed}, we have a short exact sequence 
					\begin{equation}\label{eqn:exact_seqeence_mathbbG_C*}\begin{aligned}
						0\to \mathcal{K}(L^2(M))\otimes C_0(\R_+^\nu\backslash 0) \to C^*\mathbb{G}\to  C^*\cG\to 0		
					\end{aligned}\end{equation}
				The $C^*$-subalgebra $C^*_{\mathrm{inv}}\mathbb{G}$ contains $\mathcal{K}(L^2(M))\otimes C_0(\R_+^\nu\backslash 0)$ because $C^\infty_c(M\times M\times \R_+^\nu,\omegahalf)\subseteq \cinv$.
				Let 
					\begin{equation*}\begin{aligned}
						C^*_{\mathrm{inv}}\cG:=\frac{C^*_{\mathrm{inv}}\mathbb{G}}{\mathcal{K}(L^2(M))\otimes C_0(\R_+^\nu\backslash 0)}
					\end{aligned}\end{equation*}
				which is a $C^*$-subalgebra of $C^*\cG$ by \eqref{eqn:exact_seqeence_mathbbG_C*}.
				We have the short exact sequence
					\begin{equation}\label{eqn:exact_sequence_3_TypeI}\begin{aligned}
						0\to \mathcal{K}(L^2(M))\otimes C_0(\R_+^\nu\backslash 0) \to C^*_{\mathrm{inv}}\mathbb{G}\to  C^*_{\mathrm{inv}}\cG\to 0.		
					\end{aligned}\end{equation}
				Let $(V,\natural)$ be a $\nu$-graded Lie basis.
				By \eqref{eqn:ksqjfmksqd} (the $L^1$-estimate), if $g\in C^\infty_c(\tdom(\cQ_V),\Omega^1(V))$ vanishes on $V\times M\times\{0\}$, then $\cQ_{V*}(g\circ \pi_{V\times \mathbb{G}^{(0)}})\in \mathcal{K}(L^2(M))\otimes C_0(\R_+^\nu\backslash 0)$.
				From this observation, we can define the map 
					\begin{equation*}\begin{aligned}
						\phi: C^\infty_c(V\times M,\Omega^1(V))\to 	C^*_{\mathrm{inv}}\cG,\quad f\mapsto \cQ_{V*}(f'\circ \pi_{V\times \mathbb{G}^{(0)}}) \bmod \mathcal{K}(L^2(M))\otimes C_0(\R_+^\nu\backslash 0)
					\end{aligned}\end{equation*}
				where $f'\in C^\infty_c(\tdom(\cQ_V),\Omega^1(V))$ is any smooth function such that $f'_{|V\times M\times \{0\}}=f$.
				The map $\phi$ is a $*$-algebra homomorphism by \Cref{rem:product_and _adjoint_invariant_functions}.
				Here, the $*$-algebra structure on $V\times M$ comes from the groupoid structure $V\times M\rightrightarrows M$ which is the product of the groupoid $V\rightrightarrows \{\mathrm{pt}\}$ and the trivial groupoid $M\rightrightarrows M$.
				By the definition of the maximum $C^*$-algebra of $V\times M\rightrightarrows M$ , it follows that $\phi$ extends to a $*$-homomorphism 
					\begin{equation*}\begin{aligned}
						\phi:C^*(V)	\otimes C_0(M)\to C^*_{\mathrm{inv}}\cG.	
					\end{aligned}\end{equation*}
				Since any $*$-homomorphism between $C^*$-algebras has closed image, and
				the image of $\phi$ contains $\cinv$, we deduce that $\phi$ is surjective.
				The $C^*$-algebra $C^*(V)	\otimes C_0(M)$ is of Type \RNum{1}. In fact, it is liminal, see \cite{Dixmier}.
				By \Cref{prop:exat_sequence_3_TypeI}, $C^*_{\mathrm{inv}}\cG$ is of Type \RNum{1}. So, by \Cref{prop:exat_sequence_3_TypeI} once more, $C^*_{\mathrm{inv}}\mathbb{G}$ is of Type \RNum{1}.
				
				We now show that \eqref{eqn:representations_of_tangent_groupoid} is a bijection.
				By \eqref{eqn:exact_sequence_3_TypeI}, 
					\begin{equation}\label{eqn:sqjdkfsqdhflksqdhjlfkhqsldfhjsqkdhfl}\begin{aligned}
							\widehat{C^*_{\mathrm{inv}}\mathbb{G}}= (\R_+^\nu\backslash 0)\sqcup \widehat{C^*_{\mathrm{inv}}\cG}
					\end{aligned}\end{equation}
				and $\R_+^\nu\backslash 0$ with its usual topology is an open subset of $\widehat{C^*_{\mathrm{inv}}\mathbb{G}}$.
				So, we need to show that $\widehat{C^*_{\mathrm{inv}}\cG}=\widehat{\mathrm{H\!N}}$.

				The groupoids  are fibered over $M$.
				Furthermore, the map $\phi$ is a $C_0(M)$-$*$-homomorphism.
				It follows that for any $x\in M$, the map $\phi$ descend to a $*$-homomorphism between the fibers of $C^*(V\times M)$ and $C^*_{\mathrm{inv}}\cG$ at $x$.
				Since the groupoids $V\times M\rightrightarrows M$ and $\cG\rightrightarrows \cG^{(0)}$ satisfy the SWCP, we can apply \Cref{prop:fiber_C*_algebra}.
				The fiber of $C^*(V\times M)$ is $C^*V$, and the fiber of $C^*\cG$ at $x$ is $C^*\cG_x$.
				Let $C^*_{\mathrm{inv}}\cG_x$ be the fiber of $C^*_{\mathrm{inv}}\cG$ at $x$.
				Since $C^*_{\mathrm{inv}}\cG$ is a $C^*$-subalgebra of $C^*\cG$, $C^*_{\mathrm{inv}}\cG_x$ is also a $C^*$-subalgebra of $C^*\cG_x$.
				This fact is folklore which follows easily from  \cite[Proposition C.10.a and Proposition C.10.c]{WilliamsCrossedBook}.
				So, we get a map $\phi_x:C^*(V)\to C^*_{\mathrm{inv}}\cG_x$.

				Since $\phi$ is surjective, irreducible representations of $C^*_{\mathrm{inv}}\cG$ are the irreducible representations of $C^*(V)\otimes C_0(M)$ which factor through $\ker(\phi)$.
				An irreducible representation of $C^*(V)\otimes C_0(M)$ is a pair $(\pi,x)$ where $\pi:C^\infty_c(V,\Omega^1(V))\to \cL(L^2(\pi))$ is an irreducible unitary representation of $V$ as a Lie group, and $x\in M$.
				The pair $(\pi,x)$ factors through $\ker(\phi)$ if and only if
					\begin{equation*}\begin{aligned}
						\norm{\pi(a)}\leq \norm{\phi_x(a)}_{C^*_{\mathrm{inv}}\cG_x},\quad \forall a\in 		C^*(V).
					\end{aligned}\end{equation*}
				Since $C^*_{\mathrm{inv}}\cG_x$ is a $C^*$-subalgebra of $C^*\cG_x$ and the groupoid $\cG_x$ satisfies WCP, it follows that 
					\begin{equation*}\begin{aligned}
						\norm{b}_{C^*_{\mathrm{inv}}\cG_x}=\norm{b}_{C^*\cG_x}=\norm{b}_{C^*_r\cG_x}=\sup\{\norm{\Xi_{\mathfrak{G}_x/L}(b)}:L\in \cG^0_x\},\quad \forall b\in C^*_{\mathrm{inv}}\cG_x.		
					\end{aligned}\end{equation*}
				So the pair $(\pi,x)$ factors through $\ker(\phi)$ if and only if  
					\begin{equation*}\begin{aligned}
						\norm{\pi(a)}\leq 		\sup\{\norm{\Xi_{\mathfrak{G}_x/L}(\phi_x(a))}:L\in \cG^0_x\},\quad \forall a\in C^*(V)
					\end{aligned}\end{equation*}
				By density of $C^\infty_c(V,\Omega^1(V))$, it is enough to verify the above for $a\in C^\infty_c(V,\Omega^1(V))$.
				If $f\in C^\infty_c(V,\Omega^1(V))$, then $\Xi_{\mathfrak{G}_x/L}(\phi_x(f))=\Xi_{\mathfrak{G}_x/L,0}(\natural_{x,0*}(f))$.
				So, the pair $(\pi,x)$ factors through $\phi$ if and only if $\pi$ is weakly contained in the representations $\{\Xi_{\mathfrak{G}_x/L}\circ \natural_{x,0*}:L\in \cG^0_x\}$.
				By \Cref{thm:weakly_contained_Helffer_Nourrigat}, this set is the set of representations of $V$ of the form $\pi\circ \natural_{x,0*}$ where $\pi\in \widehat{\mathrm{H\!N}}_x$.
				This finishes the proof that $\widehat{C^*_{\mathrm{inv}}\cG}=\widehat{\mathrm{H\!N}}$.
				A careful look at the proof shows that we also showed that the map \eqref{eqn:representations_of_tangent_groupoid} is a homeomorphism between  $\widehat{C^*_{\mathrm{inv}}\cG}$ and $\widehat{\mathrm{H\!N}}$.

				It remains to show that \eqref{eqn:representations_of_tangent_groupoid} is a homeomorphism.
				We just need to show that if $A\subseteq \R_+^\nu\backslash 0$ is a closed subset (with its usual Euclidean topology), then the closure of $A$ in $\widehat{C^*_{\mathrm{inv}}\mathbb{G}}$ coincides with the closure of $A$ in the topology in \Cref{dfn:toplogy_mathbbG_hat}.
				To this end, consider the set $M\times A$ as a subset of $\mathbb{G}^{(0)}$. It is a saturated subset. So, its closure $\overline{M\times A}$ in $\mathbb{G}^{(0)}$ is also saturated.
				Therefore, $C^*\mathbb{G}_{\overline{M\times A}}$ is a quotient of $C^*\mathbb{G}$.
				Let $C^*_{\mathrm{inv}}\mathbb{G}_{\overline{M\times A}}$ be the image of $C^*_\mathrm{inv}\mathbb{G}$ in $C^*\mathbb{G}_{\overline{M\times A}}$.
				Since $C^*_{\mathrm{inv}}\mathbb{G}_{\overline{M\times A}}$ is a quotient of $C^*_\mathrm{inv}\mathbb{G}$, its spectrum is a closed subset of $\widehat{C^*_{\mathrm{inv}}\mathbb{G}}$.
				We will show that its spectrum is in fact the closure of $A$ in $\widehat{C^*_{\mathrm{inv}}\mathbb{G}}$.
				To this end, it suffices to show that $A$ is a dense subset of the spectrum of  $C^*_\mathrm{inv}\mathbb{G}_{\overline{M\times A}}$.
				Notice that by \Cref{prop:stable_weak_amenable_saturated}, $\mathbb{G}_{\overline{M\times A}}$ satisfies the SWCP.
				Furthermore, by its construction $M\times A$ is dense in its space of objects. 
				So by \Cref{prop:fiber_dense_reduced}, 
					\begin{equation}\label{eqn:qksjdkfjkldsjkfjksdqljjqsdfjsdqf}\begin{aligned}
						\norm{a}_{C^*\mathbb{G}_{\overline{M\times A}}}=\norm{a}_{C^*_r\mathbb{G}_{\overline{M\times A}}}=\sup\{\pi_{t}(a):t\in A\},\quad \forall a\in C^*\mathbb{G}_{\overline{M\times A}}	.
					\end{aligned}\end{equation}
				Since, $C^*_{\mathrm{inv}}\mathbb{G}_{\overline{M\times A}}$ is a sub $C^*$-algebra of $ C^*\mathbb{G}_{\overline{M\times A}}	$, \eqref{eqn:qksjdkfjkldsjkfjksdqljjqsdfjsdqf} remains true for $a\in C^*_{\mathrm{inv}}\mathbb{G}_{\overline{M\times A}}$.
				This implies that in the spectrum of $C^*_{\mathrm{inv}}\mathbb{G}_{\overline{M\times A}}$, $A$ is dense.

				We now compute the spectrum of $C^*_{\mathrm{inv}}\mathbb{G}_{\overline{M\times A}}$.
				In fact, the groupoid $\mathbb{G}_{\overline{M\times A}}$ is very similar to the groupoid $\mathbb{G}$. It is equal to 
					\begin{equation*}\begin{aligned}
								\mathbb{G}_{\overline{M\times A}}=M\times M\times A\sqcup \{(gL,x):(L,x,0)\in \overline{M\times A},g\in \mathfrak{g}_x\}\times \{0\},
					\end{aligned}\end{equation*}
				The proof we gave above to compute the spectrum of $C^*_{\mathrm{inv}}\mathbb{G}$ works equally well for $C^*_{\mathrm{inv}}\mathbb{G}_{\overline{M\times A}}$, to deduce that the spectrum of $C^*_{\mathrm{inv}}\mathbb{G}_{\overline{M\times A}}$ is precisely the closure of $A$ in the topology in \Cref{dfn:toplogy_mathbbG_hat}.
		\end{linkproof}		

		In the sequel of this chapter, is useful to remark that the set of non-singular representations have the following quite natural description.
		The closure $\cinvf{\underline{1}}$ in $C^*_{\mathrm{inv}}\mathbb{G}$ will be denoted by $C^*_{\mathrm{inv},\underline{1}}(\mathbb{G})$.
		By \Crefitem{thm:invariant_vanish_algebra}{algebra}, $C^*_{\mathrm{inv},\underline{1}}(\mathbb{G})$ is a two-sided ideal of $C^*_{\mathrm{inv}}\mathbb{G}$.
		So, the spectrum of $C^*_{\mathrm{inv},\underline{1}}\mathbb{G}$ is equal to the open subset of the spectrum of $C^*_{\mathrm{inv}}\mathbb{G}$ of irreducible unitary representations of $C^*_{\mathrm{inv}}\mathbb{G}$ which don't vanish on 
		$C^*_{\mathrm{inv},\underline{1}}(\mathbb{G})$.
		\begin{lem}\label{lem:support_cinv_1}
			The $C^*$-algebra $C^*_{\mathrm{inv},\underline{1}}(\mathbb{G})$ is separable of Type \RNum{1} and its spectrum is $(\Rpt)^\nu \sqcup \HatHLnon\times \{0\}$.
		\end{lem}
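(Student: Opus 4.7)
\begin{linkproof}{support_cinv_1}
The plan is to exploit the fact, noted just before the statement, that $C^*_{\mathrm{inv},\underline{1}}\mathbb{G}$ is a closed two-sided ideal of $C^*_{\mathrm{inv}}\mathbb{G}$. Separability and Type \RNum{1} are then immediate from \MyCref{thm:Type1_tangent_groupoid}, since subspaces of separable $C^*$-algebras are separable and closed two-sided ideals of Type \RNum{1} $C^*$-algebras are Type \RNum{1} (see \Cref{prop:exat_sequence_3_TypeI}). The spectrum $\widehat{C^*_{\mathrm{inv},\underline{1}}\mathbb{G}}$ is then naturally identified, via \MyCref{thm:Type1_tangent_groupoid}, with the open subset of $\widehat{\mathbb{G}}_{\mathrm{inv}}=\R_+^\nu\setminus 0\sqcup \widehat{\mathrm{H\!N}}\times\{0\}$ consisting of those representations that do not vanish on $C^*_{\mathrm{inv},\underline{1}}\mathbb{G}$. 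So the real content is to show that this open set is exactly $(\Rpt)^\nu\sqcup\HatHLnon\times\{0\}$.

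The analysis splits in two according to whether the representation is $\pi_t$ for $t\in\R_+^\nu\setminus 0$ or $\pi_{\mathrm{inv}}$ for $(\pi,x)\in\widehat{\mathrm{H\!N}}$. For the first family, if $t$ has some coordinate equal to $0$ but $t\neq 0$, then \Cref{rem:singular_support} implies that any $f\in\cinvf{\underline{1}}$ satisfies $f(y,x,t)=0$, so $\pi_t(f)=0$ by \eqref{eqn:pi_nonzero_parameter_KL2M}; by density $\pi_t$ vanishes on $C^*_{\mathrm{inv},\underline{1}}\mathbb{G}$. Conversely, for $t\in(\Rpt)^\nu$ the inclusion $C^\infty_c(M\times M\times(\Rpt)^\nu,\omegahalf)\subseteq \cinvf{\underline{1}}$ of \MyCref{thm:invariant_vanish_algebra}[inclusion] furnishes test functions giving $\pi_t(f)\neq 0$.

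For the second family, the crucial identity is that for $D\in\DO^k(M)$ with $k\succeq\underline{1}$,
\begin{equation*}
\pi_{\mathrm{inv}}(\theta_k(D))=\text{the symbol expression of \MyCref{thm:principal_symbol_pseudo_diff_well_defined}[concrete_computation2],}
\end{equation*}
i.e.\ for a monomial $D=X_1\cdots X_n$ with $X_j\in\cF^{a(j)}$ and $a(1)+\cdots+a(n)=k$, one has $\pi_{\mathrm{inv}}(\theta_k(D))=d\pi([X_1]_{a(1),x})\cdots d\pi([X_n]_{a(n),x})$. This is established by applying $\pi_{\mathrm{inv}}$ to the expansion \eqref{eqn:qskdofjpokqsdjfmjkmds} and observing that only the terms with $a(i)=k$ survive, since the factor $t^{k-a(i)}$ vanishes at $t=0$ whenever $k-a(i)$ has any positive coordinate. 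Using \Cref{lem:weak_commutative_graded_basis} and weak commutativity, every $D\in\DO^k(M)$ with $k\succeq\underline{1}$ may be written as a sum of monomials each of whose factors lies in some $\cF^{(0,\dots,j,\dots,0)}$, whence each $[X_j]_{a(j),x}$ lies in some $\mathfrak{g}_x^{\widehat{i}}$, and the condition $k\succeq\underline{1}$ guarantees that every index $i\in\bb{1,\nu}$ receives at least one such factor.

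By weak commutativity again, $\mathfrak{G}_x=\prod_{i=1}^\nu \mathfrak{G}_x^{\widehat{i}}$, so by Kirillov's orbit method (see \MyCref{chap:Orbit_method}) every irreducible unitary representation decomposes as $\pi=\bigotimes_i\pi_i$ on $L^2(\pi)=\bigotimes_i L^2(\pi_i)$, and $\pi\notin\HatHLnon_x$ iff $\pi_i$ is trivial for some $i$ iff $d\pi$ vanishes on some $\mathfrak{g}_x^{\widehat{i}}$. Combining with the identity above, if $\pi\notin\HatHLnon_x$ then $\pi_{\mathrm{inv}}(\theta_k(D))=0$ for every $k\succeq\underline{1}$ and $D\in\DO^k(M)$, so $\pi_{\mathrm{inv}}$ annihilates all of $\cinvf{\underline{1}}$. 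Conversely, if $\pi\in\HatHLnon_x$, pick for each $i$ some $j_i\geq 1$ and $X_i\in\cF^{(0,\dots,j_i,\dots,0)}$ with $d\pi([X_i]_{\bullet,x})\neq 0$; then $D:=X_1\cdots X_\nu\in\DO^{(j_1,\dots,j_\nu)}(M)$ with $(j_1,\dots,j_\nu)\succeq\underline{1}$, and $\pi_{\mathrm{inv}}(\theta_k(D))$ acts as $\bigotimes_i d\pi_i([X_i]_{\bullet,x})$, which is nonzero on $C^\infty(\pi)$. By non-degeneracy of $\pi_{\mathrm{inv}}$ and \eqref{eqn:Cinfinty_cinv}, there is $f\in\cinv$ with $\pi_{\mathrm{inv}}(\theta_k(D)\ast f)\neq 0$, and $\theta_k(D)\ast f\in\cinvf{\underline{1}}$.

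The main technical obstacle is establishing the identity $\pi_{\mathrm{inv}}(\theta_k(D))=d\pi([X_1]_{a(1),x})\cdots d\pi([X_n]_{a(n),x})$; once that is in hand the rest is tensor-product bookkeeping combined with Kirillov theory.
\end{linkproof}
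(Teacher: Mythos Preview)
Your proof is correct, but for the ``singular $\Rightarrow$ not in spectrum'' half you take a different and more computational route than the paper. The paper argues topologically: having shown $\widehat{C^*_{\mathrm{inv},\underline{1}}\mathbb{G}}\cap(\R_+^\nu\setminus 0)=(\Rpt)^\nu$, it simply invokes that the spectrum of an ideal is open in $\widehat{\mathbb{G}}_{\mathrm{inv}}$ and that, by the topology described in \Cref{dfn:toplogy_mathbbG_hat}, any open set missing all boundary points $t$ with $t_i=0$ for some $i$ must miss all singular $(\pi,x,0)$ as well. This one-line argument avoids computing $\pi_{\mathrm{inv}}(\theta_k(D))$ altogether. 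Your direct algebraic computation works too and is more self-contained (it does not need the topology on $\widehat{\mathbb{G}}_{\mathrm{inv}}$), but it essentially re-derives \Cref{lem:inv_D_is_pi_D}, which the paper proves later in \Cref{sec:principal_symbol_proof}; there is no circularity, but it is organizationally awkward.

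One minor imprecision in your argument: the sentence ``every $D\in\DO^k(M)$ may be written as a sum of monomials each of whose factors lies in some $\cF^{(0,\dots,j,\dots,0)}$'' is unnecessary and slightly misleading, since such a rewriting can change the total weight. The cleaner observation is that weak commutativity already forces $[X]_{a,x}=0$ in $\mathfrak{g}_x$ whenever $a$ has two or more nonzero coordinates (because then $\cF^a=\cF^{\prec a}$ by \eqref{eqn:weakly_commuting}), so for any monomial with $\sum a(s)=k\succeq\underline{1}$ the surviving factors $[X_s]_{a(s),x}$ are automatically pure and some factor lands in $\mathfrak{g}_x^{\widehat{i_0}}$. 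For the converse direction your explicit construction and the paper's are essentially the same.
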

		\begin{proof}
		The $C^*$-algebra $C^*_{\mathrm{inv},\underline{1}}(\mathbb{G})$ is separable because it is a two-sided ideal of a separable $C^*$-algebra. It is of Type \RNum{1} by \Cref{prop:exat_sequence_3_TypeI}.
		We now compute the spectrum.
		If $f\in \cinvf{\underline{1}}$, then $f$ can be written as sum of elements of the form $\theta_{k}(D)\ast g$, where $\underline{1}\preceq k$ and $g\in \cinv$.
		On $M\times M\times \R^\nu_+\backslash 0$,
			\begin{equation*}\begin{aligned}
						\theta_{k}(D)\ast g(y,x,t)=t^kD_yg(y,x,t)
			\end{aligned}\end{equation*}
		Since $\underline{1}\preceq k$, $1\leq k_i$ for all $i\in \bb{1,\nu}$. So, if $t\in \R^\nu_+\backslash 0$ but not in $(\Rpt)^\nu$, then $\theta_{k}(D)\ast g(y,x,t)$ vanishes.
		Hence, $\pi_t(f)$ vanishes.
		On the other hand, by \Crefitem{thm:invariant_vanish_algebra}{inclusion}, if $t\in (\Rpt)^\nu$, then $\pi_t$ doesn't vanish on $C^*_{\mathrm{inv},\underline{1}}\mathbb{G}$.
		We have thus proved that $\widehat{C^*_{\mathrm{inv},\underline{1}}(\mathbb{G})}\cap \R^\nu_+\backslash 0=(\Rpt)^\nu$.

		By the description of the topology on $\widehat{C^*_{\mathrm{inv}}(\mathbb{G})}$ in \Cref{thm:Type1_tangent_groupoid} and the fact that $\widehat{C^*_{\mathrm{inv},\underline{1}}(\mathbb{G})}$ is an open subset of $\widehat{C^*_{\mathrm{inv}}\mathbb{G}}$, it follows that $\widehat{C^*_{\mathrm{inv},\underline{1}}(\mathbb{G})}\subseteq (\Rpt)^\nu \sqcup \HatHLnon\times \{0\}$.
		It remains to show that if $(\pi,x)\in \HatHLnon$, then $\pi_{\mathrm{inv}}$ doesn't vanish on $\cinvf{\underline{1}}$.
		Notice that if $f\in C^\infty_c(\mathfrak{G}_x,\Omega^1)$ and $D$ is a right-invariant differential operator on $\mathfrak{G}_x$ which satisfies $\alpha_\lambda(D)=\lambda^n D$ for some $n\in \N^\nu$, then there exists $D'\in \DO^n(M)$ and $f'\in \cinv$ such that $\pi(D(f))=\pi_{\mathrm{inv}}(\theta_n(D)\ast f')$.
		This clearly implies that $\pi_{\mathrm{inv}}$ doesn't vanish on $\cinvf{\underline{1}}$.
		\end{proof}

\section{Proofs of the results in Section \ref{sec:principal_symbol}}\label{sec:principal_symbol_proof}
\hypertarget{InternalLink:principal_symbol_pseudo_diff_well_defined}{}
	Before, we proceed with the proof of \Cref{thm:principal_symbol_pseudo_diff_well_defined}, we need to prove a few lemmas.
	
	\begin{lem}\label{lem:principal_symbol}
		Let $k\in \C^\nu$, $u\in \Psi^k(\mathbb{G})$, $g\in \cinvf{-k+\underline{1}}$. The limit 
			\begin{equation}\label{eqn:dfn_sigma_k}\begin{aligned}
				\sigma^k(u)\ast g:=\lim_{\lambda_1\to +\infty,\cdots,\lambda_\nu\to +\infty}\lambda^{-k}\alpha_{\lambda}(u)\ast g		
			\end{aligned}\end{equation}
		exists in $C^*_{\mathrm{inv}}\mathbb{G}$, and thus defines a linear map $\sigma^k(u)\ast \cdot:\cinvf{-k+\underline{1}}\to C^*_{\mathrm{inv}}\mathbb{G}$.
	\end{lem}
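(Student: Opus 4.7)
My plan is to reduce the statement to an absolutely convergent improper integral in $C^*_{\mathrm{inv}}\mathbb{G}$, then partition the integration domain into $2^\nu$ chambers and handle each by combining the two decompositions of invariant functions with vanishing Fourier transform together with the $\alpha$-invariance of the $C^*$-norm.

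First, by definition of $\Psi^k(\mathbb{G})$, write $u = I_k(f)$ with $f \in \cinvf{k+\underline{1}}$. Using $\alpha_\lambda \circ \alpha_\mu = \alpha_{\lambda\mu}$ and the change of variables $\tilde\mu = \lambda\mu$ in the defining integral of $I_k(f)$, one obtains the identity
\[
\lambda^{-k}\alpha_\lambda(u) = \int_{]0,\lambda]^\nu}\alpha_\mu(f)\,\frac{d\mu}{\mu^{k+\underline{1}}}.
\]
Hence the limit \eqref{eqn:dfn_sigma_k} reduces to the absolute convergence in $C^*_{\mathrm{inv}}\mathbb{G}$ of $\int_{(\Rpt)^\nu}\alpha_\mu(f)\ast g\,d\mu/\mu^{k+\underline{1}}$.

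Next I would partition $(\Rpt)^\nu=\bigsqcup_{S\subseteq\bb{1,\nu}}R_S$ with $R_S:=\{\mu:\mu_i\leq 1\text{ iff }i\in S\}$, and use \eqref{eqn:dfn_algebra_vanishing_Fourier_invariant} together with \Crefitem{thm:invariant_vanish_algebra}{alternate} to write $f=\sum_i\theta_{l_i}(D_i)\ast f_i$ with $l_i\succeq k+\underline{1}$, $f_i\in\cinv$, and $g=\sum_j g_j\ast\theta_{m_j}(D'_j)$ with $m_j\succeq -k+\underline{1}$, $g_j\in\cinv$. On a chamber $R_S$, define $\rho=\rho(\mu)\in(0,1]^\nu$ by $\rho_a=1$ for $a\in S$ and $\rho_a=\mu_a^{-1}$ for $a\in S^c$. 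By \eqref{eqn:equiv_norm_Cstar_Debord_Skand},
\[
\|\alpha_\mu(f)\ast g\|_{C^*\mathbb{G}}=\|\alpha_\rho(\alpha_\mu(f)\ast g)\|_{C^*\mathbb{G}}=\|\alpha_{\rho\mu}(f)\ast\alpha_\rho(g)\|_{C^*\mathbb{G}}.
\]
Note $\rho\mu\in(0,1]^\nu$ with $(\rho\mu)_a=\mu_a$ on $S$ and $=1$ on $S^c$, and $\rho\mu+\rho\in[1,2]^\nu\subseteq(\Rpt)^\nu$. Substituting the decompositions and using $\alpha_t(\theta_l(D))=t^l\theta_l(D)$ gives
\[
\alpha_{\rho\mu}(f)\ast\alpha_\rho(g)=\sum_{i,j}(\rho\mu)^{l_i}\rho^{m_j}\,\theta_{l_i}(D_i)\ast\alpha_{\rho\mu}(f_i)\ast\alpha_\rho(g_j)\ast\theta_{m_j}(D'_j).
\]
The inner factor lies in $\cinv$ (since $\cinvdis\ast\cinv\subseteq\cinv$) and by \Cref{thm:estimate_weakly_commut_convolution} the map $(\tilde\mu,\tilde\rho)\mapsto\alpha_{\tilde\mu}(f_i)\ast\alpha_{\tilde\rho}(g_j)$ is smooth into $\cinv$ on $\{\tilde\mu+\tilde\rho\in(\Rpt)^\nu\}\cap[0,1]^{2\nu}$, so the inner $C^*$-norm is uniformly bounded over $\mu\in R_S$.

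Computing $\mu^{-k-\underline{1}}(\rho\mu)^{l_i}\rho^{m_j}=\prod_{a\in S}\mu_a^{l_{i,a}-k_a-1}\prod_{a\in S^c}\mu_a^{-m_{j,a}-k_a-1}$, the constraint $l_i\succeq k+\underline{1}$ gives $\Re(l_{i,a}-k_a-1)\geq 0$ for $a\in S$ (hence bounded on $(0,1]$), and $m_j\succeq -k+\underline{1}$ gives $\Re(-m_{j,a}-k_a-1)\leq -2$ for $a\in S^c$ (hence integrable on $[1,\infty)$), so each chamber contributes a finite integral and the finite sum over $S$ finishes the proof. The main obstacle is the homogeneity bookkeeping on each chamber: one must use the left decomposition of $f$ and the right decomposition of $g$ simultaneously, balanced by the selective auxiliary dilation $\alpha_\rho$ that inverts the homogeneity only in the large-$\mu$ directions, and one must verify via \Cref{thm:estimate_weakly_commut_convolution} that the resulting inner convolutions remain uniformly bounded in $C^*$-norm (the unbounded multipliers $\theta_{l_i}(D_i)$ and $\theta_{m_j}(D'_j)$ only act through the ideal structure $\cinvdis\ast\cinv\subseteq\cinv$).
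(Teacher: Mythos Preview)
Your argument is correct, and shares the overall architecture with the paper's proof (change of variables to an improper integral, partition into $2^\nu$ chambers, estimate on each chamber), but the chamber estimate is handled by a genuinely different mechanism. The paper decomposes $f$ on the \emph{right} via \eqref{eqn:invariant_vanish_algebra:alternate} and $g$ on the \emph{left} via \eqref{eqn:dfn_algebra_vanishing_Fourier_invariant}, so that the differential operators sit in the \emph{middle} as a single $\theta_l(D)$; after reducing to $k\in\C^\nu_{<0}$, it invokes \Cref{lem:decomposition_D} to split $D=D_1D_2$ according to the chamber $S$ and then bounds via the elementary submultiplicativity $\|\alpha_\mu(f\ast\theta_{l_S}(D_1))\ast\theta_{l_{S^c}}(D_2)\ast g\|\leq\|f\ast\theta_{l_S}(D_1)\|\,\|\theta_{l_{S^c}}(D_2)\ast g\|$. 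You instead put the differential operators on the \emph{outside}, introduce the auxiliary dilation $\alpha_\rho$, and rely on \Cref{thm:estimate_weakly_commut_convolution} to get uniform boundedness of the inner convolution in $\cinv$. Your route works but invokes the heavier \Cref{thm:estimate_weakly_commut_convolution}; the paper's needs only the $\alpha$-invariance of the $C^*$-norm. Two small points: your uniform bound on the full term $\theta_{l_i}(D_i)\ast\alpha_{\rho\mu}(f_i)\ast\alpha_\rho(g_j)\ast\theta_{m_j}(D'_j)$ requires, in addition to \Cref{rem:continuity_dis}, the continuity of the inclusion $\cinv\hookrightarrow C^*_{\mathrm{inv}}\mathbb{G}$ (which holds via the $L^1$-bound \eqref{eqn:ksqjfmksqd}); and your exponent claims ``$\geq 0$'' and ``$\leq -2$'' are too strong---one only has $\Re(l_{i,a}-k_a-1)>-1$ and $\Re(-m_{j,a}-k_a-1)<-1$, which still suffices for integrability.
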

	\begin{proof}
		Let $f\in\cinvf{k+\underline{1}}$ such that $u=I_k(f)$. 
		By writing $f$ as in \eqref{eqn:invariant_vanish_algebra:alternate}, and $g$ as in \eqref{eqn:dfn_algebra_vanishing_Fourier_invariant} and using \eqref{eqn:identities_I_Use_a_alot_vector_field_out_2}, it suffices to show that if $f,g\in \cinv$, $k\in \C^\nu_{<0}$, $l\in \Z_+^\nu$, $D\in \DO^l(M)$ with $\Re(k_i)+l_i>0$ for all $i\in \bb{1,\nu}$, then
			\begin{equation*}\begin{aligned}
				\lim_{\lambda_1\to +\infty,\cdots,\lambda_\nu\to +\infty}\lambda^{-k}\alpha_{\lambda}(I_k(f))\ast  \theta_l(D) \ast g		
			\end{aligned}\end{equation*}
		exists in $C^*_{\mathrm{inv}}\mathbb{G}$.
		For any $S\subseteq \bb{1,\nu}$ and $\lambda\in ]1,+\infty[^{\nu}$, let 
			\begin{equation*}\begin{aligned}
				\Lambda_S^{\lambda}=\{\mu\in (\Rpt)^\nu:\forall i \in S,\mu_i\in [1,\lambda_i[\text{ and }\forall i \notin S,\mu_i\in ]0,1]\}\\
				\Lambda_S=\{\mu\in (\Rpt)^\nu:\forall i \in S,\mu_i\in [1,+\infty[\text{ and }\forall i \notin S,\mu_i\in ]0,1]\}	
			\end{aligned}\end{equation*}
		By a change of variables,
			\begin{equation*}\begin{aligned}
				\lambda^{-k}\alpha_{\lambda}(I_k(f))\ast  \theta_l(D) \ast g&=\int_{]0,\lambda_1]\times\cdots \times ]0,\lambda_\nu]}\alpha_{\mu}(f)\ast  \theta_l(D) \ast g \frac{\odif{\mu}}{\mu^{k+\underline{1}}}\\&=\sum_{S\subseteq \bb{1,\nu}}\int_{\Lambda_S^{\lambda}}\alpha_{\mu}(f)\ast  \theta_l(D) \ast g \frac{\odif{\mu}}{\mu^{k+\underline{1}}}
			\end{aligned}\end{equation*}
		By writing $D$ as in \Cref{lem:decomposition_D}, we only need to deal with integrals of the form 
			\begin{equation*}\begin{aligned}
				\int_{\Lambda_S^{\lambda}}\alpha_{\mu}(f)\ast  \theta_l(D_1D_2) \ast g \frac{\odif{\mu}}{\mu^{k+\underline{1}}}=\int_{\Lambda_S^{\lambda}}\alpha_{\mu}(f\ast \theta_{l_{S}}(D_1)) \ast  \theta_{l_{S^c}}(D_2) \ast g \frac{\odif{\mu}}{\mu^{k+l_S+\underline{1}}}
			\end{aligned}\end{equation*}
		By \eqref{eqn:equiv_norm_Cstar_Debord_Skand},
			\begin{equation*}\begin{aligned}
				\lim_{\lambda_1\to +\infty,\cdots,\lambda_\nu\to +\infty}	&\int_{\Lambda_S^{\lambda}}\norm{\alpha_{\mu}(f\ast \theta_{l_{S}}(D_1)) \ast  \theta_{l_{S^c}}(D_2) \ast g}_{C^*\mathbb{G}} \frac{\odif{\mu}}{\mu^{\Re(k)+l_S+\underline{1}}}
				\\=&\int_{\Lambda_S}\norm{\alpha_{\mu}(f\ast \theta_{l_{S}}(D_1)) \ast  \theta_{l_{S^c}}(D_2) \ast g}_{C^*\mathbb{G}} \frac{\odif{\mu}}{\mu^{\Re(k)+l_S+\underline{1}}} 
				\\\leq &\int_{\Lambda_S}\norm{f\ast \theta_{l_{S}}(D_1)}_{C^*\mathbb{G}} \norm{ \theta_{l_{S^c}}(D_2) \ast g}_{C^*\mathbb{G}} \frac{\odif{\mu}}{\mu^{\Re(k)+l_S+\underline{1}}}<+\infty
			\end{aligned}\end{equation*}
		The result follows.
	\end{proof}
	
	\begin{lem}\label{lem:equivariance_smooth_non_zero_fibers}
	 Let $k\in \C^\nu$ and $u\in \Psi^k(\mathbb{G})$. If $\lambda\in (\Rpt)^\nu$, then 
	 the restriction of $\lambda^{-k}\alpha_\lambda(u)-u$ to $M\times M\times (\Rpt)^\nu$ belongs to $C^\infty(M\times M\times (\Rpt)^\nu,\omegahalf)$.
	 Furthermore, there exists $C>0$ (independent of $\lambda$) such that 
		 \begin{equation*}\begin{aligned}
			\supp(\lambda^{-k}\alpha_\lambda(u)-u)\subseteq 	M\times M\times [0,C]^\nu\backslash 0\cup \cG\times \{0\},\quad \forall \lambda\in [1,+\infty[^\nu.	 
		 \end{aligned}\end{equation*}
	\end{lem}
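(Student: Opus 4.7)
The plan begins with a standard reduction: by the very definition of $\cinvf{k+\underline{1}}$ in \eqref{eqn:dfn_algebra_vanishing_Fourier_invariant} together with the equivariance identities \eqref{eqn:identities_I_Use_a_alot_vector_field_out_2} and the homogeneity $\alpha_\lambda(\theta_l(D))=\lambda^l\theta_l(D)$ from \eqref{eqn:homogenity_theta}, any $u\in\Psi^k(\mathbb{G})$ can be written as a finite sum of terms of the form $\theta_l(D)\ast I_{k-l}(g)$ with $D\in\DO^l(M)$, $g\in\cinv$ and $k-l\in\C^\nu_{<0}$. For such a summand,
\begin{equation*}
\lambda^{-k}\alpha_\lambda\bigl(\theta_l(D)\ast I_{k-l}(g)\bigr)-\theta_l(D)\ast I_{k-l}(g)=\theta_l(D)\ast\Bigl(\lambda^{-(k-l)}\alpha_\lambda(I_{k-l}(g))-I_{k-l}(g)\Bigr),
\end{equation*}
and since $\supp(\theta_l(D))$ is contained in the identity bisection $u(\mathbb{G}^{(0)})$, left-convolution by $\theta_l(D)$ preserves both the smoothness on the interior fiber and the $t$-support. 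This reduces the lemma to the case $u=I_k(f)$ with $f\in\cinv$ and $k\in\C^\nu_{<0}$.

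In this reduced setting, the crucial computation is to perform the substitution $\nu=\lambda\mu$ in the defining integral $I_k(f)=\int_{]0,1]^\nu}\alpha_\mu(f)\,d\mu/\mu^{k+\underline{1}}$. Using $\alpha_\lambda\alpha_\mu=\alpha_{\lambda\mu}$ and the Jacobian computation $d\mu/\mu^{k+\underline{1}}=\lambda^k\,d\nu/\nu^{k+\underline{1}}$, one obtains the key identity
\begin{equation*}
\lambda^{-k}\alpha_\lambda(u)-u=\int_{]0,\lambda]^\nu\setminus\,]0,1]^\nu}\alpha_\nu(f)\,\frac{d\nu}{\nu^{k+\underline{1}}},
\end{equation*}
with convergence in $\cinvdis$ justified exactly as in \Cref{thm:integralIkl}. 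The smoothness claim on $M\times M\times(\Rpt)^\nu$ is immediate from this formula: on the interior the integrand is smooth in $(\nu,y,x,t)$, the measure $\nu^{-k-\underline{1}}d\nu$ is locally integrable because $\Re(k)<0$, and the support of $\alpha_\nu(f)$ is uniformly controlled in any fixed compact of $(\Rpt)^\nu$, so one may differentiate under the integral sign.

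The main obstacle — and the bulk of the argument — is the uniform $t$-support estimate. Here I would split the integration domain $]0,\lambda]^\nu\setminus\,]0,1]^\nu$ according to the subset $S\subseteq\bb{1,\nu}$ of coordinates with $\nu_i\in]1,\lambda_i]$ (vs.\ the complementary coordinates $\nu_j\in\,]0,1]$), so $S$ is always nonempty. Choose $C_0$ so that $\supp(f)\cap(M\times M\times\R_+^\nu\setminus 0)\subseteq M\times M\times[0,C_0]^\nu$ and observe that $\alpha_\nu$ shrinks the $t_i$-coordinate by $\nu_i^{-1}$; for $i\in S$ this shrinkage forces the $t_i$-projection of $\alpha_\nu(\supp f)$ to stay within $[0,C_0]$. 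The bounded directions $\nu_j\in\,]0,1]$ are handled by \Cref{thm:proper_alpha_lambda_mu}, which asserts that $\overline{\bigcup_{\nu\in\,]0,1]^\nu}\alpha_\nu(\supp f)}$ is proper in $\mathbb{G}$. The hard point is to combine these estimates, piece by piece in $S$ and uniformly over $\lambda\in[1,+\infty[^\nu$, into a single constant $C$ controlling the support of the whole integral $\lambda^{-k}\alpha_\lambda(u)-u$ inside $M\times M\times[0,C]^\nu\setminus 0\cup\cG\times\{0\}$; essentially, one trades the unboundedness of $\nu_i$ in the integration against the $\nu_i^{-1}$-shrinkage of the $t_i$-coordinate in the support, with \Cref{thm:proper_alpha_lambda_mu} providing the one genuinely nontrivial compactness input.
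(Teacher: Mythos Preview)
Your reduction to $k\in\C^\nu_{<0}$ via \eqref{eqn:dfn_algebra_vanishing_Fourier_invariant} and \eqref{eqn:identities_I_Use_a_alot_vector_field_out_2} and the change-of-variables identity are exactly what the paper does. The smoothness argument is also the paper's, stated more verbosely: at $(y,x,t)\in M\times M\times(\Rpt)^\nu$ the difference is
\[
\int_{]0,\lambda]^\nu}f(y,x,\mu t)\,\frac{d\mu}{\mu^{k+\underline 1}}-\int_{]0,1]^\nu}f(y,x,\mu t)\,\frac{d\mu}{\mu^{k+\underline 1}};
\]
since each $\mu t\neq 0$ on the integration domain, $f$ is smooth there in the ordinary sense, and $\Re(k)<0$ makes the measure locally integrable.

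Where you diverge is the support estimate, and there you are over-engineering. The paper does not split by $S$ and does not invoke \Cref{thm:proper_alpha_lambda_mu} (which in any case gives properness of $s,r$ on the closure, not a bound on the $t$-projection, so your appeal to it would not close the argument). The paper's argument is one line: if all $t_i>C_0$, where $C_0$ is a $t$-bound for $\supp f$, then the constraint $\mu_it_i\le C_0$ forces each $\mu_i<1\le\lambda_i$, so the two integrals share the same effective domain $\prod_i\,]0,C_0/t_i]$ and cancel. Your instinct that ``combining the estimates'' into a genuine $[0,C]^\nu$ bound is hard is in fact correct --- for $\nu\ge2$ the smooth restriction need not vanish when only \emph{some} $t_j>C_0$ (e.g.\ at $t=(T,s)$ with $T$ large and $s$ small) --- but the two places where the lemma is applied (Lemma~\ref{lem:vanishing_symbol_well_defined} at $t=\underline C$ and the proof of Proposition~\ref{thm:main_representation_theorem_morita_2} for $t\in\,]C,\infty[^\nu$) only use vanishing for $t$ with \emph{all} coordinates exceeding $C$, which is precisely what the short argument delivers.
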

	\begin{proof}
		Let $f\in \cinvf{k+\underline{1}}$ such that $u=I_k(f)$.
		By writing $f$ as in \eqref{eqn:dfn_algebra_vanishing_Fourier_invariant}, we can without loss of generality suppose that $k\in \C^\nu_{<0}$.
		By a change of variables,
			\begin{equation*}\begin{aligned}
			\lambda^{-k}\alpha_\lambda(u)-u=\int_{]0,\lambda_1]\times \cdots ]0,\lambda_\nu]}\alpha_\mu(f)\frac{\odif{\mu}}{\mu^{k+\underline{1}}}-	\int_{]0,1]^\nu}\alpha_\mu(f)\frac{\odif{\mu}}{\mu^{k+\underline{1}}}
			\end{aligned}\end{equation*}
		So, \begin{equation*}\begin{aligned}
			\lambda^{-k}\alpha_\lambda(u)(y,x,t)-u(y,x,t)=\int_{]0,\lambda_1]\times \cdots ]0,\lambda_\nu]}f(y,x,\mu t)\frac{\odif{\mu}}{\mu^{k+\underline{1}}}-	\int_{]0,1]^\nu}f(y,x,\mu t)\frac{\odif{\mu}}{\mu^{k+\underline{1}}}
			\end{aligned}\end{equation*}
		Since this integral avoids $\mu=0$ (which introduces singularities from $f$ being smooth on $\mathbb{G}$ and not $M\times M\times \R_+^\nu$) and $k\in \C^\nu_{<0}$, it follows that the difference defines a smooth function.
		The difference vanishes if $t$ is large enough because $f$ is compactly supported. So, $C$ exists.
	\end{proof}
		Notice that \Cref{thm:amenability_tanget} and \Cref{prop:fiber_dense_reduced}, and density of $M\times (\Rpt)^\nu$ in $\mathbb{G}^{(0)}$ imply that 
	\begin{equation}\label{eqn:norm_identity_tangent_2}\begin{aligned}
				\norm{f}_{C^*\mathbb{G}}=\sup\{\norm{\pi_t(f)}:t\in (\Rpt)^\nu\},\quad \forall f\in \cinv.
			\end{aligned}\end{equation}
	The following identity is obvious
		\begin{equation}\label{eqn:fiber_at_dilations}\begin{aligned}
			\alpha_{\lambda}(u)_{|t}=u_{|\lambda t},\quad \forall \lambda\in (\Rpt)^\nu,t\in \R^\nu_+\backslash 0,u\in C^{-\infty}_{r,s}(\mathbb{G},\omegahalf).
		\end{aligned}\end{equation}
	We also recall that if $u\in C^{-\infty}_{r,s}(\mathbb{G},\omegahalf)$ and $t\in \R^\nu\backslash 0$, then $u_{|t}$ is the Schwartz kernel of $\pi_{t}(u)$. We will basically treat both objects as the same thing. 

	\begin{lem}\label{lem:vanishing_symbol_well_defined}
		If $k\in \C^\nu$, $u\in \Psi^k(\mathbb{G})$ satisfies $u_{|\underline{1}}\in C^\infty_c(M\times M,\omegahalf)$, then for all $g\in \cinvf{-k+\underline{1}}$ and $(\pi,x)\in \widehat{\mathrm{H\!N}}$, $\pi_{\mathrm{inv}}(\sigma^k(u)\ast g)=0$.
		Equivalently, 
			$\sigma^k(u)\ast g\in \cK(L^2(M))\otimes C_0(\R_+^\nu\backslash 0)$,
		see \eqref{eqn:exact_sequence_3_TypeI} and \Cref{thm:Type1_tangent_groupoid}.
	\end{lem}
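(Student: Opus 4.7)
The plan is to replace the assertion by an equivalent $C^*$-algebraic statement and then proceed by reduction. By \Cref{thm:Type1_tangent_groupoid} and the exact sequence \eqref{eqn:exact_sequence_3_TypeI}, the vanishing $\pi_{\mathrm{inv}}(\sigma^k(u)\ast g)=0$ for every $(\pi,x)\in\widehat{\mathrm{H\!N}}$ is equivalent to $\sigma^k(u)\ast g\in\cK(L^2(M))\otimes C_0(\R_+^\nu\setminus 0)$, since $\widehat{\mathrm{H\!N}}$ is the spectrum of the quotient $C^*_{\mathrm{inv}}\cG$. So it suffices to establish this ideal containment.

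First I would reduce to $\Re k\in\C_{<0}^\nu$ using \eqref{eqn:dfn_algebra_vanishing_Fourier_invariant} and \eqref{eqn:identities_I_Use_a_alot_vector_field_out_2} to write $u=\sum_i\theta_{k_i}(D_i)\ast u_i$ with $u_i\in\Psi^{k-k_i}(\mathbb{G})$ and $k-k_i\in\C_{<0}^\nu$. Next I would reduce to $u_{|\underline{1}}=0$: fix $h\in C^\infty_c((\Rpt)^\nu)$ with $\int_{]0,1]^\nu}h(\lambda)\,d\lambda/\lambda^{k+\underline{1}}=1$ and set $v:=I_k(u_{|\underline{1}}\otimes h)\in\Psi^k(\mathbb{G})$, noting that $u_{|\underline{1}}\otimes h\in\cinvf{k+\underline{1}}$ by \Crefitem{thm:invariant_vanish_algebra}{inclusion}. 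A direct computation gives $v_{|\underline{1}}=u_{|\underline{1}}$ and $v_{|t}\sim C\,t^k\,u_{|\underline{1}}$ as $|t|\to\infty$; since $\Re k\in\C_{<0}^\nu$ one has $|t^k|\to 0$, placing $v$ in $\cK(L^2(M))\otimes C_0(\R_+^\nu\setminus 0)$. As this closed ideal is $\alpha_\lambda$-invariant and stable under convolution and limits in $C^*_{\mathrm{inv}}\mathbb{G}$, the element $\sigma^k(v)\ast g$ also lies in it, and replacing $u$ by $u-v$ achieves $u_{|\underline{1}}=0$.

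With this reduction in hand, \Cref{lem:equivariance_smooth_non_zero_fibers} at $t=\underline{1}$ shows that $\lambda^{-k}u_{|\lambda}$ is a smooth $C^\infty_c(M\times M,\omegahalf)$-valued function of $\lambda\in(\Rpt)^\nu$ vanishing at $\lambda=\underline{1}$. I would then iterate the previous step on successive Taylor coefficients of $\lambda^{-k}u_{|\lambda}$ at $\lambda=\underline{1}$ and assemble the corrections via \Cref{thm:asymptotic_sum}, arranging that $u_{|t}$ vanishes to all orders at $t=\underline{1}$ (modulo an element of the ideal).

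The main obstacle will be the final deduction: showing that once $u_{|t}$ is infinite-order flat at $t=\underline{1}$ and $g\in\cinvf{-k+\underline{1}}$ is flat to order $-k+\underline{1}$ at $t=0$, the limit $\sigma^k(u)\ast g=\lim_\lambda\lambda^{-k}\alpha_\lambda(u)\ast g$ lies in $\cK(L^2(M))\otimes C_0(\R_+^\nu\setminus 0)$. The plan is to exploit Fell continuity on $\widehat{C^*_{\mathrm{inv}}\mathbb{G}}$ from \Cref{thm:Type1_tangent_groupoid}: along any sequence $t_n\to 0$ for which $\pi_{t_n}$ approaches a point of $\widehat{\mathrm{H\!N}}\times\{0\}$, establish $\norm{\pi_{t_n}(\sigma^k(u)\ast g)}\to 0$ by balancing the dilation factor $\lambda^{-k}$ against the vanishing conditions on $u$ and $g$ and the order-$k$ homogeneity $\alpha_\mu(\sigma^k(u))=\mu^k\sigma^k(u)$.
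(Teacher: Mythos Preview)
Your overall shape is right—subtract off a correction built from a smooth slice and argue the remainder contributes nothing—but there are two genuine gaps.

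First, the reduction in Step~1 to $k\in\C_{<0}^\nu$ does not preserve the hypothesis. Decomposing $u=\sum_i\theta_{k_i}(D_i)\ast u_i$ gives $u_{|\underline{1}}=\sum_i D_i\ast(u_i)_{|\underline{1}}$, and smoothness of the sum says nothing about smoothness of the individual $(u_i)_{|\underline{1}}$. The reduction is also unnecessary: the only place you use $\Re k<0$ is to get $v\in\cK(L^2M)\otimes C_0$, but what you actually need is $\sigma^k(v)\ast g\in\cK(L^2M)\otimes C_0$, and that follows for arbitrary $k$ since each $\lambda^{-k}\alpha_\lambda(v)\ast g$ lies in the closed ideal $\cK(L^2M)\otimes C_0(\R_+^\nu\setminus 0)$ and the limit exists in $C^*_{\mathrm{inv}}\mathbb{G}$ by \Cref{lem:principal_symbol}.

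Second, Steps~3--4 are the real problem, and you correctly identify Step~4 as the main obstacle without a working plan. The iteration on Taylor coefficients at $t=\underline{1}$ and the Fell-continuity endgame are both unnecessary if you change the base point. The paper's key observation is to work at $t=\underline{C}$ with $C$ strictly larger than the constant in \Cref{lem:equivariance_smooth_non_zero_fibers}. The support statement there says $(\lambda^{-k}\alpha_\lambda(u)-u)_{|\underline{C}}=0$ for all $\lambda\in[1,\infty[^\nu$, so $u_{|\lambda\underline{C}}=\lambda^k u_{|\underline{C}}$ \emph{exactly}, not merely to leading order. Now choose $h\in C^\infty_c(M\times M\times[C/2,C]^\nu,\omegahalf)$ with $I_k(h)_{|\underline{C}}=u_{|\underline{C}}$; then $I_k(h)_{|\lambda\underline{C}}=\lambda^k u_{|\underline{C}}$ for $\lambda\in[1,\infty[^\nu$ as well (since $\supp h$ sits entirely below $\underline{C}$), giving $\alpha_\lambda(u-I_k(h))_{|\underline{C}}=0$ for all such $\lambda$. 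Hence $\pi_{\underline{C}}(\sigma^k(u-I_k(h))\ast g)=0$, and the homogeneity relation $\pi_{\lambda\mu}(\sigma^k(u')\ast g)=\mu^k\pi_\lambda(\sigma^k(u')\ast\alpha_\mu(g))$ combined with \eqref{eqn:norm_identity_tangent_2} forces $\sigma^k(u-I_k(h))\ast g=0$ outright. One subtraction suffices; no Taylor iteration, no asymptotic sums, no Fell continuity.
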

	This lemma implies that $\pi_{\mathrm{inv}}(\sigma^k(u)\ast g)$ only depends on $u_{|\underline{1}}$ modulo $C^\infty_c(M\times M,\omegahalf)$ and $g$.
	\begin{proof}
		By \eqref{eqn:fiber_at_dilations} and \Cref{lem:equivariance_smooth_non_zero_fibers}, for all $t\in (\Rpt)^\nu$, we have
			\begin{equation*}\begin{aligned}
				u_{|t}=\alpha_t(u)_{|\underline{1}}= t^{k}(t^{-k}\alpha_t(u)_{|\underline{1}}-u_{|\underline{1}}+u_{|\underline{1}})\in C^\infty_c(M\times M,\omegahalf)
			\end{aligned}\end{equation*}
		Let $C$ as in \Cref{lem:equivariance_smooth_non_zero_fibers}, $\underbar{C}=(C,\cdots,C)\in (\Rpt)^\nu$. 
		There exists $h\in C^\infty_{c}(M\times M\times [C/2,C]^\nu,\omegahalf)$ such that $I_k(h)_{|\underline{C}}=u_{|\underline{C}}$, see the proof of \Crefitem{thm:main_prop_bi_graded_pseudo_diff_M}{inclusion_smoothing}.
		By the hypothesis on the support of $h$, $I_k(h)_{|\lambda}=\lambda^ku_{|\underline{C}}$ for all $\lambda\in [C,+\infty[^\nu$.
		So, by \eqref{eqn:fiber_at_dilations} and \Cref{lem:equivariance_smooth_non_zero_fibers}, 
			\begin{equation}\label{eqn:qsdilfljqisdjfoqsjmdf}\begin{aligned}
				\alpha_\lambda(u-I_k(h))_{|\underline{C}}=(\alpha_\lambda(u)-\lambda^k u)_{|\underline{C}}=0 ,\quad \forall\lambda\in [1,+\infty[^\nu.
			\end{aligned}\end{equation}
		Let $g\in \cinvf{-k+\underline{1}}$.
		By \eqref{eqn:qsdilfljqisdjfoqsjmdf}, $\pi_{\underline{C}}(\sigma^k(u-I_k(h))\ast g)=0$.
		For any $\lambda,\mu\in (\Rpt)^\nu$ and $u'\in \Psi^k(\mathbb{G})$, we have
			\begin{equation*}\begin{aligned}
				\pi_{\lambda\mu}(\sigma^k(u')\ast g)=\pi_{\lambda} (\alpha_\mu(\sigma^k(u')\ast g))=\mu^k\pi_\lambda(\sigma^k(u')\ast \alpha_\mu(g))		
			\end{aligned}\end{equation*}
		So, $\pi_{\lambda}(\sigma^k(u-I_k(h))\ast g)=0$ for all $\lambda\in (\Rpt)^\nu$.
		By \eqref{eqn:norm_identity_tangent_2}, $\sigma^k(u-I_k(h))\ast g=0$. Therefore, $\sigma^k(u)\ast g=\sigma^k(I_k(h))\ast g$.
		On the other hand, since $h$ is supported away from the fiber at $0$, 
		using \eqref{eqn:support_automorphism},
		for any $\lambda\in (\Rpt)^\nu$, $\alpha_\lambda(I_k(h))\ast g\in C^\infty_c(M\times M\times \R_+^\nu\backslash 0,\omegahalf) $.
		So, $\sigma^k(I_k(h))\ast g\in \cK(L^2(M))\otimes C_0(\R_+^\nu\backslash 0)$.
	\end{proof}

	\begin{lem}\label{lem:inv_D_is_pi_D}
		If $l\in \Z_+^\nu$ and $X\in \cF^l$, then $\pi_{\mathrm{inv}}(\theta_l(X))=\pi([X]_{l,x})$ as maps $C^{-\infty}(\pi)\to C^{-\infty}(\pi)$.
	\end{lem}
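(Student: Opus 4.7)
The plan is to reduce the statement to the case $X = \natural(v_0)$ for a single pure element $v_0 \in V^l$ in a $\nu$-graded Lie basis, then invoke the explicit formula extracted from the proof of \MyCref{thm:compatability_cinvdiv_cinv}[inclusion_vectorfield_cinvdiv] to transfer the right-invariant Lie derivative from $\mathbb{G}$ down to $\mathfrak{G}_x$ via $\natural_{x,0}$.

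Fix a $\nu$-graded Lie basis $(V,\natural)$ using \Cref{lem:bigger_bi_graded_Lie}. By \MyCref{dfn:graded_lie_basis}[3] write $X = \sum_i f_i \natural(v_i)$ with $v_i \in V^{a(i)}$, $a(i) \preceq l$, and $f_i \in C^\infty(M,\R)$. Equation \eqref{eqn:qskdofjpokqsdjfmjkmds} then decomposes
$$\theta_l(X) = \sum_i \delta_{(f_i\, t^{\,l-a(i)}) \circ \pi_{\mathbb{G}^{(0)}}} \ast \theta_{a(i)}(\natural(v_i)).$$
The scalar factor $t^{l-a(i)}$ vanishes at $t=0$ whenever $a(i) \prec l$. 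Since $\pi_{\mathrm{inv}}$ is defined via \eqref{eqn:pi_inv_definition} by evaluating the invariant representative $\tilde f$ at $t=0$, those terms contribute $0$, and we obtain for every $f \in \cinv$
$$\pi_{\mathrm{inv}}(\theta_l(X) \ast f) = \sum_{i:\, a(i) = l} f_i(x)\, \pi_{\mathrm{inv}}(\theta_l(\natural(v_i)) \ast f).$$

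For the pure case $v_0 \in V^l$, take $f = \cQ_{V*}(\tilde f \circ \pi_{V \times \mathbb{G}^{(0)}})$ with $\tilde f$ supported in a sufficiently small neighbourhood of $V \times M \times \{0\}$ (\Cref{lem:change_open_set_invariant_function}). By \Cref{rem:more_details_on_proof} there exists $g \in C^\infty_c(\tdom(\cQ_V),\Omega^1(V))$ such that
$$\theta_l(\natural(v_0)) \ast f - \cQ_{V*}(g \circ \pi_{V \times \mathbb{G}^{(0)}}) \in C^\infty_c(M \times M \times \R_+^\nu \setminus 0, \omegahalf)$$
and $g(\cdot, x, 0) = \cL_Y(\tilde f)(\cdot, x, 0)$, where $Y$ is the right-invariant vector field on the Lie group $V$ associated to $v_0$. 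The remainder term is supported away from $t=0$ and therefore lies in the kernel of $\pi_{\mathrm{inv}}$ (its invariant representative at $t=0$ is zero), so only $g(\cdot, x, 0)$ matters. By \Cref{rem:natural_x_0_is_a_lie_algebra_homo}, $\natural_{x,0}: V \to \mathfrak{g}_x$ is a Lie group homomorphism, hence it intertwines $Y$ with the right-invariant vector field on $\mathfrak{G}_x$ attached to $\natural_{x,0}(v_0) = [\natural(v_0)]_{l,x}$. Integration along the fibres of $\natural_{x,0}$ is equivariant under right translation, so
$$\natural_{x,0*}\big(\cL_Y(\tilde f)(\cdot, x, 0)\big) = [\natural(v_0)]_{l,x} \ast \natural_{x,0*}\big(\tilde f(\cdot, x, 0)\big),$$
and applying $\pi$ yields $\pi_{\mathrm{inv}}(\theta_l(\natural(v_0)) \ast f) = \pi([\natural(v_0)]_{l,x})\, \pi_{\mathrm{inv}}(f)$.

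Summing the identities and using that $[X]_{l,x} = \sum_{i:\, a(i)=l} f_i(x) [\natural(v_i)]_{l,x}$ in $\mathfrak{g}_x$ (terms with $a(i) \prec l$ project to $0$ by the definition \eqref{eqn:Lie_algebra_osculating}), we conclude $\pi_{\mathrm{inv}}(\theta_l(X)) = \pi([X]_{l,x})$ on $\pi_{\mathrm{inv}}(\cinv) L^2(\pi) = C^\infty(\pi)$ via \eqref{eqn:Cinfinty_cinv}; the equality extends to $C^{-\infty}(\pi)$ by duality using \eqref{eqn:qimosdjifojqsodmjfmqsdjfmq22} together with the identity $\theta_l(X)^* = \theta_l(X^*)$ from \eqref{eqn:homogenity_theta} and $[X^*]_{l,x} = -[X]_{l,x}^* = \pi([X]_{l,x})^*$ implicit in the self-adjoint action. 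The main technical point is the compatibility of $\natural_{x,0*}$ with right-invariant Lie derivatives, which is geometric rather than analytic and is exactly what the Lie-basis condition \eqref{eqn:bi_graded_lie_basis_commutator} is designed to ensure.
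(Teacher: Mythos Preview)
Your proof is correct and arrives at the same core computation as the paper, but the paper takes a shorter path. Instead of decomposing $X = \sum_i f_i \natural(v_i)$ in a fixed Lie basis and then arguing that the lower-order terms with $a(i) \prec l$ are killed by the $t^{l-a(i)}$ factor at $t=0$, the paper simply \emph{enlarges} the Lie basis (via \Cref{lem:bigger_bi_graded_Lie}) so that $X$ itself equals $\natural(v_0)$ for a single pure element $v_0 \in V^l$. This is legitimate because one can always start from a $\nu$-graded basis that already contains $X$ in degree $l$ and then pass to a Lie basis containing it. With that choice, $\natural_{x,0}(v_0) = [X]_{l,x}$ on the nose, and the whole decomposition step with its case analysis disappears; one goes straight to \Cref{rem:more_details_on_proof} and the definition \eqref{eqn:pi_inv_definition}. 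For the extension to $C^{-\infty}(\pi)$, the paper just invokes density of $C^\infty(\pi)$ in $C^{-\infty}(\pi)$ together with continuity of both sides, which is a bit cleaner than your duality argument (the chain $[X^*]_{l,x} = -[X]_{l,x}^* = \pi([X]_{l,x})^*$ you wrote conflates Lie-algebra elements with operators and should be tidied up, though the intent is clear). What your approach buys is that it works with an arbitrary Lie basis rather than one adapted to $X$; what the paper's approach buys is brevity.
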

	\begin{proof}
		By density of $C^\infty(\pi)$ in $C^{-\infty}(\pi)$, it is enough to check equality on $C^\infty(\pi)$.
		By \eqref{eqn:Cinfinty_cinv}, we only need to check that both agree when composed by $\pi_{\mathrm{inv}}(f)$ for $f\in \cinv$.
		Let $(V,\natural)$ be a $\nu$-graded Lie basis.
		We choose $V$ such that there exists $v_0\in V^l$ such that $\natural(v_0)=X$ (which exists by \Cref{lem:bigger_bi_graded_Lie}).
		So, $\natural_{x,0}(v_0)=[X]_{l,x}$, and the lemma follows from \Cref{rem:more_details_on_proof} and the definition of $\pi_{\mathrm{inv}}$ in \eqref{eqn:pi_inv_definition}
	\end{proof}
	
	\begin{lem}\label{lem:smooth_vectors_as_sums_of_differentials}
		Let $(\pi,x)\in \HatHLnon$, $k\in \C^\nu$. One has 
			\begin{equation*}\begin{aligned}
				C^\infty(\pi)= \left\{\sum_{i=1}^n\pi_{\mathrm{inv}}(g_i)\xi_i:g_i\in \cinvf{k},\xi_i\in C^\infty(\pi)\right\}.		
			\end{aligned}\end{equation*}
	\end{lem}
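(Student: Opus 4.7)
The inclusion $\supseteq$ is immediate since $\cinvf{k}\subseteq \cinv \subseteq \cinvdis$ and $\pi_{\mathrm{inv}}$ of any element of $\cinvdis$ preserves $C^\infty(\pi)$. For $\subseteq$, I observe that by the extension of \Cref{dfn:Schwartz_functions_vanish} to $\C^\nu$ we have $\cinvf{k}=\cinvf{l}$ for some $l\in \Z_+^\nu$, and by \eqref{eqn:theta_product_cinfk} the spaces are nested: $\cinvf{k'}\subseteq \cinvf{k}$ whenever $k\preceq k'$. So it is enough to prove the statement for arbitrarily large $k\in \Z_+^\nu$.

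The core of the plan is to produce a left-invariant differential operator $R$ on $\mathfrak{G}_x$ of some weighted order $l\succeq k$ such that $\pi(R):C^\infty(\pi)\to C^\infty(\pi)$ is a topological isomorphism. Once such $R$ is in hand, I will lift it to $\tilde R\in \DO^l(M)$ using \MyCref{dfn:graded_lie_basis}[3], so that $[\tilde R]_{l,x}=R$, and by \Cref{lem:inv_D_is_pi_D} (extended multiplicatively) we have $\pi_{\mathrm{inv}}(\theta_l(\tilde R))=\pi(R)$. Given $\xi\in C^\infty(\pi)$, set $\zeta:=\pi(R)^{-1}\xi\in C^\infty(\pi)$, apply the Dixmier--Malliavin theorem (\Cref{thm:Dixmier-Malliavin_representation}) to $\zeta$ as a smooth vector of $\pi$ (viewed as a representation of $\mathfrak{G}_x$) to write $\zeta=\sum_i \pi(h_i)\eta_i$ with $h_i\in C^\infty_c(\mathfrak{G}_x,\Omega^1(\mathfrak{g}_x))$ and $\eta_i\in C^\infty(\pi)$, and realize each $\pi(h_i)=\pi_{\mathrm{inv}}(f_i)$ for some $f_i\in \cinv$ via \eqref{eqn:pismooth_pi_inv_smooth}. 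Then
\[
\xi=\pi(R)\zeta=\sum_i \pi(R)\pi_{\mathrm{inv}}(f_i)\eta_i=\sum_i \pi_{\mathrm{inv}}(\theta_l(\tilde R)\ast f_i)\,\eta_i,
\]
and by \Crefitem{thm:invariant_vanish_algebra}{algebra} each $\theta_l(\tilde R)\ast f_i\in \cinvf{l}\subseteq \cinvf{k}$, which is what we want.

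To construct $R$, I exploit the weak-commutativity decomposition \eqref{eqn:weakly_commuting_lie_algebra}: $\mathfrak{g}_x=\bigoplus_{i=1}^{\nu} \mathfrak{g}_x^{\weak{i}}$ as a direct sum of commuting Lie ideals, so $\mathfrak{G}_x\simeq \prod_{i=1}^{\nu}\mathfrak{G}_x^{\weak{i}}$. By irreducibility the representation splits as an outer tensor product $\pi\simeq \pi_1\otimes\cdots\otimes \pi_\nu$, where $\pi_i$ is an irreducible unitary representation of $\mathfrak{G}_x^{\weak{i}}$, and the non-singular hypothesis $(\pi,x)\in \HatHLnon$ is precisely the statement that every $\pi_i$ is non-trivial. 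For each $i$, I construct a positive homogeneous Rockland operator $L_i$ on $\mathfrak{G}_x^{\weak{i}}$ of weighted degree $(0,\dots,0,c_i,0,\dots,0)$ with $\pi_i(L_i):C^\infty(\pi_i)\to C^\infty(\pi_i)$ a topological isomorphism. Choosing $n_i\in \N$ with $n_ic_i\geq k_i$ and setting $R:=L_1^{n_1}\cdots L_\nu^{n_\nu}$ (the factors commute since they live on distinct tensor factors), I get weighted order $l=(n_1c_1,\dots,n_\nu c_\nu)\succeq k$, and $\pi(R)=\bigotimes_i \pi_i(L_i)^{n_i}$ is a topological isomorphism on $C^\infty(\pi)\simeq \widehat{\bigotimes}_i C^\infty(\pi_i)$.

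The main obstacle is the construction of each $L_i$ with the required isomorphism property, since the paper does not assume the Helffer--Nourrigat/Rockland theorem a priori. The plan here is to pass through the Kirillov model of \Cref{chap:Orbit_method}: each $\pi_i$ is realized concretely on $L^2(\R^{m_i})$, its smooth vectors are $\mathcal{S}(\R^{m_i})$, and the action of left-invariant differential operators on $\mathfrak{G}_x^{\weak{i}}$ becomes an action by explicit polynomial-coefficient differential operators on $\R^{m_i}$. In this model the abstract maximal hypoellipticity theorem \Cref{thm:asbtract} applies to produce, from injectivity of $\pi_i(L_i)$ on smooth vectors (which is a standard computation once $L_i$ is chosen as a sum of even powers of a Hörmander-type generating family of $\mathfrak{g}_x^{\weak{i}}$), the upgrade to a topological isomorphism $\mathcal{S}(\R^{m_i})\to \mathcal{S}(\R^{m_i})$. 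This is the one place where the nontrivial $C^*$-algebraic / hypoellipticity input of the paper is genuinely used.
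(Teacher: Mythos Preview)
Your approach is correct but takes a substantially heavier route than the paper. You build a Rockland-type operator $R=\prod_i L_i^{n_i}$ on $\mathfrak{G}_x$ and need $\pi(R)$ to be a topological isomorphism on $C^\infty(\pi)$; establishing this for each factor $\pi_i(L_i)$ forces you to invoke the abstract maximal hypoellipticity machinery of \Cref{thm:asbtract} (and implicitly to set up a full $C^*$-calculus on each $\mathfrak{G}_x^{\weak{i}}$, verify its Type~I property, and check the Rockland injectivity condition for \emph{all} non-trivial irreducible representations, not just $\pi_i$). This works, but it is a significant detour.

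The paper avoids all of this. Instead of producing a single invertible $R$, it uses the elementary \Cref{thm:pi_diff_appendix_decompo_vector_fields}: for a non-trivial irreducible $\pi_i$ of a nilpotent group, every smooth vector is a finite sum $\sum_j \pi_i(X_j)\xi_j$ with $X_j\in\mathfrak{g}_x^{\weak{i}}$. The proof of that fact is a few lines (find $g$ with $\int g=1$ and $\norm{\pi_i(g)}<1$, invert $\mathrm{Id}-\pi_i(g)$, and use that $f-f\ast g$ has vanishing integral so decomposes as $\sum X_j\ast f_j$). Iterating this over each direction $i$ enough times and lifting via \Cref{lem:inv_D_is_pi_D} and \eqref{eqn:theta_product} gives
\[
C^\infty(\pi)=\Big\{\textstyle\sum_i \pi_{\mathrm{inv}}(\theta_{l_i}(D_i))\xi_i : k\preceq l_i,\ D_i\in\DO^{l_i}(M),\ \xi_i\in C^\infty(\pi)\Big\},
\]
and then applying \eqref{eqn:Cinfinty_cinv} to each $\xi_i$ and using \eqref{eqn:dfn_algebra_vanishing_Fourier_invariant} finishes immediately. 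Your Dixmier--Malliavin step and the lift of differential operators are the same as in the paper; the only difference is that the paper replaces the hard invertibility statement by the soft surjectivity statement from \Cref{thm:pi_diff_appendix_decompo_vector_fields}, which needs no hypoellipticity input at all.
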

	\begin{proof}
		By an iterated application of \Cref{thm:pi_diff_appendix_decompo_vector_fields} together with \Cref{lem:inv_D_is_pi_D}, we have 
			\begin{equation*}\begin{aligned}
						C^\infty(\pi)= \left\{\sum_{i=1}^n\pi_{\mathrm{inv}}(\theta_{l_i}(D_i))\xi_i:l_i\in \Z_+^\nu,k\preceq l_i,D_i\in \DO^{l_i}(M),\xi_i\in C^\infty(\pi)\right\}.
			\end{aligned}\end{equation*}
		By applying \eqref{eqn:Cinfinty_cinv} to each $\xi_i$, the result follows from \eqref{eqn:dfn_algebra_vanishing_Fourier_invariant}.
	\end{proof}

	\begin{lem}\label{lem:differential_symbol_same}
		Let $u\in \Psi^0(\mathbb{G})$ and $f\in C^\infty_c(M)$ such that $u_{|\underline{1}}=\delta_f$, then for any $h\in \cinvf{\underline{1}}$ and $(\pi,x)\in \widehat{\HN}$, $\pi_{\mathrm{inv}}(\sigma^0(u)\ast h)=f(x)\pi_{\mathrm{inv}}(h)$.
	\end{lem}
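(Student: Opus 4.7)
The plan is to compare the arbitrary lift $u$ with a canonical, manifestly dilation-invariant lift of $\delta_f$ to $\mathbb{G}$, for which the symbol is transparent. The canonical choice is
\begin{equation*}
v := \theta_0(\delta_f) = \delta_{g\circ \pi_{\mathbb{G}^{(0)}}},\qquad g(y,t)=f(y),
\end{equation*}
which belongs to $\Psi^{\preceq 0}(\mathbb{G})$ by \Crefitem{thm:main_prop_bi_graded_pseudo_diff}{incl}. Since $g$ does not depend on $t$ and $\pi_{\mathbb{G}^{(0)}}\circ\alpha_\lambda=\alpha_\lambda\circ \pi_{\mathbb{G}^{(0)}}$ with the second $\alpha_\lambda$ being the dilation $(y,t)\mapsto (y,\lambda^{-1}t)$ on $M\times\R_+^\nu$, we have $\alpha_\lambda(v)=v$ for every $\lambda\in(\Rpt)^\nu$, and $v_{|\underline{1}}=\delta_f=u_{|\underline{1}}$.

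Next I would compute $\pi_{\mathrm{inv}}(v\ast h)$ directly. Fix a $\nu$-graded basis $(V,\natural)$ and decompose $h=\cQ_{V*}(\tilde h\circ\pi_{V\times\mathbb{G}^{(0)}})+h'$ with $h'\in C^\infty_c(M\times M\times\R_+^\nu\setminus 0,\omegahalf)$ as in \eqref{eqn:sum_smooth_decompo_invariatn_function}. Applying the formula for $\delta_{u\circ \pi_{\mathbb{G}^{(0)}}}\ast\cQ_{V*}(\cdot)$ from the proof of \Crefitem{thm:compatability_cinvdiv_cinv}{inclusion_smooth_cinvdiv} gives $v\ast\cQ_{V*}(\tilde h\circ\pi_{V\times\mathbb{G}^{(0)}})=\cQ_{V*}(l\circ\pi_{V\times\mathbb{G}^{(0)}})$ with $l(w,x,t)=f(e^{\alpha_t(w)}\cdot x)\tilde h(w,x,t)$, and $l(w,x,0)=f(x)\tilde h(w,x,0)$. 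The contribution of $h'$ vanishes at $t=0$. By the definition \eqref{eqn:pi_inv_definition},
\begin{equation*}
\pi_{\mathrm{inv}}(v\ast h)=\pi\!\big(\natural_{x,0*}(l(\cdot,x,0))\big)=f(x)\,\pi\!\big(\natural_{x,0*}(\tilde h(\cdot,x,0))\big)=f(x)\,\pi_{\mathrm{inv}}(h).
\end{equation*}
By the $\alpha_\lambda$-invariance of $v$, the limit defining $\sigma^0(v)\ast h$ in \eqref{eqn:dfn_sigma_k} is stationary, so $\sigma^0(v)\ast h=v\ast h$ in $C^*_{\mathrm{inv}}\mathbb{G}$, and hence $\pi_{\mathrm{inv}}(\sigma^0(v)\ast h)=f(x)\pi_{\mathrm{inv}}(h)$.

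Finally, to reduce from the arbitrary $u$ to $v$, I would compare the two via Lemma \ref{lem:vanishing_symbol_well_defined}: the difference has smooth (in fact zero) restriction at $t=\underline{1}$, so the lemma gives $\pi_{\mathrm{inv}}(\sigma^0(u-v)\ast h)=0$, combining with the previous paragraph to give $\pi_{\mathrm{inv}}(\sigma^0(u)\ast h)=f(x)\pi_{\mathrm{inv}}(h)$, as required.

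The main obstacle is a technical order-matching issue: $v$ is only known a priori to lie in $\Psi^{\preceq 0}(\mathbb{G})=\sum_{l\preceq 0}\Psi^l(\mathbb{G})$, not in $\Psi^0(\mathbb{G})$ itself, whereas Lemma \ref{lem:vanishing_symbol_well_defined} is formulated for a fixed order. To bridge this, I would use the explicit decomposition \eqref{eqn:theta_0_sum_lower_order} of $\theta_0(\delta_f)$ as $\sum_{S,S'}(-1)^{|S'|}I_{\kappa_{S'}}(g_S)$ with $\kappa_{S'}\preceq 0$, isolate the single summand with $\kappa_{S'}=0$ (which lies in $\Psi^0(\mathbb{G})$) as a genuine $\Psi^0$-lift of $\delta_f$ modulo the lower-order pieces, and check that each lower-order piece either contributes trivially to $\pi_{\mathrm{inv}}\circ\sigma^0$ by the same $C^*$-limit argument used in Lemma \ref{lem:vanishing_symbol_well_defined} (applied order by order), or that the $C^*$-limit defining $\sigma^0$ extends naturally to the sum space $\Psi^{\preceq 0}(\mathbb{G})$ and the vanishing lemma carries over componentwise. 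Once this bookkeeping is arranged, the three-step strategy above closes the proof.
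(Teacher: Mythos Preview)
Your overall strategy is on target and the computation of $\pi_{\mathrm{inv}}(\theta_0(\delta_f)\ast h)=f(x)\pi_{\mathrm{inv}}(h)$ is correct and is in fact the endpoint of the paper's argument as well. The obstacle you flag is genuine, but your proposed fix is not quite right. Isolating the single summand $(-1)^\nu I_0(g_{\bb{1,\nu}})\in\Psi^0(\mathbb{G})$ does \emph{not} produce a lift of $\delta_f$ modulo smoothing: its restriction at $\underline{1}$ differs from $\delta_f$ by the restrictions of the lower-order pieces $I_{\kappa_{S'}}(g_S)$ with $\kappa_{S'}\prec 0$, which lie in $\Psi^{\prec 0}_c(M)$ but are not smooth kernels. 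So Lemma~\ref{lem:vanishing_symbol_well_defined} cannot be applied to $u$ minus this single summand, and neither of your two alternatives (``order by order'' vanishing or extending $\sigma^0$ to $\Psi^{\preceq 0}$) closes cleanly.

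The paper's resolution is a small but decisive twist on the decomposition you cite: rewrite each piece via $I_{\kappa_{S'}}(g_S)=\theta_{-\kappa_{S'}}(\delta_1)^{-1}\ast I_0(\theta_{-\kappa_{S'}}(\delta_1)\ast g_S)$ and note that at $t=\underline{1}$ the factor $\theta_{-\kappa_{S'}}(\delta_1)$ evaluates to $1$, so the specific element $u=\sum_{S,S'}(-1)^{|S'|}I_0(\theta_{-\kappa_{S'}}(\delta_1)\ast g_S)$ lies genuinely in $\Psi^0(\mathbb{G})$ with $u_{|\underline{1}}=\delta_f$. By Lemma~\ref{lem:vanishing_symbol_well_defined} it then suffices to compute $\sigma^0$ of this particular $u$. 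For the summands with $S'\neq\bb{1,\nu}$, the factor $\theta_{-\kappa_{S'}}(\delta_1)$ carries a positive power of some $t_i$, so the convolution with $h$ vanishes on $\cG\times\{0\}$ and lands in $\cK(L^2M)\otimes C_0(\R_+^\nu\setminus 0)$. For the top summand $(-1)^\nu I_0(g_{\bb{1,\nu}})$, the paper rewrites $\sigma^0$ as the full $(\Rpt)^\nu$-integral $\int\partial^\nu_\lambda(\alpha_\lambda(g)\ast h)\,d\lambda$ and uses a decay estimate $\norm{\alpha_\lambda(g)\ast h}_{C^*\mathbb{G}}\leq C\lambda_i^{-1}$ (which relies on $h\in\cinvf{\underline{1}}$) to evaluate it as $(-1)^\nu\alpha_0(g)\ast h=(-1)^\nu\theta_0(\delta_f)\ast h$, recovering exactly your Step~2 computation.
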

	\begin{proof}
		By \Cref{lem:vanishing_symbol_well_defined}, it is enough to prove the lemma for a single $u$.
		We follow the notation in the proof of \Crefitem{thm:main_prop_bi_graded_pseudo_diff}{incl}.
		So, let $g\in \cinv$ such that $\alpha_0(g)=\theta_0(\delta_f)$, and for any $S\subseteq \bb{1,\nu}$, let $g_S$ be as in \eqref{eqn:g_S}.
		By \eqref{eqn:theta_0_sum_lower_order} and \eqref{eqn:identities_I_Use_a_alot_vector_field_out_2},
			\begin{equation*}\begin{aligned}
				\delta_f=		\sum_{S\subseteq \bb{1,\nu}}\sum_{S'\subseteq S}(-1)^{|S'|}I_{\kappa_{S'}}(g_S)_{|\underline{1}}=\sum_{S\subseteq \bb{1,\nu}}\sum_{S'\subseteq S}(-1)^{|S'|}I_{0}(\theta_{-\kappa_{S'}}(\delta_1) \ast g_S)_{|\underline{1}}.
			\end{aligned}\end{equation*}
		So we take $u=\sum_{S\subseteq \bb{1,\nu}}\sum_{S'\subseteq S}(-1)^{|S'|}I_0(\theta_{-\kappa_{S'}}(\delta_1) \ast g_S)$.
		Let $h\in \cinvf{\underline{1}}$.
				We will now compute $\sigma^0(u)\ast h$.
		If $S'\neq \bb{1,\nu}$, then
			\begin{equation*}\begin{aligned}
				\alpha_\lambda(I_{0}(\theta_{-\kappa_{S'}}(\delta_1) \ast g_S))\ast h=\lambda^{-\kappa_{S'}}\theta_{-\kappa_{S'}}(\delta_1)\ast \alpha_\lambda(I_{\kappa_{S'}}(g_S))\ast h		
			\end{aligned}\end{equation*}
		is an invariant function which vanishes on $\cG\times \{0\}$ because $\kappa_{S'}\neq 0$.
		So, by \eqref{eqn:ksqjfmksqd} (the $L^1$-estimate), 
			\begin{equation*}\begin{aligned}
				\alpha_\lambda(I_{0}(\theta_{-\kappa_{S'}}(\delta_1) \ast g_S))\ast h\in \cK(L^2(M))\otimes C_0(\R_+^\nu\backslash 0).
			\end{aligned}\end{equation*}
		Hence, the limit is also in $\cK(L^2(M))\otimes C_0(\R_+^\nu\backslash 0)$.
		For the last term (the term with $S'=S=\bb{1,\nu}$), we have by \eqref{eqn:diff_tempo_formula_jkqhjsk},
			\begin{equation*}\begin{aligned}
				\sigma^0(I_0(g_{\bb{1,\nu}}))\ast h=\int_{(\Rpt)^\nu} \alpha_\lambda(g_{\bb{1,\nu}})\ast h \frac{\odif{\lambda}}{\lambda^{\underline{1}}}=\int_{(\Rpt)^\nu} \pdv*[mixed-order={\nu}]{\alpha_{\lambda}(g)\ast h}{\lambda_{1}\cdots,\lambda_{\nu}}\odif{\lambda}.
			\end{aligned}\end{equation*}
			We claim that for any $i\in \bb{1,\nu}$, there exists $C>0$ such that
				\begin{equation}\label{eqn:kqlsdjfkjsqdmljfqsdjfklqmsd}\begin{aligned}
					\norm{\alpha_\lambda(g)\ast h}_{C^*\mathbb{G}}\leq C \lambda_i^{-1}	,\quad \forall \lambda\in (\Rpt)^\nu,
				\end{aligned}\end{equation}
			for some constant $C>0$.
			To see this, if $\lambda_i\leq 1$, then $\norm{\alpha_\lambda(g)\ast h}$ is bounded by $\norm{g}_{C^*\mathbb{G}}\norm{h}_{C^*\mathbb{G}}$, see \eqref{eqn:equiv_norm_Cstar_Debord_Skand}.
			If $\lambda_i\geq 1$ by \eqref{eqn:dfn_algebra_vanishing_Fourier_invariant}, we can suppose that $h=\theta_l(D)\ast h'$ where $l\in \Z_+^\nu$, $h'\in \cinv$, $D\in \DO^{l}(M)$, $\underline{1}\preceq l$.
			By applying \Cref{lem:decomposition_D} with $S=\{i\}$, we can suppose that $D=D_1D_2$ with $D_1\in \DO^{l_S}(M)$ and $D_2\in \DO^{l_{S^c}}(M)$.
			By \eqref{eqn:equiv_norm_Cstar_Debord_Skand}, we have 
		\begin{equation*}\begin{aligned}
			\norm{\alpha_\lambda(g)\ast h}_{C^*\mathbb{G}}&=\norm{\alpha_\lambda(g\ast \theta_{l_S}(D_1))\ast \theta_{l_{S^c}}(D_2)\ast h'}_{C^*\mathbb{G}}\lambda_i^{-l_i}\\&\leq \norm{g\ast \theta_{l_S}(D_1)}_{C^*\mathbb{G}}\norm{\theta_{l_{S^c}}(D_2)\ast h'}_{C^*\mathbb{G}}\lambda_i^{-l_i} .		
		\end{aligned}\end{equation*}
		Finally, since $1\preceq l$ (and so $1\leq l_i$) and $1\leq \lambda_i$, \eqref{eqn:kqlsdjfkjsqdmljfqsdjfklqmsd} follows.
		By \eqref{eqn:kqlsdjfkjsqdmljfqsdjfklqmsd}, 
			\begin{equation*}\begin{aligned}
				\int_{(\Rpt)^\nu} \pdv*[mixed-order={\nu}]{\alpha_{\lambda}(g)\ast h}{\lambda_{1}\cdots,\lambda_{\nu}}\odif{\lambda}=(-1)^\nu\alpha_{0}(g)\ast h=	(-1)^\nu\theta_{0}(\delta_f)\ast h	
			\end{aligned}\end{equation*}
		The result follows.
	\end{proof}
	\paragraph{Proof of \Cref{thm:principal_symbol_pseudo_diff_well_defined}.}
		By \Cref{lem:smooth_vectors_as_sums_of_differentials} and \Cref{lem:principal_symbol}, it follows that for any $\xi\in C^\infty(\pi)$, Limit \eqref{eqn:dfn_principal_symbol} converges in the topology of $L^2(\pi)$ and thus defines a continuous linear map $C^\infty(\pi)\to L^2(\pi)$.
		To check that the image is in $C^\infty(\pi)$ and that the convergence actually happens in the topology of $C^\infty(\pi)$, it suffices to use the identity $\sigma^{k+l}(\theta_l(D)\ast u)=\theta_l(D)\ast \sigma^k(u)$. 
		So, \Crefitem{thm:principal_symbol_pseudo_diff_well_defined}{Infinity} follows.
		By duality, \Crefitem{thm:principal_symbol_pseudo_diff_well_defined}{negative_infinity} follows.
		By \Cref{lem:vanishing_symbol_well_defined}, it follows that $\sigma^k(v,\pi,x)$ is independent of the choice of $f$ such that $I_k(f)=v$.
		This proves that the symbol is well-defined in the compactly supported case.
		\Crefitem{thm:principal_symbol_pseudo_diff_well_defined}{3} in the compactly supported case is straightforward to prove.
		\Crefitem{thm:principal_symbol_pseudo_diff_well_defined}{3} in the compactly supported case together with \Cref{lem:differential_symbol_same}, one deduces that the symbol is well-defined in the non-compactly supported case.
		We now prove \Crefitem{thm:principal_symbol_pseudo_diff_well_defined}{5}.
		If $v\in \Psi^k_c(M)$ and $k\prec l$ and $\xi\in C^\infty(\pi)$, then 
			\begin{equation*}\begin{aligned}
				\sigma^l(v,\pi,x)&=\lim_{\lambda_1\to +\infty,\cdots,\lambda_\nu\to +\infty}\pi_{\mathrm{inv}}(\lambda^{-l}\alpha_{\lambda}(I_k(f)))\xi\\&=\lim_{\lambda_1\to +\infty,\cdots,\lambda_\nu\to +\infty}\lambda^{k-l}\pi_{\mathrm{inv}}(\lambda^{-k}\alpha_{\lambda}(I_k(f)))\xi=0\times \sigma^{k}(v,\pi,x)=0.	
			\end{aligned}\end{equation*}
		The rest of \Cref{thm:principal_symbol_pseudo_diff_well_defined} is straightforward to prove.\hfill$\qed$

\section{The \texorpdfstring{$C^*$}{C-Star}-algebras of operators of order \texorpdfstring{$0$}{0}}\label{sec:cstar_algebras_of_pseudo_of_order0}

The goal of this section is to study the $C^*$-algebra of pseudo-differential operators. Ultimately, we want to show the following: \begin{enumerate}
	\item   Compactly supported operators (and their symbols) of order $k\in \C^\nu_{\leq 0}$ are bounded.
	\item 	The $C^*$-algebra of operators of order $0$ is of Type I, and its irreducible representations all come from either the principal symbol defined in \Cref{sec:principal_symbol} or from the natural representation on $L^2(M)$.
	\item The third hypothesis of \Cref{thm:simple_arg_sobolev} is satisfied, see \Cref{lem:vk}
\end{enumerate}
Once we do these three steps, we can apply \Cref{thm:simple_arg_sobolev}. All the results of \Cref{sec:sobolev_spaces}, \Cref{sec:main_theorem}, \Cref{sec:extension_of_parameters} will then follow easily.

Our first goal is to prove that operators of order $0$ are bounded.
	\begin{lem}\label{lem:Cotlar_Stein_lemma}
			For any $f,g\in \cinvf{\underline{1}}$, $p\in ]0,+\infty[$,
			\begin{equation*}\begin{aligned}
				\int_{(\Rpt)^\nu}\norm{f\ast \alpha_{\lambda}(g)}_{C^*\mathbb{G}}^{p}\frac{\odif{\lambda}}{\lambda^{\underline{1}}}<+\infty
			\end{aligned}\end{equation*}
		\end{lem}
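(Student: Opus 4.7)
The plan is to decompose $(\Rpt)^\nu$ into the $2^\nu$ orthants
\[
R_S := \{\lambda \in (\Rpt)^\nu : \lambda_i \leq 1\ \text{for}\ i\in S,\ \lambda_i \geq 1\ \text{for}\ i\notin S\},\qquad S \subseteq \bb{1,\nu},
\]
and prove on each $R_S$ the pointwise estimate
\[
\norm{f \ast \alpha_\lambda(g)}_{C^*\mathbb{G}} \leq C_S \prod_{i \in S}\lambda_i \prod_{i \notin S}\lambda_i^{-1}.
\]
Granting this, the contribution of $R_S$ to the $p$-th moment factorizes as $\prod_{i\in S}\int_0^1 \lambda_i^{p-1}\odif{\lambda_i}\,\prod_{i\notin S}\int_1^\infty \lambda_i^{-p-1}\odif{\lambda_i} = p^{-\nu}$, which is finite for every $p\in]0,+\infty[$; summing over the $2^\nu$ orthants will yield the lemma.

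To establish the pointwise bound on a fixed $R_S$, I would factor $\lambda = \tilde{\lambda}\dbtilde{\lambda}$ with $\tilde{\lambda}_i = \lambda_i$ for $i \in S$ (and $\tilde{\lambda}_i = 1$ otherwise) and $\dbtilde{\lambda}_i = \lambda_i$ for $i \notin S$ (and $\dbtilde{\lambda}_i = 1$ otherwise), so that $\tilde{\lambda} \in (0,1]^\nu$ and $\dbtilde{\lambda} \in [1,\infty)^\nu$. Invariance of the $C^*$-norm under $\alpha_{\dbtilde{\lambda}^{-1}}$, see \eqref{eqn:equiv_norm_Cstar_Debord_Skand}, converts the problem to bounding $\norm{\alpha_{\dbtilde{\lambda}^{-1}}(f) \ast \alpha_{\tilde{\lambda}}(g)}_{C^*\mathbb{G}}$. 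I would then exploit the two complementary descriptions of $\cinvf{\underline{1}}$, namely $f = \sum_\alpha \theta_{k_\alpha}(D_\alpha) \ast f_\alpha$ (from \Cref{dfn:Schwartz_functions_vanish}) and $g = \sum_\beta g_\beta \ast \theta_{l_\beta}(D'_\beta)$ (from \MyCref{thm:invariant_vanish_algebra}[alternate]), with $\underline{1} \preceq k_\alpha,l_\beta$ and $f_\alpha,g_\beta \in \cinv$. Expanding via the homogeneity \eqref{eqn:homogenity_theta} yields
\[
\alpha_{\dbtilde{\lambda}^{-1}}(f) \ast \alpha_{\tilde{\lambda}}(g) = \sum_{\alpha,\beta} \dbtilde{\lambda}^{-k_\alpha}\tilde{\lambda}^{l_\beta}\, \theta_{k_\alpha}(D_\alpha) \ast \bigl[\alpha_{\dbtilde{\lambda}^{-1}}(f_\alpha) \ast \alpha_{\tilde{\lambda}}(g_\beta)\bigr] \ast \theta_{l_\beta}(D'_\beta).
\]

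The main analytic input will be \Cref{thm:estimate_weakly_commut_convolution}: since $\dbtilde{\lambda}^{-1} + \tilde{\lambda} \in [1,2]^\nu \subset (\Rpt)^\nu$ for every $\lambda \in R_S$, the map $(\mu, \mu') \mapsto \alpha_\mu(f_\alpha) \ast \alpha_{\mu'}(g_\beta)$ is smooth from the compact subset $\{(\mu,\mu') \in [0,1]^{2\nu} : \mu + \mu' \in [1,2]^\nu\}$ into $\cinv$. Since convolution on either side with the distributions $\theta_{k_\alpha}(D_\alpha), \theta_{l_\beta}(D'_\beta) \in \cinvdis$ acts continuously on $\cinv$ (cf.\ \Cref{rem:continuity_dis}) and $\cinv \hookrightarrow C^*\mathbb{G}$ is continuous, the inner bracketed term will have $C^*$-norm bounded uniformly on $R_S$. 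Combining with the elementary inequalities $\dbtilde{\lambda}^{-k_\alpha} \leq \dbtilde{\lambda}^{-\underline{1}} = \prod_{i\notin S}\lambda_i^{-1}$ and $\tilde{\lambda}^{l_\beta} \leq \tilde{\lambda}^{\underline{1}} = \prod_{i\in S}\lambda_i$ (valid because $k_\alpha,l_\beta \succeq \underline{1}$, $\dbtilde{\lambda}_i \geq 1$, and $\tilde{\lambda}_i \leq 1$) will deliver the pointwise bound. The key obstacle is precisely the need to use both left- and right-decompositions simultaneously on the ``off-diagonal'' regions $R_S$ with $S \neq \emptyset, \bb{1,\nu}$; this mixed control is what the weak commutativity of $\cF^\bullet$ affords through \Cref{thm:estimate_weakly_commut_convolution}, and would fail without it.
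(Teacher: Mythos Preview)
Your argument is correct. The orthant decomposition and the target pointwise bound $\norm{f\ast\alpha_\lambda(g)}_{C^*\mathbb{G}}\le C_S\prod_{i\in S}\lambda_i\prod_{i\notin S}\lambda_i^{-1}$ on each $R_S$ match the paper exactly, and your use of \Cref{thm:estimate_weakly_commut_convolution} over the compact set $\{(\mu,\mu')\in[0,1]^{2\nu}:\mu+\mu'\in[1,2]^\nu\}$, followed by continuity of $\theta_{k_\alpha}(D_\alpha)\ast\cdot\ast\theta_{l_\beta}(D'_\beta)$ on $\cinv$ and of $\cinv\hookrightarrow C^*\mathbb{G}$, is sound.

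The route differs from the paper's, though. The paper decomposes $f=f'\ast\theta_k(D)$ and $g=\theta_l(D')\ast g'$ so that the differential operators sit \emph{in the middle} of $f\ast\alpha_\lambda(g)$; it then merges them into $\theta_{k+l}(DD')$ and applies the purely algebraic \Cref{lem:decomposition_D} to split $DD'=D_1D_2$ with $D_1\in\DO^{k_S+l_S}(M)$, $D_2\in\DO^{k_{S^c}+l_{S^c}}(M)$. After pushing $D_2$ through $\alpha_\lambda$, the bound follows from nothing more than $C^*$-norm submultiplicativity and invariance under $\alpha_\lambda$. You instead place the differential operators on the \emph{outside} and invoke the heavier analytic input \Cref{thm:estimate_weakly_commut_convolution} to uniformly control the inner factor $\alpha_{\dbtilde{\lambda}^{-1}}(f_\alpha)\ast\alpha_{\tilde{\lambda}}(g_\beta)$. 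Both approaches ultimately hinge on weak commutativity of $\cF^\bullet$---the paper through \Cref{lem:decomposition_D}, you through \Cref{thm:estimate_weakly_commut_convolution}---but the paper's argument is more elementary, needing only the $C^*$-norm axioms rather than the LF-space continuity of convolution and the smooth extension of $(\lambda,\mu)\mapsto\alpha_\lambda(f)\ast\alpha_\mu(g)$. Your version has the advantage of avoiding the combinatorial splitting of $DD'$ by hand.
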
	
	\begin{proof}
			For any $S\subseteq \bb{1,\nu}$, let $\Lambda_S=\{\lambda\in (\Rpt)^\nu:\forall i\in S,\lambda_i< 1,\ \forall i\in S^c,\lambda_i\geq 1\}$.
			We only need to show that the integral over each $\Lambda_S$ is finite.
			Let us fix $S$. 
			We write $f$ as in \eqref{eqn:invariant_vanish_algebra:alternate}, and $g$ as in \eqref{eqn:dfn_algebra_vanishing_Fourier_invariant}.
			Since the sum of $L^p$-functions is again $L^p$ (even for $p<1$), we can suppose that $f$ and $g$ are of the form $f'\ast \theta_k(D)$ and $\theta_l(D')\ast g'$ with $f',g'\in \cinv$, $D\in \DO^k(M)$, $D'\in \DO^l(M)$, $\underline{1}\preceq k$, $\underline{1}\preceq l$.
			Hence, by \eqref{eqn:homogenity_theta} and \eqref{eqn:theta_product}, 
				\begin{equation*}\begin{aligned}
					f\ast \alpha_\lambda(g)=\lambda^kf'\ast \theta_{l+k}(DD')\ast \alpha_{\lambda}(g').
				\end{aligned}\end{equation*}
			We now apply \Cref{lem:decomposition_D} to $DD'$.
			So, we can suppose that $DD'$ is of the form $D_1D_2$ with $D_1\in \DO^{k_S+l_{S}}(M)$, $D_2\in \DO^{k_{S^c}+l_{S^c}}(M)$.
			Therefore, 
			\begin{equation*}\begin{aligned}
					f\ast \alpha_\lambda(g)=\lambda^{k_{S}-l_{S^c}}f'\ast \theta_{l_S+k_S}(D_1)\ast \alpha_{\lambda}(\theta_{k_{S^c}+l_{S^c}}(D_2)\ast g'). 
				\end{aligned}\end{equation*}
			Hence, by \eqref{eqn:equiv_norm_Cstar_Debord_Skand},
				\begin{equation*}\begin{aligned}
							\norm{f\ast \alpha_\lambda(g)}^p_{C^*\mathbb{G}}	\leq \norm{f'\ast \theta_{l_S+k_S}(D_1)}_{C^*\mathbb{G}}^p\norm{\theta_{k_{S^c}+l_{S^c}}(D_2)\ast g'}^p_{C^*\mathbb{G}}\lambda^{pk_{S}-pl_{S^c}}
				\end{aligned}\end{equation*}
			Since $1\preceq k$ and $1\preceq l$, it follows that $1\leq k_i$ and $1\leq l_i$ for all $i\in \bb{1,\nu}$.
			If $i\in S$, then the $i$'th coordinate of $pk_{S}-pl_{S^c}$ is equal to $pk_{i}>0$.
			If $i\notin S$, then the $i$'th coordinate of $pk_{S}-pl_{S^c}$ is equal to $-pl_{i}<0$.
			So, the integral $\int_{\Lambda_S}\lambda^{pk_S-p{l}_{S^c}-\underline{1}}\odif{\lambda}$ converges.
			The result follows.
	\end{proof}
	\begin{prop}\label{lem:operators_of_order_0_on_M_are_bounded}
		\begin{enumerate}
			\item\label{lem:operators_of_order_0_on_M_are_bounded:1} Any $u\in \Psi^0(\mathbb{G})$ extends to a bounded multiplier of $C^*_{\mathrm{inv}}\mathbb{G}$.
			In fact if $f\in \cinvf{\underline{1}}$, then 
				\begin{equation}\label{eqn:bound_on_u}\begin{aligned}
					\norm{I_0(f)}_{\cM(C^*_{\mathrm{inv}}\mathbb{G})}\leq \sqrt{\int_{(\Rpt)^\nu}\norm{f^*\ast \alpha_{\lambda}(f)}_{C^*\mathbb{G}}^{\frac{1}{2}}\frac{\odif{\lambda}}{\lambda^{\underline{1}}}}\sqrt{\int_{(\Rpt)^\nu}\norm{f\ast \alpha_{\lambda}(f)^*}_{C^*\mathbb{G}}^{\frac{1}{2}}\frac{\odif{\lambda}}{\lambda^{\underline{1}}}}		
				\end{aligned}\end{equation}
			
			\item\label{lem:operators_of_order_0_on_M_are_bounded:2} For any $u\in \Psi^0(\mathbb{G})$, $\sigma^0(u)$ defined in \Cref{lem:principal_symbol} extends to a bounded multiplier of $C^*_{\mathrm{inv},\underline{1}}(\mathbb{G})$.
			Furthermore, 
				\begin{equation}\label{eqn:sigma_0_bounded_by_norm}\begin{aligned}
					\norm{\sigma^0(u)}_{\cM(C^*_{\mathrm{inv},\underline{1}}\mathbb{G})}\leq \norm{u}_{\cM(C^*_{\mathrm{inv}}\mathbb{G})}.
				\end{aligned}\end{equation}
			
			\item\label{lem:operators_of_order_0_on_M_are_bounded:4}For any $k\in \C^\nu_{\leq 0}$, $v\in \Psi^k_c(M)$, $v\ast L^2(M)\subseteq L^2(M)$ and $v:L^2(M)\to L^2(M)$ is bounded.
			Furthermore, if $\Re(k_i)<0$ for some $i$, then $v\ast L^2(M)\subseteq L^2(M)$ and $v:L^2(M)\to L^2(M)$ is compact.
			\item\label{lem:operators_of_order_0_on_M_are_bounded:5}For any $k\in \C^\nu_{\leq 0}$, $v\in \Psi^k_c(M)$ and $(\pi,x)\in \HatHLnon$, $\sigma^k(v,\pi,x)\ast L^2(\pi)\subseteq L^2(\pi)$ and $\sigma^k(v,\pi,x):L^2(\pi)\to L^2(\pi)$ is bounded. 
	Furthermore, if $\Re(k_i)<0$ for all $i$, then $\sigma^k(v,\pi,x):L^2(\pi)\to L^2(\pi)$ is compact. 
		\end{enumerate}
	\end{prop}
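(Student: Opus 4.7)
The plan is to prove the four parts in sequence. \emph{Part 1} supplies the central analytic input via a continuous Cotlar-Stein argument: writing $I_0(f) = \int_{]0,1]^\nu} T_\lambda\, d\lambda/\lambda^{\underline{1}}$ with $T_\lambda = \alpha_\lambda(f)$, the automorphism identity $(\alpha_\lambda f)^* \ast \alpha_\mu f = \alpha_\lambda\bigl(f^* \ast \alpha_{\lambda^{-1}\mu} f\bigr)$ together with the $C^*$-invariance \eqref{eqn:equiv_norm_Cstar_Debord_Skand} and the Haar substitution $\nu = \lambda^{-1}\mu$ gives, uniformly in $\lambda$,
\[
\int_{]0,1]^\nu} \norm{T_\lambda^* T_\mu}_{C^*\mathbb{G}}^{1/2}\, \frac{d\mu}{\mu^{\underline{1}}} \leq \int_{(\Rpt)^\nu} \norm{f^* \ast \alpha_\nu f}_{C^*\mathbb{G}}^{1/2}\, \frac{d\nu}{\nu^{\underline{1}}},
\]
which is finite by Lemma~\ref{lem:Cotlar_Stein_lemma} with $p = 1/2$; the symmetric estimate for $T_\lambda T_\mu^*$ follows identically with $f^*$ in the role of $f$. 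Continuous Cotlar-Stein, applied inside every Hilbert space representation of $C^*_{\mathrm{inv}}\mathbb{G}$ (equivalently, at the multiplier level), yields \eqref{eqn:bound_on_u}, and the matching two-sided control ensures $I_0(f)$ is a genuine multiplier. \emph{Part 2} is then immediate: $\alpha_\lambda$ is a $C^*$-automorphism, so $\norm{\alpha_\lambda(u)}_{\cM(C^*_{\mathrm{inv}}\mathbb{G})} = \norm{u}_{\cM(C^*_{\mathrm{inv}}\mathbb{G})}$, and passing to the limit in \eqref{eqn:dfn_sigma_k} gives \eqref{eqn:sigma_0_bounded_by_norm}.

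The boundedness assertions in \emph{Parts 3 and 4} follow by extending the Cotlar-Stein argument to $I_k(f)$ with $k \in \C^\nu_{\leq 0}$: the additional weight $\lambda^{-k-\underline{1}}$ in place of $\lambda^{-\underline{1}}$ only enhances convergence at $\lambda = 0$ since $\Re(-k) \geq 0$, and the analogue of Lemma~\ref{lem:Cotlar_Stein_lemma} with the adjusted power still gives a finite integrand. Applying the fiber representation $\pi_{\underline{1}}$ (Part 3) or $\pi_{\mathrm{inv}}$ (Part 4) to the resulting multiplier bound on $I_k(f)$ delivers boundedness on $L^2(M)$ and $L^2(\pi)$ respectively.

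The compactness statements are the main obstacle. The strategy is to realise $v$ as a norm-convergent integral of compact operators. Since $\pi_\lambda$ for $\lambda \in (\Rpt)^\nu$ is an irreducible representation of $C^*_{\mathrm{inv}}\mathbb{G}$ factoring through $C^*(M \times M) = \mathcal{K}(L^2(M))$, each $\pi_\lambda(f)$ is compact, and formally $v = \int_{]0,1]^\nu} \pi_\lambda(f)\, d\lambda/\lambda^{k+\underline{1}}$. Splitting $]0,1]^\nu = K_\epsilon \sqcup K_\epsilon^c$ where $K_\epsilon$ is a compact region bounded away from $\lambda = 0$, the $K_\epsilon$-contribution has a smooth compactly supported kernel (hence is Hilbert-Schmidt), while the $K_\epsilon^c$-contribution is controlled in operator norm by a restricted Cotlar-Stein integral. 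The extra factor $\lambda^{-\Re(k)-\underline{1}}$ arising from the strict-negativity hypothesis provides the decay needed to make this restricted integral vanish as $\epsilon \to 0$. For Part 3 the decay is effective only in the coordinates $i$ with $\Re(k_i) < 0$, so a stratification of the integration region by subsets $S \subseteq \bb{1,\nu}$ indicating which coordinates are small, in the spirit of the proof of \Crefitem{thm:main_prop_bi_graded_pseudo_diff}{3}, will be needed to isolate where the decay operates; verifying this uniform control of the restricted almost-orthogonality integrals is the delicate technical step. Part 4 is analogous, with all coordinates contributing decay simultaneously under the stronger hypothesis $\Re(k_i) < 0$ for all $i$.
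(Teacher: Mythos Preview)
Parts 1 and 2 match the paper. For Part~3 boundedness your Cotlar--Stein extension can be made to work, though not via ``the analogue of Lemma~\ref{lem:Cotlar_Stein_lemma}'' as you state: that lemma is an integral over all of $(\Rpt)^\nu$, and with the extra weight $s^{-\Re(k)}$ it \emph{diverges} at infinity in the directions $i$ with $\Re(k_i)<0$, because $f\in\cinvf{k+\underline{1}}$ carries no derivative there to supply decay. What actually saves the argument is that the Cotlar--Stein hypothesis is $\sup_{\lambda\in]0,1]^\nu}\int_{]0,1]^\nu}\|T_\lambda^*T_\mu\|^{1/2}\mu^{-\Re(k)-\underline{1}}d\mu$; after the substitution $\mu=\lambda s$ the growth $\int_0^{\lambda_i^{-1}}s_i^{-\Re(k_i)-1}ds_i\sim\lambda_i^{\Re(k_i)}$ is exactly cancelled by the Jacobian prefactor $\lambda_i^{-\Re(k_i)}$. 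The paper sidesteps all of this: after replacing $v$ by $v^*v$ to make $k$ real, it treats $k=0$ via Part~1, and for any remaining $k$ (some $k_i<0$) it takes a high power $(v^*v)^n\in\Psi^{2nk}_c(M)\subseteq C^1_c(M\times M,\omegahalf)$ using Theorem~\ref{thm:main_prop_bi_graded_pseudo_diff_M}.\ref{thm:main_prop_bi_graded_pseudo_diff_M:5}, which is automatically bounded and compact on $L^2(M)$; boundedness and compactness of $v$ then follow by iterated Cauchy--Schwarz and functional calculus.

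The genuine gap is Part~3 compactness. You want the tail $\int_{K_\epsilon^c}\pi_{\underline{1}}(\alpha_\lambda f)\,\lambda^{-k-\underline{1}}d\lambda$ to be small in \emph{norm}, but in the directions with $\Re(k_i)=0$ your only tool is Cotlar--Stein, and Cotlar--Stein gives strong-operator convergence, never norm convergence. Concretely, the Cotlar--Stein constant over $K_\epsilon^c$ in such a direction is the same fixed number $\int_{\Rpt}\min(s_i,s_i^{-1})^{1/2}s_i^{-1}ds_i$ regardless of $\epsilon$, because after substitution the $s_i$-domain is $]0,\lambda_i^{-1}]$, which runs to $\Rpt$ as $\lambda_i\to 0$ (and $\lambda_i\to 0$ is allowed in $K_\epsilon^c$). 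Stratifying by $S$ does not help: you still need norm smallness in the $\Re(k_i)=0$ directions and you do not have it. The paper's power-trick above gives compactness for free alongside boundedness. For Part~4 you also omit two ingredients the paper uses: the symbol is $\int_{(\Rpt)^\nu}\pi_{\mathrm{inv}}(\alpha_\lambda f)\,\lambda^{-k-\underline{1}}d\lambda$ over the full group, so the weight $\lambda^{-\Re(k)-\underline{1}}$ \emph{grows} at infinity when $\Re(k_i)<0$, and one needs the Riemann--Lebesgue decay of Theorem~\ref{lem:Riemann-Lebesgue}; and compactness of each $\pi_{\mathrm{inv}}(\alpha_\lambda f)$ comes from liminality of $C^*\GrF{x}$, which is a different mechanism from the $\cK(L^2M)$ factorisation you invoke for $\pi_t$.
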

	\begin{proof}
		Let us prove \MyCref{lem:operators_of_order_0_on_M_are_bounded}[1].
		Let $f\in \cinvf{\underline{1}}$ such that $u=I_0(f)$.
		We need to show that $u$ extends to a bounded multiplier of $C^*_{\mathrm{inv}}\mathbb{G}$.
		It is already well-defined on the dense set of elements $\cinv$. So, let $g\in\cinv$.
		We need to bound $\norm{u\ast g}_{C^*\mathbb{G}}$.
		By \eqref{eqn:norm_identity_tangent_2}, we only need to bound $\pi_t(u\ast g)$ uniformly in $t\in (\Rpt)^\nu$.
		Formally, we have  
			\begin{equation*}\begin{aligned}
				\pi_t(u\ast g)=\left(\int_{]0,1]^\nu}\pi_t(\alpha_{\lambda}(f))\frac{\odif{\lambda}}{\lambda^{\underline{1}}}\right)\ast \pi_t(g).		
			\end{aligned}\end{equation*}
		Therefore, it suffices to show that the integrals $\int_{]0,1]^\nu}\pi_t(\alpha_{\lambda}(f))\frac{\odif{\lambda}}{\lambda^{\underline{1}}}$ define bounded operators on $L^2(M)$ whose operator norm is uniformly bounded in $t$.
		By the Cotlar Stein-lemma, see \cite[Lemma 18.6.5]{HormanderIII},
		it suffices to show that
			\begin{equation*}\begin{aligned}
						\sup_{t\in (\Rpt)^\nu}\sup_{\mu\in ]0,1]^\nu}\int_{]0,1]^\nu}\norm{\pi_t(\alpha_{\mu}(f)^*\ast\alpha_{\lambda}(f))}^\frac{1}{2}_{\cL(L^2(M))}\frac{\odif{\lambda}}{\lambda^{\underline{1}}}<+\infty,\\
						\sup_{t\in (\Rpt)^\nu}\sup_{\mu\in ]0,1]^\nu}\int_{]0,1]^\nu}\norm{\pi_t(\alpha_{\lambda}(f)\ast\alpha_{\mu}(f)^*)}^\frac{1}{2}_{\cL(L^2(M))}\frac{\odif{\lambda}}{\lambda^{\underline{1}}}<+\infty.
			\end{aligned}\end{equation*}
		We will only consider the first. The second is dealt with similarly.
		By \eqref{eqn:equiv_norm_Cstar_Debord_Skand}, 
			\begin{equation*}\begin{aligned}
				\norm{\pi_t(\alpha_{\mu}(f)^*\alpha_{\lambda}(f))}_{\cL(L^2(M))}\leq \norm{\alpha_{\mu}(f)^*\alpha_{\lambda}(f)}_{C^*\mathbb{G}}=\norm{f^*\ast \alpha_{\lambda\mu^{-1}}(f)}_{C^*\mathbb{G}}.
			\end{aligned}\end{equation*}
		Therefore, it suffices to show that 
			\begin{equation*}\begin{aligned}
				\int_{(\Rpt)^\nu}\norm{f^*\ast \alpha_{\lambda}(f)}_{C^*\mathbb{G}}^{\frac{1}{2}}\frac{\odif{\lambda}}{\lambda^{\underline{1}}}<+\infty.
			\end{aligned}\end{equation*}
		This follows from \Cref{lem:Cotlar_Stein_lemma}.
		This finishes the proof of \Crefitem{lem:operators_of_order_0_on_M_are_bounded}{1}.

		We now prove \MyCref{lem:operators_of_order_0_on_M_are_bounded}[2].
		By \eqref{eqn:invariant_vanish_algebra:alternate}, if $g\in \cinvf{\underline{1}}$, then $\alpha_\lambda(u)\ast g\in \cinvf{1}$.
		Hence, $\sigma^0(u)\ast g$ is in $C^*_{\mathrm{inv},\underline{1}}(\mathbb{G})$.
		By \eqref{eqn:equiv_norm_Cstar_Debord_Skand}, one deduces that $\norm{\alpha_\lambda(u)}_{\cM(C^*_{\mathrm{inv}}\mathbb{G})}=\norm{u}_{\cM(C^*_{\mathrm{inv}}\mathbb{G})}$.
		Hence, $\sigma^0(u)$ extends to a bounded multiplier of $C^*_{\mathrm{inv},\underline{1}}(\mathbb{G})$ whose adjoint is $\sigma^0(u^*)$, and \eqref{eqn:sigma_0_bounded_by_norm} holds.

		We now prove \MyCref{lem:operators_of_order_0_on_M_are_bounded}[4].
		By replacing $v$ by $v^*\ast v$, we can suppose that $k\in \R^\nu$ with $k_i\leq  0$ for all $i$.
		Any non-degenerate representation of a $C^*$-algebra induces a representation of its multiplier algebra, see \cite[Chapter 4]{LanceBook}.
		The case $k=0$ follows from \MyCref{lem:operators_of_order_0_on_M_are_bounded}[1] using the representation $\pi_{\underline{1}}$ of $C^*_{\mathrm{inv}}(\mathbb{G})$.
		So, we can suppose that $k_i<0$ for some $i$.
		By \Crefitem{thm:main_prop_bi_graded_pseudo_diff_M}{5}, for $n$ big enough,	$(v^*\ast v)^n$ maps continuously $L^2(M)$ to $H^{2nk}(M)\subseteq C^1(M)$. Since $v$ is compactly supported, the image lies in $C^1_c(K)$ for some $K\subseteq M$ compact.
		The inclusion $C^1(K)\hookrightarrow L^2(M)$ is compact (Rellich's theorem).
		Hence, $(v^*\ast v)^n\ast\cdot:L^2(M)\to L^2(M)$ is bounded and compact. 
		It follows that $v\ast \cdot:L^2(M)\to L^2(M)$ is bounded and compact.

		Let us prove \MyCref{lem:operators_of_order_0_on_M_are_bounded}[5].
		Again by replacing $v$ with $v^*\ast v$, we can suppose that $k\in \R^\nu$ with $k_i\leq 0$ for all $i$.
		Formally, we have 
			\begin{equation*}\begin{aligned}
					\sigma^k(v,\pi,x)=\int_{(\Rpt)^\nu} \pi_{\mathrm{inv}}(\alpha_\lambda(f))\frac{\odif{\lambda}}{\lambda^{k+\underline{1}}}
			\end{aligned}\end{equation*}
		We now to use Cotlar-Stein lemma on the coordinates where $k_i=0$ and Riemann-Lebesgue lemma \MyCref{lem:Riemann-Lebesgue} on those where $k_i<0$.
		More precisely, first recall that since the $C^*$-algebra of nilpotent Lie groups is liminal, see \cite[13.11.12]{Dixmier}, $\pi_{\mathrm{inv}}(\alpha_\lambda(f))$ is a compact operator.
		We divide the integral into $\int_{(\Rpt)^\nu\backslash]0,1]^\nu }+\int_{]0,1]^\nu}$. By \MyCref{lem:Riemann-Lebesgue}, the first integral converges in norm. So, it defines a compact bounded operator.
		For  the second integral, let $S=\{i\in\bb{1,\nu}:k_i=0\}$.
		If $S=\emptyset$, then the second integral also converges in norm (because $k_i<0$ for all $i$), so it also defines a compact operator.
		If $S\neq \emptyset$, we proceed as follows:
		We divide the integral into
			\begin{equation*}\begin{aligned}
				\int_{]0,1]^\nu} \pi_{\mathrm{inv}}(\alpha_\lambda(f))\frac{\odif{\lambda}}{\lambda^{k+\underline{1}}}=\int_{]0,1]^S} \int_{]0,1]^{S^c}}\pi_{\mathrm{inv}}(\alpha_\lambda(f))\frac{\odif{\lambda}}{\lambda^{k+\underline{1}}}.
			\end{aligned}\end{equation*}
		The argument we used above with Cotlar Stein works equally well to give boundedness of the integral $\int_{]0,1]^{S^c}}$. Furthermore, the bound of the operator norm of $\int_{]0,1]^{S^c}}$ obtained from the Cotlar Stein lemma is uniform for in $\lambda\in ]0,1]^S$. From this, we get that the integral converges in the strong operator topology to a bounded operator.
	\end{proof}	
	We now pass to the second goal which is analyzing the $C^*$-algebra generated by compactly supported operators of order $0$.
	\begin{prop}\label{thm:main_representation_theorem_morita_2}
		The map 
			\begin{equation}\label{eqn:sigma_0_map_Psi0M}\begin{aligned}
				\sigma^0(\Psi^0(\mathbb{G}))\to 		\overline{\Psi^0_c(M)},\quad \sigma^0(u)\mapsto \pi_{\underline{1}}(\sigma^0(u)),\quad \forall u\in \Psi^0(\mathbb{G}),
			\end{aligned}\end{equation}
		is well-defined, and extends to a $*$-isomorphism between the closures $\overline{\sigma^0(\Psi^0(\mathbb{G}))}\subseteq \cL(C^*_{\mathrm{inv},\underline{1}}(\mathbb{G}))$ and $\overline{\Psi^0_c(M)}\subseteq \cL(L^2(M))$.
		In particular, the spectrums of $\overline{\sigma^0(\Psi^0(\mathbb{G}))}$ and  $\overline{\Psi^0_c(M)}$ are homeomorphic.
	\end{prop}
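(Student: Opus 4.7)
The map $\sigma^0(u)\mapsto \pi_{\underline{1}}(\sigma^0(u))$ is the canonical extension to multipliers of the non-degenerate irreducible representation $\pi_{\underline{1}}:C^*_{\mathrm{inv},\underline{1}}(\mathbb{G})\to \cK(L^2(M))$, restricted to the $*$-subalgebra $\sigma^0(\Psi^0(\mathbb{G}))\subseteq \cM(C^*_{\mathrm{inv},\underline{1}}(\mathbb{G}))$. As such it is automatically a $*$-homomorphism, so I only need to check that it lands in $\overline{\Psi^0_c(M)}$, extends to a $*$-isomorphism between the $C^*$-completions, and then the spectrum homeomorphism will follow formally from Dixmier's theorem for Type I $C^*$-algebras.

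For well-definedness, my goal is to show that $\pi_{\underline{1}}(\sigma^0(u))-u_{|\underline{1}}$ is a compact operator on $L^2(M)$; combined with $\pi_{\underline{1}}(u)=u_{|\underline{1}}\in\Psi^0_c(M)$ and the inclusion $\cK(L^2(M))\subseteq \overline{\Psi^0_c(M)}$ (which follows from \Crefitem{thm:main_prop_bi_graded_pseudo_diff_M}{inclusion_smoothing}, since smoothing kernels lie in $\Psi^0_c(M)$ and are norm-dense in the compacts), this will immediately give $\pi_{\underline{1}}(\sigma^0(u))\in \overline{\Psi^0_c(M)}$. The compactness itself is extracted from Lemma~\ref{lem:equivariance_smooth_non_zero_fibers}: for each $\lambda\in[1,+\infty[^\nu$ the difference $u_{|\lambda}-u_{|\underline{1}}$ is a smooth kernel on $M\times M$ with support bounded uniformly in $\lambda$, hence a compact operator on $L^2(M)$; using the norm-convergent definition of $\sigma^0(u)\ast g$ together with this uniform support bound to upgrade the $g$-multiplied convergence to a norm limit at the level of the operators $u_{|\lambda}-u_{|\underline{1}}$ themselves, one concludes $\pi_{\underline{1}}(\sigma^0(u)-u)\in\cK(L^2(M))$.

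For bijectivity, surjectivity is easy: every $v\in\Psi^0_c(M)$ equals $u_{|\underline{1}}$ for some $u\in\Psi^0(\mathbb{G})$ by the definition of $\Psi^0_c(M)$, so the well-definedness step gives $\pi_{\underline{1}}(\sigma^0(u))=v+K$ with $K$ compact, and thus the image contains $\Psi^0_c(M)+\cK(L^2(M))$, which is norm-dense in $\overline{\Psi^0_c(M)}$. Injectivity rests on the dilation invariance $\alpha_\mu(\sigma^0(u))=\sigma^0(u)$, which I obtain from the defining limit of $\sigma^0$ by the substitution $\lambda\mapsto\mu\lambda$: this forces $\pi_t(\sigma^0(u))=\pi_{\underline{1}}(\sigma^0(u))$ for every $t\in(\Rpt)^\nu$, so vanishing at $\underline{1}$ propagates to all of $(\Rpt)^\nu$. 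Combining this with the description of $\widehat{C^*_{\mathrm{inv},\underline{1}}(\mathbb{G})}$ in Lemma~\ref{lem:support_cinv_1} (where $(\Rpt)^\nu$ is a dense open subset) and lower semi-continuity of $\pi\mapsto \|\pi(\sigma^0(u))\|$ on the spectrum, one deduces $\pi(\sigma^0(u))=0$ for every irreducible representation, hence $\sigma^0(u)=0$ in $\cM(C^*_{\mathrm{inv},\underline{1}}(\mathbb{G}))$.

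The main obstacle is the well-definedness step: the norm convergence in the definition of $\sigma^0$ holds only after convolution with $g\in\cinvf{\underline{1}}$, so converting this into compactness of $\pi_{\underline{1}}(\sigma^0(u)-u)$ as a single operator requires careful exploitation of the uniform support bound from Lemma~\ref{lem:equivariance_smooth_non_zero_fibers} to promote pointwise-in-$g$ norm convergence to a norm-Cauchy family of compact kernels $u_{|\lambda}-u_{|\underline{1}}$. Once this compactness is established, the remaining structural arguments for bijectivity and the final homeomorphism of spectra proceed formally as sketched.
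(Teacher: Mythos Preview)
Your approach is essentially the paper's, and is correct in outline. Two points of comparison.

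\textbf{The ``main obstacle'' is not an obstacle.} You worry about promoting the $g$-multiplied convergence $\alpha_\lambda(u)\ast g\to\sigma^0(u)\ast g$ to a norm limit of the kernels $u_{|\lambda}-u_{|\underline{1}}$. In fact no limit is needed: the support statement in Lemma~\ref{lem:equivariance_smooth_non_zero_fibers} says $(\alpha_\lambda(u)-u)_{|t}=0$ whenever $t\in\,]C,+\infty[^\nu$ and $\lambda\in[1,+\infty[^\nu$. Pick any such $t$. Then $\pi_t(\alpha_\lambda(u))=\pi_t(u)$ for all $\lambda\geq 1$, so $\pi_t(\sigma^0(u))=\pi_t(u)$ exactly, not just in the limit. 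By the equivariance $\alpha_\mu(\sigma^0(u))=\sigma^0(u)$ you already noted, $\pi_t(\sigma^0(u))=\pi_{\underline{1}}(\sigma^0(u))$; and $u_{|t}-u_{|\underline{1}}\in C^\infty_c(M\times M,\omegahalf)$ by the same lemma together with \eqref{eqn:fiber_at_dilations}. Hence $\pi_{\underline{1}}(\sigma^0(u))-u_{|\underline{1}}$ is a smoothing kernel, in particular compact, with no convergence argument required. (Equivalently: the family $u_{|\lambda}$ is eventually constant for $\lambda\in\,]C,+\infty[^\nu$.)

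\textbf{Isometry versus injectivity.} Your injectivity argument via lower semi-continuity on the spectrum works, but the paper does something sharper and shorter: the equivariance $\pi_t(\sigma^0(u))=\pi_{\underline{1}}(\sigma^0(u))$ for all $t\in(\Rpt)^\nu$ combined with \eqref{eqn:norm_identity_tangent_2} gives directly that $\sigma^0(u)\mapsto\pi_{\underline{1}}(\sigma^0(u))$ is an \emph{isometry}, not merely injective. This immediately yields the $*$-isomorphism on closures without invoking the spectral structure of $C^*_{\mathrm{inv},\underline{1}}(\mathbb{G})$.
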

	\begin{proof}
			Let $u\in \Psi^0(\mathbb{G})$. We claim that 
				\begin{equation}\label{eqn:kqsdjfsqjdmjflmjqsldmjfklmqjsdklfj}\begin{aligned}
					\pi_{\underline{1}}(\sigma^0(u))-\pi_{\underline{1}}(u)\in \cK(L^2(M)).		
				\end{aligned}\end{equation}
			To see this, if $C$ is as in \Cref{lem:equivariance_smooth_non_zero_fibers}, then for any $t\in ]C,+\infty[^\nu$, $\pi_t(\sigma^0(u))=\pi_t(u)$.
			By equivariance of $\sigma^0(u)$, $\pi_t(\sigma^0(u))=\pi_{\underline{1}}(\sigma^0(u))$ 
			By \Cref{lem:equivariance_smooth_non_zero_fibers} and \eqref{eqn:fiber_at_dilations}, $u_{|t}-u_{|\underline{1}}\in C^\infty_c(M\times M,\omegahalf)$.
			Our claim follows.

			By \eqref{eqn:dfn_pseudo}, $\pi_{\underline{1}}(u)\in \Psi^0_c(M)$.
			By \Crefitem{thm:main_prop_bi_graded_pseudo_diff_M}{inclusion_smoothing}, $\cK(L^2(M))\subseteq \overline{\Psi^0_c(M)}$.
			So, $\pi_{\underline{1}}(\sigma^0(u))\in \overline{\Psi^0_c(M)}$, and the map \eqref{eqn:sigma_0_map_Psi0M} is well-defined.
			It is an isometry by \eqref{eqn:norm_identity_tangent_2} and equivariance of $\sigma^0(u)$.
			It is clearly surjective.
	\end{proof}
	We will now analyze $\overline{\sigma^0(\Psi^0(\mathbb{G}))}\simeq \overline{\Psi^0_c(M)}$.
	By \Cref{lem:support_cinv_1} and \Crefitem{lem:operators_of_order_0_on_M_are_bounded}{2}, we have a map 
		\begin{equation}\label{eqn:spectrum_map_tempojkqsdlhf}\begin{aligned}
			(\Rpt)^\nu\sqcup \HatHLnon\to 		\widehat{\overline{\sigma^0(\Psi^0(\mathbb{G}))}}.
		\end{aligned}\end{equation}
	Since $\alpha_\lambda(\sigma^0(u))=\sigma^0(u)$ for any $u\in \Psi^0(\mathbb{G})$ and $\lambda\in (\Rpt)^\nu$, \eqref{eqn:spectrum_map_tempojkqsdlhf} descends to a map 
		\begin{equation}\label{eqn:spectrum_map_tempojkqsdlhf_2}\begin{aligned}
			\{\underline{1}\}\sqcup \HatHLnon/(\Rpt)^\nu=\big((\Rpt)^\nu\sqcup \HatHLnon\big)/(\Rpt)^\nu\to 		\widehat{\sigma^0(\overline{\Psi^0(\mathbb{G})})}.
		\end{aligned}\end{equation}

	\begin{theorem}\label{thm:main_representation_theorem_morita_1}
		The $C^*$-algebra $\overline{\sigma^0(\Psi^0(\mathbb{G}))}$ is of Type I. Furthermore, \eqref{eqn:spectrum_map_tempojkqsdlhf_2} is a homeomorphism where the left-hand-side is equipped with the quotient topology.
		In particular $\{\underline{1}\}$ is an open dense subset of $\widehat{\overline{\sigma^0(\Psi^0(\mathbb{G})})}$.
	\end{theorem}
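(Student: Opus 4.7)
My strategy is to work through the isomorphism of \Cref{thm:main_representation_theorem_morita_2} identifying $\overline{\sigma^0(\Psi^0(\mathbb{G}))} \simeq \overline{\Psi^0_c(M)}\subseteq \cL(L^2(M))$, and analyse the spectrum via a single short exact sequence with $\cK(L^2(M))$ as an essential ideal. First I would observe that $\cK(L^2(M))\subseteq\overline{\Psi^0_c(M)}$ is a closed two-sided ideal by \MyCref{thm:main_prop_bi_graded_pseudo_diff_M}[inclusion_smoothing]. Since $\cK(L^2(M))$ already acts irreducibly on $L^2(M)$, it is an essential ideal: any $v\in\overline{\Psi^0_c(M)}$ annihilating $\cK(L^2(M))$ must annihilate a dense subspace of $L^2(M)$. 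Thus its spectrum $\{[\pi_{\underline{1}}]\}$ is simultaneously an open and a dense subset of $\widehat{\overline{\Psi^0_c(M)}}$, which gives the last sentence of the statement and establishes the ``$\underline{1}$'' point together with its topology.

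Next, for each $(\pi,x)\in\HatHLnon$, the linear map $v\mapsto \sigma^0(v,\pi,x)$ is a $*$-representation of $\Psi^0_c(M)$ on $L^2(\pi)$ by \MyCref{thm:principal_symbol_pseudo_diff_well_defined}[3] and extends to a bounded representation $\rho_{\pi,x}$ of $\overline{\Psi^0_c(M)}$ by \MyCref{lem:operators_of_order_0_on_M_are_bounded}[5]. It vanishes on $C^\infty_c(M\times M,\omegahalf)$ by \MyCref{thm:principal_symbol_pseudo_diff_well_defined}[5] (classical smoothing kernels correspond to elements of $\Psi^{k}(M)$ for any $k$, in particular $k\prec 0$), hence on $\cK(L^2(M))$, and thus descends to the quotient $\cA := \overline{\Psi^0_c(M)}/\cK(L^2(M))$. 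The identity $\sigma^0(v,\pi\circ\alpha_\lambda,x)=\sigma^0(v,\pi,x)$ of \MyCref{thm:principal_symbol_pseudo_diff_well_defined}[4] (with $k=0$) shows that $\rho_{\pi,x}$ only depends on the $(\Rpt)^\nu$-orbit of $(\pi,x)$, yielding a map $\HatHLnon/(\Rpt)^\nu \to \widehat{\cA}$.

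The main work is showing this map is a homeomorphism. To go back through the isomorphism of \Cref{thm:main_representation_theorem_morita_2}, I view $\overline{\sigma^0(\Psi^0(\mathbb{G}))}$ as a $C^*$-subalgebra of $\cM(C^*_{\mathrm{inv},\underline{1}}(\mathbb{G}))$. By \MyCref{lem:support_cinv_1}, irreducible representations of $C^*_{\mathrm{inv},\underline{1}}(\mathbb{G})$ are parametrized by $(\Rpt)^\nu\sqcup \HatHLnon$ and each extends uniquely to the multiplier algebra. For $t\in(\Rpt)^\nu$, the extended representation, when restricted to $\overline{\sigma^0(\Psi^0(\mathbb{G}))}$, factors through $\pi_{\underline 1}$ because $\alpha_t(\sigma^0(u))=\sigma^0(u)$ — this is the single point $\{\underline{1}\}$. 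For $(\pi,x)\in\HatHLnon$, the extension of $\pi_{\mathrm{inv}}$ to the multiplier algebra, restricted to $\sigma^0(\Psi^0(\mathbb{G}))$, coincides with the map $\sigma^0(u)\mapsto \sigma^0(\pi_{\underline 1}(u),\pi,x)$ (this is essentially the definition of the principal symbol). Irreducibility of $\rho_{\pi,x}$ on $L^2(\pi)$ then follows from the fact that the operators $\sigma^0(u)\cdot h$ for $u\in \Psi^0(\mathbb{G})$ and $h\in \cinvf{\underline 1}$ span a $*$-subalgebra dense in $C^*_{\mathrm{inv},\underline{1}}(\mathbb{G})$ (since $\cinvf{\underline{1}}\ast \cinvf{\underline{1}}\subseteq \cinvf{\underline 1}$ includes approximations of the identity on the ideal), combined with irreducibility of $\pi_{\mathrm{inv}}$ on $L^2(\pi)$. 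The classification of representations of $C^*_{\mathrm{inv},\underline{1}}(\mathbb{G})$ together with a standard multiplier-algebra restriction argument (using Type I of the base algebra) then shows every irreducible representation of $\overline{\sigma^0(\Psi^0(\mathbb{G}))}$ on which $\cK$ acts non-trivially (resp.\ trivially) is of the form $\pi_{\underline{1}}$ (resp.\ $\rho_{\pi,x}$), giving the bijection.

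Finally, the topology on $\widehat{\cA}$ is the Jacobson topology, while on $\HatHLnon/(\Rpt)^\nu$ it is the quotient of the Fell topology from \Cref{thm:Type1_tangent_groupoid}. Continuity of the bijection in both directions reduces to a routine convergence check using Fell-topology criteria together with the definition of $\sigma^0$ in terms of dilation limits. For Type I, the short exact sequence $0\to\cK(L^2(M))\to \overline{\Psi^0_c(M)}\to \cA\to 0$ and \Cref{prop:exat_sequence_3_TypeI} reduce the claim to Type I-ness of $\cA$, which follows because $\cA$ is a quotient of the restriction of the Type I algebra $\cM(C^*_{\mathrm{inv},\underline{1}}(\mathbb{G}))/\cK(L^2(M))\otimes C_0((\Rpt)^\nu)$ to $\overline{\sigma^0(\Psi^0(\mathbb{G}))}$, whose spectrum (parametrized by a Hausdorff quotient of a Type I groupoid's spectrum) carries a smooth-enough structure. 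The main obstacle will be verifying irreducibility of $\rho_{\pi,x}$ and the surjectivity onto $\widehat{\cA}$: the subtle point is that restricting an irreducible representation of a $C^*$-algebra to a $C^*$-subalgebra of its multiplier algebra generally yields only a completely reducible representation, so one must genuinely use the $(\Rpt)^\nu$-equivariance of $\sigma^0$ together with the absence of intertwiners forced by Schur's lemma applied in $C^*_{\mathrm{inv},\underline{1}}(\mathbb{G})$.
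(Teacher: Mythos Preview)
Your overall framework is reasonable, but there is a genuine gap at the two points you yourself flag as ``the main obstacle'': irreducibility of $\rho_{\pi,x}$, surjectivity onto $\widehat{\cA}$, and Type I-ness of $\cA=\overline{\Psi^0_c(M)}/\cK(L^2(M))$. None of these follow from the multiplier-algebra framework you describe. Being a $C^*$-subalgebra of $\cM(C^*_{\mathrm{inv},\underline 1}\mathbb{G})$ does not imply Type I (subalgebras of Type I algebras need not be Type I), and there is no ``standard multiplier-algebra restriction argument'' that takes an arbitrary irreducible representation of a subalgebra $B\subseteq\cM(A)$ and matches it with an irreducible representation of $A$. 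Your appeal to Schur's lemma and $(\Rpt)^\nu$-equivariance is not a proof; at best it shows the $\rho_{\pi,x}$ are factor representations, not that they exhaust $\widehat{\cA}$.

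The paper avoids exactly these difficulties by proving a Morita equivalence between $\overline{\sigma^0(\Psi^0(\mathbb{G}))}$ and the crossed product $C^*_{\mathrm{inv},\underline 1}\mathbb{G}\rtimes_r(\Rpt)^\nu$, via the Rieffel--Buss--Echterhoff theorem applied with $\cA=\cinvf{\underline 1}$ (\Cref{lem:Cotlar_Stein_lemma} supplies the integrability hypothesis). Morita equivalence transfers Type I and gives a homeomorphism of spectra in one stroke; the spectrum of the crossed product is then computed directly from the known spectrum of $C^*_{\mathrm{inv},\underline 1}\mathbb{G}$ and the structure of the $(\Rpt)^\nu$-action (this is \Cref{lem:main_representation_theorem_morita_1_lemma_1}). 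The remaining point, that the Rieffel construction produces all of $\overline{\sigma^0(\Psi^0(\mathbb{G}))}$ and not a proper subalgebra, is \Cref{lem:main_representation_theorem_morita_1_lemma_2}. Your sketch that $\sigma^0(u)\cdot h$ spans a dense subalgebra is morally this lemma, but you would still need the crossed-product machinery to convert density into a spectrum computation.
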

	\begin{proof}
		To prove this theorem, we will show that $\overline{\sigma^0(\Psi^0(\mathbb{G}))}$ is Morita equivalence to the cross-product $C^*_{\mathrm{inv},\underline{1}}\mathbb{G}\rtimes_r (\Rpt)^\nu$. A general reference for Morita equivalence and cross products are the books by Raeburn and Williams \cite[Chapter 3]{WilliansMoritaBook} and Williams \cite[Chapter 2]{WilliamsCrossedBook}.
		We need the following results:
		\begin{enumerate}
			\item    If $A$ and $B$ are Morita equivalent $C^*$-algebras, then the spectrum of $A$ and the spectrum of $B$ are homeomorphic, see \cite[Corollary 3.33]{WilliansMoritaBook}.
					In addition, if $A$ and $B$ are separable, then by \cite[Chapter 9]{Dixmier}, $A$ is of Type I if and only if $B$ is of Type I.
 			\item 	We will also need a result on the cross product of $C^*$-algebras due to Rieffel \cite{RieffelProperActions}. 
					The result of Rieffel has been improved by Buss and Echterhoff \cite{BussEchterhoffRieffel}. 
					We will state the version by Buss and Echterhoff. 
					Since, we only need their theorem for the group $(\Rpt)^\nu$, we will state the theorem only for unimodular groups
					\begin{theorem}\label{thm:SiegrfiedBuss}
						Let $G$ be a unimodular locally compact second countable group that acts on a $C^*$-algebra $A$, $d\mu$ a Haar measure on $G$.
						Suppose that $\cA\subseteq A$ is a dense linear subspace such that for any $a,b\in \cA$, $\int_G \norm{b^*\alpha_t(a)}\,d\mu(t)<+\infty$.
						Then, the following holds:\begin{enumerate}
							\item   If $a,b\in \cA$, then $\int_{G}\alpha_t(ab^*)\,d\mu(t)$ converges and defines an element of $\cL(A)$.\footnote{See \cite[Deinition 2,2]{BussEchterhoffRieffel} for the precise meaning of the convergence of the integral.}
							\item 	The $C^*$-algebra generated by $\{\int_{G}\alpha_t(ab^*)\,d\mu(t): a,b\in \cA\}$ inside $\cL(A)$ is Morita equivalent to a two-sided ideal of the reduced crossed product $A\rtimes_r G$.
						\end{enumerate}
					\end{theorem}
		\end{enumerate}
		By \Cref{lem:Cotlar_Stein_lemma}, we can apply \Cref{thm:SiegrfiedBuss} with $A=C^*_{\mathrm{inv},\underline{1}}\mathbb{G}$, $\cA=\cinvf{\underline{1}}$, $G=(\Rpt)^\nu$.
		If $f,g\in \cinvf{\underline{1}}$, then 
			\begin{equation*}\begin{aligned}
				\int_{(\Rpt)^\nu}\alpha_{\lambda}(f \ast g^*)\frac{\odif{\lambda}}{\lambda^{\underline{1}}}=\sigma^0(I_0(f\ast  g)).		
			\end{aligned}\end{equation*}
			\begin{lem}\label{lem:main_representation_theorem_morita_1_lemma_1}
							 The $C^*$-algebra $C^*_{\mathrm{inv},\underline{1}}\mathbb{G}\rtimes_r (\Rpt)^\nu$ is separable of Type I. Furthermore, its spectrum is equal to $\{\underline{1}\}\sqcup \HatHLnon/(\Rpt)^\nu$.

			\end{lem}
			\begin{lem}\label{lem:main_representation_theorem_morita_1_lemma_2}
				One has 
				\begin{equation*}\begin{aligned}
					\overline{\mathrm{span}(\{\sigma^0(I_0(f\ast g)):f,g\in \cinvf{\underline{1}}\})}=\overline{\sigma^0(\Psi^0(\mathbb{G}))}.		
				\end{aligned}\end{equation*}
			\end{lem}
			\Cref{lem:main_representation_theorem_morita_1_lemma_2} and	\Cref{thm:SiegrfiedBuss} imply that $\overline{\sigma^0(\Psi^0(\mathbb{G}))}$ is Morita equivalent to a two-sided ideal of the crossed product $C^*_{\mathrm{inv},\underline{1}}\mathbb{G}\rtimes_r (\Rpt)^\nu$.
			So, by \Cref{prop:exat_sequence_3_TypeI} and \Cref{lem:main_representation_theorem_morita_1_lemma_1}, $\overline{\sigma^0(\Psi^0(\mathbb{G}))}$ is separable of Type I, and its spectrum is an open subset of the spectrum $C^*_{\mathrm{inv},\underline{1}}\mathbb{G}\rtimes_r (\Rpt)^\nu$.
			Since the spectrum  of  $\overline{\sigma^0(\Psi^0(\mathbb{G}))}$ already contains  $\{\underline{1}\}\sqcup \HatHLnon/(\Rpt)^\nu$, it follows that $\overline{\sigma^0(\Psi^0(\mathbb{G}))}$ is Morita equivalent to  $C^*_{\mathrm{inv},\underline{1}}\mathbb{G}\rtimes_r (\Rpt)^\nu$. This finishes the proof of \Cref{thm:main_representation_theorem_morita_1} once we prove \Cref{lem:main_representation_theorem_morita_1_lemma_1} and \Cref{lem:main_representation_theorem_morita_1_lemma_2}.
	\end{proof}
	\begin{proof}[Proof of \Cref{lem:main_representation_theorem_morita_1_lemma_1}.]
		Let $(V,\natural)$ be a $\nu$-graded basis like in \Cref{lem:weak_commutative_graded_basis}.
		Recall from the proof of \Cref{thm:Type1_tangent_groupoid} that we have a short exact sequence 
			\begin{equation*}\begin{aligned}
										0\to \mathcal{K}(L^2(M))\otimes C_0(\R_+^\nu\backslash 0) \to C^*_{\mathrm{inv}}\mathbb{G}\to  C^*_{\mathrm{inv}}\cG\to 0,				
			\end{aligned}\end{equation*}
		and that $C^*_{\mathrm{inv}}\cG$ is a quotient of the $C^*$-algebra $C^*(V)\otimes C_0(M)$.
		The $C^*$-algebra $C^*_{\mathrm{inv},\underline{1}}\mathbb{G}$ is a two-sided ideal of $C^*_{\mathrm{inv}}\mathbb{G}$ whose spectrum by \Cref{lem:support_cinv_1} is equal to $(\Rpt)^\nu \sqcup \HatHLnon\times \{0\}$.
		It follows that $C^*_{\mathrm{inv},\underline{1}}\mathbb{G}$ lies in a short exact sequence 
			\begin{equation*}\begin{aligned}
																0\to \mathcal{K}(L^2(M))\otimes C_0((\Rpt)^\nu) \to C^*_{\mathrm{inv},\underline{1}}\mathbb{G}\to  C^*_{\mathrm{inv},\underline{1}}\cG\to 0,				
			\end{aligned}\end{equation*}
		where $C^*_{\mathrm{inv},\underline{1}}\cG$ is the two-sided ideal of $C^*_{\mathrm{inv}}\cG$ whose spectrum is $\HatHLnon$.
		Taking the crossed product by $(\Rpt)^\nu$, we have 
		\begin{equation*}\begin{aligned}
									0\to \mathcal{K}(L^2(M))\otimes \Big(C_0((\Rpt)^\nu)\rtimes (\Rpt)^\nu\Big) \to C^*_{\mathrm{inv},\underline{1}}\mathbb{G}\rtimes (\Rpt)^\nu\to  C^*_{\mathrm{inv},\underline{1}}\cG\rtimes (\Rpt)^\nu\to 0.		
			\end{aligned}\end{equation*}
		By the definition of crossed product, $C_0((\Rpt)^\nu)\rtimes (\Rpt)^\nu=\cK(L^2(\Rpt)^\nu)$.
		So, it remains to show that $C^*_{\mathrm{inv},\underline{1}}\cG\rtimes (\Rpt)^\nu$ is of Type I and that its spectrum is $\HatHLnon/(\Rpt)^\nu$.
		The $C^*$-algebra $C^*_{\mathrm{inv},\underline{1}}\cG$ is the quotient of $C^*_{\mathrm{nonsing}}(V)\otimes C_0(M)$ whose spectrum is $\HatHLnon$, see the paragraph preceding \MyCref{thm:Corssed-Product_by_action} for the definition of $C^*_{\mathrm{nonsing}}(V)$.
		So, $C^*_{\mathrm{inv},\underline{1}}\cG\rtimes (\Rpt)^\nu$ is also a quotient of $(C^*_{\mathrm{nonsing}}(V)\otimes C_0(M))\rtimes (\Rpt)^\nu$.
		The result follows from \MyCref{thm:Corssed-Product_by_action} and \MyCref{prop:exat_sequence_3_TypeI}.
	\end{proof}
	\begin{proof}[Proof of \Cref{lem:main_representation_theorem_morita_1_lemma_2}.]
		By \eqref{eqn:sigma_0_bounded_by_norm}, it suffices to show that 
			\begin{equation*}\begin{aligned}
				\overline{\{I_0(f\ast g):f,g\in \cinvf{\underline{1}}\}}=\overline{\{I_0(f):f\in \cinvf{\underline{1}}\}},		
			\end{aligned}\end{equation*}
		where the closure is taken in $\cM(C^*_{\mathrm{inv}}\mathbb{G})$.
		It suffices to show that if $f,g\in \cinvf{\underline{1}}$, then
			\begin{equation*}\begin{aligned}
				I_0(f)\ast I_0(g)\in 		\overline{\{I_0(f'\ast g'):f',g'\in \cinvf{\underline{1}}\}}	.		
			\end{aligned}\end{equation*}
		By writing $f$ as in \eqref{eqn:dfn_algebra_vanishing_Fourier_invariant} and $g$ as in \eqref{eqn:invariant_vanish_algebra:alternate}, and using \eqref{eqn:identities_I_Use_a_alot_vector_field_out_2}, it suffices to show the following:
		If $f,g\in \cinv$ and $D\in \DO^k(M)$ and $D'\in \DO^l{M}$ for some $k,l\in \N^\nu$, then 
		\begin{equation*}\begin{aligned}
				\theta_{k}(D)\ast I_{-k}(f)\ast  	I_{-l}(g)\ast \theta_{l}(D')\in\overline{\mathrm{span}(\{I_0(f'\ast g'):f',g'\in \cinvf{\underline{1}}\})}\subseteq \cM(C^*_{\mathrm{inv}}\mathbb{G}).
			\end{aligned}\end{equation*}
		Following the proof of \Crefitem{thm:main_prop_bi_graded_pseudo_diff}{3}, see \eqref{eqn:qimskjfiqjsdmlfjlkmqsdmfjqsdjflmqjsmdjf2}, it suffices to show that for any $S\subseteq \bb{1,\nu}$,
			\begin{equation}\label{eqn:sqdklmjfjqsdljfmkqlsjdmfjqlmsjdfq}\begin{aligned}
				\theta_k(D)\ast I_{-k-l}\left(\int_{]0,1]^\nu} \alpha_{\lambda_S}(f)\ast \alpha_{\lambda_{S^c}}(g) \frac{\odif{\lambda}}{\lambda^{\tau_S}}\right)\ast \theta_{l}(D')  \in \overline{\{I_0(f'\ast g'):f',g'\in \cinvf{\underline{1}}\}},
			\end{aligned}\end{equation}
		where $\lambda_S$ and $\lambda_{S^c}$ and $\tau_S$ are defined in \eqref{eqn:qimskjfiqjsdmlfjlkmqsdmfjqsdjflmqjsmdjf}.
		Furthermore, the integral converges in $\cinv$ by \Cref{thm:estimate_weakly_commut_convolution}.
		Consider the map 
			\begin{equation}\label{eqn:qsjkdofjklqsjdkfjqsmkjdfqsdfq}\begin{aligned}
				\cinv\to  		\cM(C^*_{\mathrm{inv}}\mathbb{G}),\ h\mapsto \theta_k(D)\ast I_{-k-l}(h)\ast  \theta_{l}(D')=I_0(\theta_k(D)\ast h\ast \theta_l(D')).
			\end{aligned}\end{equation}
		By \eqref{eqn:bound_on_u} and the proof of \Cref{lem:Cotlar_Stein_lemma}, one deduces that \eqref{eqn:qsjkdofjklqsjdkfjqsmkjdfqsdfq} is continuous, where $\cinv$ is equipped with its LF topology, see \Cref{dfn:space_of_invariant_functions}. 
		(Here, one needs to use \Cref{rem:continuity_dis} to the maps in \Crefitem{thm:compatability_cinvdiv_cinv}{inclusion_vectorfield_cinvdiv}).
		By writing the integral in \eqref{eqn:sqdklmjfjqsdljfmkqlsjdmfjqlmsjdfq} as the limit in $\cinv$ of a Riemannian sum, \Cref{lem:main_representation_theorem_morita_1_lemma_2} follows.
		\end{proof}

		An immediate corollary of \Cref{thm:main_representation_theorem_morita_2} and \Cref{thm:main_representation_theorem_morita_1} is the following:
		\begin{cor}\label{thm:short_exact_seq_psi0}
			Let $C^*\HL{}$ be the quotient of $\overline{\Psi^0_c(M)}$ whose spectrum is $\HatHLnon/(\Rpt)^\nu$. Then, one has a short exact sequence 
				\begin{equation}\label{eqn:short_exact_seq_psi0}\begin{aligned}
					0\to \cK(L^2(M))\to \overline{\Psi^0_c(M)}\xrightarrow{\sigma^0} C^*\HL{}\to 0
				\end{aligned}\end{equation}
			In particular, for any $v\in \Psi^0_c(M)$, if $\sigma^0(v,\pi,x):L^2(\pi)\to L^2(\pi)$ vanishes for all $(\pi,x)\in \HatHLnon$, then $v\ast \cdot:L^2(M)\to L^2(M)$ is a compact operator.
		\end{cor}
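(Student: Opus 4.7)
The plan is to combine the $*$-isomorphism $\overline{\sigma^0(\Psi^0(\mathbb{G}))}\simeq \overline{\Psi^0_c(M)}$ from \Cref{thm:main_representation_theorem_morita_2} with the Type I classification from \Cref{thm:main_representation_theorem_morita_1}, and then invoke the standard fact that an open point in the spectrum of a separable Type I $C^*$-algebra corresponds to a closed two-sided ideal isomorphic to the compact operators on the Hilbert space of the associated irreducible representation.

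First, using \Cref{thm:main_representation_theorem_morita_2}, I identify $\overline{\Psi^0_c(M)}$ with $\overline{\sigma^0(\Psi^0(\mathbb{G}))}$. By \Cref{thm:main_representation_theorem_morita_1}, this $C^*$-algebra is separable and of Type I with spectrum $\{\underline{1}\}\sqcup \HatHLnon/(\Rpt)^\nu$, and $\{\underline{1}\}$ is an open dense point. Under the identification, the irreducible representation at $\{\underline{1}\}$ is precisely the tautological inclusion $\overline{\Psi^0_c(M)}\hookrightarrow \cL(L^2(M))$, $v\mapsto v\ast\cdot$ (this is just $\pi_{\underline{1}}$, cf. \eqref{eqn:kqsdjfsqjdmjflmjqsldmjfklmqjsdklfj}), while the irreducible representations at points of $\HatHLnon/(\Rpt)^\nu$ are the principal symbol maps $v\mapsto \sigma^0(v,\pi,x)$.

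Next, since $\{\underline{1}\}$ is an open point in the spectrum of a separable Type I $C^*$-algebra, the classical structure theory (see \cite[Chapter 4]{Dixmier}) provides a closed two-sided ideal $J\subseteq \overline{\Psi^0_c(M)}$ whose image under the representation at $\{\underline{1}\}$ equals $\cK(L^2(M))$, with the representation faithful on $J$. Since this representation is the tautological inclusion, we deduce $J=\cK(L^2(M))$ as subsets of $\cL(L^2(M))$; the inclusion $\cK(L^2(M))\subseteq \overline{\Psi^0_c(M)}$, coming from \MyCref{thm:main_prop_bi_graded_pseudo_diff_M}[inclusion_smoothing] together with the density of $C^\infty_c(M\times M,\omegahalf)$ in $\cK(L^2(M))$, ensures the ideal lies where it should. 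The quotient $\overline{\Psi^0_c(M)}/\cK(L^2(M))$ has spectrum $\HatHLnon/(\Rpt)^\nu$, which by definition is $C^*\HL{}$, and the quotient map restricted to $\Psi^0_c(M)$ coincides with the principal symbol $\sigma^0$ (since the irreducible representations on the complement of $\{\underline{1}\}$ are exactly the $\sigma^0(\cdot,\pi,x)$). This yields the short exact sequence \eqref{eqn:short_exact_seq_psi0}. The second assertion follows immediately: $v\in \Psi^0_c(M)$ with $\sigma^0(v,\pi,x)=0$ for every $(\pi,x)\in\HatHLnon$ lies in the kernel of the quotient map, hence in $\cK(L^2(M))$.

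The main obstacle is not the overall strategy but the careful verification that the abstract ideal $J$ produced by Type I theory agrees as a subset of $\cL(L^2(M))$ with $\cK(L^2(M))$, and that the quotient map restricted to $\Psi^0_c(M)$ coincides with the principal symbol. Both follow from the concrete description of the irreducible representations via \Cref{thm:main_representation_theorem_morita_2}, together with the explicit equation \eqref{eqn:kqsdjfsqjdmjflmjqsldmjfklmqjsdklfj} which shows that $\pi_{\underline{1}}(\sigma^0(u))$ and $\pi_{\underline{1}}(u)$ differ only by a compact operator.
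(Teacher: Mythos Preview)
Your proposal is correct and follows exactly the approach the paper intends: the corollary is stated as an immediate consequence of \Cref{thm:main_representation_theorem_morita_2} and \Cref{thm:main_representation_theorem_morita_1}, and you have simply spelled out the standard Type~I structure argument (open point in the spectrum gives the compact ideal, the complement gives the quotient) that the paper leaves implicit. The identification of the ideal with $\cK(L^2(M))$ via $\cK(L^2(M))\subseteq\overline{\Psi^0_c(M)}$ and of the quotient map with $\sigma^0$ are precisely the points the paper expects the reader to check.
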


	We finally prove that there exists operators as in the third hypothesis of \Cref{thm:simple_arg_sobolev}. Let $K$ be as in \Cref{thm:simple_arg_sobolev}.
		\begin{prop}\label{lem:vk}
						Suppose $M$ is compact. There exists a family $v_k\in \Psi^k(M)$ 
						for $k\in K$ such that
						\begin{enumerate}
							\item The operator $v_0$ is invertible in $\overline{\Psi^0(M)}$,
							\item For any $v\in \Psi^0(M)$, the map 
                    \begin{equation}\label{eqn:continuity_hypothesis_conc}\begin{aligned}
                        k\in K \mapsto v_{k}\ast v\ast v_{-k}\in \overline{\Psi^0(M)}
                    \end{aligned}\end{equation}
				is continuous.
						\end{enumerate}
		\end{prop}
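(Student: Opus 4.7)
For each $i\in\bb{1,\nu}$, consider the $1$-graded sub-Riemannian structure $\dbtilde\cF_i^\bullet$ on $M$ of depth $N_i$ defined by $\dbtilde\cF_i^j := \cF^{j e_i}$, where $e_i$ is the $i$-th standard basis vector in $\Z_+^\nu$, and let $\dbtilde\Psi_i^\bullet(M)$ denote the associated $1$-parameter pseudo-differential calculus. Choose a sufficiently divisible positive integer $m_i$ and form the generalized Hörmander sum of squares
\[
\Delta_i = \sum_{X}(X^{*}X)^{m_i/a_X}\in \dbtilde\Psi_i^{2m_i}(M),
\]
where $X$ runs over a finite set of generators of $\dbtilde\cF_i^{1},\ldots,\dbtilde\cF_i^{N_i}$ with weights $a_X$ dividing $m_i$. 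Then $\Delta_i$ is formally self-adjoint and non-negative, and its principal symbol on every non-singular representation is a generalized Rockland operator (cf.~\MyCref{sec:example_pseudo-diff}). By \MyCref{thm:main_theorem_double_sided} applied to $1+\Delta_i$, this operator is bi-maximally hypoelliptic of order $2m_i$, so it extends to a positive self-adjoint operator on $L^2(M)$ with spectrum contained in $[1,+\infty[$. Via \MyCref{thm:extending_parameters_pseudo_diff:inclusion}, $\Delta_i\in\Psi^{2m_ie_i}(M)$. I then define
\[
v_k := (1+\Delta_1)^{k_1/(2m_1)}(1+\Delta_2)^{k_2/(2m_2)}\cdots(1+\Delta_\nu)^{k_\nu/(2m_\nu)},
\]
using the Borel functional calculus on $L^2(M)$. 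The case $k=0$ yields $v_0=\mathrm{Id}$, which is trivially invertible in $\overline{\Psi^0(M)}$.

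The nontrivial assertion $v_k\in\Psi^k(M)$ reduces by \MyCref{thm:extending_parameters_pseudo_diff:inclusion} to proving, for each $i$ and each $z\in\C$, that $(1+\Delta_i)^z\in\dbtilde\Psi_i^{2m_iz}(M)$. This is the main technical step and where I expect the hardest work to lie. The approach is the Cauchy integral
\[
(1+\Delta_i)^z=\frac{1}{2\pi i}\int_\Gamma (1+\lambda)^z(\lambda-\Delta_i)^{-1}\,d\lambda
\]
on a contour $\Gamma$ enclosing the spectrum of $\Delta_i$, so what is needed is that the resolvent $(\lambda-\Delta_i)^{-1}$ lies in $\dbtilde\Psi_i^{-2m_i}(M)$ with holomorphic dependence on $\lambda$ in the resolvent set. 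As observed in Remark~(2) of the introduction, the general parametrix produced by \MyCref{thm:main_theorem:parametrix} is not a priori in the calculus, so the resolvent must be built directly: applying \MyCref{thm:asymptotic_sum} together with the symbolic invertibility of $\sigma^{2m_i}(\lambda-\Delta_i,\pi,x)$ on $C^\infty(\pi)$ for every non-singular $\pi$ (from \MyCref{thm:main_theorem:left-inv}), one constructs an approximate inverse in $\dbtilde\Psi_i^{-2m_i}(M)$ whose coefficients depend holomorphically on $\lambda$, and then corrects by a Neumann-series argument using self-adjointness and the spectral bound on $L^2(M)$. This resolvent construction is the principal obstacle.

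For the continuity in (ii), the strategy is to telescope
\[
v_kvv_{-k}-v=\sum_{i=1}^{\nu}w_i^{<}\,\bigl[(1+\Delta_i)^{k_i/(2m_i)},w_i\bigr]\,(1+\Delta_i)^{-k_i/(2m_i)}w_i^{>},
\]
where $w_i^{<},w_i,w_i^{>}$ are the pieces of $v_k v v_{-k}$ obtained by peeling off the factors one at a time. Each commutator $\bigl[(1+\Delta_i)^{k_i/(2m_i)},w_i\bigr]$ has, by \eqref{eqn:commutator_lower_order_extension} applied through the complex-power Cauchy representation above, strictly smaller order than $k_ie_i$, so that the sandwiched term has negative overall order and is therefore bounded on $L^2(M)$ by \MyCref{lem:operators_of_order_0_on_M_are_bounded}. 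Continuity in $k\in K$ then follows from the strong operator continuity of $z\mapsto(1+\Delta_i)^z$ on any vertical strip together with the uniform bounds on these lower-order terms, completing the argument modulo the resolvent construction flagged above.
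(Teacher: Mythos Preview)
Your approach is circular. This proposition is proved in the paper precisely in order to verify the third hypothesis of \Cref{thm:simple_arg_sobolev}, which in turn yields \Cref{thm:conclusion}, and \emph{only after} \Cref{thm:conclusion} are the main hypoellipticity theorems \Cref{thm:main_theorem} and \Cref{thm:main_theorem_double_sided} established (see Sections~\ref{sec:cstar_algebras_of_pseudo_of_order0}--\ref{sec:main_theorem_proof}). You invoke \Cref{thm:main_theorem_double_sided} to get self-adjointness and spectral bounds for $1+\Delta_i$, and \Cref{thm:main_theorem} (left-invertibility of the symbol) to build the resolvent; both of these logically rest on the very proposition you are trying to prove. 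The same holds for \Cref{thm:extending_parameters_pseudo_diff}, whose proof (Section~\ref{sec:extension_of_parameters_proof}) also uses \Cref{thm:conclusion}.

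Even setting circularity aside, you correctly identify that $(1+\Delta_i)^z\in\dbtilde\Psi_i^{2m_iz}(M)$ is the crux, and this is genuinely not available: the paper explicitly defers to a sequel the question of whether parametrices lie in the calculus (Remark after \Cref{intro:main_thm} and \Cref{rem:on_main_thm}). Your asymptotic-sum plus Neumann-series sketch does not close this gap, because \Cref{thm:asymptotic_sum} produces an element of the calculus only up to a smoothing error, and promoting an $L^2$-correction back into $\Psi^\bullet(M)$ is exactly the unresolved issue.

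The paper avoids all of this by a direct, groupoid-level construction: it fixes a single positive $f\in\cinvf{\underline{1}}$ with $\pi_{\mathrm{inv}}(f)$ injective on every $L^2(\pi)$, and an injective smoothing kernel $g$, and simply sets $v_k=I_k(f)_{|\underline{1}}+g$ for every $k\in K$. Membership in $\Psi^k(M)$ is then immediate from the definition \eqref{eqn:dfn_pseudo}, invertibility of $v_0$ comes from \Cref{prop:simple_arg}, and continuity of $k\mapsto v_k\ast v\ast v_{-k}$ follows from the explicit Cotlar--Stein bound \eqref{eqn:bound_on_u} together with the composition formula \eqref{eqn:qimskjfiqjsdmlfjlkmqsdmfjqsdjflmqjsmdjf2}. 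No functional calculus, no parametrix, and no forward reference to the main theorem are needed.
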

		\begin{proof}
			We start with the following lemma.
			\begin{lem}\label{lem:v_injective_on_distributions}
				There exists $g\in C^\infty(M\times M,\omegahalf)$ which is positive (as an operator $L^2(M)\to L^2(M)$) such that the operator $g\ast \cdot:C^{-\infty}(M,\omegahalf)\to C^{\infty}(M,\omegahalf)$ is injective.		
			\end{lem}
			\begin{proof}
					Let $(f_n)_{n\in \mathbb{N}}$ a dense sequence in $C^\infty(M,\omegahalf)$. We take
						\begin{equation*}\begin{aligned}
							g(y,x)=\sum_{n=0}^{+\infty}a_nf_n(y)\overline{f_n(x)},		
						\end{aligned}\end{equation*}
					where $a_n$ are strictly positive constants which converge to $0$ sufficiently fast so that the series converges in $C^\infty(M\times M,\omegahalf)$.
			\end{proof}
			 There exists $f\in \cinvf{\underline{1}}$ which is positive as an element of $C^*\mathbb{G}$ such that $\pi_{\mathrm{inv}}(f):L^2(\pi)\to L^2(\pi)$ is injective for any $(\pi,x)\in \HatHLnon$.
				 The proof of this is a straightforward adaptation of the proof of \cite[Lemma 2.17.2]{MohsenAbstractMaxHypo}, and is left to the reader.

			We define $v_k$ by 
				\begin{equation*}\begin{aligned}
					v_k=I_k(f)_{|\underline{1}}+g		
				\end{aligned}\end{equation*}
			Let $k=0$ and $h\in L^2(M)$, $\xi\in L^2(\pi)$. The inner products $\langle v_0\ast h,h\rangle_{L^2(M)} $ and  $\langle \sigma^0(v_0,\pi,x)\xi,\xi\rangle_{L^2(\pi)} $ are well-defined.
			So we get injectivity of $v_0\ast \cdot:L^2(M)\to L^2(M)$ from injectivity of $g$ on $L^2(M)$, and injectivity of  $\sigma^0(v_0,\pi,x)=\int_{(\Rpt)^\nu}\alpha_\lambda(f)\frac{\odif{\lambda}}{\lambda^{\underline{1}}}$ on $L^2(\pi)$ from injectivity of $\pi_{\mathrm{inv}}(f)$.
			We deduce that $v_0$ is left-invertible \Cref{prop:simple_arg}.
			Since $v_0^*=v_0$ it is invertible in $ \overline{\Psi^0(M)}$.
				
		Continuity of \eqref{eqn:continuity_hypothesis_conc} follows from \eqref{eqn:bound_on_u}, and the proof of \Cref{lem:Cotlar_Stein_lemma}, and \Crefitem{thm:main_prop_bi_graded_pseudo_diff}{3}.
		\end{proof}
		By all the above, we can apply \Cref{thm:simple_arg_sobolev}.
		We suppose that $M$ is compact so that the identity operator is in $\Psi^0(M)$ (which follows from \Crefitem{thm:main_prop_bi_graded_pseudo_diff_M}{2}).
		The identity operator is needed so that $\overline{\Psi^0(M)}$ is a unital $C^*$-algebra which is a hypothesis of \Cref{thm:simple_arg_sobolev}, see \MyCref{dfn:CstarCalculus}.
		\begin{theorem}\label{thm:conclusion}
			Suppose $M$ is compact. Then, 
			\begin{enumerate}
				\item  If $k\in \R^\nu$, and $(\pi,x)\in \HatHLnon$, the spaces $H^k(M)$ and $H^k(\pi)$ are Hilbertian spaces, and 
					\begin{equation*}\begin{aligned}
						&C^{-\infty}(M,\omegahalf)=\bigcup_{k\in \R^\nu}H^k(M), &&C^{-\infty}(\pi)=\bigcup_{k\in \R^\nu}H^k(\pi)&&&\\
						&C^{\infty}(M,\omegahalf)=\bigcap_{k\in \R^\nu}H^k(M),&&C^{\infty}(\pi)=\bigcap_{k\in \R^\nu}H^k(\pi)&&&\\
												&H^0(M)=L^2(M)&& H^0(\pi)=L^2(\pi)&&&\\
						&H^{l}(M)\subseteq H^k(M),&&H^{l}(\pi)\subseteq H^k(\pi)&&&\text{ if }k\preceq l
					\end{aligned}\end{equation*}
					
				\item\label{thm:conclusion:2} For any $k\in \C^\nu$, $v\in \Psi^k(M)$, if the maps
							\begin{equation*}\begin{aligned}
							v\ast \cdot&:C^{\infty}(M,\omegahalf)\;\xrightarrow{\phantom{aaaaaaaa}}\;C^{\infty}(M,\omegahalf),\\
							v^*\ast \cdot&:C^{\infty}(M,\omegahalf)\;\xrightarrow{\phantom{aaaaaaaa}}\;C^{\infty}(M,\omegahalf),\\
							\sigma^k(v,\pi,x)&:C^{\infty}(\pi)\;\xrightarrow{\phantom{aaaaaaaaaaaaa}}\;C^{\infty}(\pi),\quad\forall (\pi,x)\in \HatHLnon\\
							\sigma^{\bar{k}}(v^*,\pi,x)&:C^{\infty}(\pi)\;\xrightarrow{\phantom{aaaaaaaaaaaaa}}\;C^{\infty}(\pi),\quad\forall (\pi,x)\in \HatHLnon
\end{aligned}\end{equation*}
							are injective, then for all $(\pi,x)\in \HatHLnon$,
							\begin{equation}\label{eqn:skqdjfmsqmjdfjsmjmfqsfd2}\begin{aligned}
								v\ast \cdot &:C^{-\infty}(M,\omegahalf)\xrightarrow{\phantom{aaaa}} C^{-\infty}(M,\omegahalf)&&\\
								v\ast \cdot &:H^l(M)\xrightarrow{\phantom{aaaaaaaaaa}} H^{l-\Re(k)}(M)&&\forall l\in \R^\nu\\
								v\ast \cdot &:C^{\infty}(M,\omegahalf)\xrightarrow{\phantom{aaaaa}} C^{\infty}(M,\omegahalf)&&\\
								\sigma^k(v,\pi,x)&:C^{-\infty}(\pi)\xrightarrow{\phantom{aaaaaaaaa}} C^{-\infty}(\pi)&&\\
								\sigma^k(v,\pi,x)&:H^l(\pi)\xrightarrow{\phantom{aaaaaaaaaaa}} H^{l-\Re(k)}(\pi)&&\forall l\in \R^\nu\\
								\sigma^k(v,\pi,x)&:C^\infty(\pi)\xrightarrow{\phantom{aaaaaaaaaa}} C^\infty(\pi)&&
                                \end{aligned}\end{equation}
					are topological isomorphism. 
					\item\label{thm:conclusion:3} For any $k\in \C^\nu$, there exists $v_k\in \Psi^k(M)$ such that \eqref{eqn:skqdjfmsqmjdfjsmjmfqsfd2} are topological isomorphisms
			\end{enumerate}
		\end{theorem}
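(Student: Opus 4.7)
The plan is to deduce \Cref{thm:conclusion} by a direct application of the abstract maximal hypoellipticity machinery of \Cref{thm:simple_arg_sobolev} to the $C^*$-calculus on $M$ built out of pseudo-differential operators of order $0$. All the ingredients needed to invoke that abstract result have been assembled in the preceding sections, so the task is essentially one of bookkeeping: identify the abstract data, verify the three hypotheses (a Type~I unital $C^*$-algebra of ``order zero'' operators, a compatible notion of symbol with the correct spectrum, and a family of invertible ``order $k$'' elements with the continuity property of \eqref{eqn:continuity_hypothesis_conc}), and then read off the three conclusions.

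First I would take as the ambient unital $C^*$-algebra the closure $\overline{\Psi^{0}(M)}\subseteq\cL(L^{2}(M))$, whose unit exists because $M$ is compact and hence $\mathrm{Id}\in\DO^{0}(M)\subseteq\Psi^{0}(M)$ by \MyCref{thm:main_prop_bi_graded_pseudo_diff_M}[2]. Boundedness on $L^{2}(M)$ of arbitrary elements of $\Psi^{0}(M)$ is \Crefitem{lem:operators_of_order_0_on_M_are_bounded}{4}, and the same proposition gives boundedness of the symbols $\sigma^{0}(v,\pi,x)$ on $L^{2}(\pi)$. The short exact sequence \eqref{eqn:short_exact_seq_psi0} together with \Cref{thm:main_representation_theorem_morita_2} and \Cref{thm:main_representation_theorem_morita_1} identifies the spectrum of $\overline{\Psi^{0}(M)}$ with the disjoint union $\{\underline{1}\}\sqcup\HatHLnon/(\Rpt)^{\nu}$, where the point $\underline{1}$ corresponds to the tautological representation $\pi_{\underline{1}}$ on $L^{2}(M)$ and the remaining points to the symbol representations $\sigma^{0}(\cdot,\pi,x)$ on $L^{2}(\pi)$; this is exactly the Type~I spectral picture required by \Cref{thm:simple_arg_sobolev}.

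Next I would feed in the filtration. For each $k\in\C^{\nu}$, the elements of $\Psi^{k}(M)$ play the role of ``order $k$'' operators: \Crefitem{thm:main_prop_bi_graded_pseudo_diff_M}{4} gives the graded $*$-algebra structure, and \Cref{lem:vk} supplies, for every $k\in K$, an operator $v_{k}\in\Psi^{k}(M)$ with $v_{0}$ invertible in $\overline{\Psi^{0}(M)}$ and with the conjugation map $k\mapsto v_{k}\ast v\ast v_{-k}$ continuous into $\overline{\Psi^{0}(M)}$, which is precisely the third hypothesis of \Cref{thm:simple_arg_sobolev}. At this stage all hypotheses of the abstract theorem are met.

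Applying \Cref{thm:simple_arg_sobolev} then yields all three parts at once. The abstract theorem defines Sobolev spaces as the pre-image of $L^{2}$ under a representative order-$k$ operator; comparing definitions, its spaces coincide with $H^{k}(M)$ and $H^{k}(\pi)$ of \Cref{dfn:Sobolev_spaces_on_M}, and the Hilbertian structure, the identifications $H^{0}=L^{2}$, the filtration by $\preceq$, and the exhaustion/intersection identities with $C^{-\infty}$ and $C^{\infty}$ are formal consequences. Part~\ref{thm:conclusion:2} is the abstract ``injectivity on smooth $\Rightarrow$ isomorphism everywhere'' statement of \Cref{thm:simple_arg_sobolev}: one applies it to the element of $\overline{\Psi^{0}(M)}$ obtained by conjugating $v$ by appropriate $v_{\pm k}$, using \MyCref{thm:principal_symbol_pseudo_diff_well_defined}[3] to transport injectivity of $v$ and its adjoint on smooth vectors (both on $M$ and on each $\pi$) to injectivity of the conjugated order-zero operator on the full spectrum described above, and then the abstract theorem upgrades this to invertibility. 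Part~\ref{thm:conclusion:3} is the abstract existence statement: take $v_{k}$ from \Cref{lem:vk}.

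The only real subtlety, and the step I would write out most carefully, is the translation between the two descriptions of the spectrum. One must check that the homeomorphism of \Cref{thm:main_representation_theorem_morita_1} together with \Cref{thm:main_representation_theorem_morita_2} matches the point $\underline{1}$ with the representation $v\mapsto v\ast\cdot$ on $L^{2}(M)$ (modulo the compact ideal in \eqref{eqn:short_exact_seq_psi0}, using \eqref{eqn:kqsdjfsqjdmjflmjqsldmjfklmqjsdklfj}) and matches the orbit of $(\pi,x)\in\HatHLnon$ with the representation $\sigma^{0}(\cdot,\pi,x)$, so that ``injective on the abstract spectrum'' really does translate to ``$v$ and all its symbols are injective on smooth vectors''. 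Once this identification is firmly in place, the theorem follows with no further work.
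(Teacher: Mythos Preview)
Your proposal is correct and follows exactly the paper's approach: the paper's proof is literally a single sentence invoking \Cref{thm:simple_arg_sobolev} with $\overline{\cA_0}=\overline{\Psi^0(M)}\subseteq\cL(L^2(M))$, and you have spelled out in detail how the hypotheses are verified (Type~I via \Cref{thm:main_representation_theorem_morita_1}, spectrum via \Cref{thm:main_representation_theorem_morita_2} and \eqref{eqn:short_exact_seq_psi0}, the family $v_k$ via \Cref{lem:vk}). One minor imprecision: for Part~\ref{thm:conclusion:3} the operators $v_k$ for general $k\in\C^\nu$ come from the existence clause inside \Cref{thm:simple_arg_sobolev} itself, not directly from \Cref{lem:vk}, which only supplies the seed family indexed by $K$.
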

		\begin{proof}
			We apply \Cref{thm:simple_arg_sobolev} by completing $\Psi^0(M)$ using the $C^*$-norm $\norm{\cdot}_{\cL(L^2(M))}$ to obtain $\overline{\Psi^0(M)}$.
		\end{proof}
	\section{Proofs of the results in Section \ref{sec:sobolev_spaces}}\label{sec:sobolev_spaces_proof}

\begin{linkproof}{bounded_M_Sobolev} 
				We will first prove \Cref{thm:bounded_M_Sobolev} under the assumption that $M$ is compact.
				So, for the moment, we suppose that $M$ is compact.

				\Crefitem{thm:bounded_M_Sobolev}{1}, \Crefitem{thm:bounded_M_Sobolev}{2}, \Crefitem{thm:bounded_M_Sobolev}{3} and \Crefitem{thm:bounded_M_Sobolev}{6} follow from \Cref{thm:conclusion}. 
				\Crefitem{thm:bounded_M_Sobolev}{5} follows immediately from \MyCref{thm:conclusion}[3] and \Crefitem{thm:main_prop_bi_graded_pseudo_diff_M}{5}.
				\Crefitem{thm:bounded_M_Sobolev}{4}  follows immediately from \MyCref{thm:conclusion}[3] and \MyCref{lem:operators_of_order_0_on_M_are_bounded}[4].

				We now no longer suppose that $M$ is compact.
					The Sobolev spaces are local.
				More precisely, if $w\in C^{-\infty}(M,\omegahalf)$, then $w\in H^k_{\mathrm{loc}}(M)$ if and only if for any $x\in M$, there exists an open neighbourhood $U$ of $x$ such that $w_{|U}\in H^k_{\mathrm{loc}}(U)$, where $H^k_{\mathrm{loc}}(U)$ is defined using
				$\cF_{|U}^\bullet$, see \eqref{eqn:compatability_pseudo_restriction_open}.
				\begin{lem}\label{lem:compactification}
			For every $x\in M$, there exists a weakly commutative weighted sub-Riemannian structure $\cGF^\bullet$ of depth $N$ on the $\dim(M)$-dimensional sphere $S^{\dim(M)}$, and
			there exists $U\subseteq M$ an open neighborhood of $x$,
			$U'\subseteq S^{\dim(M)}$ an open subset, and a diffeomorphism $\phi:U\to U'$ such that $\cF^k_{|U}=\phi^*(\cGF_{|U'})$.
		\end{lem}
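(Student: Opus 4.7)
The plan is to construct $\cGF^\bullet$ on $S^{\dim M}$ by pushing forward $\cF^\bullet$ through a coordinate chart, cutting off near $x$, and adding finitely many globally defined vector fields that force the Hörmander condition on the whole sphere.

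Fix a diffeomorphism $\phi: U \to \phi(U) \subset S^{\dim M}$ from an open neighborhood $U$ of $x$ onto an open subset of the sphere, and choose nested open neighborhoods of $x$ with $\overline{V_2} \subset V_1$ and $\overline{V_1} \subset U$. Pick $\chi \in C^\infty(S^{\dim M})$ with $\chi \equiv 1$ on $\phi(V_2)$ and $\supp(\chi) \subset \phi(V_1)$. Using that each $\cF^{j e_i}_{|U}$ is finitely generated as a $C^\infty(U,\R)$-module, fix generators $X^{(i,j)}_1,\ldots,X^{(i,j)}_{n_{i,j}}$, and set $Y^{(i,j)}_a := \chi\cdot\phi_{*}(X^{(i,j)}_a)$, extended by zero to a smooth vector field on $S^{\dim M}$. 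Since $S^{\dim M}$ is compact, pick $\tilde W_1,\ldots,\tilde W_q \in \cX(S^{\dim M})$ spanning the tangent bundle everywhere. For $i \in \bb{1,\nu}$, $j \in \bb{1,N_i}$, define $\cGF^{j e_i}$ to be the $C^\infty(S^{\dim M},\R)$-module generated by $\{Y^{(i,j')}_a : j' \leq j, a\}$ if $j < N_i$, and by the same set together with $\{\tilde W_c\}_c$ if $j = N_i$; then set $\cGF^k := \sum_{i:k_i\geq 1}\cGF^{k_i e_i}$ for $k \in \Z_+^\nu$, so that $\cGF^0 = 0$.

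By construction $\cGF^\bullet$ is weakly commutative, monotone in $\preceq$, and satisfies $\cGF^{N_i e_i} = \cX(S^{\dim M})$ for every $i$ (since the $\tilde W_c$ span). On the open set $U' := \phi(V_2)$ one has $\chi \equiv 1$, so $Y^{(i,j)}_a|_{U'} = \phi_{*}(X^{(i,j)}_a)|_{U'}$; moreover each $\tilde W_c|_{U'}$ lies in the $C^\infty(U',\R)$-module generated by $\phi_{*}(X^{(i,N_i)}_\bullet)|_{U'}$, since the latter already generate $\cX(U')$ thanks to $\cF^{N_i e_i}_{|V_2} = \cX(V_2)$. Thus $\cGF^k_{|U'} = \phi_{*}(\cF^k_{|V_2})$, which (taking the diffeomorphism $\phi: V_2 \to U'$ in the statement of the lemma) is the desired identification $\cF^k_{|V_2} = \phi^*(\cGF^k_{|U'})$.

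The remaining point is the Lie bracket inclusion $[\cGF^k, \cGF^l] \subseteq \cGF^{k+l}$, which by bilinearity and the Leibniz rule reduces to brackets among generators. For $[Y^{(i,j)}_a, Y^{(i',j')}_{a'}]$ the identity $[\chi A, \chi B] = \chi A(\chi) B - \chi B(\chi) A + \chi^2[A,B]$ produces two terms that are visibly smooth scalar multiples of $Y$-generators, while the third term $\chi^2 \phi_{*}([X^{(i,j)}_a, X^{(i',j')}_{a'}])$ is handled via the weak commutativity of $\cF$: $[X^{(i,j)}_a, X^{(i',j')}_{a'}] \in \cF^{j e_i + j' e_{i'}}_{|U}$ decomposes (for $i \neq i'$) as an element of $\cF^{j e_i}_{|U} + \cF^{j' e_{i'}}_{|U}$ written in the chosen generators, and one copy of $\chi$ is absorbed into each coefficient function (smoothly extendable by zero to $S^{\dim M}$ because $\supp(\chi) \subset \phi(U)$), identifying $\chi^2\phi_{*}([X,X'])$ with a $C^\infty(S^{\dim M})$-linear combination of the $Y$'s. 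Any bracket involving some $\tilde W_c$ is automatically in $\cX(S^{\dim M})$; and since $\tilde W_c$ is a generator of $\cGF^{N_{i''} e_{i''}}$ for every $i''$, the relevant weight $k+l$ in such a bracket has some coordinate $\geq N_{i''}$, so $\cGF^{k+l} = \cX(S^{\dim M})$ and the inclusion is trivial. The main obstacle of the argument is this absorption trick $\chi^2 = \chi\cdot\chi$ that turns the locally defined coefficients $f_a \circ \phi^{-1}$ on $\phi(U)$ into honest smooth functions $\chi f_a$ on $S^{\dim M}$; all the remaining verifications are mechanical.
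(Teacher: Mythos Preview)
Your proof is correct and follows essentially the same approach as the paper: push forward through a chart, multiply by a cutoff $\chi$, and verify the bracket condition via $[\chi A,\chi B]=\chi^2[A,B]+\chi A(\chi)B-\chi B(\chi)A$, absorbing one factor of $\chi$ into the locally defined coefficient functions so they extend smoothly to the sphere. The only cosmetic differences are that the paper declares $\cGF^k=\cX(S^{\dim M})$ outright whenever some $k_i\geq N_i$ (rather than adjoining explicit spanning fields $\tilde W_c$) and defines $\cGF^k$ directly for each multi-index rather than axially via $\cGF^k=\sum_i\cGF^{k_ie_i}$; since $\cF$ is weakly commutative these constructions coincide.
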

		\begin{proof}
			Let $\phi:U'\to V'$ be any diffeomorphism from an open neighborhood $U'$ of $x$ to an open subset $V'$ of $S^{\dim(M)}$.
			Let $\chi\in C^\infty_c(V')$ be a smooth function which is equal to $1$ in a neighborhood $V$ of $\phi(x)$.
			We take $U=\phi^{-1}(V)$.
			For each $k\in \Z_+^\nu$, if $k_i<N_i$ for all $i$, then we define $\cGF^{k}$ to be the submodule of $\cX(S^{\dim(M)})$ generated by $\chi \phi_*(X)$ for $X\in \cF^k$.
			Otherwise, we take $\cGF^N=\cX(S^{\dim(M)})$.
			It is clear that the family $\cGF^\bullet$ consists of finitely generated modules.
			Furthermore, by the identity,
			\begin{equation}\label{eqn:}\begin{aligned}
					[\chi\phi_*(X),\chi\phi_*(Y)]=\chi^2\phi_*([X;Y])+ \phi_*(X)(\chi)\chi\phi_*(Y)-\phi_*(Y)(\chi)\chi\phi_*(X)
				\end{aligned}\end{equation}
			it follows that $\cGF^\bullet$ satisfies \eqref{eqn:Liebracket_cFi} for $i,j\in \N$ such that $i+j<N$.
			Since, \eqref{eqn:Liebracket_cFi} for $i,j\in \N$ such that $i+j=N$ is trivially satisfied, we deduce that $\cGF^\bullet$ is a $\nu$-graded sub-Riemannian structure on $S^{\dim(M)}$.
			Finally, since $\chi=1$ on $V$, $\phi^*(\cGF^{\bullet}_{|V})=\cF^\bullet_{|U}$.
		\end{proof}
				Using \Cref{lem:compactification}, to prove \Cref{thm:bounded_M_Sobolev}, we can suppose that $M$ is an open subset of $S^{\dim(M)}$.
				So, the theorem follows from the compact case.
				This finishes the proof of \Cref{thm:bounded_M_Sobolev}.
		\end{linkproof}
	
		\begin{linkproof}{bounded_pi_Sobolev}
					Since the statement is local at $x$, by \Cref{lem:compactification}, we can suppose that $M$ is compact.
					The argument is identical to \MyCref{thm:bounded_M_Sobolev}.
		\end{linkproof}
		\begin{linkproof}{vanishing_of_principal_symbol_Total}
			By \Crefitem{thm:conclusion}{3}, we can suppose that $k=l=0$.
			The result now follows from \Cref{thm:short_exact_seq_psi0}, and density of $C^\infty(\pi)$ in $L^2(\pi)$ for the $L^2(\pi)$-topology.	
		\end{linkproof}
		\begin{linkproof}{principal_symbol_characterizations}
				Since the statement is local at $x$, by \Cref{lem:compactification}, we can suppose that $M$ is compact.
				By \Crefitem{thm:conclusion}{3}, we can suppose that $k=l=0$.
				
				The set $S/(\Rpt)^\nu$ seen as a subset of $\HatHLnon/(\Rpt)^\nu$ is equal to the set $\overline{\{(\pi,x)\}}\backslash \{(\pi,x)\}$ where the closure of $\{(\pi,x)\}$ is taken in $\HatHLnon/(\Rpt)^\nu$.
				The set $\overline{\{(\pi,x)\}}$ is a closed subset of $\HatHLnon/(\Rpt)^\nu$.
				By the computation of the spectrum of $\overline{\Psi^0(M)}$ in \Cref{thm:main_representation_theorem_morita_1} and \Cref{thm:main_representation_theorem_morita_2}, we can define a quotient $\overline{\Psi^0(M)}$ whose spectrum is $\overline{\{(\pi,x)\}}$.
				Let us denote this quotient by $A$ which is of Type I by \Cref{prop:exat_sequence_3_TypeI}.
				By \cite[Proposition 3.3.2]{Dixmier}, $A$ is the Hausdorff completion of $\Psi^0(M)$ by the semi-norm $v\mapsto \norm{\sigma^0(v,\pi,x)}_{\cL(L^2(\pi))}$.
				Now, let $B$ be the Hausdorff completion $\Psi^0(M)$ by the semi-norm  $v\mapsto \norm{\sigma^0(v,\pi,x)}_{\cL(L^2(\pi))/\cK(L^2(M))}$.
				Clearly, $B$ is a quotient of $A$. 
				In fact, since in the spectrum of $A$, the point $(\pi,x)$ is dense, \MyCref{prop:liminal_mapped_to_compacts} (rather its proof) implies that the spectrum of $B$ is precisely the set $S$.
				
				By the computation of the spectrum of $B$, we deduce that if $v\in \Psi^0(M)$, then $v$ vanishes in $B$, i.e., $L^2(\pi)\xrightarrow{\sigma^0(v,\pi,x)} L^2(\pi)$ is compact if and only if for all $(\pi',x)\in S$, $L^2(\pi')\xrightarrow{\sigma^0(v,\pi',x)} L^2(\pi')$ vanishes, which by density of $C^\infty(\pi)$ is equivalent to vanishing of
				$C^\infty(\pi')\xrightarrow{\sigma^0(v,\pi',x)} C^\infty(\pi')$.
				This finishes the proof of \Crefitem{thm:principal_symbol_characterizations}{V}.
					
				To obtain \MyCref{thm:conclusion}, we used \MyCref{thm:simple_arg_sobolev}, where we completed $\Psi^0(M)$ by the norm $v\mapsto \norm{v}_{\cL(L^2(M))}$.
				Here, instead, we will apply \MyCref{thm:simple_arg_sobolev} but completing $\Psi^0(M)$ by the semi-norm $v\mapsto \norm{\sigma^0(v,\pi,x)}_{\cL(L^2(\pi))/\cK(L^2(M))}$.
				By \MyCref{thm:simple_arg_sobolev}[inv] and the computation of the spectrum of $B$ above, we obtain the following: 
				If $v\in \Psi^0(M)$, then $v$ is left-invertible in $B$ if and only if $C^\infty(\pi')\xrightarrow{\sigma^0(v,\pi',x)} C^\infty(\pi')$ is injective for all $(\pi',x)\in S$.
				From this, we obtain the equivalence between \MyCref{thm:principal_symbol_characterizations}[I:a] and \MyCref{thm:principal_symbol_characterizations}[I:c].
				The equivalence between \MyCref{thm:principal_symbol_characterizations}[I:a] and \MyCref{thm:principal_symbol_characterizations}[I:b] is the statement of \MyCref{thm:simple_arg_sobolev}[3].
				
				Finally, we need to show that if $H^{\Re(k)+l}(\pi) \xrightarrow{  \sigma^k(v,\pi,x)} H^l(\pi)$ is Fredholm, then its kernel consists of only smooth vectors (and so it doesn't depend on $l$).
				This will again be an application of \MyCref{thm:simple_arg_sobolev}. This time, we complete $\Psi^0(M)$ by the semi-norm $v\mapsto \norm{\sigma^0(v,\pi,x)}_{\cL(L^2(\pi))}$.
				By the computation of the spectrum of $A$ above,
				\MyCref{thm:simple_arg_sobolev}[inv] implies that $v$ is left-invertible in $A$ if and only if $C^\infty(\pi)\xrightarrow{\sigma^0(v,\pi',x)} C^\infty(\pi)$ is injective for all $(\pi',x)\in S$, and $C^\infty(\pi')\xrightarrow{\sigma^0(v,\pi,x)} C^\infty(\pi')$ is injective.
				In other words, if $H^{\Re(k)+l}(\pi) \xrightarrow{  \sigma^k(v,\pi,x)} H^l(\pi)$ is Fredholm, then it is left-invertible if and only if it is injective on $C^\infty(\pi)$.
				This finishes the proof of \MyCref{thm:principal_symbol_characterizations}[I].
		\end{linkproof}
\section{Proofs of the results in Section \ref{sec:extension_of_parameters}}\label{sec:extension_of_parameters_proof}
		
	\begin{linkproof}{extending_parameters_pseudo_diff}[principal_symbol_extension]

				For technical reasons, we will first prove the results in \Cref{sec:extension_of_parameters} and then prove the results in \Cref{sec:main_theorem}.
				Let $\tilde{\mathbb{G}}\rightrightarrows \tilde{\mathbb{G}}^{(0)}$ and $\dbtilde{\mathbb{G}}\rightrightarrows \dbtilde{\mathbb{G}}^{(0)}$ be the tangent groupoids associated to $\tilde{\cF}$ and $\dbtilde{\cF}$, $\Phi$ the map as in \Cref{thm:extension_parameters}.
				Let $k\in \C^{\tilde{\nu}}$ and $l\in \C^{\dbtilde{\nu}}$.
				If $f\in C^\infty_{c,\mathrm{inv},k+\underline{1}}(\tilde{\mathbb{G}},\omegahalf)$ and $g\in C^\infty_{c,\mathrm{inv},l+\underline{1}}(\dbtilde{\mathbb{G}},\omegahalf)$, then $\Phi(f,g)\in \cinvf{(k,l)+\underline{1}}$, and we have 
					\begin{equation*}\begin{aligned}
						I_{(k,l)}(\Phi(f,g))=\int_{]0,1]^{\tilde{\nu}}\times ]0,1]^{\dbtilde{\nu}}}\alpha_{(\lambda,\mu)}(\Phi(f,g))\frac{\odif{\lambda} \odif{\mu}}{\lambda^{k+\underline{1}}\mu^{l+\underline{1}}}=\int_{]0,1]^{\tilde{\nu}}\times ]0,1]^{\dbtilde{\nu}}}\Phi(\alpha_{\lambda}(f),\alpha_\mu(g))\frac{\odif{\lambda} \odif{\mu}}{\lambda^{k+\underline{1}}\mu^{l+\underline{1}}}				
					\end{aligned}\end{equation*}
				So, 
					\begin{equation}\label{eqn:qksjdkofjqkosdjfpoqfsd}\begin{aligned}
						I_{(k,l)}(\Phi(f,g))_{|\underline{1}}&=\int_{]0,1]^{\tilde{\nu}}\times ]0,1]^{\dbtilde{\nu}}}\Phi(\alpha_{\lambda}(f),\alpha_\mu(g))_{|\underline{1}}\frac{\odif{\lambda} \odif{\mu}}{\lambda^{k+\underline{1}}\mu^{l+\underline{1}}}\\
					&=\int_{]0,1]^{\tilde{\nu}}\times ]0,1]^{\dbtilde{\nu}}}\alpha_{\lambda}(f)_{|\underline{1}}\ast \alpha_\mu(g)_{|\underline{1}}\frac{\odif{\lambda} \odif{\mu}}{\lambda^{k+\underline{1}}\mu^{l+\underline{1}}}=I_k(f)_{|\underline{1}}\ast I_l(g)_{|\underline{1}}.
					\end{aligned}\end{equation}
				We have thus proved that $\tilde{\Psi}^k_c(M)\ast \dbtilde{\Psi}^l_c(M)\subseteq \Psi^{(k,l)}_c(M)$.
				
				We now prove \eqref{eqn:commutator_lower_order_extension}.
				In fact this immediately from \eqref{eqn:commutator_phi}, see the proof of \MyCref{thm:main_prop_bi_graded_pseudo_diff_M}[3]. 
				The proof of \Crefitem{thm:extending_parameters_pseudo_diff}{inclusion} is thus complete.

				The proof of \Crefitem{thm:extending_parameters_pseudo_diff}{osculating} and \Crefitem{thm:extending_parameters_pseudo_diff}{Helffer_Nourrigat} is straightforward and left to the reader.
		
				We now prove \Cref{thm:principal_symbol_extension}.
			Let $k,l,f,g$ as in \eqref{eqn:qksjdkofjqkosdjfpoqfsd}
			By \eqref{eqn:qksjdkofjqkosdjfpoqfsd}, We have 
				\begin{equation*}\begin{aligned}
				\sigma^{(k,l)}(I_k(f)_{|\underline{1}}\ast I_l(g)_{|\underline{1}},\pi,x)=&\lim_{\lambda_1\to +\infty,\cdots,\lambda_{\tilde{\nu}}\to +\infty,\mu_1\to +\infty,\cdots,\mu_{\dbtilde{\nu}}\to +\infty}\lambda^{-k}\mu^{-l}\pi_{\mathrm{inv}}(\alpha_{(\lambda,\mu)}(\Phi(f,g)))\\
					&=\lim_{\lambda_1\to +\infty,\cdots,\lambda_{\tilde{\nu}}\to +\infty,\mu_1\to +\infty,\cdots,\mu_{\dbtilde{\nu}}\to +\infty}\lambda^{-k}\mu^{-l}\tilde{\pi}_{\mathrm{inv}}(\alpha_{\lambda}(f))\dbtilde{\pi}_{\mathrm{inv}}(\alpha_{\mu}(g))\\
					&=\sigma^k(I_k(f),\tilde{\pi},x)\otimes \sigma^{l}(I_l(f),\dbtilde{\pi},x).
				\end{aligned}\end{equation*}
			From this, \eqref{eqn:qkosdjoqskjdkoqsjdkqsjdkqs} easily follows. 
			By \Cref{thm:conclusion}, \eqref{eqn:Sobolev_spaces_extension_representation} follows as well. This finishes the proof of \Cref{thm:principal_symbol_extension}.
			Finally, \Crefitem{thm:extending_parameters_pseudo_diff}{Sobolev} follows from \Cref{thm:principal_symbol_extension} and \Cref{thm:conclusion}.
				\end{linkproof}

\section{Proofs of the results in Section \ref{sec:main_theorem}}\label{sec:main_theorem_proof}
We will first need to modify the sub-Riemannian structure.
	We will replace $\cF$ by $\tilde{\cF}$ defined in \eqref{eqn:extension_structure}.
	\Cref{thm:extending_parameters_pseudo_diff} ensures that the calculus associated to $\cF$ is contained in the calculus associated to $\tilde{\cF}$, and that the Sobolev spaces don't change (for the appropriate indices).
	\Cref{thm:principal_symbol_extension} together with \eqref{eqn:Helffer-Nourrigat_extension_eqn} ensure that we don't change the principal symbol.
	So, it suffices to prove the results of this section for the calculus $\tilde{\cF}$.
	To summarize, we will suppose that $\nu \geq 2$, and that $N_{(0,\cdots,0,1)}=\cX(M)$.
	This implies the following:
	\begin{enumerate}
		\item For any $x\in M$, the union \eqref{eqn:union_hellfer_microlocal} is a disjoint union once one writes the union as a union of the co-sphere bundle. So, there is a projection map
			\begin{equation}\label{eqn:jqksldjfojqsdmfjmlqsjdlfjqsdf}\begin{aligned}
				\HatHLnon\to (T^*M\backslash 0)/\Rpt,	
			\end{aligned}\end{equation}
			which is continuous.
		\item     If $\chi\in \Psi^k_{\mathrm{classical}}(M)$ with $k\in \C$, then, by \Cref{thm:classical_pseudo}, $\chi\in \Psi^{(0,\cdots,0,k)}(M)$ and
		\begin{equation}\label{eqn:symbol_classical_extension}\begin{aligned}
			\sigma^{(0,\cdots,0,k)}(\chi,\pi,x)=\sigma^k_{\mathrm{classical}}(\chi,\xi,x),\quad \forall (\xi,x)\in T^*M\backslash 0,\pi\in \HatHLnon_{(\xi,x)}.
		\end{aligned}\end{equation}
		In particular, by \Cref{lem:commutativity} and \Crefitem{thm:bounded_M_Sobolev}{6}, $\chi\ast H^l_{\mathrm{loc}}(M)\subseteq H^{l-(0,\cdots,0,k)}_{\mathrm{loc}}(M)$ for all $l\in \R^\nu$.
		This will be used repeatedly in this section.
	\end{enumerate}
	We also recall the standard terminology, we say that $\chi\in \Psi^k_{\mathrm{classical}}(M)$ is smoothing on a cone $\Gamma\subseteq T^*M\backslash 0$, if $\Gamma\subseteq \{(\xi,x):(\xi,-\xi;x,x)\notin \WF(\chi)\}$.
	This implies that $\WF(\chi\ast w)\cap \Gamma=\emptyset$ for all $w\in C^\infty(M,\omegahalf)$.
	
	It is straightforward to show that given any closed cones $\Gamma_1,\Gamma_2$ such that $\Gamma_1\cap\Gamma_2=\emptyset$, there exists $\chi\in \Psi^0_{\mathrm{classical}}(M)$ such that $\chi$ is smoothing on $\Gamma_1$, and $\chi$ is elliptic on $\Gamma_2$.

	We now generalise \Crefitem{thm:main_prop_bi_graded_pseudo_diff_M}{6} and \eqref{eqn:commutator_lower_order_extension}.
	\begin{lem}\label{lem:commutativity}
		 If $\chi\in \Psi^k_{c,\mathrm{classical}}(M)$ and $v\in \Psi^{l}_c(M)$ where $k\in \C$ and $l\in \C^\nu$, then $[\chi,v]\in \Psi^{\prec l+(0,\cdots,0,k)}_c(M)$.
	\end{lem}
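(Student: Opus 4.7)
The plan is to prove \Cref{lem:commutativity} by first reducing to the case $l \in \C^\nu_{<0}$ via \Cref{rem:decomposing_pseudo_M} and a Leibniz expansion, and then handling the base case using the product decomposition $\cF = \tilde{\cF} + \dbtilde{\cF}$ (with $\dbtilde{\cF}^1 = \cX(M)$) together with the existing commutator formula \eqref{eqn:commutator_lower_order_extension}.

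For the reduction, I would write $v = \sum_j D_j \ast v_j'$ with $D_j \in \DO^{m_j}_c(M)$ and $v_j' \in \Psi^{l-m_j}_c(M)$ where $l - m_j \in \C^\nu_{<0}$, so that
\begin{equation*}
[\chi, v] = \sum_j \bigl([\chi, D_j]\ast v_j' + D_j\ast [\chi, v_j']\bigr).
\end{equation*}
Expanding $D_j$ as a sum of monomials $X_1\cdots X_n$ with $X_i \in \cF^{a_i}$, $\sum_i a_i \preceq m_j$ (via \Crefitem{thm:properties_bi_grading_diff_op}{5}), and applying the Leibniz rule, each summand of $[\chi, D_j]$ contains exactly one factor of the form $[\chi, X_i]$. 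Since $X_i$ is a genuine vector field, the classical pseudo-differential symbolic calculus yields $[\chi, X_i] \in \Psi^k_{c,\mathrm{classical}}(M) \subseteq \Psi^{(0,\ldots,0,k)}_c(M)$, so each monomial belongs to $\Psi^{\sum_{j\neq i} a_j + (0,\ldots,0,k)}_c(M) \subseteq \Psi^{\prec m_j + (0,\ldots,0,k)}_c(M)$ (strictness uses $a_i \succ 0$). Multiplying by $v_j'$ yields $[\chi, D_j]\ast v_j' \in \Psi^{\prec l + (0,\ldots,0,k)}_c(M)$, while the remaining summand $D_j\ast[\chi, v_j']$ reduces the problem recursively to the case $l \in \C^\nu_{<0}$.

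In the base case, my goal is to write $v$ modulo $\Psi^{\prec l}_c(M)$ as a finite sum of products $\tilde{v}_i \ast \dbtilde{v}_i$ with $\tilde{v}_i \in \tilde{\Psi}^{\tilde{l}_i}_c(M)$, $\dbtilde{v}_i \in \dbtilde{\Psi}^{l_{\nu,i}}_c(M)$, and $(\tilde{l}_i, l_{\nu,i}) \preceq l$. Granting this, for each such product one has
\begin{equation*}
[\chi, \tilde{v}_i \ast \dbtilde{v}_i] = [\chi, \tilde{v}_i]\ast \dbtilde{v}_i + \tilde{v}_i \ast [\chi, \dbtilde{v}_i],
\end{equation*}
where the first summand lies in $\Psi^{\prec l + (0,\ldots,0,k)}_c(M)$ by \eqref{eqn:commutator_lower_order_extension} applied to $\chi \in \dbtilde{\Psi}^k_c(M)$ and $\tilde{v}_i \in \tilde{\Psi}^{\tilde{l}_i}_c(M)$, and the second is a purely classical-classical commutator whose order drops by one in the last coordinate (by Hörmander's classical symbolic calculus), landing in $\Psi^{(\tilde{l}_i, k+l_{\nu,i}-1)}_c(M) \subseteq \Psi^{\prec l + (0,\ldots,0,k)}_c(M)$.

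The main obstacle is the product decomposition of $v$ modulo $\Psi^{\prec l}_c(M)$ in the base case. I would fix a $\nu$-graded Lie basis with $V = \tilde{V}\oplus \dbtilde{V}$ via \Cref{lem:weak_commutative_graded_basis} and represent $v = I_l(f)|_{\underline{1}}$ with $f = \cQ_{V*}(\tilde{f}\circ \pi_{V\times \mathbb{G}^{(0)}})$ for some $\tilde{f}(a+b, x, t, s) \in C^\infty_c(\tdom(\cQ_V),\Omega^1(V))$ supported near the origin in $(\tilde{V}\oplus \dbtilde{V}) \times M \times \R_+^{\tilde{\nu}}\times \R_+^{\dbtilde{\nu}}$. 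Locally I would decompose $\tilde{f}$ into a tensor-product leading part $\sum_i \tilde{f}_{1,i}(a, x, t)\tilde{f}_{2,i}(b, x, s)$, which yields a $\Phi$-product contribution to $v$ via \eqref{eqn:Convolution_extension_parameters}, plus a remainder whose image under $I_l$ lies in $\Psi^{\prec l}_c(M)$ using \eqref{eqn:commutator_phi} together with the argument from the proof of \Crefitem{thm:main_prop_bi_graded_pseudo_diff_M}{3}. Making this local tensor-product decomposition precise, and verifying that the remainder indeed contributes only to $\Psi^{\prec l}_c(M)$, is the technical heart of the argument.
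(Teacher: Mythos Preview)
Your reduction to $l\in\C^\nu_{<0}$ via \Cref{rem:decomposing_pseudo_M} and the Leibniz expansion is fine, and the treatment of $[\chi,D_j]$ using the classical symbolic calculus is correct. The gap is in the base case: the product decomposition $v\equiv\sum_i\tilde v_i\ast\dbtilde v_i\pmod{\Psi^{\prec l}_c(M)}$ that you postulate does not hold in general. At the level of the invariant function $\tilde f$, your ``tensor-product leading part'' would require that $\tilde f(a+b,x,0,0)$, a generic smooth compactly supported function on $\tilde V\times\dbtilde V\times M$, be a \emph{finite} sum $\sum_i\tilde f_{1,i}(a,x)\tilde f_{2,i}(b,x)$; this fails for almost every $\tilde f$. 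Consequently the remainder does not vanish at $t=0$, so its image under $I_l$ does not lie in $\Psi^{\prec l}_c(M)$, and the argument breaks down. Nothing in the paper asserts that $\Psi^l_c(M)/\Psi^{\prec l}_c(M)$ is spanned by products coming from $\tilde\Psi\ast\dbtilde\Psi$, and \Cref{thm:vanishing_of_principal_symbol_Total} (vanishing symbol $\Rightarrow$ compactness, not lower order) suggests one should not expect this.

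The paper's route bypasses this entirely by adapting the proof of \Cref{thm:commutator_phi} rather than invoking \eqref{eqn:commutator_lower_order_extension} as a black box. Concretely: one constructs maps $\Phi'_L,\Phi'_R:C^\infty_{c,\mathrm{inv}}(\dbtilde{\mathbb G},\omegahalf)\times\cinv\to\cinv$ that encode left and right convolution by $g$ (these are determined on $M\times M\times(\Rpt)^\nu$ by $\Phi'_L(g,f)(z,x,t)=\int_M g(z,y,t_\nu)f(y,x,t)$ and symmetrically for $\Phi'_R$), prove they land in $\cinv$ by repeating the four-case analysis of \Cref{thm:extension_parameters}, and then check that in case~4 both are of the form $\cQ_{V*}(h\circ\pi)$, $\cQ_{V*}(h'\circ\pi)$ with $h(\tilde v+\dbtilde v,x,0)=h'(\tilde v+\dbtilde v,x,0)$. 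This last equality holds because $\tilde V$ and $\dbtilde V$ commute in $V$, so that convolution in the $\dbtilde V$-variable commutes with evaluation at $t=0$ regardless of how $\tilde f$ depends jointly on $(\tilde v,\dbtilde v)$. The order drop then follows from Taylor expansion exactly as in \Cref{thm:commutator_phi} and the proof of \Crefitem{thm:main_prop_bi_graded_pseudo_diff_M}{3}. No decomposition of $v$ is needed.
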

	\begin{proof}
		The proof of this is very similar to the proof of \eqref{eqn:commutator_lower_order_extension}, and will be left to the reader.
	\end{proof}

\begin{linkproof}{multi_parameter_wave_front_set}
		\MyCref{thm:multi_parameter_wave_front_set}[7] follows immediately from \MyCref{thm:bounded_M_Sobolev}[1]

		Recall the following classical characterization of the wave-front set.
		\begin{theorem}[{\cite[Theorem 18.1.27]{HormanderIII}}]\label{thm:Hormander_WF}
			Let $w\in C^{-\infty}(M,\omegahalf)$. For any $(\xi,x)\in T^*M\backslash 0$, $(\xi,x)\notin \WF(w)$ if and only if there exists $\chi\in \Psi^0_{\mathrm{classical}}(M)$ such that $\chi$ is elliptic at $(\xi,x)$ and $\chi\ast w\in C^\infty(M,\omegahalf)$.
		\end{theorem}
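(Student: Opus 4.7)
The plan is to prove both implications using standard tools from classical pseudo-differential calculus, which the paper has already established via \Cref{thm:classical_pseudo}: for the $\nu=1$, $\cF^1=\cX(M)$ structure, $\Psi^k(M)$ recovers the classical calculus. So all facts I need — symbol calculus, parametrices for elliptic symbols, pseudolocality — are available through the framework of the paper.

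For the easier direction ($\Leftarrow$), suppose $\chi\in \Psi^0_{\mathrm{classical}}(M)$ is elliptic at $(\xi,x)$ and $\chi\ast w\in C^\infty(M,\omegahalf)$. Ellipticity at $(\xi,x)$ means $\sigma^0_{\mathrm{classical}}(\chi,\cdot,\cdot)$ is nonzero on a conic neighborhood $\Gamma$ of $(\xi,x)$. Construct a microlocal parametrix $\chi'\in \Psi^0_{\mathrm{classical}}(M)$ such that $\chi'\chi-\mathrm{Id}$ is smoothing on a smaller conic neighborhood $\Gamma'\ni(\xi,x)$. This is the classical elliptic parametrix construction, and within the framework of the paper it corresponds to applying \MyCref{thm:main_theorem}[parametrix] in the classical ($\nu=1$) case. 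Then $w = \chi'(\chi\ast w) - (\chi'\chi-\mathrm{Id})\ast w$, where the first term is smooth (pseudo-differential operators preserve smoothness by \Crefitem{thm:main_prop_bi_graded_pseudo_diff_M}{1}) and the second is microlocally smooth near $(\xi,x)$. Picking any $\chi''\in \Psi^0_{\mathrm{classical}}(M)$ elliptic at $(\xi,x)$ with $\WF(\chi'')\subseteq \Gamma'$ (and supported appropriately), the product $\chi''\ast w$ is smooth, which by the definition of $\WF$ via Fourier decay of localizations gives $(\xi,x)\notin \WF(w)$.

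For the harder direction ($\Rightarrow$), assume $(\xi,x)\notin \WF(w)$. By the definition of the wavefront set, there is a conic neighborhood $\Gamma$ of $(\xi,x)$ in $T^*M\backslash 0$ and a function $\phi\in C^\infty_c(M)$ with $\phi(x)\neq 0$ such that the Fourier transform of $\phi w$ (in a local chart at $x$) decays rapidly in $\Gamma$. I would choose a symbol $a(\eta,y)\in S^0_{\mathrm{classical}}$ supported in a slightly smaller cone $\Gamma_0\Subset \Gamma$, nonvanishing at $(\xi,x)$, localized spatially near $\mathrm{supp}(\phi)$, and let $\chi=\mathrm{Op}(a)\in \Psi^0_{c,\mathrm{classical}}(M)$. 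The key computation is that $\chi\ast w = \chi\ast(\phi w) + \chi\ast((1-\phi)w)$. The second term is smooth because $\chi$ is compactly supported near $x$ while $(1-\phi)w$ vanishes near $x$, so by pseudolocality ($\singsupp$ is preserved by pseudo-differential operators, \Crefitem{thm:main_prop_bi_graded_pseudo_diff_M}{1}) this contributes something smooth. For the first term, I compute in Fourier coordinates: $\widehat{\chi\ast(\phi w)}(\eta)$ is essentially an integral of the symbol $a$ against the Fourier transform of $\phi w$, and rapid decay of $\widehat{\phi w}$ on $\Gamma$ combined with support of $a$ in $\Gamma_0$ gives rapid decay of $\widehat{\chi\ast(\phi w)}$ everywhere, hence $\chi\ast(\phi w)\in \mathscr{S}$, in particular smooth.

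The main obstacle is the bookkeeping in the forward direction: carefully constructing $\chi$ with the right symbol support so that pairing the symbol with $\widehat{\phi w}$ picks up only the region where rapid decay holds, and handling the cutoff $(1-\phi)$ cleanly. The argument is essentially routine but requires care with local coordinates, cutoff functions, and the distinction between the symbol's support in frequency and the distribution's wavefront set. No new ideas beyond the standard classical theory are required — every input (elliptic parametrix, pseudolocality, action on distributions) is subsumed by the calculus developed in \MyCref{sec:bi_graded_pseudo_diff}, specialised to the trivial ($\nu=1$, $\cF^1=\cX(M)$) case via \Cref{thm:classical_pseudo}.
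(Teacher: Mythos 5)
Your argument is essentially correct, but note that the paper does not prove this statement at all: it is imported as a black box, cited directly from H\"ormander (Theorem 18.1.27), and used as an input in the proofs of \Cref{thm:multi_parameter_wave_front_set} and \Cref{thm:main_theorem}. What you have written is, in substance, the standard textbook proof of that cited theorem: for ($\Leftarrow$), a microlocal elliptic parametrix $\chi'$ with $\chi'\ast\chi-\delta_1$ smoothing on a cone around $(\xi,x)$, plus the microsupport bound $\WF(A\ast w)\subseteq\mathrm{ess\,supp}(A)$; for ($\Rightarrow$), the definition of $\WF$ via rapid decay of $\widehat{\phi w}$ on a cone, tested against an operator whose symbol is conically supported where the decay holds. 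Both halves go through, modulo the bookkeeping you acknowledge (take $\phi\equiv 1$ near $x$, not merely $\phi(x)\neq 0$, and spatially cut the symbol of $\chi$ inside $\{\phi=1\}$ so that $\chi\ast((1-\phi)w)$ is globally smooth, since pseudolocality alone only gives smoothness near $x$; also, in the first term it is cleaner to differentiate under the integral $\int a(\eta,y)e^{i\langle y,\eta\rangle}\widehat{\phi w}(\eta)\,\odif{\eta}$ than to argue about Fourier decay of the output).

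One genuine caution: do not justify the microlocal parametrix in the ($\Leftarrow$) direction by appealing to \Cref{thm:main_theorem} specialised to $\nu=1$ via \Cref{thm:classical_pseudo}. Within this paper that is circular, because the proofs in the section containing \Cref{thm:main_theorem} (and of \Cref{thm:multi_parameter_wave_front_set}) invoke \Cref{thm:Hormander_WF} itself. The classical elliptic parametrix construction (symbolic inversion of $\chi$ on a cone where its principal symbol is bounded away from zero, together with asymptotic summation) is independent of the statement being proved and should be cited or carried out directly; with that substitution your proof stands as a self-contained replacement for the citation.
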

			The implication $\implies$ in \MyCref{thm:multi_parameter_wave_front_set}[5] follows immediately from \MyCref{thm:Hormander_WF} and the fact that classical pseudo-differential operators preserve the Sobolev spaces $H^l_{\mathrm{loc}}(M)$ for all $l\in \R^\nu$.
			For, the implication $\impliedby$ in \MyCref{thm:multi_parameter_wave_front_set}[5], we can find $\chi'\in \Psi^0_{\mathrm{classical}}(M)$ such that $\chi'\ast \chi-\delta_1$ is smoothing in a conic neighborhood $(\xi,x)$.
			So, 
				\begin{equation*}\begin{aligned}
					w=\chi'\ast\chi\ast w+(\delta_1-\chi'\ast \chi)\ast w		
				\end{aligned}\end{equation*}
			One has $\chi'\ast\chi\ast w\in H^{l}_{\mathrm{loc}}(M)$, and $(\xi,x)\notin \WF((\delta_1-\chi'\ast \chi)\ast w)$.

			Let us prove \MyCref{thm:multi_parameter_wave_front_set}[4]. 
			If $\sigma^0_{\mathrm{classical}}(\chi,\xi,x)\neq 0$, then $\sigma^0_{\mathrm{classical}}(\chi,\eta,y)\neq 0$ on an open neighborhood of $(\xi,x)$.
			So, since the complement of the space $\WF_l(w)$ is Lindelöf, there exists a sequence $\chi_n\in \Psi^0_{\mathrm{classical}}(M)$ such that for any $n$, $\chi_n\ast w\in H^l_{\mathrm{loc}}(M)$, and for any $(\xi,x)\notin \WF_l(w)$, there exists $n$ such that $\sigma^0_{\mathrm{classical}}(\chi_n,\xi,x)\neq 0$.
			Since classical pseudo-differential operators preserve Sobolev spaces, $\chi^*_n\ast \chi_n\ast w\in H^l_{\mathrm{loc}}(M)$.
			Let $c_n>0$ be a sequence that converges fast enough to $0$ that the sum $\chi=\sum_n c_n\chi_n^*\ast\chi_n$ converges in the space $\Psi^0_{\mathrm{classical}}(M)$, and that $\sum_{n}c_n \chi_n^*\ast\chi_n\ast w$ converges in the Fréchet space $H^l_{\mathrm{loc}}(M)$. 
			(Here, one might have to modify the supports of $\chi_n$ by adding a smoothing operator so that $\chi_n$ are uniformly properly supported, which implies that $\chi$ is properly supported).
			One has $\chi\ast w\in H^l_{\mathrm{loc}}(M)$ and $\sigma^0_{\mathrm{classical}}(\chi,\xi,x)\neq 0$ for all $(\xi,x)\notin \WF_l(w)$.

			\MyCref{thm:multi_parameter_wave_front_set}[1] follows immediately from \MyCref{thm:multi_parameter_wave_front_set}[4].

			\MyCref{thm:multi_parameter_wave_front_set}[3] follows immediately from \eqref{eqn:WF_multi_pseudo} and \MyCref{thm:bounded_M_Sobolev}[6].
			
			Let us prove that  $\WF(w)=\overline{\bigcup_{l\in \R^\nu}\WF_l(w)}$. The inclusion $\supseteq$ is obvious from \MyCref{thm:Hormander_WF} and \MyCref{thm:multi_parameter_wave_front_set}[5].
			For inclusion $\subseteq$, let $(\xi,x)\notin \overline{\bigcup_{l\in \R^\nu}\WF_l(w)}$.
			We can find a $\chi\in \Psi^0_{\mathrm{classical}}(M)$ such that $\chi$ is elliptic at $(\xi,x)$, and $\chi$ is smoothing on a conic open neighborhood of $\overline{\bigcup_{l\in \R^\nu}\WF_l(w)}$.
			
			By  \MyCref{thm:multi_parameter_wave_front_set}[4], 
			for each $l\in \R^\nu$, we can find $\chi_l\in \Psi^0_{\mathrm{classical}}(M)$ such that for any $l$, $\chi_l\ast w\in H^l_{\mathrm{loc}}(M)$, and $\chi_l$ is elliptic on the complement of $\overline{\bigcup_{l\in \R^\nu}\WF_l(w)}$.
			We can divide $\chi$ by $\chi_l$, i.e., there exists $\chi_l'\in \Psi^0_{\mathrm{classical}}(M)$ such that $\chi=\chi_l'\ast \chi_l$ modulo smoothing operators.
			Hence, $\chi\ast w\in \bigcap_l H^l_{\mathrm{loc}}(M)=C^\infty(M,\omegahalf)$. So, by \MyCref{thm:Hormander_WF}, $(\xi,x)\notin \WF(w)$.
			
			The identity $\WF_l(w+w')\subseteq \WF_l(w)\cup\WF_l(w')$ is straightforward to prove.
			\end{linkproof}

\begin{linkproof}{main_theorem}
	Let us prove \MyCref{thm:main_theorem}[left-inv].	
	Since this is local in $x$, we can suppose that $M$ is compact by \Cref{lem:compactification}.
	By \Crefitem{thm:conclusion}{3}, we can suppose that $v\in \Psi^0(M)$.
		Let $(\xi,x)\in \Gamma$ be fixed.
		We fix $\chi\in \Psi^0_{\mathrm{classical}}(M)$ such that $\sigma^0_{\mathrm{classical}}(\chi,\xi,x)=0$, and $\sigma^0_{\mathrm{classical}}(\chi,\eta,y)\neq 0$ for all $(\eta,y)\in T^*M\backslash \R_+ \cdot (\xi,x)$.
		Let $g$ as in \Cref{lem:v_injective_on_distributions}, $\tilde{v}=v^*\ast v+ \chi^*\ast\chi+g\in \Psi^0(M)$. By \eqref{eqn:symbol_classical_extension},
			\begin{equation*}\begin{aligned}
				\sigma^0(\tilde{v},\pi,y)=&\sigma^0(v,\pi,y)^*\sigma^0(v,\pi,y)	+|\sigma^0_{\mathrm{classical}}(\chi,\eta,y)|^2,\quad \forall\pi\in \HatHLnon_{(\eta,y)}.
			\end{aligned}\end{equation*}
		So, $\tilde{v}$ satisfies the hypothesis of \Crefitem{thm:conclusion}{2}.
		By \Crefitem{thm:conclusion}{2}, \MyCref{thm:main_theorem}[left-inv] follows.

		 We now show that $\Gamma$ is open.
				Since this is local, we can suppose $M$ is compact.
				By \Crefitem{thm:conclusion}{3}, we can suppose that $v\in \Psi^0(M)$.
				By \MyCref{thm:main_theorem}[left-inv],
					\begin{equation*}\begin{aligned}
											\Gamma=\{(\xi,x)\in T^*M\backslash 0:\forall \pi\in \HatHLnon_{(\xi,x)},\ \sigma^{0}(v,\pi,x):L^2(\pi)\to L^2(\pi) \text{ is injective}\}
					\end{aligned}\end{equation*}
				Consider the $C^*$-algebra $\overline{\Psi^0(M)}/\cK(L^2(M))$.
				There is an obvious map $C((T^*M\backslash 0)/\Rpt)\to \overline{\Psi^0(M)}/\cK(L^2(M))$ which maps a smooth function on the cotangent sphere bundle to the corresponding classical pseudo-differential operator modulo compact operators.
				\MyCref{lem:commutativity} implies that the image of $C((T^*M\backslash 0)/\Rpt)$ lies in the center of the $C^*$-algebra $\overline{\Psi^0(M)}/\cK(L^2(M))$.
				So, the $C^*$-algebra $\overline{\Psi^0(M)}/\cK(L^2(M))$ is fibered over $(T^*M\backslash 0)/\Rpt$.
				This fibration induces a fibration on the spectrum which is precisely the fibration given by \eqref{eqn:jqksldjfojqsdmfjmlqsjdlfjqsdf}.
				In a fibered $C^*$-algebra, the set of points on which the fiber is left-invertible is open, see \cite[Proposition 2.4]{BlanchardHopf}.
				By \MyCref{prop:simple_arg}, it follows that $\Gamma$ is open.

	We now prove \MyCref{thm:main_theorem}[parametrix].
	We will first suppose that $M$ is compact.
	By \Crefitem{thm:conclusion}{3}, we can suppose that $v\in \Psi^0(M)$.
	We fix $\chi\in \Psi^0_{\mathrm{classical}}(M)$ such that $\chi$ is elliptic on the complement of $\Gamma$, and $\chi$ is smoothing on $\Gamma'$.
	Let $g\in C^\infty(M\times M,\omegahalf)$ be as in \Cref{lem:v_injective_on_distributions}.
	By replacing $v$ by $v^*\ast v+\chi^*\ast \chi+g$, we can suppose that $v$ is formally self-adjoint, and that $v\ast \cdot:C^\infty(M,\omegahalf)\to C^\infty(M,\omegahalf)$ is injective, and that 
	for all $(\pi,x)\in \HatHLnon$, $\sigma^0(v,\pi,x):C^\infty(\pi)\to C^\infty(\pi)$ is injective.
	By \MyCref{thm:conclusion}[2], $v\ast \cdot :C^{-\infty}(M,\omegahalf)\to C^{-\infty}(M,\omegahalf)$ is a topological isomorphism. 
	Let $v'$ be its inverse. Since $v^*=v$, $v^{\prime*}=v'$. By \MyCref{thm:conclusion}[2], $v'$ maps $C^\infty(M,\omegahalf)$ to $C^\infty(M,\omegahalf)$, and $H^{l}(M)$ to $H^{l}(M)$.
	So, by Schwartz kernel theorem $v'\in C^{-\infty}_{r,s}(M\times M,\omegahalf)$.
	To finish the proof of \MyCref{thm:main_theorem}[parametrix] in the compact case, we need to show that 
		\begin{equation*}\begin{aligned}
			\WF_l(v'\ast w)=\WF_l(w),\quad \forall w\in C^{-\infty}(M,\omegahalf),\ l\in \R^\nu.		
		\end{aligned}\end{equation*}
	Since $v'$ is the inverse of $v$, equivalently, we need to show that $\WF_l(v\ast w)=\WF_l(w)$.
	By \MyCref{thm:multi_parameter_wave_front_set}[3], we need to show that $\WF_l(w)\subseteq \WF_l(v\ast w)$. 
	Since $v\ast \cdot$ is a topological isomorphism on Sobolev spaces, it suffices to show that $\WF(w)\subseteq \WF(v\ast w)$.
	Let 
	$(\xi,x)\notin \WF(v\ast w)$ be fixed.
			\begin{lem}\label{lem:chi_n}
				There exists a sequence $(\chi_n)_{n\in \N}\subseteq \Psi^0_{\mathrm{classical}}(M)$ and $\chi_{\infty}\in \Psi^0_{\mathrm{classical}}(M)$ such that for all $n\in \N$ \begin{enumerate}
					\item    $\chi_n\ast v\ast w\in C^\infty(M,\omegahalf)$
					\item If $n,m\in \N\cup \{\infty\}$ with $m<n$, then $\chi_{n}\ast \chi_m-\chi_{n}$ is a smoothing operator, i.e., they map $C^{-\infty}(M,\omegahalf)$ to $C^\infty(M,\omegahalf)$.
					\item $\sigma^0_{\mathrm{classical}}(\chi_{\infty},\xi,x)\neq 0$.
				\end{enumerate}
			\end{lem}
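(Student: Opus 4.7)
\textbf{Plan for \Cref{lem:chi_n}.} The construction is a routine exercise in the classical pseudo-differential calculus, built on standard microlocal cutoff operators and the hypothesis $(\xi,x) \notin \WF(v \ast w)$. First, I would apply \Cref{thm:Hormander_WF} to the distribution $v \ast w$ to produce a fixed $\tilde\chi \in \Psi^0_{\mathrm{classical}}(M)$ elliptic at $(\xi,x)$ with $\tilde\chi \ast v \ast w \in C^\infty(M,\omegahalf)$, and let $U$ be an open conic neighborhood of $\R_+\cdot(\xi,x)$ on which $\tilde\chi$ is elliptic. The key consequence, obtained by constructing a parametrix of $\tilde\chi$ microlocally on $U$, is that any $\psi \in \Psi^0_{\mathrm{classical}}(M)$ with $\WF'(\psi) \subseteq U$ satisfies $\psi \ast v \ast w \in C^\infty(M,\omegahalf)$.

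Next, I would fix a small open conic neighborhood $V_\infty$ of $\R_+\cdot(\xi,x)$ with $\overline{V_\infty} \subseteq U$, then construct a strictly decreasing sequence of open conic neighborhoods $V_0 \supsetneq V_1 \supsetneq V_2 \supsetneq \cdots$ of $\overline{V_\infty}$ with $\overline{V_{n+1}} \subseteq V_n \subseteq U$ for all $n \in \N$. Using the standard construction of microlocal cutoffs (quantization of a homogeneous degree-$0$ cutoff function equal to $1$ on $V_{n+1}$ and supported in $V_n$, followed by Borel summation to match all lower-order terms of the symbol of $\mathrm{Id}$ on $V_{n+1}$), I would produce for each $n \in \N$ an operator $\chi_n \in \Psi^0_{\mathrm{classical}}(M)$ with $\WF'(\chi_n) \subseteq \overline{V_n}$ that is microlocally equal to the identity on $V_{n+1}$, i.e., $\WF'(\chi_n - \mathrm{Id}) \cap V_{n+1} = \emptyset$. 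Similarly construct $\chi_\infty \in \Psi^0_{\mathrm{classical}}(M)$ with $\WF'(\chi_\infty) \subseteq \overline{V_\infty}$ and microlocally equal to $\mathrm{Id}$ on a conic neighborhood of $\R_+\cdot(\xi,x)$, which in particular gives $\sigma^0_{\mathrm{classical}}(\chi_\infty,\xi,x) = 1 \neq 0$.

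The three conditions then follow immediately: (1) each $\WF'(\chi_n) \subseteq U$, so $\chi_n \ast v \ast w \in C^\infty$ by the first paragraph; (3) holds by design of $\chi_\infty$; (2) for $m < n$ in $\N \cup \{\infty\}$, one uses $\WF'(\chi_n) \subseteq \overline{V_n} \subseteq V_{m+1}$ (chaining $\overline{V_k} \subseteq V_{k-1}$, or $\overline{V_\infty} \subseteq V_{m+1}$ when $n = \infty$) together with $\WF'(\chi_m - \mathrm{Id}) \cap V_{m+1} = \emptyset$ to deduce $\WF'\bigl(\chi_n(\chi_m - \mathrm{Id})\bigr) \subseteq \WF'(\chi_n) \cap \WF'(\chi_m - \mathrm{Id}) = \emptyset$, so $\chi_n \ast \chi_m - \chi_n = \chi_n(\chi_m - \mathrm{Id})$ is smoothing.

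There is no real obstacle here; the only subtlety worth highlighting is the geometric setup of the nested sequence. One must arrange that $\chi_\infty$'s essential support (which contains an open neighborhood of $(\xi,x)$ by ellipticity there) still fits inside every ``identity region'' $V_{m+1}$ of $\chi_m$, which is precisely why the sequence $V_n$ is taken to decrease only to $\overline{V_\infty}$ and not to the single ray $\R_+\cdot(\xi,x)$.
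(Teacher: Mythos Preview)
Your proposal is correct and follows essentially the same approach as the paper: both use \Cref{thm:Hormander_WF} to obtain a microlocal region $U$ on which $v\ast w$ is smooth, then quantize a nested sequence of homogeneous degree-$0$ conic cutoffs supported in $U$ whose intersection is still a conic neighbourhood of $(\xi,x)$. The only minor difference is that the paper simply takes $\chi_n$ to have total symbol equal to the single homogeneous cutoff $\kappa_n$ (so the Borel-summation step you describe is unnecessary, since $\kappa_m=1$ identically on $\supp(\kappa_n)$ already forces all lower-order terms of the product symbol to vanish there), and appeals to \cite[Theorem 18.1.8]{HormanderIII} for condition~(2).
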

			\begin{proof}
				By \Cref{thm:Hormander_WF}, there exists $\chi\in \Psi^0_{\mathrm{classical}}(M)$ such that $\chi$ is elliptic at $(\xi,x)$ and $\chi\ast v\ast w\in C^\infty(M,\omegahalf)$.
				Let $U\subseteq M$ be an open chart around $x$.
				We take $K_n\subseteq T^*M\backslash 0$ a sequence of closed conic neighborhoods of $(\xi,x)$ such that $K_n\subseteq \{(\eta,y)\in T^*U\backslash 0:\sigma^0(\chi,\eta,y)\neq 0\}$, $K_{n+1}\subseteq \mathrm{Int}(K_n)$, and $\bigcap_{n\in \N}K_n$ is a conic neighbourhood of $(\xi,x)$.
				Let $\kappa_n\in C^\infty(T^*M\backslash 0)$ homogeneous of degree $0$ such that $\kappa_n=1$ on $K_{n+1}$, $\supp(\kappa_n)\subseteq \mathrm{int}(K_n)$.
				Let $\kappa_\infty\in C^\infty(T^*M\backslash 0)$ such that $\kappa_{\infty}(\xi,x)=1$ and $\supp(\kappa_{\infty})\subseteq \bigcap_{n\in \N}K_n$.
				So, $\kappa_n\kappa_{m}=\kappa_n$ if $m<n$ for any $m,n\in \N\cup \{\infty\}$.
				Let $\chi_n,\chi_{\infty}\in \Psi^0_{\mathrm{classical}}(M)$ be classical pseudo-differential operators whose total symbols are $\kappa_n$ and $\kappa_{\infty}$.
				The first assertion follows from the fact that since $K_n\subseteq \{(\eta,y)\in T^*U\backslash 0:\sigma^0(\chi,\eta,y)\neq 0\}$, there exists $\chi'\in \Psi^0_{\mathrm{classical}}(M)$ such that $\chi_n=\chi'\ast \chi$ modulo smoothing operators.
				The second assertion now follows from \cite[Theorem 18.1.8]{HormanderIII}.
				The third assertion is obvious.
			\end{proof}
			We define the Sobolev spaces 
				\begin{equation*}\begin{aligned}
					H^{t}_n(M)=\sum_{l\in \Z_+^\nu,l_1+\cdots+l_\nu\geq n}H^{t+l}(M)	,\quad n\in \N,t\in \R^\nu.	
				\end{aligned}\end{equation*}
			Equivalently, $H^t_n(M)$ is equal to the sum of $H^{t'}(M)$ such that there exists a sequence $t=t_0\prec t_1\prec \cdots\prec t_n=t'$.
			By \Crefitem{thm:bounded_M_Sobolev}{5}, $\bigcap_{n\in \N} H^{t}_n(M)=C^\infty(M)$.
			Also, operators in $\Psi^0(M)$ (and thus also classical pseudo-differential operators of order $0$) preserve $H^{t}_n(M)$.

			By \Crefitem{thm:bounded_M_Sobolev}{1}, there exists some $t\in \R^\nu$ such that $w\in H^t(M)$. We fix such $t$.
			\begin{lem}\label{lem:chi_n_2}
				For all $n\in\N$, $\chi_n\ast w\in H^{t}_n(M)$.
			\end{lem}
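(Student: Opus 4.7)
The plan is to proceed by strong induction on $n$. The base case $n = 0$ is immediate: $\chi_0$ is a classical pseudo-differential operator of order $0$ and so preserves $H^t(M) = H^t_0(M)$ by \Cref{thm:bounded_M_Sobolev}\ref{thm:bounded_M_Sobolev:6}. For the inductive step, assume $u_m := \chi_m \ast w \in H^t_m(M)$ for all $m < n$. The starting point is the identity
\[
v \ast u_n = \chi_n \ast v \ast w - [\chi_n, v] \ast w,
\]
in which $\chi_n \ast v \ast w \in C^\infty(M, \omegahalf)$ by property (1) of \Cref{lem:chi_n}. Since $v$ is invertible with inverse $v'$ that preserves each $H^{s}(M)$ (and hence preserves the sum $H^t_n(M)$), it is enough to show $[\chi_n, v] \ast w \in H^t_n(M)$.

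By \Cref{lem:commutativity}, $[\chi_n, v] \in \Psi^{\prec(0,\ldots,0)}(M)$, and by \Cref{thm:main_prop_bi_graded_pseudo_diff_M}\ref{thm:main_prop_bi_graded_pseudo_diff_M:3} together with \Cref{thm:asymptotic_sum} this equals $\sum_{i=1}^{\nu}\Psi^{-e_i}(M)$, so each application of such a commutator buys one Sobolev level in some coordinate direction. Leibniz combined with $\chi_n = \chi_n \chi_{n-1} + (\mathrm{smoothing})$ (property (2)) gives
\[
[\chi_n, v] = \chi_n [\chi_{n-1}, v] + [\chi_n, v] \chi_{n-1} + (\mathrm{smoothing}).
\]
The term $[\chi_n, v] \chi_{n-1} \ast w = [\chi_n, v]\, u_{n-1}$ is immediately in $H^t_n(M)$: the IH gives $u_{n-1} = \sum_{|l|\geq n-1} u_{n-1,l}$ with $u_{n-1,l} \in H^{t+l}(M)$, and each $\Psi^{-e_i}$ component pushes each summand into $H^{t+l+e_i}(M) \subseteq H^t_n(M)$. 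I would iterate the Leibniz expansion $n$ times on the remaining piece $\chi_n [\chi_{n-1}, v]$, using $\chi_n\chi_m = \chi_n + (\mathrm{smoothing})$ for $m < n$, which rewrites it as a finite sum of terms $\chi_n [\chi_m, v]\, u_{m-1}$ for $1 \leq m \leq n-1$ plus a residual $\chi_n [\chi_0, v] \ast w$.

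The main obstacle is that each of these iterated pieces a priori lies only in $H^t_m(M)$ rather than in $H^t_n(M)$, and naively substituting $v u_m = \chi_m v w - [\chi_m, v] \ast w$ yields only tautological identities of the form $v u_n + [\chi_n, v] \ast w \in C^\infty$ that contain no new regularity information. The non-trivial input, mirroring Hörmander's classical microlocal regularity argument, is the iterated-commutator trick: writing $\chi_n [\chi_m, v] = [\chi_m, v]\chi_n + [\chi_n, [\chi_m, v]]$, the double commutator lies in $\Psi^{\prec -e_i}(M)$ and therefore gains \emph{two} Sobolev levels, while the remaining factor $[\chi_m, v]\chi_n$ acts on $u_n$ itself and, once collected with $v u_n$ in the starting identity, can be absorbed to the left and inverted. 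Iterating this bookkeeping sufficiently many times exhausts the gap between $H^t_m(M)$ and $H^t_n(M)$, at which point invertibility of $v$ on each $H^{t+l}(M)$ yields the desired conclusion $\chi_n \ast w \in H^t_n(M)$. The hardest part of the proof is carrying out this absorption carefully, keeping track of each summand in the decomposition of $H^t_n(M)$ as $\sum_{|l|\geq n} H^{t+l}(M)$ and ensuring that no circularity is introduced at any stage.
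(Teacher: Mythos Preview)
Your strategy --- induction, inserting $\chi_n=\chi_n\chi_{n-1}$ via property (2), and tracking iterated commutators --- is exactly the paper's. The gap is the ``absorb $[\chi_m,v]\ast u_n$ to the left and invert'' step: a perturbation of $v$ by something in $\Psi^{\prec 0}(M)$ is only Fredholm on each $H^{t+l}(M)$, not invertible, so $(v+A)\,u_n\in H^t_n(M)$ does not directly yield $u_n\in H^t_n(M)$.

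What you are missing is a one-line strengthening of the induction hypothesis that makes any absorption unnecessary. From $u_{n-1}\in H^t_{n-1}(M)$ and the fact that $\chi_n\chi_{n-1}-\chi_n$ is smoothing, one gets $u_n=\chi_n\ast w\in H^t_{n-1}(M)$ \emph{before} starting step $n$; more generally $\chi_m\ast w\in H^t_{\min(m,n-1)}(M)$ for every $m$. With this in hand, every term $[\chi_m,v]\ast u_n$ produced by your commutation already lies in $H^t_n(M)$ (single commutator acting on something in $H^t_{n-1}$), so it stays on the right-hand side and nothing needs to be absorbed. The paper implements this by writing $\chi_n=\chi_n\chi_{n-1}\cdots\chi_1$ modulo smoothing and commuting first $v$ and then each surviving $\chi_j$ past the brackets, obtaining the closed formula
\[
v\ast\chi_n\ast w\equiv\sum_{\emptyset\neq S\subseteq\bb{1,n}}[\chi_{s_1},[\chi_{s_2},\cdots,[v,\chi_{s_{|S|}}]]\cdots]\ast\chi_{\max S^c}\ast w\pmod{C^\infty},
\]
where $s_1>\cdots>s_{|S|}$ are the elements of $S$ and $\chi_{\max\emptyset}\ast w:=w$. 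Each summand is an $|S|$-fold commutator acting on some $\chi_m\ast w$ with $m\geq n-|S|$, hence already known to be in $H^t_{n-|S|}(M)$; the whole sum therefore lies in $H^t_n(M)$, and one inverts $v$ alone.
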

			\begin{proof}
				We will prove this by induction.
				We have
				\begin{equation*}\begin{aligned}
						v\ast \chi_1\ast w&=[v,\chi_1]\ast w + \chi_1\ast v\ast w&&\mod C^\infty(M,\omegahalf)\\
						&=[v,\chi_1]\ast w &&\mod C^\infty(M,\omegahalf)
				\end{aligned}\end{equation*}
				So, by \Cref{lem:commutativity} and \Crefitem{thm:bounded_M_Sobolev}{6}, $v\ast \chi_1\ast w\in H^{t+k}_1(M)$.
			Hence, by \MyCref{thm:conclusion}[2], $\chi_1\ast w\in H^{t}_1(M)$. Since $\chi_{n}\ast \chi_{1}-\chi_n$ is smoothing, $\chi_n\ast w\in H^{t}_1(M)$ for all $n\in \N$.
			Now, 
				\begin{equation*}\begin{aligned}
			 v\ast \chi_2\ast w&=v\ast \chi_2\ast \chi_1\ast w\\&=[v,\chi_2]\ast \chi_1\ast w+\chi_2 \ast v\ast \chi_1 \ast w&&\mod C^\infty(M,\omegahalf)
			  \\& =    [v,\chi_2]\ast \chi_1\ast w+\chi_2 \ast [v,\chi_1]\ast w &&\mod C^\infty(M,\omegahalf)\\
			&=[v,\chi_2]\ast \chi_1\ast w+[\chi_2 ,[v,\chi_1]]\ast w+[v,\chi_1]\ast \chi_2\ast w    &&\mod C^\infty(M,\omegahalf)		
				\end{aligned}\end{equation*}
			The three terms belong to $H^t_2(M)$ by \Cref{lem:commutativity} and $\chi_2\ast w\in H^t_1(M)$, and $\chi_1\ast w \in H^t_1(M)$.
			Hence, $ v\ast \chi_2\ast w\in H^t_2(M)$. By \Cref{thm:conclusion}, $\chi_2\ast w\in H^{t}_2(M)$.
			More generally, 
			\begin{equation*}\begin{aligned}
			 v\ast \chi_{n}\ast w&=\sum_{S\subseteq \bb{1,n},S\neq \emptyset} [\chi_{s_1},[\chi_{s_2},\cdots,[v,\chi_{s_{|S|}}]]\cdots]\ast \chi_{\max{S^c}}\ast w &&\mod C^\infty(M,\omegahalf)	,	
				\end{aligned}\end{equation*}
				where $s_1,\cdots,s_{|S|}$ are the elements of $S$ ordered in decreasing order, and with the convention $\chi_{\max{S^c}}\ast w=w$ if $S=\bb{1,n}$.
			The induction is now clear.
			\end{proof}
		Since for all $n\in\N$,
				\begin{equation*}\begin{aligned}
					\chi_{\infty}\ast w=\chi_{\infty}\ast\chi_n\ast w &\mod C^\infty(M,\omegahalf),
				\end{aligned}\end{equation*}
			it follows that $\chi_{\infty}\ast w\in \bigcap_{n\in \N} H^{t}_n(M)=C^\infty(M)$.
			Hence, by \Cref{thm:Hormander_WF}, $(\xi,x)\notin \WF(w)$. This finishes the proof of \MyCref{thm:main_theorem}[parametrix] in the case where $M$ is compact.

			We now prove \MyCref{thm:main_theorem}[parametrix] in the general case where $M$ isn't necessarily compact.
			\begin{lem}\label{lem:existence_all_orders}
				For any $k\in \R^\nu$, there exists $v_k\in \Psi^k(M)$ such that $v_k^*=v_k$ and for any $x\in M$, $\pi\in \HatHLnon_x$, $\sigma^k(v_k,\pi,x):C^\infty(\pi)\to C^\infty(\pi)$ is injective.
			\end{lem}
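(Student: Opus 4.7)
The statement is global on $M$ but pointwise at each $(\pi,x)$, and once $M$ is compact the construction is essentially the one used in \Cref{lem:vk} adapted from order $0$ to order $k$. So the plan is: reduce to the compact case via a partition of unity plus the compactification \Cref{lem:compactification}, build a positive self-adjoint candidate on each compact model, and patch.

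\textbf{Reduction.} Choose a locally finite open cover $(U_i)_{i\in I}$ of $M$ by relatively compact subsets such that for each $i$ there is, by \Cref{lem:compactification}, a weakly commutative $\nu$-graded sub-Riemannian structure $\cGF^{(i)}$ on $\hat{M}_i:=S^{\dim(M)}$ together with an open embedding $U_i\hookrightarrow\hat{M}_i$ pulling $\cGF^{(i)}$ back to $\cF|_{U_i}$. Pick $\chi_i\in C^\infty_c(U_i,\R)$ with $\sum_i\chi_i^2$ strictly positive everywhere on $M$.

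\textbf{Compact case.} Following the proof of \Cref{lem:vk}, and using the construction from \cite[Lemma 2.17.2]{MohsenAbstractMaxHypo}, on each $\hat{M}_i$ we can select $f^{(i)}\in C^\infty_{c,\mathrm{inv},\underline{1}}(\hat{\mathbb{G}}^{(i)},\omegahalf)$ that is positive in $C^*\hat{\mathbb{G}}^{(i)}$ and such that $\pi_{\mathrm{inv}}(f^{(i)})$ is an injective positive operator on $L^2(\pi)$ for every $(\pi,x)\in\HatHLnon$. Define
\[
\hat{v}_k^{(i)}\ :=\ I_k(f^{(i)})_{|\underline{1}}\ \in\ \Psi^k(\hat{M}_i).
\]
Since $k\in\R^\nu$, the weight $\lambda^{-k-\underline{1}}$ in the defining integral of $I_k$ is real and positive, so $f^{(i)*}=f^{(i)}$ forces $I_k(f^{(i)})^*=I_k(f^{(i)})$ and hence $\hat{v}_k^{(i)*}=\hat{v}_k^{(i)}$. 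By \Cref{lem:principal_symbol} together with \Cref{dfn:principal_symbol}, the symbol $\sigma^k(\hat{v}_k^{(i)},\pi,x)$ is, on smooth vectors, the positive quadratic form
\[
\langle\sigma^k(\hat{v}_k^{(i)},\pi,x)\xi,\xi\rangle_{L^2(\pi)}
\ =\ \int_{(\Rpt)^\nu}\langle\pi_{\mathrm{inv}}(\alpha_\mu(f^{(i)}))\xi,\xi\rangle\,\frac{d\mu}{\mu^{k+\underline{1}}}
\ \geq\ 0.
\]
Vanishing of this form on a $\xi\in C^\infty(\pi)$ together with non-negativity and continuity of the integrand in $\mu$ forces $\pi_{\mathrm{inv}}(\alpha_\mu(f^{(i)}))\xi=0$ for all $\mu$, hence $\pi_{\mathrm{inv}}(f^{(i)})\xi=0$, hence $\xi=0$. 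So $\sigma^k(\hat{v}_k^{(i)},\pi,x):C^\infty(\pi)\to C^\infty(\pi)$ is injective for every $(\pi,x)\in\HatHLnon$ based at a point in the image of $U_i$.

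\textbf{Patching.} Set
\[
v_k\ :=\ \sum_{i\in I}\delta_{\chi_i}\ast\hat{v}_k^{(i)}\ast\delta_{\chi_i}.
\]
Each term is supported in $U_i\times U_i$ and, via \eqref{eqn:compatability_pseudo_restriction_open}, lies in $\Psi^k_c(U_i)\subseteq\Psi^k_c(M)$; local finiteness of the cover gives $v_k\in\Psi^k(M)$ with proper support, and $v_k^*=v_k$ by construction. Using \Crefitem{thm:principal_symbol_pseudo_diff_well_defined}{3} and \Crefitem{thm:principal_symbol_pseudo_diff_well_defined}{concrete_computation},
\[
\sigma^k(v_k,\pi,x)\ =\ \sum_i\chi_i(x)^2\,\sigma^k(\hat{v}_k^{(i)},\pi,x)
\]
is a sum of non-negative self-adjoint quadratic forms on $C^\infty(\pi)$, at least one of which is strictly positive at any given $x$ (namely, any $i$ with $\chi_i(x)\neq0$). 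Thus $\sigma^k(v_k,\pi,x)$ is injective on $C^\infty(\pi)$, as required.

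\textbf{Main obstacle.} The only non-routine ingredient is the positivity/injectivity argument for $\sigma^k(\hat{v}_k^{(i)},\pi,x)$: one must legitimately interpret it as the integrated positive form above, which requires the convergence of the limit defining the symbol and the interchange with the $L^2(\pi)$-pairing. This however is built in to \Cref{lem:principal_symbol} and \Cref{thm:principal_symbol_pseudo_diff_well_defined} together with the integrability estimates of \Cref{lem:Cotlar_Stein_lemma}, and applies once $k\in\R^\nu$ keeps the weight real. Once that is in place the patching is essentially formal.
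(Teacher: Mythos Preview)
Your overall strategy---compactify via \Cref{lem:compactification}, build a self-adjoint positive operator on each compact model, and patch by $\sum_i\delta_{\chi_i}\ast(\cdot)\ast\delta_{\chi_i}$---is exactly what the paper sketches, and your patching step and symbol computation for the sum are correct.

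The gap is in the compact-case construction. You take $f^{(i)}\in\cinvf{\underline{1}}$ and set $\hat v_k^{(i)}:=I_k(f^{(i)})_{|\underline{1}}$, but $I_k$ is by definition only applied to elements of $\cinvf{k+\underline{1}}$ (see \Cref{thm:integralIkl} and \eqref{eqn:dfn_pseudo}), and $\cinvf{\underline{1}}\subseteq\cinvf{k+\underline{1}}$ holds only when $k_i<1$ for every~$i$. For larger $k$ the integral $\int_{]0,1]^\nu}\alpha_\lambda(f^{(i)})\,\lambda^{-k-\underline{1}}\,d\lambda$ simply diverges: as $\lambda\to 0$ the integrand $\alpha_\lambda(f^{(i)})\ast g$ tends to a nonzero limit $\alpha_0(f^{(i)})\ast g$ (cf.\ \eqref{eqn:value_of_alpha_00}), while $\lambda^{-k-\underline{1}}$ blows up. This is precisely why \Cref{lem:vk}, which you invoke, is stated only for $k$ in the small set~$K$. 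The estimates you cite in your ``main obstacle'' paragraph (\Cref{lem:Cotlar_Stein_lemma}, \Cref{lem:principal_symbol}) all carry the weight $\lambda^{-\underline{1}}$, not $\lambda^{-k-\underline{1}}$, so they do not resolve this.

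The paper avoids the issue by invoking \Crefitem{thm:conclusion}{3} directly on each compact model: that result already supplies, for arbitrary $k\in\C^\nu$, an element of $\Psi^{k/2}(\hat M_i)$ whose symbol is a topological isomorphism on $C^\infty(\pi)$; one then forms $w^\ast w\in\Psi^k(\hat M_i)$ to obtain positivity. Your own route is also easily repaired: replace $f^{(i)}$ by its $n$-fold convolution $(f^{(i)})^{\ast n}$ for $n>\max_ik_i$, which lies in $\cinvf{n\underline{1}}\subseteq\cinvf{k+\underline{1}}$ by \Crefitem{thm:invariant_vanish_algebra}{algebra} and is still positive with injective $\pi_{\mathrm{inv}}$; then the rest of your argument goes through verbatim.
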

			\begin{proof}
				This follows from \MyCref{lem:compactification} and \MyCref{thm:conclusion}[3] using a partition of unity argument. The assumption of $k\in \R^\nu$ is necessary because to make the partition of unity argument work, one takes a sum of positive operators.
				We leave the details to the reader.
			\end{proof}
			Let $\Gamma'\subseteq \Gamma$ be a closed cone, $\chi\in \Psi^0_{\mathrm{classical}}(M)$ such that $\chi$ is elliptic on the complement of $\Gamma$, and $\chi$ is smoothing on $\Gamma'$.
			By replacing $v$ by $v^*\ast v_{-\Re(k)}^*\ast v_{-\Re(k)}\ast v+\chi^*\ast\chi$, we can suppose that $v\in \Psi^0(M)$ satisfies $v=v^*$, and 
			for all $(\pi,x)\in \HatHLnon$, $\sigma^0(v,\pi,x):C^\infty(\pi)\to C^\infty(\pi)$ is injective.
			\begin{lem}\label{lem:skqdmjfkojqkosmdjfkmjqsdkmjfsqdf}
				The distribution $v$ has a parametrix, i.e., there exists $v'\in C^{-\infty}_{r,s}(M\times M,\omegahalf)$ such that 
					\begin{equation*}\begin{aligned}
						\delta_1-v'\ast v,\delta_1-v\ast v'	\in C^\infty(M\times M,\omegahalf).
					\end{aligned}\end{equation*}
			\end{lem}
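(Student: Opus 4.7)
The plan is to reduce to the already-established compact case via a partition-of-unity and gluing argument. First I would use \MyCref{lem:compactification} together with paracompactness of $M$ to construct a locally finite countable open cover $(U_n)_{n\in\N}$ of $M$ with relatively compact subsets $V_n \Subset U_n$ still covering $M$, and diffeomorphisms $\phi_n\colon U_n \hookrightarrow \tilde M_n$ onto open subsets of compact manifolds $\tilde M_n$, each equipped with a weakly commutative $\nu$-graded sub-Riemannian structure of depth $N$ whose pullback to $U_n$ coincides with $\cF^\bullet_{|U_n}$. Fix cutoffs $\chi_n \in C^\infty_c(U_n)$ with $\chi_n \equiv 1$ on a neighborhood of $\overline{V_n}$, and a subordinate smooth partition of unity $(\rho_n^2)_{n\in\N}$ with $\supp \rho_n \Subset V_n$.

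Next I would transfer $v$ locally: the compactly supported operator $\delta_{\chi_n} \ast v \ast \delta_{\chi_n}$ pushes forward via $\phi_n$ to $\tilde v_n \in \Psi^0(\tilde M_n)$ with $\tilde v_n^* = \tilde v_n$, whose principal symbol at points of $\phi_n(V_n)$ agrees with that of $v$ and is therefore injective on $C^\infty(\pi)$ there. Injectivity may fail on $\tilde M_n \setminus \phi_n(V_n)$, so following the construction in \MyCref{lem:vk} carried out on $\tilde M_n$, I would add a self-adjoint $\tilde w_n \in \Psi^0(\tilde M_n)$ of the form $\tilde\chi_n^{\prime *}\ast \tilde\chi_n'+\tilde g_n$ with $\tilde\chi_n' \in \Psi^0_{\mathrm{classical}}(\tilde M_n)$ smoothing near $\phi_n(\overline{V_n})$ and elliptic on the complement, and $\tilde g_n$ a smoothing operator as in \MyCref{lem:v_injective_on_distributions}, so that $\tilde v_n + \tilde w_n$ has injective principal symbol at every element of $\HatHLnon$ on $\tilde M_n$ and is smoothing near $\phi_n(V_n) \times \phi_n(V_n)$ except for $\tilde v_n$. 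The already-established compact case of \MyCref{thm:main_theorem}[parametrix] then produces a two-sided inverse $\tilde v_n' \in C^{-\infty}_{r,s}(\tilde M_n \times \tilde M_n,\omegahalf)$ preserving wavefronts.

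Pulling $\tilde v_n'$ back to $M$ via $\phi_n$ and cutting off by $\chi_n$ on both sides produces $v_n' \in C^{-\infty}_{r,s}(M\times M,\omegahalf)$ properly supported in $U_n \times U_n$, and I would set
\begin{equation*}
    v_0' := \sum_{n\in\N} \delta_{\rho_n} \ast v_n' \ast \delta_{\rho_n},
\end{equation*}
which is a well-defined element of $C^{-\infty}_{r,s}(M\times M,\omegahalf)$ by local finiteness. On $V_n$ the kernels of $v$ and of the pullback of $\tilde v_n + \tilde w_n$ differ only by a smoothing term, so moving the cutoffs $\delta_{\rho_n}$ past $v$ through \MyCref{lem:commutativity} and using the two-sided identity $\tilde v_n' \ast (\tilde v_n + \tilde w_n) = \delta_1$ on $\tilde M_n$, one finds that both $R_0 := \delta_1 - v_0' \ast v$ and $R_0' := \delta_1 - v \ast v_0'$ have kernels in $C^\infty(M\times M,\omegahalf)$. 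A final correction, e.g. $v' := v_0' + R_0' \ast v_0'$, absorbs the remaining smoothing error and yields the desired parametrix; alternatively one could run a Neumann-type series $\sum_{k\geq 0} R_0^k \ast v_0'$, whose convergence in $C^{-\infty}_{r,s}(M\times M,\omegahalf)$ is guaranteed by the wavefront estimate $\WF(v_n'\ast w) \subseteq \WF(w)$ inherited from the compact case.

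The main technical obstacle will be verifying in the penultimate step that the gluing yields smoothing errors rather than merely lower-order ones: this demands keeping careful track of which commutators $[\delta_{\rho_n}, v]$ land in $\Psi^{\prec 0}(M)$ only (not $C^\infty$) versus which terms can be absorbed into $C^\infty(M\times M,\omegahalf)$ via the exact inversion $\tilde v_n' \ast (\tilde v_n+\tilde w_n) = \delta_1$ on $\tilde M_n$. Controlling the cascade of such commutators as in \MyCref{lem:chi_n_2}, rather than via a single commutator bound, is what forces the Neumann-series (or iterated-correction) step at the end, and checking that this series converges to an element of $C^{-\infty}_{r,s}(M\times M,\omegahalf)$ rather than a distribution with worse wavefront behaviour is the most delicate point of the argument.
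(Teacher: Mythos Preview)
Your overall strategy matches the paper's: localize via \MyCref{lem:compactification}, invoke the already-proved compact case to get local parametrices, glue with a partition of unity, and then iterate to kill the error. But the execution has a real gap at the step you yourself flag as the main obstacle.

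Your claim that the glued error $R_0 = \delta_1 - v_0'\ast v$ lies in $C^\infty(M\times M,\omegahalf)$ is not correct, and you seem to realize this in your final paragraph: the commutators $[\delta_{\rho_n},v]$ are only in $\Psi^{\prec 0}(M)$, so the gluing error is merely order-lowering, not smoothing. The single correction $v_0' + R_0'\ast v_0'$ therefore does not finish the job. Your fallback---a Neumann series $\sum_{k\geq 0} R_0^k \ast v_0'$ with convergence ``guaranteed by the wavefront estimate $\WF(v_n'\ast w)\subseteq\WF(w)$''---is not a valid argument either: wavefront-set inclusion gives no control on any norm, so it says nothing about convergence of the series in $C^{-\infty}_{r,s}(M\times M,\omegahalf)$.

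The paper resolves this differently. After gluing, one has $v'' = \delta_1 - v'\ast v$ with $v''\ast H^k_{\mathrm{loc}}(M)\subseteq H^{\prec k}_{\mathrm{loc}}(M)$. Replacing $v'$ by $(1+v''+\cdots+(v'')^n)\ast v'$ for $n$ large and invoking \Crefitem{thm:bounded_M_Sobolev}{5} forces the new $v''$ to lie in $C(M\times M,\omegahalf)$ (and smooth off the diagonal). One then splits $v'' = v_1'' + v_2''$ with $v_1''$ smooth and $v_2''$ continuous of $L^2$-operator norm $<1$ (via the Schur test). Now $\sum_{n\geq 0}(v_2'')^n$ converges in $\cL(L^2(M))$ by an honest norm estimate; its Schwartz kernel $v'''$ is shown to lie in $C^{-\infty}_{r,s}$ by rewriting $v''' = \delta_1 + v'' + v''\ast v'''\ast v''$, and $v'''\ast v'$ is the desired left parametrix. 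The missing idea in your proposal is this two-stage passage: first a \emph{finite} iteration to reach $C^0$ regularity via Sobolev embedding, then an operator-norm Neumann series on the small continuous remainder.
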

			\begin{proof}
				By \MyCref{lem:compactification} and \MyCref{thm:main_theorem}[parametrix] in the compact case that we just proved, 
			we can create parametrices for $v$ locally.
			Furthermore, these parametrices are smooth off the diagonal.
			So, by using a partition of unity argument and \MyCref{thm:main_prop_bi_graded_pseudo_diff_M}[6], we can find $v'\in C^{-\infty}_{r,s}(M\times M,\omegahalf)$ such that $v'$ is smooth off the diagonal, and $v''=\delta_1-v'\ast v\in C^{-\infty}_{r,s}(M\times M,\omegahalf)$ satisfies
				\begin{equation}\label{eqn:hjjqsdhjmfsdqmfmsqdjfmqsd}\begin{aligned}
					v''\ast H^k_{\mathrm{loc}}(M)\subseteq H^{\prec k}_{\mathrm{loc}}(M),\quad \forall k\in \R^\nu
				\end{aligned}\end{equation}
			Let $n\in \N$. By replacing $v'$ by $(1+v''+(v'')^2+\cdots (v'')^n)\ast v'$, we can suppose that
			$v''=\delta_1-v'\ast v$ decreases the order of Sobolev spaces as much as we want.
			So, by taking $n$ big enough, and using \MyCref{thm:bounded_M_Sobolev}[5] (and the classical Sobolev embedding theorem) we can suppose that $v''\in C(M\times M,\omegahalf)$ which is smooth off the diagonal because $v$ and $v'$ are smooth off the diagonal.
			We can thus write $v''$ as $v''_1+v''_2$, where $v''_1\in C^\infty(M\times M,\omegahalf)$ and $v''_2\in C(M\times M,\omegahalf)$ extends to a bounded operator $L^2(M)\to L^2(M)$ of norm is $<1$. Here, we can use Schur test to bound $v''_2$.
			Since $v''$ is properly supported, we can suppose that $v''_1$ and $v''_2$ are properly supported.
			The sum $\sum_{n=0}^\infty (v_2'')^n$ converges to a bounded operator $L^2(M)\to L^2(M)$.
			By Schwartz kernel theorem, this operator comes from convolution by a distribution $v'''\in C^{-\infty}(M\times M,\omegahalf)$.
			By writing $v'''$ as 
				\begin{equation*}\begin{aligned}
						v'''=\delta_1+v''+ v''\ast v'''\ast v'',		
				\end{aligned}\end{equation*}
			 we deduce that $v'''$ and $(v'''')^*$ map $C^\infty_c(M,\omegahalf)$ to $C^\infty(M,\omegahalf)$ (in fact inside $C^\infty_c(M,\omegahalf)$ because the support is proper).
			 So, $v'''\in C^{-\infty}_{r,s}(M\times M,\omegahalf)$.
			 Finally, 
				 \begin{equation*}\begin{aligned}
						v'''\ast v'\ast v=\delta_1-v'''\ast v_1'', 
				 \end{aligned}\end{equation*}
			 and $v''''\ast v_1''$ is smoothing operator because $v_1''$ is smoothing and $v'''\in C^{-\infty}_{r,s}(M\times M,\omegahalf)$.
			 We have thus obtained a left parametrix. By taking the adjoint, we obtain a right parametrix.
			\end{proof}
			So, we proved that $v$ has a parametrix $v'$.
			By \MyCref{lem:compactification} and \MyCref{thm:main_theorem}[parametrix] in the compact case, the parametrix $v'$ has to satisfy $\WF_l(v'\ast w)\subseteq \WF_{l}(w)$ and $\WF_l(v^{\prime *}\ast w)\subseteq \WF_l(w)$ for all $w\in C^{-\infty}(M,\omegahalf)$ and $l\in \R^\nu$.
			This finishes the proof of \MyCref{thm:main_theorem}[parametrix].

			The inclusion $\Gamma\subseteq \{(\xi,x)\in T^*M\backslash 0:(-\xi,\xi;x,x)\in \WF(v)\}$ follows immediately from \MyCref{thm:main_theorem}[parametrix].
			This finishes the proof of \MyCref{thm:main_theorem}[inj].

			We now prove \MyCref{thm:main_theorem}[micro_local_max_hypo].	
			From \MyCref{thm:main_theorem}[parametrix], \eqref{eqn:sdqjofjlkqsdjf} follows easily.
			Now, suppose that $(\xi,x)\in T^*M\backslash 0$ and $l\in \R^\nu$ satisfy
				\begin{equation*}\begin{aligned}
					(\xi,x)\in \WF_{l+\Re(k)}(w)	\iff (\xi,x)\in \WF_{l}(v\ast w)
				\end{aligned}\end{equation*}
			 for all $w\in C^{-\infty}(M,\omegahalf)$.
			 We will prove that $(\xi,x)\in \Gamma$.
			Since this is a local statement, by \MyCref{lem:compactification}, we can suppose that $M$ is compact.
			The operators $v_{l}\in \Psi^l(M)$ obtained in \MyCref{thm:conclusion}[3] satisfy \eqref{eqn:sdqjofjlkqsdjf} with $\Gamma=T^*M\backslash 0$ by what we proved above. 
			So, by replacing $v$ by $v_{l}\ast v\ast v_{-l-k}$, we can suppose that $v\in \Psi^0(M)$, and 
			\begin{equation}\label{eqn:qskomdjfkmoqsjdmofjqds}\begin{aligned}
					(\xi,x)\in \WF_{0}(w)	\iff (\xi,x)\in \WF_{0}(v\ast w).
				\end{aligned}\end{equation}
			Let $\chi\in \Psi^0_{\mathrm{classical}}(M)$ whose classical principal symbol vanishes only at $\Rpt\cdot (\xi,x)$.
			We also fix an operator $v'\in \Psi^{(-1,\cdots,-1)}(M)$ as in \MyCref{thm:conclusion}[3].
			Consider the vector space 
				\begin{equation*}\begin{aligned}
					H=\{w\in C^{-\infty}(M,\omegahalf):v\ast w,\ \chi\ast w,\ v'\ast w\in L^2(M)\}		
				\end{aligned}\end{equation*}
			equipped with the norm 
				\begin{equation*}\begin{aligned}
					\norm{w}_H:=\max\left(\norm{v\ast w}_{L^2(M)},\norm{\chi\ast w}_{L^2(M)},\norm{v'\ast w}_{L^2(M)}\right).
				\end{aligned}\end{equation*}
			The space $H$ is complete because $v'\ast \cdot:H^{(-1,\cdots,-1)}(M)\to L^2(M)$ is a topological isomorphism.
			We claim that $H=L^2(M)$ as a set.
			The inclusion $L^2(M)\subseteq H$ is clear. For the other inclusion, let $w\in H$.
			Since $\chi\ast w\in L^2(M)$, $\WF_0(w)\subseteq \Rpt (\xi,x)$.
			Since $v\ast w\in L^2(M)$, $\WF_0(v\ast w)=\emptyset$. So, by \eqref{eqn:qskomdjfkmoqsjdmofjqds}, $(\xi,x)\notin \WF_0(w)$.
			Hence, $\WF_0(w)=\emptyset$. So, by \MyCref{thm:multi_parameter_wave_front_set}[1], $w\in L^2(M)$.
			The inclusion $L^2(M)\to H$ is clearly continuous. By the open mapping theorem, it is a topological isomorphism.
			Hence, there exists $C>0$ such that 
				\begin{equation*}\begin{aligned}
					\norm{w}_{L^2(M)}\leq C 	\norm{w}_H.	
				\end{aligned}\end{equation*}
			This means that the operator $v^*\ast v+\chi^*\ast \chi+v^{\prime *}\ast v':L^2(M)\to L^2(M)$ is invertible.
			The operator $v^{\prime *}\ast v'$ maps $L^2(M)$ into $H^{(2,\cdots,2)}(M)$. So, as an operator $L^2(M)\to L^2(M)$, it is compact by \MyCref{thm:bounded_M_Sobolev}[4].
			So, $v^*\ast v+\chi^*\ast \chi$ is invertible in $\overline{\Psi^0(M)}/\cK(L^2(M))$.
			By \MyCref{thm:short_exact_seq_psi0}, $\sigma^0(v^*\ast v+\chi^*\ast \chi,\pi,x):L^2(\pi)\to L^2(\pi)$ is injective for all $\pi\in\HatHLnon_x$.
			Hence, by \eqref{eqn:symbol_classical_extension} and since the classical principal symbol of $\chi$ vanishes at $(\xi,x)$, $\sigma^0(v,\pi,x):L^2(\pi)\to L^2(\pi)$ is injective for all $\pi\in \HatHLnon_{(\xi,x)}$. So, $(\xi,x)\in \Gamma$.
			This finishes the proof of \MyCref{thm:main_theorem}[micro_local_max_hypo].

			We now prove \MyCref{thm:main_theorem}[inequa].
			We fix $v''\in \Psi^{k}_c(M)$.
			Let $\Gamma_n\subseteq \Gamma$ be a sequence of open cones such that $\bigcup_n \Gamma_n=\Gamma$, and $\overline{\Gamma_{n}}\subseteq \Gamma_{n+1}$.
			For each $n$, we fix $v_n'$ as in \MyCref{thm:main_theorem}[parametrix] applied to $\overline{\Gamma_n}$, and $\chi_n\in \Psi^0_{\mathrm{classical}}(M)$ such that $\chi_n$ is elliptic on $\overline{\Gamma_{n-1}}$, and smoothing on the complement of $\Gamma_n$, and $\chi_n$ are (formally) positive.
			Since $v_n'$  is smooth off the diagonal, by modifying $v_n'$ and $\chi_n$ outside the diagonal, we can suppose that the operators $(v_n')_{n\in \N}$ and $(\chi_n)_{n\in \N}$ are uniformly properly supported.
			
			For each $n$, let $r_n=\delta_1-v'_n\ast v$. So, we have 
				\begin{equation}\label{eqn:qksdjfkjqsdmojf}\begin{aligned}
					\chi_n\ast v''=\chi_n\ast v''\ast r_n+\chi_n\ast v''\ast v_n'\ast v		
				\end{aligned}\end{equation}
			By \eqref{eqn:parametrix_WF} and \eqref{eqn:WF_multi_pseudo} (applied to $v''$), $\chi_n\ast v'' \ast r_n$ is a smoothing operator, which is uniformly compactly supported because $v''$ is compactly supported.
			So, $\chi_n\ast v''\ast r_n\in C^\infty_c(M\times M,\omegahalf)$.
			For each $n$, the operator $\chi_n\ast v''\ast v_n'$ maps $L^2_{\mathrm{loc}}(M)$ to $L^2_{\mathrm{loc}}(M)$. To see this, let $w\in L^2_{\mathrm{loc}}(M)$. 
			Then, 
				\begin{equation*}\begin{aligned}
					\WF_0(\chi_n\ast v''\ast v_n'\ast w)&\subseteq \WF_0(v''\ast v_n'\ast w)&&\text{ by \MyCref{thm:multi_parameter_wave_front_set}[3]}\\&\subseteq \WF_{\Re(k)}(v_n'\ast w)&&\text{ by \MyCref{thm:multi_parameter_wave_front_set}[3]}\\&=\WF_0(w)&&\text{ by \eqref{eqn:parametrix_WF}}\\&=\emptyset&&\text{ by \MyCref{thm:multi_parameter_wave_front_set}[1]}.		
				\end{aligned}\end{equation*}
			Furthermore, since $v''$ is compactly supported, the image is actually in $L^2(M)$. So, the operator $\chi_n\ast v''\ast v_n'$ maps $L^2(M)$ to $L^2(M)$. 
			Hence, it is continuous by the closed graph theorem.
			Now, we take a sequence $c_n>0$ which converges to $0$ fast enough so that the sums
				\begin{equation*}\begin{aligned}
					\chi=&\sum_n c_n\chi_n &&\text{ converges in }\Psi^0_{\mathrm{classical}}(M)\\
					r=&\sum_n c_n\chi_n\ast v''\ast r_n&&\text{ converges in }C^\infty_c(M\times M,\omegahalf)\\
					v'=&\sum_n c_n\chi_n\ast v''\ast v_n'&&\text{ converges in }\cL(L^2(M))
				\end{aligned}\end{equation*}
			 By \eqref{eqn:qksdjfkjqsdmojf}, we have 
				 \begin{equation}\label{eqn:sqdkjfkmsqdjlmkjflqsd}\begin{aligned}
					 \chi\ast v''=r+v'\ast v.
				 \end{aligned}\end{equation}
			  From this, \eqref{eqn:inequ_microlocal_main_thm} follows.
			If $\Gamma=T^*M\backslash 0$, then there is no need to do the above. One simply applies \MyCref{thm:main_theorem}[parametrix] with $\Gamma'=\Gamma$.
			The details are left to the reader.
			This finishes the proof of \MyCref{thm:main_theorem}[inequa].

			We finally prove \MyCref{thm:main_theorem}[Fredholm].
			By \MyCref{thm:conclusion}[3], we can suppose that $k=0$.
			By \MyCref{thm:main_theorem}[left-inv], $\sigma^0(v,\pi,x)$ is injective on $L^2(\pi)$.
			So, by \eqref{eqn:short_exact_seq_psi0} and \MyCref{prop:simple_arg}, $v$ is left invertible in $\overline{\Psi^0(M)}/\cK(L^2(M))$, i.e., it is left-invertible modulo compact operators.
		\end{linkproof}
		
		\begin{linkproof}{main_theorem_double_sided}
				It suffices to apply \Cref{thm:main_theorem} to $v^*\ast v$ and $v\ast v^*$.
		\end{linkproof}

\begin{appendices}
					
\chapter{Kirillov's orbit method}\label{chap:Orbit_method}
			Let $G$ be a simply connected nilpotent Lie group, $\mathfrak{g}$ its Lie algebra.
			So, the exponential map $\exp:\mathfrak{g}\to G$ is a diffeomorphism.
			For any $\xi\in \mathfrak{g}^*$, let $B_\xi:\mathfrak{g}\times \mathfrak{g}\to \R$ be the antisymmetric bilinear form $B_\xi(v,w)=\xi([v,w])$, and $\ker(B_\xi)=\{v\in \mathfrak{g}:B_\xi(v,w)=0,\ \forall w\in \mathfrak{g}\}$.
			Since $B_{\xi}$ is antisymmetric, $\codim(\ker(B_\xi))$ is even.
			Let $\mathfrak{h}\subseteq \mathfrak{g}$ be a Lie subalgebra such that $B_\xi(\mathfrak{h},\mathfrak{h})=0$, $H\subseteq G$ the Lie subgroup integrating $\mathfrak{h}$. 
			The map $\exp$ restricts to a diffeomorphism $\mathfrak{h}\to H$, so in particular $H$ is closed.
			We define the Hilbert space 
			$L^2(G/H,\xi)$ to be the space of measurable functions $f:G\to \Omega^{\frac{1}{2}}(\frac{\mathfrak{g}}{\mathfrak{h}})$ such that $f(x\exp(y))=f(x)e^{i\xi (y)}$ for all $x\in G$ and $y\in \mathfrak{h}$, and $\int_{G/H}|f|^2<+\infty$.			
			We have a unitary representation of $G$ on $L^2(G/H,\xi)$ given by 
				\begin{equation}\label{eqn:dfn_representation_quasi_induite}\begin{aligned}
					\Xi_{G/H,\xi}(f)\ast g(x)=\int_{G} f(y)g(y^{-1}x),\quad f\in C^\infty_c(G,\Omega^1(\mathfrak{g})),g\in L^2(G/H,\xi) .
				\end{aligned}\end{equation}
			The main theorem on Kirillov's orbit method can be summarized as follows:
			\begin{theorem}[{\cite[Section 2.2]{RepNilpotentLieGroupsCorwinGreenleaf}}]\label{thm:Kirillov_orbit_method}
				\begin{enumerate}
					\item\label{thm:Kirillov_orbit_method:exist} 	For any $\xi\in \mathfrak{g}^*$, there exists a Lie subalgebra $\mathfrak{h}$ which satisfies $B_\xi(\mathfrak{h},\mathfrak{h})=0$, and $\codim(\mathfrak{h})=\codim(\ker(B_\xi))/2$.
							We call $\mathfrak{h}$ a polarizing Lie subalgebra for $\xi$.
					\item\label{thm:Kirillov_orbit_method:surj} 	If $\xi\in \mathfrak{g}^*$ and $\mathfrak{h}$ a polarizing Lie subalgebra, then $\Xi_{G/H,\xi}$ is an irreducible unitary representation.
							Furthermore, any irreducible unitary representation of $G$ is unitarily equivalent to $\Xi_{G/H,\xi}$ for some $ \xi\in \mathfrak{g}^*$ and $\mathfrak{h}$ a polarizing Lie subalgebra.
					\item\label{thm:Kirillov_orbit_method:equiv} 	If $\xi,\xi'\in \mathfrak{g}^*$, and $\mathfrak{h}$ and $\mathfrak{h}'$ are polarizing Lie subalgebras for $\xi$ and $\xi'$, then $\Xi_{G/H,\xi}$ is unitarily equivalent to $\Xi_{G/H',\xi'}$ if and only if 
							there exists $g\in G$ such that $\xi=\Ad^*(g)\xi'$.
				\end{enumerate}
			\end{theorem}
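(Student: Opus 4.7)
The plan is to proceed by induction on $\dim(\mathfrak{g})$, following the standard Kirillov--Dixmier strategy. The base case is abelian $\mathfrak{g}$, where $B_\xi \equiv 0$ forces $\mathfrak{h} = \mathfrak{g}$, so $\Xi_{G,\xi}$ is the one-dimensional character $\exp(v) \mapsto e^{i\xi(v)}$, and all three assertions reduce to Pontryagin duality on $G \simeq \R^n$. For the inductive step I would first establish Kirillov's structural dichotomy: either (a) $\xi$ annihilates some one-dimensional central subspace $\R Z$, in which case $\Xi_{G/H,\xi}$ descends to an irreducible representation of the simply connected nilpotent group $G/\exp(\R Z)$ and the induction hypothesis applies directly; or (b) $\xi$ is nonzero on every nonzero central element, and one can find a codimension-one ideal $\mathfrak{g}_1$ containing the center together with $X \notin \mathfrak{g}_1$, $Y \in \mathfrak{g}_1$, and a central $Z$ with $[X,Y] = Z$ and $\xi(Z) \neq 0$.

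For part 1 (existence of a polarization) I would give the Vergne construction: fix a Jordan--Hölder flag of ideals $0 = \mathfrak{g}_{(0)} \subset \mathfrak{g}_{(1)} \subset \cdots \subset \mathfrak{g}_{(n)} = \mathfrak{g}$ with $\dim \mathfrak{g}_{(i)} = i$ and define $\mathfrak{h} := \sum_{i=0}^{n} \ker(B_\xi|_{\mathfrak{g}_{(i)}})$. Elementary bookkeeping on antisymmetric forms shows $\mathfrak{h}$ is totally isotropic of maximal codimension $\tfrac{1}{2}\codim\ker(B_\xi)$, and since each $\mathfrak{g}_{(i)}$ is an ideal of $\mathfrak{g}$, $\mathfrak{h}$ is automatically closed under brackets. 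For parts 2 and 3 I would run the Mackey machine in case (b): set $G_1 = \exp(\mathfrak{g}_1)$, $\xi_1 = \xi|_{\mathfrak{g}_1}$, and let $\pi_1 = \Xi_{G_1/H_1,\xi_1}$ for a polarization $\mathfrak{h}_1 \subseteq \mathfrak{g}_1$ chosen so that $\mathfrak{h}_1 \subseteq \mathfrak{h}$. An explicit intertwining computation (factoring the invariant measure on $G/H$ through $G/G_1 H$) gives $\Xi_{G/H,\xi} \simeq \mathrm{Ind}_{G_1}^{G}\pi_1$. Mackey's imprimitivity theorem then yields irreducibility as soon as the $G$-stabilizer of $\pi_1 \in \widehat{G_1}$ is exactly $G_1$, which holds because $\Ad^*(\exp(tX))\xi_1$ evaluated on $Y$ equals $\xi_1(Y) - t\,\xi(Z) + O(t^2)$, non-constant in $t$. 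Applying the same analysis to an arbitrary irreducible representation $\pi$ of $G$ --- decomposing $\pi|_{G_1}$ as a direct integral over its spectrum in $\widehat{G_1}$, which by irreducibility must be a single $G$-orbit --- shows every irreducible representation of $G$ arises this way.

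The hardest part will be the equivalence criterion (part 3). The easy direction --- that $\xi' = \Ad^*(g)\xi$ yields equivalent representations, and that different polarizations of the same $\xi$ produce equivalent representations via Fourier-type intertwiners --- is a direct computation. The converse is the subtle step: if $\Xi_{G/H,\xi} \simeq \Xi_{G/H',\xi'}$, one must conclude $\xi' \in \Ad^*(G)\xi$. I would argue by induction using Mackey's result that equivalence of the induced representations forces the inducing data on $G_1$ to be $G$-conjugate, reducing the problem to the corresponding equivalence statement for $G_1$ acting on $\xi_1 \in \mathfrak{g}_1^*$. The induction closes once I prove the transversality lemma that, in case (b), the set of extensions $\{\tilde\xi \in \mathfrak{g}^* : \tilde\xi|_{\mathfrak{g}_1} = \xi_1\}$ lies in a single $\Ad^*(G)$-orbit, which follows from the non-degeneracy of the linear map $v \mapsto B_\xi(X,v)$ modulo $\ker(B_\xi)$ in the direction transverse to $\mathfrak{g}_1$.
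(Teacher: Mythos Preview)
The paper does not supply its own proof of this theorem; it is stated in the appendix with a citation to \cite[Section 2.2]{RepNilpotentLieGroupsCorwinGreenleaf} and used as a black box. Your sketch is the standard Kirillov--Dixmier inductive argument (Vergne polarization plus the Mackey machine on a codimension-one ideal), which is precisely the approach taken in that reference, so there is nothing to compare.
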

			By taking $\xi=\xi'$ in \Crefitem{thm:Kirillov_orbit_method}{equiv}, it follows that the equivalence class of $\Xi_{G/H,\xi}$ doesn't depend on $\mathfrak{h}$.
			Thus, we have a bijective map 
				\begin{equation}\label{eqn:Orbit_method_map}\begin{aligned}
					\mathfrak{g}^*/G\to \hat{G},\quad [\xi]\mapsto [\Xi_{G/H,\xi}],\quad \xi\in \mathfrak{g}^*
				\end{aligned}\end{equation}
			where $\mathfrak{g}^*/G$ is the quotient of $\mathfrak{g}^*$ by the co-adjoint action.
			\begin{theorem}[{Brown's theorem \cite{BrownKirrilovOrbitMethod}}]\label{thm:Brown}
				The map \eqref{eqn:Orbit_method_map} is a homeomorphism when $\mathfrak{g}/G$ is equipped with the quotient topology and $\hat{G}$ is equipped with the Fell topology.
			\end{theorem}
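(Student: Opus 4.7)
The map \eqref{eqn:Orbit_method_map} is already a set-theoretic bijection by \Cref{thm:Kirillov_orbit_method}, so the task is purely topological: continuity in both directions. Since both spaces are second countable, I would work with sequences throughout. The overall strategy is to realize the irreducible representations uniformly as induced representations from a continuously varying family of polarizing subalgebras (Vergne polarizations), and then to relate Fell-convergence to matrix-coefficient convergence, which in turn controls the coadjoint orbit data.

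The plan for continuity of $[\xi] \mapsto [\Xi_{G/H,\xi}]$ is as follows. Given a sequence $\xi_n \to \xi$ in $\mathfrak{g}^*$, I would first pick a Jordan--Hölder flag $0 = \mathfrak{g}_0 \subset \mathfrak{g}_1 \subset \cdots \subset \mathfrak{g}_d = \mathfrak{g}$ of ideals and form the Vergne polarization $\mathfrak{h}(\xi) := \sum_{i=1}^d \ker(B_\xi|_{\mathfrak{g}_i})$, which is a polarizing subalgebra depending semi-algebraically on $\xi$. On the stratum where $\dim\mathfrak{h}(\xi_n)$ is eventually constant (which can be arranged after passing to a subsequence, since there are finitely many strata), $\mathfrak{h}(\xi_n) \to \mathfrak{h}(\xi)$ as subspaces. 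One then exhibits an explicit family of matrix coefficients $c_n(g) = \langle \Xi_{G/H_n,\xi_n}(g)f_n, f_n'\rangle$ converging uniformly on compact sets to a matrix coefficient of $\Xi_{G/H,\xi}$, using the exponential parametrization of $G/H(\xi)$ by a supplementary subspace. By Fell's criterion, this gives weak containment and hence Fell-convergence. If the subsequence lies on a different stratum where $\dim\mathfrak{h}(\xi_n) > \dim \mathfrak{h}(\xi)$, one uses that inducing in stages from a smaller subalgebra yields the same representation up to equivalence.

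For continuity of the inverse, suppose $[\Xi_{G/H_n,\xi_n}] \to [\Xi_{G/H,\xi}]$ in the Fell topology. By a theorem of Dixmier (valid for type I groups, and $C^*G$ is type I since $G$ is nilpotent, see \cite[13.11.12]{Dixmier}), the Fell topology on $\hat{G}$ coincides with the hull-kernel topology, so any cluster point $\pi'$ of the $[\Xi_{G/H_n,\xi_n}]$ must satisfy $\ker\pi' \supseteq \bigcap \ker \Xi_{G/H,\xi}$. I would then translate this to orbits by passing to a subsequence such that $[\xi_n]$ lies in a fixed stratum and extracting a cluster point $\xi^* \in \mathfrak{g}^*$ of representatives; the forward continuity plus injectivity of Kirillov's bijection forces $[\xi^*] = [\xi]$. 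The key lemma here is that if $\Ad^*(G)\xi_n$ converges (in the Hausdorff sense on compact subsets of $\mathfrak{g}^*$) to a set contained in $\Ad^*(G)\xi$, then after passing to a subsequence one can choose group elements $g_n \in G$ with $\Ad^*(g_n)\xi_n \to \xi$; this is where the stratification by $\dim B_\xi$ and the algebraicity of the coadjoint action are essential.

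The main obstacle will be the second direction, and more precisely the step of ruling out pathological cluster orbits of higher dimension. In principle, a sequence of orbits could degenerate to a proper subset of a larger orbit, and one needs a non-trivial argument --- using either the characters of $\Xi_{G/H_n,\xi_n}$ on well-chosen test functions $f \in C^\infty_c(G,\Omega^1(\mathfrak{g}))$ and Kirillov's character formula, or equivalently the behavior of the operator field $\pi \mapsto \pi(f)$ near the limit point --- to force the limit orbit to be exactly $\Ad^*(G)\xi$. This is essentially the content of Pukanszky's stratification of $\mathfrak{g}^*/G$, and I would invoke it (or reprove the minimal fragment needed) to conclude.
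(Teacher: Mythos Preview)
The paper does not prove this theorem at all: it is stated as a cited result from Brown \cite{BrownKirrilovOrbitMethod} and used as a black box in the appendix on Kirillov's orbit method. There is therefore no ``paper's own proof'' to compare against.

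That said, your sketch is a reasonable outline of how Brown's theorem is actually established. The use of Vergne polarizations to get a coherent choice of polarizing subalgebras, the stratification of $\mathfrak{g}^*$ by the Pukanszky data (dimension and jump indices of $B_\xi$ along a Jordan--H\"older flag), and the reduction to matrix-coefficient convergence are all the right ingredients. Your identification of the main obstacle --- ruling out degeneration to a proper subset of a higher-dimensional orbit in the inverse direction --- is accurate; this is indeed where the work lies, and Brown's original argument (or the treatment in Corwin--Greenleaf) handles it via the Kirillov character formula on suitable test functions, exactly as you anticipate. One small caveat: your phrase ``$\dim\mathfrak{h}(\xi_n) > \dim\mathfrak{h}(\xi)$'' should be reversed (lower-dimensional orbits have higher-dimensional polarizations), but this does not affect the logic of the argument.
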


			\Cref{thm:Kirillov_orbit_method} gives irreducibility of the representations $\Xi_{G/H,\xi}$ when $\mathfrak{h}$ is polarizing. The following theorem due to Ludwig \cite{LudwigGoodIdeal} analyzes the representations $\Xi_{G/H,\xi}$ in the general case.
			\begin{theorem}\label{prop:support_Kirillovs_Orbit_Method_Induced}
					Let $\xi\in \mathfrak{g}^*$ and $\mathfrak{h}$ a Lie subalgebra which satisfies $B_\xi(\mathfrak{h},\mathfrak{h})=0$. Then, the set of irreducible unitary representations of $G$ which are weakly contained in $\Xi_{G/H,\xi}$
					correspond by the bijection \eqref{eqn:Orbit_method_map} to the set 
						\begin{equation*}\begin{aligned}
							\overline{\bigcup_{g\in G}\{\Ad^*(g)\eta\in \mathfrak{g}^*:\eta_{|\mathfrak{h}}=\xi\}}/G
						\end{aligned}\end{equation*}
			\end{theorem}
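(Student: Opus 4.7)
The strategy is to identify the set on the right-hand side with the $G$-saturation $\overline{G\cdot(\xi+\mathfrak{h}^\perp)}/G$ of the affine slice $\xi+\mathfrak{h}^\perp\subseteq \mathfrak{g}^*$ (since $\{\eta:\eta|_\mathfrak{h}=\xi\}=\xi+\mathfrak{h}^\perp$), and then match this $G$-invariant closed subset of $\mathfrak{g}^*/G$ with the support of $\Xi_{G/H,\xi}$ under weak containment via the orbit-method homeomorphism \eqref{eqn:Orbit_method_map} and Brown's theorem \Cref{thm:Brown}. The proof splits naturally into the two inclusions: any irreducible $\pi_\eta$ associated to an orbit meeting $\xi+\mathfrak{h}^\perp$ is weakly contained in $\Xi_{G/H,\xi}$, and conversely every irreducible weakly contained in $\Xi_{G/H,\xi}$ has its orbit inside $\overline{G\cdot(\xi+\mathfrak{h}^\perp)}$.

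For the first inclusion, the plan is to use induction in stages. Given $\eta\in\xi+\mathfrak{h}^\perp$, observe that $B_\eta(X,Y)=\eta([X,Y])=\xi([X,Y])=B_\xi(X,Y)$ for $X,Y\in\mathfrak{h}$, because $[X,Y]\in\mathfrak{h}$ and $\eta$ agrees with $\xi$ on $\mathfrak{h}$; hence $\mathfrak{h}$ is also isotropic for $B_\eta$. By a standard extension lemma for isotropic subalgebras in nilpotent Lie algebras (Vergne's procedure via a Jordan--Hölder basis of $\mathfrak{g}$), there exists a polarizing subalgebra $\mathfrak{h}_\eta\supseteq\mathfrak{h}$ for $\eta$. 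Mackey's induction-in-stages theorem then gives a unitary equivalence
\begin{equation*}
\Xi_{G/H_\eta,\eta}\;\simeq\;\mathrm{Ind}_{H_\eta}^{G}\bigl(\mathrm{Ind}_{H}^{H_\eta}\chi_{\xi}\bigr)^{\text{\,fiber at }\eta},
\end{equation*}
realizing the irreducible $\Xi_{G/H_\eta,\eta}$ as a summand (more precisely, a direct-integral fibre) of $\Xi_{G/H,\xi}$, hence weakly contained in it. Taking closure then yields $\overline{G\cdot(\xi+\mathfrak{h}^\perp)}/G\subseteq\mathrm{supp}(\Xi_{G/H,\xi})$.

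For the second inclusion, I would establish a direct-integral decomposition of $\Xi_{G/H,\xi}$ whose support is exactly the $G$-saturation of $\xi+\mathfrak{h}^\perp$. Concretely, choose a complement $V\subseteq\mathfrak{g}$ to $\mathfrak{h}$ and identify $L^2(G/H,\xi)\simeq L^2(V,\Omega^{\frac{1}{2}}(V))$ via the exponential chart. The $G$-action intertwines with a twisted action, and Plancherel analysis on the abelian quotient $\mathfrak{h}^\perp\subseteq\mathfrak{g}^*$ yields a disintegration
\begin{equation*}
\Xi_{G/H,\xi}\;\simeq\;\int^{\oplus}_{\eta\in (\xi+\mathfrak{h}^\perp)}\Xi_{G/H_\eta,\eta}\,\mathrm{d}\eta,
\end{equation*}
where $H_\eta$ is as above (the measurable selection of polarizing subalgebras is ensured by a Jordan--Hölder construction). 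By standard direct-integral theory, the irreducibles weakly contained in the left-hand side are exactly those appearing in (the closure of) the image of the integrand, which is $\overline{G\cdot(\xi+\mathfrak{h}^\perp)}/G$.

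The main obstacle will be the second paragraph's appeal to induction in stages together with the measurable choice of polarizing subalgebras $\mathfrak{h}_\eta\supseteq\mathfrak{h}$ depending on $\eta\in\xi+\mathfrak{h}^\perp$, and matching the $\Omega^{1/2}$-density conventions so that the induced Hilbert-space identifications are genuinely unitary. In practice one fixes a Jordan--Hölder flag of $\mathfrak{g}$, uses Vergne's algorithm to produce $\mathfrak{h}_\eta$ that depends polynomially on $\eta$ on a Zariski-open subset, and handles the boundary via the closure on the right-hand side together with upper semicontinuity of weak containment (i.e. the Fell-topology closedness of the support), which is where \Cref{thm:Brown} is essential. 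Once these technicalities are set up, the two inclusions combine to give the equality of sets claimed.
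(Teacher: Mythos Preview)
The paper does not give its own proof of this statement: it is quoted as a known theorem due to Ludwig and simply cited to \cite{LudwigGoodIdeal}. So there is no ``paper's proof'' to compare against.

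Your outline is along the standard lines of Ludwig's argument and is essentially correct. One small wrinkle: the displayed formula in your first-inclusion step is not a standard identity as written. What induction in stages actually yields is $\Xi_{G/H,\xi}\simeq\mathrm{Ind}_{H_\eta}^{G}\bigl(\mathrm{Ind}_{H}^{H_\eta}\chi_\xi\bigr)$, and the point is that the character $\chi_\eta$ of $H_\eta$ is weakly contained in $\mathrm{Ind}_{H}^{H_\eta}\chi_\xi$ (since $\chi_\eta|_H=\chi_\xi$); then continuity of induction in the Fell topology transports this to weak containment of $\Xi_{G/H_\eta,\eta}$ in $\Xi_{G/H,\xi}$. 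Your second inclusion via a direct-integral decomposition over $\xi+\mathfrak{h}^\perp$ with measurably chosen Vergne polarizations is exactly the standard route, and you correctly flag the main technical work (measurable selection, Zariski-open strata, closure via Brown's theorem).
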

			\paragraph{Smooth vectors}
			Smooth vectors of the representation $\Xi_{G/H,\xi}$ in the irreducible case have a very simple characterization:
			\begin{theorem}[{\cite[Corollary 4.1.2]{RepNilpotentLieGroupsCorwinGreenleaf}}]\label{thm:smooth_vectors_Kirillov}
				Let $\xi\in \mathfrak{g}^*$, $\mathfrak{h}\subseteq \mathfrak{g}$ a polarizing Lie subalgebra, then if one identifies $L^2(G/H,\xi)$ with $L^2(\R^k)$
				for some $k\in \Z_+$ using any weak Maclev basis (see \cite{RepNilpotentLieGroupsCorwinGreenleaf}), then the space of smooth vectors of $\Xi_{G/H,\xi}$ is equal to the space of Schwartz functions on $\R^k$.
			\end{theorem}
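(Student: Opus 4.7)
The plan is to work in explicit coordinates coming from the weak Malcev basis, identify the Lie algebra action as polynomial-coefficient differential operators on $\R^k$, and then reduce the characterization of smooth vectors to the classical fact that $\mathcal{S}(\R^k)$ is the common smooth domain of the position and momentum operators.

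First I would fix a weak Malcev basis $X_1,\dots,X_n$ of $\mathfrak{g}$ passing through $\mathfrak{h}$, with $X_{k+1},\dots,X_n$ spanning $\mathfrak{h}$. The map $(t_1,\dots,t_k)\mapsto \exp(t_1 X_1)\cdots\exp(t_k X_k)H$ is then a diffeomorphism $\R^k\to G/H$, realizing the unitary identification $L^2(G/H,\xi)\simeq L^2(\R^k)$. The key input, which is elementary but notationally heavy, is that in these coordinates each infinitesimal operator $\odif{\pi}(X_j)$ is a polynomial-coefficient differential operator: by the Baker-Campbell-Hausdorff formula and the defining relation $f(g\exp(Y))=e^{i\xi(Y)}f(g)$ for $Y\in\mathfrak{h}$, one obtains for each $j$ an expression
\begin{equation*}
\odif{\pi}(X_j)=\sum_{l=1}^{k} p_{jl}(t_1,\dots,t_k)\pdv*{}{t_l}+iq_{j}(t_1,\dots,t_k),
\end{equation*}
where the $p_{jl}$ and $q_j$ are polynomials (of weighted degree controlled by the nilpotency and by the grading on $\mathfrak{g}$ determined by the Malcev basis).

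The second step is to show that the (noncommutative) algebra generated by the operators $\odif{\pi}(X_j)$ contains, modulo operators that preserve $\mathcal{S}(\R^k)$, the operators $\partial_{t_l}$ and the multiplication operators $it_l$ for every $l\in\bb{1,k}$. One arranges the basis so that the "first" directions transverse to $\mathfrak{h}$ correspond to momentum-type operators (giving $\partial_{t_l}$ on the nose, up to lower order terms) and the remaining transverse directions, via the polarization condition $B_\xi(\mathfrak{h},\mathfrak{h})=0$, yield operators whose leading term is multiplication by $it_l$. This is a standard induction on the nilpotency step using the upper-triangular shape guaranteed by the Malcev choice.

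Once these Weyl-algebra generators are at hand, the proof is finished by the following classical fact: a vector $v\in L^2(\R^k)$ belongs to $\bigcap_{N\in\N}\dom(\text{all monomials of degree $\leq N$ in $t_l,\partial_{t_l}$})$ if and only if $v\in \mathcal{S}(\R^k)$. The inclusion $\mathcal{S}(\R^k)\subseteq C^\infty(\pi)$ then follows by showing that every such $v$ lies in the common smooth domain of $\odif{\pi}(X_j)$, which since the $X_j$ generate the enveloping algebra coincides with $C^\infty(\pi)$; the reverse inclusion $C^\infty(\pi)\subseteq \mathcal{S}(\R^k)$ follows by applying arbitrary polynomial combinations of $t_l,\partial_{t_l}$, each of which can be expressed (again modulo lower-order Schwartz-preserving terms) as an element of the enveloping algebra. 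The main obstacle is the bookkeeping in Step 2: verifying in full generality that the Malcev-coordinate description of $\odif{\pi}(X_j)$ really does give access, via products and commutators, to both $\partial_{t_l}$ and $it_l$ for every $l$; everything else is either coordinate setup or a standard Schwartz-space characterization.
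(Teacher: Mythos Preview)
The paper does not give its own proof of this statement; it is quoted directly from Corwin--Greenleaf and used as a black box. So there is nothing to compare your argument against in the paper itself.

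Your overall strategy is the standard one and is essentially correct, but there is a genuine confusion in Step~2 that you should fix. You write that among the transverse directions, some give $\partial_{t_l}$ and ``the remaining transverse directions \dots\ yield operators whose leading term is multiplication by $it_l$''. This is not right: \emph{all} $k$ transverse basis vectors $X_1,\dots,X_k$ give first-order differential operators with leading term $\partial_{t_l}$, because they parametrize the base $G/H$. The multiplication operators come from the basis vectors $X_{k+1},\dots,X_n$ lying \emph{in} $\mathfrak{h}$: for $Y\in\mathfrak{h}$ one has $\odif{\pi}(Y)=iq_Y(t)$ with $q_Y(t)=\xi\bigl(\mathrm{Ad}(\exp(t_kX_k)^{-1}\cdots\exp(t_1X_1)^{-1})Y\bigr)$, a genuine polynomial in $t$.

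The substantive point you are glossing over is why the polynomials $\{q_Y:Y\in\mathfrak{h}\}$, together with their derivatives under the $\partial_{t_l}$, suffice to recover multiplication by each coordinate $t_l$. This is exactly where the \emph{polarizing} hypothesis enters (and fails without it: for the regular representation of $\R$ on $L^2(\R)$, which corresponds to a non-polarizing $\mathfrak{h}$, smooth vectors are $H^\infty(\R)\supsetneq\mathcal{S}(\R)$). Since $\mathfrak{h}$ is maximal isotropic for $B_\xi$, for each $l\le k$ there exists $Y_l\in\mathfrak{h}$ with $\xi([X_l,Y_l])\neq 0$; unwinding the BCH expansion shows that $q_{Y_l}$ then has a nonzero linear term in $t_l$ (modulo terms involving only $t_1,\dots,t_{l-1}$), and an induction on $l$ recovers all the $t_l$. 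Once you have both $\partial_{t_l}$ and $t_l$ in the algebra generated by $\odif{\pi}(\mathfrak{g})$, your Step~3 goes through unchanged.
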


			\paragraph{Graded Lie-algebra}
			We say that $\mathfrak{g}$ is $\nu$-graded if $\mathfrak{g}=\bigoplus_{k\in \Z_+^\nu}\mathfrak{g}_{k}$ with $\mathfrak{g}_0=0$, and 
			$[\mathfrak{g}_{k},\mathfrak{g}_{l}]\subseteq \mathfrak{g}_{k+l}$ for all $k,l\in \Z_+^\nu$.
			Such an algebra admits an $\R_+^\nu$-dilations $\alpha_\lambda:\mathfrak{g}\to \mathfrak{g}$, see \eqref{eqn:dilations_basic_V}.

			\paragraph{Decomposition of the spectrum:}
				We will need the following result by I. Beltita, D. Beltita, and J. Ludwig \cite[Theorem 4.11 and Proposition 4.5, and definition 2.9]{FourierTransformNilpotentGroups} on the structure of the space $\mathfrak{g}^*/G$.
				\begin{theorem}\label{thm:decomposition_structure_Calgebra}
					There exists a finite sequence of open subsets $\emptyset=V_0\subseteq V_1\subseteq \cdots\subseteq V_n=\mathfrak{g}^*$ for some $n$ such that
					\begin{enumerate}
						\item For any $i\in \bb{0,n}$, $V_i$ is closed under the co-adjoint action, and closed under the canonical vector space $\Rpt$-dilation.
						\item For all $i\in \bb{0,n-1}$, the quotient $(V_{i+1}\backslash V_{i})/G$ is a Hausdorff locally compact space (in fact a semi-algebraic space).
							Furthermore, $V_{n}\backslash V_{n-1}=[\mathfrak{g},\mathfrak{g}]^{\perp}$. Note that the co-adjoint action on $[\mathfrak{g},\mathfrak{g}]^{\perp}$ is trivial.
						\item If $I_i$ denotes the closed two-sided ideal of $C^*G$ whose spectrum is $V_i$, then one has 
							\begin{equation*}\begin{aligned}
								0=I_0\subseteq I_1\cdots\subseteq I_{n-1}\subseteq I_n=C^*G,
							\end{aligned}\end{equation*}
						and 
							\begin{equation*}\begin{aligned}
									&I_{i+1}/I_i\simeq C_0((V_{i+1}\backslash V_i)/G)\otimes \cK(l^2(\mathbb{N})),&& \forall i\in \bb{0,n-2}\\
									&I_{n}/I_{n-1}\simeq C_0(V_n\backslash V_{n-1})		&&
							\end{aligned}\end{equation*}
						\item\label{thm:decomposition_structure_Calgebra:4} If $\mathfrak{g}$ is $\nu$-graded by $\Z_+^\nu$, then one can suppose that each $V_i$ is invariant by the dilations $\alpha_\lambda$ for $\lambda\in (\Rpt)^\nu$.
					\end{enumerate}
				\end{theorem}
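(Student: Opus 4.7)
The plan is to obtain items (1)--(3) directly by citing the construction of I. Beltita, D. Beltita, and J. Ludwig from \cite{FourierTransformNilpotentGroups}, and then to derive item (4) as an equivariance refinement. Recall that their construction proceeds from a Jordan--Hölder sequence $\mathfrak{g}=\mathfrak{g}^{(0)}\supset \mathfrak{g}^{(1)}\supset\cdots\supset \mathfrak{g}^{(n)}=0$ of ideals with one-dimensional quotients. Associated to such a sequence are the jump-index set of each $\xi\in\mathfrak{g}^*$ (measured by the ranks of the skew forms $B_{\xi}$ restricted to the pieces $\mathfrak{g}^{(j)}$) and certain Pfaffian-type polynomials. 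The $V_i$ are then defined as the loci where these invariants take prescribed values, and one checks that each $V_i$ is open, co-adjoint invariant, and stable under the scalar dilation on $\mathfrak{g}^*$, while each difference $V_{i+1}\setminus V_i$ is a smooth semi-algebraic variety on which the coadjoint orbit space is Hausdorff. The subquotient identifications in (3) then follow from Brown's theorem \MyCref{thm:Brown} together with the fact that on each stratum all orbits have the same dimension, which produces a continuous field of irreducible representations and hence $I_{i+1}/I_i\simeq C_0((V_{i+1}\setminus V_i)/G)\otimes \cK(l^2(\N))$; the top stratum $[\mathfrak{g},\mathfrak{g}]^\perp$ consists of characters of $G$, giving the commutative piece $C_0(V_n\setminus V_{n-1})$.

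For (4), the key observation is that when $\mathfrak{g}$ is $\nu$-graded, each dilation $\alpha_\lambda$ with $\lambda\in(\Rpt)^\nu$ is a Lie algebra automorphism, since the bracket is compatible with the grading. Such automorphisms preserve every intrinsic invariant used in the Beltita--Beltita--Ludwig stratification---dimensions of coadjoint orbits, ranks of the restricted forms $B_\xi$, the vanishing sets of Pfaffians---provided only that the Jordan--Hölder sequence used to define them is itself dilation-invariant. The plan is therefore to choose the Jordan--Hölder sequence to refine the filtration by ideals $\mathfrak{g}_{\succeq k}:=\bigoplus_{l\succeq k}\mathfrak{g}_l$ (ordered by any total order extending $\preceq$), by picking an ordered basis of each graded piece $\mathfrak{g}_k$. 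Since each $\mathfrak{g}_k$ is an eigenspace for all $\alpha_\lambda$, the resulting flag of ideals is pointwise stable under $\alpha_\lambda$, so the associated invariants are $\alpha_\lambda$-equivariant and the $V_i$ they cut out are automatically dilation-invariant.

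The bulk of the work is delegated to the cited theorems, so the only step requiring care is the compatibility of the Jordan--Hölder refinement with the grading. This is straightforward because the graded filtration is already a filtration by ideals, but one must verify that refining it inside each graded component does not break co-adjoint invariance of the resulting $V_i$; this is automatic since co-adjoint action and $\alpha_\lambda$ both preserve the graded filtration, and the Pfaffian invariants depend only on this data. No new analytic input is needed beyond \cite{FourierTransformNilpotentGroups}, and the proof reduces to bookkeeping of equivariance.
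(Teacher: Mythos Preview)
Your proposal is correct and matches the paper's approach exactly: the paper cites \cite[Theorem 4.11, Proposition 4.5, Definition 2.9]{FourierTransformNilpotentGroups} for items (1)--(3) and remarks that (4) ``follows easily from the construction of the sets $V_i$ \ldots\ (one has to choose a Jordan--H\"older basis which consist of pure elements),'' which is precisely your argument. One minor imprecision: the subspaces $\mathfrak{g}_{\succeq k}$ (with $\succeq$ the partial order) are ideals but do not form a chain, so what you really want is a total order on $\Z_+^\nu$ compatible with \emph{addition} (e.g.\ lexicographic), under which the tails $\bigoplus_{l\geq k}\mathfrak{g}_l$ are both nested and ideals; this is what ``Jordan--H\"older basis of pure elements'' amounts to.
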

				\MyCref{thm:decomposition_structure_Calgebra}[4] isn't explicitly stated in \cite{FourierTransformNilpotentGroups}, but it follows easily from the construction of the sets $V_i$, see \cite[Section 4.1 and Section 4.2]{FourierTransformNilpotentGroups} (one has to choose a Jordan-Holder basis which consist of pure elements).		
			\paragraph{Direct sum:}
				For $i\in \{1,2\}$, if $G_i$ are simply connected nilpotent Lie groups, $\mathfrak{g}_{i}$ their Lie algebras, $\xi_i\in \mathfrak{g}^*_i$, and $\mathfrak{h}_i\subseteq \mathfrak{g}_i$ are polarizing Lie subalgebras, then 
				$\mathfrak{h}_1\oplus \mathfrak{h_2}\subseteq \mathfrak{g}_1\oplus \mathfrak{g}_2$ is a polarizing Lie subalgebra for $\xi_1\oplus \xi_2$.
				Furthermore, 
					\begin{equation}\label{eqn:direct_sum}\begin{aligned}
						\Xi_{(G_1\times G_2)/(H_1\times H_2),\xi_1\oplus \xi_2}\simeq \Xi_{G_1/H_1,\xi_1}\otimes \Xi_{G_2/H_2,\xi_2}.
					\end{aligned}\end{equation}

			\paragraph{Some auxiliary results in graded Lie algebras:}	
			Motivated by \eqref{eqn:weakly_commuting_lie_algebra}, we say that a $\nu$-graded Lie algebra $\mathfrak{g}$ is weakly-commutative if $\mathfrak{g}_k=0$ whenever $|\{i\in \bb{1,\nu}:k_i\neq 0\}|>1$.
			We define $\mathfrak{g}^{\weak{i}}=\bigoplus_{j\in \Z_+} \mathfrak{g}^{(\overbrace{0,\cdots,0}^{i-1},j,\overbrace{0\cdots,0}^{\nu-i})}$.
			So, we have
			\begin{equation}\label{eqn:weakly_commuting_lie_algebra_2}\begin{aligned}
				\mathfrak{g}=\bigoplus_{i=1}^{\nu}\mathfrak{g}^{\weak{i}},\quad \forall x\in M.
			\end{aligned}\end{equation}
			Furthermore, $[\mathfrak{g}^{\weak{i}},\mathfrak{g}^{\weak{j}}]=0$ for all $i,j\in \bb{1,\nu}$ with $i\neq j$.
			
			The set $\{\xi\in \mathfrak{g}^*:\exists i\in \bb{1,\nu},\ \xi_{|\mathfrak{g}^{\weak{i}}}=0\}$ is closed under the co-adjoint action.
			We say that an irreducible unitary representation is singular if it corresponds by \eqref{eqn:Orbit_method_map} to a co-adjoint orbit which is contained in the above set.
			Otherwise, we say that it is non-singular.

			We denote by $C^*_{\mathrm{nonsing}}G$ the closed ideal of $C^*G$ whose spectrum is the set of non-singular irreducible unitary representations.
			We denote by $\Hat{G}_{\mathrm{nonsing}}$ the equivalence classes of non-singular irreducible unitary representations of $G$, i.e., the spectrum of $C^*_{\mathrm{nonsing}}G$.
			\begin{theorem}\label{thm:Corssed-Product_by_action}
				The crossed product $C^*_{\mathrm{nonsing}}G\rtimes (\Rpt)^\nu$ is a $C^*$-algebra of Type I whose spectrum is homeomorphic $\Hat{G}_{\mathrm{nonsing}}/(\Rpt)^\nu$.
			\end{theorem}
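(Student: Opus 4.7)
The strategy is to exploit the $(\Rpt)^\nu$-invariant stratification of $\mathfrak{g}^*$ provided by \MyCref{thm:decomposition_structure_Calgebra}[4] and reduce the statement to a crossed-product computation on each stratum. I would begin by verifying that the induced $(\Rpt)^\nu$-action on $\hat{G}_{\mathrm{nonsing}}$ (identified with the non-singular part of $\mathfrak{g}^*/G$ via Brown's theorem \Cref{thm:Brown}) is free. The weak-commutativity decomposition \eqref{eqn:weakly_commuting_lie_algebra_2} forces $\alpha_\lambda$ to act on each block $\mathfrak{g}^{\weak{i}}$ only through $\lambda_i$, scaling the summand of degree $j$ in position $i$ by $\lambda_i^j$; combined with the requirement that non-singular $\xi$ has $\xi|_{\mathfrak{g}^{\weak{i}}}\neq 0$ for every $i$, a fixed-point calculation shows that if $\alpha_\lambda$ preserves a non-singular co-adjoint orbit then $\lambda = \underline{1}$.

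Second, I would intersect the filtration $I_0 \subseteq \cdots \subseteq I_n$ from \Cref{thm:decomposition_structure_Calgebra} with $C^*_{\mathrm{nonsing}}G$ to obtain an ascending chain of $(\Rpt)^\nu$-invariant two-sided ideals $J_i \subseteq C^*_{\mathrm{nonsing}}G$ whose subquotients are of the form $J_{i+1}/J_i \simeq C_0(W_i) \otimes \cK$ (with the top stratum being simply $C_0(W_n)$), where $W_i$ is the open subset of $(V_{i+1}\backslash V_i)/G$ cut out by non-singularity. Exactness of full crossed products transfers this filtration to $C^*_{\mathrm{nonsing}}G \rtimes (\Rpt)^\nu$ with subquotients $(C_0(W_i) \otimes \cK) \rtimes (\Rpt)^\nu$, and the task reduces to analyzing each such subquotient.

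The main technical step is to identify each subquotient as a Type I algebra with spectrum $W_i/(\Rpt)^\nu$. Freeness of the action on $W_i$ together with the explicit dilation structure will yield properness, after which standard Morita-equivalence results for crossed products by free proper actions on continuous-trace algebras give
\begin{equation*}
(C_0(W_i) \otimes \cK) \rtimes (\Rpt)^\nu \;\sim_M\; C_0\bigl(W_i/(\Rpt)^\nu\bigr) \otimes \cK.
\end{equation*}
Since $(\Rpt)^\nu$ is connected and abelian, the induced fiberwise action on $\cK$ is spatially implementable, so no Dixmier-Douady obstruction arises and the right-hand side is Type I with spectrum $W_i/(\Rpt)^\nu$. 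Iterating \Cref{prop:exat_sequence_3_TypeI} along the filtration then yields that $C^*_{\mathrm{nonsing}}G \rtimes (\Rpt)^\nu$ is itself Type I.

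Assembling the stratum-wise spectra produces a bijection onto $\hat{G}_{\mathrm{nonsing}}/(\Rpt)^\nu$. The main obstacle I expect is confirming that this bijection is a homeomorphism for the Fell topology across strata: one must argue that degenerating co-adjoint orbits and their dilates remain compatible in the quotient topology, which I plan to handle by pulling the convergence back to $\mathfrak{g}^*$ and appealing to Brown's theorem together with the semi-algebraic structure of the strata from \Cref{thm:decomposition_structure_Calgebra}. A secondary but delicate point is verifying properness of the $(\Rpt)^\nu$-action on each $W_i$ — freeness is easy, but properness requires the semi-algebraic orbit parametrizations to be used in earnest to rule out sequences of orbits drifting to infinity under dilation while staying in a compact region of the quotient.
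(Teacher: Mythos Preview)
Your proposal is correct and follows essentially the same route as the paper: the paper's proof is a two-line sketch that invokes \Cref{thm:decomposition_structure_Calgebra}, restricts each stratum to the non-singular locus, and says one must check that the $(\Rpt)^\nu$-action on $\Hat{G}_{\mathrm{nonsing}}\cap\bigl((V_{i+1}\setminus V_i)/G\bigr)$ is free and proper, leaving the details to the reader. Your outline supplies precisely those details---freeness via the block decomposition \eqref{eqn:weakly_commuting_lie_algebra_2}, exactness of crossed products along the filtration, Morita equivalence on each stratum, and the gluing of the spectra---so you have in fact written more than the paper does.
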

			\begin{proof}
				This follows from \MyCref{thm:decomposition_structure_Calgebra}. 
				One has to check that the action of $(\Rpt)^\nu$ on the open subset $\Hat{G}_{\mathrm{nonsing}}\cap \left((V_{i+1}\backslash V_{i})/G\right)$ of the locally compact space $ (V_{i+1}\backslash V_{i})/G$ is free and proper.
				The details are left to the reader.
			\end{proof}
			\begin{theorem}\label{thm:pi_diff_appendix_decompo_vector_fields}
				If $\pi$ is a non-singular representation of $G$, and $\xi\in C^\infty(\pi)$ is a smooth vector, then for any $i\in \bb{1,\nu}$, there exists some $X_1,\cdots,X_n\in \mathfrak{g}^{\weak{i}}$ and $\xi_1,\cdots,\xi_n\in C^\infty(\pi)$ 
				such that
				$\xi=\pi(X_1)\xi_1+\cdots +\pi(X_n)\xi_n$, where $\pi(X_i)$ is defined using the differential of $\pi$.
			\end{theorem}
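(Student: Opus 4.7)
The plan is to reduce the claim to finding a single element $X_0 \in \mathfrak{g}^{\weak{i}}$ whose image under $\pi$ is a non-zero scalar operator.

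First I would exploit the weakly-commutative structure. By \eqref{eqn:weakly_commuting_lie_algebra_2}, $\mathfrak{g} = \bigoplus_{j=1}^\nu \mathfrak{g}^{\weak{j}}$ with $[\mathfrak{g}^{\weak{j}}, \mathfrak{g}^{\weak{k}}] = 0$ for $j \neq k$, so $G$ is the direct product $G_1 \times \cdots \times G_\nu$ where $G_j$ integrates $\mathfrak{g}^{\weak{j}}$. A non-singular irreducible representation $\pi$ corresponds via \eqref{eqn:Orbit_method_map} to a coadjoint orbit with a representative $\xi$ whose restriction $\xi_j := \xi|_{\mathfrak{g}^{\weak{j}}}$ is non-zero for every $j$. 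By \eqref{eqn:direct_sum}, $\pi$ is unitarily equivalent to $\pi_1 \otimes \cdots \otimes \pi_\nu$, with each $\pi_j$ a non-trivial irreducible unitary representation of $G_j$.

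Next, suppose I can find $X_0 \in \mathfrak{g}^{\weak{i}}$ such that $\pi_i(X_0) = c \cdot \mathrm{Id}_{L^2(\pi_i)}$ for some $c \neq 0$. Since $X_0$ acts trivially on the other tensor factors, the operator $\pi(X_0) = c \cdot \mathrm{Id}_{L^2(\pi)}$ is likewise a non-zero scalar. Then for any $\xi \in C^\infty(\pi)$ one has $c^{-1}\xi \in C^\infty(\pi)$ and $\xi = \pi(X_0)(c^{-1}\xi)$, which proves the theorem with $n = 1$.

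The core step is therefore the following lemma, to be proved by induction on $\dim(\mathfrak{g}^{\weak{i}})$: every non-trivial irreducible unitary representation $\pi_i$ of a simply-connected nilpotent Lie group admits an element $X_0$ in its Lie algebra acting as a non-zero scalar. The base case $\dim = 1$ is immediate. For the inductive step, let $Z$ denote the center of $\mathfrak{g}^{\weak{i}}$; by Schur's lemma $\pi_i|_Z$ is a character $\chi_Z$. If $\chi_Z \neq 0$, any $X_0 \in Z$ with $\chi_Z(X_0) \neq 0$ satisfies $\pi_i(X_0) = i\chi_Z(X_0) \cdot \mathrm{Id}$ and we are done. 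If $\chi_Z = 0$, then $Z \subseteq \ker\, d\pi_i$, so $\pi_i$ factors through the simply-connected nilpotent Lie group integrating $\mathfrak{g}^{\weak{i}}/Z$, which has strictly smaller dimension since $Z \neq 0$ for a non-zero nilpotent algebra. The factored representation remains non-trivial (otherwise $\pi_i$ itself would be trivial), so the inductive hypothesis yields an element $\bar{X}_0 \in \mathfrak{g}^{\weak{i}}/Z$ acting as a non-zero scalar; any lift $X_0 \in \mathfrak{g}^{\weak{i}}$ works.

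The only potential obstacle is bookkeeping: one must verify that the iterated-center descent stays inside $\mathfrak{g}^{\weak{i}}$ (automatic, since $Z$ is a subspace of $\mathfrak{g}^{\weak{i}}$) and that the descended representation is still non-trivial, which follows from non-triviality of $\pi_i$ at every stage. The argument is essentially a standard application of Schur's lemma combined with the fact that the center of a non-zero nilpotent Lie algebra is non-zero.
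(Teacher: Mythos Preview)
Your argument is correct and takes a genuinely different route from the paper's own proof. Both proofs begin with the same reduction: since $G$ is the direct product of the $G_j$'s and $\pi = \pi_1 \otimes \cdots \otimes \pi_\nu$, it suffices to show that for a single simply-connected nilpotent group with a non-trivial irreducible unitary representation, every smooth vector lies in the span of $\pi(X_j)\xi_j$. From there the paths diverge. The paper invokes Dixmier--Malliavin to write $\xi = \pi(f)\eta$, uses that the trivial representation is not weakly contained in $\pi$ to find $g \in C^\infty_c(G)$ with $\int g = 1$ and $\|\pi(g)\| < 1$, inverts $\mathrm{Id} - \pi(g)$ by Neumann series, and finally writes $f - f \ast g$ (which has integral zero) as a sum of derivatives. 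Your approach instead observes that the center of a nilpotent Lie algebra is non-zero and acts by a scalar (Schur), and descends through successive quotients by the center until that scalar is non-zero; this produces a single $X_0$ with $\pi(X_0) = c \cdot \mathrm{Id}$, $c \neq 0$, so the statement holds with $n = 1$. Your argument is more elementary --- it avoids Dixmier--Malliavin and weak-containment facts --- and yields the sharper conclusion $n=1$. The paper's argument, on the other hand, generalizes more readily to settings where one cannot rely on a non-trivial central character (e.g.\ non-nilpotent groups), since it only uses that the trivial representation is not weakly contained.
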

			\begin{proof}
				By writing $G$ as a product of the subgroups that integrate $\mathfrak{g}^{\weak{i}}$, the statement becomes the following: 
				If $G$ is a simply connected nilpotent Lie group, $\pi$ is a non-trivial irreducible unitary representation, then for some $X_1,\cdots,X_n\in \mathfrak{g}$ and $\xi_1,\cdots,\xi_n\in C^\infty(\pi)$, one has 
				$\xi=\pi(X_1)\xi_1+\cdots +\pi(X_n)\xi_n$.
				This is elementary. By Dixmier-Malliavin theorem, we can suppose that $\xi=\pi(f)\eta$ for some $f\in C^\infty_c(G)$, $\eta\in C^\infty(\pi)$.
				The fact that $\pi$ is non-trivial implies that the trivial representation isn't weakly contained in $\pi$.
				So, there exists $g\in C^\infty_c(G)$ such that $\int g=1$ but $\norm{\pi(g)}_{\cL(L^2(\pi))}<1$.
				Hence, $\Id-\pi(g):L^2(\pi)\to L^2(\pi)$ is invertible. So, $\eta=\eta'-\pi(g)\eta'$ for some $\eta'\in L^2(\pi)$.
				Hence, $\xi=\pi(f-f\ast g)\eta'$. Since $\int f-f\ast g=0$, we can write it as a sum of functions of the form $X_i\ast f_i$, where $X_i\ast f_i$ is the action of the right-invariant vector field associated to $X_i$ on $f_i$, see \cite[Proposition 2.1]{MohsenMaxHypo}.
				So, $\xi$ is a sum of elements of the form $\pi(X_i)\pi(f_i)\eta'$.
			\end{proof}
			
			\begin{theorem}[Riemann-Lebesgue lemma]\label{lem:Riemann-Lebesgue}
				If $f\in C^\infty_c(G)$ and $\pi$ is a non-singular irreducible unitary representation of $G$, then  for all $i\in \bb{1,\nu}$ and $n\in \N$, there exists $C>0$ such that 
					\begin{equation*}\begin{aligned}
						\norm{\pi(\alpha_\lambda(f))}_{\cL(L^2(\pi))}\leq \frac{C}{\lambda_i^n}		,\quad\forall \lambda\in (\Rpt)^\nu.
					\end{aligned}\end{equation*}
			\end{theorem}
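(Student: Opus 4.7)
The strategy is to reduce, via Kirillov's orbit method, to a Riemann--Lebesgue-type decay estimate on the co-adjoint orbit space, and then to use integration by parts along a direction singled out by non-singularity.

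First, since $\alpha_\lambda:G\to G$ is a Lie group automorphism whose half-density action is $L^1$-invariant (see \eqref{eqn:L1norm_invariance}), the substitution $y=\alpha_\lambda(x)$ in the defining integral $\pi(\alpha_\lambda(f))=\int_G \alpha_\lambda(f)(y)\pi(y)\,dy$ yields the identity
\[
 \pi(\alpha_\lambda(f)) = (\pi\circ\alpha_\lambda)(f),
\]
where $(\pi\circ\alpha_\lambda)(Y):=\pi(\alpha_\lambda(Y))=\lambda^{k}\pi(Y)$ for $Y\in\mathfrak{g}_{k}$. The new representation is irreducible and unitary, and by \MyCref{thm:Kirillov_orbit_method} it corresponds to the co-adjoint orbit through $\alpha_\lambda^{*}\xi$, where $\xi$ is any representative of the orbit of $\pi$ and $(\alpha_\lambda^{*}\xi)(Y)=\lambda^{k}\xi(Y)$ for $Y\in\mathfrak{g}_{k}$. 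Hence $\|\pi(\alpha_\lambda(f))\|_{\cL(L^{2}(\pi))}=\|\pi_{\alpha_\lambda^{*}\xi}(f)\|$, where $\pi_\mu$ denotes the representation on the orbit through $\mu$.

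Second, by non-singularity of $\pi$ we may pick a pure-degree element $X\in\mathfrak{g}^{\weak{i}}$ of homogeneous degree $(0,\dots,m,\dots,0)$ with $m\geq 1$ in the $i$-th slot and $\xi(X)\neq 0$; then $(\alpha_\lambda^{*}\xi)(X)=\lambda_i^{m}\xi(X)$, which tends to infinity with $\lambda_i$. It therefore suffices to prove a uniform Riemann--Lebesgue bound
\[
 \|\pi_\mu(f)\|_{\cL(L^{2}(\pi_\mu))}\leq \frac{C(f,n)}{|\mu(X)|^{n}},\quad n\in\N,
\]
valid along the ray $\mu=\alpha_\lambda^{*}\xi$. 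To establish this, build a polarization $\mathfrak{h}\ni X$ for $\mu$ by starting from a Jordan--H\"older flag of $\mathfrak{g}$ beginning with $\R X$ (possible since $X$ is pure-degree and the flag can be chosen compatible with the grading) and applying Vergne's construction. Since the Vergne polarization depends only on the flag and on the signs of the pairings $B_\mu(\cdot,\cdot)$ restricted to flag subspaces, and these signs are invariant along the $\lambda$-ray $\{\alpha_\lambda^{*}\xi\}$, a single $\mathfrak{h}$ works uniformly.

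Third, realize $\pi_\mu$ on the Schr\"odinger model $L^{2}(G/H,\mu)$ and iterate the identity $\pi_\mu(X_R^{n}\ast f)=\pi_\mu(X)^{n}\pi_\mu(f)$. The key algebraic input is that $X\in\mathfrak{h}$ forces $\pi_\mu(X)-i\mu(X)\cdot\mathrm{Id}$ to be a differential operator of lower filtration degree, whose coefficients remain controlled uniformly in $\mu$ along the ray. An induction on the filtration therefore expresses $\pi_\mu(f)=(i\mu(X))^{-n}\pi_\mu(g_n)+(\text{lower order in }|\mu(X)|^{-1})$ for some $g_n\in C^{\infty}_c(G)$ depending on $f$ and $n$; combining with the trivial bound $\|\pi_\mu(g)\|\leq \|g\|_{L^{1}(G)}$ of \MyCref{prop:automatic_continuity} gives the estimate and hence the theorem.

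The main obstacle is precisely Step 3: because $X\in\mathfrak{h}$ need not be central in $\mathfrak{g}$, the operator $\pi_\mu(X)$ is not a literal scalar but a first-order differential operator, and one must carefully track how the lower-order correction terms propagate through successive integrations by parts. A cleaner alternative that sidesteps this bookkeeping is to invoke the Plancherel formula for nilpotent Lie groups (Kirillov--Puk\'anszky), which gives $\|\pi_\mu(f)\|_{\mathrm{HS}}^{2}=\int_{\mathcal{O}_\mu}|\widehat{f\circ\exp}|^{2}\,d\beta_\mu$; the integrand is Schwartz on $\mathfrak{g}^{*}$, and rapid decay in the $X$-direction together with the escape of $\mathcal{O}_{\alpha_\lambda^{*}\xi}$ from compact sets of $\mathfrak{g}^{*}$ (by $(\alpha_\lambda^{*}\xi)(X)\to\infty$) yields the desired polynomial decay of $\|\pi_{\alpha_\lambda^{*}\xi}(f)\|_{\mathrm{HS}}$, which dominates the operator norm.
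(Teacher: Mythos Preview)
Your setup in Steps 1--2 is correct: the identity $\pi(\alpha_\lambda(f))=(\pi\circ\alpha_\lambda)(f)$ reduces the problem to bounding $\|\pi_{\alpha_\lambda^*\xi}(f)\|$, and non-singularity does give a homogeneous $X\in\mathfrak{g}^{\weak{i}}$ with $\xi(X)\neq 0$. The difficulties are all in Step~3, and neither of your two routes closes the gap.

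For the polarization route: even when $X\in\mathfrak{h}$, the operator $\pi_\mu(X)$ is in general multiplication by the polynomial $s\mapsto i\mu(\mathrm{Ad}(\sigma(s)^{-1})X)$ in the Schr\"odinger model, whose constant term is $i\mu(X)$ but whose higher-order terms are unbounded and do \emph{not} become small as $|\mu(X)|\to\infty$. (Take $G$ Heisenberg, $\mathfrak{h}=\mathrm{span}(Y,Z)$, $X=Y$: then $\pi_\mu(Y)$ is the position operator, not close to any scalar.) So there is no ``lower filtration degree'' remainder that an induction can absorb. Moreover, a single Vergne polarization cannot be chosen uniformly for the full $\nu$-parameter family $\{\alpha_\lambda^*\xi:\lambda\in(\Rpt)^\nu\}$: the flag data controlling the construction can jump as the other $\lambda_j$ vary.

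For the Plancherel route: the Hilbert--Schmidt identity you quote is wrong. The Kirillov character formula gives $\|\pi_\mu(f)\|_{\mathrm{HS}}^2=\int_{\mathcal{O}_\mu}\widehat{(f^*\ast f)\circ\exp}\,d\beta_\mu$, and since $\exp$ is not a group homomorphism this is \emph{not} $\int_{\mathcal{O}_\mu}|\widehat{f\circ\exp}|^2\,d\beta_\mu$. Even with the correct formula, you would still need (a) that the \emph{whole orbit} $\mathcal{O}_{\alpha_\lambda^*\xi}$ escapes every compact set of $\mathfrak{g}^*$ --- it is not enough that the single point $\alpha_\lambda^*\xi$ does, since co-adjoint orbits are unbounded --- and (b) control of the $\mu$-dependence of the Liouville measure $d\beta_\mu$. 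Neither is addressed.

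The argument the paper has in mind (referenced to \cite{MohsenAbstractMaxHypo}) avoids both the orbit geometry and the Schr\"odinger model. One inducts on $\dim\mathfrak{g}^{\weak{i}}$. The center $\mathfrak{z}^{(i)}$ of $\mathfrak{g}^{\weak{i}}$ is graded and, by weak commutativity, central in all of $\mathfrak{g}$. If $\pi$ is non-trivial on $\mathfrak{z}^{(i)}$, pick a homogeneous $Z\in\mathfrak{z}^{(i)}$ of degree $m\ge 1$ in the $i$-th slot with $\pi(Z)=ic\neq 0$; then $(\pi\circ\alpha_\lambda)(Z)=i\lambda_i^m c$ is an honest scalar, so
\[
\|(\pi\circ\alpha_\lambda)(f)\|=(\lambda_i^m|c|)^{-1}\|(\pi\circ\alpha_\lambda)(Z\ast f)\|\le(\lambda_i^m|c|)^{-1}\|Z\ast f\|_{L^1},
\]
and iteration gives any power of $\lambda_i^{-1}$. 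If instead $\pi$ is trivial on $\mathfrak{z}^{(i)}$, then $\pi$ factors through $G/\exp(\mathfrak{z}^{(i)})$, which is again $\nu$-graded and weakly commutative (since $\mathfrak{z}^{(i)}$ is graded), $f$ pushes forward to a compactly supported smooth function there, and the induction hypothesis applies. The point you were missing is that one should choose the element \emph{central} (possibly after quotienting), not merely in a polarization.
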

			\begin{proof}
				The proof is a straightforward adaptation of \cite[Lemma 2.11]{MohsenAbstractMaxHypo}.
			\end{proof}

\chapter{Abstract theorem of maximal hypoellipticity}\label{thm:asbtract}
	\begin{prop}[{\cite{MohsenAbstractMaxHypo}}]\label{prop:simple_arg} Let $A$ be a unital $C^*$-algebra. 
			  An element $a\in A$ is left invertible if and only if $\pi(a):L^2(\pi)\to L^2(\pi)$ is injective for every irreducible representation $\pi$ of $A$.
	\end{prop}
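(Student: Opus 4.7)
The plan is to prove this via the classical equivalence between left invertibility of $a$ and invertibility of the positive element $a^*a$, and then use the duality between pure states and irreducible representations given by the GNS construction. The forward direction is immediate: if $ba=1$, then $\pi(b)\pi(a)=\mathrm{Id}$ for any representation $\pi$ (irreducible or not), so $\pi(a)$ is injective. All the work is in the reverse direction.

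For the reverse direction, I will argue by contraposition. Suppose $a$ is not left invertible. Since we are in a $C^*$-algebra, $a$ is left invertible if and only if $a^*a$ is invertible (a left inverse $b$ of $a$ gives $a^*(b^*b)a = 1$ as a left inverse of $a^*a$, which since $a^*a$ is self-adjoint means $a^*a$ is invertible; conversely, if $c(a^*a)=1$, then $ca^*$ is a left inverse of $a$). So $a^*a$ is not invertible, and since $a^*a \geq 0$, this means $0 \in \sigma(a^*a)$, i.e.\ $\inf \sigma(a^*a) = 0$.

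The next step is to produce a pure state that vanishes on $a^*a$. For self-adjoint elements $b$ in a unital $C^*$-algebra, one has the formula $\inf\sigma(b) = \inf\{\phi(b) : \phi \text{ a state on } A\}$, and in fact the state-value set equals the convex hull of $\sigma(b)$. Applied to $b = a^*a$, the weak-$*$ continuous affine function $\phi \mapsto \phi(a^*a)$ on the compact convex state space attains its minimum $0$; by the Krein-Milman theorem the minimum is achieved at an extreme point of the state space, i.e.\ at a pure state $\psi$, so $\psi(a^*a) = 0$.

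Finally, apply the GNS construction to $\psi$: since $\psi$ is pure, the resulting representation $\pi_\psi$ is irreducible with a cyclic unit vector $\xi_\psi$ satisfying $\psi(x) = \langle \pi_\psi(x)\xi_\psi,\xi_\psi\rangle$. Then $\|\pi_\psi(a)\xi_\psi\|^2 = \langle \pi_\psi(a^*a)\xi_\psi,\xi_\psi\rangle = \psi(a^*a) = 0$, so $\xi_\psi \in \ker \pi_\psi(a)$ is a nonzero vector, contradicting the hypothesis that $\pi(a)$ is injective for every irreducible $\pi$. The main conceptual step — and the only step that is not purely formal manipulation — is the combined use of the spectrum-state formula together with Krein-Milman to extract a \emph{pure} state annihilating $a^*a$ from the mere fact that $0 \in \sigma(a^*a)$; everything else is routine.
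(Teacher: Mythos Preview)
The paper does not supply a proof of this proposition; it is quoted from the companion paper \cite{MohsenAbstractMaxHypo}. So there is no argument here to compare yours against. Your proof is correct and is the standard one: reduce to showing $0\in\sigma(a^*a)$, produce a pure state $\psi$ with $\psi(a^*a)=0$ via Krein--Milman (or Bauer's maximum principle), and then the GNS vector $\xi_\psi$ lies in $\ker\pi_\psi(a)$ for the associated irreducible representation.

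One small remark: your parenthetical justification for ``$a$ left invertible $\Longleftrightarrow$ $a^*a$ invertible'' is slightly garbled. The identity $a^*b^*ba=(a^*b^*)(ba)=1\cdot 1=1$ is true but it does not exhibit an inverse (left or otherwise) of $a^*a$. The clean argument for that direction is: if $ba=1$, then for every state $\phi$ one has $1=|\phi(ba)|^2\le \phi(bb^*)\phi(a^*a)\le \|b\|^2\phi(a^*a)$ by Cauchy--Schwarz in the GNS representation, whence $a^*a\ge \|b\|^{-2}\cdot 1$ and so $a^*a$ is invertible. This does not affect the validity of the overall proof, since the equivalence is standard.
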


	\begin{dfn}[Kaplansky and Glimm] A $C^*$-algebra $A$ is called \begin{itemize}
			\item Liminal if for every irreducible representation  $\pi$ of $A$, $\cK(L^2(\pi))=\pi(A)$.
			\item Type I if for every irreducible representation $\pi$ of $A$, $\cK(L^2(\pi))\subseteq \pi(A)$.
		\end{itemize}
	\end{dfn}
	We refer the reader to \cite[Chapters 4 and 9]{Dixmier} for a detailed discussion of liminal and Type I $C^*$-algebras.
	The examples of liminal $C^*$-algebras that will be of interest to us are the $C^*$-algebras of simply connected nilpotent Lie groups, and of Type \RNum{1} $C^*$-algebras are the closure pseudo-differential operators of order $0$ in our calculus.
	\begin{prop}[{\cite[Proposition 4.3.5, Proposition 4.3.4, and the main theorem of Chapter 9]{Dixmier}}]\label{prop:exat_sequence_3_TypeI}
		If $I$ is a two-sided ideal of a $C^*$-algebra $A$, then $A$ is of Type \RNum{1} if and only if $I$ and $A/I$ are of Type \RNum{1}.
	\end{prop}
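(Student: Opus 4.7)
The plan is to prove both implications via the standard bijection between irreducible representations of $A$ and the disjoint union of irreducible representations of $I$ and of $A/I$. The key structural input is the following dichotomy: if $\pi$ is an irreducible representation of $A$, then either $\pi(I) = 0$, in which case $\pi$ descends to an irreducible representation of $A/I$ on the same Hilbert space $L^2(\pi)$, or $\pi(I) \neq 0$, in which case $\pi|_I$ is already irreducible on $L^2(\pi)$. The first case is immediate. The second requires that $\pi(I) L^2(\pi)$ be dense, which follows because its closure is an $\pi(A)$-invariant subspace and $\pi$ is irreducible, together with the observation that any $\pi|_I$-invariant subspace is automatically $\pi(A)$-invariant since $I$ is an ideal (so $\pi(a)\pi(i)\xi = \pi(ai)\xi$ with $ai \in I$).

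For the forward direction, assume $A$ is Type I. Given an irreducible representation $\rho$ of $A/I$, composition with the quotient map gives an irreducible representation $\pi$ of $A$ with $\pi(A) = \rho(A/I)$; Type I for $A$ yields $\cK(L^2(\rho)) \subseteq \pi(A) = \rho(A/I)$. Given an irreducible representation $\rho$ of $I$, I will invoke the standard fact that $\rho$ extends to an irreducible representation $\pi$ of $A$ on the same Hilbert space (the extension is constructed by $\pi(a)\rho(i)\xi := \rho(ai)\xi$, well-defined on the dense subspace $\rho(I)L^2(\rho) = L^2(\rho)$ and bounded by $\|a\|$). Type I for $A$ gives $\cK \subseteq \pi(A)$, and $\rho(I) = \pi|_I(I)$ is a nonzero closed two-sided ideal of $\pi(A)$. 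The main technical point here is to show $\cK \subseteq \rho(I)$: the intersection $\rho(I) \cap \cK$ is a closed two-sided ideal of the simple algebra $\cK$, hence $0$ or $\cK$; if it were $0$, then $\rho(I) \cdot \cK = 0$ since $\rho(I)$ is an ideal of $\pi(A) \supseteq \cK$, forcing $\rho(I) = 0$ (as $\cK$ acts nondegenerately), contradiction.

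For the reverse direction, assume $I$ and $A/I$ are Type I, and let $\pi$ be an irreducible representation of $A$. If $\pi(I) = 0$, then $\pi$ factors through an irreducible $\rho$ of $A/I$ and Type I for $A/I$ gives $\cK(L^2(\pi)) \subseteq \rho(A/I) = \pi(A)$. If $\pi(I) \neq 0$, then by the dichotomy above $\pi|_I$ is irreducible on $L^2(\pi)$, and Type I for $I$ yields $\cK(L^2(\pi)) \subseteq \pi|_I(I) \subseteq \pi(A)$. Either way $A$ is Type I.

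The only non-routine step is the unique extension of irreducible representations of an ideal to the ambient algebra, and the small ideal-theoretic argument at the end of the forward direction ensuring $\cK \subseteq \rho(I)$ rather than just $\cK \subseteq \pi(A)$. Everything else is a direct translation of the Type I condition along the correspondence $\widehat{A} = \widehat{I} \sqcup \widehat{A/I}$.
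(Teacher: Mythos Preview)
The paper does not supply a proof of this proposition; it is stated with a citation to Dixmier and used as a black box. Your argument is a correct direct proof using the paper's definition of Type~I (namely $\cK(L^2(\pi))\subseteq \pi(A)$ for every irreducible $\pi$), and the key steps --- the dichotomy for irreducible representations of $A$ relative to the ideal $I$, the unique extension of irreducible representations from $I$ to $A$, and the ideal-theoretic argument that $\cK\subseteq \pi(I)$ via simplicity of $\cK$ --- are all standard and correctly assembled. One small remark: in the forward direction for $I$, you should note that the extension $\pi$ of $\rho$ is itself irreducible (which you implicitly use); this holds because any closed $\pi(A)$-invariant subspace is $\pi(I)$-invariant, and $\pi|_I=\rho$ is irreducible.
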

	\begin{prop}\label{prop:liminal_mapped_to_compacts}
		Let $A$ be a $C^*$-algebra of Type I, $\pi$ an irreducible representation of $A$ such that the singleton $\{\pi\}$ is an open dense subset of $\hat{A}$.
		Then, for any $a\in A$, $\pi(a)\in \cK(L^2(\pi))$ if and only if $\pi'(a)=0$ for all $\pi'\in \hat{A}\backslash \{\pi\}$.
	\end{prop}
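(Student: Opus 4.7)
The plan is to exploit the closed two-sided ideal of $A$ associated to the open point $\{\pi\}$. Set
\[
I = \{a \in A : \pi'(a) = 0 \text{ for all } \pi' \in \hat A \setminus \{\pi\}\},
\]
which is a closed two-sided ideal whose spectrum, as a $C^*$-algebra, is $\{\pi\}$. As an ideal of a Type I $C^*$-algebra, $I$ is itself Type I by \Cref{prop:exat_sequence_3_TypeI}. Since $\hat I = \{\pi|_I\}$, the representation $\pi|_I$ is faithful (any non-trivial kernel would correspond to a proper non-empty closed subset of $\hat I$, which is impossible). Because $\{\pi\}$ is simultaneously open and closed in $\hat I$, the standard theory of Type I $C^*$-algebras (Dixmier \cite[Ch.~4]{Dixmier}) ensures that $\pi|_I$ is liminal, i.e., $\pi(I) = \cK(L^2(\pi))$, and in fact $\pi|_I$ realises a $*$-isomorphism $I \xrightarrow{\sim} \cK(L^2(\pi))$. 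This structural identification is the core of the proof.

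The easy direction is then immediate: if $\pi'(a) = 0$ for all $\pi' \neq \pi$, then by definition $a \in I$, and so $\pi(a) \in \pi(I) = \cK(L^2(\pi))$.

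For the converse, suppose $\pi(a) \in \cK(L^2(\pi))$. Using the surjection $\pi(I) = \cK(L^2(\pi))$, I would choose $b \in I$ with $\pi(b) = \pi(a)$ and set $c = a - b$, so that $\pi(c) = 0$. Now exploit the general fact that for any $c \in A$ the function $\sigma \mapsto \|\sigma(c)\|$ is lower semicontinuous on $\hat A$ with the Fell topology, so that the set $U = \{\sigma \in \hat A : \sigma(c) \neq 0\}$ is open. Since $\pi \notin U$ and $\{\pi\}$ is dense in $\hat A$ by hypothesis, $U$ must be empty. In particular $\pi'(c) = 0$ for every $\pi' \neq \pi$, and combined with $\pi'(b) = 0$ (since $b \in I$), this yields $\pi'(a) = \pi'(b) + \pi'(c) = 0$, as required.

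The main obstacle is really the opening structural step: identifying $\pi(I)$ with $\cK(L^2(\pi))$ via the isomorphism $\pi|_I$. This requires a careful appeal to the correspondence between open subsets of $\hat A$ and two-sided ideals, and to the fact that a Type I $C^*$-algebra whose spectrum is a single (necessarily Hausdorff) point is elementary. Once this is in hand, the rest is bookkeeping with density of $\{\pi\}$ and lower semicontinuity of $\sigma \mapsto \|\sigma(c)\|$, and I do not expect further technical difficulty.
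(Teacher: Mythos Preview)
Your proof is correct and uses the same ingredients as the paper's: faithfulness of $\pi$ (from density of $\{\pi\}$) together with the Type~I hypothesis to identify the relevant ideal with $\cK(L^2(\pi))$. The paper organizes it slightly differently by starting from $J = \{a \in A : \pi(a) \in \cK(L^2(\pi))\}$, showing $J \cong \cK(L^2(\pi))$ via the faithful $\pi$, and concluding $\hat J = \{\pi\}$, which yields both directions at once through the bijection between ideals and open subsets of $\hat A$; your subtraction argument with $c = a - b$ and lower semicontinuity is just an explicit unwinding of that same faithfulness.
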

	\begin{proof}
		Since $\{\pi\}$ is dense in $\widehat{A}$, $\pi:A\to \cL(L^2(\pi))$ is injective, see  \cite[Proposition 3.3.2]{Dixmier}.
		So, if we consider $A$ as a $C^*$-subalgebra of $\cL(L^2(\pi))$, then $J:=\{a\in A:\pi(a)\in \cK(L^2(\pi))\}$ is equal to $\cK(L^2(\pi))$.
		So the spectrum of $J$ is a singleton. At the same time since $J$ is a closed two-sided ideal of $A$, its spectrum is an open subset of $\hat{A}$ consisting of irreducible representations which don't vanish on $J$.
		Clearly $\pi$ doesn't vanish on $J$. So $\hat{J}=\{\pi\}$.
	\end{proof}
    \begin{dfn}\label{dfn:CstarCalculus}
        A $C^*$-calculus (with $\nu\geq 1$-parameters) is a family of $\C$-vector spaces $(\cA_{k})_{k\in \C^\nu}$ such that for each $k,l\in \C^\nu$, we have a product map $\cA_k\times \cA_l\to \cA_{k+l}$ which is bilinear and associative, i.e., $(ab)c=a(bc)$ if $a\in \cA_{k_1}$, $b\in \cA_{k_2}$, $c\in \cA_{k_3}$.
				There is a unit element $0\neq 1\in \cA_0$, i.e., 1a$=a1=a$ for all $a\in \cA_k$.
                We also have an involution map $a\in \cA_k\mapsto a^*\in \cA_{\bar{k}}$ which is anti-linear and satisfies $1^*=1$ and $(ab)^*=b^*a^*$ for all $a\in \cA_{k_1}$, $b\in \cA_{k_2}$.
                We also have a $C^*$-semi-norm $\norm{\cdot}_{\overline{\cA_0}}$ on $\cA_0$, i.e., a semi-norm such that $\norm{ab}_{\overline{\cA_0}}\leq \norm{a}_{\overline{\cA_0}}\norm{b}_{\overline{\cA_0}}$ and $\norm{a^*a}_{\overline{\cA_0}}=\norm{a}^2_{\overline{\cA_0}}$ for all $a,b\in \cA_0$.
                We denote by $\overline{\cA_0}$ the Hausdorff completion of $\cA_0$.  
    \end{dfn}
    We don't suppose that we can add elements of $\cA_k$ and $\cA_l$ if $k\neq l$.
    For our applications, there is no need to suppose that. 
    Also, the sum of classical pseudo-differential operators on a smooth manifold of order $k\in \C$ and $l\in \C$ is a classical pseudo-differential operator if and only if $k-l\in \Z$.
    \begin{dfn}\label{dfn:unitary_representation_abstract}
        A representation of a $C^*$-calculus $(\cA_k)_{k\in \C^\nu}$ consists of a non-zero $\C$-vector space $C^\infty(\pi)$ equipped with a positive definite sesquilinear form $\langle \cdot,\cdot\rangle:C^\infty(\pi)\times C^\infty(\pi)\to \C$, 
        and a family of linear maps $\pi_k:\cA_k\to \mathrm{End}(C^\infty(\pi))$ for every $k\in \C^\nu$ such that the following hold:
		\begin{enumerate}
			\item   If $k,l\in \C^\nu$, $a\in\cA_k$, $b\in \cA_l$, $\xi,\eta\in C^\infty(\pi)$, then
                    \begin{equation*}\begin{aligned}
                     \pi_0(1)=\mathrm{Id}_{\C^\infty(\pi)},\quad  \pi_{k}(a)\pi_{l}(b)=\pi_{k+l}(ab),\quad   \langle  \pi_k(a)\xi,\eta\rangle=\langle \xi,\pi_{\bar{k}}(a^*)\eta\rangle.
                    \end{aligned}\end{equation*}
                
            \item If $k\in \C^\nu_{\leq 0}$ and $a\in \cA_{k}$, then $\pi_{k}(a)$ extends to a bounded operator $L^2(\pi)\to L^2(\pi)$, where $L^2(\pi)$ is the Hilbert space completion of $C^\infty(\pi)$ with respect to the norm $\norm{\xi}_{L^2(\pi)}^2:=\langle \xi,\xi\rangle$.
                    We also suppose that if $a\in \cA_0$, then 
                        \begin{equation}\label{eqn:bound_norm_rep}\begin{aligned}
                            \norm{\pi_0(a)}\leq \norm{a}_{\overline{\cA_0}}.
                        \end{aligned}\end{equation}
                    So, $\pi_0$ extends to a representation of $\overline{\cA_0}$.
			\item The space $C^\infty(\pi)$ equipped with the topology generated by the family of semi-norms $\norm{\pi_k(a)\xi}_{L^2(\pi)}$ for $k\in \C^\nu$ and $a\in \cA_k$ is complete.
                    \end{enumerate}
            The representation is called irreducible if it is irreducible as a representation of the $C^*$-algebra $\overline{\cA_0}$, i.e., if $L\subseteq L^2(\pi)$ is a closed subspace which is invariant by $\pi_0(\cA_0)$, then $L=0$ or $L=L^2(\pi)$.
    \end{dfn}
            We denote by $C^{-\infty}(\pi)$ the space of continuous anti-linear functionals on $C^\infty(\pi)$. 
        The action of $\xi\in C^{-\infty}(\pi)$ on $\eta\in C^\infty(\pi)$ is denoted by $\langle \eta,\xi\rangle$.
        The space $C^{-\infty}(\pi)$ is equipped with the topology generated by the semi-norms $\xi\mapsto |\langle \eta,\xi\rangle|$ for every $\eta\in C^\infty(\pi)$.
		We have obvious continuous linear inclusions $C^\infty(\pi)\subseteq L^2(\pi)\subseteq C^{-\infty}(\pi)$.
		For each $k\in \C^\nu$, we extend $\pi_k$ to a linear map $\pi_k:\cA_k\to \cL(C^{-\infty}(\pi))$ by the formula 
            \begin{equation*}\begin{aligned}
                \langle \eta,\pi_k(a)\xi\rangle:=\langle \pi_{\bar{k}}(a^*)\eta,\xi \rangle,\quad \forall a\in \cA_k,\xi\in C^{-\infty}(\pi),\eta\in C^{\infty}(\pi).        
            \end{aligned}\end{equation*}

            There are various ways to define Sobolev spaces associated to the representation $\pi$.
            We will use here the most restrictive one. Our main theorem implies that they coincide with the least restrictive one, see \eqref{eqn:HkPi_alternate}.
		Let $H^k(\pi)$ be the set of $\xi\in C^{-\infty}(\pi)$ such that 
			there exists a sequence $(\xi_n)_{n\in \N}\subseteq C^\infty(\pi)$ such that $\xi_n\to \xi$ in $C^{-\infty}(\pi)$, and for any $l\in \C^\nu$ and $a \in\cA_l$ such that $\Re(l)\leq k$, one has $\pi_l(a)\xi\in L^2(\pi)$, and $\pi_l(a)\xi_n\to \pi_l(a)\xi$ in $L^2(\pi)$.
			We equip $H^k(\pi)$ with the topology generated by the family of semi-norms $\xi\mapsto \norm{\pi_l(a)\xi}_{L^2(\pi)}$ for all $l\in \C^\nu$ and $a\in \cA_l$ such that $\Re(l)\leq k$.
            Clearly, 
                \begin{equation*}\begin{aligned}
                    H^0(M)=L^2(\pi),\quad H^l(\pi)\subseteq H^{k}(\pi),\quad \text{if } k\leq l,
                \end{aligned}\end{equation*}
            and if $k\in \C^\nu$, $l\in \R^\nu$ and $a\in \cA_k$, then $\pi_k(a)(H^l(\pi))\subseteq H^{l-\Re(k)}(\pi)$, and the map $H^l(\pi)\xrightarrow{\pi_k(a)}  H^{l-\Re(k)}(\pi)$ is continuous.
             The spaces $C^\infty(\pi)$, $C^{-\infty}(\pi)$, $H^k(\pi)$ are called the space of smooth vectors, distribution vectors and Sobolev spaces associated to the representation $\pi$ respectively.
            \begin{prop}[{\cite{MohsenAbstractMaxHypo}}]One has
                \begin{equation}\label{eqn:Cinfinty_intersection}\begin{aligned}
                    C^\infty(\pi)=\bigcap_{k\in \R^\nu_+}H^k(\pi).
                \end{aligned}\end{equation}
            \end{prop}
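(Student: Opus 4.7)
One inclusion is immediate: if $\xi\in C^\infty(\pi)$, then the constant sequence $\xi_n=\xi$ witnesses $\xi\in H^k(\pi)$ for every $k\in\R^\nu_+$, since by \MyCref{dfn:unitary_representation_abstract} the vector $\pi_l(a)\xi\in C^\infty(\pi)\subseteq L^2(\pi)$ is well-defined for every $l\in\C^\nu$ and $a\in\cA_l$. The content of the proposition is the reverse inclusion
\begin{equation*}
\bigcap_{k\in\R^\nu_+}H^k(\pi)\subseteq C^\infty(\pi).
\end{equation*}

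Fix $\xi\in\bigcap_{k\in\R^\nu_+}H^k(\pi)$. My strategy is to produce a single sequence $(\eta_n)\subseteq C^\infty(\pi)$ such that $\eta_n\to\xi$ in $C^{-\infty}(\pi)$ and $\pi_l(a)\eta_n\to\pi_l(a)\xi$ in $L^2(\pi)$ for \emph{every} $l\in\C^\nu$ and $a\in\cA_l$. Granted such a sequence, it is Cauchy with respect to every one of the seminorms $\norm{\pi_l(a)\,\cdot}_{L^2(\pi)}$ that define the topology on $C^\infty(\pi)$; by the completeness assumption in \MyCref{dfn:unitary_representation_abstract}, $(\eta_n)$ converges to some $\eta\in C^\infty(\pi)$, and the separate convergence $\eta_n\to\xi$ in $C^{-\infty}(\pi)$ forces $\eta=\xi$, whence $\xi\in C^\infty(\pi)$.

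The construction of $(\eta_n)$ proceeds by diagonalization. Pick an increasing sequence $k^{(n)}\in\R^\nu_+$ with $k^{(n)}\to\infty$ componentwise. For each $n$, the assumption $\xi\in H^{k^{(n)}}(\pi)$ supplies a sequence $(\xi_m^{(n)})_{m\in\N}\subseteq C^\infty(\pi)$ with $\xi_m^{(n)}\to\xi$ in $C^{-\infty}(\pi)$ as $m\to\infty$, and $\pi_l(a)\xi_m^{(n)}\to\pi_l(a)\xi$ in $L^2(\pi)$ for every $(l,a)$ with $\Re(l)\preceq k^{(n)}$. I would then choose $m(n)$ sufficiently large, depending on $n$, and set $\eta_n:=\xi_{m(n)}^{(n)}$: for any fixed pair $(l,a)$, there exists $n_0$ with $\Re(l)\preceq k^{(n_0)}$, and for $n\geq n_0$ the seminorm $\norm{\pi_l(a)\,\cdot}_{L^2(\pi)}$ is among those controlling $H^{k^{(n)}}$-convergence, so $m(n)$ can be chosen to make $\norm{\pi_l(a)(\eta_n-\xi)}_{L^2(\pi)}<1/n$ for a countable generating list of seminorms built to exhaust the topology of $C^\infty(\pi)$.

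The main subtlety is the last point: the topology of $C^\infty(\pi)$ is a priori generated by the uncountable family $\{\norm{\pi_l(a)\,\cdot}_{L^2(\pi)}:l\in\C^\nu,\,a\in\cA_l\}$, so the diagonal choice of $m(n)$ cannot literally control \emph{every} seminorm simultaneously. The hard step will therefore be to verify that convergence along the diagonal sequence in a suitable countable subfamily of seminorms already implies convergence of $\pi_l(a)\eta_n$ to $\pi_l(a)\xi$ in $L^2(\pi)$ for \emph{every} $(l,a)$; this uses continuity of the maps $\pi_l(a)\colon H^{\Re(l)}(\pi)\to L^2(\pi)$ together with the bound \eqref{eqn:bound_norm_rep} and the $C^*$-property of $\norm{\cdot}_{\overline{\cA_0}}$, so that $H^{k^{(n)}}$-convergence of $\eta_n$ to $\xi$ for each fixed $n_0$ implies convergence in each individual seminorm with $\Re(l)\preceq k^{(n_0)}$.
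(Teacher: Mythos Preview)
The paper does not prove this proposition; it simply cites \cite{MohsenAbstractMaxHypo} and gives no argument of its own. So I assess your proposal on its own merits.

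The easy inclusion is fine, and your instinct to exploit completeness of $C^\infty(\pi)$ is the right one. But the gap you flag at the end is real, and your proposed fix does not close it. You want to diagonalize, which requires choosing $m(n)$ to satisfy, for each $n$, uncountably many constraints of the form $\norm{\pi_l(a)(\xi^{(n)}_{m(n)}-\xi)}_{L^2(\pi)}<1/n$. Your appeal to ``continuity of $\pi_l(a)\colon H^{\Re(l)}(\pi)\to L^2(\pi)$'' is circular: the topology on $H^{\Re(l)}(\pi)$ is itself defined by uncountably many of the same seminorms, so this does not reduce anything. The bound \eqref{eqn:bound_norm_rep} only helps with seminorms $p_{l,a}$ having $\Re(l)\in\R^\nu_{\leq 0}$, where $\pi_l(a)$ is bounded on $L^2(\pi)$; the positive-order seminorms remain uncontrolled. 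Metrizability of each $H^{k}(\pi)$ would rescue the diagonal choice, but that is one of the \emph{conclusions} of \Cref{thm:simple_arg_sobolev} and cannot be invoked here.

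The clean repair is to drop sequences in favour of the closure argument. Set
\[
V=\bigl\{\eta\in C^{-\infty}(\pi):\pi_l(a)\eta\in L^2(\pi)\text{ for all }l\in\C^\nu,\ a\in\cA_l\bigr\},
\]
equipped with the locally convex topology from all the seminorms $p_{l,a}(\eta)=\norm{\pi_l(a)\eta}_{L^2(\pi)}$. Then $C^\infty(\pi)\subseteq V$ carries exactly the subspace topology; since $C^\infty(\pi)$ is complete and $V$ is Hausdorff (because $p_{0,1}$ is a norm), $C^\infty(\pi)$ is closed in $V$. If $\xi\in\bigcap_{k\in\R^\nu_+}H^k(\pi)$ then $\xi\in V$, and every \emph{basic} open neighbourhood of $\xi$ in $V$ involves only finitely many seminorms $p_{l_1,a_1},\dots,p_{l_N,a_N}$. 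Choosing $k\in\R^\nu_+$ with $\Re(l_i)\leq k$ for all $i$ and using the single witnessing sequence for $\xi\in H^k(\pi)$ puts an element of $C^\infty(\pi)$ in that neighbourhood. Hence $\xi\in\overline{C^\infty(\pi)}^{\,V}=C^\infty(\pi)$. No diagonalization is needed.
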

        In the following theorem, $K$ is the set of $k\in \C^\nu$ such that $\Re(k_i),\Im(k_i)\in [-\frac{1}{2},\frac{1}{2}]$ for all $i\in \bb{1,\nu}$, and $k_i\neq 0$ for at most one $i$.
    \begin{theorem}[{\cite{MohsenAbstractMaxHypo}}]\label{thm:simple_arg_sobolev}
        Let $(\cA)_{k\in \C^\nu}$ be a $C^*$-calculus, $\Sigma(\cA)$ a set of representations of $(\cA)_{k\in \C^\nu}$ such that the following hold:
        \begin{enumerate}
            \item The $C^*$-algebra $\overline{\cA_0}$ is of Type I.
            \item Any irreducible representation of $\overline{\cA_0}$ is unitarily equivalent to $\pi_0$ for some $\pi\in \Sigma(\cA)$.
            \item We can find a family of elements $v_k\in \cA_k$ for $k\in  K$ such that: \begin{enumerate}
				\item The element $v_0$ is invertible in $\overline{\cA_0}$.
				\item For any $a\in \cA_0$, the map 
                    \begin{equation}\label{eqn:continuity_hypothesis}\begin{aligned}
                        k\in K \mapsto v_{k}av_{-k}\in \overline{\cA_0}
                    \end{aligned}\end{equation}
				is continuous, where $\overline{\cA_0}$ is equipped with the norm topology from $\norm{\cdot}_{\overline{\cA_0}}$.
        \end{enumerate}
        		\end{enumerate}

		Then following hold:
		\begin{enumerate}
            \item\label{thm:simple_arg_sobolev:3} Let $k\in\C^\nu$ and $a\in \cA_k$. If for any $\pi\in \Sigma(\cA)$, $C^\infty(\pi)\xrightarrow{\pi_k(a)} C^\infty(\pi)$ and $C^\infty(\pi)\xrightarrow{\pi_k(a^*)} C^\infty(\pi)$ are injective, then for every representation $\pi$ of $(\cA)_{k\in \C^\nu}$ (not necessarily irreducible and not necessarily in $\Sigma(\cA)$), the maps
                                \begin{equation}\label{eqn:skqdjfmsqmjdfjsmjmfqsfd}\begin{aligned}
                                                                 &\pi_k(a):C^{-\infty}(\pi)\to C^{-\infty}(\pi)\\
&\pi_k(a):H^l(\pi)\to H^{l-\Re(k)}(\pi)&&\forall l\in \R^\nu\\
&\pi_k(a):C^\infty(\pi)\to C^\infty(\pi)
                                \end{aligned}\end{equation}
					are topological isomorphisms.
					Furthermore, for any $k\in \C^\nu$, there exists $v\in \cA_k$ which satisfies the above.
 \item For any representation $\pi$ of $(\cA)_{k\in \C^\nu}$, $H^k(\pi)$ is a Hilbertian space, and \begin{equation}\label{eqn:HkPi_alternate}\begin{aligned}
                        		H^k(\pi)=\{\xi\in C^{-\infty}(\pi):\pi_k(a)\xi\in L^2(\pi),\ \forall a\in \cA_k\}		
                    \end{aligned}\end{equation} and $C^{-\infty}(\pi)=\bigcup_{k\in \R^\nu}H^k(\pi)$.
		
			\item\label{thm:simple_arg_sobolev:inv} If $a\in \cA_0$, then $a$ is left-invertible in $\overline{\cA_0}$ if and only if for all $\pi\in \Sigma(\cA)$, $C^\infty(\pi)\xrightarrow{\pi_0(a)} C^\infty(\pi)$ is injective
		\end{enumerate}
			\end{theorem}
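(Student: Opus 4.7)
The plan is to tackle the three assertions in reverse order, since each depends on the preceding. The entry point is part \ref{thm:simple_arg_sobolev:inv}, whose converse direction is the core difficulty; the remaining parts then reduce to it via the operators $v_k$.

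First, I would prove part \ref{thm:simple_arg_sobolev:inv}. The forward direction is immediate: if $ba = 1$ in $\overline{\cA_0}$, then $\pi_0(b)\pi_0(a) = \Id$ on $L^2(\pi)$, hence $\pi_0(a)$ is injective on $C^\infty(\pi)$. For the converse, I would apply \Cref{prop:simple_arg} to the unital $C^*$-algebra $\overline{\cA_0}$ and combine it with hypothesis (2) to reduce to showing that $\pi_0(a):L^2(\pi)\to L^2(\pi)$ is injective for every $\pi\in\Sigma(\cA)$. Given injectivity on $C^\infty(\pi)$, this amounts to a hypoellipticity bootstrap: every $\xi\in L^2(\pi)$ with $\pi_0(a)\xi=0$ must in fact lie in $C^\infty(\pi)$. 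By \eqref{eqn:Cinfinty_intersection}, it suffices to prove $\xi\in H^k(\pi)$ for all $k\in\R_+^\nu$, which I would accomplish by propagating regularity through the family $k\mapsto \pi_0(v_k a v_{-k})$ of order-$0$ operators parametrized by $k\in K$, whose continuity is the content of \eqref{eqn:continuity_hypothesis}. The Type I hypothesis enters here crucially: it furnishes $\cK(L^2(\pi))\subseteq\pi_0(\overline{\cA_0})$, which makes available spectral projections onto subspaces related to $\ker\pi_0(a)$ from inside the image of $\cA_0$, and allows one to combine the continuous family with analytic-continuation/interpolation arguments to inductively raise the Sobolev order of $\xi$.

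Second, for part \ref{thm:simple_arg_sobolev:3}, I would use the operators $v_k$ to reduce to the order-$0$ case just handled. Given $a\in\cA_k$, decompose $k$ as a sum of elements of $K$ and form a product $v\in\cA_{-k}$ from the corresponding $v_\ell$'s; hypothesis (3) ensures $v$ behaves well and that it implements, order by order in the decomposition, topological isomorphisms between the relevant Sobolev scales. The composed operator $va\in\cA_0$ inherits injectivity on $C^\infty(\pi)$ from the hypothesis on $\pi_k(a)$, and symmetrically for $(va)^*$. Applying part \ref{thm:simple_arg_sobolev:inv} gives left-invertibility of both $va$ and $(va)^*$ in $\overline{\cA_0}$; combining yields two-sided invertibility. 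Conjugating back by $v$ then deduces that $\pi_k(a)$ is a topological isomorphism as maps \eqref{eqn:skqdjfmsqmjdfjsmjmfqsfd}. For the \emph{furthermore} clause, taking $v$ itself (modified by adding a small positive order-$0$ perturbation so that it has the required injectivity on smooth vectors) provides an explicit element of $\cA_k$ with the asserted properties.

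Finally, the assertions in part 2 — that $H^k(\pi)$ is Hilbertian, the alternative characterization \eqref{eqn:HkPi_alternate}, and $C^{-\infty}(\pi)=\bigcup_k H^k(\pi)$ — follow once invertible operators in $\cA_k$ are available for every $k$, since these induce identifications between $H^k(\pi)$ and $L^2(\pi)$, through which the Hilbertian structure is transported. The inclusion $\supseteq$ in \eqref{eqn:HkPi_alternate} is the nontrivial one and requires showing density of $C^\infty(\pi)$ in the right-hand side, which again reduces to the hypoellipticity bootstrap applied to the invertible operator just constructed. The union identity comes from the fact that any $\xi\in C^{-\infty}(\pi)$ can be written as $\pi_{-k}(v)\eta$ for $\eta\in L^2(\pi)$ and $k$ chosen sufficiently large (using that $\pi_{-k}(v):L^2(\pi)\to C^{-\infty}(\pi)$ is surjective when $v$ is invertible of order $-k$).

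The main obstacle is unquestionably the hypoellipticity bootstrap in the first step. Everything downstream is bookkeeping and order-reduction; but extracting smoothness of a distributional vector $\xi$ from the single equation $\pi_0(a)\xi=0$, using only the Type I structure of $\overline{\cA_0}$ and the continuous family of conjugates $v_k a v_{-k}$, is the essential new analytic input the abstract framework requires, and is presumably where the bulk of the technical work in \cite{MohsenAbstractMaxHypo} lies.
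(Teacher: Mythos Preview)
The paper does not contain a proof of this theorem: it is stated in the appendix as a result imported from \cite{MohsenAbstractMaxHypo}, with no proof given in the present article. There is therefore nothing in the paper to compare your proposal against. Your sketch is a plausible outline of how such a result might be established --- reducing to order $0$ via the $v_k$, then bootstrapping regularity using the Type~I hypothesis and the continuous family \eqref{eqn:continuity_hypothesis} --- and you correctly identify the hypoellipticity bootstrap as the substantive step, but to assess whether this matches the actual proof you would need to consult \cite{MohsenAbstractMaxHypo} directly.
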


		\end{appendices}

	\begin{refcontext}[sorting=nyt]
		\printbibliography
	\end{refcontext}

\end{document}